\theoremstyle{plain}
\newtheorem{lemma}{Lemma}[section]
\newtheorem{proposition}[lemma]{Proposition}
\newtheorem{theorem}[lemma]{Theorem}
\newtheorem{corollary}[lemma]{Corollary}
\newtheorem*{theorem1}{Theorem \ref{T_1}}
\newtheorem*{theorem2}{Theorem \ref{T_subpart_step}}
\theoremstyle{definition}
\newtheorem{definition}[lemma]{Definition}
\newtheorem{claim}[lemma]{Claim}
\theoremstyle{remark}
\newtheorem{remark}[lemma]{Remark}
\newtheorem{example}[lemma]{Example}
\numberwithin{equation}{section}
\def\to{\rightarrow}
\newcommand{\R}{\mathbb{R}}
\def\S{\mathbb{S}}
\newcommand{\Q}{\mathbb{Q}}
\newcommand{\N}{\mathbb{N}}
\def\LL{\mathcal L^d}
\def\PP{\mathcal P}
\newcommand{\dist}{\mathrm{dist}}
\newcommand{\clos}{\mathrm{clos}}
\newcommand{\Id}{\mathbb{I}}
\newcommand{\ind}{\mathbbm 1}
\def\P{\mathtt{P}}
\newcommand{\A}{\mathfrak{A}}
\newcommand{\B}{\mathfrak{B}}
\def\NN{\mathcal N}
\newcommand{\inter}{\mathrm{int}}
\newcommand{\Graph}{\mathrm{graph}}
\newcommand{\epi}{\mathrm{epi}}
\newcommand{\aff}{\mathrm{aff}}
\newcommand{\conv}{\mathrm{conv}}
\newcommand{\eps}{\varepsilon}
\newcommand{\nfrac}[2]{\genfrac{}{}{0pt}{}{#1}{#2}}
\newcommand{\interr}{\inter_{\mathrm{rel}}}
\def\d#1{|#1|_{D^*}}
\def\a{\mathfrak a}
\def\c{\mathfrak c}
\def\b{\mathfrak b}
\def\q{\mathfrak q}
\def\B{\mathfrak B}
\def\HH{\mathcal H}
\def\ka{^k_\a}
\def\lb{^{',\ell}_\b}
\def\pod{\Pi^{\mathrm{opt}}_{\d{\cdot}}}
\def\p{\mathtt p}
\def\e{\mathtt e}
\def\bD{\mathbf D}
\def\bC{\mathbf C}
\def\tC{\tilde{C}}
\def\C{\tilde{\mathbf C}}
\def\opt{\mathrm{opt}}
\def\O{\mathtt{O}}
\def\dom{\mathrm{dom}}
\def\T{\mathfrak T}
\def\t{\mathfrak t}
\def\tg{\tilde{\Gamma}}
\title[On Sudakov's decomposition of transference plans]{On Sudakov's type decomposition of transference plans with norm costs}
\author{Stefano Bianchini}
\address{SISSA, via Beirut 2, IT-34014 Trieste (ITALY)}
\email{bianchin@sissa.it}
\author{Sara Daneri}
\address{Universit\"at Leipzig, Mathematisches Institut, Augustusplatz 10, D-4109 Leipzig (GERMANY)}
\email{daneri@math.uni-leipzig.de}
\date{\today\text{}}
\subjclass[2010]{28A50,49Q20}
\begin{document}

\begin{abstract}
We consider the original strategy proposed by Sudakov for solving the Monge transportation problem with norm cost $\d{\cdot}$
\[
\min \bigg\{ \int \d{\mathtt T(x) - x} d\mu(x), \ \mathtt T : \R^d \to \R^d, \ \nu = \mathtt T_\# \mu \bigg\},
\]
with $\mu$, $\nu$ probability measures in $\R^d$ and $\mu$ absolutely continuous w.r.t. $\LL$. The key idea in this approach is to decompose (via disintegration of measures) the Kantorovich optimal transportation problem into a family of transportation problems in $Z_\a\times\R^d$, where $\{Z_\a\}_{\a\in\A} \subset \R^d$ are disjoint regions such that the construction of an optimal map $\mathtt T_\a : Z_\a \to \R^d$ is simpler than in the original problem, and then to obtain $\mathtt T$ by piecing together the maps $\mathtt T_\a$.
When the norm $\d{\cdot}$ is strictly convex \cite{sudak}, the sets $Z_\a$ are a family of $1$-dimensional segments determined by the Kantorovich potential called optimal rays, while the existence of the map $\mathtt T_\a$ is straightforward provided one can show that the disintegration of $\LL$ (and thus of $\mu$) on such segments is absolutely continuous w.r.t. the $1$-dimensional Hausdorff measure \cite{Car:strictly}.
When the norm $\d{\cdot}$ is not strictly convex, the main problems in this kind of approach are two: first, to identify a suitable family of regions $\{Z_\a\}_{\a\in\A}$ on which the transport problem decomposes into simpler ones, and then to prove the existence of optimal maps. 

In this paper we show how these difficulties can be overcome, and that the original idea of Sudakov can be successfully implemented. 

The results yield a complete characterization of the Kantorovich optimal transportation problem, whose straightforward corollary is the solution of the Monge problem in each set $Z_\a$ and then in $\R^d$. The strategy is sufficiently powerful to be applied to other optimal transportation problems.

The analysis requires
\begin{enumerate}
\item the study of the transportation problem on directed locally affine partitions $\{Z\ka,C\ka\}_{k,\a}$ of $\R^d$, i.e. sets $Z\ka \subset \R^d$ which are relatively open in their $k$-dimensional affine hull and on which the transport occurs only along directions belonging to a cone $C\ka$;
\item the proof of the absolute continuity w.r.t. the suitable $k$-dimensional Hausdorff measure of the disintegration of $\LL$ on these directed locally affine partitions;
\item the definition of cyclically connected sets w.r.t. a family of transportation plans with finite cone costs;
\item the proof of the existence of cyclically connected directed locally affine partitions for transport problems with cost functions which are indicator functions of cones and no potentials can be constructed.
\end{enumerate}
\end{abstract}

\thanks{This work is supported by ERC Starting Grant 240385 "ConsLaw"}

\maketitle

{\centerline{Preprint SISSA  51/2013/MATE}}

\tableofcontents

\section{Introduction}
\label{S_intro_Sud}

Let $\mu,\nu \in \mathcal P(\R^d)$ with $\mu \ll \LL$ and consider the Monge optimal transportation problem
\begin{equation}
\label{E_transpo_Sud}
\min \bigg\{ \int \d{\mathtt T(x) - x} d\mu(x), \ \mathtt T : \R^d \to \R^d, \ \nu = \mathtt T_\# \mu \bigg\},
\end{equation}
where $\d{\cdot}$ is a \emph{convex norm} in $\R^d$, namely a positively $1$-homogeneous function whose unit ball $\{x\in\R^d:\,\d{x}\leq1\}$ is a closed $d$-dimensional convex set $D$ with $0\in\inter \,D$. The $\mu$-measurable maps $\mathtt T:\R^d\to\R^d$ satisfying $\mathtt T_\#\mu=\nu$ are called \emph{transport maps}.
Well known examples show that if $\mu$ is not absolutely continuous w.r.t. $\LL$, there may be no optimal transport maps (see Theorem 8.3 of \cite{conf:optcime}). 

Due to the nonlinearity of the constraint $\mathtt T_\#\mu$, the classical approach to solve \eqref{E_transpo_Sud} is first to consider the relaxed problem 
of finding optimal \emph{transference plans} $\pi \in \Pi^{\mathrm{opt}}_{\d{\cdot}}(\mu,\nu)$ defined by
\begin{equation}
\label{E_ptimal_intro}
\int \d{y-x} d\bar\pi(x,y) = \min \bigg\{ \int \d{y-x} d\pi(y,x), \pi \in \Pi(\mu,\nu) \bigg\},
\end{equation}
where 
\begin{equation}
 \Pi(\mu,\nu) := \Big\{ \pi \in \mathcal P(\R^d \times \R^d) : (\mathtt p_1)_\# \pi = \mu, (\mathtt p_2)_\# \pi = \nu \Big\}
\end{equation}
and $\mathtt p_i : \underset{j}{\prod} X_j \to X_i$ is the projection on the $i$-coordinate in the product space $\underset{j}{\prod} X_j$. 

Assuming that
\begin{equation}
\label{E_transport_prob_Sud}
\inf_{\pi \in \Pi(\mu,\nu)} \int_{\R^d \times \R^d} \d{y-x} \,d\pi(x,y) < +\infty,
\end{equation}
by standard theorems in optimal transportation there always exists an optimal transference plan, without being in the degenerate situation where every plan $\pi \in \Pi(\mu,\nu)$ is optimal.

Then, if one can show that there exists at least an optimal transport plan $\pi$ which is concentrated on a graph of a $\mu$-measurable map $\mathtt T$, i.e. $\pi:=(\Id\times\mathtt T)_\#\mu$, then $\mathtt T$ is an optimal transport map solving \eqref{E_transpo_Sud}.

The first strategy to show the existence of such a transference plan was proposed by Sudakov in \cite{sudak} and consists in decomposing via disintegration of measures the optimal transportation problem \eqref{E_ptimal_intro} into a family of transportation problems on $Z_\a\times\R^d$, where $\{Z_\a\}_{\a\in\A} \subset \R^d$ are disjoint regions where the construction of an optimal map $\mathtt T_\a : Z_\a \to \R^d$ is simpler than in the original problem, and then to obtain $\mathtt T$ by piecing together the maps $\mathtt T_\a$.\\
 With additional regularity properties on the densities of $\mu$, $\nu$ or on the norm, such as uniform convexity, an approach partially equal to the one proposed by Sudakov was successfully followed in \cite{ambr:lecttrans}, \cite{conf:optcime}, \cite{caffafeldmc} and \cite{trudiwang}. The most general case where up to now this approach has been successfully implemented (see \cite{Car:strictly}) is the case in which $\d{\cdot}$ is strictly convex, namely when the set $D$ is strictly convex. 
 Other approaches have also been used. In \cite{evagangbo}, the problem \eqref{E_transpo_Sud} for strictly convex norms has been solved using PDE methods under the assumption that the marginals $\mu$, $\nu$ have Lipschitz continuous densities w.r.t. $\LL$. The problem \eqref{E_transpo_Sud} was solved for crystalline norms in \cite{ambprat:crist}.

In \cite{champdepasc:MongeS, champdepasc:Monge}, the authors solved the Monge problem first with strictly convex and then with general convex norms using a different method, which does not pass through a geometric/measure theoretic decomposition of the optimal transportation problem \eqref{E_ptimal_intro} into simpler ones, but is based on the selection among the optimal transference plans $\pi \in \Pi^{\mathrm{opt}}_{\d{\cdot}}(\mu,\nu)$ of a transference plan $\check \pi$ which is also minimizing a secondary cost: more precisely, one selects the (unique) transference plan $\check \pi$ such that
\[
\check \pi \ \text{is a minimizer of} \ \inf \bigg\{ \int |x - y|^2 d\pi(x,y):\, \pi \in \Pi^\mathrm{opt}_{\d{\cdot}}(\mu,\nu) \bigg\}.
\]
and the main issue consists in proving that $\check \pi$ is actually induced by a transport map $\mathtt T$, which clearly satisfies \eqref{E_transpo_Sud}.



However, the problem of whether Sudakov's strategy could be successfully implemented also in the case of general convex norms has remained open for a long time. The aim of this paper is to show how this problem can be solved. In order to introduce the notation that we need to state our main results and explain the new ideas and concepts in the case of general convex norms, we first resume briefly how Sudakov's strategy works for strictly convex norms.

The first step of Sudakov's approach consists in finding a suitable partition in $\R^d$ on which the transport occurs, namely s.t. the optimal plans move the initial mass inside the elements of the partition.
By duality (see e.g. \cite{villa:Oldnew}), there exists a function $\psi:\R^d\to\R$, called \emph{Kantorovich potential}, which satisfies
\begin{align}
 \psi(y)-\psi(x)&\leq\d{y-x},\quad\forall\,x,y\in\R^d,\label{0_psi1}\\
 \psi(y)-\psi(x)&=\d{y-x},\quad\text{for $\pi$-a.e. $(x,y)$, $\forall\,\pi\in\Pi^{\mathrm{opt}}_{\d{\cdot}}(\mu,\nu)$}.\label{0_psi2}
\end{align}
Observe that, by \eqref{0_psi1}, for all $(x,y)$ as in \eqref{0_psi2} and $\forall\,0\leq s\leq t\leq1$
\begin{equation}
\label{0_ztzs}
 \psi(z_t)-\psi(z_s)=\d{z_t-z_s},\quad z_t:=(1-t)x+ty.
\end{equation}
The open oriented segments $Z^1_\a:=]x,y[\,\subset\R^d$ (where $\a\in\A^1$ is a continuous parameter, $1$ referring to the dimension of the elements) whose extreme points satisfy \eqref{0_psi2} and which are maximal w.r.t. set inclusion are called \emph{optimal rays}. 
By strict convexity, if $(x,y)$ and $(y,z)$, with $x,z \not= y$, satisfy \eqref{0_psi2}, then
\begin{equation}
\label{0_non_branch}
y \in ]x,z[.
\end{equation}
In particular, if $(x,y)$ and $(x',y')$ satisfy \eqref{0_psi2} but $\R^+(y-x)\neq\R^+(y'-x')$, then 
\begin{equation}
\label{0_nointers}
 ]x,y[\,\,\cap\,\,]x',y'[=\emptyset.
\end{equation}
Hence, the optimal rays $\{Z^1_\a\}_{\a\in\A^1}$ form a Borel partition of $\R^d$ into $1$-dimensional open segments, up to the set of their \emph{initial points} $\underset{\a\in\A^1}{\cup}\mathcal I(Z^1_\a)\subset\R^d$ and of their \emph{final points} $\underset{\a\in\A^1}{\cup}\mathcal E(Z^1_\a)\subset\R^d$, defined for every $\a\in\A^1$ by
\begin{equation}
 Z^1_\a=\bigl\{(1-t)\mathcal I(Z^1_\a)+t\mathcal E(Z^1_\a)\bigr\},\qquad \mathcal E(Z^1_\a)-\mathcal I(Z^1_\a)\in C^1_\a,
\end{equation}
being $C^1_\a$ the half-line in $\R^d$ giving the direction on $Z^1_\a$ along which the transport occurs, i.e.
\[
 \psi(y)-\psi(x)=\d{y-x}, \quad x,y\in Z^1_\a\quad\Rightarrow\quad y-x\in C^1_\a.
\]
The partition into optimal rays with directions of transport $\{Z^1_\a, C^1_\a\}_{\a\in\A^1}$ is the simplest example of what we will call \emph{directed locally affine partition}.
Moreover, the set of initial/final points of the optimal rays is $\LL$-negligible (and then also $\mu$-negligible). Indeed, if $(x,y)$ satisfies \eqref{0_psi2} and $\psi$ is differentiable at $x$ --notice that this happens $\LL$-a.e. (and then $\mu$-a.e.) since $\psi$ is Lipschitz--, then 
\begin{equation}
 \label{0_yxpartial}
 y\in x+ \big( \partial\d{\cdot} \big)^{-1}(\nabla\psi(x)),
\end{equation}
where $\partial\d{\cdot}$ is the subdifferential of the convex norm, and, by strict convexity of $\d{\cdot}$, $(\partial\d{\cdot})^{-1}(\nabla\psi(x))$ is an half-line corresponding to a unique $C^1_\a$. 
We recall that the convex cones of the form $(\partial\d{\cdot})^{-1}(\ell)$ for some $\ell\in D^*$ are called \emph{exposed faces} of $\d{\cdot}$, while more generally the \emph{extremal faces} of $\d{\cdot}$ are by definition the projections on $\R^d$ of the extremal faces of the convex cone $\epi\d{\cdot}\subset\R^{d+1}$. In the strictly convex case, both concepts coincide and are given by half-lines.

\begin{figure}
\centering
	\subfloat[The construction of optimal rays through the potential $\psi$ and the epigraph of $\d{\cdot}$.]
	{
	\label{F:sf:alwaysbad}
	\begin{minipage}[c]{8cm}
	\centering\resizebox{8cm}{5cm}{\input{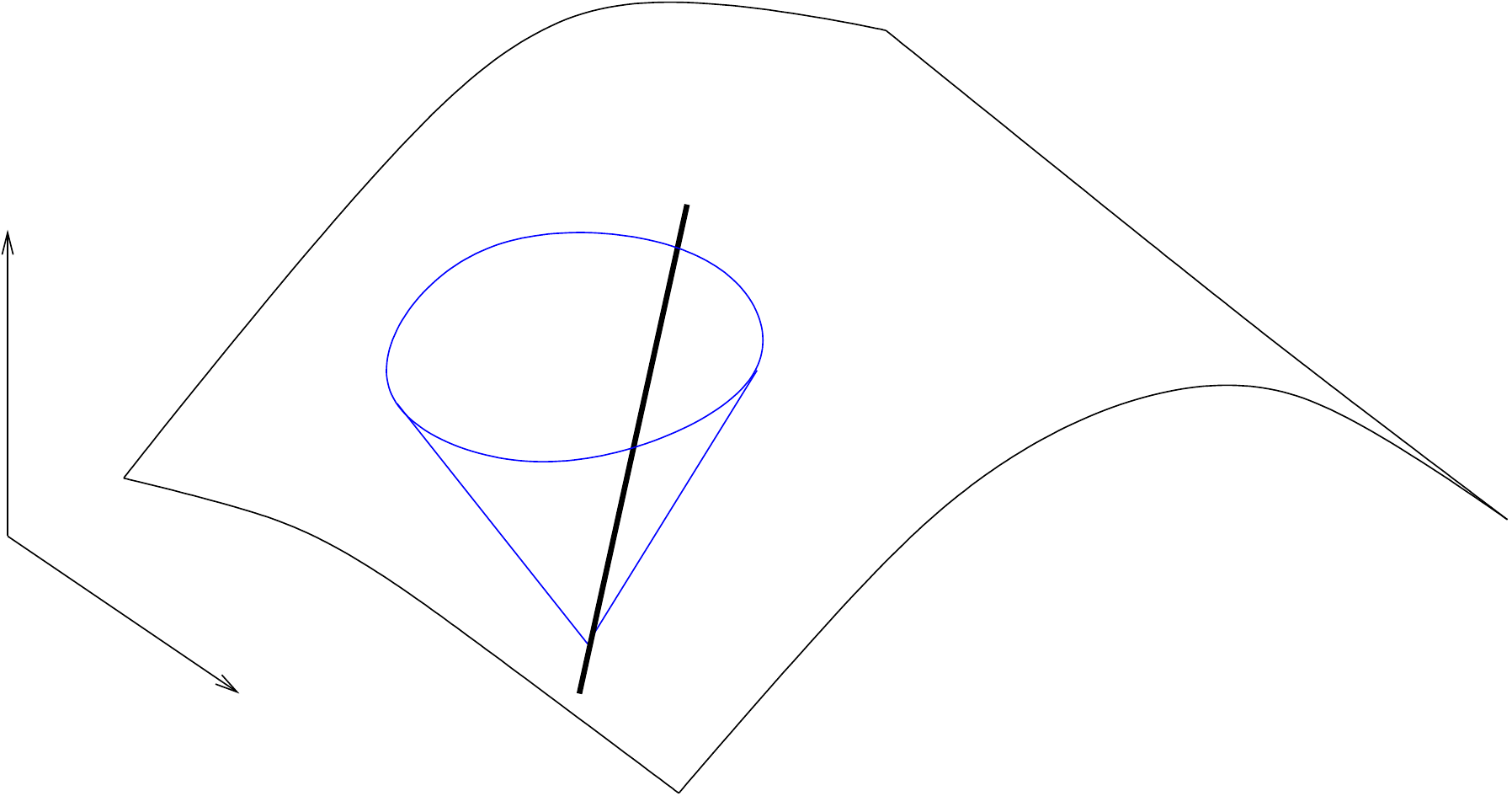_t}}
	\end{minipage}
	}
	\hskip 1cm
	\subfloat[A set of optimal rays and a simple cone vector field.]
	{
	\label{F:sf:intermdep}
	\begin{minipage}[c]{6cm}
	\centering\resizebox{6cm}{5cm}{\input{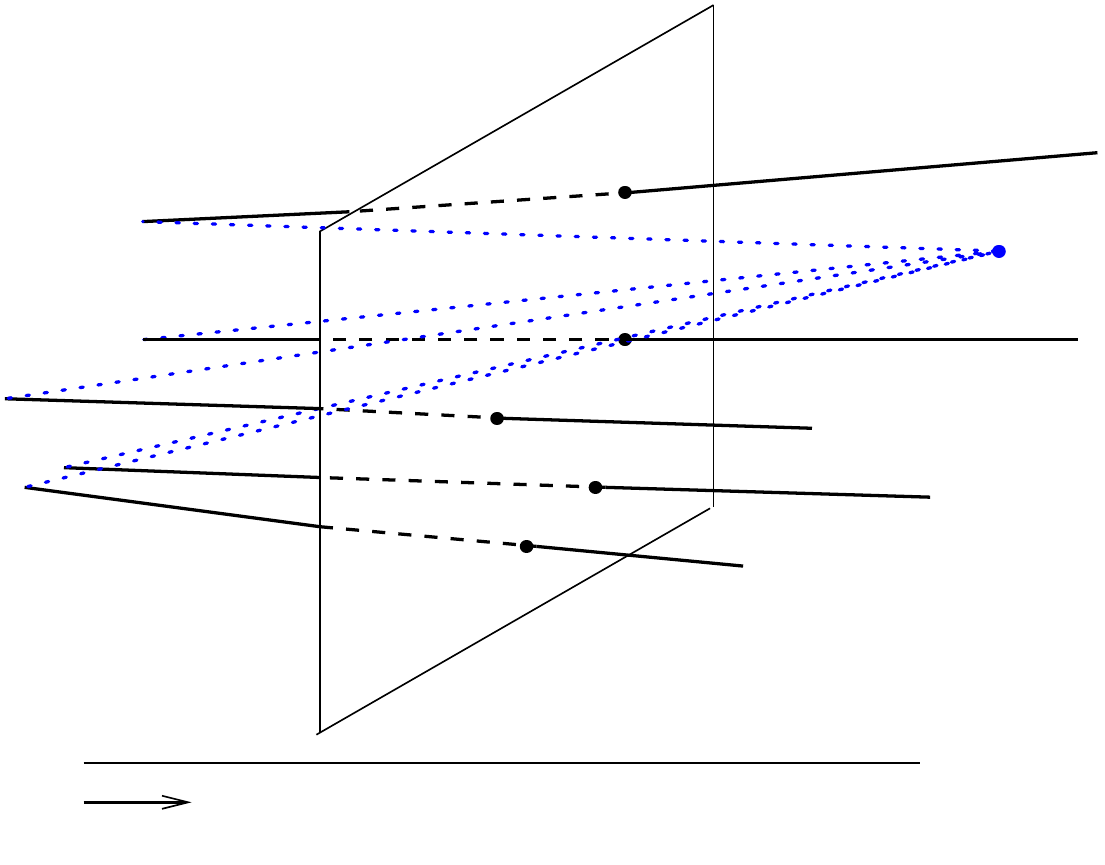_t}}
	\end{minipage}
	}
\label{Fi_casostrett_intro}
\end{figure}

Assume w.l.o.g. that $\mu\perp\nu$ --hence $\pi\{(x,y):\,y\neq x\}=1$-- and that, for the moment, also $\nu\ll\LL$.
Then, this first step yields that the optimal rays $\{Z^1_\a\}_{\a\in\A^1}$ on which $\psi$ is differentiable form a partition in $\R^d$ --up to the $\LL$-negligible set (thus also ($\mu+\nu$)-negligible) where $\psi$ is not differentiable-- s.t.
\[
 \pi\Big(\underset{\a\in\A^1}{\bigcup}Z^1_\a\times Z^1_\a\Big)=1,\quad\forall\,\pi\in\Pi^{\mathrm{opt}}_{\d{\cdot}}(\mu,\nu).
\]

The second step of the strategy consists in decomposing the transport problem in the sets $\{Z^1_\a \times \R^d\}_{\a \in \A^1}$. 
More precisely, for any given cone $C\subset\R^d$, let us denote by $\mathtt c_{C}$ the cost function 
\[
 \mathtt c_{C}(x,y):=\ind_{C}(y-x),
\]
where $\ind_A$ is the indicator function of the set $A$ (see \eqref{E_indicator_function_A}).
For notational convenience, if $\mathtt c : \R^d \times \R^d \to [0,\infty]$ is a Borel cost function, we use the notation
\[
\Pi^f_\mathtt c(\mu,\nu) := \bigg\{ \pi \in \Pi(\mu,\nu) : \int \mathtt c(x,y) \,d\pi(x,y) < \infty \bigg\}.
\]
Then, by \eqref{0_yxpartial} and \eqref{0_nointers} it follows that if $\pi=\int\pi^1_\a\,dm(\a)$, $\mu=\int\mu^1_\a\,dm(\a)$ and $\nu=\int\nu^1_\a\,dm(\a)$ denote the strongly consistent disintegrations (see Definition \ref{D_dis}) of $\pi$ w.r.t. $\{Z^1_\a\times\R^d\}_{\a \in \A^1}$ and of $\mu$ and $\nu$ w.r.t. $\{Z^1_\a\}_{\a \in \A^1}$, one has
\begin{equation}
\label{0_opt_char}
 \pi\in\Pi^{\mathrm{opt}}_{\d{\cdot}}(\mu,\nu)\quad\Leftrightarrow\quad \pi=\int\pi^1_\a\,dm(\a),\quad \text{$\pi^1_\a\in\Pi^{f}_{\mathtt c_{C^1_\a}}(\mu^1_\a,\nu^1_\a)$},
\end{equation}
being $\Pi^{f}_{\mathtt c_{C^1_\a}}(\mu_\a,\nu_\a)$ the plans of finite $\mathtt c_{C^1_\a}$-cost between $\mu_\a$ and $\nu_\a$.


In other words, the transport problem on $\R^d$ reduces to a family of independent $1$-dimensional transport problems with linear cost and prescribed direction. If $\mu^1_\a$ has no atoms, then the unique transference plan concentrated on a monotone graph in $Z^1_\a \times Z^1_\a$ is actually concentrated on a map $\mathtt T^1_\a$. In this setting, monotone means monotone w.r.t. the order induced by $C^1_\a$ on $Z^1_\a$, and the statement is a well known and simple result for 1-dimensional problems, which can be seen as a particular case of a more general structure result for optimal transportation problems with quadratic cost (see for example \cite{Bre:polarrear}). 


Then, the main problem in \cite{Car:strictly} was to prove that the disintegration of $\LL$ (and thus of $\mu$) on the optimal rays has non-atomic conditional measures. Indeed, for a general Borel partition into segments this might not be true, as discovered in a counterexample to the original Sudakov's proof by Alberti, Kirchheim and Preiss (see personal communication in \cite{ambprat:crist}). The main issue was then to prove that the optimal rays satisfy an additional regularity property which guarantees that the conditional measures of $\LL$ are not atomic. In \cite{ambr:lecttrans}, \cite{conf:optcime}, \cite{caffafeldmc} and \cite{trudiwang}, due to the additional regularity assumptions either on the measures $\mu$, $\nu$ or on the norm, the unit vector field giving at each point of an optimal ray the direction of transport is locally Lipschitz. Then, via changes of variables using the classical Coarea Formula one can reduce to study the disintegration of the Lebesgue measure on families of parallel segments, namely Fubini theorem, which gives the absolute continuity of the conditional measures w.r.t. the $1$-dimensional Hausdorff measure $\mathcal H^1$ on the segments on which they are concentrated. The absolute continuity of the conditional measures w.r.t. the $1$-dimensional Hausdorff measure on the optimal rays --thus implying the solvability of the Monge problem-- for general strictly convex norms was proved in \cite{Car:strictly}. Since in the general case no Lipschitz regularity is available, the author used a technique first introduced for a partition into segments arising from a different  variational problem in \cite{BiaGlo}. Such a technique is based on the validity for the family of segments (in this case, the optimal rays) of an approximation property via sequences of cone vector fields, that we call \emph{cone approximation property} (with the same  terminology used in the first part of \cite{Dan:Phd}).

We point out that, compared to the approach followed in \cite{champdepasc:MongeS, champdepasc:Monge}, Sudakov's approach for the Monge problem gives and relies upon a deeper geometric characterization of the transport via optimal plans, namely the existence of a family of lower dimensional regions (in the strictly convex case, $1$-dimensional) on which the transport occurs and on which the existence of optimal maps becomes easier to prove.



It remained unclear if the original strategy of Sudakov can be successful not only in the case of strictly convex norms, thus giving a complete geometric characterization of the optimal transport plans via decomposition into lower dimensional transportation problems.

The aim of this paper is to show how Sudakov's approach can be carried on also in the general convex case. In the next section we define new concepts, which in the strictly convex case (i.e. when the extremal faces of $\d{\cdot}$ are 1-dimensional) are trivially satisfied by the decomposition in optimal rays $Z^1_\a$, and state our main results, giving an overall idea of the whole construction.

\subsection{Sudakov's strategy in the general convex case} 
\label{Ss_main_Sud}

Recall that, for all $(x,y)$ as in \eqref{0_psi2}, \eqref{0_ztzs} holds. In the strictly convex case, we have seen that \eqref{0_ztzs} and \eqref{0_non_branch} imply that whenever
\begin{equation}
 \label{0_xy'y''}
 \exists\,y',y''\neq x \quad \text{s.t.} \qquad \psi(y')-\psi(x)=\d{y'-x} \ \ \text{and} \ \ \psi(x)-\psi(y'')=\d{x-y''},
\end{equation}
then $x$ belongs to a segment $Z^1_\a$ called optimal ray, which belongs to a partition on $\R^d$ on which the transport occurs along the direction $C^1_\a=\R^+(y'-x)=\R^+(x-y'')$.
However, for general convex norms, the optimal rays do not satisfy \eqref{0_nointers} and then do not form a partition in $\R^d$. Actually, $\forall\,x\in\R^d$, the sets 
\begin{equation}
\label{0_subsuper}
 \partial^+\psi(x):= \big\{y':\,\psi(y')-\psi(x)=\d{y'-x} \big\},\quad\partial^-\psi(x):= \big\{y'':\,\psi(x)-\psi(y'')=\d{x-y''} \big\},
\end{equation}
called respectively \emph{superdifferential and subdifferential of $\psi$ at $x$}, may be contained in one or even more higher dimensional cones corresponding to extremal faces of $\d{\cdot}$. Now, unlike in the strictly convex case, an extremal face is not in general a $1$-dimensional half line but a $k$-dimensional cone, with $k=1,\dots,d$. Hence Sudakov claimed that the regions on which the transport occurs are relatively open subsets of affine planes whose dimension is equal to $k$. However, even when considering the set of points in the super/subdifferential of $\psi$ at a certain point $x$ which are contained in a single $k$-extremal cone of $\epi\d{\cdot}$, it may not be a convex $k$-dimensional set or more generally a set with a well defined affine dimension (see Figure \ref{Fi_intro_supersub}).

\begin{figure}
\centering{\resizebox{16cm}{6cm}{\input{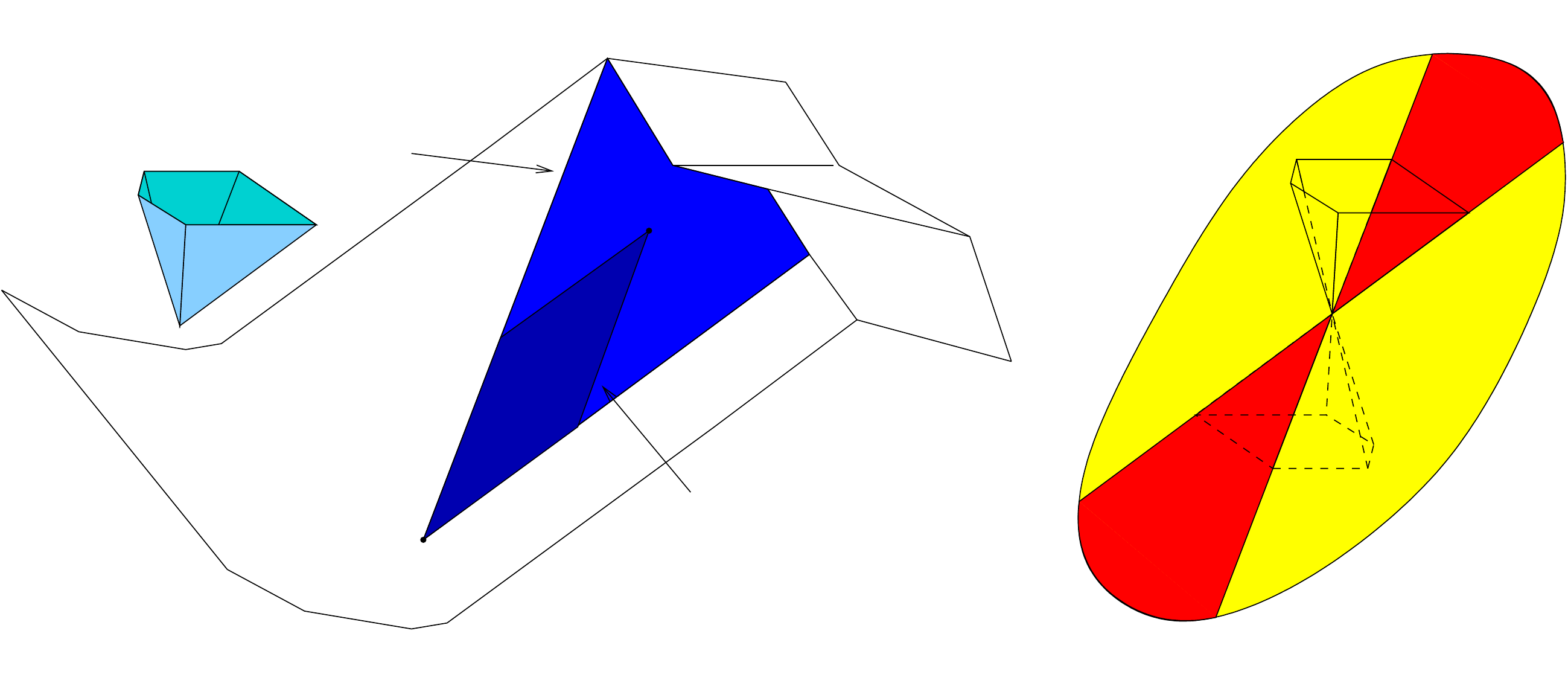_t}}}
 \caption{In the left picture a possible superdifferential $\partial^+\Graph\,\psi$ at a point $z$ of $\Graph\,\psi$ is depicted in black and different blue colors. Notice that it is not convex, not even inside extremal faces of the norm. We also underline in dark blue a set $\O(z,w)$ for some $w\in\partial^+\Graph\,\psi(z)$, in order to show the completeness property. 
  In the right picture we depict in red the super/subdifferential at a point $z$ of the regular set (yellow region) in $\Graph\,\psi$. The cone $z + \epi \d{\cdot}$ is also represented. 
  }  
 \label{Fi_intro_supersub}
\end{figure}

When we faced this problem for general convex norms, the first main issue was to find other conditions which determine that a point $x$ belongs to one of the desired $k$-dimensional regions, thus generalizing the property that whenever $y'\in\partial^+\psi(x)\setminus \{x\}$, $y''\in\partial^-\psi(x)\setminus \{x\}$ then $\R^+(y'-x)=\R^+(x-y'')$ and $x$ belongs to the optimal ray containing the segment $]y'',y'[$. 

The natural generalization of the partition into optimal rays for strictly convex norms is to look for a \emph{directed locally affine partition} $\{Z\ka,C\ka\}_{\nfrac{k=1,\dots,d}{\a \in \A^k}}$ of $\R^d$ (see Definition \ref{D_locaffpart}), namely a Borel partition of $\R^d$ into sets $Z\ka$ which are locally affine and $k$-dimensional, i.e. relatively open in their affine hull whose linear dimension is $k$, together with an extremal cone $C^k_\a$ of $\d{\cdot}$ that will correspond to the union of directions of the optimal rays starting from $x \in Z\ka$. 

The first key idea is to observe that Kantorovich duality \eqref{0_psi1}-\eqref{0_psi2} can be rewritten as follows (see Section \ref{Ss_convex_norm_cone}). Let $\hat\mu=(\Id\times\psi)_\#\mu$, $\hat\nu=(\Id\times\psi)_\#\nu$ and $\hat\pi=((\Id\times\psi)\times(\Id\times\psi))_\#\pi$. One has
\begin{align}
 \pi\in\Pi^{\mathrm{opt}}_{\d{\cdot}}(\mu,\nu)\quad&\Leftrightarrow\quad\hat\pi\in\Pi^f_{\mathtt c_{\epi\d{\cdot}}}(\hat\mu,\hat\nu)\label{0_lifting1}\\
 &\Leftrightarrow\quad\hat\pi\bigl(\partial^+\Graph\,\psi\bigr)=1,\label{0_lifting2}
\end{align}
where 
\begin{align}
\partial^+ \Graph\,\psi:=&~\bigl(\Graph\,\psi\times\Graph\,\psi\bigr)\cap\bigl\{\mathtt c_{\epi\d{\cdot}}<+\infty\bigr\}\notag\\
 =&~\Graph\,\psi\times\Graph\,\psi\cap\mathtt p_{\R^d}^{-1}(\partial^+\psi). \label{0_superdiff_graph}
\end{align}
is the \emph{superdifferential of the set $\Graph\,\psi\subset\R^{d+1}$}.
In other words, \eqref{0_lifting1} tells us that studying the optimal transportation problem between $\mu$ and $\nu$ in $\R^d$ is equivalent to study the finite cost transportation problem in $\R^{d+1}$ for a convex cone cost (precisely $\mathtt c_{\epi\d{\cdot}}$) between measures ($\hat\mu$, $\hat\nu$) concentrated on a $\d{\cdot}$-Lipschitz graph (namely $\Graph\,\psi$)
or, by \eqref{0_lifting2}, to study transport plans which are concentrated on the superdifferential of the graph of the $\d{\cdot}$-Lipschitz function $\psi$.

The advantage of this point of view is that the properties of the super/subdifferential of $\psi$ which permit to generalize \eqref{0_xy'y''}, and then to find a locally affine directed partition, can be more naturally expressed in terms of geometric properties of the super/subdifferential of $\Graph\,\psi$ --where the subdifferential of $\Graph\,\psi$ is the set $\partial^-\Graph\,\psi:=\big(\partial^+\Graph\,\psi\big)^{-1}$. 

First we will find a directed locally affine partition $\{\tilde Z^k_\a,\tilde C^k_\a\}_{\nfrac{k=1,\dots,d}{\a \in \A^k}}$ in $\R^{d+1}$ for this transportation problem, whose direction cones $\tilde C^k_\a$ are extremal faces of $\epi\d{\cdot}$ and on which the disintegration of the $d$-dimensional Hausdorff measure $\mathcal H^d$ on $\Graph\,\psi$ has conditional measures which are absolutely continuous w.r.t. $\mathcal H^k \llcorner \tilde Z^k_\a$, and then we will find the desired locally affine partition $\{Z^k_\a,C^k_\a\}_{\nfrac{k=1,\dots,d}{\a \in \A^k}}$ simply projecting it on $\R^d$. Indeed, the extremal faces of $\d{\cdot}$ are by definition the projections on $\R^d$ of the extremal faces of $\epi\d{\cdot}$ and the ``lifting map'' $\Id\times\psi$ is bi-Lipschitz, thus mapping negligible sets into negligible sets. 

The crucial properties of the super/subdifferential $\partial^\pm\Graph\,\psi$ that we will use to find the partition are the so-called \emph{transitivity property} 
\begin{equation}
 \label{0_trans}
 w'\in\partial^\pm\Graph\,\psi(w)\quad\Rightarrow\quad\partial^\pm\Graph\,\psi(w')\subset\partial^\pm\Graph\,\psi(w)
\end{equation}
and the \emph{completeness property} of the $\d{\cdot}$-Lipschitz graph $\Graph\,\psi$, that we define below.
Let $F$ be an extremal face of the convex cone $\epi\d{\cdot}$ and denote by $\interr F$ its relative interior, namely its interior w.r.t. its affine hull. Moreover, for any $z,w\in\R^{d+1}$ let 
\begin{equation}
 \O(z,w):= z+\epi\d{\cdot} \cap w-\epi\d{\cdot}.
\end{equation}
The completeness property of $\Graph\,\psi$ is the following:
\begin{align}
 &w\in\partial^+\Graph\,\psi(z),\quad w-z\in\interr F\quad\Rightarrow\quad\O(z,w)=z+F\cap w-F\subset\partial^+\Graph\,\psi(z),\label{0_compl1}\\\
 &w\in\partial^-\Graph\,\psi(z),\quad z-w\in\interr F\quad\Rightarrow\quad\O(w,z)=w+F\cap z-F\subset\partial^-\Graph\,\psi(z),\label{0_compl2}
\end{align}
where $z+ F\cap w-F$ is convex and satisfies $\R^+\bigl((z+F\cap w-F)-z\bigr)=\R^+\bigl(w-(z+F\cap w-F)\bigr)=F$ (see Proposition \ref{P_parall}).
 
In the strictly convex case, the extremal faces $F$ of $\epi\d{\cdot}$ are half-lines. Moreover, whenever \eqref{0_compl1} (resp. \eqref{0_compl2}) holds $\mathtt p_{\R^d} F$ is the extremal face of $\d{\cdot}$ giving the direction of an optimal ray starting (resp. arriving) at $x=\mathtt p_{\R^d} z$, and $\mathtt p_{\R^d}\O(z,w)=\bigl[\mathtt p_{\R^d} z,\,\mathtt p_{\R^d} w\bigr]$.
 
In the general convex case, the completeness property \eqref{0_compl1}-\eqref{0_compl2} then implies that whenever the directions of the optimal rays starting/arriving at a point $z$ are contained in a certain face $F$ and there exists a direction in $\interr F$, then they are a cone of directions coinciding with $F$. Moreover, by the transitivity property \eqref{0_trans}, whenever the same thing happens also for two points each belonging to one of the sets $B(x,\delta)\cap (z\pm \interr F)$, then $z$ has a locally affine neighborhood, of the same dimension as $F$ and contained in $z + \aff F$, made of points for which the admissible directions of transport coincide with the directions of $F$. 
Roughly speaking, the relative interior of the extremal face $F$ plays the role of a direction of an optimal ray and the set $\mathtt p_{\R^d}\O(z,w)$ the role of the segment $[x,y]$ inside such optimal ray in the strictly convex case (see Figure \ref{Fi_intro_supersub}).


The suitable generalization of \eqref{0_xy'y''} and its implications can then be found in the concept of what we call \emph{regular transport set} $\mathcal R \theta_\psi$. The notation will be clear in Section \ref{S_foliations}
when we study the more general transport problem for $\mathtt c_{\C}$-Lipschitz foliations, namely a family of graphs of $|\cdot|_{D(\a)^*}$-Lipschitz functions depending on a continuous parameter $\a$ (Section \ref{S_foliations} and Proposition \ref{P_ex_fol}). The study of $\mathtt c_{\C}$-Lipschitz foliations will be one of the main issues to complete the construction of a suitable directed locally affine partition (Theorem \ref{T_final}) on which to solve \eqref{E_transpo_Sud}. The points in $\mathcal R\theta_\psi$ are the points $z$ such that
\begin{enumerate}
\item \label{Point_intro_loafpr1} the set of directions 
\[
\mathcal D^+ \theta_\psi(z) = \bigg\{ \frac{w-z}{|w-z|}:\, w \in \partial^+\Graph\, \psi(z) \setminus \{z\} \bigg\},
\]
of the optimal rays starting in $z$ is convex in $\mathbb S^{d-1}$, and the same for the set of directions
\[
\mathcal D^- \theta_\psi(z) = \bigg\{ \frac{z-w}{|z-w|}:\,  w\in \partial^- \Graph\,\psi(z) \setminus \{z\} \bigg\},
\]
of the optimal rays arriving in $z$,
\item \label{Point_intro_loafpr2} the two sets $\mathcal D^+ \theta_\psi(z)$, $\mathcal D^- \theta_\psi(z)$ coincide,
\item \label{Point_intro_loafpr3} there are points $w'$, $w''$ such that
\[
\frac{z-w'}{|z-w'|} \in \interr \mathcal D^- \theta_\psi(z), \qquad \frac{w''-z}{|w''-z|} \in \interr \mathcal D^+ \theta_\psi(z)
\]
and Points (\ref{Point_intro_loafpr1}-\ref{Point_intro_loafpr2}) hold for $w'$, $w''$ too.
\end{enumerate}
Then the sets $\tilde Z\ka$, $\tilde C\ka$ are now determined by
\[
z \in \tilde Z\ka \quad \Longrightarrow \quad 
\left\{ 
\begin{aligned} 
\tilde Z\ka =~& \mathcal R \theta_\psi \cap \mathrm{aff}\partial^+\Graph\, \psi(z), \crcr
\tilde C\ka =&~\epi\, \d{\cdot} \cap \Big( \mathrm{aff} \partial^+ \Graph\,\psi(z)-z\Big).
\end{aligned} 
\right.
\]
Such a directed locally affine partition will be called \emph{differential partition}.
One can see that the sets $\tilde Z\ka$ are relatively open in their affine hull, and that $\tilde C\ka$ are extremal faces of $\epi\d{\cdot}$.
Recall that the index $k$ denotes the affine dimension of $\tilde Z\ka$, which coincides with the linear dimension of $\tilde C\ka$, while $\a \in \A^k$ is an index of continuum cardinality.


The second step in the strategy is then to show that the transport problem $\Pi^f_{\mathtt c_{\epi\d{\cdot}}}(\hat\mu,\hat\nu)$ can be decomposed, via disintegration of measures, into a family of finite cost transport problems on $\{\tilde Z\ka\times\R^{d+1}\}_{\nfrac{k=1,\dots,d}{\a \in \A^k}}$ with first marginals which are absolutely continuous w.r.t. the Hausdorff measure $\mathcal H^k$ on the $k$-dimensional set $\tilde Z\ka$ on which they are concentrated. 
Since the definition of ``good points'', i.e. of the regular transport set $\mathcal R \theta_\psi$, is definitely more complicated than in the strictly convex case, it is perfectly understandable that the proof of the absolute continuity w.r.t. to the Hausdorff measure on the $k$-dimensional sets $\{\tilde Z\ka\}_{k,\a}$ of the conditional probabilities of the disintegration of $\mathcal H^d$ (and then of $\hat\mu$) are considerably more intricate. The main reference for the approach used in this part is \cite{CarDan}, where the so-called \emph{cone approximation property} introduced in \cite{BiaGlo} (with the terminology used in \cite{Dan:Phd}) was first generalized to partitions into higher dimensional sets, showing the absolute continuity property for the conditional probabilities of the disintegration of the surface measure on the graph of a convex function w.r.t. the partition induced by the relative interior of the extremal faces. In particular, in Section \ref{S_disintechnique} it is shown that the differential partition satisfies both the \emph{forward} and the \emph{backward cone approximation property}, namely the cone approximation property holds both for the optimal rays starting at a point $z$ and for the points arriving at $z$, thus giving that the conditional measures of $\mathcal H^d$ are indeed equivalent to the $k$-dimensional Hausdorff measure on the set on which they are concentrated. 

As for the proof of the $\mathcal H^d$-negligibility of the set $\R^{d+1} \setminus \mathcal R\theta_\psi$, since for general convex norms the extremal faces may be more than the exposed ones, it is not possible to use the same reasoning as in the strictly convex case.
However, we will show that the set $\R^{d+1} \setminus \mathcal R\theta_\psi$ is made of \emph{initial/final points} for two other partitions (the \emph{super/subdifferential partitions} introduced in \ref{Ss_regu_resi_set}), which satisfy the (initial/final) \emph{forward/backward cone approximation property}. Hence, the same disintegration technique used in \cite{CarDan} permits to show that they are $\mathcal H^d\llcorner\Graph\,\psi$-negligible (see Theorem \ref{T_FC_no_initial}).

Denoting with $\{Z\ka,C\ka\}_{\nfrac{k=1,\dots,d}{\a \in \A^k}}$ the projection of the differential partition $\{\tilde Z\ka,\tilde C\ka\}_{\nfrac{k=1,\dots,d}{\a \in \A^k}}$ on $\R^d$, in Section \ref{S_theorem_1_proof} we deduce the following theorem. The statement includes also the points which do not belong to any optimal ray, and in that case the dimension $k$ of the elements of the directed locally affine partition they belong to, is $k=0$, as well as $C^0_\a = \{0\}$. 
Since we will often write the graph of a directed locally affine partition $\{Z\ka,C\ka\}_{k,\a}$ as
\[
\bD:=\Big\{(k,\a,z,C\ka):\,k\in\{0,\dots,d\},\,\a\in\A^k,\,z\in Z\ka\Big\},
\]
we will use also the notation
\begin{equation}
\label{E_cost_all_cone_intro}
\mathtt c_\mathbf D(x,y) :=
\begin{cases}
\ind_{C\ka}(y-x) & \text{if} \ \exists\, k,\a \ \text{s.t.} \ x \in Z\ka, \crcr
+\infty & \text{otherwise}.
\end{cases}
\end{equation}
Notice that for costs $\mathtt c$ of the form \eqref{E_cost_all_cone_intro}, one has clearly $\Pi^\mathrm{opt}_\mathtt c(\mu,\nu) = \Pi^f_\mathtt c(\mu,\nu)$, since the only values of $\mathtt c$ are $0$, $\infty$.
\begin{theorem}
\label{T_1}
Let $\mu,\nu \in \PP(\R^d)$ with $\mu \ll \LL$ and let $\d{\cdot}$ be a convex norm in $\R^d$. Then there exists a locally affine directed partition $\{Z^k_\a,C^k_\a\}_{\overset{k=0,\dots,d}{\a\in\A^k}}$ in $\R^d$ with the following properties:
\begin{enumerate}
\item \label{Point_1_T_1} for all $\a \in \A^k$ the cone $C^k_\a$ is a $k$-dimensional extremal face of $\d{\cdot}$;
\item \label{Point_2_T_1} $\displaystyle{\LL \biggl( \R^d \setminus \underset{k,\a}{\bigcup}\; Z^k_\a \biggr)=0}$;
\item \label{Point_3_T_1} the disintegration of $\LL$ w.r.t. the partition $\{Z\ka\}_{k,\a}$, $\displaystyle{\LL \llcorner_{\underset{k,\a}{\cup} Z\ka} = \int v^k_\a\,d\eta(k,\a)}$, satisfies
\[
v^k_a \simeq \HH^k \llcorner_{Z^k_\a};
\]
\item \label{Point_4_T_1} for all $\pi \in \pod(\mu,\nu)$, the disintegration $\displaystyle{\pi = \int \pi\ka\, dm(k,\a)}$ w.r.t. the partition $\{Z\ka \times \R^d\}_{k,\a}$ satisfies
\[
\pi^k_\a \in \Pi^ f_{\mathtt c_{C^k_\a}} \big( \mu\ka, (\mathtt p_2)_\# \pi\ka \big),
\]
where $\displaystyle{\mu = \int \mu\ka\,dm(k,\a)}$ is the disintegration w.r.t. the partition $\{Z\ka\}_{k,\a}$, and moreover
\[
(\mathtt p_2)_\#\pi\ka \biggl( Z\ka \cup \biggl( \R^d \setminus \underset{(k',\a') \not= (k,\a)}{\bigcup} Z^{k'}_{\a'} \biggr) \biggr) = 1.
\]
\end{enumerate}

If also $\nu \ll \LL$, then for all $\pi\in\pod(\mu,\nu)$
\[
(\mathtt p_2)_\#\pi\ka = \nu\ka
\]
where $\displaystyle{\nu = \int \nu\ka \,dm(k,\a)}$ is the disintegration w.r.t. the partition $\{Z\ka\}_{k,\a}$, and the converse of Point \eqref{Point_4_T_1} holds:
\begin{equation*}
\pi\ka \in \Pi^ f_{\mathtt c_{C\ka}}(\mu\ka,\nu\ka) \quad \Longrightarrow \quad \pi \in \pod(\mu,\nu).
\end{equation*}
\end{theorem}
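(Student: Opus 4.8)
The plan is to reduce Theorem \ref{T_1} to the corresponding statement on the lifted problem in $\R^{d+1}$, where a directed locally affine partition $\{\tilde Z\ka,\tilde C\ka\}_{k,\a}$ with the desired absolute-continuity and decomposition properties is produced by the machinery developed in the body of the paper (the differential partition of $\Graph\,\psi$, its cone approximation properties, and the $\mathcal H^d$-negligibility of $\R^{d+1}\setminus\mathcal R\theta_\psi$), and then to transfer everything back via the bi-Lipschitz lifting map $\mathtt I:=\Id\times\psi$. Concretely, I would first fix a Kantorovich potential $\psi$, set $\hat\mu=\mathtt I_\#\mu$, $\hat\nu=\mathtt I_\#\nu$, and recall from \eqref{0_lifting1}--\eqref{0_lifting2} that $\pi\in\pod(\mu,\nu)$ iff $\hat\pi:=(\mathtt I\times\mathtt I)_\#\pi$ is concentrated on $\partial^+\Graph\,\psi$, i.e. $\hat\pi\in\Pi^f_{\mathtt c_{\epi\d{\cdot}}}(\hat\mu,\hat\nu)$. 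I would then invoke Theorem \ref{T_final} (and the results of Sections \ref{S_disintechnique}, \ref{S_foliations}) to obtain the differential partition $\{\tilde Z\ka,\tilde C\ka\}_{k,\a}$ of $\R^{d+1}$ (augmented with the $0$-dimensional pieces carrying the points of $\Graph\,\psi$ lying on no optimal ray, for which $\tilde C^0_\a=\{0\}$), with the properties: $\tilde C\ka$ is a $k$-dimensional extremal face of $\epi\d{\cdot}$; $\mathcal H^d\bigl(\Graph\,\psi\setminus\bigcup_{k,\a}\tilde Z\ka\bigr)=0$ by Theorem \ref{T_FC_no_initial}; and the disintegration of $\mathcal H^d\llcorner\Graph\,\psi$ on $\{\tilde Z\ka\}_{k,\a}$ has conditional measures $\simeq\mathcal H^k\llcorner\tilde Z\ka$ by the forward/backward cone approximation property.

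Next I would project: set $Z\ka:=\mathtt p_{\R^d}\tilde Z\ka$ and $C\ka:=\mathtt p_{\R^d}\tilde C\ka$. Since $\mathtt I$ restricted to $\R^d$ is bi-Lipschitz onto $\Graph\,\psi$ and $\mathtt p_{\R^d}$ is its inverse there, the $\{Z\ka\}_{k,\a}$ form a Borel partition of (a full-measure subset of) $\R^d$ into relatively open $k$-dimensional locally affine sets, and $C\ka$ is a $k$-dimensional extremal face of $\d{\cdot}$ because the extremal faces of $\d{\cdot}$ are by definition the $\R^d$-projections of those of $\epi\d{\cdot}$ and $\mathtt p_{\R^d}$ does not collapse dimension on $\epi\d{\cdot}$ (the vertical direction is not in $\epi\d{\cdot}$). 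This gives Point \eqref{Point_1_T_1}. For Point \eqref{Point_2_T_1}, push the $\mathcal H^d$-negligible set $\Graph\,\psi\setminus\bigcup\tilde Z\ka$ through the Lipschitz map $\mathtt p_{\R^d}$ to get an $\mathcal H^d$-, hence $\LL$-, hence $\mu$-negligible leftover in $\R^d$; the $k=0$ pieces are handled by the $\mathcal H^d$-negligibility statement as well. For Point \eqref{Point_3_T_1}, I would transport the disintegration of $\mathcal H^d\llcorner\Graph\,\psi$ via $\mathtt I^{-1}=\mathtt p_{\R^d}|_{\Graph\,\psi}$: since $\mathtt I$ is bi-Lipschitz it maps $\mathcal H^k$ to a measure mutually absolutely continuous with $\mathcal H^k$ on each $k$-plane (the Jacobian is bounded above and below), so the conditional measures $v\ka$ of $\LL$ on $\{Z\ka\}_{k,\a}$ inherit $v\ka\simeq\mathcal H^k\llcorner Z\ka$; one checks that disintegration commutes with the bi-Lipschitz change of variables and with the passage from $\mathcal H^d\llcorner\Graph\,\psi$ to $\LL$ via the area formula (again with bounded Jacobian), which is a routine but slightly technical verification using the uniqueness of strongly consistent disintegrations (Definition \ref{D_dis}).

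For Point \eqref{Point_4_T_1} I would use the characterization \eqref{0_opt_char}-type decomposition at the level of $\R^{d+1}$: given $\pi\in\pod(\mu,\nu)$, $\hat\pi$ is concentrated on $\partial^+\Graph\,\psi$, and by the transitivity and completeness properties \eqref{0_trans}, \eqref{0_compl1} together with the definition of $\mathcal R\theta_\psi$, for $\hat\mu$-a.e. $z\in\tilde Z\ka$ one has $\partial^+\Graph\,\psi(z)\subset z+\tilde C\ka$; disintegrating $\hat\pi$ on $\{\tilde Z\ka\times\R^{d+1}\}_{k,\a}$ gives $\hat\pi\ka$ concentrated on $\tilde Z\ka\times(\tilde Z\ka+\tilde C\ka)$, hence $\hat\pi\ka\in\Pi^f_{\mathtt c_{\tilde C\ka}}(\hat\mu\ka,(\mathtt p_2)_\#\hat\pi\ka)$, and moreover the second marginal sends mass either into $\tilde Z\ka$ itself or into $\Graph\,\psi\setminus\bigcup_{(k',\a')}\tilde Z^{k'}_{\a'}$ (the points on no ray, which are $\mathcal H^d$-null but not necessarily $\hat\nu$-null). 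Projecting back by $\mathtt p_{\R^d}$ gives exactly the stated decomposition of $\pi$ with $\pi\ka\in\Pi^f_{\mathtt c_{C\ka}}(\mu\ka,(\mathtt p_2)_\#\pi\ka)$ and the second-marginal concentration property; here one uses that disintegration is compatible with the bijective projection and that $\mathtt c_{\tilde C\ka}$ and $\mathtt c_{C\ka}$ correspond under $\mathtt p_{\R^d}$ since $y-x\in\tilde C\ka\Leftrightarrow\mathtt p_{\R^d}y-\mathtt p_{\R^d}x\in C\ka$ for points on $\Graph\,\psi$ with the same potential slope. Finally, under the extra hypothesis $\nu\ll\LL$: by Point \eqref{Point_2_T_1} applied with the roles traced through the subdifferential partition, $\nu$ is also concentrated on $\bigcup_{k,\a}Z\ka$, so the "leftover" alternative for the second marginal is $\nu$-null, forcing $(\mathtt p_2)_\#\pi\ka=\nu\ka$; and the converse implication follows because if $\pi\ka\in\Pi^f_{\mathtt c_{C\ka}}(\mu\ka,\nu\ka)$ then $\int\pi\ka\,dm(k,\a)$ has finite $\mathtt c_\mathbf D$-cost, and $\mathtt c_\mathbf D\le\mathtt c_{\epi\d{\cdot}}\circ(\mathtt I\times\mathtt I)$ on $\bigcup Z\ka$ in the sense that finiteness of the former forces $\hat\pi$ onto $\partial^+\Graph\,\psi$, which by \eqref{0_lifting2} is optimality.

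\medskip

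The main obstacle I expect is not the projection bookkeeping but everything that has been black-boxed: proving that the differential partition $\{\tilde Z\ka,\tilde C\ka\}$ is genuinely a Borel locally affine partition with $k$-dimensional extremal-face directions (the well-posedness of the assignment $z\mapsto\mathrm{aff}\,\partial^+\Graph\,\psi(z)$ on $\mathcal R\theta_\psi$), that $\mathcal H^d\bigl(\R^{d+1}\setminus\mathcal R\theta_\psi\bigr)=0$, and that the forward and backward cone approximation properties hold so that the conditional measures of $\mathcal H^d$ are equivalent to $\mathcal H^k$ on the pieces — i.e. Theorems \ref{T_final}, \ref{T_FC_no_initial} and the disintegration technique of Section \ref{S_disintechnique}. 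Relative to those, the proof of Theorem \ref{T_1} proper is essentially a clean transfer argument, whose only genuinely delicate point is checking that strongly consistent disintegrations behave well under the bi-Lipschitz lift and under replacing $\mathcal H^d\llcorner\Graph\,\psi$ by $\LL$ via the area formula, and that the second-marginal alternative in Point \eqref{Point_4_T_1} is correctly matched to the (possibly non-null for $\nu$) set of points lying on no optimal ray.
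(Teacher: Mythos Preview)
Your approach is essentially the paper's own: lift via $\mathtt I=\Id\times\psi$, build the differential partition of $\Graph\,\psi$ from the results of Sections~\ref{S_foliations} and~\ref{S_disintechnique}, and project back. One correction: you cite Theorem~\ref{T_final}, but that theorem is proved \emph{using} Theorem~\ref{T_1}, so invoking it here would be circular; the correct inputs are Theorem~\ref{T_partition_E+-} and Corollary~\ref{C_v} for the existence of the differential partition, Theorem~\ref{T_cone_graph} for the cone approximation property of a single $\mathtt c_{\epi\d{\cdot}}$-Lipschitz graph, and then Theorems~\ref{T_FC_no_initial} and~\ref{T_one_d_slicing_FC}. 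Also, the leftover set $\Graph\,\psi\setminus\bigcup_{k,\a}\tilde Z\ka$ is not the set of ``points on no ray'' (those are your $0$-dimensional pieces $\mathcal F\theta_\psi$) but the residual set $\mathcal N\theta_\psi=\mathcal I^+\theta_\psi\cup\mathcal E^-\theta_\psi$; this does not affect the argument, since it is precisely this set whose $\mathcal H^d$-negligibility is established via Theorem~\ref{T_FC_no_initial}.
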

A locally affine directed partition satisfying Point \eqref{Point_3_T_1} is called \emph{Lebesgue-regular} (see Definition \ref{D_disint_regular}). This concludes the first part of the paper.

A remark is in order here: in Point \eqref{Point_4_T_1}, the conditional second marginals $(\mathtt p_2)_\# \pi\ka$ are independent on the potential $\psi$ but \emph{depend} on the particular transference plan $\pi$ which we are decomposing. This can be seen with elementary examples (see Example \ref{Ex_2ndmarg} in Section \ref{Ss_partitions_intro}). From now on the analysis will be done in a class of transference plans which have the same conditional second marginals: in fact, we will see in a moment that the partition of Theorem \ref{T_1} needs to be refined and by inspection one sees that such refinement changes when changing the conditional marginals.
We will consider then nonempty subsets of the optimal plans of the form
\[
\Pi^f_{\mathtt c_{\mathbf D}}(\mu,\{\bar\nu_\a\}):=\Big\{\pi\in\Pi^f_{\mathtt c_{\mathbf D}}(\mu,\nu):\, (\mathtt p_2)_\#\pi_\a=\bar\nu_\a\Big\},
\]
that is equivalent to fix a transport plan of finite $\mathtt c_{\mathbf D}$-cost $\check\pi$ and consider all transport plans $\pi\in\Pi^f_{\mathtt c_{\mathbf D}}(\mu,\{(\mathtt p_2)_\#\check \pi_\a\})$.

In the strictly convex case, Theorem \ref{T_1} has been proven in \cite{Car:strictly}. There the dimensions of the sets of the locally affine partition is equal to one, and it is classical and fairly easy to see that the optimal transportation problems
\[
 \Pi^{\mathrm{opt}}_{\mathtt c^1_{\a,2}}(\mu^1_\a,\nu^1_\a),\quad\mu^1_\a(Z^1_\a)=1,
\]
where 
\[
 \mathtt c^1_{\a,2}(x,y)= \begin{cases}
                          |y-x|^2 & \text{if $x\in Z^1_\a$, $y-x\in C^1_\a$}, \\
                          +\infty & \text{otherwise},                         
                          \end{cases}
\]
have a solution induced by a map $\mathtt T^1_\a:Z^1_\a\to\R^d$. More precisely, one shows that any $\mathtt c^1_{\a,2}$-cyclically monotone transference plan is induced by a unique transport map $\mathtt T^1_\a$. Since the dependence of the maps $\mathtt T^1_\a$  on $\a$ is $m$-measurable, the map $\mathtt T(x):=\sum_{\a\in\A}\mathtt T^1_\a(x)\chi_{Z_\a^1}(x)$ is an optimal map for \eqref{E_transpo_Sud}.
Actually, $\mathtt T$ is the unique optimal transport map relative to the cost
\begin{equation}
\label{0_c2}
 \mathtt c_{2}(x,y):= \begin{cases}
                       |y-x|^2 & \text{if $\mathtt c_{\mathbf D}(x,y)<+\infty$}, \\
		      +\infty & \text{otherwise}.
                      \end{cases}
\end{equation}

In the general convex case, the analogous way to solve \eqref{E_transpo_Sud} would be to prove that the optimal transportation problems on the sets of the partition of Theorem \ref{T_1}
\[
 \Pi^{\mathrm{opt}}_{\mathtt c^k_{\a,2}}(\mu^k_\a,\nu^k_\a),\quad\mu^k_\a(Z^k_\a)=1,
\]
where 
\[
 \mathtt c^k_{\a,2}(x,y)=\begin{cases} 
                          |y-x|^2 & \text{if $x\in Z^k_\a$, $y-x\in C^k_\a$},\\
                          +\infty & \text{otherwise},
                         \end{cases} 
                        \]
have a solution induced by a map $\mathtt T^k_\a:Z^k_\a\to\R^d$ whose graph is the support of any $\mathtt c^k_{\a,2}$-cyclically monotone transference plan, and then to glue together the maps $\mathtt T\ka$. 

This fact would be true, by classical results in optimal transportation, if there existed a pair of optimal potentials $\phi\ka$, $\psi\ka$ for the cost $\mathtt c_{C\ka}$. Recall that, for a cost $\mathtt c : \R^d \times \R^d \to [0,\infty]$, one calls optimal potentials a pair of functions $\phi$, $\psi$ s.t.
\begin{align*}
&\phi,\,\psi:\R^d\to[-\infty,+\infty), \quad \text{ $\phi$ $\mu$-measurable and $\psi$ $\nu$-measurable}, \\
&\phi(x)+\psi(y)\leq \mathtt c(x,y), \quad \forall\, x,y \in \R^d, \\
&\phi(x)+\psi(y)= \mathtt c(x,y), \quad \text{ $\pi$-a.e. for some $\pi\in \Pi(\mu,\nu)$}.
\end{align*}
Recall also that, if $\Gamma\subset \R^d\times \R^d$ is a carriage for $\pi$ and $(x_0,y_0) \in \Gamma$, then
\begin{align}
\label{E_general_phi_intro}
\phi(x) &:= \inf \bigg\{ \sum_{i=0}^I \mathtt c(x_{i+1},y_i) - \mathtt c(x_i,y_i):\, I \in \N,\, (x_i,y_i) \in \Gamma,\, x_{I+1} = x \bigg\},\\
\psi(x)&:=\mathtt c(x,y)-\phi(x)\
\end{align}
yield a pair $\phi$, $\psi$ of optimal potentials provided $\phi$ is $\mu$-a.e. finite. When $\mathtt c$ is a convex norm, then $\psi=-\phi$ is a Kantorovich potential. 

Indeed, by formula \eqref{E_general_phi_intro}, if $\exists\,\phi\ka,\, \psi\ka$ optimal potentials w.r.t. $\mathtt c_{C\ka}$ then there exist also $\phi^k_{\a,2}$, $\psi^k_{\a,2}$ optimal potentials for $\mathtt c^k_{\a,2}$ and it is then classical to show that any $\mathtt c^k_{\a,2}$-cyclically monotone transference plan is unique and induced by an optimal map $T\ka$. 

However, as shown in \cite{Car1}, in general the transport problem in $\Pi^f_{\mathtt c_{C\ka}}(\mu\ka,\{(\mathtt p_2)_\# \pi\ka\})$ on $Z\ka$ with cost $\mathtt c_{C\ka}$ does not have a potential $\phi\ka$ (see the final example of \cite{Car1}), thus the directed locally affine partition of Theorem \ref{T_1} is not refined enough to give immediately the existence of transport maps in each of the sets $Z\ka \times \R^d$. Another approach that has been used at this point to show the existence of an optimal map assuming the existence of a directed locally affine partition is the one adopted in \cite{JimSan:quadratic}, which though uses techniques similar to \cite{champdepasc:Monge}, and then is not really simplifying the problem in the spirit of Sudakov's strategy. 

What we show in the second part of the paper, more precisely in Section \ref{S_cfibr_cfol}, is that the directed locally affine partition of Theorem \ref{T_1} can be refined into another directed locally affine partition $\{\check Z\lb, \check C\lb\}_{\nfrac{\ell = 0,\dots,d}{\mathfrak b \in \mathfrak B^\ell}}$ such that, given a carriage of any $\mathtt c^k_{\a,2}$-cyclically monotone transference plan, a pair of optimal potentials $\check \phi\lb$, $\check \psi\lb$ can be constructed on each of its elements $\check Z\lb$.

In order to explain what we mean by a ``refinement'' of the partition of Theorem \ref{T_1}, referring to Section \ref{Ss_linear_pre_unique_transf} for wider motivations and more precise statements, let us consider formula \eqref{E_general_phi_intro}. The sequence of points
\[
(x_0,y_0), (x_1,y_0), (x_1,y_1), (x_2,y_1), \dots, (x_i,y_i), ( x_{i+1},y_i), (x_{i+1},y_{i+1}), \dots, (x,y_I), \qquad (x_i,y_i) \in \Gamma,
\]
is an \emph{axial path}, and we say that the axial path is a \emph{$(\Gamma,\mathtt c)$-axial path} if $\mathtt c(x,y) < \infty$ for all couples $(x,y)$ in the axial path: since we can assume that $\Gamma \subset \{\mathtt c < \infty\}$, this condition is equivalent to $\mathtt c(x_{i+1},y_i), \mathtt c(x,y_I) < \infty$. It is a well know fact that if $\mu$-a.a. points belong to an axial path starting from and ending in $(x_0,y_0)$ (which will be called a \emph{$(\Gamma, \mathtt c)$-cycle}), then formula \eqref{E_general_phi_intro} yields a $\mu$-a.e. finite potential $\phi$. Its dual $\psi$ turns out then to be finite and independent on $x$ for $\nu$-a.e. $y \in \R^d$.

It becomes then natural to ask for a directed locally affine partition $\{Z\ka,C\ka\}_{k,\a}$ that, in addition to \eqref{Point_1_T_1}, \eqref{Point_2_T_1}, \eqref{Point_3_T_1} and \eqref{Point_4_T_1} of Theorem \ref{T_1} for all $\pi\in\Pi^f_{\mathtt c_{\mathbf D}}(\mu,\{\bar\nu_\a\})$, it satisfies the following property. For all carriages $\Gamma\subset\{\mathtt c_{\mathbf D}<+\infty\}$ s.t. $\pi(\Gamma)=1$ for some $\pi\in\Pi^f_{\mathtt c_{\mathbf D}}(\mu,\{\bar\nu_\a\})$, the sets $Z\ka$ are contained in a $(\Gamma,\mathtt c_{C\ka})$-cycle up to a $\mu\ka$-negligible set (eventually depending on $\Gamma$): the cost in each $Z\ka$ is the \emph{cone cost} given by $\mathtt c_{C\ka}(x,y) = \ind_{C\ka}(y-x)$. 

This cyclical connectedness condition is called in this paper \emph{$\Pi^f_{\mathtt c_{\mathbf D}}(\mu,\{\bar\nu_\a\})$-cyclical connectedness} (see Definition \ref{D_pimunuconn}) and, as discussed above, when verified it guarantees the existence of optimal potentials.

The second main result of this paper claims the existence of such a partition. The fact that it is a refinement of an already existing locally affine partition, such as the one of Theorem \ref{T_1}, namely that each of its sets is contained in some $Z\ka$
and the corresponding cone of directions is an extremal face of the cone $C\ka$, is expressed by saying that it is a \emph{subpartition} of $\{Z\ka, \,C\ka\}$ (see Definition \ref{D_dir_subpart}). Recall Definition \ref{D_disint_regular} of Lebesgue-regular partition.

\begin{theorem}
\label{T_subpart_final}
Let $\{Z\ka, C\ka\}_{\nfrac{k = 0,\dots,d}{\mathfrak a \in \mathfrak A^k}}$ be a \emph{Lebesgue-regular} directed locally affine partition in $\R^d$ and let $\mu\ll\LL$, $\nu\in\mathcal P(\R^d)$ such that $\Pi^f_{\mathtt c_{\mathbf D}}(\mu,\nu)\neq\emptyset$.

Then, for all $\check \pi\in\Pi^f_{\mathtt c_{\mathbf D}}(\mu,\nu)$ there exists a directed locally affine subpartition $\{\check Z\lb, \check C\lb\}_{\nfrac{\ell = 0,\dots,d}{\mathfrak b \in \mathfrak B^\ell}}$ of $\{Z\ka, C\ka\}_{\nfrac{k = 0,\dots,d}{\mathfrak a \in \mathfrak A^k}}$, up to a $\mu$-negligible set $N'_{\check \pi}$, such that
\begin{equation*}
\{\check Z\lb, \check C\lb\}_{\ell,\b}\quad\text{is Lebesgue-regular},
\end{equation*}
and if $\bar\nu\lb:=(\mathtt p_2)_\#\check \pi\lb$, where $\check \pi\lb$ is the conditional probability on the partition $\{\check Z^{',\ell}_\b \times \R^d\}_{\ell,\b}$, then each set $\check Z\lb$ is $\Pi^f_{\mathtt c_{\mathbf D}}(\mu,\{\bar\nu\lb\})$-cyclically connected, for all $\ell$, $\b$.
\end{theorem}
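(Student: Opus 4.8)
The plan is to construct the refined partition iteratively along the affine dimension $k$, starting from the sets of top dimension $k=d$ and decreasing to $k=0$, combining at each level a ``decomposition into cyclically connected components'' with the main technical input of the first part of the paper, namely the fact that the differential partition of a suitable $\mathtt c_{\C}$-Lipschitz foliation is Lebesgue-regular. I would first treat one fixed set $Z\ka$ of the given Lebesgue-regular partition, with cone $C\ka$, its conditional measure $\mu\ka \ll \HH^k\llcorner Z\ka$, and the marginal $\bar\nu\ka = (\mathtt p_2)_\#\check\pi\ka$. On $Z\ka$ the relevant cost is the cone cost $\mathtt c_{C\ka}$; I would fix a carriage $\Gamma\ka\subset Z\ka\times\R^d$ of $\check\pi\ka$ with $\mathtt c_{C\ka}<\infty$ on it, and define an equivalence-type relation: $x\sim x'$ iff $x$ and $x'$ lie on a common $(\Gamma\ka,\mathtt c_{C\ka})$-cycle (after adding also moves within $\Gamma\ka$ in both directions, so that this is genuinely an equivalence relation modulo a negligible set). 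The classes of this relation, intersected with their affine hulls, will be candidate sets for the subpartition; I would need to check Borel measurability of this decomposition (this is where the ``axial path'' machinery of Section~\ref{Ss_linear_pre_unique_transf} and standard selection arguments enter) and that $\mu\ka$-a.e.\ point lies in some class.

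The geometric heart of the argument is to show that each such cyclically connected class is again locally affine of some dimension $\ell\le k$ with a direction cone $\check C\lb$ that is an extremal face of $C\ka$, and — crucially — that the induced partition is still Lebesgue-regular, i.e.\ the conditional measures of $\HH^k\llcorner Z\ka$ (hence of $\mu\ka$) on these classes are equivalent to $\HH^\ell$ on them. Here I would reuse the construction of the first part: a $(\Gamma\ka,\mathtt c_{C\ka})$-cyclically connected class behaves like the regular transport set $\mathcal R\theta$ of a $\mathtt c_{\C}$-Lipschitz foliation, because moving along axial paths with cone cost $\mathtt c_{C\ka}$ generates, via the local potentials of formula \eqref{E_general_phi_intro}, a family of $|\cdot|_{D(\a)^*}$-Lipschitz functions whose superdifferentials encode exactly the admissible directions; the transitivity and completeness properties \eqref{0_trans}, \eqref{0_compl1}, \eqref{0_compl2} then force these classes to be relatively open subsets of affine planes with extremal-face direction cones. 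The absolute-continuity statement would follow by verifying that this sub-partition satisfies the forward and backward cone approximation properties and then quoting the disintegration technique of \cite{CarDan} / Section~\ref{S_disintechnique}, exactly as in the proof of Point~\eqref{Point_3_T_1} of Theorem~\ref{T_1}; the set of points failing to belong to any cyclically connected class of full-dimensional type plays the role of the initial/final points of the super/subdifferential partitions and is $\HH^k$-negligible by Theorem~\ref{T_FC_no_initial}.

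Next I would iterate: the negligible ``residual'' set together with the lower-dimensional classes produced at level $k$ are fed back into the same procedure at level $k-1$ (a class of dimension $\ell<k$ is itself a Lebesgue-regular locally affine set carrying a conditional measure absolutely continuous w.r.t.\ $\HH^\ell$, and the restriction of $\check\pi$ to it has finite cone cost for the smaller cone), and after at most $d+1$ steps the process terminates, the only remaining dimension being $k=0$ where cyclical connectedness is trivial ($C^0=\{0\}$, each point is its own class). Gluing over $k$ and $\a$ and over the countably many steps gives a directed locally affine subpartition $\{\check Z\lb,\check C\lb\}$ of $\{Z\ka,C\ka\}$ defined outside a $\mu$-negligible set $N'_{\check\pi}$; by construction each $\check Z\lb$ is $\Pi^f_{\mathtt c_{\mathbf D}}(\mu,\{\bar\nu\lb\})$-cyclically connected (for \emph{any} carriage of \emph{any} plan in that class — one must check the cyclical connectedness is independent of the particular $\Gamma$, which follows because two carriages of plans with the same conditional second marginals differ on a set negligible for the first marginal and axial paths can be rerouted through a full-measure common subset), and Lebesgue-regularity is preserved at every step, so the final partition is Lebesgue-regular.

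The main obstacle I expect is twofold and intertwined: first, the \emph{measurability and well-posedness} of the cyclically connected decomposition — showing that ``being joined by a $(\Gamma,\mathtt c_{C\ka})$-cycle'' yields a Borel partition into locally affine sets, uniformly enough that the disintegration makes sense and the choice of carriage does not matter modulo $\mu$-null sets; second, and harder, proving that \emph{Lebesgue-regularity is not destroyed} by this refinement. The danger is the same phenomenon that made the original Sudakov proof fail: a Borel partition of a set into lower-dimensional affine pieces can have purely atomic conditional measures. The only way I see to exclude this is to show the refined pieces inherit a cone approximation property from the foliation structure generated by the local Kantorovich potentials associated to axial paths, so that the machinery of \cite{CarDan} and Section~\ref{S_disintechnique} applies verbatim; making the analogy between cyclically connected classes and regular transport sets of $\mathtt c_{\C}$-Lipschitz foliations precise, and checking all the hypotheses of that machinery (in particular that the relevant direction fields are built from extremal faces and satisfy the completeness property), is where the real work lies.
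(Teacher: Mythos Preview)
Your iterative-in-dimension plan mirrors the paper's (it deduces Theorem~\ref{T_subpart_final} from Theorem~\ref{T_subpart_step} by iterating at most $d$ times), but two of your key steps contain genuine gaps that the paper has to work hard to close.

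\textbf{Carriage-independence.} Your claim that cyclical connectedness survives a change of carriage because ``two carriages \dots\ differ on a set negligible for the first marginal and axial paths can be rerouted'' is exactly what fails. The $(\Gamma,\mathtt c_{C\ka})$-cycle classes depend on $\Gamma$, and a different carriage $\Gamma'$ of another $\pi'\in\Pi^f_{\mathtt c_{C\ka}}(\mu\ka,\nu\ka)$ can produce a \emph{strictly finer or incomparable} partition; there is no a~priori way to reroute $\Gamma'$-axial paths through $\Gamma$. The paper resolves this not by a rerouting argument but by (i) replacing the raw cycle relation by an explicit Borel \emph{linear preorder} $\preccurlyeq_{\Gamma,\mathtt W^\Gamma}$ built from the sets $H_{\tg,n}$ (Theorem~\ref{T_order_gamma}), and then (ii) invoking the abstract minimality result of \cite{BiaCar} (Theorem~\ref{T_minimal_equival}, Corollary~\ref{C_constant_for_minimal_equivalence}) to select a \emph{minimal} equivalence relation in this family. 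Only for this minimal one are the $k$-dimensional classes $\Pi^f_{\mathtt c_{\C^k}}(\tilde\mu,\tilde\nu)$-cyclically connected for \emph{every} carriage of \emph{every} plan; see Remark~\ref{R_why_this_meth}. Without this, your construction depends on the fixed $\check\pi$ in a way that does not yield the stated conclusion.

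\textbf{Lebesgue-regularity of the refinement.} Your plan to prove regularity ``exactly as in the proof of Point~\eqref{Point_3_T_1} of Theorem~\ref{T_1}'' does not go through. In Theorem~\ref{T_1} there is a single global potential $\psi$, and the cone approximation property is obtained via formula~\eqref{E_ya_unique}. Here the classes of the foliation are level sets of a function $\theta$ with no global potential, and the paper shows in Remark~\ref{R_not_gener_potential} (the explicit counterexample) that one \emph{cannot} first disintegrate $\mathcal L^k$ on the Lipschitz-graph level sets and then argue graph-by-graph: the conditional measures on a foliation by $C^\infty$ graphs can be purely atomic. The paper's way out (Theorem~\ref{T_coneappr_fol}) uses crucially that the foliation is the equivalence relation of a Borel $\mathtt c_{\C}$-compatible linear preorder: this yields, via Theorem~\ref{T_A2}, \emph{uniqueness} of the transport plan between suitable section measures $\bar\mu,\bar\nu$, and the approximating cone vector fields are then shown to converge by weak compactness and this uniqueness (Step~2 and Step~3 of the proof of Theorem~\ref{T_coneappr_fol}). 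This mechanism is absent from your outline.

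A smaller but related point: the $(\Gamma,\mathtt c_{C\ka})$-cycle classes are not a~priori locally affine of some $\ell\le k$. What actually comes out of the linear-preorder construction (Proposition~\ref{P_fol_char}) is a dichotomy: each class is either a $k$-dimensional open set or a complete $\mathtt c_{\C(\a)}$-Lipschitz graph. The locally affine sets of dimension $\ell<k$ arise only \emph{after} one applies the differential partition (Corollary~\ref{C_v}) to the Lipschitz-graph part, not directly from the cycle relation.
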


Applying Theorem \ref{T_subpart_final} to the directed locally affine partition given by Theorem \ref{T_1}, one obtains immediately the following result. As in the case of Theorem \ref{T_1}, the second part of Point \eqref{Point_4_T_final} of the next theorem is a consequence of the precise analysis of the regions where the mass transport occurs.

\begin{theorem}
\label{T_final}
Let $\mu, \nu\in\PP(\R^d)$ with $\mu\ll\LL$ and let $\d{\cdot}$ be a convex norm in $\R^d$. Then, for all $\check \pi\in\Pi^{\mathrm{opt}}_{\d{\cdot}}(\mu,\nu)$ there exists a locally affine directed partition $\{\check Z^k_\a,\check C^k_\a\}_{\overset{k=0,\dots,d}{\a\in\A^k}}$ in $\R^d$ with the following properties:
\begin{enumerate}
\item for all $\a \in \A^k$ the cone $\check C^k_\a$ is a $k$-dimensional extremal face of $\d{\cdot}$; 
\item $\displaystyle{\mu \biggl(\R^d\setminus\underset{k,\a}{\bigcup}\; \check Z^k_\a \biggr) = 0}$; 
\item the partition is \emph{Lebesgue-regular};
\item \label{Point_4_T_final} the disintegration $\displaystyle{\check \pi = \int \check \pi\ka\, dm(k,\a)}$ w.r.t. the partition $\{\check Z\ka \times \R^d\}_{k,\a}$ satisfies
\[
\check \pi^k_\a \in \Pi^f_{\mathtt c_{C^k_\a}} \big( \check \mu\ka, (\mathtt p_2)_\# \check \pi\ka \big),
\]
where $\displaystyle{\mu = \int \check \mu\ka\,dm(k,\a)}$ is the disintegration w.r.t. the partition $\{\check Z\ka\}_{k,\a}$, and moreover
\[
(\mathtt p_2)_\#\check \pi\ka \biggl( \check Z\ka \cup \biggl( \R^d \setminus \underset{(k',\a') \not= (k,\a)}{\bigcup} \check Z^{k'}_{\a'} \biggr) \biggr) = 1;
\]
\item the partition 
$\{\check Z\ka\}_{k,\a}$ is $\Pi^f_{\mathtt c_{\bar{\mathbf D}}}(\mu,\{(\mathtt p_2)_\#\check \pi\ka\})$-cyclically connected. 
\end{enumerate}
\end{theorem}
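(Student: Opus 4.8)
The plan is to derive Theorem \ref{T_final} directly from Theorem \ref{T_1} and Theorem \ref{T_subpart_final}, so the real content is bookkeeping: checking that the partition produced by the refinement step inherits all the properties of the starting partition and adds the cyclical connectedness. Concretely, first I would fix $\check\pi \in \Pi^{\mathrm{opt}}_{\d\cdot}(\mu,\nu)$ and apply Theorem \ref{T_1} to obtain a Lebesgue-regular directed locally affine partition $\{Z\ka, C\ka\}_{k,\a}$ of $\R^d$ satisfying Points \eqref{Point_1_T_1}--\eqref{Point_4_T_1}. By Point \eqref{Point_4_T_1} the disintegration $\check\pi = \int \check\pi\ka\, dm(k,\a)$ relative to $\{Z\ka\times\R^d\}_{k,\a}$ has $\check\pi\ka \in \Pi^f_{\mathtt c_{C\ka}}(\mu\ka,(\mathtt p_2)_\#\check\pi\ka)$, so in particular $\check\pi \in \Pi^f_{\mathtt c_{\mathbf D}}(\mu,\nu)$ and the hypothesis $\Pi^f_{\mathtt c_{\mathbf D}}(\mu,\nu)\neq\emptyset$ of Theorem \ref{T_subpart_final} is met.

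Next I would invoke Theorem \ref{T_subpart_final} with this $\check\pi$ and this partition: it produces a directed locally affine subpartition $\{\check Z\lb, \check C\lb\}_{\ell,\b}$, defined up to a $\mu$-negligible set $N'_{\check\pi}$, which is Lebesgue-regular and such that each $\check Z\lb$ is $\Pi^f_{\mathtt c_{\mathbf D}}(\mu,\{\bar\nu\lb\})$-cyclically connected with $\bar\nu\lb = (\mathtt p_2)_\#\check\pi\lb$. Relabelling $\ell\mapsto k$, $\b\mapsto\a$, $\B^\ell\mapsto\A^k$ gives the partition claimed in the statement. Points (1)--(3) and (5) of Theorem \ref{T_final} are then essentially immediate: (1) holds because, by Definition \ref{D_dir_subpart} of subpartition, each $\check C\lb$ is an extremal face of $C\ka$, hence (being a face of a face) an extremal face of $\d\cdot$ of dimension $\ell$; (3) is the Lebesgue-regularity asserted by Theorem \ref{T_subpart_final}; (5) is the cyclical connectedness asserted by Theorem \ref{T_subpart_final}, noting that $\mathtt c_{\bar{\mathbf D}}$ is precisely the cone cost attached to the refined partition. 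For (2), since $\mu\ll\LL$ and Lebesgue-regularity together with Point \eqref{Point_2_T_1} of Theorem \ref{T_1} give that $\{Z\ka\}_{k,\a}$ covers $\mu$-a.a.\ of $\R^d$, and the subpartition covers each $Z\ka$ up to a $\mu\ka$-null set, one concludes $\mu(\R^d\setminus\bigcup_{k,\a}\check Z\ka) \le \mu(N'_{\check\pi}) + \mu(\R^d\setminus\bigcup Z\ka) = 0$, after integrating the fibrewise null sets against $m$.

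For Point \eqref{Point_4_T_final} I would disintegrate $\check\pi$ with respect to the finer partition $\{\check Z\ka\times\R^d\}_{k,\a}$. Because the refined partition is a subpartition of $\{Z\ka\}_{k,\a}$, this disintegration refines the coarser one, so each conditional $\check\pi\ka$ (finer index) is, up to normalization, a conditional of the corresponding $\check\pi^{k'}_{\a'}$ (coarser index) restricted to $\check Z\ka\times\R^d$; since the coarser conditional is concentrated on $\{y-x \in C^{k'}_{\a'}\}$ and carried by a $\mathtt c_{C^{k'}_{\a'}}$-finite set, and $\check C\ka \subset C^{k'}_{\a'}$ is the direction cone the subpartition attaches to $\check Z\ka$, one gets $\check\pi\ka \in \Pi^f_{\mathtt c_{C\ka}}(\check\mu\ka, (\mathtt p_2)_\#\check\pi\ka)$ exactly as stated — here one must check that the finer cone cost is still finite $\check\pi\ka$-a.e., which follows because the axial-path/cyclical-connectedness construction of Theorem \ref{T_subpart_final} only keeps directions in $\check C\ka$. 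The second, more delicate half of \eqref{Point_4_T_final}, namely
\[
(\mathtt p_2)_\#\check\pi\ka\biggl(\check Z\ka \cup \Bigl(\R^d\setminus\underset{(k',\a')\neq(k,\a)}{\bigcup}\check Z^{k'}_{\a'}\Bigr)\biggr)=1,
\]
is the part the paper flags as requiring ``the precise analysis of the regions where the mass transport occurs'': it says the image of $\check\pi\ka$ cannot land in a \emph{different} cell of the refined partition. I expect this to be the main obstacle in the deduction. The idea is to combine the analogous statement at the coarse level (Point \eqref{Point_4_T_1}, which confines $(\mathtt p_2)_\#\pi\ka$ to $Z\ka$ plus the residual set) with the geometric structure of the subpartition inside each $Z\ka$: within a fixed $Z\ka$ the mass either stays in $\check Z\ka$ or moves to a point not covered by the subpartition at all, because the sub-cells of dimension $\ell<k$ inside a $(\Gamma,\mathtt c_{C\ka})$-cyclically connected decomposition are separated in a way that forbids transport of finite cone cost between two distinct ones — this is precisely the content encoded in the construction behind Theorem \ref{T_subpart_final} and should be quotable from the statement that each $\check Z\ka$ is $\Pi^f_{\mathtt c_{\mathbf D}}(\mu,\{\bar\nu\ka\})$-cyclically connected together with the subpartition being defined via the carriage $\Gamma$ of a $\mathtt c^k_{\a,2}$-cyclically monotone plan. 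I would therefore spend the bulk of the argument making this separation precise, using the transitivity/completeness machinery only implicitly through the already-established Theorem \ref{T_subpart_final}, and otherwise keep the proof a short corollary-style deduction.
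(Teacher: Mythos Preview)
Your proposal is correct and follows exactly the paper's route: the paper states that Theorem \ref{T_final} is obtained ``immediately'' by applying Theorem \ref{T_subpart_final} to the partition produced by Theorem \ref{T_1}, and you have spelled out precisely this deduction. One small sharpening: the second half of Point \eqref{Point_4_T_final} does not really come from cyclical connectedness but from the geometric separation property \eqref{E_more_than_complet} satisfied by the differential subpartition (Proposition \ref{P_hat_bf_D_graph}) together with Proposition \ref{P_dispiani_2}, applied level-set by level-set inside each $Z\ka$; this is the ``precise analysis of the regions where the mass transport occurs'' the paper alludes to.
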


\begin{remark}
\label{R_remark_str_conv}
We note that the elements of the locally affine partition $\{\check Z^k_\a,\check C^k_\a\}_{\nfrac{k=1,\dots,d}{\a \in \A^k}}$ given by the above theorem have maximal linear dimension
\[
\max \big\{ k : \check Z^k_\a \not= \emptyset \big\} \leq \max \big\{ \dim C : C \text{ extremal face of } \epi \d{\cdot} \big\}.
\]
In particular, if $D$ is strictly convex, the locally affine decomposition is made only of directed rays, and one recovers the results of \cite{Car:strictly} for strictly convex norms.
\end{remark}

In the case $\nu \ll \LL$, the decomposition does not depends on the transference plan, as in the strictly convex case. In particular, we can say that it is universal, i.e. it is independent on the particular transference plan $\pi \in \Pi^\mathrm{opt}_{\d{\cdot}}(\mu,\nu)$  used.

\begin{theorem}
\label{T_final_nu}
Assume that $\nu\ll\LL$. Then the directed locally affine partition of Theorem \ref{T_final} satisfies the following properties:
\begin{enumerate}
\item for all $\a \in \A^k$ the cone $\check C^k_\a$ is a $k$-dimensional extremal face of $\d{\cdot}$;
\item[(2')] $\displaystyle{\mu \biggl(\R^d\setminus\underset{k,\a}{\bigcup}\; \check Z^k_\a \biggr) = \nu \biggl(\R^d\setminus\underset{k,\a}{\bigcup}\; \check Z^k_\a \biggr) = 0}$; 
\item the partition is \emph{Lebesgue-regular};
\item[(4')] for all $\check \pi \in \pod(\mu,\nu)$, the disintegration $\displaystyle{\check \pi = \int \check \pi\ka\, dm(k,\a)}$ w.r.t. the partition $\{\check Z\ka \times \R^d\}_{k,\a}$ satisfies
\[
\check \pi^k_\a \in \Pi^ f_{\mathtt c_{C^k_\a}}(\check \mu\ka, \check \nu\ka),
\]
where $\displaystyle{\mu = \int \check \mu\ka\,dm(k,\a)}$, $\displaystyle{\nu = \int \check \nu\ka\,dm(k,\a)}$ are the disintegration of $\mu$, $\nu$ w.r.t. the partition $\{\check Z\ka\}_{k,\a}$; 

\item[(5')] $\{\check Z\ka\}_{k,\a}$ is $\Pi^ f_{\mathtt c_{\bar{\mathbf D}}}(\mu,\nu)$-cyclically connected. 
\end{enumerate}
\end{theorem}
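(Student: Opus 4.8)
The plan is to derive Theorem \ref{T_final_nu} from Theorem \ref{T_final} by exploiting the extra rigidity that $\nu\ll\LL$ brings to the second marginals. The starting point is Theorem \ref{T_1}: when $\nu\ll\LL$ its last assertion tells us that for \emph{every} $\pi\in\pod(\mu,\nu)$ the conditional second marginals $(\mathtt p_2)_\#\pi\ka$ coincide with the fixed measures $\nu\ka$ obtained from the disintegration of $\nu$ with respect to the partition $\{Z\ka\}_{k,\a}$ of Theorem \ref{T_1}. In particular the class $\Pi^f_{\mathtt c_{\mathbf D}}(\mu,\{\bar\nu_\a\})$ no longer depends on a choice of reference plan $\check\pi$: it is just $\pod(\mu,\nu)$ itself, decomposed fibrewise. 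So the refinement produced by Theorem \ref{T_subpart_final}, although formally built from a given $\check\pi\in\Pi^{\mathrm{opt}}_{\d{\cdot}}(\mu,\nu)$, is in fact applied to a class of plans that is independent of $\check\pi$, which is exactly what ``universal'' means and what we must upgrade and track.

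First I would run the construction of Theorem \ref{T_final} with an arbitrary $\check\pi\in\pod(\mu,\nu)$, obtaining the subpartition $\{\check Z\ka,\check C\ka\}_{k,\a}$ together with properties (1), (2), (3), \eqref{Point_4_T_final}, (5) of that theorem. Properties (1) and (3) are already in the required form, so there is nothing to do there. For (2') I would use the bi-Lipschitz lifting $\Id\times\psi$ and Point \eqref{Point_2_T_1}, \eqref{Point_3_T_1} of Theorem \ref{T_1}: since $\mu\ll\LL$ and $\nu\ll\LL$, and since on each $\check Z\ka$ the transport occurs along $\check C\ka\subset C\ka$, the second-marginal statement $(\mathtt p_2)_\#\check\pi\ka(\check Z\ka\cup(\R^d\setminus\bigcup_{(k',\a')\neq(k,\a)}\check Z^{k'}_{\a'}))=1$ from \eqref{Point_4_T_final}, combined with $\nu\ll\LL$ and the Lebesgue-negligibility of $\R^d\setminus\bigcup\check Z\ka$ as a carrier for $\mu$, forces $(\mathtt p_2)_\#\check\pi\ka$ to be carried by $\check Z\ka$ itself for $m$-a.e.\ $(k,\a)$; integrating back recovers that $\nu$ is carried by $\bigcup_{k,\a}\check Z\ka$, giving (2'). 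Here one must check that the ``residual'' part $\R^d\setminus\bigcup\check Z\ka$, which is $\mu$-null, is also $\nu$-null — this is where absolute continuity of $\nu$ and the fact that the target mass of each fibre cannot escape its own $k$-plane (because of the cone constraint and Lebesgue-regularity) are used together.

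Next, (4') follows from (2') together with \eqref{Point_4_T_final}: once we know $(\mathtt p_2)_\#\check\pi\ka$ is carried by $\check Z\ka$, the disintegration consistency of $\nu=\int\nu\ka\,dm$ (with $\nu\ka$ the conditional of $\nu$ on $\{\check Z\ka\}_{k,\a}$) and the uniqueness of disintegrations give $(\mathtt p_2)_\#\check\pi\ka=\check\nu\ka$ for $m$-a.e.\ $(k,\a)$ and every $\check\pi\in\pod(\mu,\nu)$; then \eqref{Point_4_T_final} reads $\check\pi\ka\in\Pi^f_{\mathtt c_{C\ka}}(\check\mu\ka,\check\nu\ka)$, which is (4'). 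Finally (5') is just (5) of Theorem \ref{T_final} rewritten in the light of (4'): since $\bar\nu\ka=(\mathtt p_2)_\#\check\pi\ka=\check\nu\ka$ no longer depends on $\check\pi$, the class $\Pi^f_{\mathtt c_{\bar{\mathbf D}}}(\mu,\{(\mathtt p_2)_\#\check\pi\ka\})$ equals $\Pi^f_{\mathtt c_{\bar{\mathbf D}}}(\mu,\nu)$, so the $\Pi^f_{\mathtt c_{\bar{\mathbf D}}}(\mu,\{(\mathtt p_2)_\#\check\pi\ka\})$-cyclical connectedness of each $\check Z\ka$ becomes $\Pi^f_{\mathtt c_{\bar{\mathbf D}}}(\mu,\nu)$-cyclical connectedness. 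The same collapse $\bar\nu\ka=\check\nu\ka$ shows the partition itself does not depend on the chosen $\check\pi$ (modulo $\mu$-null sets), yielding the universality asserted before the theorem.

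The main obstacle I expect is precisely the ``escape of mass'' bookkeeping in (2') and (4'): one has to be careful that the second marginal $(\mathtt p_2)_\#\check\pi\ka$ of a fibre over $\check Z\ka$, which a priori can place mass on points lying on \emph{other} elements $\check Z^{k'}_{\a'}$ (the alternative allowed by \eqref{Point_4_T_final} is only $\R^d\setminus\bigcup_{(k',\a')\neq(k,\a)}\check Z^{k'}_{\a'}$), does not in fact do so once $\nu\ll\LL$ — otherwise the disintegration of $\nu$ would not be consistent with $\{\check Z\ka\}_{k,\a}$. The clean way to handle this is to argue at the level of the lifted problem $\Pi^f_{\mathtt c_{\epi\d{\cdot}}}(\hat\mu,\hat\nu)$ on $\Graph\,\psi$, where $\hat\nu\ll\HH^d\llcorner\Graph\,\psi$ by the bi-Lipschitz property, apply the Lebesgue-regularity of the lifted subpartition $\{\tilde Z\ka\}_{k,\a}$ to conclude $\HH^d$-a.e.\ point of $\Graph\,\psi$ lies in exactly one $\tilde Z\ka$, hence $\hat\nu$-a.e.\ point does too, and then project down. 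Once this measure-concentration fact is in place, everything else is a matter of rewriting Theorem \ref{T_final} with the now-canonical marginals.
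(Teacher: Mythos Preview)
Your proposal is correct and follows essentially the same route as the paper's (very brief) proof in Section~\ref{Ss_proof_T_final}: the paper invokes Corollary~\ref{C_infinnegl} and Theorem~\ref{T_FC_no_initial} to obtain $\mathcal L^d$-negligibility of the complement of $\bigcup_{k,\a}\check Z\ka$ (hence (2') from $\nu\ll\mathcal L^d$), then Corollary~\ref{C_transp_graph} for $(\mathtt p_2)_\#\check\pi^\ell_\b(\check Z^\ell_\b)=1$, and concludes by uniqueness of strongly consistent disintegrations. Your ``escape of mass'' discussion and the lifting to $\Graph\,\psi$ are precisely what those cited results encode; one small clarification is that what guarantees $\mathcal H^d$-a.e.\ coverage by the lifted subpartition is the negligibility of initial/final points (Theorem~\ref{T_FC_no_initial}), not Lebesgue-regularity per se.
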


In particular $\displaystyle{\check \pi \biggl( \underset{k,\a}{\bigcup} \check Z\ka \times \check Z\ka \bigg)=1}$.

The main step in the proof of Theorem \ref{T_subpart_final} is the following 

\begin{theorem}
\label{T_subpart_step}
Let $\{Z\ka, C\ka\}_{\nfrac{k = 0,\dots,d}{\mathfrak a \in \mathfrak A^k}}$ be a Lebesgue-regular directed locally affine partition in $\R^d$ and let $\mu$, $\nu$ be probability measures in $\mathcal P(\R^d)$ such that $\mu\ll\LL$ and $\Pi^f_{\mathtt c_{\mathbf D}}(\mu,\nu)\neq\emptyset$.
Then, for all fixed $\check \pi \in \Pi^ f_{\mathtt c_{\mathbf D}}(\mu,\nu)$, there exists a directed locally affine subpartition $\{\check Z^\ell_{\b}, \check C^\ell_{\b}\}_{\nfrac{\ell = 0,\dots,d}{\mathfrak b \in \mathfrak B^\ell}}$ of $\{Z\ka, C\ka\}_{\nfrac{k = 0,\dots,d}{\mathfrak a \in \mathfrak A^k}}$, up to a $\mu$-negligible set $N_{\check \pi}$, such that 
\begin{equation*}
\big\{ \check Z^\ell_\b,\check C^\ell_\b \big\}_{\ell,\b} \quad \text{is Lebesgue-regular},
\end{equation*}
and setting $\check \nu^\ell_\b := (\mathtt p_2)_\#\check \pi^\ell_\b$, where $\check \pi^\ell_\b$ is the conditional probability on the partition $\{\check Z^{\ell}_\b \times \R^d\}_{\ell,\b}$, then the sets
\begin{equation}
\label{E_subp_ell_cycl_intro}
\Bigl\{ \check Z^\ell_\b:\,\check Z^\ell_\b\subset Z^\ell_\a\text{ for some $\a\in\A^\ell$}, \ell=1,\dots,d \Bigr\} 
\end{equation}
form a $\Pi^ f_{\mathtt c_{\mathbf D}}(\mu,\{\check \nu^\ell_\b\})$-cyclically connected partition.
\end{theorem}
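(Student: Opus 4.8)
The plan is to start from the given Lebesgue-regular directed locally affine partition $\{Z\ka,C\ka\}$ and the fixed plan $\check\pi\in\Pi^f_{\mathtt c_{\mathbf D}}(\mu,\nu)$, and to decompose each fixed $Z\ka$ into its $\Pi^f_{\mathtt c_{\mathbf D}}(\mu,\{\bar\nu\ka\})$-cyclically connected components by means of the equivalence relation generated by $(\Gamma,\mathtt c_{C\ka})$-axial paths, where $\Gamma$ is a carriage of $\check\pi$. Concretely, I would work inside a single $Z\ka$ (restricting $\mu$, $\nu$ and $\check\pi$ via the disintegration), introduce the relation $x\sim x'$ iff both $x$ and $x'$ lie on a common $(\Gamma,\mathtt c_{C\ka})$-cycle, and verify it is an equivalence relation with analytic graph, so that the induced partition is measurable and the disintegration of $\mu\ka$ on its classes is well defined. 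This step essentially re-runs, at the level of a single affine sheet $Z\ka$ with cone cost $\mathtt c_{C\ka}$, the cyclical-connectedness machinery of Definition \ref{D_pimunuconn}; the delicate point is to build, on each cyclically connected piece, a set of optimal potentials via formula \eqref{E_general_phi_intro} and to use the structure of $C\ka$ (an extremal face, hence a convex cone) to organize the axial paths.

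The core geometric content is the claim that, after this decomposition, each piece that is \emph{not} cyclically connected in its current dimension $k$ is actually supported, up to a $\mu$-null set, on a proper extremal subface of $C\ka$ and on a lower-dimensional affine subset. This is where one has to show that the obstruction to cyclical connectedness is precisely a drop in dimension: if on $Z\ka$ the plan $\check\pi\ka$ does not cyclically connect the whole sheet, then one can find a Kantorovich-type potential for the cone cost on a subset, the directions of transport used there span a proper subspace, and so the mass concentrates on a union of translates of a lower-dimensional face $\check C\lb\subsetneq C\ka$. Iterating in $k=d,d-1,\dots,1$ (as in the statement, which only quantifies over $Z\lb\subset Z\ka$ with the \emph{same} index $\ell$, i.e. one applies the reduction repeatedly) produces the subpartition $\{\check Z\lb,\check C\lb\}$, with an exceptional $\mu$-negligible set $N_{\check\pi}$ collecting all the boundary/lower-dimensional residue at each stage. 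Here one uses crucially the transitivity and completeness properties \eqref{0_trans}, \eqref{0_compl1}, \eqref{0_compl2}, together with the characterization of the regular transport set, to guarantee that the new direction cones $\check C\lb$ are again extremal faces and the new sets $\check Z\lb$ are relatively open in their affine hulls.

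The step I expect to be the main obstacle is showing that the refined partition is again \emph{Lebesgue-regular}, i.e. that Point \eqref{Point_3_T_1}-type absolute continuity $v\lb\simeq\HH^\ell\llcorner_{\check Z\lb}$ survives the refinement. Passing to a cyclically connected subpartition can in principle destroy the absolute continuity of the conditional measures of $\LL$ (this is exactly the Alberti–Kirchheim–Preiss phenomenon). The plan is to verify that the subpartition still satisfies both the forward and backward cone approximation property — so that the disintegration technique of \cite{CarDan} and \cite{BiaGlo} applies — by arguing that each $\check Z\lb$ sits inside the original $Z\ka$ as a union of sheets cut out by the axial-path equivalence relation, whose defining directions still form an extremal face, hence the cone-approximation estimates for $\{Z\ka,C\ka\}$ restrict to $\{\check Z\lb,\check C\lb\}$ with the same constants. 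The remaining routine tasks — measurability in $(\ell,\b)$ of the whole construction, the consistency of the disintegrations $\mu=\int\mu\lb\,dm$ and $\check\pi=\int\check\pi\lb\,dm$, and the fact that $\bigl(\mathtt p_2\bigr)_\#\check\pi\lb=\check\nu\lb$ and the residue set is $\mu$-null — I would dispatch with the standard selection and disintegration results (Definition \ref{D_dis}) already invoked earlier in the paper.
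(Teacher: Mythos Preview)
Your proposal has two genuine gaps that correspond precisely to the two main difficulties the paper isolates.

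\textbf{First gap: cyclical connectedness for all plans, not just $\check\pi$.} You propose to partition each $Z\ka$ by the $(\Gamma,\mathtt c_{C\ka})$-cycle equivalence relation for a single carriage $\Gamma$ of $\check\pi$. But the conclusion requires $\Pi^f_{\mathtt c_{\mathbf D}}(\mu,\{\check\nu\lb\})$-cyclical connectedness, i.e.\ the same classes must be $(\mu,\Gamma',\mathtt c)$-cyclically connected for \emph{every} carriage $\Gamma'$ of \emph{every} plan with those conditional second marginals. The paper handles this in Section~\ref{S_cfibr_cfol}: it constructs, for each carriage $\tilde\Gamma$ and each countable dense family of sections $\mathtt W^{\tilde\Gamma}$, a Borel $\mathtt c_{\C}$-compatible \emph{linear} preorder $\preccurlyeq_{\tilde\Gamma,\mathtt W^{\tilde\Gamma}}$ (Theorem~\ref{T_order_gamma}), and then invokes the abstract minimality result Theorem~\ref{T_minimal_equival} to select a minimal equivalence relation among all such preorders. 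Only this minimality step (Corollary~\ref{C_constant_for_minimal_equivalence}) guarantees that the $k$-dimensional classes of positive $\tilde\mu_\a$-measure are cyclically connected for \emph{every} $\Gamma'$. Your plan contains no mechanism for this.

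\textbf{Second gap: Lebesgue-regularity does not ``restrict''.} You write that the cone approximation estimates for $\{Z\ka,C\ka\}$ restrict to $\{\check Z\lb,\check C\lb\}$ with the same constants. The paper explicitly refutes this line of reasoning: Remark~\ref{R_not_gener_potential} constructs a $\mathtt c_{\C}$-Lipschitz foliation by $C^\infty$ level sets on which the disintegration of $\mathcal L^2$ has Dirac masses. So one cannot deduce regularity of the subpartition from regularity of the ambient partition by first disintegrating on the foliation leaves and then applying the single-graph argument leafwise. The paper's actual proof of regularity (Theorem~\ref{T_coneappr_fol}) is entirely different: because the foliation comes from a Borel $\mathtt c_{\C}$-compatible linear preorder, the induced transport problem between a section $P_{h^-}$ and its image under $\sigma^{h^-,h^++\eps}$ has a \emph{unique} finite-cost plan (Step~2 of that proof). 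This uniqueness is what forces the approximating cone vector fields to converge, yielding the cone approximation property and hence Lebesgue-regularity. Without the linear preorder structure, you have no uniqueness and the approximation argument does not close.

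In short, the Borel linear preorder is not a technical convenience but the engine behind both conclusions; your outline bypasses it and consequently cannot reach either.
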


Theorem \ref{T_subpart_step} allows to construct a locally directed affine subpartition $\{\check Z^\ell_{\b}, \check C^\ell_{\b}\}_{\nfrac{\ell = 0,\dots,d}{\mathfrak b \in \mathfrak B^\ell}}$ to a directed locally affine partition $\{Z\ka, C\ka\}_{\nfrac{k = 0,\dots,d}{\mathfrak a \in \mathfrak A^k}}$ such that the sets which do not lower their affine dimensions (i.e. for which $\check Z^\ell_\b \subset Z\ka$ and $\ell = k$) are $\Pi^f_{\mathtt c_{\mathbf D}}(\mu,\{\check \nu^\ell_\b\})$-cyclically connected.

Since the subpartition
\[
\big\{\check Z^\ell_{\b}, \check C^\ell_{\b} \big\}_{\nfrac{\ell = 0,\dots,d-1}{\mathfrak b \in \bar{\mathfrak B}^\ell}} \quad \text{such that if} \ \check Z^\ell_\b \subset Z^k_\a \ \text{then} \ \ell < k \ \text{(equivalently neglecting the sets of \eqref{E_subp_ell_cycl_intro})}
\]
is a Lebesgue-regular directed locally affine partition, and as a subpartition of $\{Z\ka,C\ka\}_{k,\a}$ the index $\ell$ is decreasing of at least $1$ in each $Z\ka$, by a finite iterative argument one immediately obtains Theorem \ref{T_subpart_final}.

The proof of Theorem \ref{T_subpart_step} relies on nonstandard tools in measure theory --namely, the sufficient condition for uniqueness/optimality of transference plans based on the existence of suitable Borel linear preorders given in \cite{BiaCar}-- and on the existence of Lebesgue-regular directed locally affine partitions for one parameter families of graphs of Lipschitz functions w.r.t. convex norms (called \emph{$\mathtt c_{\C}$-Lipschitz foliations}), whose construction generalizes the one of the differential partition of Theorem \ref{T_1}.

We give now a brief scheme of the main steps of the proof of Theorem \ref{T_subpart_step}. Then, we will go on stating its consequences, ending with the solution of Monge's problem in Theorem \ref{T_Monge_final}.

First of all, one can reduce to study the finite cost transportation problem on directed locally affine partitions with fixed dimension $k$ and whose cones of directions are close to a given one, called \emph{$k$-directed sheaf sets} (see Section \ref{Ss_mapping_sheaf_to_fibration}, Definition \ref{D_sheaf_set}). Moreover, by a change of variables which preserves the characteristics of the optimal transportation problem $\Pi^f_{\mathtt c}(\mu,\{\bar\nu\ka\})$, one can assume that the sets $Z\ka$ of the sheaf set are contained in distinct parallel planes, thus studying the so called \emph{$k$-directed fibrations} with cones of directions $\tilde{\mathbf C}\subset\A\times\mathcal C(k,\R^k)$ (see Definition \ref{D_fibration}). $\tilde{\mathbf C}(\a)$ is the cone of directions of the region $Z\ka$. 

To give an idea of how the subpartition is constructed, in this introduction we assume that $\A^k=\{\a\}$, so that the finite cost transportation problem on such $\tilde{\mathbf C}$-directed fibration is a finite transportation problem for a single $k$-dimensional cone cost in $\R^k$. In the paper, the variable $\a\in\A$ plays the role of a parameter and is kept in all the constructions and definitions in order to show that the sufficient measurability conditions w.r.t. $\a$, which are needed in order to define global objects, are satisfied.

By the discussion made before Theorem \ref{T_subpart_final}, it is natural to fix a carriage $\Gamma\subset\bigl\{ \mathtt c_{\mathbf C\ka}<+\infty\bigr\}$ of some $\pi\ka\in\Pi^f_{\mathtt c_{C\ka}}(\mu\ka,\nu\ka)$ and to see whether the partition of $\R^k$ into $(\Gamma,\mathtt c_{C\ka})$-cycles satisfies our requirements. It turns out that in general this is not true, the first main reason being that not all the other transport plans are necessarily concentrated on its sets. 

However --as proven for general cost functions $\mathtt c$ in \cite{BiaCar} in order to give very general sufficient conditions for uniqueness/optimality of transport plans-- a partition on which all the transport plans $\pi \in \Pi^\mathrm{opt}_{\mathtt c_{C\ka}}(\mu\ka,\nu\ka)$ are concentrated exists provided one can find a Borel linear preorder (i.e., a transitive relation such that every two points can be compared) which contains the set $\bigl\{\mathtt c_{C\ka}<+\infty\bigr\}$ (i.e, it is \emph{$\mathtt c_{C\ka}$-compatible} according to Definition \ref{D_compatible}) and extends the linear preorder $\preccurlyeq_{(\Gamma, \mathtt c_{C\ka})}$ whose equivalence classes are the $(\Gamma, \mathtt c_{C\ka})$-cycles (i.e., $x \preccurlyeq_{(\Gamma, \mathtt c_{C\ka})} y$ if there is a $(\Gamma,\mathtt c_{C\ka})$-axial path of finite cost connecting $y$ to $x$). 

In Section \ref{Ss_gamma_order}, Theorem \ref{T_order_gamma}, we show how to construct such a preorder. The preorder will be denoted in the following by $\preccurlyeq_{\Gamma,\mathtt W^\Gamma}$, where $\mathtt W^\Gamma$ is related to the countable procedure to construct the Borel preorder (see Section \ref{S_cfibr_cfol}). Its equivalence classes turn out to be either families of graphs of $|\cdot|_{\mathbf D(\a)^*}$-Lipschitz functions $\{\mathtt h_\t^\pm(\a)\}_{\t\in\T}$ --being $\epi\,|\cdot|_{\mathbf D(\a)^*}=C\ka$-- or $k$-dimensional equivalence classes. Such families of sets are called $\mathtt c_{\tilde{\mathbf C}}$-Lipschitz foliations and are studied in Section \ref{S_foliations}. In particular, the finite cost transportation problem w.r.t. $\mathtt c_{C\ka}$ in $\R^k$ reduces to a family of finite cost transportation problems w.r.t. $\mathtt c_{C\ka}$ on the sets of this $\mathtt c_{\tilde{\mathbf C}(\a)}$-Lipschitz foliation.

The equivalence classes $\{\check Z^k_\b\}_{\b}$ which do not lower the affine dimension of the $Z\ka$ are connected by $(\Gamma, \mathtt c_{C\ka})$-cycles, up to a $\mu$-negligible set, and then in principle they are candidate to be the $k$-dimensional sets of Theorem \ref{T_subpart_step}. However, they are not necessarily connected by $(\Gamma', \mathtt c_{C\ka})$-cycles for the other carriages $\Gamma'$ of plans in $\Pi^f_{\mathtt c_{C\ka}}(\mu\ka,\nu\ka)$. In fact, changing $\Gamma$, the Borel preorder $\preccurlyeq_{\Gamma,\mathtt W^\Gamma}$ also varies. Hence we need to use an abstract result on measure theory \cite{BiaCar} (recalled here in Appendix \ref{A_minimal_equivalence}), assuring that there is a minimal Borel linear preorder among the ones of the type $\preccurlyeq_{\Gamma,\mathtt W^\Gamma}$: for this one, the sets which do not lower the dimension of the $Z\ka$ and are of positive $\mu\ka$-measure are $\Pi^f_{\mathtt c_{C\ka}}(\mu\ka,\nu\ka)$-cyclically connected (see Theorem \ref{T_cfibrcfol}). Notice that this $\Pi^f_{\mathtt c_{C\ka}}(\mu\ka,\nu\ka)$-cyclically connectedness property can be indeed interpreted as a minimality or ``indecomposability'' property of the new $k$-dimensional sets.

As for the finite cost transportation problem for the cost $\mathtt c_{C\ka}$ on the classes of the minimal equivalence relation which are graphs of $|\cdot|_{\mathbf D(\a)^*}$-Lipschitz functions $\{\mathtt h_\t(\a)\}_{\t\in\T}$, one uses the same tools as in the proof of Theorem \ref{T_1} to show the existence of a \emph{differential} locally affine partition $\{\check Z^\ell_\b,\check C^\ell_\b\}_{\ell<k,\b}$ on which the transportation problem decomposes (see Section \ref{S_foliations}). Indeed, the sets of such partition which are contained in $\Graph\,\mathtt h_\t(\a)$ are constructed as the sets of the differential partition for the Kantorovich potential $\psi$: the only difference now is that one has to take care of the measurability of these sets w.r.t. the parameters $\a,\t$.

The only missing point in the proof that the union of the differential partition and of the $k$-dimensional equivalence classes $\{\check Z^\ell_\b,\check C^\ell_\b\}_{\ell,\b}$ satisfies the conclusions of Theorem \ref{T_subpart_step} is then the Lebesgue-regularity.
Notice that now the directed locally affine partition is obtained applying the same reasoning as in Theorem \ref{T_1} but for a family of norm-Lipschitz graphs depending on a continuous parameter. Hence one would be tempted to deduce the Lebesgue-regularity property first disintegrating the Lebesgue measure $\mathcal L^k$ on such graphs and then using the cone approximation property (as for the Kantorovich potential) for each conditional measure of $\mathcal L^k$ on a single graph. However, as we will show in a counterexample (see Remark \ref{R_not_gener_potential}), this is not possible because in general the conditional measures of $\mathcal L^k$ on a family of Lipschitz graphs might to be absolutely continuous w.r.t. the $(k-1)$-dimensional Hausdorff measure on the graphs on which they are concentrated.

In fact, the Lebesgue-regularity property for the sets $\{\check Z^\ell_\b,\check C^\ell_\b\}_{\ell,\b}$ follows by the fact that the Lipschitz graphs of $\{\mathtt h_\t(\a)\}$ are the equivalence classes of a $\mathtt c_{C\ka}$-compatible Borel linear preorder on which all the transport plans in $\Pi^f_{\mathtt c_{C\ka}}(\mu\ka,\nu\ka)$ are concentrated. Indeed, by the uniqueness theorem stated in \cite{BiaCar} one can prove the cone approximation property for the subpartition. The procedure is similar to the procedure followed in the case a single potential $\psi$ is present: however, the convergence of the cone approximating vector fields is now due to the uniqueness of a suitable transference plan (see Section \ref{S_cone_approx_folia}).

As discussed before, Theorem \ref{T_final} gives as an application the possibility to construct optimal potentials w.r.t. secondary cost functions such as $\mathtt c_2$ \eqref{0_c2} on each set of the partition $\check Z\ka$. In the case in which the secondary cost function is obtained by minimizing the original transport problem w.r.t. another convex norm $|\cdot|_{(D')^*}$, one obtains a refinement of the directed locally affine partition of Theorem \ref{T_final} with cones of directions given by intersections of extremal faces of $|\cdot|_{(D')^*}$ and $\d{\cdot}$. 

More precisely, let $|\cdot|_{(D')^*}$ be a convex norm with unit ball $D'$, and consider the secondary minimization problem
\begin{equation}
\label{E_seconry_min_prob}
\min \bigg\{ \int |y - x|_{(D')^*}\, d\pi(x,y):\, \pi \in \Pi^{\mathrm{opt}}_{|\cdot|_{D^*}}(\mu,\nu) \bigg\}.
\end{equation}
If $\check \pi$ is a minimizer of the above problem, by the fact that $\check \pi$ is also a minimizer of
\[
\int \mathtt c_{D,D'}(x,y)\,d \pi(x,y), \qquad \mathtt c_{D,D'}(x,y) :=
\begin{cases}
|y-x|_{(D')^*} & x \in \check Z^k_\a, y-x \in \check C^k_\a, \crcr
+\infty & \text{otherwise},
\end{cases}
\]
and that each $\check Z\ka$ is $\Pi^f_{\check C\ka}(\mu\ka,\nu\ka)$-cyclically connected, Proposition \ref{P_second_cost} yields that in each $\check Z^k_\a$ there exists a potential pair $\phi\ka$, $\psi\ka$, and since $\mathtt c_{D,D'}$ satisfies the triangle inequality we can take $\phi\ka = - \psi\ka$. By the existence of such a potential, restricting then to a single set $\check Z^k_\a$ one can prove as in the proof of Theorem \ref{T_1} the existence of a directed locally affine partition $\{\check Z^{k,\ell}_{\a,\b},\check C^{k,\ell}_{\a,\b}\}_{\nfrac{\ell = 0,\dots,k}{\b \in \B^{k,\ell}_\a}}$. In the resulting statement one has to replace the ambient space $\R^d$ with $\check Z\ka$, the measure $\LL$ with $\mathcal H^k\llcorner_{\check Z\ka}$, the marginals $\mu$, $\nu$ with $\mu\ka$, $\nu\ka$ and the cost $\d{\cdot}$ with
\begin{equation}
\label{E_cka_str_intro}
\mathtt c^k_\a := \mathtt c_{D,D'} \llcorner_{\check Z^k_\a \times \R^d} =
\begin{cases}
|y-x|_{(D')^*} & y-x \in \check C^k_\a, \crcr
+\infty & \text{otherwise}.
\end{cases}
\end{equation}

More precisely, we obtain the following theorem.

\begin{theorem}
\label{T_final_seconry}
Let $\mu, \nu\in\PP(\R^d)$ with $\mu\ll\LL$ and let $\check \pi$ be an optimal transport plan for the problem \eqref{E_seconry_min_prob}. Then there exists a locally affine directed partition $\{\check Z^{k,\ell}_{\a,\b},\check C^{k,\ell}_{\a,\b}\}_{\nfrac{k=0,\dots,d,\a\in\A^k}{\ell = 0,\dots,k,\b \in \B^{k,\ell}_\a}}$ in $\R^d$ with the following properties:
\begin{enumerate}
\item for all $k$, $\a \in \A^k$, the cone $\check C^{k,\ell}_{\a,\b}$ is an $\ell$-dimensional extremal face of the cost $\mathtt c\ka$ given by \eqref{E_cka_str_intro}, i.e. the intersection of a $k$-dimensional face of $\d{\cdot}$ with an extremal face of $|\cdot|_{(D')^*}$; 
\item $\displaystyle{\mu \biggl( \R^d \setminus \underset{k,\a,\ell,\b}{\bigcup} \check Z^{k,\ell}_{\a,\b} \biggr)=0}$; 
\item the partition is \emph{Lebesgue-regular};
\item the disintegration $\displaystyle{\check \pi = \int \check \pi^{k,\ell}_{\a,\b}\, dm(k,\a,\ell,\b)}$ w.r.t. the partition $\{\check Z^{k,\ell}_{\a,\b} \times \R^d\}_{k,\a,\ell,\b}$ satisfies
\[
\check \pi^{k,\ell}_{\a,\b} \in \Pi^f_{\mathtt c_{C^{k,\ell}_{\a,\b}}} \big( \check \mu^{k,\ell}_{\a,\b}, \big\{ (\mathtt p_2)_\# \check \pi^{k,\ell}_{\a,\b} \big\} \big),
\]
where $\displaystyle{\mu = \int \check \mu^{k,\ell}_{\a,\b}\,dm(k,\a,\ell,\b)}$ is the disintegration w.r.t. the partition $\{\check Z^{k,\ell}_{\a,\b}\}_{k,\a,\ell,\b}$, and moreover
\[
(\mathtt p_2)_\#\check \pi^{k,\ell}_{\a,\b} \biggl( \check Z^{k,\ell}_{\a,\b} \cup \biggl( \R^d \setminus \underset{(k',\a',\ell',\b') \not=(k,\a,\ell,\b)}{\bigcup} \check Z^{k',\ell'}_{\a',\b'} \biggr) \biggr) = 1;
\]
\item the partition 
$\{\check Z^{k,\ell}_{\a,\b}\}_{k,\a,\ell,\b}$ is $\Pi^f_{\mathtt c_{\bar{\mathbf D}}}(\mu,\{(\mathtt p_2)_\#\check \pi^{k,\ell}_{\a,\b}\})$-cyclically connected. 

\end{enumerate}
\end{theorem}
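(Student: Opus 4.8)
The plan is to run the machinery of Theorem~\ref{T_final} once for the primary problem and then a second time, with the \emph{secondary} cost, inside each of its regions. Let $\check\pi$ be a minimizer of \eqref{E_seconry_min_prob}; since $\check\pi\in\pod(\mu,\nu)$, Theorem~\ref{T_final} applied to $\check\pi$ produces a Lebesgue-regular directed locally affine partition $\{\check Z\ka,\check C\ka\}_{k,\a}$ with $\mu\bigl(\R^d\setminus\bigcup_{k,\a}\check Z\ka\bigr)=0$, with $\check\pi=\int\check\pi\ka\,dm(k,\a)$ and $\check\pi\ka\in\Pi^f_{\mathtt c_{\check C\ka}}(\check\mu\ka,\check\nu\ka)$ where $\check\nu\ka:=(\mathtt p_2)_\#\check\pi\ka$, and with each $\check Z\ka$ being $\Pi^f_{\mathtt c_{\check C\ka}}(\check\mu\ka,\check\nu\ka)$-cyclically connected. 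Since $\check\pi$ is concentrated on $\bigcup_{k,\a}\check Z\ka\times\check Z\ka$ with $y-x\in\check C\ka$ $\check\pi\ka$-a.e., and (with $\mathbf D$ the graph of $\{\check Z\ka,\check C\ka\}$) the plans in $\Pi^f_{\mathtt c_{\mathbf D}}(\mu,\{\check\nu\ka\})\subset\pod(\mu,\nu)$ are exactly those assembled from conditionals in $\Pi^f_{\mathtt c_{\check C\ka}}(\check\mu\ka,\check\nu\ka)$, with secondary cost $\int\bigl(\int\mathtt c\ka\,d\pi\ka\bigr)\,dm(k,\a)$, the usual disintegration of optimality forces $\check\pi\ka$ to be optimal for the cost $\mathtt c\ka$ of \eqref{E_cka_str_intro} between $\check\mu\ka$ and $\check\nu\ka$ for $m$-a.e.\ $(k,\a)$. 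Combined with the cyclical connectedness of $\check Z\ka$, Proposition~\ref{P_second_cost} then yields on each $\check Z\ka$ a pair of optimal potentials for $\mathtt c\ka$, which — $\mathtt c\ka$ satisfying the triangle inequality on the convex cone $\check C\ka$ — can be taken in the form $\psi\ka,-\psi\ka$.

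The core step is then to repeat, inside a fixed $\check Z\ka$, the construction of Theorem~\ref{T_1} followed by the refinement of Theorem~\ref{T_subpart_final}, under the dictionary: ambient space $\check Z\ka$ for $\R^d$, reference measure $\HH^k\llcorner_{\check Z\ka}$ for $\LL$, marginals $\check\mu\ka,\check\nu\ka$ for $\mu,\nu$, fixed plan $\check\pi\ka$, potential $\psi\ka$ for $\psi$, and cost $\mathtt c\ka$ for $\d{\cdot}$. One lifts by $\Id\times\psi\ka$ to $\check Z\ka\times\R$, so that the finite-$\mathtt c\ka$-cost problem becomes a finite-cost problem for the cone cost associated with the convex cone $\epi\,\mathtt c\ka=\epi\,|\cdot|_{(D')^*}\cap(\check C\ka\times\R)$ between measures supported on the graph of $\psi\ka$; the differential-partition analysis of Sections~\ref{S_foliations}--\ref{S_disintechnique} and the foliation machinery of Theorem~\ref{T_subpart_step} then apply verbatim, the only genuinely new point being to keep the dependence on $(k,\a)$ measurable, exactly as the parameter $\a$ is carried throughout Sections~\ref{S_foliations}--\ref{S_cfibr_cfol}. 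This yields, for $m$-a.e.\ $(k,\a)$, a Lebesgue-regular directed locally affine partition $\{\check Z^{k,\ell}_{\a,\b},\check C^{k,\ell}_{\a,\b}\}_{\ell\le k,\,\b\in\B^{k,\ell}_\a}$ of $\check Z\ka$ with $\check C^{k,\ell}_{\a,\b}$ an $\ell$-dimensional extremal face of $\mathtt c\ka$, with the disintegration of $\check\pi\ka$ as in Point~\eqref{Point_4_T_1}, and with each $\check Z^{k,\ell}_{\a,\b}$ cyclically connected for the corresponding cone cost. That an extremal face of $\mathtt c\ka$ is exactly the intersection of the $k$-dimensional face $\check C\ka$ of $\d{\cdot}$ with an extremal face of $|\cdot|_{(D')^*}$ is immediate from $\epi\,\mathtt c\ka=\epi\,|\cdot|_{(D')^*}\cap(\check C\ka\times\R)$ and the structure of extremal faces of intersections of convex cones.

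Collecting over all $(k,\a)$ gives the partition $\{\check Z^{k,\ell}_{\a,\b},\check C^{k,\ell}_{\a,\b}\}_{k,\a,\ell,\b}$; the measurability of $(k,\a)\mapsto(\check Z^{k,\ell}_{\a,\b},\check C^{k,\ell}_{\a,\b})$ makes it a genuine Borel directed locally affine partition of a $\mu$-full subset of $\R^d$. Properties~(1), (4) and (5) are then the local statements of the previous step read through $\mu=\int\check\mu\ka\,dm(k,\a)$ and $\check\pi=\int\check\pi\ka\,dm(k,\a)$: for~(5) one combines, on each piece, its cyclical connectedness for the cone cost with the fact that $\mathtt c_{\bar{\mathbf D}}$ restricts to that cone cost, so that $\Pi^f_{\mathtt c_{\bar{\mathbf D}}}(\mu,\{(\mathtt p_2)_\#\check\pi^{k,\ell}_{\a,\b}\})$-cyclical connectedness follows by disintegrating an arbitrary such plan. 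Property~(2) is $\mu\bigl(\R^d\setminus\bigcup_{k,\a}\check Z\ka\bigr)=0$ together with $\check\mu\ka\bigl(\check Z\ka\setminus\bigcup_{\ell,\b}\check Z^{k,\ell}_{\a,\b}\bigr)=0$; the latter reduces, since $\check\mu\ka\ll\HH^k\llcorner_{\check Z\ka}$ by the Lebesgue-regularity of the first partition, to the $\HH^k\llcorner_{\check Z\ka}$-negligibility established in the previous step.

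The main obstacle is property~(3), the Lebesgue-regularity of the \emph{composed} partition. One composes the disintegration $\LL\llcorner_{\bigcup\check Z\ka}=\int v\ka\,d\eta(k,\a)$ with $v\ka\simeq\HH^k\llcorner_{\check Z\ka}$ (Lebesgue-regularity of the first partition) with the disintegrations $\HH^k\llcorner_{\check Z\ka}=\int v^{k,\ell}_{\a,\b}\,d\eta_{k,\a}(\ell,\b)$, $v^{k,\ell}_{\a,\b}\simeq\HH^\ell\llcorner_{\check Z^{k,\ell}_{\a,\b}}$, from the previous step, and checks — routine given strong consistency — that the composition is the disintegration of $\LL$ with respect to $\{\check Z^{k,\ell}_{\a,\b}\}_{k,\a,\ell,\b}$. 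The delicate point is the equivalence $v^{k,\ell}_{\a,\b}\simeq\HH^\ell$ rather than a mere absolute continuity with respect to $\HH^\ell$, or even to $\HH^{\ell-1}$: as the counterexample of Remark~\ref{R_not_gener_potential} shows, this cannot be deduced from a naive Fubini/disintegration of $\HH^k$ along the Lipschitz graphs making up the foliation. It must instead be obtained by re-running, inside the one-parameter family of $|\cdot|_{(D')^*}$-Lipschitz graphs of that foliation, both the forward and the backward cone approximation arguments of Sections~\ref{S_disintechnique} and \ref{S_cone_approx_folia}, whose convergence is now produced by the uniqueness of a suitable transference plan in the sense of \cite{BiaCar} rather than by a single potential. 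This cone-approximation analysis for families of norm-Lipschitz graphs, together with the $(k,\a)$-measurability bookkeeping, is where the actual work of the theorem lies; the rest is a transcription of Theorems~\ref{T_1}, \ref{T_subpart_final} and \ref{T_final}.
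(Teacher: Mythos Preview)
Your proposal is correct and follows the same route the paper sketches just before the statement: apply Theorem~\ref{T_final} to $\check\pi$ to get the first partition with cyclical connectedness, invoke Proposition~\ref{P_second_cost} to produce Kantorovich potentials $\psi\ka=-\phi\ka$ for $\mathtt c\ka$ on each $\check Z\ka$, and then rerun the whole machinery (Theorem~\ref{T_1} followed by Theorem~\ref{T_subpart_final}) inside each $\check Z\ka$ under the indicated dictionary. The paper's own argument is only this sketch; you have simply unpacked it more carefully, in particular making explicit the composition of disintegrations needed for Lebesgue-regularity and the role of Theorem~\ref{T_subpart_final} in obtaining Point~(5).

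One small recalibration: for the \emph{initial} differential partition on each $\check Z\ka$ you have a genuine potential $\psi\ka$, so Lebesgue-regularity there comes from the easier single-graph cone approximation of Theorem~\ref{T_cone_graph}, not from the foliation/uniqueness argument of Section~\ref{S_cone_approx_folia}. The latter is needed only at the \emph{refinement} stage (Theorem~\ref{T_subpart_step} iterated), where one is working with equivalence classes of a $\mathtt c_{\C}$-compatible linear preorder rather than with a single Lipschitz graph. Your last paragraph slightly blurs this distinction, but the argument is sound.
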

A completely similar extension can be given to Theorem \ref{T_final_nu}.

A particular case is when each extremal face of $|\cdot|_{(D')^*}$ is contained in an extremal face of $|\cdot|_{D^*}$: in this case condition (1) becomes
\begin{enumerate}[{\it (1')}]
\itemindent .55cm
\item \label{Point_1_prime} {\it for all $k$, $\a \in \A^k$, the cone $\check C^{k,\ell}_{\a,\b}$ is an $\ell$-dimensional extremal face of $|\cdot|_{(D')^*}$.}
\end{enumerate}
The only difference w.r.t. Theorem \ref{T_final} is that now $\check \pi$ is a minimum for the secondary minimization problem \eqref{E_seconry_min_prob}, not a transference plan in $\Pi^{\mathrm{opt}}_{|\cdot|_{(D')^*}}(\mu,\nu)$.

The case (1') above happens if for example $|\cdot|_{(D')^*}$ is strictly convex, so that the $\check Z^{k,\ell}_{\a,\b}$ are now directed segments, i.e. $\ell = 1$. By the standard analysis on transportation problems in 1-d, and the measurable dependence on $k$, $\a$, $\b$, the existence of an optimal transport map $\mathtt T$ for the Monge problem \eqref{E_transpo_Sud} follows as a simple corollary. In particular, the restriction $\mathtt T \llcorner_{\check Z^{k,1}_{\a,\b}}$ is a monotone increasing map in the direction of $\check C^{k,1}_{\a,\b}$ on $\aff\, \check Z^{k,1}_{\a,\b}$, for all $k$, $\a$, $\b$.

\begin{theorem}
\label{T_Monge_final}
Let $\mu, \nu\in\mathcal P(\R^d)$, $\mu\ll\LL$. Then, there exists an optimal transport map $\mathtt T$ for the Monge problem \eqref{E_transpo_Sud}. 
\end{theorem}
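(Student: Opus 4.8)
The plan is to assemble Theorem~\ref{T_Monge_final} as a straightforward corollary of the machinery developed throughout the paper, choosing the secondary norm in the refinement of Theorem~\ref{T_final_seconry} to be \emph{strictly convex}. Concretely, I would fix once and for all a strictly convex norm $|\cdot|_{(D')^*}$ on $\R^d$ --- e.g.\ the Euclidean norm, so that the secondary problem \eqref{E_seconry_min_prob} becomes exactly the minimization of $\int |y-x|^2\,d\pi$ over $\pi \in \Pi^{\mathrm{opt}}_{\d{\cdot}}(\mu,\nu)$, which is nonempty by \eqref{E_transport_prob_Sud} and weak-$*$ compactness of $\Pi^{\mathrm{opt}}_{\d{\cdot}}(\mu,\nu)$. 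Let $\check\pi$ be a minimizer of this secondary problem. Then I would invoke Theorem~\ref{T_final_seconry} in the case (1') (which applies precisely because $|\cdot|_{(D')^*}$ is strictly convex, so every extremal face of $|\cdot|_{(D')^*}$ is a half-line, hence trivially contained in an extremal face of $\d{\cdot}$): this yields a locally affine directed partition $\{\check Z^{k,\ell}_{\a,\b}, \check C^{k,\ell}_{\a,\b}\}$ of $\R^d$, covering $\mu$-a.a.\ of $\R^d$, with each cone $\check C^{k,\ell}_{\a,\b}$ an $\ell$-dimensional extremal face of $|\cdot|_{(D')^*}$, hence $\ell \in \{0,1\}$: the pieces are either single points ($\ell=0$, on which any transport map is forced) or directed open segments ($\ell=1$) equipped with a direction cone $\check C^{k,1}_{\a,\b}$ which is a half-line.

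The next step is the $1$-dimensional core argument. On each segment piece $\check Z^{k,1}_{\a,\b}$, Theorem~\ref{T_final_seconry}(3) gives Lebesgue-regularity, so the conditional measure $\check\mu^{k,1}_{\a,\b}$ of $\mu$ is equivalent to $\HH^1$ restricted to $\check Z^{k,1}_{\a,\b}$, and in particular is non-atomic; moreover Theorem~\ref{T_final_seconry}(4) localizes $\check\pi$ to a finite-cost plan $\check\pi^{k,1}_{\a,\b} \in \Pi^f_{\mathtt c_{\check C^{k,1}_{\a,\b}}}(\check\mu^{k,1}_{\a,\b}, (\mathtt p_2)_\#\check\pi^{k,1}_{\a,\b})$, and Theorem~\ref{T_final_seconry}(5) gives the $\Pi^f$-cyclical connectedness of the piece, which by Proposition~\ref{P_second_cost} furnishes a potential pair $\phi = -\psi$ for the secondary cost $\mathtt c^k_\a$ of \eqref{E_cka_str_intro} on that piece. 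Standard $1$-dimensional optimal transport theory (monotone rearrangement: the unique $\mathtt c^k_\a$-cyclically monotone plan between a non-atomic measure and its target on an oriented line is induced by a monotone increasing map) then produces an optimal transport map $\mathtt T^{k,1}_{\a,\b} : \check Z^{k,1}_{\a,\b} \to \R^d$, monotone increasing in the direction of $\check C^{k,1}_{\a,\b}$ along $\aff\,\check Z^{k,1}_{\a,\b}$, whose graph supports $\check\pi^{k,1}_{\a,\b}$. On the $0$-dimensional pieces the map is trivially determined. I would then glue: $\mathtt T(x) := \mathtt T^{k,\ell}_{\a,\b}(x)$ for $x \in \check Z^{k,\ell}_{\a,\b}$, extended arbitrarily (or by the identity) on the $\mu$-negligible leftover set. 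Measurability of $\mathtt T$ follows from the measurable dependence of the pieces and of the maps $\mathtt T^{k,1}_{\a,\b}$ on the indices $(k,\a,\ell,\b)$ built into the partition, and $\mathtt T_\#\mu = \nu$ because the $\check\pi^{k,\ell}_{\a,\b}$ disintegrate $\check\pi$ and each is induced by $\mathtt T$; since $\check\pi \in \Pi^{\mathrm{opt}}_{\d{\cdot}}(\mu,\nu)$, the map $\mathtt T$ realizes the same cost and hence solves \eqref{E_transpo_Sud}.

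The point that requires genuine care --- rather than the main obstacle, since the heavy lifting is Theorems~\ref{T_1}, \ref{T_subpart_final}, \ref{T_final} and \ref{T_final_seconry}, already available --- is the \emph{measurable selection / gluing}: one must verify that the family $\{\mathtt T^{k,1}_{\a,\b}\}$ depends measurably enough on the parameters that $x \mapsto \mathtt T(x)$ is a genuine $\mu$-measurable map, and that the disintegration formula $\check\pi = \int \check\pi^{k,\ell}_{\a,\b}\,dm$ combined with $\check\pi^{k,\ell}_{\a,\b} = (\mathrm{Id}\times \mathtt T)_\#\check\mu^{k,\ell}_{\a,\b}$ recomposes to $\check\pi = (\mathrm{Id}\times\mathtt T)_\#\mu$. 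This is where the non-atomicity (Lebesgue-regularity, Theorem~\ref{T_final_seconry}(3)) is essential: without it the $1$-dimensional problem need not be solved by a map. I would therefore structure the proof as: (i) existence of a secondary minimizer $\check\pi$; (ii) apply Theorem~\ref{T_final_seconry} with strictly convex $|\cdot|_{(D')^*}$ to get the segment-and-point partition; (iii) on each piece run monotone rearrangement using Lebesgue-regularity and the potential from Proposition~\ref{P_second_cost}; (iv) glue using the built-in measurable dependence and conclude via the disintegration of $\check\pi$. The only conceptual input beyond the cited theorems is the classical $1$-dimensional result and a routine measurable-selection verification.
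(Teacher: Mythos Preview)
Your proposal is correct and follows essentially the same route as the paper: apply Theorem~\ref{T_final_seconry} with a strictly convex secondary norm so that case~(1') yields a partition into directed segments (plus points), use Lebesgue-regularity to get non-atomic conditionals, solve each $1$-dimensional problem by monotone rearrangement, and glue via the measurable dependence on the indices. One small slip: with the Euclidean norm as $|\cdot|_{(D')^*}$ the secondary problem \eqref{E_seconry_min_prob} minimizes $\int |y-x|\,d\pi$, not $\int |y-x|^2\,d\pi$, but this does not affect the argument.
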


%


\subsection{Structure of the paper}
\label{Ss_struct_Sud}

The paper is organized as follows.

In Section \ref{S_setting} we collect the main notations, definitions and the basic tools we will need in the paper. After recalling some standard definitions of commonly used sets and $\sigma$-algebras, we introduce some notations for functions and multifunctions in Section \ref{Ss_souslin_multifunction}. The basic tools in convex analysis as well as the definitions of the Polish spaces $\mathcal A(k,V)$ made of $k$-dimensional affine subspaces of the affine space $V \subset \R^d$ and the Polish space $\mathcal C(k,V)$ made of non degenerate $k$-dimensional cones of a vector space $V \subset \R^d$ are listed in Section \ref{Ss_intro_affine_subspaces_cones}. \\
The fundamental tools on measure theory and the disintegration theorem are recalled in Section \ref{Ss_measure_disintegration}, while the definition of the optimal transportation problem with the classical sufficient conditions for optimality of transference plans are listed in Section \ref{Ss_transference_plans}. The key analysis on transportation problems for which no potential is available and the role of Borel linear preorders is presented in Section \ref{Ss_linear_pre_unique_transf}. In particular, we define $\Pi^f_{\mathtt c}(\mu,\nu)$-cyclically connected partitions (see Definition \ref{D_pimunuconn}), we state Proposition \ref{P_second_cost} yielding the existence of a family of potentials on the elements of a partition for secondary costs under the assumption of $\Pi^f_{\mathtt c}(\mu,\nu)$-cyclical connectedness, and we show in Theorem \ref{T_A2} \cite{BiaCar} that whenever a Borel linear preorder is $\mathtt c$-compatible and extends a $\preccurlyeq_{(\Gamma,\mathtt c)}$-preorder, then all the transport plans are concentrated on its equivalence classes.

In Section \ref{S_conetransport} we analyze the optimal transportation for cone costs of the form \eqref{E_cost_all_cone_intro}. In Section \ref{Ss_convex_norm_cone} we show the equivalence between optimal transportation problems in $\R^d$ with cost $\d{\cdot}$ and marginals $\mu,\nu \in \mathcal P(\R^d)$ and optimal transportation problems in $\R^{d+1}$ with cost $\mathtt c_{\epi\,\d{\cdot}}$ and marginals $\hat \mu, \hat \nu \in \mathcal P(\R^{d+1})$, where the measures $\hat \mu$, $\hat \nu$ are supported on a $\d{\cdot}$-Lipschitz graph $\Graph\,\psi$ and $\mu = (\mathtt p_{\R^d})_\# \hat \mu$, $\nu = (\mathtt p_{\R^d})_\# \hat \mu$. \\
In Section \ref{Ss_partitions_intro} we generalize the single cone cost transportation problem to the transportation problem on a directed locally affine partition. Here we introduce also the notion of initial and final points of a directed locally affine partition and the notion of conditional second marginals, as well as an example of their dependence w.r.t. the transference plan (Example \ref{Ex_2ndmarg}) and a special case where the conditional second marginals correspond to the disintegration of $\nu$ (Proposition \ref{P_dispiani_2}). \\
A standard covering argument allows to decompose a directed locally affine partition into $k$-directed \emph{sheaf sets}, i.e. directed locally affine partitions whose components $Z\ka$ and cones $C\ka$ are close to a given reference plane $V^k$ and cone $C^k$, and their projection on $V^k$ contains a given open $k$-dimensional cube, Proposition \ref{P_countable_partition_in_reference_directed_planes} and Definition \ref{D_sheaf_set}. This allows to map these sets into \emph{directed fibrations}, where the $Z\ka$ are contained in the planes $\{\a\} \times \R^k$, $\a \in \R^{d-k}$ (Proposition \ref{P_map_sheaf_set_into_fibration}). In this case the transport problem splits into a family of transport problems, each one moving mass on a plane of the form $\{\a\} \times \R^k$ and with cost
\begin{equation}
\label{E_cone_C_a_intro}
\mathtt c_{\C(\a)}(w,w') := \ind_{\C(\a)}(w'-w),
\end{equation}
where $\a \mapsto \C(\a)$ a $\sigma$-compact map with values in $\mathcal C(k,\R^k)$: since $k$ is fixed on a fibration, we can skip it in order to simplify the notation. \\
The final part of the section shows that the mapping of a sheaf set into a fibration preserves the key structures of the optimal transportation problem needed in the proofs of Theorems \ref{T_1} and \ref{T_subpart_final}, and thus allows us to work from now onwards on fibrations.


In Section \ref{S_foliations} we present a technique in order to find the so called \emph{differential} directed locally affine subpartition of a given \emph{$\mathtt c_{\C}$-Lipschitz foliation} of a $\C$-directed fibration. The reason why we introduce and study {$\mathtt c_{\C}$-Lipschitz foliations} is that they are the natural generalization of the notions of graphs of cone-Lipschitz functions --as the Kantorovich potential $\psi$-- and equivalence classes of a $\mathtt c_{C\ka}$-compatible Borel linear preorder (see Proposition \ref{P_ex_fol}). For the terminology used to briefly list the content of this section we refer also to the discussion made in this introduction at the beginning of Section \ref{Ss_main_Sud}.\\
Due to the results of the previous section, when the differential subpartition is mapped back from the $\mathtt c_{\C}$-Lipschitz foliation to the $k$-directed sheaf set, one obtains subpartitions of the sheaf sets covering a given directed locally affine partition, and thus we have a method which yields a subpartition of a given directed locally affine partition. \\
In Section \ref{Ss_cone_lipschitz_graph} we first analyze the simplest example of $\mathtt c_{\C}$-Lipschitz foliation: a \emph{$\mathtt c_{\C(\a)}$-Lipschitz graph}, namely a graph of a $|\cdot|_{\mathbf D(\a)^*}$-Lipschitz function (with $\C(\a)=\epi\,|\cdot|_{\mathbf D(\a)^*}$) in a fibration consisting of a single fiber $\{\a\} \times \R^k$, whose \emph{super/subdifferential} satisfy the \emph{completeness property} \eqref{0_compl1}-\eqref{0_compl2} (as the graph of the Kantorovich potential for the cost $\mathtt c_{\epi\,\d{\cdot}}$).\\
In Section \ref{Ss_cone_lipschitz} we consider general $\mathtt c_{\C}$-Lipschitz foliations, namely partitions of $\A\times\R^k$ whose sets are contained in $\{\a\}\times\R^k$ as $\a$ varies in $\A$ and are given by collections of complete $\mathtt c_{\C(\a)}$-Lipschitz graphs (see Proposition \ref{P_fol_char}). We extend to these families of sets the notion of super/subdifferential (see Definition \ref{D_partial_theta}) and in Section \ref{Ss_regu_resi_set} we show that its completeness and transitivity properties permit to select regions called \emph{forward/backward regular set} and \emph{regular set}.\\
These regions are respectively partitioned in the so called \emph{super/subdifferential partition} and \emph{differential partition} in Section \ref{Ss_partition_transport_set} (see Theorem \ref{T_partition_E+-} and Corollary \ref{C_v}).\\
In Section \ref{Ss_analysis_residual_set} we analyze the \emph{residual set}, namely the complementary of the regular set, and characterize it as the union of the initial/final points of the {super/subdifferential partition} (Theorem \ref{T_partition_E}).\\
In Section \ref{Ss_optim_folia} we give a descriptive characterization of the support of the optimal transportation problem on a $\mathtt c_{\C}$-Lipschitz foliation in terms of the forward/backward regular set and initial/final points.

Section \ref{S_disintechnique} concerns the problem of Lebesgue-regularity of the disintegration on directed locally affine partitions. The main property which allows to deduce the regularity is the \emph{cone approximation property}.\\
First we consider $1$-directed sheaf sets made of segments whose projection on the line generated by the reference cone is a given interval. These particular sheaf sets are called \emph{model sets of directed segments} (see Section \ref{Ss_model_dir_segm}). In the case of strictly convex norms it is sufficient to analyze this special case.\\
The analysis is then extended to \emph{$k$-dimensional model sets}, namely $k$-directed sheaf sets whose projection on the $k$-dimensional plane generated by the reference cone is a given rhomboid (Section \ref{Ss_k_dim_model_set}). In this case, the \emph{cone approximation property} refers to the cone approximation property of any of its $1$-dimensional slices, the latter being sections of a $k$-dimensional model set with $(d-k+1)$-dimensional planes transversal to the reference plane (see Definition \ref{D_1_dim_slice_sheaf}): by transversality, on each of these planes the $k$-dimensional model set becomes a model set of directed segments. \\
Next the analysis is extended to $k$-dimensional sheaf sets (Section \ref{Ss_k_dim_sheaf}). The main observation is that one can partition the sheaf set into countably many $k$-dimensional model sets (Theorem \ref{T_one_d_slicing_FC}). \\
Finally, the property of approximation by cone vector fields also for initial/final points yields the Lebesgue-negligibility of the initial/final points by means of a technique developed first in \cite{celper:euler}, and then extended in \cite{BiaGlo,Car:strictly,CarDan} (see Section \ref{Ss_neglig_init_fin}).

At this point all the techniques needed in order to prove Theorem \ref{T_1} are presented, and its proof is done in Section \ref{S_theorem_1_proof}. Indeed, in Section \ref{S_foliations} we develop a technique to find directed locally affine subpartitions by means of $\mathtt c_{\C}$-Lipschitz foliations, and the graph of the potential $\psi$ is in particular a $\mathtt c_{\C(\a)}$-Lipschitz graph. The only point which remains to be proved is that the disintegration of the Lebesgue measure is regular, which is a consequence of the cone approximation property. The section is thus devoted to the proof of the cone approximation property for cone-Lipschitz graphs (Theorem \ref{T_cone_graph}). 

Let $\tilde {\mathbf D}=\{Z\ka,C\ka\}$ be a directed fibration with the associated transportation problem; as said before, we assume that $\Pi^f_{\mathtt c_{\tilde{\mathbf{D}}}}(\mu,\nu) \not= \emptyset$. In Section \ref{S_cfibr_cfol} we show how to 
further partition $\{Z\ka,C\ka\}$ into a ${\mathtt c_{\tilde{\mathbf{C}}}}$-Lipschitz foliation, whose $k$-dimensional sets satisfy the assumptions of Theorem \ref{T_subpart_step} (see Theorem \ref{T_cfibrcfol}). The key results are stated in Theorem \ref{T_order_gamma} and Proposition \ref{P_equiv_coun}. The sets of this ${\mathtt c_{\tilde{\mathbf{C}}}}$-Lipschitz foliation are given by the equivalence classes of a ${\mathtt c_{\tilde{\mathbf{C}}}}$-compatible linear preorder on which all the transport plans in $\Pi^f_{\mathtt c_{\tilde{\mathbf{D}}}}(\mu,\nu)$ are concentrated (called \emph{$(\mathtt c_{\tilde{\mathbf{C}}},\mu,\nu)$-compatible} linear preorder in Definition \ref{D_cmunucomp}), as anticipated after the statement of Theorem \ref{T_subpart_step}. 

In Section \ref{S_cone_approx_folia} we prove the cone approximation property for the differential partition of a $\mathtt c_{\C}$-Lipschitz foliation whose sets are given by equivalence classes of a Borel $({\mathtt c_{\tilde{\mathbf{C}}}}, \mu,\nu)$-compatible linear preorder. Since we do not have a potential, we need to use the uniqueness theorem of the linear preorder (Theorem \ref{T_coneappr_fol}): as a corollary, one immediately obtains the Lebesgue-regularity of the disintegration (Corollary \ref{C_infinnegl}).\\
The section is concluded which an example (Remark \ref{R_not_gener_potential}) which shows that this result cannot be deduced as a consequence of the analysis of Section \ref{S_theorem_1_proof}, even if the level sets of $\theta$ are Lipschitz graphs. In fact, the disintegration of the Lebesgue measure on a Lipschitz foliation is in general not absolutely continuous w.r.t. the natural Hausdorff measures on the level sets: we show an example where the level sets of the function $\theta : [0,1]^2 \to [0,1]$ generating the foliation are $C^\infty$, nevertheless the disintegration of $\mathcal L^2 \llcorner_{[0,1]^2}$ on the level sets of $\theta$ have Dirac masses.

The proof of Theorems \ref{T_subpart_step} and \ref{T_final_nu} are done in Section \ref{S_proff_main_Th}, and they are obtained as direct consequences of the results proved so far.

Finally, in Appendix \ref{A_minimal_equivalence} we present one of the main result of \cite{BiaCar} about the minimality of equivalence relations and prove a key consequence used in our proof, Corollary \ref{C_constant_for_minimal_equivalence}. 

In Appendix \ref{A_appendix_nota} we collect the notations used in this paper.

\section{General notations and definitions}
\label{S_setting}

As standard notation, we will write $\N$ for the natural numbers, $\N_0 = \N \cup \{0\}$, $\Q$ for the rational numbers and $\R$ for the real numbers. The sets of positive rational and real numbers will be denoted by $\Q^+$ and $\R^+$ respectively.

The $d$-dimensional real vector space will be denoted by $\R^d$. $B^d(x,r)$ is the open unit ball in $\R^d$ with center $x$ and radius $r>0$ and $\mathbb S^{d-1}$ is the $(d-1)$-dimensional unit sphere. The scalar product of two vectors $x,y \in \R^d$ will be denoted by $x \cdot y$, and the Euclidean norm of $x \in \R^d$ by $|x| = \sqrt{x \cdot x}$. To avoid the analysis of different cases when parameters are in $\R^k$ for $k=1,\dots,d$ or in $\N$, we set $\R^0 := \N$.

We denote the first infinite ordinal number by $\omega$.

Given a set $X$, $\P(X)$ is the power set of $X$. The closure of a set $A$ in a topological space $X$ will be denoted by $\clos\,A$, and its interior by $\inter\,A$. If $A \subset Y \subset X$, then the relative interior of $A$ in $Y$ is $\interr A$: in general the space $Y$ will be clear from the context. The topological boundary of a set $A$ will be denoted by $\partial A$, and the relative boundary is $\partial_\mathrm{rel} A$.

If $A$, $A'$ are subsets of a real vector space, we will write
\begin{equation}
\label{E_vector_sum}
A + A' := \big\{ x + x':\, x \in A, x' \in A' \big\}.
\end{equation}
If $T \subset \R$, then we will write
\begin{equation}
\label{E_product_scalar_vector_def}
T A := \big\{ t x:\, t \in T, x \in A \big\}.
\end{equation}
In particular $A - A' = A + (-A')$.

If $\prod_i X_i$ is the product space of the spaces $X_i$, we will denote the projection on the $\bar i$-component either as $\mathtt p_{\bar i}$ or $\mathtt p_{x_{\bar i}}$ or $\mathtt p_{X_{\bar i}}$: in general no ambiguity will occur.

\subsection{Functions and multifunctions}
\label{Ss_souslin_multifunction}

A multifunction $\mathbf f$ will be considered either as a map $\mathbf f:\,X \subset \dom\,\mathbf f \to \P(Y)$ or as a set $\mathbf f \subset X \times Y$. The set $\dom\, \mathbf f$ is called the \emph{domain} of $\mathbf f$. For every $x\in\dom\,\mathbf f$ we will write
\[
\mathbf f(x) = \big\{ y \in Y: (x,y) \in \mathbf f \big\}.
\]
The inverse of $\mathbf f$ will be denoted by
\begin{equation}
\label{E_inverse_multi_function}
\mathbf f^{-1} = \big\{ (y,x) \in Y \times X : (x,y) \in \mathbf f \big\}.
\end{equation}
Similarly, if $A\subset X\times Y$, then $A^{-1}:=\{(y,x):\,(x,y)\in A\}$.

In the same spirit, we will often not distinguish between a single valued function $\mathtt f$ and its graph, denoted by $\Graph\, \mathtt f$. We say that the function $\mathtt f$ (or the multifunction $\mathbf f$) is \emph{$\sigma$-continuous} if the set $\Graph\,\mathtt f$ (or $\mathbf f\subset X\times Y$) is $\sigma$-compact. Note that we do not require its domain to be the entire space.

If $\mathtt f$, $\mathtt g$ are two functions, their composition will be denoted by $\mathtt g \circ \mathtt f$, with domain $\mathtt f^{-1}(\dom\,\mathtt g)$.
If $\mathtt f:X\to Y$, $\mathtt g:X\to Z$, then the product map is denoted by $\mathtt f\times \mathtt g:X\to Y\times Z$.

The epigraph of a function $\mathtt f : X \to \R$ is the set
\begin{equation}
\label{E_epigraph_function}
\epi\,\mathtt f := \big\{ (x,t)\in X\times\R : \mathtt f(x) \leq t \big\}.
\end{equation}
The identity map will be written as $\mathbb I$, the characteristic function of a set $A$ will be denoted by
\begin{equation}
\label{E_char_funct_set_A}
\chi_A(x) :=
\begin{cases}
1 & x \in A, \crcr
0 & x \notin A,
\end{cases}
\end{equation}
and the indicator function of a set $A$ is defined by
\begin{equation}
\label{E_indicator_function_A}
\ind_A(x) :=
\begin{cases}
0 & x \in A, \crcr
+\infty & x \notin A.
\end{cases}
\end{equation}

\subsection{Affine subspaces, convex sets and norms}
\label{Ss_intro_affine_subspaces_cones}

For $k,k',d \in \N$, $k' \leq k \leq d$, define $\mathcal G(k,\R^d)$ to be the set of $k$-dimensional subspaces of $\R^d$ and let $\mathcal A(k,\R^d)$ be the set of $k$-dimensional affine subspaces of $\R^d$. If $V \in \mathcal A(k,\R^d)$, we define $\mathcal A(k',V) \subset \mathcal A(k',\R^d)$ to be the set of $k'$-dimensional affine subspaces of $V$. We also denote by $\mathtt p_V:\R^d\to V$ the projection map.

If $A \subset \R^d$, then define its \emph{affine span} as
\begin{equation}
\label{E_affine_span_set}
\aff\,A := \bigcap_{\nfrac{V\in\mathcal A(k,\R^d)}{\,k\in\N,\,A\subset V}} V,
\end{equation}
and its convex hull $\conv\,A$ as the set given by the intersection of all convex sets of $\R^d$ containing $A$. Given $k$ vectors $\{\e_1,\dots,\e_k\}\subset\R^d$, their linear span $\mathrm{aff}\,\{\e_1,\dots,\e_k,0\}$ is denoted by $\langle\e_1,\dots,\e_k\rangle$, and the orthogonal space to $V\in\mathcal G(k,\R^d)$ is denoted by $V^\perp$. Notice that
\[
\aff\,A=\conv\,A+\R(\conv\,A-\conv\,A) \qquad \text{and} \qquad \aff\,A\in\underset{k\in\N}{\bigcup}\mathcal A(k,\R^d),
\]
unless $A$ consists of a single point. If we set by convention $\mathcal A(0,\R^d)=\R^d$, then the above formula holds also when $A$ is a singleton. 

The \emph{linear dimension} of an affine subspace $V$ is denoted by $\dim\,V$, and we set accordingly $\dim\{x\}=0$ for all $x\in\mathcal A(0,\R^d)$. 

If $A$ is convex, then its relative interior in $\aff\, A$, denoted by $\interr A$, is nonempty and $A\subset\clos\,\interr A$. Hence we define $\dim\,A:=\dim \aff\,A$. 

An \emph{extremal face} of a convex set $A\subset\R^d$ is by definition any convex set $A'\subset A$ such that 
\[
\interr\,[x,y]\cap A'\neq\emptyset\quad\Rightarrow\quad[x,y]\in A',\quad \forall\,x,y\in A. 
\]

A convex set $C\subset\R^d$ is a \emph{convex cone} if 
\[
C = \{0\} \cup \R^+C.
\]
In particular, $0$ is called the \emph{vertex} of the convex cone. For all $k\in\{1,\dots,d\}$, we let $\mathcal C(k,\R^d)$ to be the set of closed $k$-dimensional convex cones in $\R^d$ which are \emph{nondegenerate}, meaning that
\[
C\in\mathcal C(k,\R^d)\quad\Rightarrow\quad C\setminus\{0\}\subset \interr\, H_C
\]
for some $k$-dimensional half-plane $H_C \subset \R^d$.

The extremal faces of a convex cone are still convex cones called \emph{extremal cones}.

If $D$ is a $d$-dimensional compact convex set in $\R^d$ and $0\in\inter\,D$, then one can define the \emph{(convex) norm} (or \emph{Minkowski functional}) $\d{\cdot}$ generated by $D$ as 
\begin{equation}
\label{E_norm}
\d{x} = \min \big\{ t \in \R^+:\,x \in t D \big\} = \max \big\{ x' \cdot x:\,x' \in D^* \big\},
\end{equation}
where 
\[
D^* := \Big\{ x' \in \R^d:\,x' \cdot x \leq 1 \text{ for all $x\in D$} \Big\}
\]
is the \emph{convex dual} to $D$. Equivalently, $\d{\cdot}:\R^d\to\R$ is identified by the following properties:
\begin{align*}
& \{x\in\R^d:\,\d{x}\leq1\}=D;\\
& \d{t x} = t \d{x},\quad\forall\,t \geq 0 \qquad \text{positively $1$-homogeneous};\\
& \d{x+y}\leq\d{x}+\d{y}\qquad\text{subadditive}.
\end{align*}
In particular, since $\d{\cdot}$ is positively $1$-homogeneous, subadditivity can be equivalently replaced by convexity. 

\begin{remark}
 \label{R_cone_epi}
Notice that the set $\epi\,\d{\cdot}$ belongs to $\mathcal C(d+1;\R^{d+1})$, with the identification $\R^{d}\times\R\overset{\sim}{=}\R^{d+1}$. Viceversa, given a convex cone $C\in\mathcal C(k,\R^d)$, if we fix a system of coordinates $(x_1,\dots,x_k,x_{k+1},\dots,x_d)\in\R^d$ such that $H_C=\{x_k\geq0,\,x_{k+1}=\dots=x_d=0\}$, then $C$ is the epigraph  of a convex norm on $\R^{k-1}\overset{\sim}{=}\{x_{k}=\dots=x_d=0\}$.
\end{remark}

We will call \emph{extremal cones} of a convex norm $\d{\cdot}:\R^d\to\R$ either the extremal cones of $\epi\,\d{\cdot}$ in $\R^{d+1}$ or their projections on $\R^d$, being the distinctions between the two cases clear from the context.

For $C \in \mathcal C(k,\R^d)$ we call $C\cap\mathbb S^{d-1}$ the \emph{set of directions} of $C$. For $V \in \mathcal G(k,\R^d)$ we will also write
\begin{equation}
\label{E_cones_and_directions_in_subspace}
\mathcal C(k',V) := \big\{ C \in \mathcal C(k',\R^d): C \subset V \big\}.
\end{equation}

If $C \in \mathcal C(k,\R^d)$, $r>0$, we also define the cone
\begin{equation}
\label{E_epsilon_neigh_of_Cka}
\mathring C(r) := \{0\} \cup \R^+ \Big( \big( C\cap\mathbb S^{d-1} + B^d(0,r) \big) \cap \aff\,C \Big).
\end{equation}
Clearly $C(r) := \clos\,\mathring C(r) \in \mathcal C(k,\R^d)$ for $0 < r \ll 1$ and $C = \underset{n}{\cap}\, \mathring C(2^{-n})$. For $r > 0$ we also define the cone
\begin{equation}
\label{E_inverse_neigh_Cone}
\mathring C(-r) := \{0\} \cup \R^+ \Big\{ x \in \mathbb S^{d-1} \cap \aff\,C :\, B^d(x,r) \cap \aff\,C \subset C \Big\},
\end{equation}
so that $\{0\} \cup \interr C = \underset{n}{\cup}\, \mathring C(-2^{-n})$: as before $C(-r) := \clos\,\mathring C(-r) \in \mathcal C(k,R^d)$ for $0 < r \ll 1$. More generally, for $C \in \mathcal C(k,\R^d)$ we will use the notation
\begin{equation}
\label{E_open_cone_iterior_def}
\mathring C := \{0\} \cup \interr C = \bigcup_{n \in \N} \mathring C(-2^{-n}).
\end{equation}

By convention we set $\mathcal C(0,\R^d)=\R^d$ and we will often denote a convex cone $C$ as $C^{\dim\,C}$ to emphasize its dimension.

On $\mathcal G(k,\R^d)$, $\mathcal A(k,\R^d)$ and $\mathcal C(k,\R^d)$ we consider the topology given by the Hausdorff distance $\mathtt d_{\mathrm H}$ in every closed ball $\clos\, B^d(0,r)$ of $\R^d$, i.e.
\begin{equation}
\label{E_distance_cones}
\mathtt d(A,A') := \sum_n 2^{-n} \mathtt d_{\mathrm H} \big( A \cap B^d(0,n), A' \cap B^d(0,n) \big).
\end{equation}
for two generic elements $A$, $A'$.

If $A \subset \mathbb S^{d-1}$, its spherical convex envelope is defined as
\begin{equation}
\label{E_convex_envelope_on_sphere}
\conv_{\mathbb S^{d-1}} A := \mathbb S^{d-1} \cap \big( \R^+  \conv\,A \big).
\end{equation}

\subsection{Measures and disintegration} 
\label{Ss_measure_disintegration}

We will denote the Lebesgue measure on $\R^d$ by $\mathcal L^d$, and the $k$-dimensional Hausdorff measure on $V \in \mathcal A(k,\R^d)$ as $\mathcal H^k \llcorner_V$. In general, the restriction of a function/measure to a set $A \in \R^d$ will be denoted by the symbol $\llcorner_A$ (or sometimes $\llcorner A$) following the function/measure. The product of two locally finite Borel measures $\varpi_0$, $\varpi_1$ will be denoted by $\varpi_0 \otimes \varpi_1$.

The Lebesgue points $\mathrm{Leb}(A)$ of a set $A \subset \R^d$ are the points $z \in A$ such that
\begin{equation}
\label{E_lebesgue_point_definition}
\lim_{r \to 0} \frac{\mathcal L^d(A \cap B^d(z,r))}{\mathcal L^d(B^d(z,r))} = 1.
\end{equation}
If $\varpi$ is a locally bounded Borel measure on $\R^d$, we will write $\varpi \ll \mathcal L^d$ if $\varpi$ is \emph{absolutely continuous} (a.c. for brevity) w.r.t. $\mathcal L^d$.

For a generic \emph{Polish space} $X$ (i.e., a separable and complete metric space), the Borel sets and the set of Borel probability measures will be respectively denoted by $\mathcal B(X)$ and $\mathcal P(X)$.
The \emph{Souslin sets $\Sigma^1_1$} of a Polish space $X$ are the projections on $X$ of the Borel sets of $X \times X$. The $\sigma$-algebra generated by the Souslin sets will be denoted by $\varTheta$.

Two Radon measures $\varpi_0$, $\varpi_1$ on $X$ are \emph{equivalent} if for all $B \in \mathcal B(X)$
\begin{equation}
\label{E_equiv_varpi}
\varpi_0(B) = 0 \quad \Longleftrightarrow \quad \varpi_1(B)=0,
\end{equation}
and we denote this property by $\varpi_0 \simeq \varpi_1$.

If $\varpi$ is a measure on a measurable space $X$ and $\mathtt f : X \to Y$ is an $\varpi$-measurable map, then the \emph{push-forward of $\varpi$} by $\mathtt f$ is the measure $\mathtt f_\# \varpi$ on $Y$ defined by
\begin{equation}
\label{E_push_forward}
\mathtt f_\#\varpi(B)=\varpi(\mathtt f^{-1}(B)),\quad\text{for all $B$ in the $\sigma$-algebra of $Y$.}
\end{equation}

Finally we briefly recall the concept of disintegration of a measure over a partition.

\begin{definition}[Partitions]
\label{D_part}
A \emph{partition} in $\R^d$ is a family $\{Z_{\a}\}_{\a\in\A}$ of disjoint subsets of $\R^d$. We say that $\{Z_{\a}\}_{\a\in\A}$ is a \emph{Borel partition} if $\A$ is a Polish space, $\underset{\mathfrak a \in \mathfrak A}{\cup} Z_\mathfrak a$ is Borel and the \emph{quotient map} $\mathtt h : \underset{\a\in\A} {\cup} Z_\a \to \A$, $\mathtt h : z \mapsto \mathtt h(z) = \a$ such that $z \in Z_{\mathfrak a}$, is Borel-measurable. We say that $\{Z_{\a}\}_{\a\in\A}$ is \emph{$\sigma$-compact} if $\A\subset\R^k$ for some $k\in\N$, $\underset{\a \in \A}{\cup}\, Z_\mathfrak a$ is $\sigma$-compact and $\mathtt h$ is $\sigma$-continuous.
\end{definition}

The sets in the $\sigma$-algebra $\{\mathtt h^{-1}(F):\, F \in \mathcal B(\A)\}$ are also called in the literature \emph{saturated sets}. Notice that we do not require $\{Z_{\a}\}_{\a\in\A}$ to be a covering of $\R^d$.

\begin{definition}[Disintegration]
\label{D_dis}
Given a Borel partition in $\R^d$ into sets $\{Z_\a\}_{\a\in\A}$ with quotient map $\mathtt h : \underset{\a\in\A}{\cup} Z_\a \to \A$ and a probability measure $\varpi \in \mathcal P(\R^d)$ s.t. $\varpi\bigl(\underset{\a\in\A}{\cup}Z_\a\bigr)=1$, a \emph{disintegration} of $\varpi$ w.r.t. $\{Z_\a\}_{\a\in\A}$ is a family of probability measures $\{\varpi_\a\}_{\a\in\A}\subset\mathcal P(\R^d)$ such that
\begin{align}
& \A \ni \a \mapsto \varpi_\a(B) \quad \text{is an $\mathtt h_{\#} \varpi$-measurable map $\forall\,B\in\mathcal B(\R^d)$}, \label{E_disint1}\\
& \varpi \bigl( B \cap \mathtt h^{-1}(F) \bigr) = \int_F \varpi_{\a}(B)\,d\mathtt h_{\#}\varpi(\a), \quad \forall\,B\in\mathcal B(\R^d),\,F\in\mathcal B(\A). \label{E_disint2}
\end{align}
\end{definition}

As proven in Appendix A of \cite{BiaCar} (for a more comprehensive analysis see \cite{Fre}), we have the following theorem.

\begin{theorem}
\label{T_disint}
Under the assumptions of Definition \ref{D_dis}, the disintegration $\{\varpi_\a\}_{\a\in\A}$ is \emph{unique} and \emph{strongly consistent}, namely
\begin{align}
&\text{if }\a\mapsto\varpi^1_{\a},\:\a\mapsto\varpi^2_{\a} \text{ satisfy \eqref{E_disint1}-\eqref{E_disint2}} \quad \Longrightarrow \quad \varpi^1_\a=\varpi^2_\a \ \text{for $\mathtt h_{\#}\varpi$-a.e. $\a\in\A$}; \notag \\
&\varpi_{\a}(Z_\a)=1 \quad \text{ for $\mathtt h_{\#}\varpi$-a.e. $\a\in\A$}.\label{E_disintsc}
\end{align}
\end{theorem}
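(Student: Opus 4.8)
The plan is to establish, in turn, existence, uniqueness and strong consistency of the family $\{\varpi_\a\}_{\a\in\A}$; the last of these is where I expect the real work to be. For \emph{existence} I would fix a countable algebra $\mathcal G=\{B_m\}_{m\in\N}$ of Borel subsets of $\R^d$ generating $\mathcal B(\R^d)$. For each $m$ the finite Borel measure $F\mapsto\varpi\bigl(B_m\cap\mathtt h^{-1}(F)\bigr)$ on $\A$ is absolutely continuous with respect to $\mathtt h_\#\varpi$; let $f_m\colon\A\to[0,1]$ be a Radon--Nikodym density. Since for each fixed $F\in\mathcal B(\A)$ the map $B\mapsto\varpi\bigl(B\cap\mathtt h^{-1}(F)\bigr)$ is a measure, uniqueness of densities produces a single $\mathtt h_\#\varpi$-null set outside of which the assignment $B_m\mapsto f_m(\a)$ is a normalized premeasure on $\mathcal G$, hence extends (Carath\'{e}odory, using that $\R^d$ is a standard Borel space) to a Borel probability measure $\varpi_\a$ on $\R^d$. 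One then checks \eqref{E_disint1} and, by a monotone class argument starting from $\mathcal G$, also \eqref{E_disint2}; the details are the classical ones, as in \cite{Fre}.

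For \emph{uniqueness} I would use the same countable algebra $\mathcal G$: if $\a\mapsto\varpi^1_\a$ and $\a\mapsto\varpi^2_\a$ both satisfy \eqref{E_disint1}--\eqref{E_disint2}, then for each fixed $m$ the two $\mathtt h_\#\varpi$-integrable maps $\a\mapsto\varpi^1_\a(B_m)$ and $\a\mapsto\varpi^2_\a(B_m)$ have the same integral $\varpi\bigl(B_m\cap\mathtt h^{-1}(F)\bigr)$ over every $F\in\mathcal B(\A)$, hence coincide $\mathtt h_\#\varpi$-a.e.; discarding the countable union of the corresponding null sets yields a conull set of $\a$ on which $\varpi^1_\a$ and $\varpi^2_\a$ agree on $\mathcal G$, and therefore on all of $\mathcal B(\R^d)$ by Dynkin's $\pi$--$\lambda$ theorem.

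For \emph{strong consistency}, since $\A$ is Polish it is second countable, so I fix a countable base $\{A_n\}_{n\in\N}$ of its topology; this family separates the points of $\A$ and covers $\A$, and each $\mathtt h^{-1}(A_n)$ is Borel in $\R^d$. Applying \eqref{E_disint2} with $B=\mathtt h^{-1}(A_n)$ and $F=A_n$, and then with $F=\A\setminus A_n$, the nonnegativity of the integrands forces $\varpi_\a\bigl(\mathtt h^{-1}(A_n)\bigr)=\chi_{A_n}(\a)$ for $\mathtt h_\#\varpi$-a.e.\ $\a$. Let $N$ be the union of these countably many exceptional null sets. For $\a\notin N$ the probability measure $\varpi_\a$ gives full mass to $\mathtt h^{-1}(A_n)$ whenever $\a\in A_n$ and full mass to $\R^d\setminus\mathtt h^{-1}(A_n)$ whenever $\a\notin A_n$, so intersecting these countably many conull events gives
\[
\varpi_\a\biggl(\ \bigcap_{n\,:\,\a\in A_n}\mathtt h^{-1}(A_n)\ \cap\ \bigcap_{n\,:\,\a\notin A_n}\bigl(\R^d\setminus\mathtt h^{-1}(A_n)\bigr)\biggr)=1 .
\]
Because $\{A_n\}$ separates points and covers $\A$, the set in parentheses is exactly $\mathtt h^{-1}(\{\a\})=Z_\a$, whence $\varpi_\a(Z_\a)=1$ for $\mathtt h_\#\varpi$-a.e.\ $\a$, which is \eqref{E_disintsc}.

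The step I expect to be the main obstacle is precisely strong consistency: one has to exhibit a \emph{single} $\mathtt h_\#\varpi$-null set outside of which \emph{every} conditional measure $\varpi_\a$ is simultaneously concentrated on its own fibre $Z_\a$, and this is possible only because the quotient space $\A$ admits a countable family of Borel sets separating its points --- the one place where being Polish (or, more generally, standard Borel) is essential. Uniqueness, and the measurability bookkeeping in the existence part, are by comparison routine monotone-class and Radon--Nikodym arguments.
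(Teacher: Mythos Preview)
The paper does not give its own proof of this theorem; it simply cites Appendix~A of \cite{BiaCar} and Fremlin's treatise \cite{Fre}. Your proposal is a correct and standard argument of exactly the kind found in those references: the Radon--Nikodym/monotone-class construction for existence, the countable-generating-algebra argument for uniqueness, and --- the only genuinely nontrivial point --- the use of a countable separating family of Borel sets in the Polish quotient $\A$ to force $\varpi_\a(Z_\a)=1$ for a \emph{single} conull set of $\a$'s. You have correctly identified where the Polish hypothesis on $\A$ is used.
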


The measures $\{\varpi_\a\}_{\a\in\A}$ are also called \emph{conditional probabilities}.

To denote the (strongly consistent) disintegration $\{\varpi_\a\}_{\a\in\A}$ of a probability measure $\varpi\in\mathcal P(\R^d)$ on a Borel partition $\{Z_\a\}_{\a\in\A}$ we will often use the formal notation
\begin{equation}
\label{E_dis_not}
\varpi=\int_\mathfrak A \varpi_\mathfrak a \,dm(\a),\quad \varpi_\mathfrak a(Z_\a)=1,
\end{equation}
with $m = \mathtt h_\# \varpi$, $\mathtt h$ being the quotient map.

Since the conditional probabilities $\varpi_\a$ are defined $m$-a.e., many properties (such as $\varpi_\a(Z_\a)=1$) should be considered as valid only for $m$-a.e. $\a \in \A$: for shortness, we will often consider the $\varpi_{\mathfrak a}$ redefined on $m$-negligible sets in order to have statements valid $\forall \a \in \A$.

We also point out the fact that, according to Definition \ref{D_dis}, in order that a disintegration of $\varpi$ over a partition can be defined, $\varpi$ has to be concentrated on the union of the sets of the partition (which do not necessarily cover the whole $\R^d$). In general, if we remove this assumption, since the formulas \eqref{E_disint1}-\eqref{E_disint2} make sense nonetheless for $B \subset \underset{\a\in\A}{\cup}Z_\a$, by means of formula \eqref{E_dis_not} we ``reconstruct'' only $\varpi\llcorner_{\underset{\a\in\A}{\cup}Z_\a}$ .

Let $m'\in\mathcal P(\A)$, $\{\varpi'_\a\}_{\a\in\A}\subset\mathcal P(\R^d)$ such that
\begin{equation*}
\A \ni \a \mapsto \varpi'_\a(B) \quad \text{ is $m'$-measurable, $\forall\,B\in\mathcal B(\R^d)$}.
\end{equation*}
Then, one can define the probability measure $\varpi'$ on $\R^d$ by
\begin{equation}
\label{E_int_measure}
\varpi'(B)=\int_\mathfrak A \varpi'_\a(B)\,dm'(\a),\quad\text{ $\forall\,B\in\mathcal B(\R^d)$}.
\end{equation}
%
The measure defined in \eqref{E_int_measure} will be denoted as
\[
\varpi' = \int_\mathfrak A \varpi'_\a\,dm'.
\]
Notice that, despite the notation is the same as in \eqref{E_dis_not}, the family $\{\varpi'_\a\}_{\a \in \mathfrak A}$ in the above definition is not necessarily a disintegration of $\varpi'$, both because the measure $m'$ is not necessarily a quotient measure of a Borel partition and because the measures $\varpi'_\a$ are not necessarily concentrated on the sets of a partition. In the rest of the paper, such an ambiguity will not occur, since we will always point out whether a measurable family of probability measures is generated by a disintegration or not.  

\begin{remark}
\label{R_disint_lebesgue}
If instead of $\varpi \in \mathcal P(\R^d)$ we consider the Lebesgue measure $\mathcal L^d$ (more generally, a Radon measure) a disintegration $\{\upsilon_\a\}_{\a\in\A}$ is to be considered in the following sense. First choose a partition $\{A_i\}_{i\in\N}$ of $\R^d$ into sets with unit Lebesgue measure, then let
\[
\mathcal L^d \llcorner_{A_i} = \int \upsilon_{\mathfrak a,i} d\eta_i(\mathfrak a), \quad \eta_i := \mathtt h_\# \mathcal L^d \llcorner_{A_i},
\]
be the standard disintegration of the probability measure $\mathcal L^d \llcorner_{A_i}$, and finally
\[
\upsilon_\mathfrak a := \sum_i 2^i \upsilon_{\mathfrak a,i}, \quad \eta := \sum_i 2^{-i} \eta_i.
\]
Clearly, in this definition the ``conditional probabilities'' $\upsilon_\mathfrak a$ and the ``image measure'' $\eta$ depend on the choice of the sets $\{A_i\}_{i\in\N}$. 
\end{remark}

\subsection{Optimal transportation problems}
\label{Ss_transference_plans}

For a generic Polish space $X$, measures $\mu,\nu \in \mathcal P(X)$ and Borel \emph{cost function} $\mathtt c : X \times X \to [0,\infty]$, we will consider the sets of probability measures
\begin{equation}
\label{E_Pi_mu_nu}
\Pi(\mu,\nu) := \Big\{ \pi \in \mathcal P(X \times X) : (\mathtt p_1)_\# \pi = \mu, (\mathtt p_2)_\# \pi = \nu \Big\},
\end{equation}
\begin{equation}
\label{E_Pi_mu_nu_fin}
\Pi^f_{\mathtt c}(\mu,\nu) := \bigg\{ \pi \in \Pi(\mu,\nu) : \int_{X \times X} \mathtt c \,d\pi < +\infty \bigg\},
\end{equation}
\begin{equation}
\label{E_Pi_mu_nu_optimal}
\Pi^{\mathrm{opt}}_{\mathtt c}(\mu,\nu) := \bigg\{ \pi \in \Pi(\mu,\nu) : \int_{X \times X} \mathtt c \,d\pi = \inf_{\pi' \in \Pi(\mu,\nu)} \int_{X \times X} \mathtt c \,d\pi' \bigg\}.
\end{equation}
The elements of the set defined in \eqref{E_Pi_mu_nu} are called \emph{transference} or \emph{transport plans} between $\mu$ and $\nu$, those in \eqref{E_Pi_mu_nu_fin} \emph{transference} or \emph{transport plans with finite cost} and the set defined in \eqref{E_Pi_mu_nu_optimal} is the set of \emph{optimal plans}. The quantity
\begin{equation}
\label{E_transport_prob}
\mathtt C(\mu,\nu) := \inf_{\pi \in \Pi(\mu,\nu)} \int_{X \times X} \mathtt c \,d\pi
\end{equation}
is the \emph{transportation cost}.

In the following we will always consider costs and measures s.t. $\mathtt C(\mu,\nu)<+\infty$, thus $\Pi^f_{\mathtt c}(\mu, \nu)\neq\emptyset$.

The problem of showing that $\Pi^{\mathrm{opt}}_{\mathtt c}(\mu,\nu)\neq\emptyset$ is called Monge-Kantorovich problem.

We recall (see e.g. \cite{BiaCar,kel:duality}) that any optimal plan $\pi\in\Pi^{\mathrm{opt}}_{\mathtt c}(\mu,\nu)$ is \emph{$\mathtt c$-cyclically monotone}, i.e. there exists a $\sigma$-compact \emph{carriage} $\Gamma\subset X\times X$ such that $\pi(\Gamma)=1$ and for all $I\in \N$, $\{(x_i,y_i)\}_{i=1}^I\subset\Gamma$,
\[
\sum_{i=1}^I\mathtt c(x_i,y_i)\leq\sum_{i=1}^I\mathtt c(x_{i+1},y_i),
\]
where we set $x_{I+1}:=x_1$. Any such $\Gamma$ is called \emph{$\mathtt c$-cyclically monotone carriage}. However, in order to deduce the optimality of a transference plan the $\mathtt c$-cyclical monotonicity condition itself is not sufficient and one has to impose additional conditions. Most of the conditions in the literature exploit the dual formulation of Monge-Kantorovich problem (see \cite{villa:Oldnew}), namely
\[
 \mathtt C(\mu,\nu)=\sup_{\underset{\phi\:\mu\text{-meas. and }\psi\:\nu\text{-meas.}}{\phi,\,\psi:X\rightarrow[-\infty,+\infty)}}\Big\{\int \phi(x)\,d\mu(x)+\int\psi(y)\,d\nu(y):\,{\phi(x)+\psi(y)\leq \mathtt c(x,y)}\Big\}.
\]
For example (see Lemma 5.3 of \cite{BiaCar}) if there exists a pair of functions
\begin{align}
&\phi,\,\psi:X\to[-\infty,+\infty), \quad \text{ $\phi$ $\mu$-measurable and $\psi$ $\nu$-measurable}, \label{E_phipsi1} \\
&\phi(x)+\psi(y)\leq \mathtt c(x,y), \quad \forall\, x,y \in X, \label{E_phipsi2} \\
&\phi(x)+\psi(y)= \mathtt c(x,y), \quad \text{ $\pi$-a.e. for some $\pi\in \Pi(\mu,\nu)$}, \label{E_phipsi3}
\end{align}
then $\phi,\,\psi$ are optimizers for the dual problem and $\pi\in\Pi^{\mathrm{opt}}_{\mathtt c}(\mu,\nu)$. Conditions on the cost guaranteeing the existence of such potentials (and indeed of more regular ones) are e.g. the following ones:
\begin{enumerate}
\item \label{Point_boun_fg} $\mathtt c$ is l.s.c. and satisfies $\mathtt c(x,y) \leq \mathtt f(x) + \mathtt g(y)$ for some $\mathtt f \in L^1(\mu)$, $\mathtt g \in L^1(\nu)$ (\cite{rachrusch});
\item $\mathtt c$ is real-valued and satisfies the following assumption (\cite{conf:optcime})
\[
\nu\Big(\Big\{y:\,\int\mathtt c(x,y)\,d\mu(x)<+\infty\Big\}\Big)>0,\quad \mu\Big(\Big\{x:\,\int\mathtt c(x,y)\,d\nu(y)<+\infty\Big\}\Big)>0;
\]
\item $\{\mathtt c<+\infty\}$ is an open set $O$ minus a $\mu\otimes\nu$-negligible set $N$ (\cite{beigolmarsch}).
\end{enumerate}

The weakest sufficient condition for optimality, which does not rely on the existence of global potentials and implies the results recalled above, has been given in \cite{BiaCar}. Since such condition will be needed and of fundamental importance for the proofs of our main results (in particular, Theorem \ref{T_subpart_step}), in the next section we give a brief explanation of the approach followed in \cite{BiaCar} and we state it in a form which will be more convenient for our purposes.

\subsection{Linear preorders, uniqueness and optimality}
\label{Ss_linear_pre_unique_transf}


Let $\mathtt c : X \times X \to [0,+\infty]$ be a Borel cost function on a Polish space $X$ such that $\mathtt c(x,x) = 0$ for all $x \in X$, let $\mu$, $\nu \in \mathcal P(X)$ be such that $\Pi^f_{\mathtt c}(\mu,\nu) \neq \emptyset$ and let $\Gamma \subset X \times X$ be a $\mathtt c$-cyclically monotone carriage of some $\pi \in \Pi^f_{\mathtt c}(\mu,\nu)$ satisfying w.l.o.g. $\{(x,x) : x \in X\} \subset \Gamma$.
A standard formula for constructing a pair of optimal potentials is the following: for fixed $(x_0,y_0) \in \Gamma$ and $(x,y)\in\Gamma$, define
\begin{align}
\label{E_pot_form}
\phi(x) &:= \inf \bigg\{ \sum_{i=0}^I \mathtt c(x_{i+1},y_i) - \mathtt c(x_i,y_i):\, (x_i,y_i) \in \Gamma, I \in \N, x_{I+1} = x \bigg\},\\
\psi(y) &:=\mathtt c(x,y)-\phi(x). \notag
\end{align}
If one of the assumptions $(1)$-$(3)$ holds, then this $\phi$, $\psi$ satisfy \eqref{E_phipsi1}-\eqref{E_phipsi3}. However, for general Borel costs $\mathtt c$, the assumptions $(1)$-$(3)$ are not satisfied. In particular, for any choice of $(x_0,y_0)$, there may be a set of positive $\mu$-measure on which $\phi$ is not well defined (namely, the infimum in \eqref{E_pot_form} is taken over an empty set) or takes the value $-\infty$ (see the examples in \cite{BiaCar}).

To explain why this can happen and briefly recall the strategy adopted in \cite{BiaCar} to overcome this problem in a more general setting, we need the following definition.

\begin{definition}[Axial paths and cycles]
\label{D_axpath}
An \emph{axial path with base points $\{(x_i,y_i)\}_{i=1}^I \subset \Gamma$}, $I\in\N$, starting at $x = x_1$ and ending at $x'$ is the sequence of points
\[
(x,y_1) = (x_1,y_1),(x_2,y_1),\dots,(x_i,y_{i-1}),(x_i,y_i),(x_{i+1},y_i),\dots,(x_I,y_I),(x',y_I).
\]
We will say that the axial path \emph{goes from} $x$ to $x'$: note that $x \in \mathtt p_1 \Gamma$. A \emph{closed axial path} or \emph{cycle} is an axial path with base points in $\Gamma$ such that $x = x'$. A \emph{$(\Gamma, \mathtt c)$-axial path} is an axial path with base points in $\Gamma$ whose points are contained in $\{\mathtt c < \infty\}$ and a \emph{$(\Gamma,\mathtt c)$-cycle} is a closed $(\Gamma, \mathtt c)$-axial path.
\end{definition}

Notice that, in order that \eqref{E_pot_form} is well defined, for $\mu$-a.e. point $x\in\mathtt p_1\Gamma$ there must be a $(\Gamma,\mathtt c)$-axial path going from $x_0$ to $x$. Moreover, being $\Gamma$ $\mathtt c$-cyclically monotone, $\phi$ is surely finite valued in the case in which for $\mu$-a.e. point $x\in\mathtt p_1 \Gamma$ there exists also a $(\Gamma,\mathtt c)$-axial path going from $x$ to $x_0$ (and thus to a.a. any other point in $\Gamma$). In particular, $x$ and $x_0$ are connected by a $(\Gamma,\mathtt c)$-cycle.

The first idea in \cite{BiaCar} is then to partition $X$ into the equivalence classes $\{Z_\a\}_{\a\in \A}$ induced by the $(\Gamma,\mathtt c)$-cycle equivalence relation and disintegrate $\mu$, $\nu$ over $\{Z_\a\}_{\a\in \A}$ and $\pi$ over $\{Z_\a\times Z_\b\}_{\a,\b\in\A}$. 

Since $\mathtt c(x,x)=0$ $\forall\,x\in X$ and $\Gamma\supset \Graph\,\Id$, then $(x,y)\in\Gamma$ implies that $x$ and $y$ belong to the same $(\Gamma,\mathtt c)$-cycle (consider the path $(x,y)$, $(y,y)$, $(y,y)$, $(x,y)$) and in particular that 
\begin{equation}
\label{E_piconc}
\pi\bigg(\underset{\a\in\A}{\bigcup}Z_\a\times Z_\a\bigg)=1.
\end{equation}

If the disintegration is strongly consistent (see Theorem \ref{T_disint}), we get
\begin{align}
\mu&=\int\mu_\a\,dm(\a),\quad \mu_\a(Z_\a)=1, \label{E_mudisi}\\
\nu&=\int\nu_\a\,dm(\a),\quad\nu_\a(Z_\a)=1, \label{E_nudisi}\\
\pi&=\int\pi_{\a\a}\,d(\Id\times\Id)_\#m(\a),\quad\pi_\a(Z_\a\times Z_\a)=1, \label{E_pidisint}
\end{align}
where $m=\mathtt h_\#\mu=\mathtt h_\#\nu$ because there exists at least a plan in $\Pi^f_\mathtt c(\mu,\nu)$ --in this case $\pi$-- such that \eqref{E_piconc} is satisfied.

Notice that the fact that $\pi$ is concentrated on the diagonal equivalence classes $\{Z_\a \times Z_\a\}_{\a \in \A}$, i.e. formula \eqref{E_piconc}, is equivalent to say that the quotient measure $(\mathtt h \times \mathtt h)_\#\pi$ 
satisfies
\[
(\mathtt h \times \mathtt h)_\#\pi=(\Id\times\Id)_\#m,
\]
i.e. it is concentrated on the diagonal of $\A\times \A$ (see \eqref{E_pidisint}).

Now, as a consequence of the fact that $\mu_\mathfrak a$-a.a. points in $Z_\mathfrak a$ can be connected to $\mu_\mathfrak a$-a.a. other points in $Z_\mathfrak a$ by a $(\Gamma\cap Z_\mathfrak a\times Z_\mathfrak a, \mathtt c)$-cycle and $\exists\,\pi_{\mathfrak a\a}\in \Pi^f_\mathtt c(\mu_\mathfrak a,\nu_\mathfrak a)$ $\mathtt c$-cyclically monotone which is concentrated on $\Gamma\cap Z_\mathfrak a\times Z_\mathfrak a$, using \eqref{E_pot_form} we are able to construct optimal potentials $\phi_\a$, $\psi_\a:Z_\a\to[-\infty,+\infty)$ for the transportation problem in $\Pi(\mu_\a,\nu_a)$ and conclude that
\[
\pi_{\a\a} \in \Pi^{\mathrm{opt}}_\mathtt c(\mu_\a,\nu_\a), \quad\text{ for $m$-a.e. $\a$.}
\]

Let us then consider another $\pi'\in \Pi^f_\mathtt c(\mu,\nu)$. After the disintegration w.r.t. $\{Z_\a\times Z_\b\}_{\a,\b \in \A}$ we get
\[
\pi' = \int\pi'_{\a\b}dm'(\a,\b), \quad \pi'_{\a\b}(Z_\a\times Z_\b)=1,
\]
with
\begin{equation}
\label{E_quotient_cost}
m'\in\Pi^f_{(\mathtt h \times \mathtt h)_\# \mathtt c}(m,m), \quad \text{where} \quad (\mathtt h \times \mathtt h)_\# \mathtt c(\mathfrak a,\mathfrak b) = \inf_{Z_\a \times Z_\mathfrak b} \mathtt c(x,y).
\end{equation}

Hence one has the following theorem, which gives a sufficient condition for optimality based on behavior of optimal transport plans w.r.t. disintegration on $(\Gamma,\mathtt c)$-cycle equivalence relations.

\begin{theorem}
\label{T_A1}
Let $\Gamma$ be a $\mathtt c$-cyclically monotone carriage of a transference plan $\pi \in \Pi^f_\mathtt c(\mu,\nu)$. If the partition $\{Z_\a\}_{\a\in\A}$ w.r.t. the $(\Gamma,\mathtt c)$-cycle equivalence relation satisfies
\begin{align}
&\text{the disintegration on $\{Z_\a\}_{\a\in\A}$ is strongly consistent,} \label{E_teoA1} \\
&\pi'\bigg(\underset{\a}{\bigcup}\; Z_\a\times Z_\a\bigg)=1,\quad\forall\,\pi'\in\Pi^f_\mathtt c(\mu,\nu), \label{E_teoA2}
\end{align}
then $\pi$ is an optimal transference plan.
\end{theorem}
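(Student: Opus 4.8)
The plan is to reduce the global optimality of $\pi$ to a fiberwise optimality on the equivalence classes $\{Z_\a\}_{\a\in\A}$, where the $(\Gamma,\mathtt c)$-cycle structure produces genuine potentials. First I would record that $\pi$ itself is concentrated on $\bigcup_\a Z_\a\times Z_\a$: since $\Gamma\supset\Graph\,\Id$ and $\mathtt c$ vanishes on the diagonal, every $(x,y)\in\Gamma$ has $x$ and $y$ in the same class (apply the path $(x,y),(y,y),(y,y),(x,y)$), which is exactly \eqref{E_piconc}, and by assumption \eqref{E_teoA2} every competitor $\pi'\in\Pi^f_\mathtt c(\mu,\nu)$ has the same property. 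Using the strong consistency assumption \eqref{E_teoA1} I would then disintegrate $\mu$ and $\nu$ over $\{Z_\a\}_{\a\in\A}$ and $\pi$, $\pi'$ over $\{Z_\a\times Z_\b\}_{\a,\b}$; because the quotient measures of $\pi$ and $\pi'$ live on the diagonal of $\A\times\A$, projecting onto the two marginals and using uniqueness of the disintegration (Theorem \ref{T_disint}) gives one common quotient measure $m=\mathtt h_\#\mu=\mathtt h_\#\nu$ and representations $\pi=\int\pi_{\a\a}\,dm$, $\pi'=\int\pi'_{\a\a}\,dm$ with $\pi_{\a\a},\pi'_{\a\a}\in\Pi(\mu_\a,\nu_\a)$; Fubini together with $\int\mathtt c\,d\pi,\int\mathtt c\,d\pi'<\infty$ upgrades these to $\Pi^f_\mathtt c(\mu_\a,\nu_\a)$ and gives $\pi_{\a\a}(\Gamma\cap Z_\a\times Z_\a)=1$, for $m$-a.e.\ $\a$.

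The core step is then to show $\pi_{\a\a}\in\Pi^{\mathrm{opt}}_\mathtt c(\mu_\a,\nu_\a)$ for $m$-a.e.\ $\a$. I would fix such an $\a$ and set $\Gamma_\a:=\Gamma\cap Z_\a\times Z_\a$, which is still $\mathtt c$-cyclically monotone and contains the diagonal of $Z_\a$. The key geometric observation is that, since $Z_\a$ is a single $(\Gamma,\mathtt c)$-cycle class and all base points of a cycle through a point of $Z_\a$ lie in $Z_\a\times Z_\a$ (each $x_i$ appears in the path and each $y_i$ is equivalent to $x_i$ via the diagonal trick, so both are equivalent to the given point), any two points of $Z_\a$ are joined by a $(\Gamma_\a,\mathtt c)$-axial path in both directions. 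Fixing a base point $(x_0,y_0)\in\Gamma_\a$, formula \eqref{E_pot_form} applied with $\Gamma_\a$ in place of $\Gamma$ then yields $\phi_\a:Z_\a\to\R$ that is finite on all of $Z_\a$: finite from above because the infimum is over a nonempty set of finite sums, and finite from below because concatenating any admissible path with a fixed return path produces a cycle on which $\mathtt c$-cyclical monotonicity of $\Gamma_\a$ forces the total increment to be nonnegative. The dual $\psi_\a(y):=\mathtt c(x,y)-\phi_\a(x)$ for $(x,y)\in\Gamma_\a$ (consistently defined by $\mathtt c$-cyclical monotonicity) then satisfies \eqref{E_phipsi1}--\eqref{E_phipsi3} for the transport problem $\Pi(\mu_\a,\nu_\a)$, so by Lemma 5.3 of \cite{BiaCar} the plan $\pi_{\a\a}$ is optimal there.

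Finally I would glue: for $m$-a.e.\ $\a$ one has $\pi'_{\a\a}\in\Pi(\mu_\a,\nu_\a)$ and $\pi_{\a\a}$ optimal, hence $\int\mathtt c\,d\pi_{\a\a}\le\int\mathtt c\,d\pi'_{\a\a}$; integrating against $m$ and using that the disintegration reproduces $\int\mathtt c\,d\pi=\int\big(\int\mathtt c\,d\pi_{\a\a}\big)\,dm$ (valid since $\mathtt c\ge0$ is Borel and $\a\mapsto\pi_{\a\a}(B)$ is $m$-measurable by \eqref{E_disint1}), and likewise for $\pi'$, one gets $\int\mathtt c\,d\pi\le\int\mathtt c\,d\pi'$ for every $\pi'\in\Pi^f_\mathtt c(\mu,\nu)$, i.e.\ $\pi$ is optimal.

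I expect the genuine obstacle to be packaged entirely inside the hypotheses \eqref{E_teoA1}--\eqref{E_teoA2}: the reason the statement is neither vacuous nor a consequence of the classical potential-existence results is that for a general Borel cost $\mathtt c$ the $(\Gamma,\mathtt c)$-cycle classes need neither form a well-behaved (Borel/analytic) partition admitting strongly consistent disintegrations nor carry the mass of every competitor, so the decisive work in applications is establishing \eqref{E_teoA1}--\eqref{E_teoA2} rather than the soft argument above; internal to this proof, the only measure-theoretic subtlety is that the fiberwise optimality is a pointwise-in-$\a$ assertion while the measurability required in the last display is exactly the one built into the definition of disintegration, \eqref{E_disint1}.
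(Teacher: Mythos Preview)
Your proof is correct and follows essentially the same approach as the paper: disintegrate $\pi$ and every competitor $\pi'$ on the diagonal classes $\{Z_\a\times Z_\a\}$, use the $(\Gamma,\mathtt c)$-cycle connectedness of each $Z_\a$ together with formula \eqref{E_pot_form} to build optimal potentials $\phi_\a,\psi_\a$ showing $\pi_{\a\a}\in\Pi^{\mathrm{opt}}_\mathtt c(\mu_\a,\nu_\a)$, and then integrate the fiberwise inequality $\int\mathtt c\,d\pi_{\a\a}\le\int\mathtt c\,d\pi'_{\a\a}$ against $m$. Your closing remark that the real work lies in verifying \eqref{E_teoA1}--\eqref{E_teoA2} is exactly the paper's viewpoint as well.
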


Indeed, if \eqref{E_teoA1} and \eqref{E_teoA2} are satisfied, then $\pi'=\int\pi'_{\a\a}\,d(\Id\times\Id)_\#m(\a)$ with $\pi'_{\a\a} \in \Pi^f_\mathtt c(\mu_\a,\nu_\a)$ and one obtains the conclusion by integrating w.r.t. $m$ the optimality of the conditional plans $\pi_{\a\a}$, namely
\[
\int \mathtt c(x,y)\,d\pi_{\a\a}(x,y)\leq\int\mathtt c(x,y)\,d\pi'_{\a\a}(x,y).
\]

The second crucial point in \cite{BiaCar} is then to find weak sufficient conditions such that the assumptions of Theorem \ref{T_A1} are satisfied. 

Before introducing them, we show how the request that the sets of a Borel partition satisfying \eqref{E_teoA2} coincide with the equivalence classes of the $(\Gamma,\mathtt c)$-cycle relation can be weakened, yet yielding the possibility of constructing optimal potentials on each class --and then, as a corollary, to prove the optimality of a $\mathtt c$-cyclically monotone plan $\pi$. 
First, we need the following
\begin{definition}
\label{def:gccyclconn}
 A set $E\subset \mathtt p_1\Gamma$ is \emph{$(\Gamma,\mathtt c)$-cyclically connected} if $\forall\, x,y\in E$ there exists a $(\Gamma,\mathtt c)$-cycle connecting $x$ to $y$.
\end{definition}
According to the above definition, the equivalence classes of $\preccurlyeq_{(\Gamma,\mathtt c)}$ are maximal $(\Gamma,\mathtt c)$-cyclically connected sets, namely $(\Gamma,\mathtt c)$-cyclically connected sets which are maximal w.r.t. set inclusion.

Then notice that, given a Borel partition $\{Z_{\b}'\}_{\b\in\mathfrak B}\subset\R^d$ such that 
\[
 \pi\bigg(\underset{\b}{\bigcup}Z_\b'\times Z_\b'\bigg)=1,\quad\forall\,\pi\in\Pi^f_{\mathtt c}(\mu,\nu)
\]
and whose sets are $(\Gamma,\mathtt c)$-cyclically connected but not necessarily maximal, then it is still possible to define on each of them a pair of optimal potentials and prove the optimality of $\pi$ such that $\pi(\Gamma)=1$.

Moreover, one can weaken this condition by removing a $\mu$-negligible set in the following way.
Let $\mu=\int\mu'_{\b}\,dm'(\b)$, $\mu_{\b}'(Z_\b)=1$.

\begin{definition}
\label{D_gammaconn}
The partition $\{Z_\b'\}_{\b\in\mathfrak B}$ is \emph{$(\mu,\Gamma, \mathtt c)$-cyclically connected} if $\exists\,F\subset X$ $\mu$-conegligible s.t. $Z_\b'\cap F$ is $(\Gamma, \mathtt c)$-cyclically connected $\forall\, \b\in\mathfrak B$. Equivalently, $\exists$ an $m'$-conegligible set $\mathfrak B'\subset \mathfrak B$ s.t. $\forall\,\b'\in\mathfrak B'$ $\exists\,N_\b'\subset Z_\b'$, with` $\mu_{\b}'(N_\b')=0$, s.t. $Z_\b'\setminus N_\b'$ is $(\Gamma, \mathtt c)$-cyclically connected.
\end{definition}

When the $(\mu,\Gamma, \mathtt c)$-cyclically connectedness property holds for all $\mathtt c$-cyclically monotone carriages of all transport plans of finite cost --hence it is possible to construct optimal potentials starting from any $\mathtt c$-cyclically monotone $\Gamma$-- we have the following

\begin{definition}
\label{D_pimunuconn}
 We say that $\{Z_\b'\}$ is \emph{$\Pi^f_\mathtt c(\mu,\nu)$-cyclically connected} if it is $(\mu,\Gamma, \mathtt c)$-cyclically connected $\forall\,\Gamma$ $\mathtt c$-cyclically monotone s.t. $\pi(\Gamma)=1$ for some $\pi\in\Pi^f_\mathtt c(\mu,\nu)$.
\end{definition}

Notice that the $\mu$-conegligible set $F$ in the definition of $(\mu,\Gamma,\mathtt c)$-cyclically connected partition depends on the set $\Gamma$. 

In this paper, in particular for the proof of Theorems \ref{T_final_seconry} and \ref{T_Monge_final}, the importance of $\Pi^f_\mathtt c(\mu,\nu)$-cyclically connected partitions is given by the following proposition.

\begin{proposition}
\label{P_second_cost}
Let $\{Z'_\b\}_{\b\in \B}$ be a $\Pi^f_\mathtt c(\mu,\nu)$-cyclically connected Borel partition satisfying 
\begin{equation}
\label{epseccost}
 \pi\bigg(\underset{\b}{\bigcup}\; Z_\b'\times Z_\b'\bigg)=1,\quad\forall\,\pi\in\Pi^f_\mathtt c(\mu,\nu)
\end{equation}
for a cost function of the form
\begin{equation}
\label{E_cost_ind}
\mathtt c(x,y)=\ind_M(x,y), \qquad M \supset \big\{ (x,x) : x \in X \big\}.
\end{equation}
Let $\mathtt c_\mathtt m : X \times X \to [0,+\infty]$ be any \emph{secondary cost} of the form
\begin{equation}
\label{E_cs}
\mathtt c_\mathtt m(x,y) =
\begin{cases}
\mathtt m(x,y) & \mathtt c(x,y)<+\infty, \\
+\infty & \text{otherwise,}  
\end{cases}
\end{equation}
where $\mathtt m$ is l.s.c. and there exist $\mathtt f \in L^1(\mu),\, \mathtt g \in L^1(\nu)$ s.t. $\mathtt m(x,y) \leq \mathtt f(x) + \mathtt g(y)$. Then, any $\mathtt c_\mathtt m$-cyclically monotone plan $\pi_\mathtt m\in\Pi^f_{\mathtt c_\mathtt m}(\mu,\nu)$ is optimal for $\mathtt c_\mathtt m$. More precisely, for any $\mathtt c_\mathtt m$-cyclically monotone set $\Gamma_\mathtt m$ with $\pi_\mathtt m(\Gamma_\mathtt m)=1$, there exist Borel functions $\phi^{\mathtt m}$, $\psi^\mathtt m$ such that the restrictions
\begin{equation}
\label{E_phipsi_m_restr}
\phi^\mathtt m_\b := \phi^\mathtt m \llcorner_{Z_\b'}, \qquad \psi^\mathtt m_\b := \psi^\mathtt m \llcorner_{Z_\b'}
\end{equation}
are Borel optimal potentials for $\Pi^{\mathrm{opt}}_{\mathtt c_\mathtt m}(\mu_\b',\nu_\b')$, for all $\b$ in an $m'$-conegligible set $\B'\subset \B$.
\end{proposition}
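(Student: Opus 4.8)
The plan is to split the secondary-cost transport problem into independent problems on the classes $Z'_\b$, to produce a pair of optimal potentials on each of them by means of the formula \eqref{E_pot_form}, and then to glue the potentials and integrate the per-class optimality. First I would record the elementary observation that, since $\mathtt m\geq0$ and $\mathtt m\leq\mathtt f+\mathtt g$ with $\mathtt f\in L^1(\mu)$, $\mathtt g\in L^1(\nu)$, one has $\{\mathtt c_\mathtt m<+\infty\}=M=\{\mathtt c<+\infty\}$ and
\[
\Pi^f_{\mathtt c_\mathtt m}(\mu,\nu)=\big\{\pi\in\Pi(\mu,\nu):\pi(M)=1\big\}=\Pi^f_{\mathtt c}(\mu,\nu),
\]
so that \eqref{epseccost} holds for every $\pi\in\Pi^f_{\mathtt c_\mathtt m}(\mu,\nu)$ as well. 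Then I would fix a $\sigma$-compact $\mathtt c_\mathtt m$-cyclically monotone carriage $\Gamma_\mathtt m$ of $\pi_\mathtt m$, assuming by inner regularity that $\Gamma_\mathtt m\subset M\cap\bigcup_\b(Z'_\b\times Z'_\b)$. Being contained in $M=\{\ind_M=0\}$, $\Gamma_\mathtt m$ is automatically $\mathtt c$-cyclically monotone and carries $\pi_\mathtt m\in\Pi^f_{\mathtt c}(\mu,\nu)$; hence the $\Pi^f_{\mathtt c}(\mu,\nu)$-cyclical connectedness of $\{Z'_\b\}$ (Definitions \ref{D_gammaconn} and \ref{D_pimunuconn}) yields a $\mu$-conegligible set $F$ with $Z'_\b\cap F$ being $(\Gamma_\mathtt m,\mathtt c)$-cyclically connected for all $\b$. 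The point to exploit is that, since $\{\mathtt c_\mathtt m<\infty\}=\{\mathtt c<\infty\}$, a $(\Gamma_\mathtt m,\mathtt c)$-axial path and a $(\Gamma_\mathtt m,\mathtt c_\mathtt m)$-axial path are literally the same object, so $Z'_\b\cap F$ is also $(\Gamma_\mathtt m,\mathtt c_\mathtt m)$-cyclically connected, whereas $\Gamma_\mathtt m$ is $\mathtt c_\mathtt m$-cyclically monotone: the connectedness and the monotonicity ingredients required by \eqref{E_pot_form} come from the two distinct hypotheses but are compatible.

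Next I would disintegrate: using \eqref{epseccost} and strong consistency (Theorem \ref{T_disint}, the partition being Borel), write $\mu=\int\mu'_\b\,dm'$, $\nu=\int\nu'_\b\,dm'$, $\pi_\mathtt m=\int\pi_{\mathtt m,\b}\,dm'$ with $m'=\mathtt h_\#\mu=\mathtt h_\#\nu=\mathtt h_\#\pi_\mathtt m$, $\mu'_\b,\nu'_\b$ concentrated on $Z'_\b$, $\pi_{\mathtt m,\b}\in\Pi(\mu'_\b,\nu'_\b)$, and, for $m'$-a.e.\ $\b$, $\pi_{\mathtt m,\b}(\Gamma_\mathtt m\cap(Z'_\b\times Z'_\b))=1$, $\mu'_\b(F)=1$, $\mathtt f\in L^1(\mu'_\b)$ and $\mathtt g\in L^1(\nu'_\b)$. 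For such $\b$ I would select (measurably in $\b$) a base point $(x^\b_0,y^\b_0)\in\Gamma_\mathtt m\cap(Z'_\b\times Z'_\b)$ with $x^\b_0\in F$, and let $\phi^\mathtt m_\b,\psi^\mathtt m_\b:Z'_\b\to[-\infty,+\infty)$ be the restrictions to $Z'_\b$ of the functions produced by \eqref{E_pot_form} with carriage $\Gamma_\mathtt m$ and base point $(x^\b_0,y^\b_0)$. The $(\Gamma_\mathtt m,\mathtt c_\mathtt m)$-cyclical connectedness of $Z'_\b\cap F$ provides $(\Gamma_\mathtt m,\mathtt c_\mathtt m)$-axial paths from $x^\b_0$ to $x$ and from $x$ to $x^\b_0$ for $\mu'_\b$-a.e.\ $x\in Z'_\b$, so $\Gamma_\mathtt m$ being $\mathtt c_\mathtt m$-cyclically monotone forces $\phi^\mathtt m_\b$ to be finite $\mu'_\b$-a.e., and the standard properties of \eqref{E_pot_form} give $\phi^\mathtt m_\b(x)+\psi^\mathtt m_\b(y)\leq\mathtt c_\mathtt m(x,y)$ for all $x,y\in Z'_\b$, with equality $\pi_{\mathtt m,\b}$-a.e.; the bound $\mathtt m\leq\mathtt f+\mathtt g$ together with $\int\mathtt c_\mathtt m\,d\pi_{\mathtt m,\b}<\infty$ makes the pair admissible in the dual problem. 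Then Lemma~5.3 of \cite{BiaCar} (recalled in Section~\ref{Ss_linear_pre_unique_transf}) gives that $\phi^\mathtt m_\b,\psi^\mathtt m_\b$ are optimal potentials for $\Pi^{\mathrm{opt}}_{\mathtt c_\mathtt m}(\mu'_\b,\nu'_\b)$ and $\pi_{\mathtt m,\b}\in\Pi^{\mathrm{opt}}_{\mathtt c_\mathtt m}(\mu'_\b,\nu'_\b)$, for $m'$-a.e.\ $\b$.

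Finally I would set $\phi^\mathtt m:=\sum_\b\phi^\mathtt m_\b\chi_{Z'_\b}$ and $\psi^\mathtt m:=\sum_\b\psi^\mathtt m_\b\chi_{Z'_\b}$, which (after the standard measurability adjustments) are Borel and whose restrictions to $Z'_\b$ are the claimed potentials on an $m'$-conegligible $\B'\subset\B$. As for the optimality of $\pi_\mathtt m$: every $\pi'\in\Pi^f_{\mathtt c_\mathtt m}(\mu,\nu)=\Pi^f_{\mathtt c}(\mu,\nu)$ is concentrated on $\bigcup_\b Z'_\b\times Z'_\b$ by \eqref{epseccost}, hence $\pi'=\int\pi'_{\b\b}\,dm'$ with $\pi'_{\b\b}\in\Pi(\mu'_\b,\nu'_\b)$ (the quotient measure is again $m'$, since $\pi'$ lives on the diagonal classes and has marginals $\mu,\nu$) and $\int\mathtt c_\mathtt m\,d\pi'_{\b\b}<\infty$ for $m'$-a.e.\ $\b$, so integrating the per-class optimality,
\[
\int\mathtt c_\mathtt m\,d\pi_\mathtt m=\int\Big(\int\mathtt c_\mathtt m\,d\pi_{\mathtt m,\b}\Big)dm'(\b)\leq\int\Big(\int\mathtt c_\mathtt m\,d\pi'_{\b\b}\Big)dm'(\b)=\int\mathtt c_\mathtt m\,d\pi'.
\]
The hard part will be the measurability bookkeeping in the per-class construction — the measurable selection of the base points $(x^\b_0,y^\b_0)$ and the Borel dependence of $\phi^\mathtt m_\b$, $\psi^\mathtt m_\b$ on $\b$ — which one obtains from the $\sigma$-compactness of $\Gamma_\mathtt m$ and the standard selection/projection theorems, exactly as in \cite{BiaCar}; the conceptual point is merely that $\mathtt c$-cyclical connectedness of the partition and $\mathtt c_\mathtt m$-cyclical monotonicity of the carriage can be used simultaneously, which is granted by $\{\mathtt c_\mathtt m<\infty\}=\{\mathtt c<\infty\}$.
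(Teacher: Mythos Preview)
Your proposal is correct and follows essentially the same route as the paper's proof: reduce to the classes $Z'_\b$ via \eqref{epseccost}, use the $\Pi^f_{\mathtt c}(\mu,\nu)$-cyclical connectedness applied to the carriage $\Gamma_\mathtt m$ to get $(\Gamma_\mathtt m,\mathtt c)$-cyclical connectedness on each class, build the potentials from \eqref{E_pot_form} with the bound $\mathtt m\le\mathtt f+\mathtt g$ ensuring finiteness, and integrate the per-class optimality as in Theorem~\ref{T_A1}; the Borel gluing is deferred to the selection results of \cite{BiaCar} in both versions. One small caveat: your pointwise claim $\{\mathtt c_\mathtt m<+\infty\}=M$ is not literally guaranteed, since $\mathtt f,\mathtt g\in L^1$ may take the value $+\infty$ on null sets; this is harmless (restrict $\Gamma_\mathtt m$ so that $\mathtt p_1\Gamma_\mathtt m\subset\{\mathtt f<\infty\}$ and $\mathtt p_2\Gamma_\mathtt m\subset\{\mathtt g<\infty\}$, which costs nothing) but should be said.
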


\begin{proof}
Notice that $\Pi^f_{\mathtt c_{\mathtt m}}(\mu,\nu)\subset\Pi^f_\mathtt c(\mu,\nu)$.
Let $\Gamma_\mathtt m\subset \underset{\b}{\cup}\; Z_\b'\times Z_\b'$ be a $\mathtt c_\mathtt m$-cyclically monotone carriage for $\pi_\mathtt m\in \Pi^f_{\mathtt c_{\mathtt m}}(\mu,\nu)$. Then, there exists a conegligible set $F \subset X$ such that $Z_\b' \cap F$ is $(\Gamma_\mathtt m,\mathtt c)$-cyclically connected for all $\b\in \B$. Hence, formula \eqref{E_pot_form}, together with the validity of the Point \eqref{Point_boun_fg} at page \pageref{Point_boun_fg}, yields potentials $\phi^\mathtt m_\b$, $\psi^\mathtt m_\b$ for the transport problem in $\Pi^f_{\mathtt c_{\mathtt m}}(\mu_\b,\nu_\b)$ with cost $\mathtt c_\mathtt m$. In particular, the conditional probability $\pi_{\mathtt m,\b\b}$ is optimal in $\Pi^f_{\mathtt c_{\mathtt m}}(\mu_\b,\nu_\b)$, and thus by \eqref{epseccost} it follows as in Theorem \ref{T_A1} that $\pi_\mathtt m$ is optimal in $\Pi^f_{\mathtt c_{\mathtt m}}(\mu,\nu)$.

The fact that one can find Borel functions $\phi^\mathtt m$, $\psi^\mathtt m$ such that \eqref{E_phipsi_m_restr} holds is an application of standard selection principles, and it can be found in \cite{BiaCar}.
\end{proof}

In order to state the main result of \cite{BiaCar} which is at the core of their sufficient condition concerning optimality, we need the concept of (linear) preorder.

\begin{definition}[(Linear) Preorder]
\label{D_preorder}
A \emph{preorder} on $X$ is a set $A\subset X\times X$ s.t.
\begin{align*}
&(x,x) \in A, \quad \forall\, x \in X\\
&(x,y) \in A \quad \wedge \quad (y,z) \in A \quad \Longrightarrow \quad (x,z) \in A.
\end{align*}
A preorder $A\subset X\times X$ is \emph{linear} if 
\[
X\times X = A \cup A^{-1}.
\]
\end{definition}

The statement $(x,y)\in A$ will also be denoted by $x\preccurlyeq_A y$ and $A$ is also called the \emph{graph of the (linear) preorder $\preccurlyeq_A$}. Any preorder $\preccurlyeq_A$ induces the equivalence relation $\simeq_A$ on $X$
\[
x \simeq_A y \qquad \Longleftrightarrow \qquad x\preccurlyeq_Ay\quad\text{and}\quad y\preccurlyeq_A x.
\]
We also denote the graph of the equivalence relation $\simeq_A$ by 
\[
A \cap A^{-1} \quad \text{ or } \quad \preccurlyeq_A \cap \, (\preccurlyeq_A)^{-1}.
\]

Going back to our problem, one can see that the $(\Gamma,\mathtt c)$-axial relation gives a Borel preorder on $X$, namely
\begin{equation}
\label{E_axpreorder}
x \preccurlyeq_{(\Gamma,\mathtt c)} y \quad\text{ if there exists a $(\Gamma,\mathtt c)$-axial path going from $y$ to $x$.}
\end{equation}

\begin{figure}
\input{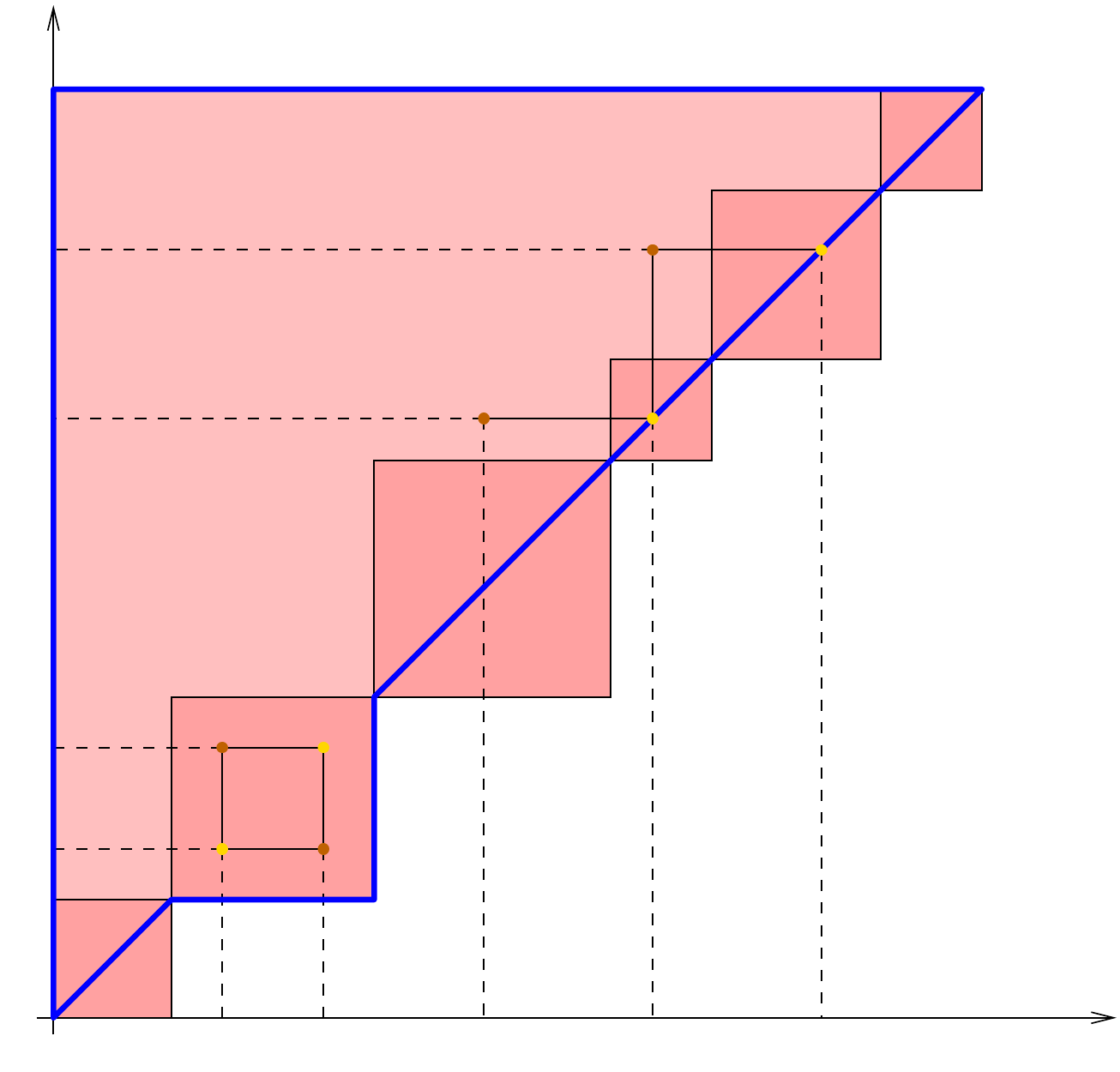_t}
\caption{The graph of the cost $\mathtt c$ is given by the indicator function of the region inside the blue curve. The graph of a $\mathtt c$-compatible linear preorder $\preccurlyeq_A$ is given by the union of the pink and of the red region. The red region corresponds to the graph of the induced equivalence relation $\simeq_A$. We draw also an axial path connecting $x_5$ to $x_3$ with base points $(x_5,y_5)$, $(x_4,y_4)$, and a $(\Gamma, \mathtt c)$-cycle connecting $(x_1,y_1)$ to $(x_2,y_2)$.}
\label{Fi_orderaxial.notat}
\end{figure}

The reason for introducing (linear) preorders in this context is given by the following theorem \cite{BiaCar}.

\begin{theorem}
\label{T_A2}
Let $A \subset X \times X$ be a Borel graph of a linear preorder on $X$ with equivalence classes $\{Z^A_\c\}_{\c \in \mathfrak C}$ 
satisfying
\begin{align}
& \{\mathtt c < +\infty\} \subset A, \label{E_tA22} \\
& \preccurlyeq_{(\Gamma,\mathtt c)} \subset A,\text{ for some $\mathtt c$-cyclically monotone set $\Gamma$ s.t. $\pi(\Gamma)=1$, $\pi\in\Pi^f_{\mathtt c}(\mu,\nu)$.} \label{E_tA21}
\end{align}
Then, the disintegration w.r.t. the partition $\{Z^A_\c\}_{\c \in \mathfrak C}$ is strongly consistent and 
\begin{equation}
\label{E_eqca}
\pi'\biggl( \bigcup_\c Z^A_\c \times Z^A_\c \biggr) = 1, \qquad \forall\,\pi'\in\Pi^f_\mathtt c(\mu,\nu).
\end{equation}
\end{theorem}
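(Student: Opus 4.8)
The statement has two parts: strong consistency of the disintegration over the equivalence classes $\{Z^A_\c\}_{\c\in\mathfrak C}$, and the concentration property \eqref{E_eqca}. My plan is to reduce both to the existence of a countable family of bounded Borel functions on $X$ which are monotone along $\preccurlyeq_A$ and ``see'' the classes, and then to exploit the rigidity coming from \eqref{E_tA22}--\eqref{E_tA21}. First I would set $\lambda:=\mu+\nu$ (which dominates the marginals of every $\pi'\in\Pi^f_\mathtt c(\mu,\nu)$ one has to consider), fix a countable family $\{C_n\}_{n\in\N}$ generating $\mathcal B(X)$, and define
\[
\mathfrak f_n(x):=\lambda\bigl(C_n\cap A^{-1}(x)\bigr),\qquad A^{-1}(x)=\{z\in X:\,z\preccurlyeq_A x\}.
\]
Since $A^{-1}(x)$ is a Borel section of the Borel set $A$, each $\mathfrak f_n$ is Borel; since $x\preccurlyeq_A x'$ forces $A^{-1}(x)\subseteq A^{-1}(x')$, each $\mathfrak f_n$ is nondecreasing along $\preccurlyeq_A$, hence constant on every $Z^A_\c$. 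Moreover $\{\mathfrak f_n\}_n$ separates the classes modulo $\lambda$: if $x\preccurlyeq_A y$ and $\mathfrak f_n(x)=\mathfrak f_n(y)$ for all $n$, then $\lambda\bigl(C_n\cap(A^{-1}(y)\setminus A^{-1}(x))\bigr)=0$ for all $n$, whence $\lambda\bigl(\{z:\,x\prec_A z\preccurlyeq_A y\}\bigr)=0$.

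For strong consistency I would take $\mathtt h:=(\mathfrak f_n)_n\colon X\to\R^\N$, a Borel map into a Polish space whose fibres are $\preccurlyeq_A$-order-convex and, by the separation just noted, differ from the classes $Z^A_\c$ only by classes of zero $\lambda$-measure; the argument of the appendix of \cite{BiaCar} shows that the union of these exceptional classes is $\lambda$-negligible, hence $\mu$-negligible. Thus, up to a $\mu$-negligible set, $\{Z^A_\c\}$ is the partition of $X$ into the fibres of the Borel map $\mathtt h$, and Theorem \ref{T_disint}, applied with quotient space $\mathtt h(X)$ and quotient map $\mathtt h$, yields that the disintegration of $\mu$ (and of $\nu$, and of every $\pi'$ over $\{Z^A_\c\times Z^A_\c\}$) is strongly consistent.

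The heart of the concentration statement is the observation that the distinguished carriage $\Gamma$ of \eqref{E_tA21} lies entirely in $\bigcup_\c Z^A_\c\times Z^A_\c$. Indeed, for $(x,y)\in\Gamma$ one has $\Gamma\subseteq\{\mathtt c<+\infty\}\subseteq A$ by \eqref{E_tA22}, so $x\preccurlyeq_A y$; on the other hand, since $\mathtt c(y,y)=0$, the two--term sequence $(x,y),(y,y)$ is a $(\Gamma,\mathtt c)$-axial path going from $x$ to $y$ (its only base point $(x,y)$ lies in $\Gamma$ and both its points lie in $\{\mathtt c<+\infty\}$), so $y\preccurlyeq_{(\Gamma,\mathtt c)}x$ and hence $y\preccurlyeq_A x$ by \eqref{E_tA21}; thus $x\simeq_A y$. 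Consequently $\pi$ is concentrated on $\bigcup_\c Z^A_\c\times Z^A_\c$, so $\mathfrak f_n(x)=\mathfrak f_n(y)$ holds $\pi$-a.e., the law of $(\mathfrak f_n(x),\mathfrak f_n(y))$ under $\pi$ is concentrated on the diagonal of $\R\times\R$, and therefore $(\mathfrak f_n)_\#\mu=(\mathfrak f_n)_\#\nu=:F_n$ for every $n$.

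Then, for an arbitrary $\pi'\in\Pi^f_\mathtt c(\mu,\nu)$, since $\int\mathtt c\,d\pi'<+\infty$ the plan $\pi'$ is concentrated on $\{\mathtt c<+\infty\}\subseteq A$, so $\mathfrak f_n(x)\le\mathfrak f_n(y)$ holds $\pi'$-a.e., while the law $\varrho_n$ of $(\mathfrak f_n(x),\mathfrak f_n(y))$ under $\pi'$ lies in $\Pi(F_n,F_n)$. A coupling of a probability with itself supported in $\{(s,t):s\le t\}$ is supported in the diagonal: for every $a$, $F_n((-\infty,a])=\varrho_n(\{s\le a\})=\varrho_n(\{t\le a\})$ and $\{t\le a\}\subseteq\{s\le a\}$, so $\varrho_n(\{s\le a<t\})=0$ for all $a$, and letting $a$ run over $\Q$ gives $\varrho_n(\{s<t\})=0$. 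Hence $\pi'\bigl(\{\mathfrak f_n(x)=\mathfrak f_n(y)\}\bigr)=1$ for each $n$, so $\pi'\bigl(\bigcap_n\{\mathfrak f_n(x)=\mathfrak f_n(y)\}\bigr)=1$. Since the marginals of $\pi'$ are dominated by $\lambda$, the separation property of $\{\mathfrak f_n\}$, in the quantitative form established in \cite{BiaCar}, identifies $\bigcap_n\{\mathfrak f_n(x)=\mathfrak f_n(y)\}$ with $\bigcup_\c Z^A_\c\times Z^A_\c$ up to a $\pi'$-negligible set, which is exactly \eqref{E_eqca}. The step I expect to be the main obstacle is precisely this last one: upgrading ``equality of all $\mathfrak f_n$'' to ``same $\simeq_A$-class'' modulo an \emph{arbitrary} coupling rather than modulo $\lambda\otimes\lambda$ — one must rule out the set $\{(x,y):\,x\prec_A y,\ \lambda(\{z:\,x\prec_A z\preccurlyeq_A y\})=0\}$ for every coupling with marginals $\le\lambda$, and this, together with the negligibility of the exceptional classes used for strong consistency, is the genuinely measure-theoretic core of the linear-preorder machinery of \cite{BiaCar}; the rest is soft.
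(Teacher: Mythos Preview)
The paper does not prove Theorem~\ref{T_A2}: it is stated as a result of \cite{BiaCar} and used as a black box, so there is no in-paper proof to compare against.

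Your sketch is essentially the approach of \cite{BiaCar}. The construction of Borel monotone functions $\mathfrak f_n(x)=\lambda(C_n\cap A^{-1}(x))$ (Borel by Fubini applied to $\chi_A$), the observation that $\Gamma\subset\bigcup_\c Z^A_\c\times Z^A_\c$ via the one--base-point axial path $(x,y),(y,y)$, the resulting equality $(\mathfrak f_n)_\#\mu=(\mathfrak f_n)_\#\nu$, and the monotone-coupling rigidity on $\R$ forcing $\mathfrak f_n(x)=\mathfrak f_n(y)$ $\pi'$-a.e.\ for every $\pi'\in\Pi^f_\mathtt c(\mu,\nu)$ --- all of this is correct and is the skeleton of the \cite{BiaCar} argument. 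Your axial-path argument for $\Gamma\subset\bigcup_\c Z^A_\c\times Z^A_\c$ is in fact slightly slicker than the one the present paper recalls before \eqref{E_piconc}, since it does not require $\Graph\,\Id\subset\Gamma$.

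You have also correctly located the one genuine gap: passing from ``$\mathfrak f_n(x)=\mathfrak f_n(y)$ for all $n$'' to ``$x\simeq_A y$'' $\pi'$-almost everywhere. The fibres of $\mathtt h=(\mathfrak f_n)_n$ are order-convex in $\preccurlyeq_A$ but may strictly contain several $\simeq_A$-classes; what must be shown is that the union of classes on which this happens is $\lambda$-negligible, and that this negligible set is saturated for $\simeq_A$ (so that it is automatically $\pi'$-negligible for any coupling with marginals $\ll\lambda$). This is precisely the measure-algebra argument carried out in \cite{BiaCar}, and you explicitly defer to it --- which is exactly what this paper does as well. So your proposal is a faithful outline, not a self-contained proof, and the step you flag as the obstacle is indeed the whole content of the theorem beyond the soft parts.
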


For future convenience we give the following definition.

\begin{definition}
\label{D_compatible}
A preorder $\preccurlyeq_A$ on $X$ is \emph{$\mathtt c$-compatible} if \eqref{E_tA22} holds.
\end{definition}

\begin{remark}
\label{rem_pre}
Let $A$ be a $\mathtt c$-compatible linear preorder. Whenever a carriage $\Gamma$ satisfies \eqref{E_tA21} the $\preccurlyeq_{(\Gamma,\mathtt c)}$-equivalence classes are contained in the equivalence classes of $\simeq_A$ and then, as noticed before, since $\Gamma \supset \Graph\,\Id$ and $\mathtt c(x,x)=0$ for all $x$,
\[
\Gamma \subset \underset{\c}{\bigcup} \,Z^A_\c \times Z^A_\c, \quad\pi\bigg(\underset{\c}{\bigcup}Z^A_\c\times Z^A_\c\bigg)=1.
\]
Viceversa, if $\pi'\bigl(\underset{\c}{\cup}Z^A_\c\times Z^A_\c\bigl)=1$ for some $\pi'\in\Pi^f_{\mathtt c}(\mu,\nu)$ and $\pi'(\Gamma')=1$, then by the $\mathtt c$-compatibility of $A$
\[
 \preccurlyeq_{(\Gamma'\cap\underset{\c}{\cup}Z^A_\c\times Z^A_\c,\mathtt c)}\subset A
\]
and then also its equivalence classes are contained in the equivalence classes of $\simeq_A$. 
In particular, \eqref{E_tA21} could also be rewritten as $\pi\bigl(\underset{\c}{\cup}Z^A_\c\times Z^A_\c\bigr)=1$.

We point out that, while a $\mathtt c$-compatible linear preorder satisfying \eqref{E_tA21} for some $\Gamma$ can always be constructed using the axiom of choice, \eqref{E_eqca} may not hold if the linear preorder is not Borel (see \cite{BiaCar}): hence, the main assumption of the theorem is the Borel regularity.
 Finally, notice that the partition into equivalence classes of $\preccurlyeq_{(\Gamma'\cap\underset{\c}{\cup}Z^A_\c\times Z^A_\c,\mathtt c)}$ with $\Gamma'$ as above is $(\mu,\Gamma',\mathtt c)$-cyclically connected in the sense of Definition \ref{D_gammaconn}.
\end{remark}

In order to prove Theorem \ref{T_subpart_step}, in Section \ref{S_cfibr_cfol} we will look --for a particular class of cost functions of the form \eqref{E_cost_ind} called \emph{cone-Lipschitz costs associated to a directed fibration}-- for $\Pi^f_{\mathtt c}(\mu,\nu)$-cyclically connected partitions satisfying \eqref{epseccost}. Therefore, by Theorem \ref{T_A2} and Remark \ref{rem_pre}, we will construct a Borel $\mathtt c$-compatible linear preorder $A$ such that, for any carriage of finite cost $\Gamma'$, the equivalence classes of $\preccurlyeq_{(\Gamma'\cap \underset{\c}{\cup}Z^A_\c\times Z^A_\c,\mathtt c)}$ coincide up to a $\mu$-negligible set with those of $\simeq_A$.

For convenience we give also the following 
\begin{definition}
 \label{D_cmunucomp}
 If $\preccurlyeq_A$ is $\mathtt c$-compatible and \eqref{E_tA21} holds for every $\pi\in\Pi^f_\mathtt c(\mu,\nu)$, then $A$ is called \emph{$(\mathtt c,\mu,\nu)$-compatible}.
\end{definition}
Hence, Theorem \ref{T_A2} can also be restated saying that whenever $A$ is a Borel $\mathtt c$-compatible linear preorder satisfying \eqref{E_tA21} for some $\Gamma$ of finite cost, then it is $(\mathtt c,\mu,\nu)$-compatible.

According to the terminology used in \cite{BiaCar}, $(\mathtt c,\mu,\nu)$-compatibility can also be restated saying that the diagonal in the quotient space
\begin{equation}
\label{E_push_f_A}
(\Id\times\Id)\circ \mathtt h\circ \mathtt p_1 (A)
\end{equation}
is a \emph{set of uniqueness} for $\Pi^f_{(\mathtt h \times \mathtt h)_\# \mathtt c}(m,m)$, where $\mathtt h$ is the quotient map associated to the partition $\simeq_A$: this means that there exists a unique transference plan in $\Pi^f_{(\mathtt h \times \mathtt h)_\# \mathtt c}(m,m)$, namely $(\Id \times \Id)_\# m$.

\section{Optimal transportation problems with convex norm and cone costs}
\label{S_conetransport}

Let $\d{\cdot}:\R^d\to\R$ be a convex norm as defined in \eqref{E_norm} and $\mu,\nu\in\mathcal P(\R^d)$. The transport plans with finite $\d{\cdot}$-cost $\Pi^f_{\d{\cdot}}(\mu,\nu)$ and the optimal plans w.r.t. $\d{\cdot}$ $\Pi^{\mathrm{opt}}_{\d{\cdot}}(\mu,\nu)$ are respectively given by the transference plans with finite cost and the optimal plans w.r.t. the cost function
\begin{equation}
\label{E_norm_cost_1}
\mathtt c(x,y) = \d{y - x}.
\end{equation}
Since the cost is a norm, we have the following well known results \cite{ambrgigli:userguide}: if $\Pi^f_{\d{\cdot}}(\mu,\nu) \not= \emptyset$, then
\begin{enumerate}
\item there exists at least one optimal transference plan $\bar \pi$;
\item if $\Gamma$ is a $|\cdot|_{D^*}$-cyclically monotone carriage of $\bar \pi$, then for $(x_0,y_0) \in \Gamma$ the function given by \eqref{E_pot_form},
\[
\phi(x) := \inf \bigg\{ \sum_{i=0}^I \big| y_i - x_{i+1} \big|_{D^*} - \big| y_i - x_i \big|_{D^*}: I \in \N, (x_i,y_i) \in \Gamma, x_{I+1} = x \bigg\},
\]
is Lipschitz continuous on $\R^d$ and
\begin{equation}
\label{E_potential_norm_leq}
\phi(x) - \phi(y) \leq |y - x|_{D^*},\quad\forall x,y \in \R^d,
\end{equation}
\begin{equation}
\label{E_potential_norm_equal}
\int_{\R^d \times \R^d} |y-x|_{D^*}\, d\bar \pi(x,y) = \int_{\R^d} \phi(x)\, d\mu(x) - \int_{\R^d} \phi(y)\, d\nu(x).
\end{equation}
\end{enumerate}
In particular, $\pi$ is an optimal plan if and only if 
\[
\pi \Big( \Big\{ (x,y) : \phi(x) - \phi(y) = |y - x|_{D^*} \Big\} \Big) = 1.
\]

In the following we will denote by $\psi$ the dual potential
\begin{equation}
\label{E_dual_pot_psi}
\psi(x) := - \phi(x),
\end{equation}
which will be called \emph{Kantorovich potential}.
Clearly
\begin{equation}
\label{E_psi_forward_norm}
\psi(y) - \psi(x) \leq |y-x|_{D^*}, \qquad \pi \in \Pi^{\mathrm{opt}}_{\d{\cdot}}(\mu,\nu) \ \Leftrightarrow \ \pi \Big( \Big\{ (x,y) : \psi(y) - \psi(x) = |y - x|_{D^*} \Big\} \Big) = 1.
\end{equation}

\begin{definition}
\label{D_super_sub_diff}
A function $\varphi : \dom\, \varphi\subset\R^d\to\R$ is \emph{$\d{\cdot}$-Lipschitz} if it satisfies
\[
\varphi(y) - \varphi(x) \leq \d{y - x}, \qquad \forall\,x,y\in\dom\,\varphi.
\]
The \emph{superdifferential} of $\varphi$ is the set
\[
\partial^+\varphi := \Big\{ (x,y) : \varphi(y) - \varphi(x) = |y - x|_{D^*} \Big\},
\]
while its \emph{subdifferential} is the set
\[
\partial^-\varphi := \big(\partial^+\varphi\big)^{-1}.
\]
\end{definition}

Hence, \eqref{E_psi_forward_norm} can be rephrased as
\begin{equation}
\label{E_psi_opt}
\exists\,\psi : \R^d \to \R \text{ $\d{\cdot}$-Lipschitz s.t. }\pi \in \Pi^{\mathrm{\opt}}_{\d{\cdot}}(\mu,\nu) \ \Leftrightarrow \ \pi \big( \partial^+\psi \big) = 1. 
\end{equation}


Let now $C^k\in\mathcal C(k;\R^k)$.

\begin{definition}
\label{D_cone_cost}
We define the \emph{convex cone cost associated to $C^k$} as the function $\mathtt c_{C^k}:\R^k\times\R^k\to[0,+\infty]$ given by
\begin{equation}
\label{E_cone_cost}
\mathtt c_{C^k}(x,y) =
\begin{cases}
0 & y-x \in C^k, \\
+\infty & \text{otherwise.}
\end{cases}
\end{equation}
\end{definition}

Given $\mu,\nu \in \mathcal P(\R^k)$, let $\Pi^f_{\mathtt c_{C^k}}(\mu,\nu)$ be the set of transport plans of finite cone cost. Notice that 
\[
\Pi^f_{\mathtt c_{C^k}}(\mu,\nu)=\Pi^{\mathrm{opt}}_{\mathtt c_{C^k}}(\mu,\nu)= \Big\{ \pi\in\Pi^f_{\mathtt c_{C^k}}(\mu,\nu):\,\pi\text{ is $\mathtt c_{C^k}$-cyclically monotone} \Big\}.
\]
 
\subsection{Transportation problems with convex norms and cone costs on Lipschitz graphs}
\label{Ss_convex_norm_cone}

The optimal transport problem w.r.t. $\d{\cdot}$ can be casted as a convex cone optimal transportation problem on $\R^{d+1}\simeq\R^d\times\R$ w.r.t. the convex cone cost $\mathtt c_{\mathrm{epi}\,{\d{\cdot}}}$ associated to
\[
\epi\,\d{\cdot}\in\mathcal C(d+1;\R^{d+1})
\]
(see Definition \ref{D_cone_cost} and Remark \ref{R_cone_epi}).
Define in fact the measures in $\mathcal P(\Graph\,\psi)$
\begin{equation}
\label{E_measure_on_graph}
\hat \mu := (\Id\times\psi)_\# \mu, \qquad \hat \nu := (\Id\times\psi)_\# \nu,
\end{equation}
where $\psi$ is the Kantorovich potential of $\Pi^{\mathrm{opt}}_{\d{\cdot}}(\mu,\nu)$, formula \eqref{E_dual_pot_psi}, and for $\pi \in \Pi(\mu,\nu)$ consider the plan in $\mathcal P(\Graph\,\psi \times \Graph\,\psi)$
\begin{equation}
\label{E_graph_plan}
\hat \pi := \big( (\Id\times\psi) \times (\Id\times\psi) \big)_\# \pi.
\end{equation}

The fundamental observations are \eqref{E_psi_opt} and the following: if $\varphi$ is $\d{\cdot}$-Lipschitz, then 
\begin{equation}
\label{E_partvarphi_form}
\partial^+\varphi = \mathtt p_{\R^d\times\R^d} \Big( \Graph\,\varphi \times \Graph\,\varphi \cap \bigl\{ \mathtt c_{\epi\,\d{\cdot}} < +\infty \bigr\} \Big).
\end{equation}

\begin{definition}
\label{D_subsuperdiff}
If $\Graph\,\varphi\subset\R^{d+1}$ is the graph of a $\d{\cdot}$-Lipschitz function, define its \emph{superdifferential} and \emph{subdifferential} respectively as
\begin{equation}
\label{E_sdiff_varphi}
\partial^+\Graph\,\varphi=\Graph\,\varphi\times\Graph\,\varphi\cap\big\{\mathtt c_{\epi\,\d{\cdot}}<+\infty\big\},\qquad\partial^-\Graph\,\varphi=\big(\partial^+ \Graph\,\varphi\big)^{-1}.
\end{equation}
\end{definition}

Then \eqref{E_partvarphi_form} can be rewritten as
\[
\partial^\pm\varphi=\mathtt p_{\R^d\times\R^d} \big( \partial^\pm\Graph\,\varphi \big).
\]

Hence the following proposition holds true.

\begin{figure}
\input{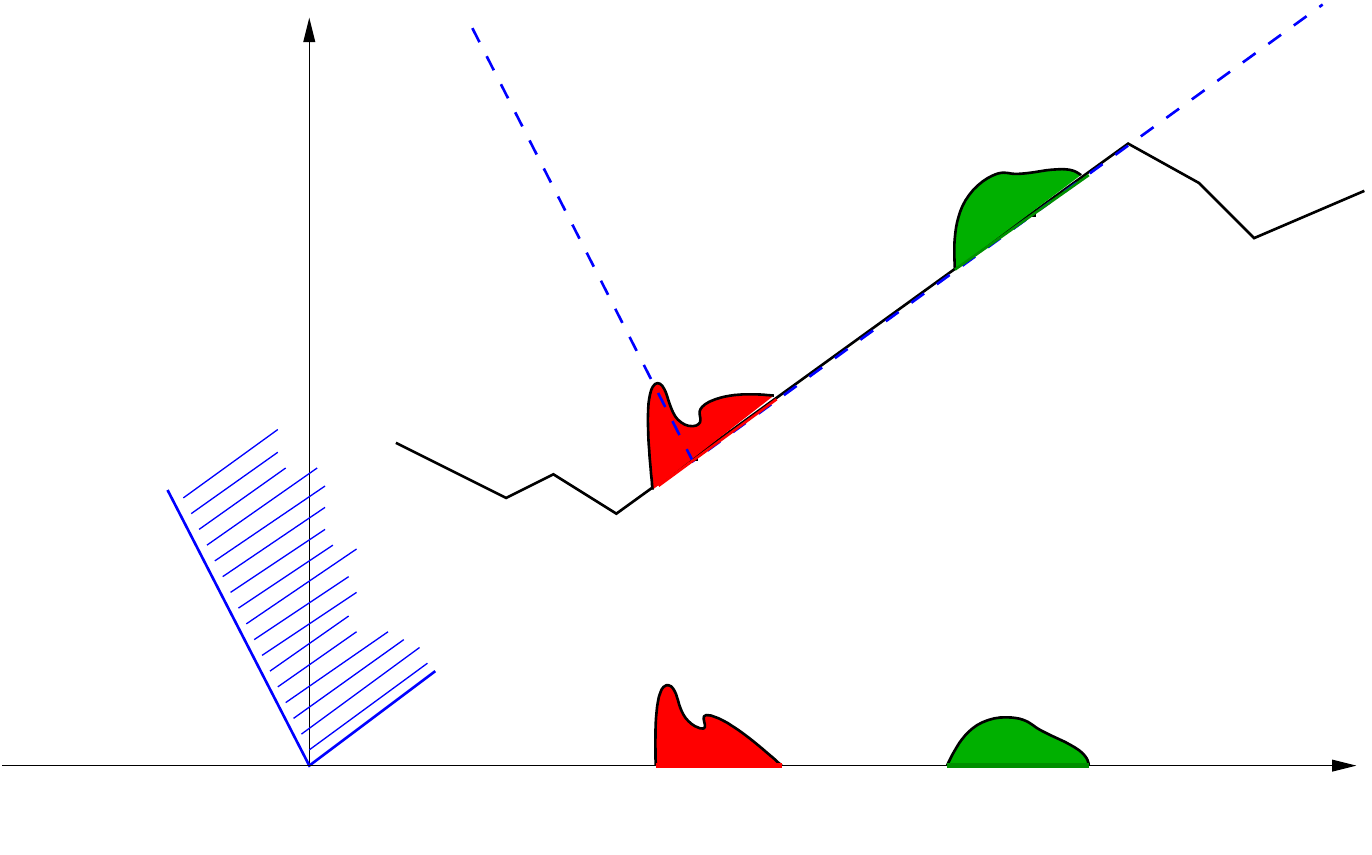_t}
\caption{The equivalence of Proposition \ref{P_equivalence_lifting}.}
\label{Fi_sudakov.lifting}
\end{figure}

\begin{proposition}
\label{P_equivalence_lifting}
The following statements are equiveridical:
\begin{enumerate}
\item \label{PCond_1_setting} \hskip 1cm $\pi \in \Pi^{\mathrm{opt}}_{\d{\cdot}}(\mu,\nu)$;
\item \label{PCond_2_setting} \hskip 1cm $\hat \pi := \big( (\Id\times\psi) \times (\Id\times\psi) \big)_\# \pi \in \Pi^f_{\mathtt c_{\epi\,\d{\cdot}}}(\hat \mu,\hat \nu)$, with $\hat \mu$, $\hat \nu$ given by \eqref{E_measure_on_graph};
\item \label{PCond_3_setting} \hskip 1cm $\pi = \big( \mathtt p_{\R^d \times \R^d} \big)_\# \hat \pi$ for some $\hat \pi \in \Pi^f_{\mathtt c_{\epi\,\d{\cdot}}}(\hat \mu,\hat \nu)$, with $\hat \mu$, $\hat \nu$ given by \eqref{E_measure_on_graph};
\item \label{E_hat_pi_graph} \hskip 1cm $\hat \pi := \big( (\Id\times\psi) \times (\Id\times\psi) \big)_\# \pi$ satisfies $\hat{\pi}(\partial^+\Graph\,\psi) = 1$.
\end{enumerate}
\end{proposition}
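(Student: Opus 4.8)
The plan is to prove the three equivalences $(1)\Leftrightarrow(4)$, $(2)\Leftrightarrow(4)$ and $(2)\Leftrightarrow(3)$, from which the equiveridicality of all four statements follows. Two elementary facts drive everything. First, writing $L := (\Id\times\psi)\times(\Id\times\psi)$ for the lifting map $\R^d\times\R^d\to\R^{d+1}\times\R^{d+1}$, one has $\mathtt p_{\R^d\times\R^d}\circ L=\Id$ on all of $\R^d\times\R^d$ and $L\circ\mathtt p_{\R^d\times\R^d}=\Id$ on $\Graph\,\psi\times\Graph\,\psi$; since $\psi$ is Lipschitz, hence Borel, $L$ is a Borel isomorphism of $\R^d\times\R^d$ onto $\Graph\,\psi\times\Graph\,\psi$ with inverse $\mathtt p_{\R^d\times\R^d}$, so pushforward by $L$ and by $\mathtt p_{\R^d\times\R^d}$ are mutually inverse bijections between $\mathcal P(\R^d\times\R^d)$ and the probability measures concentrated on $\Graph\,\psi\times\Graph\,\psi$. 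Second, $\mathtt c_{\epi\,\d{\cdot}}$ takes only the values $0$ and $+\infty$, so a transference plan has finite $\mathtt c_{\epi\,\d{\cdot}}$-cost precisely when it is concentrated on $\{\mathtt c_{\epi\,\d{\cdot}}<+\infty\}$.

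For $(1)\Leftrightarrow(4)$, I would first record the pointwise identity: for every $(x,y)$, $(x,y)\in\partial^+\psi$ iff $L(x,y)\in\partial^+\Graph\,\psi$. Indeed, by Definition \ref{D_subsuperdiff} the latter means $(y-x,\psi(y)-\psi(x))\in\epi\,\d{\cdot}$, i.e. $\psi(y)-\psi(x)\geq\d{y-x}$, which together with the $\d{\cdot}$-Lipschitz inequality $\psi(y)-\psi(x)\leq\d{y-x}$ is equivalent to $\psi(y)-\psi(x)=\d{y-x}$. Hence $\hat\pi(\partial^+\Graph\,\psi)=\pi(L^{-1}(\partial^+\Graph\,\psi))=\pi(\partial^+\psi)$, and $(1)\Leftrightarrow(4)$ follows from \eqref{E_psi_opt}.

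For $(2)\Leftrightarrow(4)$ and $(2)\Leftrightarrow(3)$, note that $\hat\pi=L_\#\pi$ always lies in $\Pi(\hat\mu,\hat\nu)$: from $\mathtt p_i\circ L=(\Id\times\psi)\circ\mathtt p_i$ one gets $(\mathtt p_i)_\#\hat\pi=(\Id\times\psi)_\#(\mathtt p_i)_\#\pi$, equal to $\hat\mu$, resp. $\hat\nu$. Moreover $\hat\pi$ is concentrated on $\Graph\,\psi\times\Graph\,\psi$ because its marginals $\hat\mu$, $\hat\nu$ are concentrated on $\Graph\,\psi$ (a coupling whose marginals live on a Borel set $S$ lives on $S\times S$). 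Therefore $\hat\pi$ has finite $\mathtt c_{\epi\,\d{\cdot}}$-cost iff it is concentrated on $\Graph\,\psi\times\Graph\,\psi\cap\{\mathtt c_{\epi\,\d{\cdot}}<+\infty\}=\partial^+\Graph\,\psi$, which is exactly $(2)\Leftrightarrow(4)$. For $(2)\Rightarrow(3)$ one exhibits the same $\hat\pi=L_\#\pi$ and uses $\mathtt p_{\R^d\times\R^d}\circ L=\Id$ to get $(\mathtt p_{\R^d\times\R^d})_\#\hat\pi=\pi$. For $(3)\Rightarrow(2)$, any $\hat\pi\in\Pi^f_{\mathtt c_{\epi\,\d{\cdot}}}(\hat\mu,\hat\nu)$ projecting onto $\pi$ is, as above, concentrated on $\Graph\,\psi\times\Graph\,\psi$, where $L\circ\mathtt p_{\R^d\times\R^d}=\Id$; hence $\hat\pi=L_\#(\mathtt p_{\R^d\times\R^d})_\#\hat\pi=L_\#\pi$, which is the plan of $(2)$.

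There is essentially no hard step in this proposition: everything reduces to the two bookkeeping observations above. The only points deserving a line of care are the measurability of the lifting (handled by the Lipschitz regularity of $\psi$) and the standard fact that a coupling is concentrated on the product of sets carrying its marginals; both are routine.
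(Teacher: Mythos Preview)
Your proof is correct and follows exactly the approach the paper indicates: the paper states that the proposition is an immediate consequence of \eqref{E_psi_opt} and the identity \eqref{E_partvarphi_form} (equivalently, $\partial^\pm\psi=\mathtt p_{\R^d\times\R^d}(\partial^\pm\Graph\,\psi)$), and you have simply spelled out the routine details of that implication. There is nothing to add.
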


Observe that, since $(\Id\times\psi):\R^d\to\Graph\,\psi$ is bi-Lipschitz, then if $\varpi \in \mathcal P(X)$ and $\hat \varpi := (\Id \times \psi)_\# \varpi$,
\begin{equation}
\label{E_graphpsimu}
\varpi(B) = 0 \quad \Longleftrightarrow \quad \hat \varpi \bigl( (\Id\times\psi)(B) \bigr) = 0, \quad \forall\, B \in \mathcal B(\R^d).
\end{equation}

\subsection{Optimal transportation problems on directed locally affine partitions}
\label{Ss_partitions_intro}

We first give the definition of directed locally affine partition.

\begin{definition}
\label{D_locaff}
We say that a nonempty subset $Z \subset \R^d$ is \emph{locally affine} if there exist $k\in\{0,\dots,d\}$ and $V \in \mathcal A(k,\R^d)$ such that $Z \subset V$ and $Z$ is relatively open in $V$, i.e. $Z=\interr Z \not= \emptyset$.
\end{definition}

Notice that, in the above definition, $V=\aff\,Z$. Whenever $Z$ is a locally affine set of dimension $k$ we will often denote it as $Z^{k}$ to emphasize its dimension.

\begin{definition}
\label{D_locaffpart}
A \emph{directed locally affine partition} in $\R^d$ is a partition into locally affine sets $\{Z^k_\mathfrak a\}_{\nfrac{k = 0,\dots,d}{\mathfrak a \in \mathfrak A^k}}$, endowed with a family of closed nondegenerate convex cones $\{C^k_\mathfrak a\}_{\nfrac{k = 0,\dots,d}{\mathfrak a \in \mathfrak A^k}}$ such that 
\begin{enumerate}
\item the set
\begin{equation}
\label{E_bD}
\bD:=\Big\{ (k,\a,z,C\ka) : k\in\{0,\dots,d\},\, \a \in \A^k,\, z\in Z\ka \Big\}\subset \bigcup_{k=0}^{d} \{k\} \times \A^k\times\R^d\times\mathcal C(k,\R^d)
\end{equation}
is $\sigma$-compact;
\item $\aff\,(z+C\ka)=\aff(Z\ka)$ for all $z\in Z\ka$.
\end{enumerate}

\end{definition}

For shortness we will use the notation
\begin{align}
\label{E_mathbf_Z_base_partition}
\mathbf Z^k &:= \mathtt p_z \mathbf D(k) = \bigcup_{\mathfrak a \in \mathfrak A^k} Z^k_\mathfrak a, \qquad \mathbf Z := \mathtt p_z \mathbf D = \bigcup_k \mathbf Z^k = \bigcup_{k=0}^d \bigcup_{\mathfrak a \in \mathfrak A^k} Z^k_\mathfrak a,\qquad
\bar{\mathbf Z}^k:=\underset{\a\in\A^k}{\bigcup}\clos\,Z\ka.
\end{align}
For the conditional probabilities of a measure $\mu$ over a locally affine partition we will use the notation $\{\mu^k_\a\}_{\nfrac{k=0,\dots,d}{\a\in\A^k}}$, with $\mu^k_\a(Z^k_\a)=1$: the fact that the disintegration is strongly consistent is a consequence of the fact that the function $\mathbf Z\ni z \mapsto (k,\a)$ has $\sigma$-compact graph $\mathtt p_{(z,k,\a)} \mathbf D$.
%
Notice that the quotient space of the partition is given by
\begin{equation}
\label{E_disj_unio_A}
\A := \bigsqcup_{k} \A^k,
\end{equation}
where $\sqcup$ denotes the disjoint union of sets.

Given a locally affine directed partition $\{Z\ka, C\ka\}_{k,\a}$ one can define the sets of \emph{initial} and \emph{final points} as follows.

\begin{definition}
\label{D_initial_final}
Define for $k = 1,\dots,d$, $\a \in \A^k$ the \emph{initial points of $Z\ka$} as
\begin{equation*}
\mathcal I(Z\ka) := \Bigl\{ z \in \R^d \setminus \mathbf Z : \exists\,r>0 \text{ s.t. } z + \interr C\ka \cap B^d(z,r) \subset Z\ka \Bigr\},
\end{equation*}
and the \emph{final points of $Z\ka$} as
\begin{equation*}
\mathcal E(Z\ka) := \Bigl\{ z \in \R^d \setminus \mathbf Z : \exists\,r>0 \text{ s.t. } z - \interr C\ka \cap B^d(z,r) \subset Z\ka \Bigr\}.
\end{equation*}
Finally, we call \emph{sets of initial points} and \emph{sets of final points} of the locally affine directed partition $\{Z\ka, C\ka\}_{k,\a}$ the sets given respectively by
\begin{equation}
\label{E_initial_final}
\mathcal I := \bigcup_{k,\a} \mathcal I(Z\ka), \qquad \mathcal E := \bigcup_{k,\a} \mathcal E(Z\ka).
\end{equation}
\end{definition}

\begin{figure}
\input{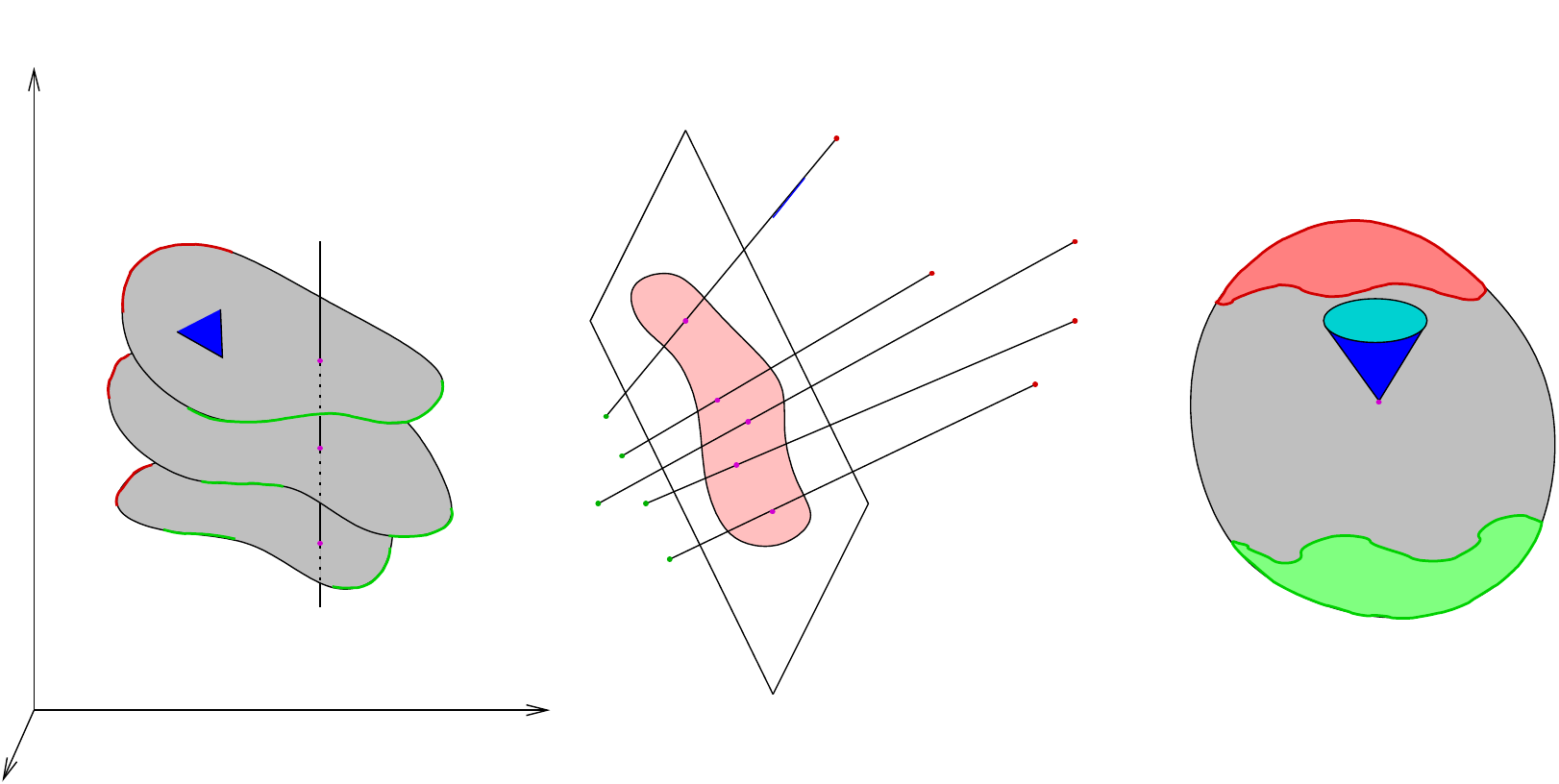_t}
\caption{A directed locally affine partition in $\R^3$ into three $2$-dimensional sets with quotient space $\A^2$, five $1$-dimensional sets with quotient space $\A^1$ and a $3$-dimensional set with quotient space $\A^3$. In each $k$-dimensional subpartition, for $k=1,\dots,3$, we denote a locally affine set as $Z^k_\a$ with cone of directions $C^k_\a$ (colored in blue), initial points $\mathcal I(Z\ka)$ (colored in green), final points $\mathcal E(Z\ka)$ (colored in red) and quotient point $\a$ (colored in purple), chosen as in \eqref{E_mathfrak_A_k_def}.}
\label{Fi_direct_fibra}
\end{figure}

Notice that the sets $\mathcal I(Z\ka)$, $\mathcal I(Z^{k'}_{\a'})$ do not need to be disjoint even if $k\neq k'$ and $\a\neq \a'$, and the same for $\mathcal E(Z\ka)$, $\mathcal E(Z^{k'}_{\a'})$.
Moreover,
\begin{equation}
\label{E_infin_incl}
\mathcal I(Z\ka) \cup \mathcal E(Z\ka)\subset \partial_{\mathrm{rel}}Z\ka,
\end{equation}
but the inclusion \eqref{E_infin_incl} may be strict (see Figure \ref{Fi_direct_fibra} and Figure \ref{Fi_infin}). 
The measurability of the sets of initial/final points is proven in the Lemma \ref{L_regularity_initial_final_points}. In the proof we use the concept of \emph{completeness} of a directed locally affine partition, whose meaning will be clear in Section \ref{S_foliations} and whose definition is given below. Since up to that section, when it will become crucial for our analysis of the super/subdifferential partitions, such a property will be used only in order to prove measurability issues, more precisely in the proofs of Lemma \ref{L_regularity_initial_final_points} and of Proposition \ref{P_countable_partition_in_reference_directed_planes}, a deeper understanding of its meaning is up to then not necessary and can be for the moment neglected.

\begin{definition}
\label{D_part_compl}
 A directed locally affine partition $\{Z\ka,C\ka\}_{k,\a}$ is \emph{complete} if
 \begin{equation}
 \label{E_part_compl}
  x+C\ka\,\cap\, y-C\ka\subset Z\ka,\qquad\forall\,x,y\in Z\ka.
 \end{equation}
\end{definition}
In the proof of Proposition \ref{P_parall}, we will see that the set defined in \eqref{E_part_compl} is a convex set satisfying
\[
 \R^+\bigl((x+C\ka\,\cap\, y-C\ka)-x\bigr)=\R^+\bigl(y-(x+C\ka\,\cap\, y-C\ka)\bigr)=C\ka.
\]

\begin{definition}
\label{D_dir_subpart}
We will say that a directed locally affine partition $\{Z^{',\ell}_\mathfrak b,\,C^{',\ell}_\mathfrak b\}_{\nfrac{\ell = 0,\dots,d}{\mathfrak b \in \mathfrak B^\ell}}$ in $\R^d$ is a \emph{directed locally affine subpartition} of $\{Z^k_\mathfrak a,\,C^k_\mathfrak a\}_{\nfrac{k = 0,\dots,d}{\mathfrak a \in \mathfrak A^k}}$ if the following holds:
\begin{enumerate}
\item \label{E_same_base} $\mathbf Z=\mathbf Z'$, where $\mathbf Z'$ is the set given by \eqref{E_mathbf_Z_base_partition} for $\{Z^{',\ell}_\mathfrak b,\,C^{',\ell}_\mathfrak b\}_{\ell,\mathfrak b}$;
\item $\forall\,\ell,\b$ there exists $k,\a$ s.t. $Z^{',\ell}_\b\subset Z^k_\a$ and $C^{',\ell}_\b$ is an extremal face of $C^k_\a$.
\end{enumerate}
\end{definition}

\begin{definition}
\label{D_disint_regular} 
We say that a locally affine ($\sigma$-compact) partition $\{Z^k_{\a}\}_{\nfrac{k = 0,\dots,d}{\mathfrak a \in \mathfrak A^k}}$ is \emph{Lebesgue-regular} if the conditional probabilities $\{\upsilon^k_\a\}_{k,\a}$ of the disintegration of $\LL$ on the partition $\{Z^k_{\a}\}_{k,\a}$ (see Remark \ref{R_disint_lebesgue}) satisfy
\begin{equation}
\label{E_disint_regular}
\upsilon^k_\a \simeq \HH^k \llcorner_{Z^k_\a}, \quad \text{for $\eta$-a.e. $(k,\a) \in \A$}.
\end{equation}
\end{definition}

From the definition of disintegration of a Radon measure given in Remark \ref{R_disint_lebesgue}, it is not difficult to check that the validity of \eqref{E_disint_regular} is independent on the partition into unit measure sets $\{A_i\}$, hence Definition \ref{D_disint_regular} is consistent.



To a directed locally affine partition $\{Z^k_\mathfrak a,\,C^k_\mathfrak a\}_{\nfrac{k = 0,\dots,d}{\mathfrak a \in \mathfrak A^k}}$ in $\R^d$, we associate the cost function 
\begin{equation}
\label{E_c_bD}
\mathtt c_{\bD}(x,y) :=
\begin{cases} 
0 & x\in Z\ka, \mathtt c_{C\ka}(x,y)<+\infty \ \text{for some} \ (k,\a)\in\A, \\
+\infty & \text{otherwise.}
\end{cases}
\end{equation}
Notice that, since $\bD$ is $\sigma$-compact, $\mathtt c_{\mathbf D}$ is $\sigma$-continuous. Indeed, 
\[
\{\mathtt c_{\mathbf D}<+\infty\} = \mathtt p_{x,y} \bigl( \big\{ (k,\a,x,y), (k,\a,x,y-x)\in\mathbf D \big\} \bigr).
\]

Let us consider $\mu,\nu\in\mathcal P(\R^d)$ satisfying
\[
\quad\Pi^f_{\mathbf c_{\bD}}(\mu,\nu)\neq\emptyset.
\]
By definition of $\mathtt c_{\bD}$, one can easily see that $\mu(\mathbf Z)=1$ and 
\[
\Pi^{\mathrm{opt}}_{\mathtt c_{\bD}}(\mu,\nu) = \Pi^f_{\mathtt c_{\bD}}(\mu,\nu)= \Big\{ \pi\in\Pi^f_{\mathtt c_{\bD}}(\mu,\nu):\,\pi\text{ is $\mathtt c_{\bD}$-cyclically monotone} \Big\}.
\] Let
\[
\mu=\int\mu\ka\,dm(k,\a), \qquad \mu\ka(Z\ka)=1,
\]
be the disintegration of $\mu$ w.r.t. the partition $\{Z\ka\}_{k,\a}$.

We have the following characterization.

\begin{proposition}
\label{P_dispiani}
$\pi\in\Pi^f_{\mathtt c_{\mathbf{D}}}(\mu,\nu)$ if and only if the strongly consistent disintegration $\{\pi^k_\a\}_{k,\a}\subset\mathcal P(\R^d\times\R^d)$ of $\pi$ w.r.t. the partition $\{Z^k_\a\times\R^d\}_{k,\a}$ satisfies the following properties:
\begin{subequations}
\label{E_L_dispiani}
\begin{equation}
\label{E_L_dispiani2}
\pi^k_\a \in \Pi^f_{\mathtt c_{C^k_\a}}(\mu^k_\a,(\p_2)_\#\pi^k_\a) \quad \text{for $m$-a.e. $(k,\a)$},
\end{equation}
\begin{equation}
\label{E_L_dispiani3}
\int (\mathtt p_2)_\# \pi\ka\,dm(k,\a) = \nu,
\end{equation}
where the measure on the l.h.s. of \eqref{E_L_dispiani3} is defined as in \eqref{E_int_measure}. 
\end{subequations}

\end{proposition}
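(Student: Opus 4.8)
The plan is to exploit that both $\mathtt c_{\mathbf D}$ and every cone cost $\mathtt c_{C\ka}$ take only the values $0$ and $+\infty$, so that ``having finite cost'' is the same as ``being concentrated on $\{\mathtt c<+\infty\}$'', and then to transport this concentration statement through the disintegration of $\pi$. First I would record the preliminaries. Since $\Pi^f_{\mathtt c_{\mathbf D}}(\mu,\nu)\neq\emptyset$ forces $\mu(\mathbf Z)=1$, any $\pi$ with $(\mathtt p_1)_\#\pi=\mu$ is concentrated on $\mathbf Z\times\R^d$, hence the partition $\{Z\ka\times\R^d\}_{k,\a}$ with quotient map $\mathtt h\circ\mathtt p_1$ (here $\mathtt h$ is the quotient map of $\{Z\ka\}_{k,\a}$) fits the hypotheses of Definition \ref{D_dis}; moreover the graph of $(z,y)\mapsto(k,\a)$ is $\sigma$-compact because $\mathbf D$ is (it is $\mathtt p_{(z,k,\a)}\mathbf D$ times $\R^d$), so Theorem \ref{T_disint} produces a unique, strongly consistent disintegration $\pi=\int\pi\ka\,dm(k,\a)$ with $\pi\ka(Z\ka\times\R^d)=1$ and $m=(\mathtt h\circ\mathtt p_1)_\#\pi=\mathtt h_\#\mu$; this is the object the statement refers to.

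For the forward implication, assume $\pi\in\Pi^f_{\mathtt c_{\mathbf D}}(\mu,\nu)$. Then $\int\mathtt c_{\mathbf D}\,d\pi<+\infty$ is equivalent to $\pi(\{\mathtt c_{\mathbf D}<+\infty\})=1$, and $\{\mathtt c_{\mathbf D}<+\infty\}=\bigcup_{k,\a}\{(x,y):x\in Z\ka,\,y-x\in C\ka\}$. Since the $Z\ka$ are pairwise disjoint, intersecting with $Z\ka\times\R^d$ gives $\{\mathtt c_{\mathbf D}<+\infty\}\cap(Z\ka\times\R^d)=\{(x,y):x\in Z\ka,\,y-x\in C\ka\}$, i.e.\ on $Z\ka\times\R^d$ the cost $\mathtt c_{\mathbf D}$ agrees with $\mathtt c_{C\ka}$. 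Applying \eqref{E_disint2} to the Borel set $\{\mathtt c_{\mathbf D}<+\infty\}$ yields $\pi\ka(\{(x,y):x\in Z\ka,\,y-x\in C\ka\})=1$ for $m$-a.e.\ $(k,\a)$, hence $\int\mathtt c_{C\ka}\,d\pi\ka=0$; comparing, via the uniqueness part of Theorem \ref{T_disint}, the family $(k,\a)\mapsto(\mathtt p_1)_\#\pi\ka$ with the disintegration $\{\mu\ka\}$ of $\mu$ (both satisfy \eqref{E_disint1}--\eqref{E_disint2} for $\mu$, because $(\mathtt p_1)_\#\pi=\mu$ and $(\mathtt h\circ\mathtt p_1)^{-1}(F)=\mathtt h^{-1}(F)\times\R^d$) gives $(\mathtt p_1)_\#\pi\ka=\mu\ka$ for $m$-a.e.\ $(k,\a)$. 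Thus $\pi\ka\in\Pi^f_{\mathtt c_{C\ka}}(\mu\ka,(\mathtt p_2)_\#\pi\ka)$, which is \eqref{E_L_dispiani2}. For \eqref{E_L_dispiani3}, testing \eqref{E_disint1}--\eqref{E_disint2} with the sets $\R^d\times B$ and $F=\A$: the map $(k,\a)\mapsto(\mathtt p_2)_\#\pi\ka(B)=\pi\ka(\R^d\times B)$ is $m$-measurable by \eqref{E_disint1}, and, $\pi$ being concentrated on $\mathbf Z\times\R^d$, $\int(\mathtt p_2)_\#\pi\ka(B)\,dm(k,\a)=\pi(\R^d\times B)=\nu(B)$ for every Borel $B$, i.e.\ $\int(\mathtt p_2)_\#\pi\ka\,dm=\nu$ in the sense of \eqref{E_int_measure}.

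The reverse implication is the same chain run backwards. Given a disintegration satisfying \eqref{E_L_dispiani2}--\eqref{E_L_dispiani3}, strong consistency gives $\pi\ka(Z\ka\times\R^d)=1$, and $\int\mathtt c_{C\ka}\,d\pi\ka<+\infty$ forces $\pi\ka$ to be concentrated on $\{(x,y):y-x\in C\ka\}$, hence on $\{(x,y):x\in Z\ka,\,y-x\in C\ka\}\subset\{\mathtt c_{\mathbf D}<+\infty\}$. Integrating in $(k,\a)$: from $(\mathtt p_1)_\#\pi\ka=\mu\ka$ one recovers $(\mathtt p_1)_\#\pi=\mu$ (consistent with $\pi(\mathbf Z\times\R^d)=1$, which is what legitimizes the disintegration), from the concentration statements one gets $\pi(\{\mathtt c_{\mathbf D}<+\infty\})=1$, i.e.\ $\int\mathtt c_{\mathbf D}\,d\pi=0<+\infty$, and \eqref{E_L_dispiani3} gives $(\mathtt p_2)_\#\pi=\nu$; therefore $\pi\in\Pi^f_{\mathtt c_{\mathbf D}}(\mu,\nu)$.

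I expect the only genuinely delicate point — the main obstacle — to be the measure-theoretic bookkeeping rather than any estimate: namely, justifying that the disintegration over the family $\{Z\ka\times\R^d\}_{k,\a}$, which does \emph{not} cover $\R^d\times\R^d$, is strongly consistent (this is exactly where $\mu(\mathbf Z)=1$ together with the $\sigma$-compactness of $\mathbf D$ enter, through Theorem \ref{T_disint}), and checking the measurability of $(k,\a)\mapsto(\mathtt p_2)_\#\pi\ka$ so that the integrated second marginal in \eqref{E_L_dispiani3} is a well-defined probability measure in the sense of \eqref{E_int_measure}. Everything else reduces to the routine identification of push-forwards through \eqref{E_disint1}--\eqref{E_disint2} together with the trivial remark that a $\{0,+\infty\}$-valued cost has finite integral against $\pi$ precisely when $\pi$ gives no mass to the set where the cost equals $+\infty$.
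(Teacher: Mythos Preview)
Your proof is correct and follows essentially the same approach as the paper's: both rely on the identity $\mathtt c_{\mathbf D}\llcorner_{Z\ka\times\R^d}=\mathtt c_{C\ka}$ and then pass the concentration/finite-cost condition through the disintegration in each direction. Your version simply spells out in more detail the measure-theoretic points (strong consistency via $\mu(\mathbf Z)=1$ and $\sigma$-compactness of $\mathbf D$, measurability of $(k,\a)\mapsto(\mathtt p_2)_\#\pi\ka$) that the paper leaves implicit.
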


\begin{proof}
If $\pi \in \Pi^f_{\mathtt c_{\bD}}(\mu,\nu)$, then up to an $m$-negligible set one has $\pi\ka \in \Pi^f_{\mathtt c_{\bD}}(\mu\ka,(\p_2)_\# \pi\ka)$, and since $\mathtt c_{\bD} \llcorner_{Z\ka\times\R^d} = \mathtt c_{C\ka}$ one deduces \eqref{E_L_dispiani2}. The equality \eqref{E_L_dispiani3} is a fairly easy consequence of $(\p_2)_\# \pi = \nu$.

Conversely, if $\pi$ satisfies \eqref{E_L_dispiani}, then the two formulas
\[
(\p_2)_\# \pi = \int (\p_2)_\# \pi\ka dm(k,\a), \quad \int \mathtt c_{\bD} d\pi = \int \bigg( \int \mathtt c_{\bD} d\pi\ka \bigg) dm(k,\a) = \int \bigg( \int \mathtt c_{C\ka} d\pi\ka \bigg) dm(k,\a)
\]
yield $\pi \in \Pi^f_{\mathtt c_{\bD}}(\mu,\nu)$.
%
\end{proof}

In other words any optimal transference plan w.r.t. the cost associated to a directed locally affine partition can be decomposed as a family of transference plans on the $k$-dimensional affine hulls of the $k$-dimensional sets of the partition, moving the mass along the cones of directions, and viceversa it can be reconstructed given a family $\{\pi^k_\a\}_{k,\a}$ satisfying \eqref{E_L_dispiani2}-\eqref{E_L_dispiani3}.

In general \eqref{E_L_dispiani3} is not a disintegration (see Section \ref{Ss_measure_disintegration} before Remark \ref{R_disint_lebesgue}), as the following example shows.

\begin{example}
\label{Ex_2ndmarg}

\begin{figure}
\input{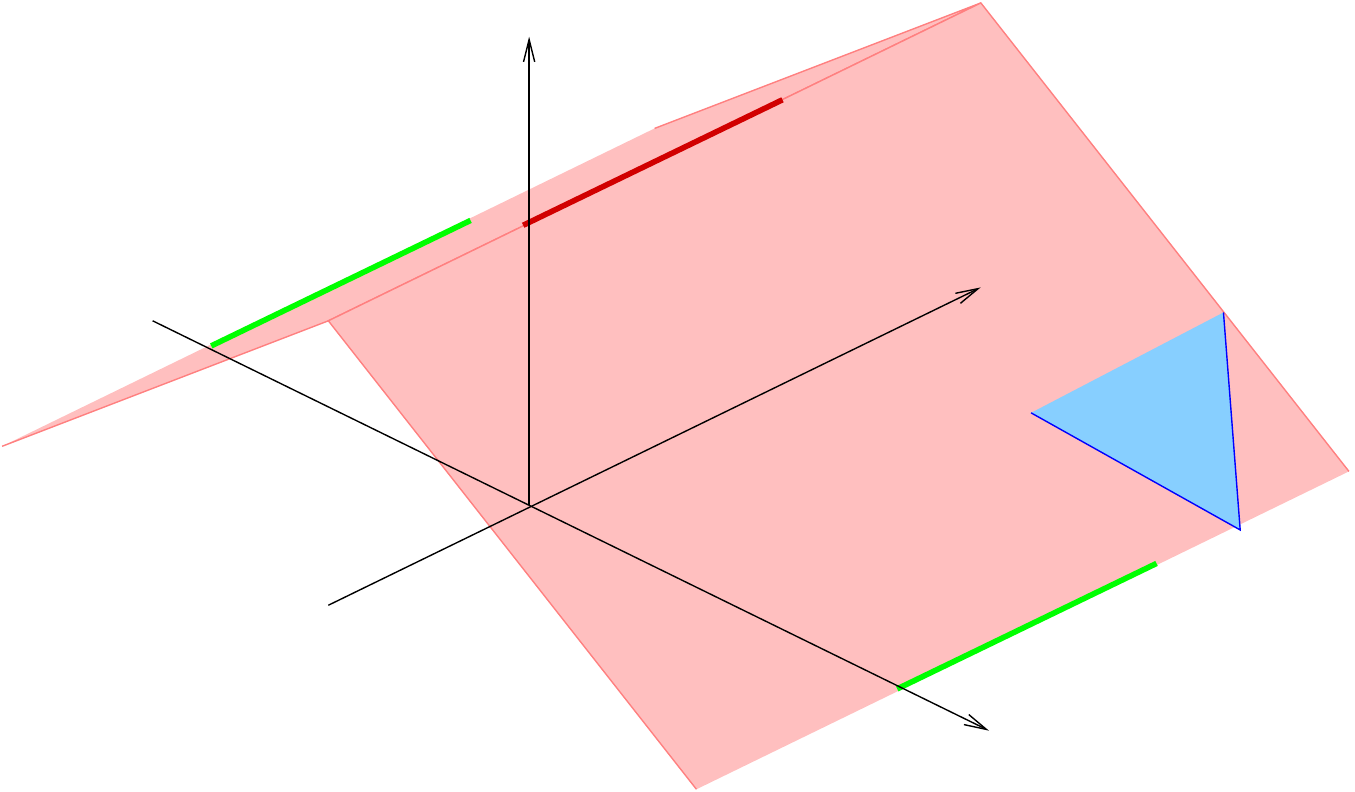_t}
\caption{The transport problem on the directed locally affine partition described in Example \ref{Ex_2ndmarg}.}
\label{Fi_notuniqe}
\end{figure}

For $d=3$ let
\[
\mu := \mathcal H^1 \llcorner_{\{-1\} \times [0,1/2] \times \{0\} \cup \{1\} \times [0,1/2] \times \{0\}}, \quad \nu := 2 \mathcal H^1 \llcorner_{\{0\} \times [0,1/2] \times \{1\}},
\]
and consider the directed locally affine partition
\[
Z^2_1 := \big\{ (z_1,z_2,z_3), z_1 < 0, z_1 = z_3 - 1 \big\}, \quad C^2_1 := \big\{ (z_1,z_2,z_3) : |z_2| \leq z_1, z_1 = z_3 \big\},
\]
\[
Z^2_2 := \big\{ (z_1,z_2,z_3), z_1 > 0, z_1 = - z_3 + 1 \big\}, \quad C^2_2 := \big\{ (z_1,z_2,z_3) : |z_2| \leq - z_1, z_1 = - z_3 \big\}.
\]
Then, for every decomposition $2\nu = \nu_1 + \nu_2$ with $\nu_1, \nu_2 \in \mathcal P(\R^2)$, 
\[
\Pi(\mu,\{\nu_1,\nu_2\}) := \Bigl\{ \pi \in \Pi^f_{\mathtt c_{\mathbf D}}(\mu,\nu) : (\p_2)_{\#}\pi^2_1 = \nu_1,\,(\p_2)_{\#}\pi^2_2 = \nu_2 \Bigr\} \neq \emptyset,
\]
and clearly $\Pi(\mu,\{\nu_1,\nu_2\}) \subset \Pi^f_{\mathtt c_\bD}(\mu,\nu)$.
\end{example}

Example \ref{Ex_2ndmarg} motivates the following definition.

\begin{definition}
\label{D_pinu}
Given a transference plan $\bar\pi\in\Pi^f_{\mathtt c_{\mathbf D}}(\mu,\nu)$, we define the \emph{conditional second marginals} of $\bar\pi$ w.r.t. $\{Z\ka,C\ka\}_{k,\a}$ as
\[
\bar\nu^k_\a := (\p_2)_\#\bar\pi\ka, \quad \text{ for } (k,\a) \in \A. 
\]
We also set
\begin{equation}
\label{E_optplan_dirpar}
\Pi^f_{\mathtt c_\bD}(\mu,\{\bar\nu\ka\}) = \Bigl\{ \pi\in\Pi^f_{\mathtt c_{\mathbf D}}(\mu,\nu):\, (\p_2)_\#\pi\ka=\bar\nu^k_\a \text{ for $m$-a.e. }(k,\a) \Bigr\},
\end{equation}
and we call \eqref{E_optplan_dirpar} the set of \emph{optimal transport plans on the directed locally affine partition $\bD$} w.r.t. $\mu$ and $\{\bar\nu\ka\}$.
\end{definition}

Therefore, in the following by \emph{optimal transportation problem on a directed locally affine partition $\mathbf D$} we mean an optimal transportation problem w.r.t. the cost $\mathtt c_{\bD}$ between measures $\mu$ and $\{\bar\nu\ka\}_{k,\a}$, being the latter \emph{admissible second marginals}, namely  conditional second marginals of at least one transference plan $\pi\in \Pi^f_{\mathtt c_\bD}(\mu,\nu)$.

Notice that in Example \ref{Ex_2ndmarg} the existence of more than one family of admissible second marginals for the given optimal transportation problem would be avoided provided 
\begin{equation}
\label{E_nuz1}
\nu(\mathbf Z)=1. 
\end{equation}
What \eqref{E_nuz1} implies in general is that any family of admissible second marginals $\{\bar\nu\ka\}$ is given by a relabeling of the disintegration of $\nu$ on $Z\ka$, but it may not necessarily occur that 
\begin{equation}
 \label{E_barnuza1}
 \bar\nu\ka(Z\ka)=1
\end{equation}
(see Figure \ref{Fi_notuniqe}). In the next proposition we give a criterion --namely, condition \eqref{E_more_than_complet}-- in order that condition \eqref{E_barnuza1} is satisfied and then there exists just one family of admissible second marginals. Condition \eqref{E_more_than_complet} will be indeed satisfied by directed locally affine partitions called fibrations and $\mathtt c_{\tilde{\mathbf C}}$-foliations given by single Lipschitz graphs (see Corollary \ref{C_plan_fibr} and Proposition \ref{P_hat_bf_D_graph}).

\begin{proposition}
\label{P_dispiani_2}
Assume that 
\begin{equation}
\label{E_more_than_complet}
z \in Z\ka, z' \in Z^{k'}_{\a'} \ \text{for} \ (k,\a) \not= (k',\a') \quad \Longrightarrow \quad \mathbf Z \cap ( z + C\ka ) \cap ( z' + C^{k'}_{\a'} ) = \emptyset.
\end{equation}
Hence,
\begin{equation}
 (\p_2)_\#\pi\ka\Big(Z\ka\cup\R^d\setminus\underset{(\a',k')\neq(\a,k)}{\bigcup}Z^{k'}_{\a'}\Big)=1, \quad\text{for all $\pi\in\Pi^f_{\mathtt c_{\mathbf{D}}}(\mu,\nu)$, $m$-a.e. $(k,\a)\in\A$.}
\end{equation}

Moreover, if $\nu(\mathbf Z)=1$, one has that
\begin{equation*}
(\mathtt p_2)_\#\pi\ka = \nu\ka,\quad \text{being $\nu = \int \nu\ka\,dm(k,\a)$, $\nu\ka(Z\ka)=1$ the disintegration of $\nu$ w.r.t. $\mathbf D$.}
\end{equation*}
\end{proposition}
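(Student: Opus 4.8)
The plan is to exploit the geometric disjointness hypothesis \eqref{E_more_than_complet} together with the already-established characterization of $\Pi^f_{\mathtt c_{\mathbf D}}$ in Proposition \ref{P_dispiani}. First I would fix $\pi \in \Pi^f_{\mathtt c_{\mathbf D}}(\mu,\nu)$ and its strongly consistent disintegration $\pi = \int \pi\ka\,dm(k,\a)$ with respect to $\{Z\ka \times \R^d\}_{k,\a}$, so that by \eqref{E_L_dispiani2} one has $\pi\ka \in \Pi^f_{\mathtt c_{C\ka}}(\mu\ka, (\mathtt p_2)_\#\pi\ka)$ for $m$-a.e. $(k,\a)$; in particular $\pi\ka$ is concentrated on $\{(x,y) : x \in Z\ka,\ y - x \in C\ka\}$, hence $(\mathtt p_2)_\#\pi\ka$ is concentrated on $Z\ka + C\ka = \bigcup_{z \in Z\ka}(z + C\ka)$. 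The point is then purely set-theoretic: I claim that for $m$-a.e.\ pair $(k,\a) \ne (k',\a')$ the sets $(Z\ka + C\ka)$ and $Z^{k'}_{\a'}$ are disjoint. Indeed if $z'' \in (Z\ka + C\ka)\cap Z^{k'}_{\a'}$ then $z'' \in z + C\ka$ for some $z \in Z\ka$ and trivially $z'' \in z'' + C^{k'}_{\a'}$ with $z'' \in Z^{k'}_{\a'}$, so $z'' \in \mathbf Z \cap (z + C\ka)\cap(z'' + C^{k'}_{\a'})$, contradicting \eqref{E_more_than_complet}. Therefore $(\mathtt p_2)_\#\pi\ka\big(Z^{k'}_{\a'}\big) = 0$ for all $(k',\a') \ne (k,\a)$, which rewritten gives
\[
(\mathtt p_2)_\#\pi\ka\Big(Z\ka \cup \Big(\R^d \setminus \underset{(\a',k')\ne(\a,k)}{\bigcup} Z^{k'}_{\a'}\Big)\Big) = 1,
\]
since $(\mathtt p_2)_\#\pi\ka\big(\underset{(\a',k')\ne(\a,k)}{\cup} Z^{k'}_{\a'}\big) = 0$ after removing the $m$-null set of indices where \eqref{E_L_dispiani2} fails, using the $\sigma$-compactness of $\mathbf Z$ to make the countable-type union argument rigorous.

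For the second assertion, assume in addition $\nu(\mathbf Z) = 1$. Since $\mathbf Z = \bigsqcup_{k,\a} Z\ka$ up to the way the partition is indexed, the disintegration $\nu = \int \nu\ka\,dm(k,\a)$ with $\nu\ka(Z\ka) = 1$ is well defined (the quotient map has $\sigma$-compact graph, so strong consistency holds by Theorem \ref{T_disint}). Now from the first part, for $m$-a.e.\ $(k,\a)$ the measure $(\mathtt p_2)_\#\pi\ka$ charges only $Z\ka$ together with $\R^d \setminus \mathbf Z$; but $(\mathtt p_2)_\#\pi\ka$ is a probability measure and, by \eqref{E_L_dispiani3}, $\int (\mathtt p_2)_\#\pi\ka\,dm(k,\a) = \nu$, which is concentrated on $\mathbf Z$. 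Integrating the identity $(\mathtt p_2)_\#\pi\ka(\R^d \setminus \mathbf Z) \ge 0$ against $m$ and using that the integral equals $\nu(\R^d \setminus \mathbf Z) = 0$, we conclude $(\mathtt p_2)_\#\pi\ka(\R^d \setminus \mathbf Z) = 0$, hence $(\mathtt p_2)_\#\pi\ka(Z\ka) = 1$ for $m$-a.e.\ $(k,\a)$. Thus the family $\{(\mathtt p_2)_\#\pi\ka\}_{k,\a}$ is a family of probabilities with $(\mathtt p_2)_\#\pi\ka(Z\ka) = 1$ whose $m$-average is $\nu$; by the uniqueness part of the disintegration theorem (Theorem \ref{T_disint}), applied to the partition $\{Z\ka\}_{k,\a}$, it must coincide $m$-a.e.\ with $\{\nu\ka\}_{k,\a}$, i.e.\ $(\mathtt p_2)_\#\pi\ka = \nu\ka$.

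The only genuinely delicate point is the measurability/countability bookkeeping in the first step: one must check that "$\,(\mathtt p_2)_\#\pi\ka$ charges no $Z^{k'}_{\a'}$ with $(k',\a') \ne (k,\a)$'' can be phrased as a single $m$-a.e.\ statement rather than an uncountable family of conditions. This is handled exactly as in the strong-consistency arguments already used for $\mathtt c_{\mathbf D}$: the set $\mathbf D$ is $\sigma$-compact, so $\mathbf Z$ and the graph of the quotient map are $\sigma$-compact, and the function $(k,\a) \mapsto (\mathtt p_2)_\#\pi\ka\big((Z\ka + C\ka) \setminus Z\ka\big)$ is $m$-measurable; one shows its integral against $m$ vanishes by the disjointness computation above applied pointwise and then invokes Fubini. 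No new idea beyond \eqref{E_more_than_complet} and the two parts of Proposition \ref{P_dispiani} is needed.
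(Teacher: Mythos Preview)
Your proof is correct and follows essentially the same idea as the paper: the key set-theoretic observation is that if $y \in (z + C\ka) \cap Z^{k'}_{\a'}$ with $(k',\a') \ne (k,\a)$, then since $0 \in C^{k'}_{\a'}$ one has $y \in \mathbf Z \cap (z + C\ka) \cap (y + C^{k'}_{\a'})$, contradicting \eqref{E_more_than_complet}. The paper packages this more economically by establishing the global inclusion
\[
\mathbf Z \times \R^d \cap \{ \mathtt c_{\mathbf D} < +\infty \} \subset \bigcup_{k,\a} Z\ka \times \big( Z\ka \cup (\R^d \setminus \mathbf Z) \big)
\]
\emph{before} disintegrating, which sidesteps entirely the measurability/countability bookkeeping you flag at the end; since $\pi(\{\mathtt c_{\mathbf D} < +\infty\}) = 1$, the concentration of each $\pi\ka$ follows immediately from strong consistency. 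Your treatment of the second assertion (integrate $(\mathtt p_2)_\#\pi\ka(\R^d \setminus \mathbf Z)$ against $m$, conclude it vanishes $m$-a.e., then invoke uniqueness of disintegration) is exactly what is implicit in the paper's terse ``this concludes the proof.''
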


Hence the conditional second marginals of $\pi \in \Pi^f_{\mathtt c_{\mathbf{D}}}(\mu,\nu)$ are equal to the conditional probabilities of $\nu$, computed via disintegration on $Z\ka$. Notice that part of the statement is that the quotient measure $m$ of $\nu$ is the same as for $\mu$.

\begin{proof}
It is fairly easy to see that \eqref{E_more_than_complet} implies that
\[
\mathbf Z \times \R^d\cap \big\{ \mathtt c_{\mathbf D} < +\infty \big\} \subset \bigcup_{\a,k} Z\ka \times \Big(Z\ka\cup\R^d\setminus \mathbf Z\Big),
\]
so that each $\pi \in \Pi^f_{\mathtt c_{\bD}}(\mu,\nu)$ is concentrated on
\[
\bigcup_{\a,k} Z\ka \times \Big(Z\ka\cup\R^d\setminus \mathbf Z\Big),
\]
and this concludes the proof.
\end{proof}

\subsection{From directed partitions to directed fibrations}
\label{Ss_mapping_sheaf_to_fibration}

In the first part of this section we show that a directed locally affine partition is a countable union of directed locally affine partitions whose elements are locally affine sets having the same dimension and whose direction cones are ``close'' to a fixed reference cone. This kind of partitions will be called \emph{sheaf sets}. Then, we will see that the optimal transportation problem on a \emph{$k$-directed sheaf set} --with $k$ denoting the dimension of its locally affine sets-- can be equivalently reformulated as an optimal transportation problem on a $k$-directed sheaf set whose sets are contained in distinct parallel $k$-dimensional planes, called \emph{$k$-directed fibration}. The advantage of this reformulation is that on a $k$-directed fibration all the supports of the second marginals are disjoint, condition \eqref{E_more_than_complet} holds and then (by Corollary \ref{C_plan_fibr}) one can consider the quotient variables of the partition as parameters for a family of independent convex-cone optimal transportation problems in $\R^k$. 

Since $0$-dimensional sets, i.e. single points in $\R^d$, are obviously not further partitionable, from now on we will consider partitions into sets of dimension $k\geq1$.

Let $\{Z\ka, C\ka\}_{\nfrac{k = 1,\dots,d}{\mathfrak a \in \mathfrak A^k}}$ be a locally affine directed partition in $\R^d$. If $\{\e^k_i\}_{i=1}^k$ are vectors in $\R^d$, define the sets 
\begin{equation}
\label{E_cone_e_k_i}
C(\{\e^k_i\}) := \bigg\{ \sum_{i=1}^k t_i \e^k_i:\, t_i \in \R^+ \cup \{0\} \bigg\}
\end{equation}
and 
\begin{equation}
\label{E_U_square_mathrm_e}
U(\{\e^k_i\}) := \bigg\{ \sum_{i=1}^k t_i \e^k_i:\, t_i \in [0,1]\bigg\}.
\end{equation}

Recalling the definitions given in Sections \ref{Ss_intro_affine_subspaces_cones} and \ref{Ss_partitions_intro}, and the completeness property of Definition \ref{D_part_compl}, we have the following result.

\begin{proposition}
\label{P_countable_partition_in_reference_directed_planes}
There exists a countable covering of $\mathbf D$ into disjoint directed locally affine partitions $\{\mathbf D^k_n\}_{\nfrac{k = 1,\dots,d}{n \in \N}}$, with the following properties: $\forall\,n\in\N$., set
\[
\mathfrak A^k_n := \mathtt p_\mathfrak a \mathbf D_n^k,  \quad C\ka = \mathtt p_{\mathcal C(k,\R^d)} \bD^k_n(\a),\quad Z\ka=\p_{\R^d}\mathbf D_n^k(\a).
\]
Then, $\mathtt p_{\{1,\dots,k\}}\mathbf D^k_n=\{k\}$ for all $n \in \N$, i.e. the elements $Z\ka$, $C\ka$ have linear dimension $k$, for $\a \in \A^k_n$, and there exist 
\begin{itemize}
\item linearly independent vectors $\{\e^k_i(n)\}_{i=1}^k\subset \mathbb S^{d-1}$, with linear span
\[
V^k_n=\langle \e_1^k(n),\dots,\e_k^k(n)\rangle,
\]
\item a given point $z^k_n \in V^k_n$,
\item constants $r^k_n,\lambda^k_n \in \R^+$,
\item a non degenerate cone $C^k \in \mathcal C(k,\R^d)$, with $C(\{\e_i^k\}) \subset \mathring C^k$,
\end{itemize}
such that it holds:
\begin{enumerate}
\item \label{Point_1_Proposition_P_countable_partition} the enlargement of the cone $C^k$ by a factor $2r^k_n$ is non-degenerate
\[
C^k(2r^k_n) \in \mathcal C(k,V^k_n);
\]
\item \label{Point_2_Proposition_P_countable_partition} the projections on $V^k_n$ of the cones $C\ka$, $\a \in \A^k_n$, have a uniform opening containing $C^k$
\[
C^k(r^k_n)\subset \mathtt p_{V^k_n} \mathring C^k_\mathfrak a;
\]
\item \label{Point_3_Proposition_P_countable_partition} the projections on $V^k_n$ of the cones $C\ka$, $\a \in \A^k_n$, are strictly contained in the given cone $C^k(2r^k_n)$
\[
\mathtt p_{V^k_n} C^k_\mathfrak a \subset C^k(2r^k_n); 
\]
\item \label{Point_4_Proposition_P_countable_partition} the projection map on $V^k_n$ is nondegenerate
\[
| \mathtt p_{V^k_n} z | \geq 1/\sqrt{2} \quad \text{ for all $z \in C^k_\mathfrak a\cap \mathbb S^{d-1}$, $\mathfrak a \in \mathfrak A^k_n$};
\]
\item the projection of $Z\ka$ on $V^k_n$ contains a given cube
\[
z^k_n + \lambda^k_n \, U(\{\e^k_i(n)\}) \subset \mathtt p_{V^k_n} Z^k_\mathfrak a.
\]
\end{enumerate}
Moreover, if $\mathbf D$ is complete, then the sets $\{\mathbf D^k_n\}$ are Borel.
\end{proposition}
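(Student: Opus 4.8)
## Plan of proof

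The plan is to decompose the $\sigma$-compact set $\mathbf D$ dimension by dimension, and within each dimension $k$ along a countable family of reference frames, and then to verify that the five quantitative conditions hold on each piece after a further countable refinement. Since by Definition~\ref{D_locaffpart} the set $\mathbf D$ is $\sigma$-compact, we may write $\mathbf D = \bigcup_j K_j$ with $K_j$ compact and increasing; it suffices to build the covering compatibly with this exhaustion. Fix $k \in \{1,\dots,d\}$ and work with $\mathbf D(k)$, which is itself $\sigma$-compact. First I would choose a countable dense family of pairs $(V,C)$ with $V \in \mathcal G(k,\R^d)$ and $C \in \mathcal C(k,V)$ a nondegenerate reference cone generated by linearly independent unit vectors $\{\e_i\}$ with $C(\{\e_i\}) \subset \mathring C$, together with rationals $r,\lambda > 0$ and base points $z^k_n \in V$ in a countable dense set. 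The point is that for each $(k,\a,z,C\ka) \in \mathbf D(k)$, the cone $C\ka$ is nondegenerate and $k$-dimensional, hence admits some $k$-plane $V$ onto which $\mathtt p_V$ is injective on $C\ka$ with the quantitative bound of Point~\eqref{Point_4_Proposition_P_countable_partition}; and $Z\ka$ being relatively open in $\aff Z\ka = z + \aff C\ka$ contains some small cube $z^k_n + \lambda\, U(\{\e_i\})$ in its projection onto $V$. So each point of $\mathbf D(k)$ lies in at least one set of the form
\[
E^k_{(V,C,\{\e_i\},z^k_n,r,\lambda)} := \Bigl\{ (k,\a,z,C\ka) \in \mathbf D(k) : \text{conditions (1)--(5) hold with these data} \Bigr\},
\]
and there are only countably many such data tuples.

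The second step is to turn this covering into a \emph{disjoint} covering by directed locally affine partitions. Enumerate the (countably many) sets $E^k_{(\cdot)}$ as $F^k_1, F^k_2, \dots$ and set $\mathbf D^k_n := F^k_n \setminus \bigcup_{m<n} F^k_m$; union over $k$ and $n$ and relabel to get $\{\mathbf D^k_n\}_{k,n}$. Each $\mathbf D^k_n$ is again a directed locally affine partition (the $\sigma$-compactness hypothesis in Definition~\ref{D_locaffpart}(1) will need checking — see below — and condition (2) there, $\aff(z+C\ka) = \aff Z\ka$, is inherited automatically since we keep whole fibers $\mathbf D(\a)$, as conditions (1)--(5) depend only on $\a$ not on the individual $z \in Z\ka$). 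Then I would verify that conditions (1)--(5) of the statement hold on $\mathbf D^k_n$: conditions (2),(3),(4),(5) are exactly built into the definition of $F^k_n$, and condition (1), $C^k(2r^k_n) \in \mathcal C(k,V^k_n)$, holds provided $r^k_n$ was chosen small enough relative to $C^k$, which we arrange in the countable enumeration (only pairs $(C,r)$ with $C(2r)$ nondegenerate in $V$ are admitted). The cone inclusion $C(\{\e_i^k(n)\}) \subset \mathring C^k$ is likewise imposed at the selection stage.

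The main obstacle — and the place where the hypothesis that $\mathbf D$ is \emph{complete} enters — is the measurability/Borel regularity of the sets $\mathbf D^k_n$, and more precisely the $\sigma$-compactness needed for each $\mathbf D^k_n$ to be a legitimate directed locally affine partition. The issue is that conditions involving strict inclusions and interiors of cones, such as $C^k(r^k_n) \subset \mathtt p_{V^k_n} \mathring C\ka$ and $\mathtt p_{V^k_n} Z\ka \supset z^k_n + \lambda^k_n U(\{\e_i\})$, are not obviously closed conditions on $(\a, C\ka, Z\ka)$ in the relevant Hausdorff-type topologies — the map $\a \mapsto Z\ka$ records a relatively open set, and the requirement that its projection contains a fixed cube is the kind of "open/stable" condition one must handle with care. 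Here one uses completeness \eqref{E_part_compl}: for a complete partition, $x + C\ka \cap y - C\ka \subset Z\ka$ for all $x,y \in Z\ka$, so $Z\ka$ can be reconstructed from $C\ka$ together with finitely many (or a $\sigma$-compact family of) points of $Z\ka$ via the $\sigma$-compact data in $\mathbf D$; this lets one rewrite the cube-containment condition as a $\sigma$-compact condition on $\mathbf D$, and the cone-inclusion conditions become $\sigma$-compact by passing to the closed sub/super-cones $C^k(\pm r)$, $\mathring C\ka(\pm 2^{-m})$ of \eqref{E_epsilon_neigh_of_Cka}--\eqref{E_inverse_neigh_Cone} and taking countable unions over $m$. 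Once each $F^k_n$ is exhibited as $\sigma$-compact (or at least Borel), the set differences $\mathbf D^k_n = F^k_n \setminus \bigcup_{m<n}F^k_m$ are Borel, and being subsets of the $\sigma$-compact $\mathbf D$ with Borel quotient maps they satisfy the hypotheses of Definition~\ref{D_locaffpart}. I expect the bulk of the technical work to be precisely this Borel/$\sigma$-compact bookkeeping — exactly as the remark preceding the proposition anticipates, flagging that completeness is used here solely to settle measurability and may otherwise be ignored until Section~\ref{S_foliations}.
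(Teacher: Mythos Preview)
Your proposal is correct and follows essentially the same two-stage strategy as the paper: first cover $\mathbf D(k)$ by countably many sets indexed by reference data $(V,C,r,\{\e_i\},z,\lambda)$ on which conditions (1)--(5) hold, then disjointify by set differences. The paper organizes the first stage slightly differently --- it first covers $\mathcal C(k,\R^d)$ by closures of the neighborhoods $L(k,C,r,1/\sqrt{2})$ (which are compact, giving $\sigma$-compactness of the pieces directly), and only afterwards refines by the cube data $(z_m,\lambda_m)$ --- but this is a matter of packaging. The one point where the paper is sharper than your sketch is the use of completeness for condition (5): rather than a general reconstruction of $Z\ka$ from $C\ka$ and points, the paper observes that, once Point~\eqref{Point_2_Proposition_P_countable_partition} holds and $C(\{\e^k_i(n)\}) \subset \mathring C^k$, completeness reduces the cube-containment $z_m + \lambda_m U(\{\e^k_i(n)\}) \subset \mathtt p_{V^k_n} Z\ka$ to the two-point condition $\{z_m,\, z_m + \lambda_m \sum_i \e^k_i(n)\} \subset \mathtt p_{V^k_n} Z\ka$, which is manifestly a $\sigma$-compact condition via the $\sigma$-continuous multifunction $\a \mapsto \mathtt p_{V^k_n} Z\ka$.
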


Observe that from Point \eqref{Point_2_Proposition_P_countable_partition} and Point \eqref{Point_4_Proposition_P_countable_partition} above it follows that there exists $\rho > 0$ such that
\begin{equation*}
|\mathtt p_{V^k_n} (z-z')| \geq \rho |z-z'|, \qquad \forall\, z ,z'\in \aff\, Z^k_\mathfrak a,\,\forall \mathfrak a \in \mathfrak A^k_n.
\end{equation*}
\begin{proof}
If $V \in \mathcal G(k,\R^d)$, $C \in \mathcal C(k,V)$ and given two real numbers $0<\delta, r<1$ such that $C(2r)\in\mathcal C(k,\R^d)$, consider the subset $L(k,C,r,\delta)$ of $\mathcal C(k,\R^d)$ defined by
\begin{align}
\label{E_L_V_r_def}
L(k,C,r,\delta) := \Big\{ C' \in \mathcal C(k,\R^d) : \ (i)&~ C(r) \subset \mathtt p_V \mathring C', \crcr
(ii)&~ \mathtt p_V C' \subset C(2r), \crcr
(iii)&~ \inf \big\{ |\mathtt p_V z|: z \in C'\cap \mathbb S^{d-1} \big\} > 1 - \delta \Big\}.
\end{align}
If is fairly easy to see that for all $0<\delta<1$ as above the family
\begin{equation}
\label{E_base_of_C_k_R_ell}
\mathfrak L(k,\delta) := \Big\{ L(k,C,r',\delta): C \in \mathcal C(k,\R^d), 0 < r' < 1 \text{ s.t. }C(2r')\in\mathcal C(k,\R^d) \Big\}
\end{equation}
generates a base of neighborhoods of $\mathcal C(k,\R^d)$. In particular, we can find a countable family of sets $\{L(k,C^k_n,r^k_n,1\slash\sqrt 2)\}_{n \in \N}$, covering $\mathcal C(k,\R^d)$ --being the latter separable. 
Notice that
\[
\mathrm{clos}\,L(k,C,r,\delta) = \bigg\{ C' \in \mathcal C(k,\R^d) :\,C(r) \subset \mathtt p_V C',\,\mathtt p_V C' \subset C(2r),\,\inf \big\{ |\mathtt p_V z|: z \in C'\cap \mathbb S^{d-1} \big\} \geq 1 - \delta \bigg\}
\]
is compact. 

Then, define
\begin{equation*}
\mathbf D^k_n := \bigg\{ \big( \mathfrak a,z,C^k_\mathfrak a \big) \in \mathbf D(k): C^k_\mathfrak a \in \mathrm{clos}\,L \big( k,C^k_n,r^k_n,1/\sqrt{2} \big) \setminus \bigcup_{n' < n} \mathrm{clos}\,L \big( k,C^k_{n'},r^k_{n'},1/\sqrt{2} \big) \bigg\}.
\end{equation*}
Clearly $\{\mathbf D^k_n\}_{n \in \N}$ is a covering of $\mathbf D(k)$ into disjoint sets, and it is fairly straightforward to prove that these sets are $\sigma$-compact, because the sets $\mathrm{clos}\,L(k,C,r,\delta)$ are compact.

For each $k$, $C^k_n$, $r^k_n$, consider a family of $k$ linearly independent unit vectors $\{\e^k_i(n)\}_{i =1}^k$ in $\R^d$ such that
\begin{equation*}
C(\{\e^k_i(n)\}) \subset \mathring C^k_n.
\end{equation*}

Being the family of sets $\bigl\{\{z + \lambda \, \interr U(\{\e^k_i(n)\})\}\bigr\}_{z \in V^k_n,\,\lambda \in \R^+}$ a base of the topology of $V^k_n$, let $\bigl\{\{z_m + \lambda_m \, \interr U(\{\e^k_i(n)\})\}\bigr\}_{m \in \N}$ be a countable base. Define thus
\begin{equation*}
\mathbf D^k_{n,m} := \Big\{ (\mathfrak a,z,C^k_\mathfrak a) \in \mathbf D^k_n : z_m + \lambda_m U(\{\e^k_i(n)\}) \subset \mathtt p_{V^k_n} Z^k_\mathfrak a \Big\} \setminus \bigcup_{m' < m} \mathbf D_{n,m'}^k.
\end{equation*}
Since the directed partition is complete (see Definition \ref{D_part_compl}) and Point \eqref{Point_2_Proposition_P_countable_partition} holds then 
\[
 z_m + \lambda_m U(\{\e^k_i(n)\}) \subset \mathtt p_{V^k_n} Z^k_\mathfrak a\quad\Leftrightarrow\quad \Big\{z_m,\,z_m+\lambda_m\sum_{i=1}^k\e_i^k(n)\Big\}\subset \mathtt p_{V^k_n} Z^k_\mathfrak a.
\]
Let $\mathbf f^k_n:\,\mathtt p_{\A\times\R^d}\mathbf D^k_n \to\mathtt p_{V^k_n}\bigl(\mathtt p_{\A\times\R^d}\mathbf D^k_n\bigr)$ be the $\sigma$-continuous map 
\[
 \mathbf f^k_n(\a,z)=\mathtt p_{V^k_n}(Z\ka).
\]
One has that
\begin{equation} 
\label{E_sheafcomp}
\begin{split}
\bigg\{ (\mathfrak a,z) \in\mathtt p_{\A\times\R^d}\mathbf D^k_n :\, \bigg\{z_m,\,z_m+\lambda_m& \sum_{i=1}^k\e_i^k(n) \bigg\}\subset \mathtt p_{V^k_n} Z^k_\mathfrak a \bigg\} \\
=&~ \mathtt p_{12} \Big(\Graph\, \mathbf f^k_n\cap \big\{(\a,z,z_m):\,\a\in\A,z\in\R^d \big\} \Big) \\
&\cap\mathtt p_{12} \bigg( \Graph\, \mathbf f^k_n \cap \bigg\{ \bigg( \a,z, z_m+\lambda_m \sum_{i=1}^k\e_i^k(n) \bigg) :\, \a\in\A, z\in\R^d \bigg\} \bigg),
\end{split}
\end{equation}
is a $\sigma$-compact set, thus $\mathbf D^k_{n,m}$ is Borel. Relabeling the sets $\mathbf D^k_{n,m}$ as $\mathbf D^k_n$, the proof is completed.
\end{proof}

\begin{remark}
\label{R_sheaf_negl}
In the rest of this section, without further comments, we will assume that the directed locally affine partitions are complete, according to Definition \ref{D_part_compl}. Indeed, this will be always the case for the partitions we analyze in the paper. Since we will be interested into directed locally affine partitions up to sets which are $\varpi$-negligible w.r.t. some fixed measure $\varpi$, we will also consider the sets of the countable partition $\{\mathbf D^k_n\}$ as $\sigma$-compact, which is always the case provided we remove an $\varpi$-negligible set. 
\end{remark}

\begin{figure}
\input{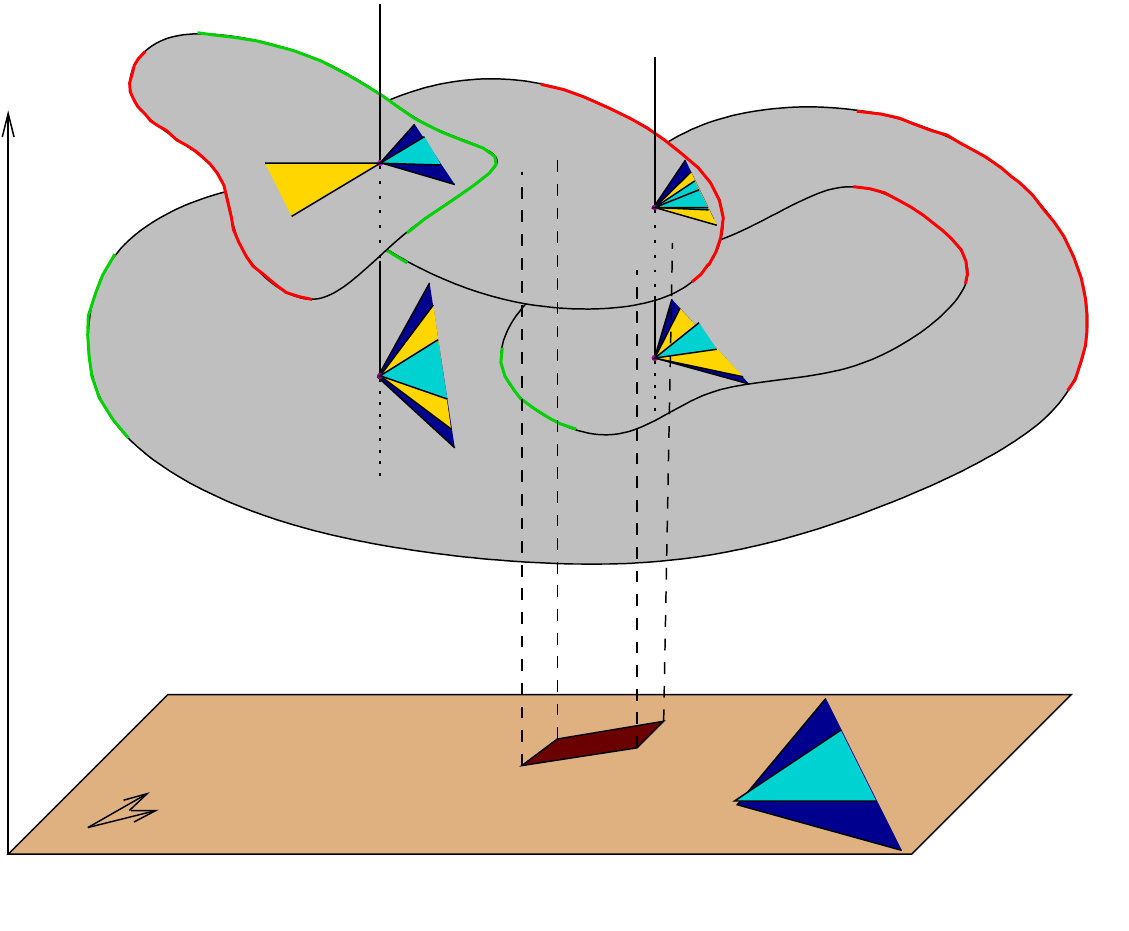_t}
\caption{The locally affine sets $Z^2_1$, $Z^2_2$, $Z^2_3$ with cones of directions $C^2_1$, $C^2_2$, $C^2_3$ (given by the union of the cyan and the yellow triangles) form a $2$-dimensional directed sheaf set with reference plane $V^2$, base vectors $\e^2_1,\,\e^2_2$, reference rectangle $z^2+\lambda^2 U(\{\e^2_1,\,\e^2_2\})$ and base cones of directions $C(\{\e^2_1,\e^2_2\})(r^2)\subset V^2$ (colored in cyan) and $C(\{\e^2_1,\e^2_2\})(2r^2)\subset V^2$ (colored in blue). In the picture, we underline with the corresponding color the counterimages of the reference cones on the affine spans of the locally affine sets. The remaining locally affine set of the partition does not belong to the sheaf set since the projection of its cone of directions (colored in yellow) on the reference plane does not contain/is contained in the reference cone.}
\label{Fi_sheafdef}
\end{figure}

\begin{definition}
\label{D_sheaf_set}
For $k=1,\dots,d$, we call \emph{$k$-(dimensional) directed sheaf set} a $\sigma$-compact directed locally affine partition into $k$-dimensional sets $\bD^k$ which satisfy the same properties of the sets $\bD^k_n$ in Proposition \ref{P_countable_partition_in_reference_directed_planes}: there exist
\begin{itemize}
\item linearly independent vectors $\{\e^k_i\}_{i=1}^k\subset \mathbb S^{d-1}$, with linear span
\[
V^k = \langle \e_1^k,\dots,\e_k^k \rangle,
\]
\item a given point $z^k \in V^k$,
\item constants $r^k,\lambda^k \in \R^+$,
\item a non degenerate cone $C^k \in \mathcal C(k,\R^d)$, with $C(\{\e_i^k\})(r_k) \subset \mathring C^k$, 
\end{itemize}
such that, denoting
$\mathfrak A^k := \mathtt p_\mathfrak a \mathbf D^k$, $C\ka = \mathtt p_{\mathcal C(k,\R^d)} \bD^k(\a)$, $Z\ka=\p_{\R^d}\mathbf D^k(\a)$,
it holds:
\begin{enumerate}
\item 
$C^k(r^k) \in \mathcal C(k,V^k)$;
\item 
$C^k \subset \mathtt p_{V^k} \mathring C^k_\mathfrak a$;
\item 
$\mathtt p_{V^k} C^k_\mathfrak a \subset C^k(r^k)$ for all $\mathfrak a \in \mathfrak A^k$
\item 
$|\mathtt p_{V^k} z| \geq 1/\sqrt{2}$ for all $z \in C^k_\mathfrak a\cap \mathbb S^{d-1}$, $\mathfrak a \in \mathfrak A^k$;
\item 
$z^k + \lambda^k \, U(\{\e^k_i\}) \subset \mathtt p_{V^k} Z^k_\mathfrak a$.
\end{enumerate}
The $k$-dimensional plane $V^k=\langle \e_1^k,\dots \e_k^k\rangle$ will be called \emph{reference plane}, the cones $C^k \subset C^k(r^k) =:C^{',k}$ \emph{base cones of directions}, $z^k$ \emph{base point} and $z^k + \lambda^k \, U(\{\e^k_i\})$ \emph{base rectangle} of the sheaf set.
\end{definition}

Moreover, we can choose
\begin{equation}
\label{E_mathfrak_A_k_def}
\mathfrak A^k := \mathbf Z^k \cap \big( z^k + (V^k)^\perp \big).
\end{equation}
In this way the quotient space $\mathfrak A^k$ is a subset of a $(d-k)$-dimensional affine space. This will be our default choice for the quotient space $\A^k$ of $\bD^k$.

Before going on, we prove the following lemma, announced in Section \ref{Ss_partitions_intro}.
\begin{lemma}
\label{L_regularity_initial_final_points}
If the directed locally affine partition is $\sigma$-compact and complete, then the sets $\mathcal I$, $\mathcal E$ are Souslin.
\end{lemma}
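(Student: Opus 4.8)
The plan is to show that both $\mathcal{I}$ and $\mathcal{E}$ can be written as projections of Souslin (indeed $\sigma$-compact or Borel) sets, using the $\sigma$-compactness of $\mathbf D$ together with the completeness property to replace the "$\exists r>0$ with $B^d(z,r)\cap(z+\interr C\ka)\subset Z\ka$" condition by a countable one. By symmetry it suffices to treat $\mathcal{I}$; the argument for $\mathcal{E}$ is identical after replacing $C\ka$ by $-C\ka$.

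First I would rewrite the local condition defining $\mathcal I(Z\ka)$ in a form that does not refer to the (non-measurable-looking) existential quantifier over $r$. The key observation is that, by completeness (Definition \ref{D_part_compl}), if $z\notin\mathbf Z$ and there is a single point $w\in Z\ka$ with $w-z\in\interr C\ka$, then the whole convex set $z+C\ka\cap w-C\ka$ is forced to sit inside $\clos Z\ka$ by a limiting argument (points of the form $z+C\ka\cap w'-C\ka$ for $w'\in Z\ka$ close to $w$ belong to $Z\ka$, and these exhaust a relative neighbourhood of $z$ in $z+\aff C\ka$ as $w'$ ranges over $Z\ka$ near $w$ because $Z\ka$ is relatively open). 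In fact one gets: $z\in\mathcal I(Z\ka)$ iff $z\notin\mathbf Z$ and $\exists\,w\in Z\ka$ with $w-z\in\interr C\ka$. This is exactly the device already used in the proof of Proposition \ref{P_countable_partition_in_reference_directed_planes} (the equivalence in formula \eqref{E_sheafcomp} via evaluating at finitely many vertices), so I would cite that completeness mechanism rather than reprove it.

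With this reformulation, write
\[
\mathcal I=\bigcup_{k,\a}\mathcal I(Z\ka)=\mathtt p_{z}\Big(\big\{(k,\a,z,C\ka)\ :\ z\notin\mathbf Z,\ \exists\,w\ \big((k,\a,w,C\ka)\in\mathbf D,\ w-z\in\interr C\ka\big)\big\}\Big).
\]
The set inside is obtained from $\mathbf D$ (which is $\sigma$-compact) by: intersecting with the condition $z\in\R^d\setminus\mathbf Z$ — here $\mathbf Z=\mathtt p_z\mathbf D$ is Souslin, so its complement is co-Souslin, i.e. $\Sigma^1_2$, hence still in the $\sigma$-algebra $\varTheta$ and in particular yields a Souslin set after the remaining operations; imposing the relation "$\exists w$ with $(k,\a,w,C\ka)\in\mathbf D$ and $w-z\in\mathring C$'' where $\mathring C$ is the relative interior of the cone attached to $(k,\a)$. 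To make the last relation manageable I would use $\{0\}\cup\interr C=\bigcup_n\mathring C(-2^{-n})$ from \eqref{E_open_cone_iterior_def}, so that "$w-z\in\interr C\ka$'' becomes a countable union over $n$ of the closed conditions "$w-z\in\mathring C(-2^{-n})$ with $C=\mathtt p_{\mathcal C}\mathbf D(k,\a)$'', each of which cuts a $\sigma$-compact (or at least Borel) piece out of $\mathbf D\times_{\A}\mathbf D$ (the fibered product over the $(k,\a)$ coordinate, which is $\sigma$-compact). Projecting away the $w$-variable and the $C\ka$-variable and then onto the $z$-coordinate, and taking the countable union over $n$, keeps us inside the Souslin class: recall Souslin sets are closed under countable unions, countable intersections, and projections along Polish factors.

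The only genuinely delicate point — and the one I'd flag as the main obstacle — is the interaction of the two clauses "$z\notin\mathbf Z$'' and "$\exists w$ ...'': the first is the complement of a Souslin set, so a priori only in $\varTheta$, not Souslin, and intersecting a co-Souslin set with a Souslin set followed by a projection does not obviously stay Souslin. I would handle this by observing that $\mathbf Z=\mathtt p_z\mathbf D$ with $\mathbf D$ $\sigma$-compact, so $\mathbf Z$ is itself $\sigma$-compact (continuous images of $\sigma$-compact sets are $\sigma$-compact), hence $F_\sigma$, hence Borel; therefore $\R^d\setminus\mathbf Z$ is Borel, the intersection with the Souslin "$\exists w$'' set is Souslin, and the final projection onto the $z$-axis is Souslin. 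This closes the argument for $\mathcal I$, and the same reasoning with $-C\ka$ in place of $C\ka$ gives that $\mathcal E$ is Souslin, completing the proof of the lemma.
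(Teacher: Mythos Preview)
Your overall strategy matches the paper's: use completeness to replace the ``$\exists r>0$ such that a whole relative neighbourhood lies in $Z\ka$'' condition by a single witness point, and then write $\mathcal I$ as a projection of a $\sigma$-compact/Borel set. Your observation that $\mathbf Z=\mathtt p_z\mathbf D$ is $\sigma$-compact, hence Borel, so that ``$z\notin\mathbf Z$'' causes no trouble, is exactly right.

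There is, however, a genuine gap in the equivalence you claim. You assert that
\[
z\in\mathcal I(Z\ka)\quad\Longleftrightarrow\quad z\notin\mathbf Z\ \text{and}\ \exists\,w\in Z\ka\ \text{with}\ w-z\in\interr C\ka,
\]
but the backward implication is false as stated. Take $k=1$, $C\ka=\{(t,0):t\ge0\}$, $Z\ka=(0,1)\times\{0\}\subset\R^2$ (complete, since $(x+C\ka)\cap(y-C\ka)$ is just the segment $[x,y]$). Let $z=(-1,0)$ and $w=(1/2,0)$: then $w-z\in\interr C\ka$, yet no neighbourhood of $z$ in $z+\interr C\ka$ lies in $Z\ka$. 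Your ``limiting argument'' invokes completeness with $z$ as one endpoint of $\O(z,w')$, but completeness in Definition~\ref{D_part_compl} only applies to pairs $x,y\in Z\ka$, and $z\notin Z\ka$.

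The fix, which is precisely what the paper does, is to add the clause $z\in\clos Z\ka$. With that, the backward direction goes through: take $u_n\in Z\ka$ with $u_n\to z$; for any $w''\in(z+\interr C\ka)\cap B(z,r)$ with $r$ small one has $w-w''\in\interr C\ka$, and for $n$ large $w''-u_n=(w''-z)-(u_n-z)\in\interr C\ka$, so $w''\in(u_n+C\ka)\cap(w-C\ka)\subset Z\ka$ by completeness. The paper packages this by first reducing to a $\sigma$-compact sheaf set (Definition~\ref{D_sheaf_set}), defining
\[
\mathcal I^k_{n,l}=\big\{(\a,z):z\in\clos Z\ka,\ (z+\interr C\ka)\cap\partial B^d(z,2^{-n'})\cap Z\ka\neq\emptyset\big\},
\]
showing these are Borel via the mechanism of \eqref{E_sheafcomp}, and concluding $\mathcal I=\mathtt p_{\R^d}\big(\bigcup_{k,n,l}\mathcal I^k_{n,l}\big)\setminus\mathbf Z$ is Souslin. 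The sheaf-set reduction is also what makes the Borel regularity of $\{(\a,z):z\in\clos Z\ka\}$ transparent, a point your write-up leaves implicit.
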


\begin{proof}
We prove the statement only for $\mathcal I$, since the proof for $\mathcal E$ is analogous. Moreover, we can consider w.l.o.g. a directed locally affine partition given by a $\sigma$-compact directed sheaf set as in Definition \ref{D_sheaf_set}. 

Let $\A = \underset{l\in\N}{\cup}\, \A_l$ such that the sets $\mathtt p_{\A_l\times\mathcal C(k,\R^d)}\mathbf D(k)$ are compact,
and for $n\in\N$ define
\[
\mathcal I^k_{n,l} := \Bigl\{(\a,z)\in\A_l\times\R^d :\, z\in\clos\,Z^k_\a,\quad z + \interr C^k_{\a,l} \cap B^d(z,2^{-n}) \subset Z^k_{\a} \Bigr\}.
\]
By the completeness property of the sheaf set, there exists $n'$ such that
\begin{equation}
\label{E_iknl}
\mathcal I^k_{n,l}:= \Bigl\{(\a,z)\in\A_l\times\R^d :\, z\in\clos\,Z^k_\a,\quad z + \interr C^k_{\a,l} \cap \partial B^d(z,2^{-n'})\cap Z\ka\neq\emptyset \Bigr\}.
\end{equation}

Since the set $\{(\a,z):\,z\in\clos\,Z^k_\a\}$ is Borel, then by \eqref{E_iknl}, reasoning as in \eqref{E_sheafcomp} the sets $\mathcal I^k_{n,l}$ are Borel too and finally
\[
 \mathcal I=\mathtt p_{\R^d}\Big(\underset{k,n,l}{\bigcup}\,\mathcal I^k_{n,l}\Big)\setminus \mathbf Z
\]
is Souslin.
\end{proof}

Now we show that the graph $\bD^k$ of a $k$-directed sheaf set in $\R^d$ can be mapped injectively into a subset  of $\R^{d-k}\times\R^k$ called \emph{fibration}, consisting of a family of parallel $k$-dimensional locally affine sets. In this section, points in $\R^{d-k}\times\R^k$ will be denoted as $(\q,w)$.

\begin{figure}
\centerline{\resizebox{14cm}{12cm}{\input{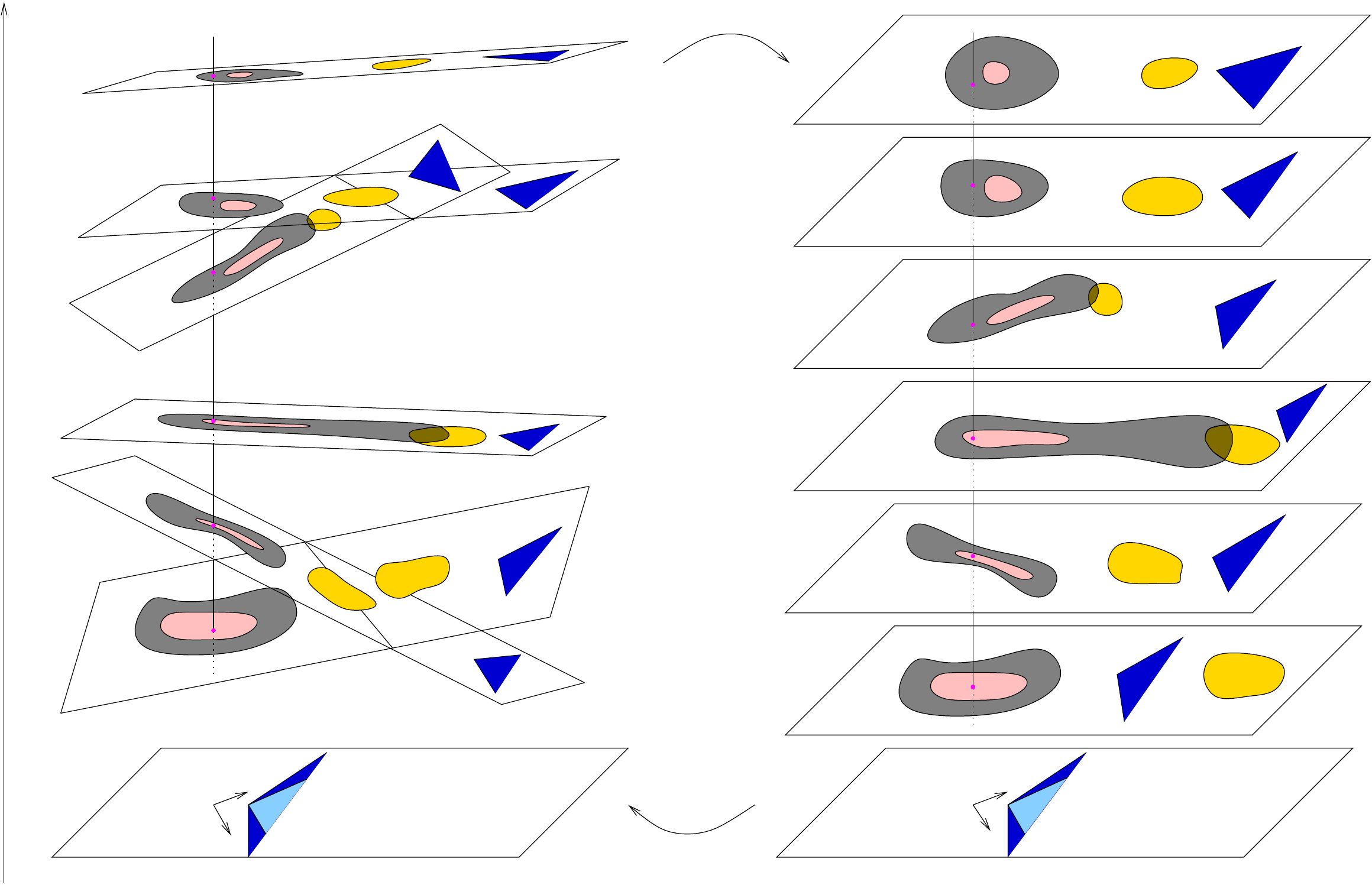_t}}}
\caption{The mapping of a $2$-dimensional sheaf set into a fibration, Proposition \ref{P_map_sheaf_set_into_fibration}. The pink region denotes the support of the conditional measures $\mu^2_\a$ (resp. $\tilde\mu^2_\q$), the yellow one the support of the conditional measures $\nu^2_\a$ (resp. $\tilde\nu^2_\q$), and the blue cones $C^2_\a$ (resp. $\tilde {\mathbf C}^2(\q)$) are the cones of directions of the locally affine sets $Z^2_\a$ (in gray color). The reference cones $C^2\subset C^{',2}$ and $\bar C^2\subset\bar C^{',2}$ are also depicted.}
\label{Fi_mappingfibra}
\end{figure}

\begin{definition}
\label{D_fibration}
A \emph{$k$-(dimensional) directed fibration} is a $\sigma$-compact set $\tilde{\bD}^k\subset\R^{d-k}\times\R^k$ endowed with a $\sigma$-continuous map $\tilde{\bC}^k:\p_{\R^{d-k}}(\tilde{\bD}^k)\to\mathcal C(k,\R^k)$, $\mathfrak q \mapsto \tilde{\bC}^k(\mathfrak q)$, such that
\begin{align}
&\tilde{\bD}^k(\q)\text{ is open in $\R^k$,} \notag \\
&\exists\, \bar C^k \subset \bar C^{',k}, \ \text{with} \ \bar C^k, \bar C^{',k} \in \mathcal C(k,\R^k), \quad \text{ s.t. } \quad \bar C^k \subset \mathring{\tilde{\bC}}^k(\q) \subset \tilde{\bC}^k(\q) \subset \mathring{\bar{C}}^{',k}, \label{E_cones_fibration}
\end{align}
for all $\q \in \p_{\R^{d-k}}(\tilde{\bD}^k)$.
\end{definition}

To a directed fibration $\tilde\bD^k$ we associate the cost
\begin{equation}
\label{E_cost_fibr}
\mathtt c_{\tilde \bD^k}(\q,w,\q',w') =
 \begin{cases}
0 & \q = \q', w'-w \in \tilde {\mathbf C}^k(\q), \\
+\infty & \text{otherwise.}
\end{cases}
\end{equation}
Recalling the definition given in \eqref{E_cone_cost}, notice that 
\[
 \mathtt c_{\tilde{\mathbf D}^k}(\q,w,\q',w')=
 \begin{cases}
\mathtt c_{\tilde{\mathbf C}^k(\q)}(w,w') & \text{ if $\q = \q'$}, \\
+\infty & \text{otherwise.}
\end{cases}
\]

\begin{proposition}
\label{P_map_sheaf_set_into_fibration}
Let $\bD^k$ be a $k$-dimensional directed sheaf set with base cones $C^k\subset C^{',k}\in\mathcal C(k,\R^d)$, reference plane $V^k$, base point $z^k$ and quotient space $\A^k=Z^k\cap(z^k+(V^k)^{\perp})$ and define the map
\begin{equation}
\label{E_r_map}
\mathbf r:\A^k \times \R^d \ni (\a,z) \mapsto \Big( \mathtt i_{d-k}(\a), \mathtt i_k \circ \p_{V^k}(z) \Big) \in \R^{d-k} \times \R^k,
\end{equation}
where 
\begin{equation}
\label{E_identif_map}
\mathtt i_{d-k}:z^k+(V^k)^\perp\to\R^{d-k},\quad\mathtt i_k:V^k\to\R^k
\end{equation}
are the identification maps.

Then, $\mathbf r_{|_{\underset{\a\in\A^k}{\cup}\{\a\}\times \aff Z\ka}}$ is a bijection onto $\underset{\q\in\mathtt i_{d-k}(\A^k)}{\cup}\{\q\}\times\R^k$ and $\mathbf r_{|_{\underset{\a\in\A^k}{\cup}\{\a\}\times Z\ka}}=\mathbf r_{|_{\p_{\A^k\times\R^d}(\bD^k)}}$ maps the sheaf set $\mathtt p_{\A^k\times\R^d}(\bD^k)$ into the elements of the fibration $\tilde{\bD}^k:=\mathbf r(\p_{\A^k\times\R^d}(\bD^k))$ endowed with the direction map
\begin{equation*}
\tilde{\bC}^k:\mathtt i_{d-k}(\A^k)\ni\q\mapsto \mathtt i_k\circ\p_{V^k}\circ\p_{\mathcal C(k,\R^d)}\bigl(\bD^k(\mathtt i_{d-k}^{-1}(\q))\bigr).
\end{equation*}

Moreover,
\begin{equation}
\label{E_rxr}
(\mathbf r\times\mathbf r)_\#\Pi^f_{\mathtt c_{\bD^k}}(\mu,\{\nu^k_\mathfrak a\}) =  \Pi^f_{\mathtt c_{\tilde\bD^k}}(\hat \mu,\hat \nu),
\end{equation}
where
\begin{align}
\label{E_hat_mu_nu}
\tilde \mu = \int (\mathtt i_{k} \circ \mathtt p_{V^k})_\#\mu^k_{\mathtt i_{d-k}^{-1}(\q)}\,d\mathtt i_{d-k}\#m(\q), \quad \tilde \nu = \int (\mathtt i_{k} \circ \mathtt p_{V^k})_\#\nu^k_{\mathtt i_{d-k}^{-1}(\q)}\,d\mathtt i_{d-k}\#m(\q).
\end{align}

Finally
\begin{align*}
&(\mathtt i_{k} \circ \mathtt p_{V^k})_\# \mu^k_{\mathtt i_{d-k}^{-1}(\q)} \big( \mathtt i_{k} \circ ( \mathbf r(B) \cap \R^{d-k} \times \{\q\} ) \big) = 0 \quad \Longleftrightarrow \quad \mu^k_{\mathtt i_{d-k}^{-1}(\q)}(B) = 0, \quad \forall\, B \in \mathcal B(\R^d) \cap \aff Z\ka, \\
&(\mathtt i_{k} \circ \mathtt p_{V^k})_\# \nu^k_{\mathtt i_{d-k}^{-1}(\q)} \big( \mathtt i_{k} \circ ( )\mathbf r(B) \cap \R^{d-k} \times \{\q\} ) \big) = 0 \quad \Longleftrightarrow \quad \nu^k_{\mathtt i_{d-k}^{-1}(\q)}(B) = 0, \quad \forall\, B \in \mathcal B(\R^d) \cap \aff Z\ka.
\end{align*}

\end{proposition}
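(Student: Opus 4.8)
The plan is to reduce the whole statement to a single linear-algebraic fact: for every $\a\in\A^k$ the projection $\mathtt p_{V^k}$ is \emph{injective} on the $k$-plane $\s\,C\ka$ (the direction space of $\aff Z\ka$, by Definition \ref{D_locaffpart}(2): $\aff Z\ka=z+\s\,C\ka$ for $z\in Z\ka$). Indeed, the two inclusions $C^k\subset\mathtt p_{V^k}C\ka\subset C^k(r^k)$ in the definition of a $k$-directed sheaf set (Definition \ref{D_sheaf_set}), with $C^k,C^k(r^k)\in\mathcal C(k,V^k)$, force $\mathtt p_{V^k}C\ka$ to be a $k$-dimensional cone in $V^k$, hence $\mathtt p_{V^k}(\s\,C\ka)=\s(\mathtt p_{V^k}C\ka)=V^k$; a linear surjection between $k$-dimensional spaces is bijective. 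Consequently $\mathtt p_{V^k}\llcorner_{\aff Z\ka}:\aff Z\ka\to V^k$ is an affine bijection, bi-Lipschitz (uniformly in $\a$, using the estimate $|\mathtt p_{V^k}z|\ge1/\sqrt2$ on $C\ka\cap\mathbb S^{d-1}$), so $\mathbf r\llcorner_{\{\a\}\times\aff Z\ka}$ is an affine bijection onto $\{\mathtt i_{d-k}(\a)\}\times\R^k$. Moreover $\s\,C\ka\oplus(V^k)^\perp=\R^d$ (the intersection is $\{0\}$ by injectivity), so $\aff Z\ka$ meets $z^k+(V^k)^\perp$ in exactly one point, which lies in $Z\ka$ since $z^k\in\mathtt p_{V^k}Z\ka$; hence the representatives $\a$ are pairwise distinct and $\a\mapsto\mathtt i_{d-k}(\a)$ is injective, giving the first assertion. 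Because $Z\ka$ is relatively open in $\aff Z\ka$ (Definition \ref{D_locaff}) and $\mathbf r$ is a homeomorphism on each fiber, $\tilde{\bD}^k(\q)$ is open in $\R^k$; the $\sigma$-compactness of $\tilde{\bD}^k$ and the $\sigma$-continuity of $\tilde{\bC}^k$ follow from the $\sigma$-compactness of $\bD^k$ and the continuity of $\mathbf r$.

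Next I would check that $\tilde{\bD}^k$, with $\tilde{\bC}^k(\q)=\mathtt i_k\circ\mathtt p_{V^k}(C^k_{\mathtt i_{d-k}^{-1}(\q)})$, is a $k$-directed fibration in the sense of Definition \ref{D_fibration}. Taking $\bar C^k:=\mathtt i_k(C^k)$ and $\bar C^{',k}:=\mathtt i_k(C^k(r^k))$ (slightly enlarging the latter if a strict inclusion is needed), and using that the linear isomorphism $\mathtt p_{V^k}\llcorner_{\s C\ka}$ carries relative interiors onto relative interiors, the inclusions $C^k\subset\mathtt p_{V^k}\mathring C\ka$ and $\mathtt p_{V^k}C\ka\subset C^k(r^k)$ translate into $\bar C^k\subset\mathring{\tilde{\bC}}^k(\q)\subset\tilde{\bC}^k(\q)\subset\mathring{\bar{C}}^{',k}$, i.e. \eqref{E_cones_fibration}. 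The key point is the cost identity: if $x\in Z\ka$ and $y\in\aff Z\ka$ then $y-x\in\s\,C\ka$, and by injectivity of $\mathtt p_{V^k}$ there, $y-x\in C\ka\iff\mathtt p_{V^k}(y-x)\in\mathtt p_{V^k}C\ka$. Hence, defining $\Psi(x,y):=\big(\mathbf r(\mathtt h(x),x),\mathbf r(\mathtt h(x),y)\big)$ on $\{\mathtt c_{\bD^k}<+\infty\}$ — this is the precise meaning of ``$\mathbf r\times\mathbf r$'', the second coordinate being labelled by $\mathtt h(x)$, legitimate since $y\in\aff Z_{\mathtt h(x)}$ whenever $\mathtt c_{\bD^k}(x,y)<+\infty$ — one obtains $\mathtt c_{\bD^k}=\mathtt c_{\tilde{\bD}^k}\circ\Psi$, and $\Psi$ is a Borel bijection of $\{\mathtt c_{\bD^k}<+\infty\}$ onto $\{\mathtt c_{\tilde{\bD}^k}<+\infty\}$ with Borel inverse $\s_\bD\times\s_\bD$, where $\s_\bD(\q,w):=(\mathtt p_{V^k}\llcorner_{\aff Z_{\mathtt i_{d-k}^{-1}(\q)}})^{-1}(\mathtt i_k^{-1}(w))$.

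Finally I would prove \eqref{E_rxr}. For $\pi\in\Pi^f_{\mathtt c_{\bD^k}}(\mu,\{\nu\ka\})$ write the disintegrations $\pi=\int\pi\ka\,dm$, $\mu=\int\mu\ka\,dm$ over $\{Z\ka\times\R^d\}$ and $\{Z\ka\}$; since $\mu\ka$ is carried by $Z\ka$ and $(\mathtt p_2)_\#\pi\ka=\nu\ka$, pushing $\pi\ka$ forward by $\Psi$ yields a measure on $\{\mathtt i_{d-k}(\a)\}\times\R^k\times\R^k$ with marginals $(\mathtt i_k\circ\mathtt p_{V^k})_\#\mu\ka$ and $(\mathtt i_k\circ\mathtt p_{V^k})_\#\nu\ka$, so integrating in $m$ gives $\Psi_\#\pi\in\Pi^f_{\mathtt c_{\tilde{\bD}^k}}(\tilde\mu,\tilde\nu)$ with $\tilde\mu,\tilde\nu$ as in \eqref{E_hat_mu_nu}. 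Conversely, for $\tilde\pi\in\Pi^f_{\mathtt c_{\tilde{\bD}^k}}(\tilde\mu,\tilde\nu)$ set $\pi:=(\s_\bD\times\s_\bD)_\#\tilde\pi$; by Proposition \ref{P_dispiani} applied to the fibration, $\tilde\pi=\int\tilde\pi_\q\,d\tilde m$ over $\{\tilde{\bD}^k(\q)\times\R^k\}$, each $\tilde\pi_\q$ is carried by (fiber $\q$)$\times$(fiber $\q$) since $\tilde{\bC}^k(\q)\subset\R^k$, and — as the affine fibers $\{\q\}\times\R^k$ are pairwise disjoint — uniqueness of disintegration (Theorem \ref{T_disint}) forces $\tilde m=(\mathtt i_{d-k})_\#m$ and $(\mathtt p_2)_\#\tilde\pi_\q=(\mathtt i_k\circ\mathtt p_{V^k})_\#\nu_{\mathtt i_{d-k}^{-1}(\q)}$ for $\tilde m$-a.e. $\q$; hence $\pi\in\Pi^f_{\mathtt c_{\bD^k}}(\mu,\{\nu\ka\})$ and $\Psi_\#\pi=\tilde\pi$, because $\Psi\circ(\s_\bD\times\s_\bD)=\Id$ on $\{\mathtt c_{\tilde{\bD}^k}<+\infty\}$. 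The last two null-set equivalences are then immediate from $\mathbf r(\{\a\}\times B)=\{\q\}\times\mathtt i_k\mathtt p_{V^k}(B)$ and the fact that $\mathtt i_k\circ\mathtt p_{V^k}\llcorner_{\aff Z\ka}$ is a bi-Lipschitz bijection onto $\R^k$, hence preserves and reflects negligibility. The only genuinely delicate part is this last step: keeping track of which label $\a$ routes the second coordinate of a plan through $\mathbf r$, and pinning down the conditional second marginals $(\mathtt p_2)_\#\tilde\pi_\q$ via uniqueness of the disintegration of $\tilde\nu$ over the disjoint affine fibers; the bijectivity and fibration parts are elementary once the injectivity of $\mathtt p_{V^k}$ on $\s\,C\ka$ has been extracted from the non-degeneracy conditions of Definition \ref{D_sheaf_set}.
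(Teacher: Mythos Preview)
Your proposal is correct and follows essentially the same approach as the paper. The paper's proof is much terser---it declares the bijection and fibration properties ``straightforward'' from conditions (2)--(4) of Definition \ref{D_sheaf_set}, records the cost identity $\mathtt c_{\bD^k}((\a,z),(\a',z'))=\mathtt c_{\tilde\bD^k}(\mathbf r(\a,z),\mathbf r(\a',z'))\cdot\ind_{\tilde\bD^k\times\R^{d-k}\times\R^k}(\mathbf r(\a,z),\mathbf r(\a',z'))$, and invokes the bi-Lipschitz property of $\mathbf r\llcorner_{\{\a\}\times\aff Z\ka}$---while you have filled in the details the paper omits (the injectivity of $\mathtt p_{V^k}$ on $\s\,C\ka$, the explicit $\Psi$ and its inverse $\s_\bD\times\s_\bD$, and the disintegration argument for the reverse inclusion in \eqref{E_rxr}).
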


\begin{proof}
The functions $\mathbf r$ and $\tilde{\mathbf C}^k$ are $\sigma$-continuous, because their graphs are projections of $\sigma$-compact sets.

The facts that $\mathbf r$ is a bijection and that $\mathbf r(\p_{\A^k\times\R^d}\bD^k)$ is a fibration are straightforward, observing that \eqref{E_cones_fibration} is satisfied by the cones $\bar{C}^k=\mathtt i_k(C^k)$, $\bar{C}^{',k}=\mathtt i_k(C^{',k})$, thanks to $(2)$, $(3)$ and $(4)$ of Definition \ref{D_sheaf_set}.

As for the last statements, it is sufficient to observe that 
\[
\mathtt c_{\bD^k}((\a,z),(\a',z'))=\mathtt c_{\tilde\bD^k}(\mathbf r(\a,z),\mathbf r(\a',z')) \cdot \ind_{\tilde \bD^k \times \R^{d-k} \times \R^k}(\mathbf r(\a,z),\mathbf r(\a',z')),
\]
and that $\mathbf r_{|_{\{\a\}\times \aff Z\ka}}$ is bi-Lipschitz, $\forall\,\a\in\A^k$.
\end{proof}

In the following, we set
\begin{equation}
\label{E_tilde_mu_nu_k_a}
\tilde\mu^k_\q = (\mathtt i_{k} \circ \mathtt p_{V^k})_\# \mu^k_{\mathtt i_{d-k}^{-1}(\q)}, \quad \tilde\nu^k_\q = (\mathtt i_{k} \circ \mathtt p_{V^k})_\# \nu^k_{\mathtt i_{d-k}^{-1}(\q)}, \quad \tilde\pi^k_\q = (\mathtt i_{k} \circ \mathtt p_{V^k} \times \mathtt i_{k} \circ \mathtt p_{V^k})_\# \pi^k_{\mathtt i_{d-k}^{-1}(\q)}.
\end{equation}

\begin{remark}
\label{rem_sheaffibr}
 Notice that, once we fix an orthogonal basis $\{\e_i\}_{i=1}^d\subset\R^d$ and identify $\R^{d-k}\times\R^k\simeq \langle \e_1,\dots,\e_{d-k}\rangle\times\langle \e_{d-k+1},\dots,\e_{d}\rangle=\R^d$, a $k$-directed fibration is the image through the map $\mathtt i_{d-k}\times\mathtt i_k$ of a $k$-directed sheaf set whose reference $k$-plane is $\{0\}\times\langle \e_{d-k+1},\dots,\e_{d}\rangle$. Therefore, we can think of a $k$-directed fibration as a $k$-directed sheaf set whose sets are contained in disjoint parallel $k$-planes. In particular, when we speak about directed locally affine subpartitions of a fibration (as e.g. in Proposition \ref{P_sub_sheaf_fol}) we mean the image through the map $\mathtt i_{d-k}\times\mathtt i_k$ of directed locally affine subpartitions of the corresponding $k$-directed sheaf set. 
 \end{remark}

\begin{proposition}
\label{P_sub_sheaf_fol}
Let $\{Z^\ell_{\a,\b},C^\ell_{\a,\b}\}_{\nfrac{\ell=0,\dots,k}{\a \in \A^k,\b \in \B^\ell}}$ be a locally affine directed subpartition of the sheaf set $\{Z\ka,C\ka\}_{k,\a}$. Then the sets
\[
\tilde Z^\ell_{\q,\b} = \mathbf r(\a,Z^\ell_{\a,\b}),\quad \tilde C^\ell_{\q,\b}=\mathtt i_{k}\circ\mathtt p_{V^k}(C^\ell_{\a,\b})
\]
form a directed locally affine subpartition of the fibration $\tilde{\mathbf D}^k=\mathbf r(\mathtt p_{\A^k\times\R^d}(\mathbf D^k))$, and viceversa.

If moreover  the subpartition $\{Z^\ell_{\a,\b},C^\ell_{\a,\b}\}_{\ell,\a,\b}$ is regular, then also the subpartition $\{\tilde Z^\ell_{\q,\b},\tilde C^\ell_{\q,\b}\}_{\ell,\q,\b}$ is regular, and viceversa.
\end{proposition}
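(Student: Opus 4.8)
The plan is to push the whole statement through the map $\mathbf r$ of Proposition \ref{P_map_sheaf_set_into_fibration}. The properties of $\mathbf r$ I would rely on are: on each leaf $\{\a\}\times\aff Z\ka$ the restriction of $\mathbf r$ is an \emph{affine} isomorphism onto $\{\q\}\times\R^k$, with $\q=\mathtt i_{d-k}(\a)$, and $\mathtt i_k\circ\mathtt p_{V^k}$ is a linear isomorphism of $\aff C\ka$ onto $\R^k$ (by property (4) of Definition \ref{D_sheaf_set}); both $\mathbf r$ and $\mathbf r^{-1}$ are $\sigma$-continuous; and, by the last assertions of that proposition, $\mathbf r$ matches the null sets of the conditional measures over the leaves of $\bD^k$ with those over the leaves of $\tilde\bD^k$. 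Since $\mathbf r$ is \emph{not} globally bi-Lipschitz (the leaves of a sheaf set may rotate in a merely Borel fashion), the argument must be organised leaf by leaf.

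First I would settle the structural claim. Setting $\tilde Z^\ell_{\q,\b}:=\mathbf r(\a,Z^\ell_{\a,\b})$ and $\tilde C^\ell_{\q,\b}:=\mathtt i_k\circ\mathtt p_{V^k}(C^\ell_{\a,\b})$, each requirement of Definitions \ref{D_locaffpart} and \ref{D_dir_subpart} reduces to the leaf $\aff Z\ka$: the affine isomorphism $\mathbf r(\a,\cdot)$ carries the $\ell$-dimensional relatively open set $Z^\ell_{\a,\b}$ to the relatively open $\ell$-dimensional set $\tilde Z^\ell_{\q,\b}$ with $\aff\tilde Z^\ell_{\q,\b}=\mathbf r(\aff Z^\ell_{\a,\b})$; the linear isomorphism $\mathtt i_k\circ\mathtt p_{V^k}$ carries the $\ell$-dimensional extremal face $C^\ell_{\a,\b}$ of $C\ka$ to an $\ell$-dimensional closed nondegenerate extremal face $\tilde C^\ell_{\q,\b}$ of $\tilde\bC^k(\q)$; and the relation $\aff(z+C^\ell_{\a,\b})=\aff Z^\ell_{\a,\b}$ is preserved. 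The graph of $\{\tilde Z^\ell_{\q,\b},\tilde C^\ell_{\q,\b}\}$ is the image of the $\sigma$-compact graph of $\{Z^\ell_{\a,\b},C^\ell_{\a,\b}\}$ under the $\sigma$-continuous maps $\mathbf r$, $\mathtt i_k\circ\mathtt p_{V^k}$, hence $\sigma$-compact, and $\bigcup_{\ell,\q,\b}\tilde Z^\ell_{\q,\b}=\mathbf r\bigl(\bigcup_{\ell,\a,\b}\{\a\}\times Z^\ell_{\a,\b}\bigr)=\tilde\bD^k$, i.e.\ the condition $\mathbf Z=\mathbf Z'$ of Definition \ref{D_dir_subpart} for the fibration. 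Thus $\{\tilde Z^\ell_{\q,\b},\tilde C^\ell_{\q,\b}\}$ is a directed locally affine subpartition of $\tilde\bD^k$; the converse follows verbatim with $\mathbf r^{-1}$ in place of $\mathbf r$, which by Proposition \ref{P_map_sheaf_set_into_fibration} is again a $\sigma$-continuous, leafwise affine bijection.

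Then I would transfer Lebesgue-regularity. By the associativity of disintegration (in the Radon sense of Remark \ref{R_disint_lebesgue}), regularity of $\{Z^\ell_{\a,\b}\}$ is equivalent to the leafwise statement: writing $\LL\llcorner\mathbf Z^k=\int\upsilon\ka\,d\eta(\a)$ for the disintegration on the coarser partition $\{Z\ka\}$, for $\eta$-a.e.\ $\a$ the disintegration of $\upsilon\ka$ on $\{Z^\ell_{\a,\b}\}_{\ell,\b}$ has conditionals $\simeq\HH^\ell\llcorner Z^\ell_{\a,\b}$. On the fibration side, since $\tilde\bD^k$ sits in a product $\R^{d-k}\times\R^k$, Fubini makes the disintegration of $\LL\llcorner\tilde\bD^k$ on the fibers $\{\q\}\times\tilde\bD^k(\q)$ have conditionals $\propto\HH^k$, so regularity of $\{\tilde Z^\ell_{\q,\b}\}$ is equivalent to the same leafwise statement with $\HH^k$ restricted to the fiber in place of $\upsilon\ka$. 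Now on the leaf $\aff Z\ka$ the affine isomorphism $\mathbf r(\a,\cdot)$ carries $\HH^k\llcorner Z\ka$ to a positive multiple of $\HH^k$ restricted to the fiber and $\{Z^\ell_{\a,\b}\}_{\ell,\b}$ to $\{\tilde Z^\ell_{\q,\b}\}_{\ell,\b}$, so by uniqueness of disintegration the two leafwise disintegrations correspond under $\mathbf r$; and since an affine isomorphism of affine subspaces maps $\HH^\ell$-null sets to $\HH^\ell$-null sets, ``conditional $\simeq\HH^\ell$'' holds on one side iff on the other. Here one uses that the ambient sheaf set is itself Lebesgue-regular — the case in every application of this proposition, the sheaf set being a piece, furnished by Proposition \ref{P_countable_partition_in_reference_directed_planes}, of a Lebesgue-regular partition, and Lebesgue-regularity being inherited on restriction — so that $\upsilon\ka\simeq\HH^k\llcorner Z\ka$ and the two leafwise problems genuinely coincide; the $\sigma$-continuity of $\mathbf r^{\pm1}$ then provides the measurability in $\q$, equivalently $\a$.

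The step I expect to be the main obstacle is this last one: one must carry out the Radon-measure disintegration leaf by leaf (forced by $\mathbf r$ not being globally bi-Lipschitz), and keep careful track of the correspondence between the leafwise measure $\upsilon\ka$ on the sheaf side and $\HH^k$ on the fiber on the fibration side — which is exactly the place where the Lebesgue-regularity of the ambient sheaf set enters — together with the matching of the respective ``almost everywhere'' notions. Everything else is the elementary observation that an affine isomorphism preserves relative openness, affine dimension, the extremal-face structure of cones and Hausdorff measure up to a positive constant.
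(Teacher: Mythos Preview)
Your proposal is correct and follows the same leafwise strategy as the paper, which simply observes that $\mathbf r\llcorner_{\{\a\}\times\aff Z\ka}$ is an invertible (affine) projection and declares both the structural and the regularity parts obvious. Your explicit identification of the implicit hypothesis that the ambient sheaf set itself be Lebesgue-regular (so that $\upsilon\ka\simeq\HH^k\llcorner Z\ka$ and the leafwise problems on the two sides genuinely coincide) is a valid refinement over the paper's two-line proof, and is indeed satisfied in every application of the proposition.
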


\begin{proof}
Since $\mathtt r_{|_{\{\a\} \times \aff \, Z\ka}}$ is an invertible projection, the first part of the statement is obvious.
 
The same reasoning holds for the regularity of the measures.
\end{proof}
Recalling Propositions \ref{P_dispiani} and \ref{P_dispiani_2}, it is fairly easy to prove the following
\begin{corollary}
\label{C_plan_fibr}
Let $\tilde\bD^k$ be a $k$-directed fibration and $\tilde\mu$, $\tilde\nu\in\mathcal P(\R^{d-k}\times\R^k)$ s.t. $\Pi^f_{\mathtt c_{\tilde\bD^k}}(\tilde\mu,\tilde\nu)\neq\emptyset$. Then,
\begin{equation}
 \tilde\pi\in\Pi^f_{\mathtt c_{\tilde\bD^k}}(\tilde\mu,\tilde\nu)\quad\Leftrightarrow\quad\tilde\pi^k_\q\in\Pi^f_{\mathtt c_{\tilde{\mathbf C}^k(\q)}}(\tilde\mu^k_\q,\tilde\nu^k_\q),
\end{equation}
being $\tilde\mu=\int\tilde\mu^k_\q \, d\tilde m(\q)$, $\tilde\nu=\int\tilde\nu^k_\q \, d\tilde m(\q)$ and $\tilde\pi=\int\tilde\pi^k_\q \, d\tilde m(\q)$ be respectively the disintegrations of $\tilde\mu$, $\tilde\nu$ w.r.t. the partition $\{\{\q\}\times \R^k\}_{\q\in\mathtt i_{d-k}(\A^k)}$ and the disintegration of $\tilde\pi$ w.r.t. $\{\{\q\}\times \R^k\times \{\q\}\times \R^k\}_{\q\in\mathtt i_{d-k}(\A^k)}$.
\end{corollary}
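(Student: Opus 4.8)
The plan is to read Corollary \ref{C_plan_fibr} as a direct specialization of Propositions \ref{P_dispiani} and \ref{P_dispiani_2}. First I would observe that, using the identification of Remark \ref{rem_sheaffibr}, a $k$-directed fibration $\tilde{\mathbf D}^k$ is in particular a directed locally affine partition in $\R^d$: its sets $Z^k_\q := \{\q\}\times\tilde{\mathbf D}^k(\q)$ lie in the pairwise disjoint parallel $k$-planes $\{\q\}\times\R^k$, and their cones of directions are $C^k_\q := \{0\}_{\R^{d-k}}\times\tilde{\mathbf C}^k(\q)$; since $\tilde{\mathbf C}^k(\q)\in\mathcal C(k,\R^k)$ is $k$-dimensional, $\aff(z+C^k_\q)=\{\q\}\times\R^k=\aff Z^k_\q$, so Definition \ref{D_locaffpart} holds and the cost $\mathtt c_{\mathbf D}$ associated to this partition is exactly $\mathtt c_{\tilde{\mathbf D}^k}$.

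Next I would verify hypothesis \eqref{E_more_than_complet} of Proposition \ref{P_dispiani_2} for this partition: if $z\in Z^k_\q$, $z'\in Z^k_{\q'}$ with $\q\neq\q'$, then $z+C^k_\q\subset\{\q\}\times\R^k$ and $z'+C^k_{\q'}\subset\{\q'\}\times\R^k$ are disjoint, so $\mathbf Z\cap(z+C^k_\q)\cap(z'+C^k_{\q'})=\emptyset$. Since $\Pi^f_{\mathtt c_{\tilde{\mathbf D}^k}}(\tilde\mu,\tilde\nu)\neq\emptyset$ and a plan of finite $\mathtt c_{\tilde{\mathbf D}^k}$-cost moves mass only inside a single fiber $\{\q\}\times\R^k$, both $\tilde\mu$ and $\tilde\nu$ are concentrated on $\cup_\q\{\q\}\times\R^k$ with the same quotient measure $\tilde m=(\mathtt p_{\R^{d-k}})_\#\tilde\mu=(\mathtt p_{\R^{d-k}})_\#\tilde\nu$; moreover $\tilde\mu$ is concentrated on $\mathbf Z$, so the disintegration of $\tilde\mu$ over the parallel planes is the family $\{\tilde\mu^k_\q\}$ of \eqref{E_tilde_mu_nu_k_a}, and the disintegration of $\tilde\nu$ over the parallel planes is $\{\tilde\nu^k_\q\}$. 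Proposition \ref{P_dispiani_2} then yields that, for every $\tilde\pi\in\Pi^f_{\mathtt c_{\tilde{\mathbf D}^k}}(\tilde\mu,\tilde\nu)$, the conditional second marginals $(\mathtt p_2)_\#\tilde\pi^k_\q$ coincide with $\tilde\nu^k_\q$ for $\tilde m$-a.e. $\q$.

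Finally I would invoke Proposition \ref{P_dispiani}: $\tilde\pi\in\Pi^f_{\mathtt c_{\tilde{\mathbf D}^k}}(\tilde\mu,\tilde\nu)$ iff its disintegration $\{\tilde\pi^k_\q\}$ over $\{\{\q\}\times\R^k\times\{\q\}\times\R^k\}$ satisfies $\tilde\pi^k_\q\in\Pi^f_{\mathtt c_{\tilde{\mathbf C}^k(\q)}}(\tilde\mu^k_\q,(\mathtt p_2)_\#\tilde\pi^k_\q)$ for $\tilde m$-a.e. $\q$ together with $\int(\mathtt p_2)_\#\tilde\pi^k_\q\,d\tilde m(\q)=\tilde\nu$; substituting $(\mathtt p_2)_\#\tilde\pi^k_\q=\tilde\nu^k_\q$ from the previous step (and using $\int\tilde\nu^k_\q\,d\tilde m=\tilde\nu$ from the disintegration of $\tilde\nu$) gives exactly $\tilde\pi^k_\q\in\Pi^f_{\mathtt c_{\tilde{\mathbf C}^k(\q)}}(\tilde\mu^k_\q,\tilde\nu^k_\q)$. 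The converse is the reassembly computation in the proof of Proposition \ref{P_dispiani}: from any $\tilde m$-measurable family $\tilde\pi^k_\q\in\Pi^f_{\mathtt c_{\tilde{\mathbf C}^k(\q)}}(\tilde\mu^k_\q,\tilde\nu^k_\q)$, regarded as concentrated on the fiber over $\q$, the measure $\tilde\pi:=\int\tilde\pi^k_\q\,d\tilde m(\q)$ has marginals $\tilde\mu$, $\tilde\nu$ and $\int\mathtt c_{\tilde{\mathbf D}^k}\,d\tilde\pi=\int\!\big(\int\mathtt c_{\tilde{\mathbf C}^k(\q)}\,d\tilde\pi^k_\q\big)\,d\tilde m(\q)=0$. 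The only slightly delicate points are pure bookkeeping: matching the two partitions ($\{\q\}\times\R^k$ versus the open sets $Z^k_\q$) so that the disintegrations coincide, and the $\tilde m$-measurability of $\q\mapsto\tilde\pi^k_\q$, which follows from the $\sigma$-continuity of $\tilde{\mathbf C}^k$ and the disintegration theorem; I expect no real obstacle beyond these.
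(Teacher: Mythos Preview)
Your proposal is correct and follows exactly the approach indicated by the paper, which simply states that the corollary is ``fairly easy to prove'' by ``recalling Propositions \ref{P_dispiani} and \ref{P_dispiani_2}''. You have filled in precisely the details the authors omit: the verification of \eqref{E_more_than_complet} via disjointness of the parallel fibers, the consequence $\tilde\nu(\mathbf Z)=1$ from $\Pi^f_{\mathtt c_{\tilde{\mathbf D}^k}}(\tilde\mu,\tilde\nu)\neq\emptyset$, and the identification of the conditional second marginals with $\tilde\nu^k_\q$.
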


\section{Directed locally affine partitions on cone-Lipschitz foliations}
\label{S_foliations}

In the first part of this section we generalize the notion of graph of a $\d{\cdot}$-Lipschitz function up to the definition of a $\mathtt c_{\tilde{\mathbf C}}$-Lipschitz foliation, where $\C$ is the family of cones of directions associated to a given $k$-dimensional fibration $\tilde\bD\subset\R^{d-k}\times\R^k$. From now on we fix $k$ and we drop the superscript $k$ in the notation for a $k$-directed fibration or $k$-dimensional cone. Moreover we will replace the variable $\q$ with $\a$, since it is clear from Proposition \ref{P_map_sheaf_set_into_fibration} and Remark \ref{rem_sheaffibr} that the quotient spaces of a sheaf set and of the corresponding fibration can be identified.

\noindent In particular, for any fixed $\a\in\A=\p_{\R^{d-k}}(\tilde\bD)$, the intersection of a $\mathtt c_{\C}$-Lipschitz foliation with $\{\a\}\times\R^k$ will be a suitable collection of disjoint \emph{(complete) $\mathtt c_{\tilde{\bC}(\a)}$-Lipschitz graphs} --namely, graphs of $|\cdot|_{\tilde D(\a)^*}$-Lipschitz functions where $\tilde D(\a)$ is convex set s.t. $\tilde{ \mathbf C}(\a)=\epi\,|\cdot|_{\tilde D(\a)^*}$ (see Definition \ref{D_complete_G})-- and at most countably many sets with nonempty interior.

Next, we generalize the notion of super/subdifferentials given in Definition \ref{D_subsuperdiff} for single graphs of $\d{\cdot}$-Lipschitz functions  to this new class of objects: at each point $w\in\tilde\bD(\a)$, the superdifferential will be the intersection of the cone $w+\C(\a)$ with the $\mathtt c_{\C(\a)}$-Lipschitz graph to which $w$ belongs.

Our main result is Theorem \ref{T_partition_E+-}, in which we prove that, up to a residual set, a $\mathtt c_{\C}$-Lipschitz foliation can be decomposed into a directed locally affine partition whose cone of directions at each point is given by the super/subdifferential.

Moreover, in Theorem \ref{T_partition_E} we characterize the residual set as the set of \emph{initial}/\emph{final points} of the super/subdifferential partitions (see Definition \ref{D_initial_final}).

\subsection{Convex cone-Lipschitz graphs}
\label{Ss_cone_lipschitz_graph}

Let $\tC$ be the epigraph of a convex norm $|\cdot|_{\tilde{D}^*}:\R^{k-1}\to\R$: by Remark \ref{R_cone_epi}, as a subset of $\R^k=\R^{k-1}\times\R$, $\tC\in\mathcal C(k,\R^k)$. We
denote variables in $\R^k$ as $w=(x,y)\in\R^{k-1}\times\R$ and we let $\mathtt c_{\tC} : \R^k \times \R^k \to [0,+\infty]$ be the related \emph{convex cone cost} (see Definition \eqref{D_cone_cost}).

Now we introduce a class of subsets of $\R^k$ which includes the graphs of $|\cdot|_{\tilde{D}^*}$-Lipschitz functions $\varphi:\R^{k-1}\to\R$. In particular, when $k=d+1$ and $\tC=\epi\,\d{\cdot}$, this class contains the graphs of the Kantorovich potentials $\psi$ for the transport problem with cost \eqref{E_norm_cost_1} (see Section \ref{Ss_convex_norm_cone}).

\begin{definition}
\label{D_complete_G}
A set $G\subset \R^k$ is a \emph{$\mathtt c_{\tC}$-Lipschitz graph} if
\begin{subequations}
\label{E_G}
\begin{equation}
\label{E_G1}
G \times G \cap \big\{ \mathtt c_{\tC} < +\infty \big\} \subset \big\{ (w,w') : w'-w \in \partial \tC \big\}.
\end{equation}
Moreover, a $\mathtt c_{\tC}$-Lipschitz graph $G\subset\R^k$ is \emph{complete} if 
\begin{equation}
\label{E_G2}
\O(w,w') : =\big\{ \mathtt c_{\tC}(w,\cdot) < +\infty \big\} \cap \big\{ \mathtt c_{\tC}(\cdot,w') < +\infty \big\} \subset G, \quad \forall\, w,w' \in G.
\end{equation}
\end{subequations}
\end{definition}

Notice that \eqref{E_G1} is equivalent to
\[
w'\notin w\,\pm\,\inter\, \tC, \qquad \forall\,w,w'\in G,
\]
which can be rephrased as
\begin{equation}
\label{E_phi_graph_G}
G = \Graph\,\varphi_G \quad \text{ for some $\varphi_G:\dom\,\varphi_G\subset\R^{k-1}\to\R$, $\varphi_G$ $\d{\cdot}$-Lipschitz}.
\end{equation}
The second condition \eqref{E_G2} yields
\[
G \supset \O(G\times G) := \bigcup_{w,w'\in G} \O(w,w'). 
\]

\begin{remark}
\label{R_lipgraph}
If $G=\Graph\,\varphi_G$ with $\varphi_G$ $|\cdot|_{\tilde{D}^*}$-Lipschitz and $\dom\,\varphi_G=\R^{k-1}$, then $G$ is a complete $\mathtt c_{\tC}$-Lipschitz graph. The analysis of this particular case will be sufficient for the proof of Theorem \ref{T_1}. If $\dom\,\varphi_G\neq\R^{k-1}$, we anticipate that the ``completeness'' property \eqref{E_G2} is what we need to construct sets which preserve the properties of the $|\cdot|_{\tilde{D}^*}$-super/subdifferentials of Lipschitz functions on $\R^{k-1}$: these properties are fundamental for our later purposes, culminating with the proof of Theorem \ref{T_partition_E+-}.
\end{remark}


Recalling Definition \ref{D_subsuperdiff} of super/subdifferential of the graph of a $\d{\cdot}$-Lipschitz function $\varphi$ we give the following definition.

\begin{definition}
\label{D_tilde_C_diff}
Given a set $G\subset\R^k$, define the \emph{$\mathtt c_{\tC}$-superdifferential} and the \emph{$\mathtt c_{\tC}$-subdifferential} of $G$ respectively as
\begin{equation*}
\partial^+G := G \times G \cap \bigl\{ \mathtt c_{\tC} < +\infty \bigr\}, \qquad \partial^-G: = G \times G \cap \bigl( \bigl\{ \mathtt c_{\tC} < +\infty \bigr\} \bigr)^{-1}.
\end{equation*}
\end{definition}

Notice that $\partial^-G=(\partial^+G)^{-1}$ and since
\begin{equation}
\label{E_sum_of_cones_inside_cone}
(w+C)+C\subset w+C, \qquad \forall\,w\in\R^k,\ C \in \bigcup_{\ell=1}^k \mathcal C(\ell,\R^k),
\end{equation}
one deduces the \emph{transitivity property}
\begin{equation*}
w' \in \partial^\pm G(w) \quad \Longrightarrow \quad \partial^\pm G(w') \subset \partial^\pm G(w).
\end{equation*}

The property of a set of being a complete $\mathtt c_{\tC}$-Lipschitz graph can be equivalently restated in terms of its $\mathtt c_{\tC}$-super/subdifferentials as follows.

\begin{proposition}
\label{P_compl_para_sd}
$G\subset\R^k$ is a complete $\mathtt c_{\tC}$-Lipschitz graph if and only if
\begin{subequations}
\label{E_gprop_para_sd}
\begin{equation}
\label{E_Gproper}
\partial^\pm G \subset \mathrm{graph}\,\bigr(\Id\pm \partial \tC\bigl),
\end{equation}
\begin{equation}
\label{E_parallelogram_subdiff}
\O(w,w') \subset \partial^+G(w) \cap \partial^-G(w'), \quad \forall(w,w') \in G\times G.
\end{equation}
\end{subequations}
\end{proposition}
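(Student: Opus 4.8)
\textbf{Proof plan for Proposition \ref{P_compl_para_sd}.}
The statement is an ``if and only if'' between (a) $G$ being a complete $\mathtt c_{\tC}$-Lipschitz graph (i.e.\ \eqref{E_G1}--\eqref{E_G2}) and (b) the pair of conditions \eqref{E_Gproper}--\eqref{E_parallelogram_subdiff}. The plan is to unwind the definitions on both sides and match them term by term, the only genuine content being the interpretation of the sets $\O(w,w')$ and $w\pm\partial\tC$. First I would record the elementary identities that are used throughout: for $w\in\R^k$ one has $\{w':\mathtt c_{\tC}(w,w')<+\infty\}=w+\tC$ and $\{w':\mathtt c_{\tC}(w',w)<+\infty\}=w-\tC$, so that by Definition \ref{D_tilde_C_diff}
\[
\partial^+G(w)=G\cap(w+\tC),\qquad \partial^-G(w)=G\cap(w-\tC),
\]
and moreover $\O(w,w')=(w+\tC)\cap(w'-\tC)$ by its very definition in \eqref{E_G2}. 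I would also recall that $\Graph(\Id\pm\partial\tC)=\{(w,w'):w'-w\in\pm\partial\tC\}$, since $\partial\tC$ is the relative (topological) boundary of the cone and is exactly where $\mathtt c_{\tC}<+\infty$ meets the complement of $\pm\inter\tC$; this is the content of the remark following Definition \ref{D_complete_G} that $G\times G\cap\{\mathtt c_{\tC}<+\infty\}\subset\{w'-w\in\partial\tC\}$ is equivalent to $w'\notin w\pm\inter\tC$.

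For the forward implication, assume \eqref{E_G1}--\eqref{E_G2}. Condition \eqref{E_G1} says precisely $\partial^+G=G\times G\cap\{\mathtt c_{\tC}<+\infty\}\subset\{(w,w'):w'-w\in\partial\tC\}=\Graph(\Id+\partial\tC)$, and applying $(\cdot)^{-1}$ gives $\partial^-G\subset\Graph(\Id-\partial\tC)$; together these are \eqref{E_Gproper}. For \eqref{E_parallelogram_subdiff}, fix $(w,w')\in G\times G$ and take any $\zeta\in\O(w,w')=(w+\tC)\cap(w'-\tC)$. Completeness \eqref{E_G2} gives $\zeta\in G$. Since $\zeta\in w+\tC$ we have $\mathtt c_{\tC}(w,\zeta)<+\infty$, hence $\zeta\in\partial^+G(w)$; since $\zeta\in w'-\tC$ we have $\mathtt c_{\tC}(\zeta,w')<+\infty$, hence $\zeta\in\partial^-G(w')$. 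Thus $\O(w,w')\subset\partial^+G(w)\cap\partial^-G(w')$, which is \eqref{E_parallelogram_subdiff}.

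For the converse, assume \eqref{E_Gproper}--\eqref{E_parallelogram_subdiff}. The inclusion $\partial^+G\subset\Graph(\Id+\partial\tC)$ unwinds to exactly \eqref{E_G1}, so $G$ is a $\mathtt c_{\tC}$-Lipschitz graph. For completeness, take $w,w'\in G$ and $\zeta\in\O(w,w')$; by \eqref{E_parallelogram_subdiff}, $\zeta\in\partial^+G(w)\subset G$ (the superdifferential is a subset of $G$ by Definition \ref{D_tilde_C_diff}), so $\O(w,w')\subset G$, which is \eqref{E_G2}. Hence $G$ is a complete $\mathtt c_{\tC}$-Lipschitz graph, completing the equivalence.

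I do not expect a serious obstacle here: the whole argument is a bookkeeping exercise once the three identities $\partial^\pm G(w)=G\cap(w\pm\tC)$, $\O(w,w')=(w+\tC)\cap(w'-\tC)$ and $\Graph(\Id\pm\partial\tC)=\{w'-w\in\pm\partial\tC\}$ are in place. The one point that deserves a careful sentence rather than a one-liner is the equivalence \eqref{E_phi_graph_G} — i.e.\ that \eqref{E_G1} is the same as ``$w'\notin w\pm\inter\tC$ for all $w,w'\in G$'' — which rests on the fact that $\tC$ is the epigraph of a convex norm so that $\partial\tC=\tC\setminus\inter\tC$ and $-\inter\tC\cap\tC=\{0\}$; with that in hand the symmetry between $\partial^+$ and $\partial^-$ (they are inverses of each other, and $\partial\tC$ is preserved under $w'-w\mapsto w-w'$ up to the sign) makes the two halves of each condition interchangeable. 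I would state this as a short lemma or an inline remark before the main argument to keep the proof clean.
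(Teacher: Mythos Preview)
Your proof is correct and takes essentially the same approach as the paper: both observe that \eqref{E_Gproper} is a direct rewriting of \eqref{E_G1} via the definition of $\partial^\pm G$, and that \eqref{E_parallelogram_subdiff} is a rewriting of \eqref{E_G2} once one notes $\partial^+G(w)\cap\partial^-G(w')\subset G$. The paper's proof is in fact a two-line remark to this effect; your version simply spells out the bookkeeping in full.
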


\begin{proof}
By Definition \ref{D_tilde_C_diff}, property \eqref{E_Gproper} is a rephrasing of \eqref{E_G1}, and \eqref{E_parallelogram_subdiff} is a rephrasing of \eqref{E_G2}.
\end{proof}
Recalling \eqref{E_partvarphi_form} and \eqref{E_sdiff_varphi}, we notice that if $G$ is a $\mathtt c_{\tC}$-Lipschitz graph and $\varphi_G$ is the function satisfying \eqref{E_phi_graph_G}, then
\[
\partial^\pm G = \partial^\pm\Graph\,\varphi_G=\Id\times\varphi_G \bigl( \partial^\pm\varphi_G \bigr).
\]

Now we state a simple geometric characterization of the set $\O(w,w')$ which will be fundamental in our study of the $\mathtt c_{\tC}$-super/subdifferentials of complete $\mathtt c_{\tC}$-Lipschitz graphs. First we give the following definition.

\begin{definition}
\label{D_cww'}
For $w,\,w'\in\R^k$, we let $C(w,w')$ be the extremal cone of $\tC$ satisfying
\begin{equation}
\label{E_cww'}
w' - w \in \interr C(w,w').
\end{equation}
Equivalently, $C(w,w')$ is the minimal cone w.r.t. set inclusion among the extremal cones of $\tC$ containing $w'-w$.
\end{definition}

Notice that, by \eqref{E_G1}, if $w,w'\in G$ and $\mathtt c_{\tilde C}(w,w') < \infty$ then $C(w,w')\subset\partial\tC$, i.e. $C(w,w')$ is a proper extremal cone of $\tC$.

\begin{proposition}\label{P_parall}
$\O(w,w')$ is the convex set given by
\[
 \O(w,w')=w+C(w,w')\cap w'-C(w,w')
\]
and there exists $\delta>0$ such that
\begin{equation}
\label{E_O_ball}
B^k(w,\delta) \cap \big( w + C(w,w') \big) \subset \O(w,w') \quad \text{and} \quad B^k(w',\delta) \cap \big( w'-C(w,w') \big) \subset \O(w,w').
\end{equation}
In particular,
\begin{equation*}
\R^+ (\O(w,w') - w) = \R^+ (w' - \O(w,w')) = C(w,w').
\end{equation*}
\end{proposition}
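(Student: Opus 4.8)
The plan is to prove Proposition \ref{P_parall} by unwinding the definitions of $\O(w,w')$ and $C(w,w')$ and exploiting the cone structure of $\tC$. Recall that $\O(w,w') = \{\mathtt c_{\tC}(w,\cdot)<+\infty\}\cap\{\mathtt c_{\tC}(\cdot,w')<+\infty\} = (w+\tC)\cap(w'-\tC)$, and $C(w,w')$ is the minimal extremal cone of $\tC$ with $w'-w\in\interr C(w,w')$.

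First I would prove the identity $\O(w,w') = \big(w+C(w,w')\big)\cap\big(w'-C(w,w')\big)$. The inclusion $\supset$ is immediate since $C(w,w')\subset\tC$. For $\subset$, take $z\in(w+\tC)\cap(w'-\tC)$, so $z-w\in\tC$ and $w'-z\in\tC$; adding gives $(z-w)+(w'-z)=w'-w\in\interr C(w,w')$. Now I would invoke the standard fact about extremal faces of a convex cone: if $F$ is an extremal cone of a convex cone $\tC$, then $a+b\in\interr F$ with $a,b\in\tC$ forces $a,b\in F$. Indeed, writing $w'-w = (z-w)+(w'-z)$ as a sum of two elements of $\tC$ lying on a segment through a relative-interior point of the extremal face $C(w,w')$, the extremality property (as in the definition of extremal face in Section \ref{Ss_intro_affine_subspaces_cones}) yields $z-w\in C(w,w')$ and $w'-z\in C(w,w')$. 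This gives $z\in\big(w+C(w,w')\big)\cap\big(w'-C(w,w')\big)$, proving the reverse inclusion. Convexity of $\O(w,w')$ is then clear, being the intersection of two translated convex cones.

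Next I would establish \eqref{E_O_ball}. Since $w'-w\in\interr C(w,w')$, the point $w'-w$ is a relative-interior point of the cone $C(w,w')$, hence (cones being invariant under positive scaling) for each $v\in C(w,w')$ with $|v|$ small we have $w'-w-v\in C(w,w')$: more precisely, because $w'-w\in\interr C(w,w')$ there is $\delta>0$ with $B^k(w'-w,\delta)\cap\aff C(w,w')\subset C(w,w')$, and for $v\in C(w,w')$, $|v|<\delta$, one has $w'-w-v\in C(w,w')$ and trivially $v\in C(w,w')$, so $w+v\in\big(w+C(w,w')\big)\cap\big(w'-C(w,w')\big)=\O(w,w')$. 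This proves $B^k(w,\delta)\cap\big(w+C(w,w')\big)\subset\O(w,w')$; the symmetric argument at $w'$ gives the other half. Finally, the last assertion $\R^+\big(\O(w,w')-w\big)=\R^+\big(w'-\O(w,w')\big)=C(w,w')$ follows: the inclusion $\subset$ holds because $\O(w,w')-w\subset C(w,w')$ and $C(w,w')$ is a cone; the inclusion $\supset$ follows from \eqref{E_O_ball}, since $B^k(w,\delta)\cap\big(w+C(w,w')\big)-w$ already spans all positive multiples of every direction in $C(w,w')$.

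The main obstacle I anticipate is the clean invocation of the extremal-face cancellation property: one must be careful that $C(w,w')$ is genuinely an \emph{extremal} cone of $\tC$ (which is how Definition \ref{D_cww'} defines it — the minimal extremal cone containing $w'-w$) and that the decomposition $w'-w = (z-w)+(w'-z)$ really lands $w'-w$ at a relative-interior point of that face along the segment joining $z-w$ and $w'-z$; the point $w'-w$ is the midpoint-type combination $\tfrac12(z-w)+\tfrac12(w'-z)$ up to scaling, so it lies in $\interr\big[z-w,\,w'-z\big]$ — wait, more precisely $w'-w$ lies on the segment only if we rescale, but since $w'-w\in\interr C(w,w')$ and $\interr C(w,w')$ is relatively open in $\aff C(w,w')$, the extremal-face property applied to the two conical rays $\R^+(z-w)$ and $\R^+(w'-z)$ whose (scaled) sum meets $\interr C(w,w')$ forces both rays into $C(w,w')$. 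This is a routine but slightly delicate use of the definition of extremal face for convex cones, and it is worth spelling out; everything else is direct manipulation.
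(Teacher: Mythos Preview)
Your argument is correct and in fact cleaner than the paper's own proof. The key step you worry about in the last paragraph is actually straightforward once phrased properly: with $a:=z-w$ and $b:=w'-z$ in $\tC$, the \emph{midpoint} $(a+b)/2$ of the segment $[a,b]$ lies in $C(w,w')$ because $a+b=w'-w\in C(w,w')$ and $C(w,w')$ is a cone; since $(a+b)/2\in\interr[a,b]\cap C(w,w')$, the extremal-face definition from Section~\ref{Ss_intro_affine_subspaces_cones} immediately gives $[a,b]\subset C(w,w')$, hence $z-w,\,w'-z\in C(w,w')$. No rescaling subtlety arises --- you need the midpoint on the segment, not $w'-w$ itself.

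By contrast, the paper argues by contradiction in explicit coordinates: assuming some $z\in\O(w,w')\setminus\big(w+C(w,w')\big)$, it introduces the smallest face $F$ of $\tC$ containing both $z-w$ and $w'-w$, chooses coordinates so that $F$ is the epigraph of a norm $|\cdot|_{E^*}$ on $\R^{\ell-1}$, and then performs a case analysis on whether $z_2>|z_1|_{E^*}$ or $z_2=|z_1|_{E^*}$ to derive a contradiction with the triangle inequality. Your abstract use of the extremal-face property bypasses this computation entirely; the paper's route, on the other hand, makes the role of the norm structure of $\tC=\epi\,|\cdot|_{\tilde D^*}$ more explicit, which ties in visually with the completeness picture in Figure~\ref{Fi_intro_supersub}. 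Both proofs of \eqref{E_O_ball} and the final $\R^+$ identity are essentially the same.
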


\begin{proof}
 By Definition, $\O(w,w')\supset w+C(w,w')\cap w'-C(w,w')$ and since $w'-w\in\interr C(w,w')$, \eqref{E_O_ball} follows. 
 
 Let us assume that $\exists\,z\in \O(w,w')\setminus w+ C(w,w')$ and let $F$ be the smallest face of $\tilde C$ such that $z-w$, $w'-w\in F$. Notice that $w'-w\in\partial F$. W.l.o.g. we set $w=0$ and $z=(z_1,z_2)\in \R^{\ell-1}\times\R$, $w'=(w'_1,w'_2)\in \R^{\ell-1}\times\R$
w.r.t. coordinates s.t. $F$ is the epigraph of a convex norm $|\cdot|_{E^*}:\R^{\ell-1}\to\R$. Hence, $w'_2=|w_1'|_{E^*}$ and either $z_2>|z_1|_{E^*}$, or $z_2=|z_1|_{E^*}$ and $w'_2-z_2>|w_1'-z_1|_{E^*}$. 
In the first case, $w'_2-z_2<|w'_1|_{E^*}-|z_1|_{E^*}\leq|w'_1-z_1|_{E^*}$, which implies $w'-z\in\aff F\setminus F\subset\R^k\setminus \tilde C$, contradicting the fact that $z\in\O(w,w')$.
In the second case, we get $w'_2-z_2>|w'_1-z_1|_{E^*}\geq|w'_1|_{E^*}-|z_1|_{E^*}=w'_2-z_2$, thus leading again to a contradiction.
\end{proof}

Observe that, with the characterization of Proposition \ref{P_parall} and by projecting on $\R^{k-1}$, \eqref{E_parallelogram_subdiff} can be equivalently restated in terms of the super/subdifferentials of $\varphi_G$ saying that, for all $x,x'\in\dom\,\varphi_G$, the set $\partial^+\varphi_G(x) \cap \partial^-\varphi_G(x')$ contains the convex set $\O_{\p_{\R^{k-1}}}(x,x')$ such that
\[
 \O_{\p_{\R^{k-1}}}(x,x')=x+C_{\p_{\R^{k-1}}}(x,x')\cap x'-C_{\p_{\R^{k-1}}}(x,x')
\]
and
\begin{equation}
\label{E_proj_Oww'}
\R^+ \bigl( \O_{\p_{\R^{k-1}}}(x,x')- x \bigr) = \R^+ \bigl( x' - \O_{\p_{\R^{k-1}}}(x,x') \bigr) = C_{\p_{\R^{k-1}}}(x,x'),
\end{equation}
being $C_{\p_{\R^{k-1}}}(x,x')$ the minimal extremal cone of $|\cdot|_{\tilde D^*}$ containing $x'-x$ (see Section \ref{Ss_intro_affine_subspaces_cones} for the definition of extremal cone).

\subsection{Convex cone-Lipschitz foliations}
\label{Ss_cone_lipschitz}

Let $\C:\A\times\R^k\to\mathcal C(k;\R^k)$, $\A\subset\R^{d-k}$, be the convex cone direction map of a $k$-directed fibration satisfying \eqref{E_cones_fibration} and let $\mathtt c_{\C}$ be the cost function defined in \eqref{E_cost_fibr}. Recall that, by \eqref{E_cost_fibr}
\begin{equation}
\label{E_caa'}
\mathtt c_{\C}(\a,w,\a',w') < +\infty \quad \Longrightarrow \quad \a=\a',
\end{equation}
and 
\begin{equation}
 \mathtt c_{\C}(\a,w,\a,w)=\mathtt c_{\C(\a)}(w,w').
\end{equation}
Moreover, set
\begin{equation}
\label{E_D_star_q}
\C(\a)=\epi\,|\cdot|_{D(\a)^*},
\end{equation}
where $D(\a) \subset \R^{k-1}$ for some suitable orthonormal coordinates independent of $\a$.

\begin{definition}
\label{D_Lip_fol}
A $\mathtt c_{\C}$-Lipschitz foliation is a $\sigma$-compact partition in $\A \times \R^k$ with quotient map $\theta : \dom\,\theta \subset \A \times \R^k \to \T$ such that
\begin{subequations}
\label{E_compl_fol0}
\begin{equation}
\label{E_compl_fol}
(\a,w),(\a',w') \in \{\theta=\t\} \quad \Longrightarrow \quad \big\{ \mathtt c_{\C} (\a,w,\cdot,\cdot) < +\infty \big\} \cap \big\{ \mathtt c_{\C}(\cdot,\cdot, \a',w') < +\infty \big\} \subset \{ \theta = \t \},
\end{equation}
\begin{equation}
\label{E_compl_fol1}
\theta(\a,w) = \theta(\a',w') \quad \Longrightarrow \quad \a=\a'.
\end{equation}
\end{subequations}
\end{definition}

By \eqref{E_caa'}, \eqref{E_compl_fol} and recalling \eqref{E_G2}, we set 
\begin{equation}
\label{E_O_q_ww'}
\O(\a)(w,w') := \bigl\{ \mathtt c_{\C(\a)}(w,\cdot) < +\infty \bigr\} \cap \bigl\{ \mathtt c_{\C(\a)}(\cdot,w') < +\infty \bigr\} = (w+\C(\a)) \cap (w'-\C(\a)).
\end{equation}
We note moreover that from \eqref{E_compl_fol1} one has $\mathtt p_\A \{\theta=\mathfrak t\}= \{\a\}$ for some $\a\in\A$, thus in general $\T=\A\times\mathfrak S$ for some Polish space $\mathfrak S$ and $\p_\A\circ\theta\circ\p_\A^{-1}=\Id_\A$.
Hence, for simplicity of notation, from now onwards we will write --when not leading to confusion-- $\a=\p_\A\{\theta=\t\}$.

The following definition is given to simplify the notation in Proposition \ref{P_fol_char} (see Remark \ref{R_nondeg}).

\begin{definition}
\label{D_non_dege}
We call \emph{non-degeneracy set} of a $\mathtt c_{\C}$-Lipschitz foliation the set
\begin{equation}
\label{E_nondeg}
\begin{split}
\biggl\{ \t \in \T :&~ \exists \, w, w' \in \mathtt p_{\R^k} \{\theta=\t\} \text{ s.t. } \crcr
&~ \inter \big\{ \mathtt c_{\C(\a)}(\cdot,w') < +\infty \big\} \cup \inter \big\{\mathtt c_{\C(\a)}(w,\cdot) < +\infty \big\} \subset \bigl( \R^k \setminus \p_{\R^k}\{\theta=\t\} \bigr) \biggr\}.
\end{split}
\end{equation}
We say that the partition $\{\theta^{-1}(\t)\}_{\t\in\T}$ is \emph{non-degenerate} if the set $\{\t:\,\{\theta=\t\}\neq\emptyset\}$ coincides with its non-degeneracy set.
\end{definition}

In other words, the set $\{\theta = \t\}$ is non-degenerate if there are two points $w$, $w'$ in $\mathtt p_{\R^k} \{\theta = \t\}$ such that
\[
( w' - \C(\a) ) \cap \{\theta = \t\} \subset w' - \partial \C(\a), \qquad ( w + \C(\a) ) \cap \{\theta = \t\} \subset w + \partial \C(\a).
\]

\begin{proposition}
\label{P_fol_char}
Let $\{\theta^{-1}(\t)\}_{\t\in\T}$ be a non-degenerate $\mathtt c_{\C}$-Lipschitz foliation. Then, there exist two Borel functions
\[
\mathtt h^-,\mathtt h^+ : \Bigl\{ (\t,x)\in \T \times \R^{k-1} : x \in \mathtt p_{\R^{k-1}}(\{\theta=\t\}) \Bigr\} \to \R
\]
such that
\begin{enumerate}
\item \label{Prop_1_fol_char} $x \mapsto \mathtt h^+(\t,x),\mathtt h^-(\t,x)$ are $|\cdot|_{D(\a)^*}$-Lipschitz functions for all $\t$, where $\{\a\} = \mathtt p_{\A} \{\theta=\t\}$ and $D(\a)^*$ is given by \eqref{E_D_star_q};
\item \label{Prop_2_fol_char} $\clos\, \{\theta=\t\}(\a) \subset \Big\{ (x,y) \in \R^{k-1}\times \R : \mathtt h^-(\t,x) \leq y \leq \mathtt h^+(\t,x) \Big\}$;
\item \label{Prop_3_fol_char} $\interr \{\theta=\t\}(\a) = \Big\{ (x,y)\in \times \R^{k-1}  \times \R : \mathtt h^-(\t,x) < y < \mathtt h^+(\t,x) \Big\}$.
\end{enumerate}
%
In particular,
\begin{equation}
\label{E_Lipschitzgraph}
\begin{split}
\interr \{\theta=\t\}(\a) = \emptyset \quad & \Longleftrightarrow \quad \mathtt h^-(\t,x) = \mathtt h^+(\t,x) \ \text{for} \ x \in \mathtt p_{\R^{k-1}} \{\theta=\t\}(\a) \\
& \Longleftrightarrow \quad \{\theta=\t\}(\a) \text{ is a complete $\mathtt c_{\C(\a)}$-Lipschitz graph} \\
& \Longleftrightarrow \quad \{\theta=\t\}(\a) = \Graph\,\mathtt h^\pm(\t) \llcorner_{\mathtt p_{\R^{k-1}} \{\theta=\t\}(\a) \times \R}.
\end{split}
\end{equation}
\end{proposition}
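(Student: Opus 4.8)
\textbf{Plan of proof for Proposition \ref{P_fol_char}.}

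The plan is to construct the two functions $\mathtt h^\pm$ fiberwise, by slicing each leaf $\{\theta=\t\}$ over its projection onto $\R^{k-1}$, and then to check the Lipschitz, enclosure, and interior statements using the cone geometry established in Proposition \ref{P_parall}. First I would fix $\t\in\T$ and set $\{\a\}=\mathtt p_\A\{\theta=\t\}$ (legitimate by \eqref{E_compl_fol1}); write $\C(\a)=\epi\,|\cdot|_{D(\a)^*}$ as in \eqref{E_D_star_q}, so that $\R^k=\R^{k-1}\times\R$ in coordinates independent of $\a$. For $x\in\mathtt p_{\R^{k-1}}\{\theta=\t\}(\a)$ define
\[
\mathtt h^+(\t,x):=\sup\big\{y:\,(x,y)\in\clos\,\{\theta=\t\}(\a)\big\},\qquad
\mathtt h^-(\t,x):=\inf\big\{y:\,(x,y)\in\clos\,\{\theta=\t\}(\a)\big\}.
\]
The non-degeneracy hypothesis (Definition \ref{D_non_dege}) provides points $w,w'\in\mathtt p_{\R^k}\{\theta=\t\}$ with $(w'-\C(\a))\cap\{\theta=\t\}\subset w'-\partial\C(\a)$ and $(w+\C(\a))\cap\{\theta=\t\}\subset w+\partial\C(\a)$; combined with the completeness condition \eqref{E_compl_fol} (which forces $\O(\a)(u,u')\subset\{\theta=\t\}$ for all $u,u'$ in the leaf) this will give that the vertical sections of $\clos\,\{\theta=\t\}(\a)$ are bounded (so $\mathtt h^\pm$ are real-valued) and that the leaf is squeezed between the two graphs, which is exactly Point \eqref{Prop_2_fol_char}.

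For the Lipschitz property, Point \eqref{Prop_1_fol_char}, I would argue as follows. Take $x_0,x_1\in\dom\,\mathtt h^+(\t,\cdot)$ and pick $(x_0,y_0)\in\clos\,\{\theta=\t\}(\a)$ with $y_0$ close to $\mathtt h^+(\t,x_0)$; if $y_0+|x_1-x_0|_{D(\a)^*}< \mathtt h^+(\t,x_1)$, then there is $(x_1,y_1)$ in the closure of the leaf with $y_1>y_0+|x_1-x_0|_{D(\a)^*}$, i.e.\ $(x_1,y_1)-(x_0,y_0)\in\inter\,\C(\a)$, contradicting the fact that $\clos\,\{\theta=\t\}$ is still a $\mathtt c_\C$-Lipschitz set — more precisely, by \eqref{E_compl_fol} together with \eqref{E_caa'}, no two points of the leaf differ by an interior vector of $\C(\a)$, and this passes to the closure. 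Symmetrically for $\mathtt h^-$ with $-\C(\a)$. Hence $\mathtt h^+(\t,x_1)-\mathtt h^+(\t,x_0)\le|x_1-x_0|_{D(\a)^*}$, which is the defining inequality of a $|\cdot|_{D(\a)^*}$-Lipschitz function; swapping the roles of $x_0,x_1$ completes the proof. For the Borel measurability of $(\t,x)\mapsto\mathtt h^\pm(\t,x)$, I would use $\sigma$-compactness of the partition: $\{(\t,x,y):(x,y)\in\clos\,\{\theta=\t\}(\a),\ \a=\mathtt p_\A\{\theta=\t\}\}$ is a Borel (indeed $\sigma$-compact, up to closure) subset, and $\mathtt h^+$ is the sup over the $y$-section, hence Borel by the standard selection/projection theorems quoted in Section \ref{Ss_measure_disintegration}.

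Point \eqref{Prop_3_fol_char} is the place where the completeness property \eqref{E_compl_fol} does the real work, and I expect the inclusion $\{\mathtt h^-(\t,x)<y<\mathtt h^+(\t,x)\}\subset\interr\{\theta=\t\}(\a)$ to be the main obstacle: one must show that a point strictly between the two graphs actually lies in the leaf (not merely in its closure) and is interior to it. The idea is: given such $(x,y)$, choose $(x^-,y^-),(x^+,y^+)\in\{\theta=\t\}(\a)$ (points of the leaf itself, not the closure — obtained by approximation and a further application of \eqref{E_compl_fol} to absorb limits) such that $(x,y)\in\interr\O(\a)\big((x^-,y^-),(x^+,y^+)\big)$; this is possible precisely because $y$ is strictly between $\mathtt h^\pm$, using the parallelogram description $\O(\a)(w,w')=(w+\C(\a))\cap(w'-\C(\a))$ and the ``ball'' estimate \eqref{E_O_ball} of Proposition \ref{P_parall}, which guarantees $\O(\a)$ has nonempty interior in $\R^k$ when $w'-w\in\interr\C(\a)$. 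Then \eqref{E_compl_fol} gives $\O(\a)((x^-,y^-),(x^+,y^+))\subset\{\theta=\t\}$, and since $(x,y)$ is in the (Euclidean) interior of this set, it is in $\interr\{\theta=\t\}(\a)$. The reverse inclusion is easier: an interior point of the leaf cannot lie on either bounding graph, because near a graph point the leaf occupies only one side (by the Lipschitz squeezing of Point \eqref{Prop_2_fol_char}). Finally the chain of equivalences \eqref{E_Lipschitzgraph} follows formally: $\interr\{\theta=\t\}(\a)=\emptyset$ iff $\mathtt h^-\equiv\mathtt h^+$ by \eqref{Prop_3_fol_char}; in that case $\{\theta=\t\}(\a)$ lies on a single $|\cdot|_{D(\a)^*}$-Lipschitz graph and satisfies \eqref{E_G2} by \eqref{E_compl_fol}, hence is a complete $\mathtt c_{\C(\a)}$-Lipschitz graph in the sense of Definition \ref{D_complete_G}; and conversely such a graph has empty interior. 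The coincidence $\{\theta=\t\}(\a)=\Graph\,\mathtt h^\pm(\t)\llcorner_{\mathtt p_{\R^{k-1}}\{\theta=\t\}(\a)\times\R}$ is then immediate from Points \eqref{Prop_2_fol_char}--\eqref{Prop_3_fol_char}.
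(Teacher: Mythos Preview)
Your overall plan is sound, but the Lipschitz argument for Point \eqref{Prop_1_fol_char} rests on a false claim. You assert that ``by \eqref{E_compl_fol} together with \eqref{E_caa'}, no two points of the leaf differ by an interior vector of $\C(\a)$'' and use this to derive a contradiction. That assertion is condition \eqref{E_G1}, which characterizes $\mathtt c_{\tilde C}$-Lipschitz \emph{graphs} (Definition \ref{D_complete_G}); a leaf of a $\mathtt c_{\C}$-Lipschitz foliation is only required to satisfy the completeness condition \eqref{E_compl_fol}, which is \eqref{E_G2}. Leaves with nonempty interior --- whose existence is precisely what the proposition is characterizing --- contain many pairs of points differing by interior cone vectors, so no contradiction arises.

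The paper avoids this by defining $\mathtt h^\pm$ not as the vertical sup/inf over the closure of the leaf, but via the envelope formulas
\[
\mathtt h^+(\t,x)=\sup\big\{y'-|x'-x|_{D(\a)^*}:\,(x',y')\in\{\theta=\t\}(\a)\big\},\qquad
\mathtt h^-(\t,x)=\inf\big\{y'+|x-x'|_{D(\a)^*}:\,(x',y')\in\{\theta=\t\}(\a)\big\},
\]
i.e.\ as a sup/inf of $|\cdot|_{D(\a)^*}$-Lipschitz functions indexed by points of the leaf (this is what \eqref{E_h_def} amounts to once the cost condition is unwound). Lipschitz continuity is then automatic, non-degeneracy makes the functions real-valued, and completeness \eqref{E_compl_fol} shows directly that the leaf fills the region between the two graphs, giving Points \eqref{Prop_2_fol_char}--\eqref{Prop_3_fol_char} with no extra work. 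Your argument can be repaired in the same spirit: from $(x_0,y_0),(x_1,y_1)\in\{\theta=\t\}(\a)$ with $(x_1,y_1)-(x_0,y_0)\in\C(\a)$, do not seek a contradiction but use \eqref{E_compl_fol} to conclude $(x_0,\,y_1-|x_1-x_0|_{D(\a)^*})\in\O(\a)\big((x_0,y_0),(x_1,y_1)\big)\subset\{\theta=\t\}(\a)$, whence $\mathtt h^+(\t,x_0)\ge y_1-|x_1-x_0|_{D(\a)^*}$; taking the supremum over $y_1$ yields the Lipschitz inequality.
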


\begin{proof}
By \eqref{E_compl_fol}, $\{\theta=\t\}(\a)$ satisfies \eqref{E_G2}. In particular, for all $x \in \mathtt p_{\R^{k-1}} (\{\theta=\t\}(\a))$ the set 
\[
\big\{ y \in \R: (x,y) \in \{\theta=\t\}(\a) \big\}
\]
is a segment. Thus define for $x \in \mathtt p_{\R^{k-1}} (\{\theta=\t\}(\a))$
\begin{subequations}
\label{E_h_def}
\begin{equation}
\label{E_h-def}
\mathtt h^-(\t,x) := \inf \Big\{ y : \mathtt c_{\C(\a)}(x',y',x,y) < +\infty \text{ for some } (x',y') \in \{\theta=t\}(\a)\Big\},
\end{equation}
\begin{equation}
\label{E_h+def}
\mathtt h^+(\t,x) := \sup \Big\{ y : \mathtt c_{\C(\a)}(x,y,x',y') < +\infty \text{ for some } (x',y') \in \{\theta=t\}(\a) \Big\}.
\end{equation}
\end{subequations}
Since $\{\theta^{-1}(\t)\}_{\t \in \T}$ is non degenerate, then for all $x \in \mathtt p_{\R^{k-1}} (\{\theta=\t\}(\a))$ it follows that
\[
(x,\mathtt h^-(\t,x)) \cap \inter \big\{ \mathtt c_{\C(\a)}(\cdot,w') < +\infty \big\} =(x,\mathtt h^+(\t,x)) \cap \inter \big\{ \mathtt c_{\C(\a)}(w,\cdot) < +\infty \big\} = \emptyset,
\]
where $w$, $w'$ are the points of non-degeneracy \eqref{E_nondeg}, so that $\mathtt h^+$, $\mathtt h^-$ are real valued functions.

Using again property \eqref{E_G2}, one has also
\[
\mathtt h^-(\t,x) = \inf \Big\{ y' + |x - x'|_{D(\a)^*}: (x',y') \in \{\theta=\t\}(\a) \} \Big\},
\]
\[
\mathtt h^+(\t,x) = \sup \Big\{ y' - |x' - x|_{D(\a)^*}: (x',y') \in \{\theta=\t\}(\a)\} \Big\},
\]
which show that $\mathtt h^+$, $\mathtt h^-$ are $|\cdot|_{D(\a)^*}$-Lipschitz, proving Point \eqref{Prop_1_fol_char}. Points \eqref{Prop_2_fol_char} and \eqref{Prop_3_fol_char} of the statement are an immediate corollary of the definitions \eqref{E_h_def} and property \eqref{E_G2}.

Finally \eqref{E_Lipschitzgraph} is a straightforward consequence of the first part of the statement.
\end{proof}

\begin{remark}
\label{R_nondeg}
From the proof of Proposition \ref{P_fol_char} it is clear that out of the non-degeneracy set there are three possibilities: either the function defined in \eqref{E_h-def} is identically $-\infty$ or the function in \eqref{E_h+def} is identically $+\infty$ or both things happen. Hence, for all $\a\in\A$ there exist at most countably many $\{\t_{\a_n}\}_{n\in\N}\subset\T$ s.t. 
\[
 \interr\{\theta=\t_{\a_n}\}(\a)\neq\emptyset.
\]
\end{remark}
In the following proposition we give the two examples of $\mathtt c_{\C}$-Lipschitz foliations we will deal with in the rest of the paper.

\begin{proposition}
\label{P_ex_fol}
(1) Let $G=\Graph\,\varphi\subset\R^k$ be a complete $\mathtt c_{\tilde{C}}$-Lipschitz graph. Then, 
 the trivial equivalence relation on $\{\a_0\}\times G$ given by the constant quotient map $\theta_\varphi(\{\a_0\}\times G)=\a_0$
determines a $\mathtt c_{\C}$-Lipschitz foliation in $\{\a_0\}\times G\subset\{\a_0\}\times\R^k$, being $\C$ the constant cone direction map $\C:\{\a_0\}\times G\ni (\a_0,w)\mapsto\tilde C\in\mathcal C(k, \R^k)$, which satisfies \eqref{E_Lipschitzgraph}. 
 
(2) Let $\{\theta^{-1}(\t)\}_{\t\in\T}$ be the equivalence classes of a $\mathtt c_{\C}$-compatible linear preorder $\preccurlyeq$ on $\A\times\R^k$ with $\sigma$-compact graph (see Definition \ref{D_compatible}). Then they form a $\mathtt c_{\C}$-Lipschitz foliation.
\end{proposition}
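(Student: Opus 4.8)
The plan is to verify, for each of the two constructions, the two requirements of a $\mathtt c_{\C}$-Lipschitz foliation as in Definition \ref{D_Lip_fol} — that the partition is $\sigma$-compact and that the completeness conditions \eqref{E_compl_fol0} hold — and, for part (1), to add the short observation that the resulting one-class foliation is non-degenerate so that Proposition \ref{P_fol_char} applies and the equivalences \eqref{E_Lipschitzgraph} become available. The conceptual content is minimal: in part (1) it is exactly the completeness property \eqref{E_G2} of $G$, and in part (2) it is the transitivity of $\preccurlyeq$ together with its $\mathtt c_{\C}$-compatibility \eqref{E_tA22}; everything else is bookkeeping.

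\textbf{Part (1).} First I would fix the data $\T=\{\a_0\}$, the constant cone map $\C\equiv\tC$ (the fibration axioms \eqref{E_cones_fibration} hold with $\bar C$, $\bar C^{',k}$ a slight shrinking, resp.\ enlargement, of $\tC$), and $\theta_\varphi$ the constant map on $\{\a_0\}\times G$. Then \eqref{E_compl_fol1} is immediate since $\dom\,\theta_\varphi\subset\{\a_0\}\times\R^k$. For \eqref{E_compl_fol}, given $w,w'\in G$, by \eqref{E_cost_fibr} one has $\{\mathtt c_{\C}(\a_0,w,\cdot,\cdot)<+\infty\}=\{\a_0\}\times(w+\tC)$ and $\{\mathtt c_{\C}(\cdot,\cdot,\a_0,w')<+\infty\}=\{\a_0\}\times(w'-\tC)$, so their intersection equals $\{\a_0\}\times\O(\a_0)(w,w')$ with $\O(\a_0)(w,w')=(w+\tC)\cap(w'-\tC)$ as in \eqref{E_O_q_ww'}; by completeness of $G$, property \eqref{E_G2}, this is contained in $\{\a_0\}\times G=\{\theta_\varphi=\a_0\}$. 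The partition consists of the single set $\{\a_0\}\times G$, which is $\sigma$-compact as soon as $G$ is — automatic when $\dom\,\varphi_G=\R^{k-1}$, since then $\varphi_G$ is Lipschitz hence continuous and $G$ is closed, and this is the only case used for Theorem \ref{T_1} — and the constant quotient map is $\sigma$-continuous. Finally, for every $w\in G$, property \eqref{E_G1} (equivalently $w'\notin w\pm\inter\tC$ for $w'\in G$) gives $(w+\inter\tC)\cup(w-\inter\tC)\subset\R^k\setminus G$, so taking $w=w'$ in Definition \ref{D_non_dege} shows the foliation is non-degenerate; Proposition \ref{P_fol_char} then applies, and since $G$ is itself a complete $\mathtt c_{\tC}$-Lipschitz graph with empty interior in $\R^k$, all the conditions in \eqref{E_Lipschitzgraph} hold, with $\mathtt h^-=\mathtt h^+=\varphi_G$.

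\textbf{Part (2).} The heart is \eqref{E_compl_fol}, and it is a one-line consequence of transitivity. Writing $\theta^{-1}(\t)=Z^A_\c$ for the equivalence classes of $\simeq_A$, suppose $(\a,w),(\a,w')\in Z^A_\c$ and $(\a'',w'')$ satisfies $\mathtt c_{\C}(\a,w,\a'',w'')<+\infty$ and $\mathtt c_{\C}(\a'',w'',\a,w')<+\infty$; then $\mathtt c_{\C}$-compatibility \eqref{E_tA22} gives $(\a,w)\preccurlyeq(\a'',w'')\preccurlyeq(\a,w')$, and since $(\a',w')\preccurlyeq(\a,w)$ (same class), transitivity forces $(\a'',w'')\simeq_A(\a,w)$, i.e.\ $(\a'',w'')\in Z^A_\c$. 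Property \eqref{E_compl_fol1} amounts to saying that each class lies in a single fiber $\{\a\}\times\R^k$: here I would use that $\mathtt c_{\C}$ is finite only within fibers, together with the fact that the preorders occurring in the applications — those constructed in Section \ref{S_cfibr_cfol}, Theorem \ref{T_order_gamma} — are built fiberwise, i.e.\ $\preccurlyeq$ restricts to a linear preorder on each $\{\a\}\times\R^k$ and has no ties across distinct fibers, so no class straddles several fibers. For $\sigma$-compactness, the graph of $\simeq_A$ is $A\cap A^{-1}$, which is $\sigma$-compact because $A$ and $A^{-1}$ are; hence $\bigcup_\c Z^A_\c=\mathtt p_1(A\cap A^{-1})$ is $\sigma$-compact, and a $\sigma$-continuous quotient map realizing this partition is obtained by the standard selection arguments for $\sigma$-compact equivalence relations recalled in Section \ref{S_setting} (as in \cite{BiaCar}). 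No non-degeneracy is asserted here, consistently with the fact that classes with nonempty interior must be allowed.

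The step I expect to require genuine care rather than routine checking is the last one in part (2): constructing a $\sigma$-continuous (or, after removing negligible sets, Borel) quotient map for $\simeq_A$, and pinning down the claim that the equivalence classes stay inside single fibers. Both depend on the precise way $\preccurlyeq$ is produced in Section \ref{S_cfibr_cfol}; for a preorder given only abstractly by $\mathtt c_{\C}$-compatibility, linearity and $\sigma$-compactness of its graph, the fiberwise property underlying \eqref{E_compl_fol1} would have to be added as an explicit hypothesis. Everything else collapses to the defining properties once unwound — \eqref{E_G2} in part (1), and transitivity together with \eqref{E_tA22} in part (2).
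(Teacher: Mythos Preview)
Your proposal is correct and follows essentially the same approach as the paper: in part (1) you reduce \eqref{E_compl_fol} to the completeness property \eqref{E_G2} of $G$, and in part (2) you use $\mathtt c_{\C}$-compatibility \eqref{E_tA22} together with transitivity of $\preccurlyeq$ to obtain \eqref{E_compl_fol}. The paper's own proof is far terser --- it records only these two reductions and says nothing about $\sigma$-compactness, the quotient map, non-degeneracy, or \eqref{E_compl_fol1} --- whereas you explicitly verify each of these, and in particular you correctly observe that \eqref{E_compl_fol1} does not follow from the abstract hypotheses alone but requires the fiberwise construction of Section~\ref{S_cfibr_cfol}.
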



\begin{proof}
To prove the first part of the proposition, it is sufficient to notice that property \eqref{E_compl_fol} corresponds to \eqref{E_G2} and \eqref{E_Lipschitzgraph} is a consequence of \eqref{E_G1}.

As for the second example, by \eqref{E_tA22} observe that if $w,w'\in\{\theta=\t\}(\a)$ then 
\[
\big\{ \mathtt c_{\C(\a)}(w,\cdot) < +\infty \big\} \cap \big\{ \mathtt c_{\C(\a)}(\cdot,w') < +\infty \big\} \subset \big\{ w'' : w \preccurlyeq w'' \text{ and } w'' \preccurlyeq w' \big\} \subset \{\theta=\t\}(\a).
\]
\end{proof}


In view of \eqref{E_Lipschitzgraph}, we extend Definition \ref{D_tilde_C_diff} to $\mathtt c_{\C}$-Lipschitz foliations.

\begin{definition}
\label{D_partial_theta}
We define the \emph{superdifferential} of a $\mathtt c_{\C}$-Lipschitz foliation $\{\theta^{-1}(\t)\}_{\t \in \T}$ as the set $\partial^+ \theta \subset \A \times \R^k \times \A \times \R^k$ defined by
\begin{equation*}
\partial^+\theta(\a,\a') = \Big\{ (w,w') : \theta(\a,w) = \theta(\a',w') \quad \text{and} \quad \mathtt c_{\C}(\a,w,\a',w') < +\infty \Big\}.
\end{equation*}
Analogously, we define its \emph{subdifferential} as the set $\partial^-\theta \subset \A \times \R^k \times \A \times \R^k$ given by
\begin{equation*}
\partial^-\theta(\a,\a') = \Big\{ (w,w') : \theta(\a,w) = \theta(\a',w') \quad \text{and} \quad \mathtt c_{\C}(\a',w',\a,w) < +\infty \Big\}.
\end{equation*}
\end{definition}

Clearly
\[
\partial^-\theta=(\partial^+\theta)^{-1}
\]
and since by \eqref{E_caa'}
\begin{equation}
\label{E_subaa'}
\partial^\pm \theta(\a,\a') \not= \emptyset \quad \Longrightarrow \quad \a = \a',
\end{equation}
then for simplicity we will use the notation
\[
\partial^\pm\theta(\a) = \partial^\pm\theta(\a,\a).
\]

\begin{remark}
Recalling Definition \ref{D_tilde_C_diff}, notice that
\[
\partial^{\pm}\theta(\a) = \bigcup_{\nfrac{\t \in \T}{\mathtt p_{\A} \{\theta=\t\} = \a}} \mathtt p_{\R^k \times \R^k} \partial^\pm (\{\theta=\t\}(\a)),
\]
and, in particular, as for $\mathtt c_{\tilde C}$-Lipschitz graphs, we have the \emph{transitivity property}
\begin{equation}
\label{E_transitivity_subdifferential_a}
w' \in \partial^\pm \theta(\a,w) \quad \Rightarrow \quad \partial^\pm \theta(\a,w') \subset \partial^\pm \theta(\a,w).
\end{equation}

\end{remark}

\subsection{Regular transport sets and residual set}
\label{Ss_regu_resi_set}

In this section we consider only the elements of a $\mathtt c_{\C}$-foliation whose $\{\a\}$-sections are partitions into complete $\mathtt c_{\C(\a)}$-Lipschitz graphs, namely level sets of $\theta$ which satisfy \eqref{E_Lipschitzgraph}.

We go through a careful analysis of the geometric properties of the super/subdifferentials of $\mathtt c_{\C}$-Lipschitz foliations, which finally leads to partition them into \emph{forward/backward regular sets} and a \emph{residual set}. 

By \eqref{E_subaa'}, in the following definitions the variable $\{\a\}$ simply plays the role of a parameter. Then, to understand the geometric structure of the problem one can also think from now onwards that $\A=\{\a_0\}$. The dependence on $\a$ is kept in order to show that all the sets and functions constructed below depend measurably (resp. Borel or $\sigma$-continuously) on the parameter $\a$.
We will define the following sets through their $\{\a\}$-sections.

\begin{description}

\item[Forward/backward transport set] the \emph{forward/backward transport sets} are respectively defined by
\begin{subequations}
\label{E_forward_backward_set}
\begin{equation}
\label{E_forward_transport_set}
\mathcal T^+\theta(\a) := \big\{ w : \partial^+ \theta(\a,w)\not= \{w\} \big\} = \mathtt p_1 \big( \partial^+ \theta(\a)\setminus \Id \big),
\end{equation}
\begin{equation}
\label{E_backward_transport_set}
\mathcal T^-\theta(\a) := \big\{ w : \partial^-\theta(\a,w)\not= \{w\} \big\} = \mathtt p_1 \big( \partial^-\theta(\a) \setminus \Id \big).
\end{equation}
\end{subequations}

\item[Set of fixed points] the \emph{set of fixed points} is given by
\begin{equation}
\label{E_set_of_fixed_points}
\mathcal F\theta(\a):= \R^k \setminus \big( \mathcal T^-\theta(\a) \cup \mathcal T^+\theta(\a)
\big).
\end{equation}

\item[Forward/backward direction multifunction] The \emph{forward/backward direction multifunction} are respectively given by
\begin{subequations}
\label{E_forw_back_dir_theta}
\begin{equation}
\label{E_forward_directions_bar_theta}
\mathcal D^+ \theta(\a):= \bigg\{ \bigg( w, \frac{w'-w}{|w'-w|} \bigg) : w \in \mathcal T^+\theta(\a), w' \in \partial^+\theta(\a,w) \setminus \{w\} \bigg\},
\end{equation}
\begin{equation}
\label{E_backward_directions_bar_theta}
\mathcal D^- \theta(\a):= \bigg\{ \bigg( w, \frac{w-w'}{|w-w'|} \bigg) : w \in \mathcal T^-\theta(\a), w' \in \partial^- \theta(\a,w) \setminus \{w\} \bigg\}.
\end{equation}
\end{subequations}

\end{description}

The following proposition collects the fundamental properties of the super/subdifferentials of $\mathtt c_{\C}$-Lipschitz foliations. The most striking feature is that, due to the completeness property \eqref{E_G2} of its $\mathtt c_{\C(\a)}$-Lipschitz graphs, the forward/backward direction multifunctions at a point of $\{\theta=\t\}(\a)$ contain all the information about the super/subdifferential at that point and also in a ``neighborhood'' of it (see Remark \ref{R_directionfaces}). Whenever $w,w'\in\{\theta=\t\}(\a)$, we will define $C(\a)(w,w')$ and $\O(\a)(w,w')$ as in Definitions \ref{D_cww'} and \eqref{E_G2} for the convex cone $\tilde C=\C(\a)$.

\begin{proposition}
\label{Pdirectionfaces1}
Let $F \subset \C(\a)$ be an extremal cone, $w\in\mathcal T^+\theta(\a)$. The following conditions are equivalent:
\begin{enumerate}
\item \label{item1Pfaces} $F\cap \mathbb S^{k-1}\subset \mathcal D^+\theta(\a,w)$; 
\item \label{item2Pfaces} there exists $w' \in \partial^+ \theta(\a,w)$ such that $F=C(\a)(w,w')$; 
\item \label{item4Pfaces} $\O(\a)(w,w') \subset \partial^+ \theta(\a,w)$ for some $w'$ such that $F=C(\a)(w,w')$; 
\item \label{item3Pfaces} there exists $\delta = \delta(w,F)>0$ such that
\begin{equation*}
B^k(w,2\delta)\cap w+F\subset\partial^+\theta(\a,w).
\end{equation*}
\end{enumerate}
In particular, if $F$ satisfies one of the above conditions, then for all $w'$ as in (\ref{item2Pfaces}-\ref{item4Pfaces}) 
\begin{equation}
\label{E_ballF}
F \cap \mathbb S^{k-1} \subset \mathcal D^+\theta(\a,\bar w) \qquad \forall\, \bar w \in \interr \O(\a)(w,w').
\end{equation}

Finally, if $F$ is maximal w.r.t. set inclusion among the extremal cones of $\C(\a)$ satisfying (\ref{item1Pfaces}-\ref{item3Pfaces}), then
\begin{equation}
\label{E_ballFmax}
F \cap \mathbb S^{k-1} = \mathcal D^+\theta(\a,\bar w) = \conv_{\mathbb S^{k-1}} \mathcal D^+\theta(\a,\bar w) \qquad \forall\, \bar w \in \interr \O(\a)(w,w'),
\end{equation}
where $w'$ is chosen as in (\ref{item2Pfaces}-\ref{item4Pfaces}).
\end{proposition}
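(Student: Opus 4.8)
The plan is to prove the chain of equivalences in Proposition~\ref{Pdirectionfaces1} by exploiting the completeness property \eqref{E_G2} of the complete $\mathtt c_{\C(\a)}$-Lipschitz graphs constituting the level sets of $\theta$, together with the geometric characterization of $\O(\a)(w,w')$ in Proposition~\ref{P_parall}. Throughout I fix $\a$ and write $C := \C(\a)$, $\O(w,w') := \O(\a)(w,w')$, $\mathtt c := \mathtt c_{\C(\a)}$; recall that by Definition~\ref{D_partial_theta} and \eqref{E_subaa'}, $\partial^+\theta(\a,w) = \bigcup_{\t} \partial^+(\{\theta=\t\}(\a))(w)$, and since $w \in \mathcal T^+\theta(\a)$ there is a well-defined $\t$ with $w \in \{\theta=\t\}(\a)$, so $\partial^+\theta(\a,w) = \partial^+(\{\theta=\t\}(\a))(w) = \{\theta=\t\}(\a) \cap (w + C)$.

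I would establish the equivalences in the cyclic order $(\ref{item3Pfaces}) \Rightarrow (\ref{item1Pfaces}) \Rightarrow (\ref{item2Pfaces}) \Rightarrow (\ref{item4Pfaces}) \Rightarrow (\ref{item3Pfaces})$. First, $(\ref{item3Pfaces}) \Rightarrow (\ref{item1Pfaces})$: if $B^k(w,2\delta) \cap (w+F) \subset \partial^+\theta(\a,w)$, then every direction in $F \cap \mathbb S^{k-1}$ is realized by some $w' = w + tv \in \partial^+\theta(\a,w) \setminus \{w\}$ with $t < 2\delta$, so by \eqref{E_forward_directions_bar_theta} it lies in $\mathcal D^+\theta(\a,w)$. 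Next $(\ref{item1Pfaces}) \Rightarrow (\ref{item2Pfaces})$: if $F \cap \mathbb S^{k-1} \subset \mathcal D^+\theta(\a,w)$, pick a direction $v \in \interr F \cap \mathbb S^{k-1}$; by definition of $\mathcal D^+\theta$ there is $w' \in \partial^+\theta(\a,w) \setminus \{w\}$ with $(w'-w)/|w'-w| = v$, so $w'-w \in \interr F$, and by Definition~\ref{D_cww'} the minimal extremal cone $C(w,w')$ containing $w'-w$ equals $F$ (since $v$ was taken in the relative interior of $F$, no smaller face contains it, and $F$ itself contains it). For $(\ref{item2Pfaces}) \Rightarrow (\ref{item4Pfaces})$: this is exactly the completeness property \eqref{E_parallelogram_subdiff} of Proposition~\ref{P_compl_para_sd} applied to the complete $\mathtt c_{\C(\a)}$-Lipschitz graph $\{\theta=\t\}(\a)$, which gives $\O(w,w') \subset \partial^+G(w)$; then transitivity \eqref{E_transitivity_subdifferential_a} or direct inclusion gives $\O(w,w') \subset \partial^+\theta(\a,w)$. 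Finally $(\ref{item4Pfaces}) \Rightarrow (\ref{item3Pfaces})$: by the second part of Proposition~\ref{P_parall}, there is $\delta > 0$ with $B^k(w,\delta) \cap (w + C(w,w')) \subset \O(w,w')$; since $C(w,w') = F$, this yields $B^k(w,2\delta') \cap (w+F) \subset \O(w,w') \subset \partial^+\theta(\a,w)$ for $\delta' = \delta/2$, which is (\ref{item3Pfaces}).

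For \eqref{E_ballF}: given $w'$ as in (\ref{item2Pfaces})--(\ref{item4Pfaces}) and $\bar w \in \interr \O(w,w')$, I use Proposition~\ref{P_parall} to note $\R^+(w' - \bar w) = C(w,w') = F$ and, since $\bar w$ is in the relative interior, there are points $w'' \in \O(w,w')$ on the opposite side so that $w' - \bar w \in \interr F$ as well; then $w' \in \bar w + C$ with $w', \bar w$ in the same level set $\{\theta=\t\}(\a)$, hence $w' \in \partial^+\theta(\a,\bar w)$ and $C(\bar w, w') = F$, so by the already-proven equivalence $(\ref{item2Pfaces}) \Rightarrow (\ref{item1Pfaces})$ applied at $\bar w$ we get $F \cap \mathbb S^{k-1} \subset \mathcal D^+\theta(\a,\bar w)$. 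For the final assertion \eqref{E_ballFmax}: suppose $F$ is maximal among extremal cones satisfying (\ref{item1Pfaces})--(\ref{item3Pfaces}). The inclusion $F \cap \mathbb S^{k-1} \subset \mathcal D^+\theta(\a,\bar w)$ for $\bar w \in \interr\O(w,w')$ is \eqref{E_ballF}; for the reverse inclusion I argue that any $v \in \mathcal D^+\theta(\a,\bar w)$ comes from some $w''' \in \partial^+\theta(\a,\bar w) \setminus \{\bar w\}$, and then $F' := C(\bar w, w''')$ satisfies (\ref{item2Pfaces}) at $\bar w$; one checks that the smallest extremal cone containing $F \cup F'$ — equivalently $\conv_{\mathbb S^{k-1}}(F \cup F')$ extended to a cone — still satisfies (\ref{item1Pfaces})--(\ref{item3Pfaces}) at $w$ (using that $\bar w \in \interr\O(w,w')$ so $w$ can ``see'' in direction $v$ too, via the transitivity \eqref{E_transitivity_subdifferential_a} and the completeness of the graph), contradicting maximality of $F$ unless $F' \subset F$, so $v \in F$. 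Since $\mathcal D^+\theta(\a,\bar w) = F \cap \mathbb S^{k-1}$ and $F$ is a convex cone, $\mathcal D^+\theta(\a,\bar w)$ is spherically convex, giving the middle equality.

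The main obstacle I anticipate is the last step, the maximality argument for \eqref{E_ballFmax}: one must show that if two extremal cones $F$, $F'$ both satisfy condition (\ref{item1Pfaces}) at appropriately related base points, then so does a common ``upper bound'' extremal cone, which requires carefully tracking how the completeness property \eqref{E_G2} of a single $\mathtt c_{\C(\a)}$-Lipschitz graph lets one combine two parallelograms $\O(w,w_1)$ and $\O(w,w_2)$ into a larger region of the form $\O(w, \tilde w)$ inside the same level set — the subtlety being that $F \cup F'$ need not itself be an extremal cone, so one passes to the minimal extremal cone containing it and must verify membership of the relevant points in the graph. Everything else reduces to bookkeeping with Propositions~\ref{P_parall} and~\ref{P_compl_para_sd} and the transitivity \eqref{E_transitivity_subdifferential_a}.
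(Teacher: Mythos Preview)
Your proposal is correct and follows essentially the same cycle of implications as the paper's proof, using the completeness property \eqref{E_compl_fol}, Proposition~\ref{P_parall}, and transitivity \eqref{E_transitivity_subdifferential_a} in the same places. For the maximality step \eqref{E_ballFmax}, the paper's argument is slightly more direct than yours: rather than forming the smallest extremal cone containing $F \cup F'$ and verifying it satisfies (\ref{item1Pfaces})--(\ref{item3Pfaces}) at $w$, the paper simply uses transitivity to get $\hat w := w''' \in \partial^+\theta(\a,w)$, so that $C(\a)(w,\hat w)$ automatically satisfies (\ref{item2Pfaces}) at $w$, and then observes via Proposition~\ref{P_parall} that $\bar w \in \O(\a)(w,\hat w)$ forces $F \subsetneq C(\a)(w,\hat w)$ (since $\bar w - w \in \interr F$ lies in the face $C(\a)(w,\hat w)$, hence $F \subset C(\a)(w,\hat w)$, while $\hat w - w \notin F$) --- this sidesteps the ``combining two parallelograms'' obstacle you anticipated.
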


\begin{proof}
{\it\eqref{item1Pfaces}$\,\Rightarrow\,$\eqref{item2Pfaces}.} It is sufficient to take $w'\in\partial^+\theta(\a,w)\setminus\{w\}$ s.t.
\[
\frac{w'-w}{|w'-w|}\in\interr F\cap \mathbb S^{k-1}
\]
and recall Definition \ref{D_cww'}.

{\it\eqref{item2Pfaces}$\,\Rightarrow\,$\eqref{item4Pfaces}.} If $w'$ satisfies \eqref{item2Pfaces}, then \eqref{item4Pfaces} follows from the completeness assumption \eqref{E_compl_fol} in the definition of $\mathtt c_{\C}$-Lipschitz foliation. 

{\it\eqref{item4Pfaces}$\,\Rightarrow\,$\eqref{item3Pfaces}.} It follows immediately from \eqref{E_O_ball}.

{\it\eqref{item3Pfaces}$\,\Rightarrow\,$\eqref{item1Pfaces}.} It is a direct consequence of the definition of $\mathcal D^+\theta(\a)$.

{\it Proof of \eqref{E_ballF}.}
Let $\bar w\in\interr\O(\a)(w,w')$. Then, by the geometric properties of $\O(\a)(w,w')$ given in Proposition \ref{P_parall}, it is fairly easy to see that $w'\in\partial^+\theta(\a,\bar w)$, and $C(\a)(\bar w,w')\cap \S^{k-1}\subset\mathcal D^+\theta(\a,\bar w)$. Since $\bar w\in\interr\O(\a)(w,w')$, $C(\a)(\bar w,w')=F$.

{\it Proof of \eqref{E_ballFmax}.}
Let now $F$ be maximal and $\bar w\in \interr\O(\a)(w,w')$. By \eqref{E_ballF} we already know that $F\cap \S^{k-1}\subset\mathcal D^+\theta(\a,\bar w)$. Let us assume that there exists $\hat w\in\partial^+\theta(\a,\bar w)$ such that $\hat w - \bar w \in \R^k \setminus F$: being $F$ an extremal cone, then one has also $\hat w - \bar w \in \R^k \setminus \mathrm{aff}\,F$. By the transitivity property \eqref{E_transitivity_subdifferential_a}, $\hat w \in \partial^+\theta(\a,w) \setminus (w+F)$ and, by simple geometrical considerations similar to those made in the proof of Proposition \ref{P_parall},
\[
F \subsetneq \R^+(\O(\a)(w,\hat w)-w)
\]
with strict inclusion. Hence, by the completeness assumption \eqref{E_compl_fol}, this contradicts the maximality of $F$.
\end{proof}

A completely similar proposition can be proved for $\partial^- \theta$: we state it without proof.

\begin{proposition}
\label{Pdirectionfaces1-}
Let $F \subset \C(\a)$ be an extremal cone, $w\in\mathcal T^-\theta(\a)$. The following conditions are equivalent:
\begin{enumerate}
\item \label{item1Pfaces-} $F \cap \mathbb S^{k-1} \subset \mathcal D^-\theta(\a,w)$;
\item \label{item2Pfaces-} there exists $w'' \in \partial^- \theta(\a,w)$ such that $F=C(\a)(w'',w)$;
\item \label{item4Pfaces-} $\O(\a)(w'',w) \subset \partial^- \theta(\a,w)$ for some $w''$ such that $F=C(\a)(w'',w)$;
\item \label{item3Pfaces-} there exists $\delta = \delta(w,F) > 0$ such that 
\begin{equation*}
B^k(w,2\delta) \cap w - F \subset \partial^- \theta(\a,w).
\end{equation*}
\end{enumerate}
In particular, if $F$ satisfies one of the above conditions, then for all $w''$ as in (\ref{item2Pfaces-}-\ref{item4Pfaces-})  
\begin{equation*}
F \cap \mathbb S^{k-1} \subset \mathcal D^- \theta(\a,\bar w) \qquad \forall\,\bar w\in \interr\O(\a)(w'',w).
\end{equation*}

Finally, if $F$ be maximal w.r.t. set inclusion among the extremal cones of $\C(\a)$ satisfying (\ref{item1Pfaces-}-\ref{item3Pfaces-}), then
\begin{equation}
\label{E_ballFmax-}
F \cap \mathbb S^{k-1} = \mathcal D^- \theta(\a,\bar w) = \conv_{\mathbb S^{k-1}} \mathcal D^- \theta(\a,\bar w) \qquad \forall\, \bar w\in\interr\O(\a)(w'',w),\end{equation}
where $w''$ is chosen as in (\ref{item2Pfaces-}-\ref{item4Pfaces-}).
\end{proposition}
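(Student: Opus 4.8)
The statement is Proposition \ref{Pdirectionfaces1-}, the backward analogue of Proposition \ref{Pdirectionfaces1}. The plan is to mimic verbatim the proof of Proposition \ref{Pdirectionfaces1}, using the identity $\partial^-\theta=(\partial^+\theta)^{-1}$ to transfer the forward statements to the backward ones, but since the geometry is perfectly symmetric it is cleaner to argue directly, replacing every occurrence of $w+F$, $w'-w$, $\O(\a)(w,w')$ by $w-F$, $w-w''$, $\O(\a)(w'',w)$. So first I would establish the cycle of equivalences $(\ref{item1Pfaces-})\Rightarrow(\ref{item2Pfaces-})\Rightarrow(\ref{item4Pfaces-})\Rightarrow(\ref{item3Pfaces-})\Rightarrow(\ref{item1Pfaces-})$.

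For $(\ref{item1Pfaces-})\Rightarrow(\ref{item2Pfaces-})$: pick $w''\in\partial^-\theta(\a,w)\setminus\{w\}$ with $\frac{w-w''}{|w-w''|}\in\interr F\cap\mathbb S^{k-1}$, which exists because $F\cap\mathbb S^{k-1}\subset\mathcal D^-\theta(\a,w)$ and $\interr F$ is dense in $F$; then by Definition \ref{D_cww'} applied with $\tilde C=\C(\a)$ to the pair $(w'',w)$ one has $C(\a)(w'',w)=F$, since $w-w''\in\interr F$ identifies $F$ as the minimal extremal cone containing $w-w''$. For $(\ref{item2Pfaces-})\Rightarrow(\ref{item4Pfaces-})$: if $w''\in\partial^-\theta(\a,w)$ with $F=C(\a)(w'',w)$, then $w,w''$ lie in the same level set $\{\theta=\t\}(\a)$, and the completeness property \eqref{E_compl_fol} (equivalently \eqref{E_G2} for that graph) gives $\O(\a)(w'',w)\subset\{\theta=\t\}(\a)$; by \eqref{E_O_q_ww'} this set equals $(w''+\C(\a))\cap(w-\C(\a))$, and Proposition \ref{P_parall} shows $\O(\a)(w'',w)=w''+C(\a)(w'',w)\cap w-C(\a)(w'',w)\subset w-F$, hence $\O(\a)(w'',w)\subset\partial^-\theta(\a,w)$. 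For $(\ref{item4Pfaces-})\Rightarrow(\ref{item3Pfaces-})$: apply the ball estimate \eqref{E_O_ball} of Proposition \ref{P_parall} at the endpoint $w$, i.e. $B^k(w,\delta)\cap(w-C(\a)(w'',w))\subset\O(\a)(w'',w)$, with the relabeling $2\delta:=\delta$. For $(\ref{item3Pfaces-})\Rightarrow(\ref{item1Pfaces-})$: immediate from the definition \eqref{E_backward_directions_bar_theta} of $\mathcal D^-\theta(\a)$.

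Next I would prove the ``in particular'' clause $F\cap\mathbb S^{k-1}\subset\mathcal D^-\theta(\a,\bar w)$ for $\bar w\in\interr\O(\a)(w'',w)$: by Proposition \ref{P_parall}, $\bar w$ can be connected to $w$ through the cone $C(\a)(w'',w)$, so $w\in\partial^-\theta(\a,\bar w)^{-1}$-wise, precisely $\bar w-w''\in\C(\a)$ and $w-\bar w\in\C(\a)$; using the transitivity property \eqref{E_transitivity_subdifferential_a} one gets $w''\in\partial^-\theta(\a,\bar w)$ with $C(\a)(w'',\bar w)=F$ (since $\bar w\in\interr\O(\a)(w'',w)$ forces $\bar w-w''\in\interr F$), whence $F\cap\mathbb S^{k-1}\subset\mathcal D^-\theta(\a,\bar w)$. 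Finally, for the maximal case \eqref{E_ballFmax-}, I would argue by contradiction exactly as in the proof of \eqref{E_ballFmax}: suppose some $\hat w\in\partial^-\theta(\a,\bar w)$ has $\bar w-\hat w\in\R^k\setminus F=\R^k\setminus\mathrm{aff}\,F$; by transitivity \eqref{E_transitivity_subdifferential_a} then $\hat w\in\partial^-\theta(\a,w)\setminus(w-F)$, and the same elementary computation as in Proposition \ref{P_parall} shows $F\subsetneq\R^+(w-\O(\a)(\hat w,w))=C(\a)(\hat w,w)$ with strict inclusion; but $C(\a)(\hat w,w)$ satisfies conditions $(\ref{item1Pfaces-}$--$\ref{item3Pfaces-})$ by the already proven equivalences, contradicting maximality of $F$. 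Therefore $\mathcal D^-\theta(\a,\bar w)=F\cap\mathbb S^{k-1}$, which is spherically convex, giving \eqref{E_ballFmax-}.

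\textbf{Main obstacle.} There is no genuine obstacle: every step is already carried out in the forward case (Proposition \ref{Pdirectionfaces1}), and the backward geometry is obtained by the reflection $v\mapsto -v$ together with the identity $\partial^-\theta=(\partial^+\theta)^{-1}$. The only point requiring a modicum of care is bookkeeping the roles of the two endpoints: in the backward picture the ``apex'' of the relevant cone sits at $w''$ and the distinguished point is $w$ (rather than the apex being at $w$ as in the forward case), so one must consistently invoke the two halves of \eqref{E_O_ball} — the estimate near $w''$ versus the estimate near $w$ — at the right endpoint, and likewise read $\O(\a)(w'',w)=w''+C(\a)(w'',w)\cap w-C(\a)(w'',w)$ with the correct orientation. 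Since the paper explicitly states the forward proposition with full proof and writes ``we state it without proof'', the expected write-up is essentially a pointer to that symmetry plus these orientation remarks.
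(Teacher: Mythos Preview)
Your proposal is correct and follows exactly the approach the paper itself indicates: the paper states Proposition \ref{Pdirectionfaces1-} ``without proof'' as the ``completely similar'' backward analogue of Proposition \ref{Pdirectionfaces1}, and you have faithfully carried out that symmetry, including the cycle of implications, the use of completeness \eqref{E_compl_fol} and Proposition \ref{P_parall}, the transitivity argument for the ``in particular'' clause, and the maximality contradiction via a strictly larger extremal face $C(\a)(\hat w,w)\supsetneq F$. The only cosmetic point is that ``$\R^k\setminus F=\R^k\setminus\mathrm{aff}\,F$'' should read ``$\C(\a)\setminus F=\C(\a)\setminus\mathrm{aff}\,F$'' (this is where extremality of $F$ is used), but the intended meaning and the subsequent use are correct.
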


\begin{remark}
\label{R_directionfaces}
The radii of the balls $\delta=\delta(F,w)$ satisfying Point \eqref{item3Pfaces} in Propositions \ref{Pdirectionfaces1}-\ref{Pdirectionfaces1-} for a fixed $w \in \mathcal T^\pm \theta(\a)$ might actually change as $F$ varies in the set of extremal cones satisfying Point \eqref{item1Pfaces} and even tend to zero for some sequence of distinct $\{F_n\}_{n\in\N}$.
\end{remark}


Finally, we define the \emph{($\ell$-dimensional) forward/backward transport sets} and the \emph{residual sets}: they are defined in terms 
of properties of the forward/backward direction multifunctions, i.e. of ``local'' (see Remark \ref{R_directionfaces}) properties of their super/subdifferentials.

\begin{description}
\item[$\ell$-dimensional forward/backward regular transport set] for $\ell = 1,\dots,k-1$, the \emph{$\ell$-dimensional forward/backward regular transport sets} are defined respectively as
\begin{subequations}
\label{E_ell_prime_forward_backward_regular_transport_set}
\begin{equation}
\label{E_forward_regular_transport_set}
\begin{split}
\mathcal R^{+,\ell} \theta(\a) := \bigg\{ w \in \mathcal T^+ \theta(\a) : (i)&~ \mathcal D^+ \theta(\a,w) = \conv_{\mathbb S^{k-1}} \mathcal D^+ \theta(\a,w) \crcr
(ii)&~ \dim \big( \conv_{\mathbb S^{k-1}} \mathcal D^+ \theta(\a,w) \big) = \ell-1 \crcr
(iii)&~ \exists\, w'' \in \mathcal T^+ \theta(\a) \cap \Big( w - \inter_{\mathrm{rel}} \big( \R^+\conv_{\mathbb S^{k-1}} \mathcal D^+\theta(\a,w) \big) \Big) \crcr
&~ \text{such that } \theta(\a,w'') = \theta(\a,w) \text{ and } (i),(ii) \ \text{hold for} \ w'' \bigg\}.
\end{split}
\end{equation}
\begin{equation}
\label{E_backward_regular_transport_set}
\begin{split}
\mathcal R^{-,\ell} \theta(\a) := \bigg\{ w \in \mathcal T^- \theta(\a) : (i)&~ \mathcal D^- \theta(\a,w) = \conv_{\mathbb S^{k-1}} \mathcal D^- \theta(\a,w) \crcr
(ii)&~ \dim \big( \conv_{\mathbb S^{k-1}} \mathcal D^- \theta(\a,w) \big) = \ell-1 \crcr
(iii)&~ \exists\, w'' \in \mathcal T^- \theta(\a) \cap \Big( w + \inter_{\mathrm{rel}} \big( \R^+\conv_{\mathbb S^{k-1}} \mathcal D^-\theta(\a,w) \big) \Big) \crcr
&~ \text{such that } \theta(\a,w'') = \theta(\a,w) \text{ and } (i),(ii) \ \text{hold for} \ w'' \bigg\}.
\end{split}
\end{equation}
\end{subequations}

\item[Forward/backward regular transport sets] the \emph{forward/backward regular transport sets} are defined respectively by
\begin{equation}
\label{E_forward_backward_regular_transport_set}
\mathcal R^+ \theta(\a):= \bigcup_{\ell = 1}^{k-1} \mathcal R^{+,\ell} \theta(\a), \qquad \mathcal R^- \theta(\a):= \bigcup_{\ell = 1}^{k-1} \mathcal R^{-,\ell} \theta(\a).
\end{equation}

\item[Regular transport set] the \emph{regular transport set} is defined by
\begin{equation}
\label{E_regular_transport_set}
\mathcal R \theta(\a) := \mathcal R^- \theta(\a) \cap \mathcal R^+\theta(\a).
\end{equation}

\item[Residual set] the \emph{residual set} is defined by
\begin{equation}
\label{E_residual_set_N}
\NN \theta(\a):= \bigl(\mathcal T^+\theta(\a)\cup\mathcal T^-\theta(\a)\bigr) \setminus \mathcal R \theta(\a).
\end{equation}

\end{description}

Property \eqref{E_forward_regular_transport_set} $(iii)$ (\eqref{E_backward_regular_transport_set} $(iii)$) will be crucial in Theorem \ref{T_partition_E+-} in order to prove that the sets of points in $\mathcal R^{+,\ell} \theta(\a)$ ($\mathcal R^{-,\ell} \theta(\a)$) which belong to the same level set of $\theta$ and whose superdifferentials (subdifferentials) have the same affine span are $\ell$-dimensional locally affine sets (see Definition \ref{D_locaff}).

\begin{figure}
\centering{\resizebox{14cm}{6cm}{\input{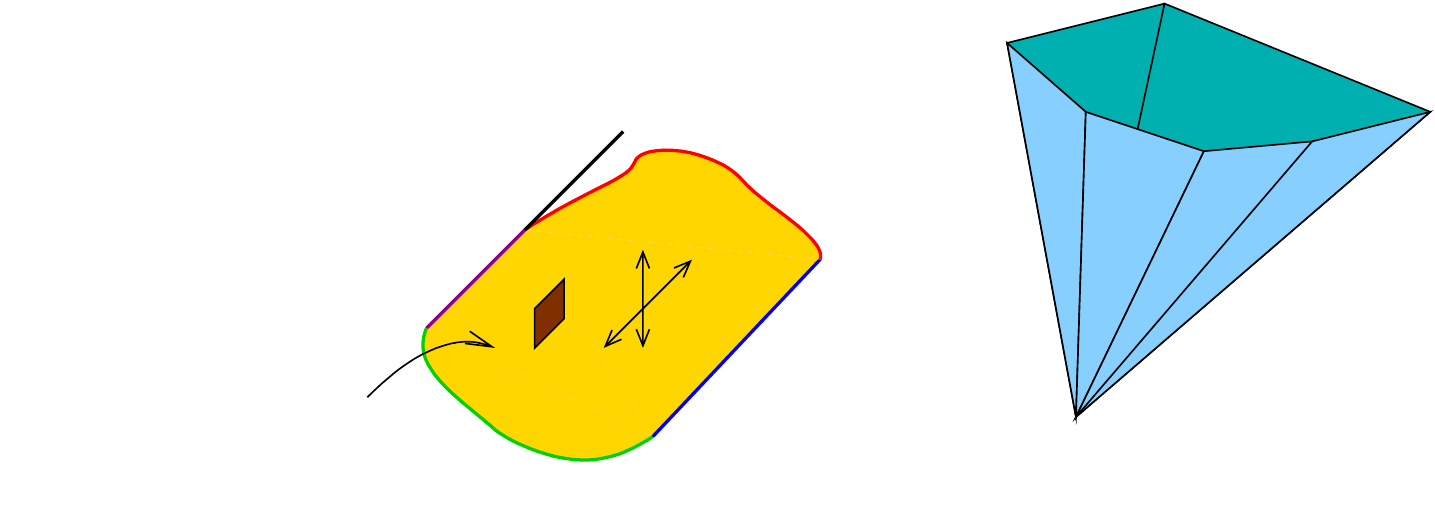_t}}}
\caption{A possible decomposition of a level set of $\theta$ (or equivalently of a complete $\mathtt c_{\tilde{\mathbf{C}}(\a)}$-Lipschitz graph), with the various sets introduced in Section \ref{Ss_regu_resi_set}. More precisely, given the $3$-dimensional cone of directions $\tilde{\mathbf C}(\a)$, the yellow region represents the set of regular points $\mathcal R^{+,2}(\a)\cap\mathcal R^{-,2}(\a)$ with $2$-dimensional forward/backward cones of directions $\mathcal D^{+}\theta(\a)=\mathcal D^-\theta(\a)$, the black line the set of regular points $\mathcal R^{+,1}(\a)\cap\mathcal R^{-,1}(\a)$ with $1$-dimensional forward/backward cones of directions, the blue line the set $\mathcal R^{-,1}\theta(\a)$ of points with $1$-dimensional backward cone $\mathcal D^-\theta(\a)$ and $2$-dimensional forward cone $\mathcal D^{+}\theta(\a)$ and the purple line  the set $\mathcal R^{+,1}\theta(\a)$ of points with $1$-dimensional forward cone $\mathcal D^+\theta(\a)$ and $2$-dimensional backward cone $\mathcal D^{-}\theta(\a)$. The brown region represents a set $\mathtt O(\a)(w',w'')$ as in \eqref{E_O_q_ww'}. As we will see in Section \ref{Ss_analysis_residual_set}, the red curve represents the set of final points $\mathcal E\theta(\a)\setminus \mathcal T^+\theta(\a)$, the green curve the set of initial points $\mathcal I\theta(\a)\setminus\mathcal T^-(\a)$, and also the blue line is contained in $\mathcal I\theta(\a)$ and the purple line in $\mathcal E\theta (\a)$.}   
\label{Fi_decotheta}
\end{figure}

We now prove that all the above sets, except $\mathcal F \theta$ and $\mathcal N \theta$, are $\sigma$-compact.

\begin{proposition}
\label{P_borel_regu_transport_sets}
The sets $\partial^\pm \theta$, $\mathcal T^\pm\theta$, $\mathcal D^\pm \theta$, $\mathcal R^{\pm,\ell}\theta$, $\mathcal R^\pm\theta$, $\mathcal R\theta$ are $\sigma$-compact.

The sets $\mathcal F \theta$, $\mathcal N \theta$ are Borel.
\end{proposition}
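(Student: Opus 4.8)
The plan is to verify the $\sigma$-compactness claims by writing each set explicitly as the projection of a $\sigma$-compact set, exploiting that the foliation $\{\theta^{-1}(\t)\}_{\t\in\T}$, the direction map $\C$ and the cost $\mathtt c_{\C}$ are all $\sigma$-compact by hypothesis, and that $\sigma$-compactness is stable under finite intersections, countable unions and images under continuous (indeed proper-on-compacts) maps. First I would record that $\partial^+\theta$ is $\sigma$-compact: by Definition \ref{D_partial_theta},
\[
\partial^+\theta=\Big\{(\a,w,\a',w'):\,\theta(\a,w)=\theta(\a',w')\ \text{and}\ \mathtt c_{\C}(\a,w,\a',w')<+\infty\Big\},
\]
which is the intersection of the $\sigma$-compact set $\{\mathtt c_{\C}<+\infty\}$ with the fibered product $\{(\a,w,\a',w'):\theta(\a,w)=\theta(\a',w')\}$; the latter is $\sigma$-compact because $\Graph\,\theta$ is $\sigma$-compact and the fibered product is a closed condition on $\Graph\,\theta\times\Graph\,\theta$. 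Then $\partial^-\theta=(\partial^+\theta)^{-1}$ is $\sigma$-compact since inverting coordinates is a homeomorphism. From \eqref{E_forward_transport_set}–\eqref{E_backward_transport_set}, $\mathcal T^\pm\theta=\mathtt p_{(\a,w)}(\partial^\pm\theta\setminus\Id)$; here one writes $\partial^+\theta\setminus\Id=\bigcup_n\big(\partial^+\theta\cap\{|w-w'|\ge 2^{-n}\}\big)$, each piece $\sigma$-compact and with proper projection onto the first two coordinates after intersecting with balls, so $\mathcal T^\pm\theta$ is $\sigma$-compact. The direction multifunctions $\mathcal D^\pm\theta$ of \eqref{E_forward_directions_bar_theta}–\eqref{E_backward_directions_bar_theta} are the images of these same $\sigma$-compact pieces $\partial^+\theta\cap\{|w-w'|\ge2^{-n}\}$ under the continuous map $(\a,w,\a',w')\mapsto(\a,w,(w'-w)/|w'-w|)$, hence $\sigma$-compact.

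Next I would handle the regular transport sets. The subtlety is that the defining conditions $(i),(ii),(iii)$ in \eqref{E_forward_regular_transport_set} involve the slices $\mathcal D^+\theta(\a,w)$ of the multifunction, together with a convexity condition, a dimension condition, and an existence-of-a-companion-point condition; one must check these are all expressible $\sigma$-compactly. Here the key is the equivalence established in Proposition \ref{Pdirectionfaces1}: $w\in\mathcal R^{+,\ell}\theta(\a)$ iff $\mathcal D^+\theta(\a,w)=F\cap\mathbb S^{k-1}$ for an extremal cone $F\subset\C(\a)$ with $\dim F=\ell$ that is maximal for property \eqref{item1Pfaces}, together with the companion-point condition. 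Since the space of extremal cones of $\C(\a)$ varies $\sigma$-continuously with $\a$ (the finitely many faces of a closed convex cone, parametrized by supporting hyperplanes, form a $\sigma$-compact family; one uses Remark \ref{R_cone_epi} and the $\sigma$-continuity of $\C$), and since "$\mathcal D^+\theta(\a,w)=F\cap\mathbb S^{k-1}$" is a closed condition relating the $\sigma$-compact set $\mathcal D^+\theta$ to this $\sigma$-compact family of cones, I would write $\mathcal R^{+,\ell}\theta$ as a projection of the $\sigma$-compact set
\[
\Big\{(\a,w,w'',F):\ F\ \text{extremal cone of}\ \C(\a),\ \dim F=\ell,\ \mathcal D^+\theta(\a,w)\subset F\cap\mathbb S^{k-1},\ w''\ \text{witnesses}\ (iii)\Big\},
\]
taking the union over the countably many $\ell$, and using that the "witness" condition in $(iii)$ — namely $w''\in\mathcal T^+\theta(\a)\cap(w-\interr(\R^+F))$ with $\theta(\a,w'')=\theta(\a,w)$ and the same face condition for $w''$ — is again a $\sigma$-compact constraint (open relative-interior conditions are handled by the usual $\bigcup_n\{x:B^k(x,2^{-n})\cap\aff\cdot\subset\cdot\}$ trick, cf. \eqref{E_inverse_neigh_Cone}). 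Then $\mathcal R^\pm\theta=\bigcup_{\ell}\mathcal R^{\pm,\ell}\theta$ and $\mathcal R\theta=\mathcal R^+\theta\cap\mathcal R^-\theta$ are $\sigma$-compact as finite/countable unions and intersections.

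Finally, the sets $\mathcal F\theta$ and $\mathcal N\theta$ are Borel but in general not $\sigma$-compact, since by \eqref{E_set_of_fixed_points} and \eqref{E_residual_set_N} they are defined by \emph{complements}: $\mathcal F\theta(\a)=\R^k\setminus(\mathcal T^-\theta(\a)\cup\mathcal T^+\theta(\a))$ and $\mathcal N\theta(\a)=(\mathcal T^+\theta(\a)\cup\mathcal T^-\theta(\a))\setminus\mathcal R\theta(\a)$. Since $\mathcal T^\pm\theta$ and $\mathcal R\theta$ are $\sigma$-compact, hence Borel, and $\mathtt p_\A\,\dom\,\theta$ (the relevant ambient set, itself $\sigma$-compact) is Borel, these differences and complements are Borel; I would just note that a $\sigma$-compact set is $F_\sigma$ and the complement of an $F_\sigma$ is $G_\delta$, so both are Borel. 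The main obstacle I anticipate is the bookkeeping in the regular-set step: one must make sure that the parametrization of extremal cones of $\C(\a)$ is genuinely $\sigma$-continuous in $\a$ and that the three conditions in \eqref{E_forward_regular_transport_set} — especially the exact-equality "$\mathcal D^+\theta(\a,w)=\conv_{\mathbb S^{k-1}}\mathcal D^+\theta(\a,w)$ of dimension $\ell-1$" together with the maximality hidden in it via Proposition \ref{Pdirectionfaces1} — are translated into closed (not merely Borel) conditions on the appropriate product of $\sigma$-compact spaces, so that the projection argument actually delivers $\sigma$-compactness rather than just analyticity; the equivalences of Propositions \ref{Pdirectionfaces1}–\ref{Pdirectionfaces1-} are exactly what make this possible.
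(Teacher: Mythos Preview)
Your arguments for $\partial^\pm\theta$, $\mathcal T^\pm\theta$, $\mathcal D^\pm\theta$, and for the Borel-ness of $\mathcal F\theta$ and $\mathcal N\theta$ are correct and coincide with the paper's. The gap is in the regular-set step, where your route via extremal faces both differs from the paper's and runs into trouble.

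Two side issues first: a nondegenerate cone $\C(\a)$ need not have finitely many extremal faces (a round cone has a continuum of extremal rays), and supporting hyperplanes parametrize exposed faces, not extremal ones. But the real problem is the condition you display, $\mathcal D^+\theta(\a,w)\subset F\cap\mathbb S^{k-1}$: this is a \emph{universal} constraint on the slice $\mathcal D^+\theta(\a,w)$ (``every direction lies in $F$''), and the implicit maximality of $F$ is of the same type. The complement of such a condition is the projection of a $\sigma$-compact set, so the condition itself is only $G_\delta$ in $(\a,w,F)$, not $\sigma$-compact; projecting your displayed set would therefore yield a Souslin set rather than a $\sigma$-compact one. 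You flag this concern in your last paragraph, but the set you write down does not resolve it.

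The paper sidesteps the extremal-face parametrization altogether. It takes $C=\R^+\conv_{\mathbb S^{k-1}}\mathcal D^\pm\theta(\a,w)$ and uses that $A\mapsto\conv A$ is continuous in the Hausdorff topology, so that $(\a,w)\mapsto C$ is $\sigma$-continuous. The decisive point, extracted from Proposition~\ref{Pdirectionfaces1-}, is the equivalence
\[
\mathcal D^-\theta(\a,w)=\conv_{\mathbb S^{k-1}}\mathcal D^-\theta(\a,w)
\quad\Longleftrightarrow\quad
\mathcal D^-\theta(\a,w)\cap\interr\conv_{\mathbb S^{k-1}}\mathcal D^-\theta(\a,w)\neq\emptyset,
\]
which replaces the universal convexity condition~(i) by an existential one (``some direction lies in the relative interior of its own convex hull''). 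With this, $\mathcal R^{-,\ell}\theta$ is written as the $(\a,w)$-projection of a set of tuples $(\a,w,\a',w',C)$ in which every constraint --- membership in $\mathcal T^-\theta$, equal $\theta$-values, $C$ equal to the convex hull, $w'\in w-\interr C$ (handled by writing $\interr C$ as a countable union via $C(-2^{-n})$), $\dim C=\ell$ (handled via the closed families $\mathcal C_m(\ell,\R^k)$ of \eqref{E_C_m_def}), and the existential reformulation of convexity --- is genuinely $\sigma$-compact. You correctly name Propositions~\ref{Pdirectionfaces1}--\ref{Pdirectionfaces1-} as the key, but the specific consequence needed is this $\forall\!\to\!\exists$ equivalence, applied to the convex hull of $\mathcal D^\pm\theta(\a,w)$ rather than to a pre-selected extremal face.
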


\begin{proof}
For every set in the first statement of the proposition, we will construct $\sigma$-compact subsets of a Polish space whose projection corresponds to that particular set.

{\it Subdifferential:} consider the following sets:

\begin{enumerate}

\item $\{(\mathfrak a,w,\mathfrak a',w') : \mathtt c_{\C}(\mathfrak a,w,\mathfrak a',w') < \infty\}$: since the function $\mathtt c_{\C}$ is $\sigma$-continuous, it follows that this set is $\sigma$-compact;

\item $\{(\mathfrak a,w,\mathfrak a',w') : \theta(\mathfrak a,w) = \theta(\mathfrak a',w')\}$: the same reasoning of the previous point applies here, being $\theta$ a $\sigma$-continuous function in Definition \ref{D_Lip_fol} of $\mathtt c_{\tilde{\bC}}$-Lipschitz foliation.
\end{enumerate}
It follows that the set
\begin{align*}
\partial^- \theta =&~ \Big\{ (\a,w,\a',w') \in \A \times \R^k \times\A\times \R^k : {\mathtt c_{\C}}(\mathfrak a',w',\mathfrak a,w) < +\infty, \theta(\mathfrak a,w) = \theta(\mathfrak a,w') \Big\} \crcr
=&~ \big\{ (\mathfrak a,w,\mathfrak a',w') : \mathtt c_{\C}(\mathfrak a',w',\mathfrak a,w) < \infty \big\} \cap \big\{ (\mathfrak a,w,\mathfrak a',w') : \theta(\mathfrak a,w) = \theta(\mathfrak a',w') \big\}
\end{align*}
is $\sigma$-compact. 

{\it Backward transport set:} the set $\mathcal T^-\theta$ is the projection of the $\sigma$-compact set
\[
\bigcup_{n \in \N} \partial^- \theta \cap \big\{ (\mathfrak a,w,\a',w') : |w-w'| \geq 2^{-n} \big\},
\]
and thus it is $\sigma$-compact. 

{\it Backward directions:} since
\[
\{w \not= w'\} \ni (w,w') \mapsto \frac{w-w'}{|w-w'|} \in \mathbb S^{k-1}
\]
is continuous, it follows that $\mathcal D^- \theta$ is $\sigma$-compact, being the image of a $\sigma$-compact set by a continuous function.

{\it Backward regular transport sets:} first notice that the map $A \mapsto \conv A$ is continuous w.r.t. the Hausdorff topology and that the sets
\begin{equation}
\label{E_C_m_def}
\mathcal C_m(\ell,\R^k) := \bigg\{ C\in\mathcal C(\ell;\R^k): \mathring C(-1\slash m) \supset \big( B^k(w_m,1\slash(2m)) \cap\, \mathrm{aff}\, C \big), \ \mathrm{dist} \big( w_m, \mathrm{aff}\, C) \leq 1/(4m) \bigg\}
\end{equation}
are closed w.r.t. the Hausdorff topology, for all $\ell=1,\dots,k-1$, $w_m \in \Q^k$, $m\in\N$. Since the function $\mathcal C(\ell,\R^k)\ni C \mapsto \mathrm{dim}\,C$ is constant on these sets, then it is $\sigma$-continuous.

Let us now prove that the set
\[
\Big\{ (w,w',C) \in \R^k \times \R^k \times \mathcal C(\ell,\R^k) : w' \in w - \interr C \Big\}
\]
is $\sigma$-compact. This follows by considering the closed sets $C(-r) \setminus B^k(0,r)$, observing that
\[
C_n \to C \quad \Longrightarrow \quad C_n(-r) \setminus B^k(0,r) \to C(-r) \setminus B^k(0,r),
\]
%
and writing the previous set as the union of countably many $\sigma$-compact sets in the following way:
\[
\bigcup_{n \in \N} \bigg\{ (w,w',C) \in \R^k \times \R^k \times \mathcal C(\ell,\R^k) : w' \in w - \big( C(-2^{-n}) \setminus B^k(0,2^{-n}) \bigr) \bigg\}.
\]

From Proposition \ref{Pdirectionfaces1-}, we have moreover that
\[
 \mathcal D^- \theta(\mathfrak a,w) = \conv_{\mathbb S^{k-1}}\, \mathcal D^- \theta(\mathfrak a,w)\quad\Leftrightarrow\quad \mathcal D^- \theta(\mathfrak a,w) \cap\interr \conv_{\mathbb S^{k-1}}\, \mathcal D^- \theta(\mathfrak a,w)\neq\emptyset.
\]

Hence the set
\begin{align*}
\bigg\{ (\mathfrak a,w,\a', w',C):\ (i)&~ (\mathfrak a,w),(\mathfrak a',w') \in \mathcal T^-\theta \crcr
(ii)&~ \theta(\a,w) = \theta(\a',w')\crcr
(iii)&~ C = \R^+\conv_{\mathbb S^{k-1}}\mathcal D^\pm \theta(\a,w)\crcr
(iv)&~ w' \in w - \inter_{\mathrm{rel}}\, C  \crcr
(v)&~ \dim \big( \conv_{\mathbb S^{k-1}}\, \mathcal D^- \theta(\mathfrak a,w) \big) = \dim \big( \conv_{\mathbb S^{k-1}}\, \mathcal D^- \theta(\mathfrak a',w') \big) = \ell-1 \crcr
(vi)&~ \mathcal D^- \theta(\mathfrak a,w) = \conv_{\mathbb S^{k-1}}\, \mathcal D^- \theta(\mathfrak a,w), \mathcal D^-  \theta(\mathfrak a,w') = \conv_{\mathbb S^{k-1}}\, \mathcal D^- \theta(\mathfrak a,w') \bigg\}
\end{align*}
is $\sigma$-compact, being the finite intersection of $\sigma$-compact sets, and thus $\mathcal R^{-,\ell}\theta$ is $\sigma$-compact too.

The proof for $\partial^+  \theta$, $\mathcal T^+\theta$, $\mathcal D^+ \theta$ and $\mathcal R^{+,\ell}\theta$ is analogous, and hence the $\sigma$-compactness of $\mathcal R^+\theta$, $\mathcal R^-\theta$ and $\mathcal R\theta$ follows.

Being the difference of two $\sigma$-compact sets a Borel set, the Borel measurability of $\mathcal F \theta$ and $\mathcal N \theta$ is proved.
\end{proof}

\subsection{Super/subdifferential directed partitions of regular sets}
\label{Ss_partition_transport_set}

In this section we construct directed locally affine partitions of the forward regular sets and backward regular sets of a $\mathtt c_{\C}$-Lipschitz foliation, which will be respectively called \emph{superdifferential directed partitions} and \emph{subdifferential directed partitions}. As we will see, these partitions coincide on the regular set, thus giving a directed locally  affine partition which will be called \emph{$\mathtt c_{\C}$-differential directed partition}. 

Define the maps
\begin{subequations}
\label{E_partition_on_T}
\begin{equation}
\label{E_partition_on_T+}
\begin{array}{ccccc}
\mathtt v^+ &:& \mathcal R^+ \theta&\to& \T \times \overset{k-1}{\underset{\ell=1}{\cup}} \mathcal A(\ell,\R^k) \crcr
&& (\a,w) &\mapsto& \mathtt v^+(\a,w) := \big( \theta(\a,w), \aff\, \partial^+ \theta(\a,w) \big)
\end{array}
\end{equation}
\begin{equation}
\label{E_partition_on_T-}
\begin{array}{ccccc}
\mathtt v^- &:& \mathcal R^- \theta&\to& \T \times \overset{k-1}{\underset{\ell=1}{\cup}} \mathcal A(\ell,\R^k) \crcr
&& (\a,w) &\mapsto& \mathtt v^-(\a,w) := \big( \theta(\a,w), \aff\, \partial^- \theta(\a,w) \big)
\end{array}
\end{equation}
\end{subequations}

In the following, when clear from the context, we identify sets $E_\a\subset\{\a\}\times\R^k$ with $\p_{\R^k}E_\a$.
\begin{theorem}
\label{T_partition_E+-}
The map $\mathtt v^+$ induces a (complete) directed locally affine partition on $\mathcal R^+\theta$ into sets
\begin{equation*}
\Big\{ Z^{\ell,+}_{\a,\mathfrak b}\Big\}_{\overset{\ell=1,\dots,k-1}{\a\in\A,\b\in\B_{\ell,+}(\a)}},\quad  Z^{\ell,+}_{\a,\mathfrak b}\subset \{\a\}\times \R^k
\end{equation*}
with direction cones
\[
\Big\{ C^{\ell,+}_{\a,\mathfrak b} \Big\}_{\overset{\ell=1,\dots,k-1}{\a\in\A,\b\in\B_{\ell,+}(\a)}},\quad C^{\ell,+}_{\a,\mathfrak b} \subset \{\a\}\times \mathcal C(\ell,\R^k)
\]
such that the following holds:
\begin{align}
\label{item2TpartE+}
&C^{\ell,+}_{\a,\b}\text{ is the extremal face of $\C(\a)$ s.t. $\mathcal D^+\theta(\a)(w)= C^{\ell,+}_{\a,\b}\cap \mathbb S^{k-1}$, $\forall\,w\in Z^{\ell,+}_{\a,\b}$. }
\end{align}

Analogously, the map $\mathtt v^-$ induces a (complete) directed locally affine partition of $\mathcal R^-\theta$ into sets
\begin{equation*}
\Big\{ Z^{\ell,-}_{\a,\mathfrak b}\Big\}_{\overset{\ell=1,\dots,k-1}{\a\in\A,\b\in\B_{\ell,-}(\a)}},\quad  Z^{\ell,-}_{\a,\mathfrak b}\subset \{\a\}\times \R^k
\end{equation*}
with direction cones
\[
\Big\{C^{\ell,-}_{\a,\mathfrak b}\Big\}_{\overset{\ell=1,\dots,k-1}{\a\in\A,\b\in\B_{\ell,-}(\a)}},\quad C^{\ell,-}_{\a,\mathfrak b} \subset \{\a\}\times \mathcal C(\ell,\R^k)
\]
such that the following holds: 
\begin{align}
\label{item2TpartE-}
&C^{\ell,-}_{\a,\b}\text{ is the extremal face of $\C(\a)$ s.t. $\mathcal D^-\theta(\a)(w)= C^{\ell,-}_{\a,\b}\cap \mathbb S^{k-1}$, $\forall\,w\in Z^{\ell,-}_{\a,\b}$.}
\end{align}
\end{theorem}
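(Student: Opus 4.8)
The plan is to analyze the level sets of $\mathtt v^+$ on $\mathcal R^+\theta$ one $\ell$ and one level at a time, using throughout that by \eqref{E_subaa'} the variable $\a$ is a mere parameter, so I may fix $\a$ (equivalently assume $\A=\{\a\}$) and restore the $\sigma$-continuous dependence on it only at the end, via Proposition \ref{P_borel_regu_transport_sets}. First I would record a preliminary fact about the cone $\C(\a)$: an extremal face $F$ is recovered from its affine hull by $F=\C(\a)\cap\aff F$ — indeed, for $x\in\C(\a)\cap\aff F$ and $y\in\interr F$ the point $y+\eps x$ lies in $\interr F$ for small $\eps>0$ and is the midpoint of $[2y,2\eps x]\subset\C(\a)$, so the face property forces $2\eps x\in F$. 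Hence two extremal faces of $\C(\a)$ with equal affine hull coincide. Next, for fixed $\ell$ and $w\in\mathcal R^{+,\ell}\theta(\a)$ I set $F_w:=\R^+\mathcal D^+\theta(\a,w)$; by conditions (i),(ii) of \eqref{E_forward_regular_transport_set} this is a convex cone of dimension $\ell$, and by Proposition \ref{Pdirectionfaces1} it is a union of extremal cones of $\C(\a)$, so taking $F$ minimal among the extremal faces of $\C(\a)$ containing it one has $\interr F_w\subset\interr F$, whence $F_w=F$ is an $\ell$-dimensional extremal face and $\mathcal D^+\theta(\a,w)=F_w\cap\mathbb S^{k-1}$. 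Since every $w'\in\partial^+\theta(\a,w)$ has $w'-w\in F_w$, and $\partial^+\theta(\a,w)$ is full near $w$ in the directions of $F_w$ (Proposition \ref{Pdirectionfaces1}), we get $\aff\partial^+\theta(\a,w)=w+\aff F_w$. Thus on a level set $Z^{\ell,+}_{\a,\b}$, where $\mathtt v^+\equiv(\t,V)$, every point lies in $V$, the linear space $W:=V-z$ is independent of $z\in Z^{\ell,+}_{\a,\b}$, and by the preliminary fact $F_z=\C(\a)\cap W$ is the same face for all such $z$; I then set $C^{\ell,+}_{\a,\b}:=\C(\a)\cap W$, which yields \eqref{item2TpartE+} and $\aff(z+C^{\ell,+}_{\a,\b})=z+W=\aff Z^{\ell,+}_{\a,\b}$, i.e. condition $(2)$ of Definition \ref{D_locaffpart}.

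\textbf{Core step: local affineness and completeness.} The heart of the argument is to show each $Z^{\ell,+}_{\a,\b}$ is relatively open in $V$ (hence locally affine of dimension $\ell$) and that the partition is complete. Write $C=C^{\ell,+}_{\a,\b}$, $\t=\theta(\a,z)$ for $z$ in the set. Condition (iii) of \eqref{E_forward_regular_transport_set} supplies a backward companion $w''$ with $z-w''\in\interr C$, $\theta(\a,w'')=\t$, and (i),(ii) holding at $w''$ (so, as in the first step, $\mathcal D^+\theta(\a,w'')=C\cap\mathbb S^{k-1}$ and $C(\a)(w'',z)=C$); since $C\cap\mathbb S^{k-1}$ has nonempty relative interior in $W$ I can also pick $\hat w\in\partial^+\theta(\a,z)\setminus\{z\}$ with $\hat w-z\in\interr C$. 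By the transitivity property \eqref{E_transitivity_subdifferential_a}, $\hat w\in\partial^+\theta(\a,w'')$ and $C(\a)(w'',\hat w)=C$, and $C$ is maximal among the extremal cones satisfying Proposition \ref{Pdirectionfaces1}; hence Proposition \ref{Pdirectionfaces1} with \eqref{E_ballF}--\eqref{E_ballFmax} gives $\O(\a)(w'',\hat w)\subset\partial^+\theta(\a,w'')$ and $\mathcal D^+\theta(\a,\bar w)=C\cap\mathbb S^{k-1}$ for all $\bar w\in\interr\O(\a)(w'',\hat w)$. By Proposition \ref{P_parall} this parallelogram is relatively open in $w''+W=V$ and contains $z$ (because $z-w''\in\interr C$ and $\hat w-z\in\interr C$), and each $\bar w$ in it has $\theta$-value $\t$ by \eqref{E_compl_fol}, admits $w''$ as a backward witness, hence lies in $\mathcal R^{+,\ell}\theta(\a)$ with $\aff\partial^+\theta(\a,\bar w)=\bar w+W=V$; so $\interr\O(\a)(w'',\hat w)\subset Z^{\ell,+}_{\a,\b}$, proving relative openness in $V$ and (the set being nonempty) $\aff Z^{\ell,+}_{\a,\b}=V$. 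The same construction gives completeness: if $x,y\in Z^{\ell,+}_{\a,\b}$ and $z\in(x+C)\cap(y-C)$, take a backward companion $x''$ of $x$ and a forward companion $\hat y$ of $y$; then $z-x''\in\interr C$, $\hat y-z\in\interr C$, $C(\a)(x'',\hat y)=C$ and $\hat y\in\partial^+\theta(\a,x'')$ by transitivity, so $z\in\interr\O(\a)(x'',\hat y)\subset Z^{\ell,+}_{\a,\b}$ exactly as before.

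\textbf{Measurability and the backward case.} It remains to note that $\mathtt v^+$ and $z\mapsto\R^+\mathcal D^+\theta(\a,z)$ are $\sigma$-continuous on $\mathcal R^{+,\ell}\theta$: the sets $\partial^+\theta$, $\mathcal D^+\theta$, $\mathcal R^{+,\ell}\theta$ are $\sigma$-compact and $\theta$ is $\sigma$-continuous by Proposition \ref{P_borel_regu_transport_sets}, and on $\mathcal R^{+,\ell}\theta$ the dimension is constant, so $\aff$ and $\conv_{\mathbb S^{k-1}}$ are well behaved via the decomposition \eqref{E_C_m_def} used in that proof. Indexing $Z^{\ell,+}_{\a,\b}$ by the pairs $(\t,V)$ and restoring the dependence on $\a$, one checks that the associated set $\bD$ is $\sigma$-compact; together with the first step this exhibits $\{Z^{\ell,+}_{\a,\b},C^{\ell,+}_{\a,\b}\}$ as a directed locally affine partition of $\mathcal R^+\theta$, complete by the core step. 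The statement for $\mathtt v^-$ is obtained verbatim, replacing Proposition \ref{Pdirectionfaces1} and \eqref{E_ballFmax} by Proposition \ref{Pdirectionfaces1-} and \eqref{E_ballFmax-}, $\partial^+$ by $\partial^-$, and the ``$z-\interr C$'' version of condition (iii) by the ``$z+\interr C$'' version in \eqref{E_backward_regular_transport_set}.

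\textbf{Main obstacle.} The genuinely delicate point is the relative openness/completeness in the core step. A regular point $z$ always sits on the \emph{boundary} of any parallelogram $\O(\a)(w'',w)$ built directly from $z$, so the argument must genuinely combine the backward companion guaranteed by condition (iii) of \eqref{E_forward_regular_transport_set} with a forward companion strictly above $z$ in order to place $z$ in the relative interior of a full-dimensional parallelogram; it is precisely the completeness property \eqref{E_compl_fol} of the $\mathtt c_{\C(\a)}$-Lipschitz graphs (channelled through Proposition \ref{Pdirectionfaces1}) that then forces the forward direction set to equal the constant face $C$ on that parallelogram. All the remaining work — the face/affine-hull lemma, the constancy of the direction cone on a level set, and the $\sigma$-compactness bookkeeping — is routine once this is in place.
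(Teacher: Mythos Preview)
Your argument is correct and follows essentially the same route as the paper's proof: both use condition (iii) of \eqref{E_forward_regular_transport_set} to produce a backward companion $w''$, then exploit completeness and Proposition \ref{Pdirectionfaces1}/\eqref{E_ballFmax} to show that the parallelogram $\interr\O(\a)(w'',\cdot)$ is a relatively open neighborhood of $z$ contained in the level set $Z^{\ell,+}_{\a,\b}$. Your version is slightly more explicit (the face/affine-hull lemma, the verification of (iii) at $\bar w$, and the completeness check are spelled out rather than left implicit), but the structure and the key Claim \ref{Cl_oww'neigh} are the same.
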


As a corollary one obtains the following decomposition of $\mathcal R \theta$.

\begin{corollary}
\label{C_v}
The two maps $\mathtt v^+$, $\mathtt v^-$ coincide on $\mathcal R \theta$, i.e.
\begin{equation}
\label{E_v}
\mathtt v:=\mathtt v^+_{|_{\mathcal R\theta}} = \mathtt v^-_{|_{\mathcal R\theta}},
\end{equation}
and the map $\mathtt v : \mathcal R \theta \to \T \times \overset{k-1}{\underset{\ell=1}{\cup}} \mathcal A(\ell,\R^k)$ defined above induces on $\mathcal R \theta$ a (complete) directed locally affine partition $\{Z^\ell_{\a,\b},\,C^\ell_{\a,\b}\}_{\nfrac{\ell=1,\dots,k-1}{\a \in \A, \b \in \B_\ell(\a)}}$, where, for all $\b \in \B_\ell(\a)$ and $\ell=1,\dots,k-1$,
\[
Z^\ell_{\a,\b} = Z^{\ell,+}_{\a,\b} \cap Z^{\ell,-}_{\a,\b}, \qquad C^\ell_{\a,\b} = C^{\ell,+}_{\a,\b} = C^{\ell,-}_{\a,\b}.
\]
In particular, both \eqref{item2TpartE+} and \eqref{item2TpartE-} are satisfied.
\end{corollary}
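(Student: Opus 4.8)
The plan is to derive Corollary~\ref{C_v} as a direct consequence of Theorem~\ref{T_partition_E+-}, together with the elementary characterization of the regular set $\mathcal R\theta(\a) = \mathcal R^+\theta(\a)\cap\mathcal R^-\theta(\a)$ from \eqref{E_regular_transport_set} and the coincidence of forward and backward direction multifunctions on $\mathcal R\theta$. First I would observe that on $\mathcal R\theta(\a)$ the point-$(\ref{Point_intro_loafpr2})$-type condition built into the definition of the regular set forces $\mathcal D^+\theta(\a,w) = \mathcal D^-\theta(\a,w)$ for every $w\in\mathcal R\theta(\a)$: indeed $w\in\mathcal R^{+,\ell}\theta(\a)$ gives that $\mathcal D^+\theta(\a,w)$ is spherically convex and equal to $F^+\cap\mathbb S^{k-1}$ for an extremal cone $F^+$ of $\C(\a)$, by Proposition~\ref{Pdirectionfaces1}; similarly $w\in\mathcal R^{-,\ell'}\theta(\a)$ gives $\mathcal D^-\theta(\a,w) = F^-\cap\mathbb S^{k-1}$ for an extremal cone $F^-$, by Proposition~\ref{Pdirectionfaces1-}. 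Then I must check $F^+ = F^-$. This is exactly the place where the completeness property \eqref{E_compl_fol} of the foliation is used: if $w'\in\partial^+\theta(\a,w)$ realizes $F^+ = C(\a)(w,w')$, then by transitivity and completeness $w\in\partial^-\theta(\a,w')$, and since $w'$ can be taken in $\mathcal T^+\theta(\a)$ (and in fact, after moving slightly, in the regular set, using clause $(iii)$ of \eqref{E_forward_regular_transport_set}), one compares the cone of directions of $\partial^-\theta(\a,w)$ with $F^+$ and concludes by the maximality argument of Proposition~\ref{Pdirectionfaces1-} that they agree. A clean way to phrase this: the superdifferential $\partial^+\theta(\a,w)$ and subdifferential $\partial^-\theta(\a,w)$ both contain, near $w$, the full cone $w\pm F$ with $F$ the common maximal extremal cone, by Point~\eqref{item3Pfaces} of Propositions~\ref{Pdirectionfaces1} and~\ref{Pdirectionfaces1-}, hence $\aff\,\partial^+\theta(\a,w) = \aff\,\partial^-\theta(\a,w) = w + \aff\,F$.

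Next I would establish \eqref{E_v} itself. Since $\theta(\a,w)$ is the same first component in both $\mathtt v^+$ and $\mathtt v^-$, and since the previous paragraph shows $\aff\,\partial^+\theta(\a,w) = \aff\,\partial^-\theta(\a,w)$ for all $(\a,w)\in\mathcal R\theta$, the two maps agree pointwise on $\mathcal R\theta$; call the common map $\mathtt v$. Its fibers over $\mathcal R\theta$ are then simultaneously fibers of $\mathtt v^+$ restricted to $\mathcal R\theta$ and fibers of $\mathtt v^-$ restricted to $\mathcal R\theta$. So the induced partition of $\mathcal R\theta$ is the common refinement of the restrictions to $\mathcal R\theta$ of the superdifferential partition $\{Z^{\ell,+}_{\a,\b}\}$ and the subdifferential partition $\{Z^{\ell,-}_{\a,\b}\}$ given by Theorem~\ref{T_partition_E+-}; but because the fibers already coincide, no genuine refinement occurs and one simply has $Z^\ell_{\a,\b} = Z^{\ell,+}_{\a,\b}\cap\mathcal R\theta = Z^{\ell,-}_{\a,\b}\cap\mathcal R\theta$ whenever these sets share a point, i.e. $Z^\ell_{\a,\b} = Z^{\ell,+}_{\a,\b}\cap Z^{\ell,-}_{\a,\b}$ after matching indices. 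The dimension label $\ell$ is well-defined and common because $\dim\,\mathcal D^+\theta(\a,w) = \dim\,\mathcal D^-\theta(\a,w) = \ell - 1$ for $w$ in such a set, so $w\in\mathcal R^{+,\ell}\theta(\a)\cap\mathcal R^{-,\ell}\theta(\a)$.

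Then I would verify that $\{Z^\ell_{\a,\b},C^\ell_{\a,\b}\}$ with $C^\ell_{\a,\b} := C^{\ell,+}_{\a,\b} = C^{\ell,-}_{\a,\b}$ is again a (complete) directed locally affine partition. Each $Z^\ell_{\a,\b}$ is the intersection of two sets which are relatively open in the same $\ell$-dimensional affine plane $\aff\,\partial^\pm\theta(\a,w)$ — the affine hull being common by the first paragraph — so $Z^\ell_{\a,\b}$ is relatively open in that plane, hence locally affine of dimension $\ell$ (nonemptiness where claimed is part of the indexing); the condition $\aff(z+C^\ell_{\a,\b}) = \aff\,Z^\ell_{\a,\b}$ is inherited from the corresponding property for $Z^{\ell,\pm}_{\a,\b}$, using that $C^\ell_{\a,\b}$ is an extremal face of $\C(\a)$ with $\aff\,C^\ell_{\a,\b} = \aff\,\partial^\pm\theta(\a,w) - w$. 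The $\sigma$-compactness of $\bD$ for this partition follows from Proposition~\ref{P_borel_regu_transport_sets} ($\mathcal R\theta$ is $\sigma$-compact and $\mathtt v$ is $\sigma$-continuous, being a restriction of the $\sigma$-continuous $\mathtt v^+$) exactly as in the proof of Theorem~\ref{T_partition_E+-}; completeness of the resulting partition follows because each $Z^\ell_{\a,\b}$ sits inside a level set of $\theta$ which, on its relative interior, is a complete $\mathtt c_{\C(\a)}$-Lipschitz graph, so the parallelogram $x + C^\ell_{\a,\b}\cap y - C^\ell_{\a,\b}$ for $x,y\in Z^\ell_{\a,\b}$ lies in $\partial^+\theta(\a,x)\cap\partial^-\theta(\a,y)\subset$ the same level set, and its relative interior consists of points whose forward and backward direction cones are still exactly $C^\ell_{\a,\b}$ by \eqref{E_ballF}--\eqref{E_ballFmax}, hence it lies in $\mathcal R\theta$ and in $Z^\ell_{\a,\b}$. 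Finally \eqref{item2TpartE+} and \eqref{item2TpartE-} both hold on $Z^\ell_{\a,\b}$ since they hold on $Z^{\ell,+}_{\a,\b}\supset Z^\ell_{\a,\b}$ and $Z^{\ell,-}_{\a,\b}\supset Z^\ell_{\a,\b}$ respectively.

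The main obstacle is the equality of the two extremal faces $F^+ = F^-$ — equivalently $\mathcal D^+\theta(\a,w) = \mathcal D^-\theta(\a,w)$ on $\mathcal R\theta$ — and, hand in hand with it, the measurability/index-matching bookkeeping that lets one glue $Z^{\ell,+}_{\a,\b}$ and $Z^{\ell,-}_{\a,\b}$ into a single set with a well-defined common direction cone. The first half is really a statement about how the clause $(iii)$ in the definitions \eqref{E_forward_regular_transport_set}--\eqref{E_backward_regular_transport_set} of the regular sets, which demands the existence of an auxiliary regular point $w''$ in the relative interior of the (negative of the) direction cone lying in the same level set, together with the maximality characterization of Propositions~\ref{Pdirectionfaces1} and~\ref{Pdirectionfaces1-} and the completeness property \eqref{E_compl_fol}, pins down one and the same extremal cone from both the forward and the backward side; this is the geometric heart of Point~\eqref{Point_intro_loafpr2} in the informal description of the regular set. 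Everything else is a transcription of the construction already carried out in the proof of Theorem~\ref{T_partition_E+-}.
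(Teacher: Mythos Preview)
Your proposal is correct and takes the same approach as the paper: show $\mathcal D^+\theta(\a,w)=\mathcal D^-\theta(\a,w)$ for $w\in\mathcal R\theta(\a)$ and then deduce that the two partitions from Theorem~\ref{T_partition_E+-} coincide on $\mathcal R\theta$ via the open-neighborhood argument of Claim~\ref{Cl_oww'neigh}. Your discussion of $F^+=F^-$ is slightly roundabout; the direct route is to use clause~(iii) of \eqref{E_forward_regular_transport_set} to obtain $w''\in\partial^-\theta(\a,w)$ with $w-w''\in\interr F^+$, so Proposition~\ref{Pdirectionfaces1-} gives $F^+\cap\mathbb S^{k-1}\subset\mathcal D^-\theta(\a,w)=F^-\cap\mathbb S^{k-1}$, and then apply the symmetric argument from clause~(iii) of \eqref{E_backward_regular_transport_set} with Proposition~\ref{Pdirectionfaces1} for the reverse inclusion.
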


In the following we will use the following definitions.

\begin{definition}
\label{D_vpm}
Given a $\mathtt c_{\C}$-Lipschitz foliation $\theta$, the directed locally affine partition induced on $\mathcal R^+\theta$ by $\mathtt v^+$ is called the \emph{superdifferential directed partition}, while the directed locally affine partition induced on $\mathcal R^-$ by $\mathtt v^-$ is called the \emph{subdifferential directed partition}.

The partition induced by $\mathtt v$ on the regular points $\mathcal R\theta$ is called \emph{$\mathtt c_{\C}$-differential directed partition}. 
\end{definition}

We prove the part of Theorem \ref{T_partition_E+-} which regards $\mathcal R^+\theta$ and $\mathtt v^+$, being the one about $\mathcal R^-\theta$ and $\mathtt v^-$ completely symmetric. Notice that in the proof is also shown that the map $\a \mapsto \B_{\ell,\pm}(\a)$ is $\sigma$-continuous.

\begin{proof}[Proof of Theorem \ref{T_partition_E+-}]
Being a single-valued map, $\mathtt v^+$ clearly induces a partition of $\mathcal R^+\theta$. Moreover, $\mathtt v^+$ is $\sigma$-continuous by Proposition \ref{P_borel_regu_transport_sets} and the fact that the affine envelope of compact sets is $\sigma$-continuous w.r.t. Hausdorff topology. Since, by \eqref{E_compl_fol1}
\[
\mathtt v^+(\a,w)=\mathtt v^+(\a',w')\quad\Leftrightarrow\quad\a=\a',
\]
let
\[
\mathfrak B_{\ell,+}(\a) := \mathtt v^+( \{\a\} \times \mathcal R^+\theta(\a))\cap \T\times \mathcal A(\ell,\R^k)
\]
and $\forall\,\b\in \mathfrak B_{\ell,+}(\a)$ let
\[
Z^{\ell,+}_{\a,\b} := \Bigl\{ w \in \mathcal R^+\theta(\a) : \mathtt v^+(\a,w) = \b \Bigr\}.
\]
By \eqref{E_forward_regular_transport_set} (i), for all $w \in Z^{\ell,+}_{\a,\b}$
\[
\R^+\mathcal D^+\theta(\a,w) = \aff\,\partial^+\theta(\a,w) -w\cap \C(\a) = \mathtt p_2 \b-w \cap \C(\a)\in\mathcal C(\ell,\R^k).
\]
Thus, by \eqref{E_forward_regular_transport_set} (iii), $Z^{\ell,+}_{\a,\b}\subset\mathcal R^{+,\ell}\theta$ and
\[
C^{\ell,+}_{\a,\b} := \mathtt p_2 \b-w \cap \C(\a)
\]
satisfies \eqref{item2TpartE+}. 

Let us now show that $Z^{\ell,+}_{\a,\b}$ is relatively open in $\mathcal R^{+,\ell}\theta \cap \mathtt p_2 \b$. More precisely, we prove the following

\begin{claim}
\label{Cl_oww'neigh}
For all $w \in Z^{\ell,+}_{\a,\b}$, there exist
\[
w' \in \partial^+ \theta(\a,w) \cap w + \interr C^{\ell,+}_{\a,\b} \quad \text{and} \quad w'' \in \partial^- \theta(\a,w) \cap w - \interr C^{\ell,+}_{\a,\b}
\]
such that 
\begin{equation*}
\interr \O(\a)(w'',w') \subset Z^{\ell,+}_{\a,\b}.
\end{equation*}
\end{claim}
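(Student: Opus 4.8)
## Proof Plan for Claim \ref{Cl_oww'neigh}

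The plan is to exploit property \eqref{E_forward_regular_transport_set} $(iii)$ in the definition of $\mathcal R^{+,\ell}\theta$ together with the maximality characterization in Proposition \ref{Pdirectionfaces1}. Fix $w \in Z^{\ell,+}_{\a,\b}$. By definition of $Z^{\ell,+}_{\a,\b}$ we have $\mathcal D^+\theta(\a,w) = C^{\ell,+}_{\a,\b} \cap \mathbb S^{k-1}$ with $C^{\ell,+}_{\a,\b}$ an $\ell$-dimensional extremal face of $\C(\a)$, and by \eqref{E_forward_regular_transport_set} $(iii)$ there is a point $w'' \in \mathcal T^+\theta(\a) \cap \big(w - \interr(\R^+\conv_{\mathbb S^{k-1}}\mathcal D^+\theta(\a,w))\big)$ with $\theta(\a,w'')=\theta(\a,w)$ for which $(i),(ii)$ hold. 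First I would observe that $\R^+\conv_{\mathbb S^{k-1}}\mathcal D^+\theta(\a,w) = C^{\ell,+}_{\a,\b}$, so that $w - w'' \in \interr C^{\ell,+}_{\a,\b}$, i.e. $w'' \in \partial^-\theta(\a,w)$ and $C(\a)(w'',w) = C^{\ell,+}_{\a,\b}$. Then I would pick $w' \in \partial^+\theta(\a,w)\setminus\{w\}$ with $(w'-w)/|w'-w| \in \interr C^{\ell,+}_{\a,\b}\cap\mathbb S^{k-1}$, which exists because $\mathcal D^+\theta(\a,w)$ is the full set of directions $C^{\ell,+}_{\a,\b}\cap\mathbb S^{k-1}$ and the relative interior is nonempty. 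Again $C(\a)(w,w')=C^{\ell,+}_{\a,\b}$, and by the completeness assumption \eqref{E_compl_fol} both $\O(\a)(w'',w)$ and $\O(\a)(w,w')$ lie in $\{\theta=\t\}(\a)$ with $\t=\theta(\a,w)$; since $w - w'' \in \interr C^{\ell,+}_{\a,\b}$ and $w'-w\in\interr C^{\ell,+}_{\a,\b}$, Proposition \ref{P_parall} gives $w \in \interr\O(\a)(w'',w')$, and $\R^+(\O(\a)(w'',w')-w'')=\R^+(w'-\O(\a)(w'',w'))=C^{\ell,+}_{\a,\b}$.

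The core of the argument is then to show $\interr\O(\a)(w'',w') \subset Z^{\ell,+}_{\a,\b}$, i.e. that every $\bar w \in \interr\O(\a)(w'',w')$ satisfies $\mathtt v^+(\a,\bar w) = \b$. Fix such a $\bar w$. By \eqref{E_ballF} applied with the face $F = C^{\ell,+}_{\a,\b}$ and the pair $(w,w')$ — noting $\bar w \in \interr\O(\a)(w,w')$ since $\interr\O(\a)(w'',w')\subset\interr\O(\a)(w,w')$ by the cone relations — we get $C^{\ell,+}_{\a,\b}\cap\mathbb S^{k-1}\subset\mathcal D^+\theta(\a,\bar w)$, so $\mathcal D^+\theta(\a,\bar w)\supset C^{\ell,+}_{\a,\b}\cap\mathbb S^{k-1}$. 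The reverse inclusion, i.e. that $\mathcal D^+\theta(\a,\bar w)$ is not \emph{larger}, is where I expect the main obstacle: I would argue that $C^{\ell,+}_{\a,\b}$ is maximal among the extremal cones of $\C(\a)$ satisfying Point \eqref{item1Pfaces} of Proposition \ref{Pdirectionfaces1} at the point $w$ — because $w \in \mathcal R^{+,\ell}\theta$ means $\mathcal D^+\theta(\a,w)$ itself is spherically convex and equals $C^{\ell,+}_{\a,\b}\cap\mathbb S^{k-1}$, leaving no room for a strictly larger face satisfying \eqref{item1Pfaces} at $w$ — and then invoke \eqref{E_ballFmax} to conclude $\mathcal D^+\theta(\a,\bar w) = C^{\ell,+}_{\a,\b}\cap\mathbb S^{k-1} = \conv_{\mathbb S^{k-1}}\mathcal D^+\theta(\a,\bar w)$ for all $\bar w\in\interr\O(\a)(w,w')$. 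This already shows $\aff\,\partial^+\theta(\a,\bar w) = \bar w + \aff\,C^{\ell,+}_{\a,\b}$, which is a translate of $\aff\,C^{\ell,+}_{\a,\b}$; since $\bar w, w$ both lie in this affine space (as $\bar w - w \in C^{\ell,+}_{\a,\b}\subset\aff\,C^{\ell,+}_{\a,\b}$), we get $\aff\,\partial^+\theta(\a,\bar w) = \aff\,\partial^+\theta(\a,w) = \mathtt p_2\b$.

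It remains to check the two membership conditions hidden in $\mathtt v^+(\a,\bar w)=\b$: that $\theta(\a,\bar w)=\t$ and that $\bar w \in \mathcal R^{+,\ell}\theta(\a)$. The first is immediate from completeness \eqref{E_compl_fol}, which put $\O(\a)(w'',w')\subset\{\theta=\t\}(\a)$. For the second, I would verify properties $(i)$–$(iii)$ of \eqref{E_forward_regular_transport_set}: $(i)$ and $(ii)$ follow from the identity $\mathcal D^+\theta(\a,\bar w)=C^{\ell,+}_{\a,\b}\cap\mathbb S^{k-1}$ just established, since $C^{\ell,+}_{\a,\b}$ is $\ell$-dimensional so $\dim\conv_{\mathbb S^{k-1}}\mathcal D^+\theta(\a,\bar w)=\ell-1$; for $(iii)$ the witness point is $w''$ itself — one needs $w'' \in w - \interr C^{\ell,+}_{\a,\b}$... more precisely $w'' \in \bar w - \interr(\R^+\conv_{\mathbb S^{k-1}}\mathcal D^+\theta(\a,\bar w))= \bar w - \interr C^{\ell,+}_{\a,\b}$, which holds because $\bar w - w'' \in \interr C^{\ell,+}_{\a,\b}$ (both $\bar w - w$ and $w - w''$ being in $C^{\ell,+}_{\a,\b}$ with the latter in the relative interior), and $(i),(ii)$ hold for $w''$ by the defining property of the witness in \eqref{E_forward_regular_transport_set} $(iii)$ for $w$. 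This establishes $\bar w \in Z^{\ell,+}_{\a,\b}$ and completes the claim; the remaining verifications in Theorem \ref{T_partition_E+-} (that these sets are locally affine, $\sigma$-compact dependence on the parameters, and completeness of the resulting partition) then follow from the claim plus Proposition \ref{P_borel_regu_transport_sets} and the $\sigma$-continuity of $\mathtt v^+$.
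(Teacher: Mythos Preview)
Your overall strategy is sound and close to the paper's, but there is a concrete error in the set inclusion you invoke. You write ``$\bar w \in \interr\O(\a)(w,w')$ since $\interr\O(\a)(w'',w')\subset\interr\O(\a)(w,w')$ by the cone relations''. This inclusion is backwards. Since $w - w'' \in \interr C^{\ell,+}_{\a,\b}$ we have $w + C^{\ell,+}_{\a,\b} \subset w'' + C^{\ell,+}_{\a,\b}$, and therefore
\[
\O(\a)(w,w') = (w + C^{\ell,+}_{\a,\b}) \cap (w' - C^{\ell,+}_{\a,\b}) \;\subset\; (w'' + C^{\ell,+}_{\a,\b}) \cap (w' - C^{\ell,+}_{\a,\b}) = \O(\a)(w'',w'),
\]
not the reverse. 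In particular, points $\bar w \in \interr\O(\a)(w'',w')$ lying ``behind'' $w$ (i.e.\ with $\bar w \in w'' + \interr C^{\ell,+}_{\a,\b}$ but $\bar w \notin w + C^{\ell,+}_{\a,\b}$) are not covered by \eqref{E_ballFmax} applied at $w$ with the pair $(w,w')$. So your argument as written does not reach all of $\interr\O(\a)(w'',w')$.

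The paper closes this gap by a transitivity sandwich rather than by invoking \eqref{E_ballFmax} at $w$. Once one knows (from transitivity plus properties $(i)$--$(ii)$ at $w''$, exactly as you observed) that $\R^+\mathcal D^+\theta(\a,w'') = C^{\ell,+}_{\a,\b}$, and similarly $\R^+\mathcal D^+\theta(\a,w') = C^{\ell,+}_{\a,\b}$ (from \eqref{E_ballFmax} at $w$), then for any $\bar w \in \interr\O(\a)(w'',w')$ completeness gives $\bar w \in \partial^+\theta(\a,w'')$ and $w' \in \partial^+\theta(\a,\bar w)$, whence
\[
C^{\ell,+}_{\a,\b} = \R^+\mathcal D^+\theta(\a,w') \subset \R^+\mathcal D^+\theta(\a,\bar w) \subset \R^+\mathcal D^+\theta(\a,w'') = C^{\ell,+}_{\a,\b}.
\]
This forces equality and the remainder of your verification of $(i)$--$(iii)$ at $\bar w$ then goes through. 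Alternatively you can repair your own route by applying \eqref{E_ballFmax} at the point $w''$ with the pair $(w'',w')$, which does cover all of $\interr\O(\a)(w'',w')$.
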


By \eqref{E_forward_regular_transport_set} $(i)$ and \eqref{E_ballFmax} there exists $w'\in \partial^+ \theta(\a,w) \cap w + \interr C^{\ell,+}_{\a,\b}$ s.t. $w'\in Z^{\ell,+}_{\a,\b}$ and by \eqref{E_forward_regular_transport_set} $(iii)$ there exists $w'' \in \partial^- \theta(\a,w) \cap w - \interr C^{\ell,+}_{\a,\b}$ s.t. $w''\in Z^{\ell,+}_{\a,\b}$. By Proposition \ref{P_parall}, $\interr\O(\a)(w'',w')$ is a relatively open neighborhood of $w$ in $\mathtt p_2 \b$.

Let now $\bar w \in \interr\O(\a)(w'',w')$. By the completeness of the superdifferential \eqref{E_compl_fol}, 
\[
\bar w \in \partial^+ \theta(\a,w'') \quad \text{and} \quad w' \in \partial^+\theta(\a,\bar w).
\]
Hence,
\[
C^{\ell,+}_{\a,\b} = \R^+\mathcal D^+\theta(\a,w')\subset\R^+\mathcal D^+\theta(\a,\bar w)\subset\R^+\mathcal D^+\theta(\a,w'')=C^{\ell,+}_{\a,\b},
\]
thus implying that $\bar w\in Z^{\ell,+}_{\a,\b}$.
\end{proof}

The proof of Corollary \ref{C_v} follows easily from the proof of Theorem \ref{T_partition_E+-}.

\begin{proof}[Proof of Corollary \ref{C_v}]
If $w \in \mathcal R\theta(\a)$, then
\[
w \in Z^{\ell,+}_{\a,\b}\cap Z^{\ell',-}_{\a,\b'}\subset \mathcal R^{+,\ell} \theta(\a) \cap \mathcal R^{-,\ell'} \theta(\a).
\]
By the transitivity property \eqref{E_transitivity_subdifferential_a} one has that 
\[
\ell = \ell', \qquad \mathcal D^+ \theta(\a,w) = \mathcal D^- \theta(\a,w). 
\]
Hence, as in the proof of Claim \ref{Cl_oww'neigh},
\[
w \in \interr \O(\a)(w'',w') \subset Z^{\ell,+}_{\a,\b} \cap Z^{\ell,-}_{\a,\b} , \quad C^{\ell,+}_{\a,\b} = C^{\ell,-}_{\a,\b},
\]
yielding the conclusion.
\end{proof}

In the following we will use the notation $\hat\bD^+$, $\hat\bD^-$ and $\hat\bD$ to denote the $\sigma$-compact graphs of the directed locally affine partitions induced respectively by $\mathtt v^+$, $\mathtt v^-$ and $\mathtt v$, namely

\begin{subequations}
\label{E_sub_directed_partition_all}
\begin{align}
\label{E_sub_directed_partition_minus}
\hat{\mathbf D}^+ := \Big\{ \big( \ell,\mathfrak a,\mathfrak b,w,C \big) : \mathtt v^+(\a,w) = \b, C = \mathtt p_2 \b-w \cap \C(\mathfrak a), w \in Z^{\ell,+}_{\mathfrak a,\mathfrak b} \Big\},
\end{align}
\begin{align}
\label{E_sub_directed_partition_plus}
\hat{\mathbf D}^- := \Big\{ \big( \ell,\mathfrak a,\mathfrak b,w,C \big) : \mathtt v^-(\a,w) = \b, C = \mathtt p_2 \b-w \cap \C(\mathfrak a), w \in Z^{\ell,-}_{\mathfrak a,\mathfrak b} \Big\},
\end{align}
\begin{align}
\label{E_sub_directed_partition}
\hat{\mathbf D} := \Big\{ \big( \ell,\mathfrak a,\mathfrak b,w,C \big) : \mathtt v(\a,w) = \b, C = \mathtt p_2 \b-w\cap \C(\mathfrak a), w \in Z^\ell_{\mathfrak a,\mathfrak b} \Big\}.
\end{align}
\end{subequations}

We will also use the notations $\mathfrak c = (\mathfrak a,\mathfrak b)$, $Z^{\ell,+}_{\mathfrak c}$, $C^{\ell,+}_{\mathfrak c}$. Recalling that $\A\subset\R^{d-k}$ and observing that, after the partition into sheaf sets as in Proposition \ref{P_countable_partition_in_reference_directed_planes} and the injection into a fibration, we can take as in \eqref{E_mathfrak_A_k_def}
\[
\B_{\ell,+} = \bigcup _{\a \in \A} \B_{\ell,+}(\a) \subset \R^{k-\ell}, 
\]
we let $\mathfrak c \in \mathfrak C_{\ell,+} \subset \R^{d-\ell}$, $\ell=1,\dots,k-1$.

\subsection{\texorpdfstring{Analysis of the residual set}{Analysis of the residual set}}
\label{Ss_analysis_residual_set}

Now we give a characterization of the residual set as the union of \emph{initial} and \emph{final points} respectively for the superdifferential partition and the subdifferential partition. Moreover, we fully characterize the super/subdifferentials at each point of the super/subdifferential partitions in terms of the regular and initial/final points.

Recalling Definition \ref{D_initial_final} of initial and final points of a directed locally affine partition, let
\begin{equation}
\label{E_init_Z+}
\mathcal I^+\theta := \bigcup_{\ell,\a,\b} \mathcal I \bigl( Z^{\ell,+}_{\a,\b} \bigr)
\end{equation}
be the \emph{sets of initial points of the superdifferential partition $\{Z^{\ell,+}_{\a,\b}, C^{\ell,+}_{\a,\b}\}_{\ell,\a,\b}$} and let
\begin{equation}
\label{E_final_Z-}
\mathcal E^-\theta := \bigcup_{\ell,\a,\b} \mathcal E \bigl( Z^{\ell,-}_{\a,\b} \bigr)                                                                                                                                                                                                                                         
\end{equation}
be the \emph{set of final points of the subdifferential partition $\{Z^{\ell,-}_{\a,\b}, C^{\ell,-}_{\a,\b}\}_{\ell,\a,\b}$}.

\begin{theorem}
\label{T_partition_E}
The following holds:
\begin{equation}
\label{Point_residual_N_is_initial_final} 
\mathcal N \theta = \mathcal I^+\theta \cup \mathcal E^-\theta,
\end{equation}
and moreover
\begin{subequations}
\label{Point_sudiff_is_partition_in_fin}
\begin{equation}
\label{Point_sudiff_is_partition_in}
\partial^+ \theta(\a,w) = \big( w + C^{\ell,+}_{\a,\b} \big) \cap \big( Z^{\ell,+}_{\a,\b} \cup \mathcal E^- \theta(\a) \big), \qquad \forall\, w \in Z^{\ell,+}_{\a,\b},
\end{equation}
\begin{equation}
\label{Point_sudiff_is_partition_fin}
\partial^- \theta(\a,w) = \big( w - C^{\ell,-}_{\a,\b} \big) \cap \big( Z^{\ell,-}_{\a,\b} \cup \mathcal I^+ \theta(\a) \big), \qquad \forall\, w \in Z^{\ell,-}_{\a,\b}.
\end{equation}
\end{subequations}
\end{theorem}

\begin{remark}
\label{R_not_only_R}
Notice that in general the points of the set $\mathcal N \theta$, i.e. the complement of the set of regular points $\mathcal R\theta$, may not belong to the set $\mathcal I \theta \cup \mathcal E \theta$, i.e. the set of initial and final points for the directed locally affine partition $\{Z^\ell_{\a,\b},C^\ell_{\a,\b}\}_{\ell,\a,\b}$ induced by $\mathtt v$ on $\mathcal R\theta$ (see Figure \ref{Fi_infin}).
\end{remark}

\begin{figure}
\centering{\resizebox{9cm}{7cm}{\input{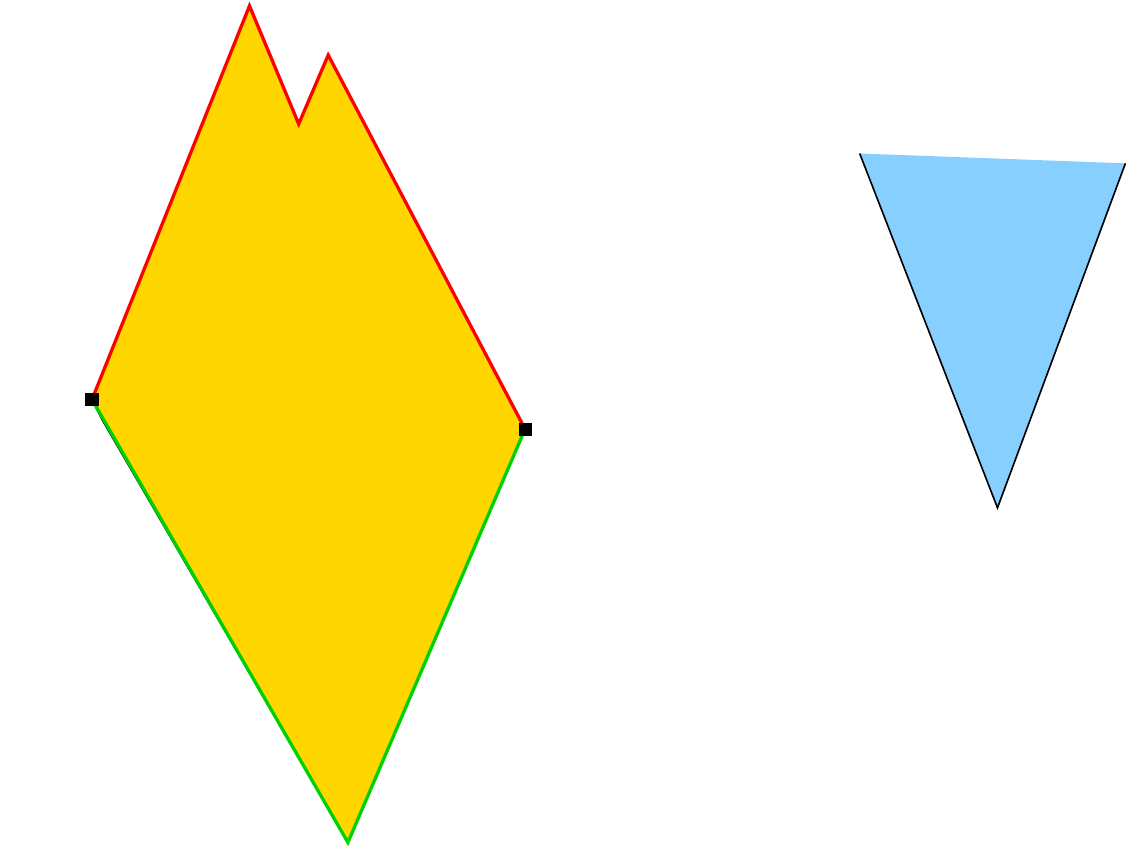_t}}}
\caption{The yellow region is made of points in the regular set $\mathcal R\theta$. The points $z$ and $z'$ in the figure belong to $\mathcal I^+\theta\setminus (\mathcal I\theta\cup\mathcal E\theta)$. The points on the red segments belong instead to $\mathcal E\theta$ and the points on the green segments to $\mathcal I\theta$.}
\label{Fi_infin}
\end{figure}
The main observation in the proof of Theorem \ref{T_partition_E} is contained in the following

\begin{remark}
\label{R_t+t-i+e-}
We observe the following properties of $\mathcal T^+ \theta$, $\mathcal T^- \theta$:
\begin{align*}
& w \in \mathcal T^+\theta(\a) \quad \Longrightarrow \quad \exists\,r>0, \, Z^{\ell,+}_{\a,\b} \text{ such that } B^k(w,r) \cap \big( w + \interr C^{\ell,+}_{\a,\b} \big) \subset Z^{\ell,+}_{\a,\b}; \\
& w \in \mathcal T^-\theta(\a) \quad \Longrightarrow \quad \exists\,r>0, \, Z^{\ell,-}_{\a,\b} \text{ such that } B^k(w,r) \cap \big( w - \interr C^{\ell,-}_{\a,\b} \big) \subset Z^{\ell,-}_{\a,\b}.
\end{align*}
The statements follow respectively from \eqref{E_ballFmax} and \eqref{E_ballFmax-}, taking $C^{\ell,\pm}_{\a,\b}$ equal to any maximal extremal cone of $\R^+\mathcal D^\pm\theta(\a,w)$.
\end{remark}


\begin{proof}[Proof of Theorem \ref{T_partition_E}]
Let $w \in \mathcal N\theta(\a)$: then from definition \eqref{E_residual_set_N},
\[
w \in \mathcal T^+\theta(\mathfrak a) \setminus \mathcal R\theta (\mathfrak a) \quad \text{or} \quad w \in \mathcal T^-\theta(\mathfrak a) \setminus \mathcal R\theta(\mathfrak a).
\]
If $w \in \mathcal T^+\theta(\mathfrak a) \setminus \mathcal R^+\theta (\mathfrak a)$, then by Remark \ref{R_t+t-i+e-} and Definition \ref{D_initial_final} it follows that $w\in\mathcal I^+\theta(\a)$. If $w \in \bigl(\mathcal T^+\theta(\mathfrak a)\cap\mathcal R^+\theta (\mathfrak a)\bigr)\setminus \mathcal R^-\theta (\mathfrak a)$,  $w\in\mathcal T^-\theta(\a)$ by \eqref{E_forward_regular_transport_set} (iii) and then again by Remark \ref{R_t+t-i+e-} we conclude that $w\in\mathcal E^-\theta(\a)$. \\
Analogously, $\mathcal T^-\theta(\mathfrak a) \setminus \mathcal R^-\theta (\mathfrak a)\subset\mathcal E^-\theta(\a)$ and $\bigl(\mathcal T^-\theta(\mathfrak a)\cap\mathcal R^-\theta (\mathfrak a)\bigr)\setminus \mathcal R^+\theta (\mathfrak a)\subset\mathcal I^+\theta(\a)$. This concludes the proof of \eqref{Point_residual_N_is_initial_final}.

We prove \eqref{Point_sudiff_is_partition_in}, being the proof of \eqref{Point_sudiff_is_partition_fin} analogous. We already know by \eqref{item2TpartE+} that 
\[
\partial^+\theta(\a,w)\subset w + C^{\ell,+}_{\a,\b} \quad \text{for all } w \in  Z^{\ell,+}_{\a,\b}.
\]
Let now $w\in Z^{\ell,+}_{\a,\b}$ and $w''\in\partial^-\theta(\a,w)\cap w-\interr C^{\ell,+}_{\a,\b}$ as in \eqref{E_forward_backward_regular_transport_set} (iii). Then, as in Claim \ref{Cl_oww'neigh}, for all $w'\in\partial^+\theta(\a,w)$ one has
\[
\interr\O(\a)(w'',w')\subset Z^{\ell,+}_{\a,\b}.
\]
Notice that now we do not need to specify that $w'-w\in\interr C^{\ell,+}_{\a,\b}$ because we already know $Z^{\ell,+}_{\a,\b}$ to be open in $\mathrm{aff}(w+C^{\ell,+}_{\a,\b})$. Then there are two possibilities: either
\begin{equation}
\label{E_firt_T_ele1}
\mathcal D^+\theta(\a,w') =  C^{\ell,+}_{\a,\b} \cap \mathbb S^{k-1},
\end{equation}
or by transitivity
\begin{equation}
\label{E_firt_T_ele2}
\mathcal D^+\theta(\a,w') \subsetneq C^{\ell,+}_{\a,\b} \cap \mathbb S^{k-1}.
\end{equation}
In case \eqref{E_firt_T_ele1} holds, it follows that $w'\in Z^{\ell,+}_{\a,\b}$. Otherwise, one has
\begin{equation}
\label{E_firt_T_ele3}
w' \in \mathcal T^-\theta(\a) \setminus \mathcal R^- \theta(\a),
\end{equation}
and then as seen before $w'\in\mathcal E^-\theta(\a)$. \\
To prove \eqref{E_firt_T_ele3}, it is sufficient to take a maximal backward extremal cone $F$ for $w'$ as in \eqref{E_ballFmax-} containing $C^{\ell,+}_{\a,\b}$: by \eqref{E_firt_T_ele2},
\[
\partial^+\theta(\a,w') \cap \big( w'+\interr F \big) = \emptyset,
\]
and then $w'$ cannot be a backward regular point. 
\end{proof}

\subsection{\texorpdfstring{Optimal transportation on $\mathtt c_{\C}$-Lipschitz foliations}{Optimal transportation on c-Lipschitz foliations}}
\label{Ss_optim_folia}

Let $\{ \theta^{-1}(\t) \}_{\t\in\T}$ be a $\mathtt c_{\C}$-Lipschitz foliation on $\A\times\R^k\subset\R^{d-k}\times\R^k$ and consider two probability measures in $\mathcal P(\A\times\R^k)$ which can be disintegrated as
\[
\tilde \mu = \int \tilde\mu_\a d\tilde m(\a), \quad \tilde\nu = \int \tilde \nu_\a d\tilde m(\a), \qquad \tilde m = (\mathtt p_{\R^{d-k}})_\# \tilde \mu = (\mathtt p_{\R^{d-k}})_\# \tilde \nu,
\]
and satisfying
\begin{subequations}
\label{E_541-2}
\begin{equation}
\label{E_541}
\tilde \mu \big( \theta^{-1}(\T) \big) = \tilde \nu \big( \theta^{-1}(\T) \big) = 1,
\end{equation}
\begin{equation}
\label{E_542}
\emptyset \neq \Pi^f_{\mathtt c_{\C},\theta}(\tilde \mu,\tilde \nu) := \bigg\{ \pi \in \Pi^f_{\mathtt c_{\C}}(\tilde \mu,\tilde \nu) : \pi \bigg( \bigcup_{\t \in \T} \{\theta=\t\} \times \{\theta=\t\} \bigg) = 1 \bigg\}.
\end{equation}
\end{subequations}
Notice that, by Definition \ref{D_partial_theta} of superdifferential of $\theta$,
\begin{equation}
\label{E_pifct}
\Pi^f_{\mathtt c_{\C},\theta}(\tilde \mu,\tilde \nu) = \Pi(\tilde \mu,\tilde \nu) \cap \bigl\{ \pi : \pi(\partial^+ \theta) = 1 \bigr\}.
\end{equation}
By Theorem \ref{T_partition_E},
\[
\bigcup_{\t \in \T} \{\theta=\t\} = \mathcal I^+ \theta \cup \mathcal E^- \theta \cup \mathcal R \theta \cup \mathcal F \theta,
\]
hence $\pi\in\Pi^f_{\mathtt c_{\C},\theta}(\tilde\mu,\tilde\nu)$ if and only if it has $\tilde\mu$ and $\tilde\nu$ as first and second marginal respectively and it is concentrated on the set
\begin{equation}
\label{E_description}
\partial^+ \theta \cap \Bigl[ \big( \mathcal I^+ \theta \cup \mathcal E^- \theta \cup \mathcal R \theta \cup \mathcal F \theta \big) \times \big( \mathcal I^+ \theta \cup \mathcal E^- \theta \cup \mathcal R \theta \cup \mathcal F \theta \big) \Bigr].
\end{equation}

First notice that
\begin{subequations}
\label{E_descr}
\begin{equation}
\label{E_descr1}
\partial^+ \theta \cap \big( \mathcal F \theta \times (\A \times \R^k) \big) = \partial^+ \theta \cap \big( (\A \times \R^k) \times \mathcal F \theta \big) = \Graph\,\Id\, \llcorner_{\mathcal F \theta},
\end{equation}
\begin{equation}
\label{E_descr2}
\partial^+ \theta \cap \big( (\A \times \R^k) \times (\mathcal T^+ \theta \setminus \mathcal T^- \theta) \big) = \Graph\,\Id\, \llcorner_{(\mathcal T^+ \theta \setminus \mathcal T^-\theta)},
\end{equation}
\begin{equation}
\label{E_descr3}
\partial^+ \theta \cap \big( (\mathcal T^- \theta \setminus \mathcal T^+ \theta) \times (\A \times \R^k) \big) = \Graph\,\Id\, \llcorner_{(\mathcal T^- \theta \setminus \mathcal T^+ \theta)},
\end{equation}
\end{subequations}
and thus the restrictions of all the transport plans in \eqref{E_pifct} to the sets \eqref{E_descr} are concentrated on the diagonal $\{(\a,w,\a,w):\,(\a,w)\in\A\times\R^k\}$. Hence, we will assume w.l.o.g. that
\begin{equation}
\label{E_543}
\tilde\mu(\mathcal F\theta)=\tilde\nu(\mathcal F\theta)=0, \quad\tilde\mu(\mathcal T^-\theta \setminus \mathcal T^+\theta)=0,\quad\tilde\nu(\mathcal T^+\theta \setminus \mathcal T^-\theta)=0.
\end{equation}
This is true e.g. if $\tilde\mu\perp\tilde\nu$.

Now we give, in the same spirit of Theorem \ref{T_partition_E}, a more accurate description of the set \eqref{E_description} (we will neglect the subsets \eqref{E_descr} by the above observation), independently of the measures $\tilde\mu$, $\tilde\nu$. As a consequence (see Proposition \ref{P_disint_fol} below) we will get that if $\tilde\mu(\mathcal I^+\theta)=0$, then the class of transport plans \eqref{E_pifct} coincides with the class of transport plans of finite cost on the directed locally affine partition of $\mathcal R^{+}\theta$ with quotient map $\mathtt v^+$ defined in Theorem \ref{T_partition_E+-} and, if $\tilde\nu(\mathcal E^-\theta)=0$, they are moreover of finite cost on the directed partition on $\mathcal R\theta$ induced by the map $\mathtt v$.

In Table \ref{Tab_poss_inter} we put on the horizontal and vertical line the sets of a partition of $\mathcal I^+\theta\cup\mathcal R\theta\cup\mathcal E^-\theta$. If $A$ is a set belonging to the horizontal line and $B$ belongs to the vertical, in the square $(A,B)$ we write $Y$ in case possibly $\partial^+\theta \cap (A\times B)\neq\emptyset$, and $N$ in case always $\partial^+\theta \cap (A\times B)=\emptyset$. Recall that $\mathcal R\theta=\mathcal R^+\theta\cap\mathcal R^-\theta$, $\mathcal R^+\theta\cap\mathcal E^-\theta=\mathcal R^+\theta\setminus \mathcal R^-\theta$, $\mathcal R^-\theta\cap\mathcal I^+\theta=\mathcal R^-\theta\setminus \mathcal R^+\theta$, $\mathcal I^-\theta\setminus\mathcal T^+\theta=\mathcal T^-\theta\setminus\mathcal T^+\theta$, 
\[
\begin{split}
\mathcal I^+ \theta =&~ (\mathcal I^+ \theta \setminus \mathcal T^- \theta) \cup (\mathcal I^+ \theta \cap \mathcal E^- \theta) \cup (\mathcal R^- \theta\cap \mathcal I^+ \theta ), \crcr
\mathcal E^- \theta =&~ (\mathcal E^- \theta \setminus \mathcal T^+ \theta) \cup (\mathcal I^+ \theta \cap \mathcal E^- \theta) \cup (\mathcal R^+ \theta\cap \mathcal E^- \theta ),
\end{split}
\]
are disjoint unions.

\begin{table}[h]
\caption{The possible intersection of $\partial^+ \theta$ with a partition of $\mathcal T \theta$}
\begin{tabular}{|c|c|c|c|c|c|c|}
\hline
 & $\mathcal R\theta$ & $\mathcal R^+ \theta\cap \mathcal E^-\theta$ & $\mathcal R^- \theta\cap \mathcal I^+\theta$ & $\mathcal I^+ \theta\setminus\mathcal T^-\theta$ & $\mathcal E^- \theta\setminus \mathcal T^+\theta$ & $\mathcal I^+\theta \cap \mathcal E^-\theta$ \\
\hline
$\mathcal R\theta$ & Y & Y & N & N & Y & Y \\
\hline
$\mathcal R^+\theta \cap \mathcal E^-\theta$ & N & Y & N & N & Y & Y\\
\hline
$\mathcal R^-\theta \cap \mathcal I^+\theta$ & Y & Y & Y & N & Y & Y \\
\hline
$\mathcal I^+ \theta\setminus \mathcal T^-\theta$ & Y & Y & Y & Y ($\Graph\, \Id$) & Y & Y \\
\hline
$\mathcal E^-\theta \setminus \mathcal T^+\theta$ & N & N & N & N & Y ($\Graph \,\Id)$ & N \\
\hline
$\mathcal I^+ \theta\cap \mathcal E^-\theta$ & Y & Y & Y & N & Y & Y \\
\hline
\end{tabular}
\label{Tab_poss_inter}
\end{table}

\begin{proof}[Proof of Table \ref{Tab_poss_inter}]
First of all, \eqref{E_descr2} and \eqref{E_descr3} yield immediately the row of $\mathcal E^- \theta \setminus \mathcal T^+ \theta$ and the column of $\mathcal I^+ \theta \setminus \mathcal T^- \theta$, that we write just for symmetry. Moreover, by Remark \ref{R_t+t-i+e-}, also the row of $\mathcal I^+ \theta \setminus \mathcal T^- \theta$ and the column of $\mathcal E^- \theta \setminus \mathcal T^+ \theta$ easily follow. 

 In order to prove the other squares we will use the following facts, which have already been proven and used in Sections \ref{Ss_partition_transport_set} and \ref{Ss_analysis_residual_set} and are a consequence of \eqref{E_transitivity_subdifferential_a} and definitions \eqref{E_forward_regular_transport_set} and \eqref{E_backward_regular_transport_set}:
 \begin{align}
  &w\in\mathcal R\theta(\a)\quad\Rightarrow \quad\mathcal D^+\theta(\a,w)=\mathcal D^-\theta(\a,w);\label{E_table1}\\
  &w\in\mathcal R^-\theta(\a)\cap \mathcal I^+\theta(\a)\quad\Rightarrow \quad\mathcal D^+\theta(\a,w)\supsetneq\mathcal D^-\theta(\a,w);\label{E_table2}\\
  &w\in\mathcal R^+\theta(\a)\cap \mathcal E^-\theta(\a)\quad\Rightarrow \quad\mathcal D^-\theta(\a,w)\supsetneq\mathcal D^+\theta(\a,w).\label{E_table3}
 \end{align}
 
 Let us first prove the relations given by the squares containing the letter $N$.
 Let $w\in\mathcal R^+\theta(\a)$ and $w'\in\partial^+\theta(\a,w)$. Then,
 \[
  \mathcal D^+\theta(\a,w')\overset{\eqref{E_transitivity_subdifferential_a}}{\subset}\mathcal D^+\theta(\a,w)\overset{\eqref{E_table1},\eqref{E_table3}}{\subset}\mathcal D^-\theta(\a,w)\overset{\eqref{E_transitivity_subdifferential_a}}{\subset}\mathcal D^-\theta(\a,w').
 \]
By \eqref{E_table1} and \eqref{E_table2} we then conclude that $\partial^+\theta\cap (\mathcal R\theta\times \mathcal R^-\theta \cap \mathcal I^+\theta)=\emptyset$ and by \eqref{E_table1}-\eqref{E_table3} that $\partial^+\theta\cap(\mathcal R^+\theta \cap \mathcal E^-\theta\times \mathcal R\theta)=\emptyset$, $\partial^+\theta\cap(\mathcal R^+\theta \cap \mathcal E^-\theta\times\mathcal R^-\theta \cap \mathcal I^+\theta)=\emptyset$.

Let us now deal with the relations given by the squares containing the letter $Y$. We refer to Figure \ref{Fi_structsubdiff}.

\begin{figure}
\centering{\resizebox{12cm}{10cm}{\input{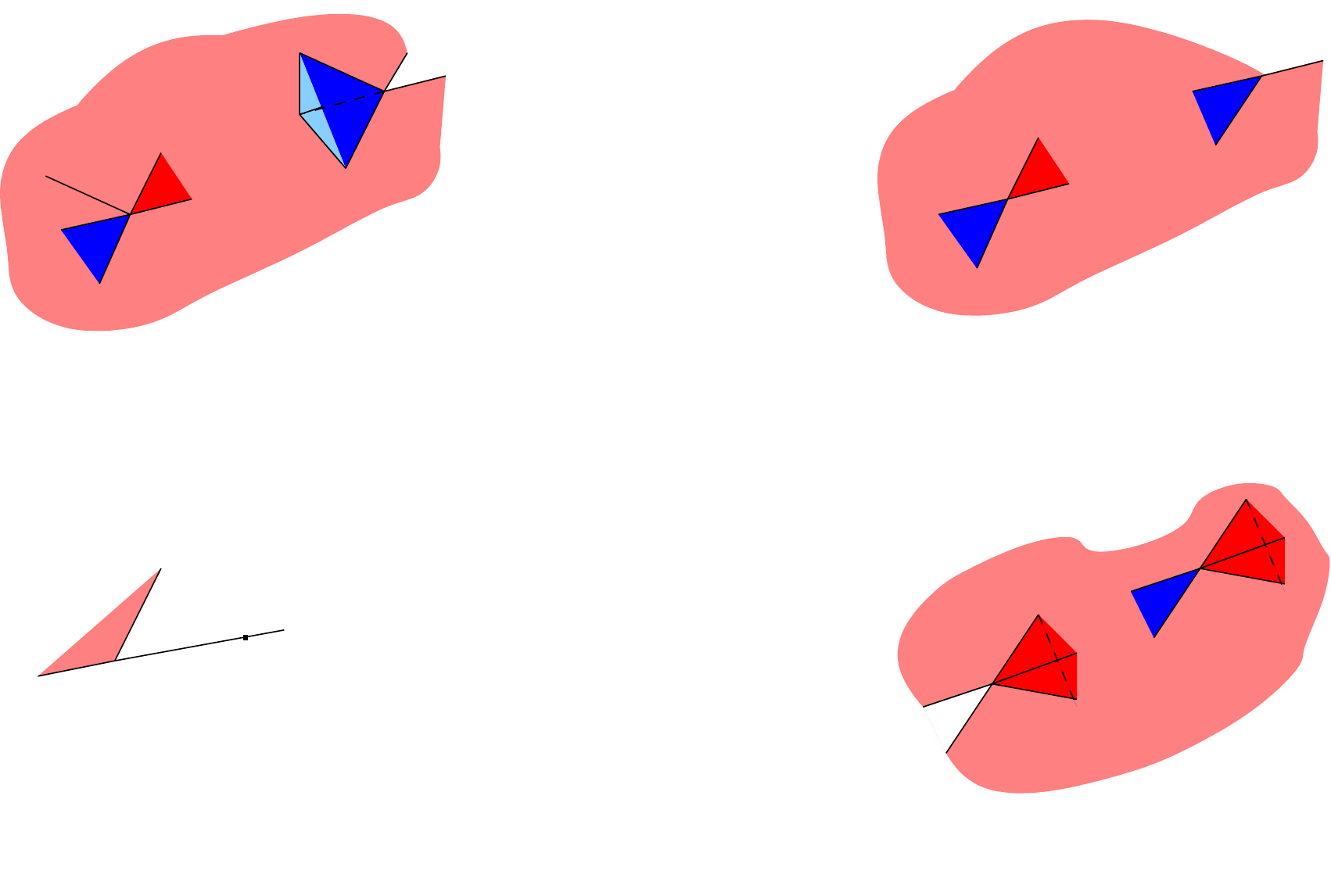_t}}}
\caption{The relations given by the letter $Y$.}
\label{Fi_structsubdiff}
\end{figure}

First we claim that 
\begin{equation}
\label{E_Y1}
 \partial^+\theta\cap (\mathcal R^+\theta\cap \mathcal E^-\theta\times \mathcal I^+\theta\cap\mathcal E^-\theta)\neq\emptyset.
\end{equation}
Indeed, let e.g. $w\in \mathcal R^+\theta(\a)\cap \mathcal E^-\theta(\a)$ such that
\[
\begin{split}
& \dim(\R^+\mathcal D^+\theta(\a,w))=2, \\
& \mathcal D^+\theta(\a,w)\subsetneq \mathcal D^-\theta(\a,w)\subsetneq \conv_{\S^{k-1}} \mathcal  D^-\theta(\a,w), \\
& \dim (\R^+\conv_{\S^{k-1}} \mathcal D^-\theta(\a,w))=3,
\end{split}
\]
and $w'\in\partial^+\theta(\a,w)$ s.t.
\[
\mathcal D^-\theta(\a,w')= \conv_{\S^{k-1}} \mathcal D^-\theta(\a,w) \quad \text{but} \quad \mathcal D^+\theta(\a,w')\subsetneq \conv_{\S^{k-1}}\mathcal D^+\theta(\a,w')=\mathcal D^+\theta(\a,w).
\]

Next we claim that 
\begin{equation}
\label{E_Y2}
 \partial^+\theta\cap (\mathcal R\theta\times \mathcal R^+\theta\cap \mathcal E^-\theta)\neq\emptyset.
\end{equation}
Indeed, it is sufficient to take e.g. $w\in\mathcal R\theta(\a)$ with $\dim (\R^+\mathcal D^+\theta(\a,w))=2$ and $w'\in \partial^+\theta(\a,w)$ s.t.
\[
\mathcal D^-\theta(\a,w')=\mathcal D^+\theta(\a,w) \quad \textrm{but} \quad \mathcal D^+\theta(\a,w')=\conv_{\S^{k-1}}\mathcal D^+\theta(\a,w')\subsetneq \mathcal D^-\theta(\a,w').
\]

Now let us prove that 
\begin{equation}
\label{E_Y3}
 \partial^+\theta\cap (\mathcal R^-\theta\cap \mathcal I^+\theta\times \mathcal R\theta)\neq\emptyset.
\end{equation}
Take for example $w\in\mathcal R^-\theta(\a)\cap \mathcal I^+\theta(\a)$ and $\bar w\in\partial^+\theta(\a,w)\cap w+\interr F$ as in \eqref{E_ballFmax} with $F$ maximal face in $\R^+\mathcal D^+\theta(\a,w)$ containing $\R^+\mathcal D^-\theta(\a,w)$.

Finally we claim that 
\begin{equation}
 \label{E_Y4}
  \partial^+\theta\cap (\mathcal I^+\theta\cap\mathcal E^-\theta\times\mathcal R^-\theta\cap \mathcal I^+\theta)\neq\emptyset.
\end{equation}
Hence, by the transitivity property \eqref{E_transitivity_subdifferential_a}, \eqref{E_Y1}-\eqref{E_Y4} immediately give all the remaining $Y$ squares in the table.

In order to show \eqref{E_Y4}, let $w\in\mathcal I^+\theta(\a)\cap\mathcal E^-\theta(\a)$ with
\[
\begin{split}
& \mathcal D^+\theta(\a,w)=\conv_{\S^{k-1}}\mathcal D^+\theta(\a,w), \\
& \dim\R^+\mathcal D^+\theta(\a,w)=3 \quad \text{and} \\
& \mathcal D^-\theta(\a,w)\subsetneq\conv_{\S^{k-1}}\mathcal D^-\theta(\a,w)\subsetneq\mathcal D^+\theta(\a,w). 
\end{split}
\]
Then, let $w'\in \partial^+\theta(\a,w)\,\cap\, w+\interr (\R^+\conv_{\S^{k-1}}\mathcal D^-\theta(\a,w))$ s.t.
\[
\mathcal D^+\theta(\a,w')=\mathcal D^+\theta(\a,w) \quad \text{and} \quad \mathcal D^-\theta(\a,w')=\conv_{\S^{k-1}}\mathcal D^-\theta(\a,w).
\]
\end{proof}

Neglecting \eqref{E_descr}, we conclude that
\begin{align}
\partial^+ \theta \cap \Big[ \big( \mathcal I^+ \theta \cup \mathcal R \theta \cup \mathcal E^- \theta \big) \times \big( \mathcal I^+ \theta \cup \mathcal R \theta \cup \mathcal E^- \theta \big) \Big] =&~ \Bigl[ \partial^+ \theta \cap \big( \mathcal R \theta \times (\mathcal R \theta \cup \mathcal E^- \theta) \big) \Bigr] \label{E_subdiff_dec}\\
&~ \cup \Bigl[ \partial^+ \theta \cap \big( (\mathcal R^+ \theta \cap \mathcal E^- \theta) \times \mathcal E^- \theta \big) \Bigr] \notag\\
&~ \cup \Bigl[ \partial^+ \theta \cap (\mathcal I^+ \theta \times \mathcal T^- \theta) \Bigr].\notag
\end{align}
Using \eqref{E_subdiff_dec}, Theorem \ref{T_partition_E+-} and Corollary \ref{C_v} we have the following.

\begin{proposition}
\label{P_disint_fol}
Let $\{\theta^{-1}(\t)\}_{\t\in\T} \subset \mathtt P(\A \times\R^k)$ be a $\mathtt c_{\C}$-Lipschitz foliation and $\tilde \mu,\tilde \nu \in \mathcal P(\A \times \R^k)$ such that \eqref{E_541-2} and \eqref{E_543} hold.

If $\tilde\mu(\mathcal I^+\theta)=0$, then
\[
\tilde \pi \in \Pi^f_{\mathtt c_{\C},\theta}(\tilde \mu,\tilde \nu) \quad \Longleftrightarrow \quad \tilde \pi \in \Pi^f_{\mathtt c_{\hat{\bD}^+}}(\tilde \mu,\tilde \nu),
\]
where $\hat{\bD}^+$ is the locally affine partition induced by $\mathtt v^+$ on $\mathcal R^+\theta$ and $\mathtt c_{\hat{\bD}^+}$ the related cost, as defined in \eqref{E_c_bD}.

Moreover, if $\tilde\mu(\mathcal I^+\theta)=0$ and $\tilde\mu(\mathcal R^+\theta\cap\mathcal E^-\theta)=0$ then
\[
\tilde \pi \in \Pi^f_{\mathtt c_{\C},\theta}(\tilde \mu,\tilde \nu) \quad \Longleftrightarrow \quad \tilde \pi \in \Pi^f_{\mathtt c_{\hat{\bD}}}(\tilde\mu,\tilde\nu),
\]
where $\hat{\bD}$ is the locally affine partition induced by $\mathtt v=\mathtt v^+_{|_{\mathcal R\theta}}$ on $\mathcal R\theta$ and $\mathtt c_{\hat{\bD}}$ the related cost. 

\end{proposition}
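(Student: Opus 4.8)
The plan is to derive both inclusions from the combinatorial picture of $\partial^+\theta$ encoded in Table~\ref{Tab_poss_inter} (equivalently, the decomposition \eqref{E_subdiff_dec}) together with the exact description of the superdifferential in \eqref{Point_sudiff_is_partition_in}. First I would recall from \eqref{E_pifct} that $\Pi^f_{\mathtt c_{\C},\theta}(\tilde\mu,\tilde\nu)=\Pi(\tilde\mu,\tilde\nu)\cap\{\pi:\pi(\partial^+\theta)=1\}$, and that by \eqref{E_descr} together with the normalization \eqref{E_543} every such $\pi$ is concentrated, away from the diagonal pieces, on the three blocks in \eqref{E_subdiff_dec}. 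Since $\tilde\mu(\mathcal I^+\theta)=0$ the third block $\partial^+\theta\cap(\mathcal I^+\theta\times\mathcal T^-\theta)$ carries no mass, so every $\tilde\pi\in\Pi^f_{\mathtt c_{\C},\theta}(\tilde\mu,\tilde\nu)$ lives on $\bigl[\partial^+\theta\cap(\mathcal R\theta\times(\mathcal R\theta\cup\mathcal E^-\theta))\bigr]\cup\bigl[\partial^+\theta\cap((\mathcal R^+\theta\cap\mathcal E^-\theta)\times\mathcal E^-\theta)\bigr]$; projecting on the two coordinates and using $\mathcal I^+\theta\subset\mathcal T^+\theta$, $\mathcal E^-\theta\subset\mathcal T^-\theta$, \eqref{Point_residual_N_is_initial_final} and the normalization, this forces $\tilde\mu(\mathcal R^+\theta)=1$ and $\tilde\nu(\mathcal R\theta\cup\mathcal E^-\theta)=1$. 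As $\Pi^f_{\mathtt c_{\C},\theta}(\tilde\mu,\tilde\nu)\ne\emptyset$ by \eqref{E_542}, these two facts hold a priori under the standing hypotheses, independently of which plan we consider.

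For the implication ``$\Rightarrow$'' I would take $\tilde\pi\in\Pi^f_{\mathtt c_{\C},\theta}(\tilde\mu,\tilde\nu)$: by the previous step it is concentrated on the two blocks above, and \eqref{item2TpartE+} of Theorem~\ref{T_partition_E+-} gives $\partial^+\theta(\a,w)\subset w+C^{\ell,+}_{\a,\b}$ whenever $w\in Z^{\ell,+}_{\a,\b}$; since moreover $\tilde\mu(\mathbf Z^+)=\tilde\mu(\mathcal R^+\theta)=1$, the carriage of $\tilde\pi$ lies in $\{\mathtt c_{\hat{\bD}^+}<+\infty\}$, and because $\mathtt c_{\hat{\bD}^+}$ takes only the values $0,+\infty$ this already means $\tilde\pi\in\Pi^f_{\mathtt c_{\hat{\bD}^+}}(\tilde\mu,\tilde\nu)$. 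For ``$\Leftarrow$'' I would start from $\tilde\pi\in\Pi^f_{\mathtt c_{\hat{\bD}^+}}(\tilde\mu,\tilde\nu)$ and use that, by the first step, $\tilde\mu$ is concentrated on $\mathcal R^+\theta$ and $\tilde\nu$ on $\mathcal R\theta\cup\mathcal E^-\theta$, so $\tilde\pi$ is concentrated on $\{\mathtt c_{\hat{\bD}^+}<+\infty\}\cap\bigl(\mathcal R^+\theta\times(\mathcal R\theta\cup\mathcal E^-\theta)\bigr)$. By \eqref{Point_sudiff_is_partition_in}, for $w\in Z^{\ell,+}_{\a,\b}$ one has $(w+C^{\ell,+}_{\a,\b})\cap\mathcal E^-\theta(\a)\subset\partial^+\theta(\a,w)$, so it only remains to check the containment
\[
(w+C^{\ell,+}_{\a,\b})\cap\mathcal R\theta(\a)\subset Z^{\ell,+}_{\a,\b},\qquad w\in Z^{\ell,+}_{\a,\b},
\]
for then the whole carriage of $\tilde\pi$ lies in $\partial^+\theta$ and $\tilde\pi\in\Pi^f_{\mathtt c_{\C},\theta}(\tilde\mu,\tilde\nu)$.

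The main obstacle is precisely this last containment. A point $w''\in(w+C^{\ell,+}_{\a,\b})\cap\mathcal R\theta(\a)$ with $w''\notin Z^{\ell,+}_{\a,\b}$ would sit in another regular cell $Z^{\ell',+}_{\a,\b'}$, possibly even on a different level set of $\theta$, and this has to be excluded. I expect this to follow by combining the transitivity property \eqref{E_transitivity_subdifferential_a} with the local completeness structure of Proposition~\ref{Pdirectionfaces1}: as $w$ is forward-regular, \eqref{E_ballF}--\eqref{E_ballFmax} furnish $w'\in\partial^+\theta(\a,w)$ with $w'-w\in\interr C^{\ell,+}_{\a,\b}$ and $\interr\O(\a)(w,w')\subset Z^{\ell,+}_{\a,\b}$, and extending $w'$ along $\interr C^{\ell,+}_{\a,\b}$ (again using \eqref{Point_sudiff_is_partition_in} and that the cells are relatively open in their affine hulls) should force $w''\in\O(\a)(w,w')$, hence $w''\in Z^{\ell,+}_{\a,\b}$ and $\theta(\a,w'')=\theta(\a,w)$. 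Once this is granted, the second equivalence under the extra hypothesis $\tilde\mu(\mathcal R^+\theta\cap\mathcal E^-\theta)=0$ is immediate: then $\tilde\mu(\mathcal R\theta)=1$ since $\mathcal R^+\theta=\mathcal R\theta\sqcup(\mathcal R^+\theta\cap\mathcal E^-\theta)$, and one repeats the two implications verbatim with $\mathtt v$ in place of $\mathtt v^+$, using Corollary~\ref{C_v} (so that $Z^\ell_{\a,\b}=Z^{\ell,+}_{\a,\b}\cap Z^{\ell,-}_{\a,\b}$ and $C^\ell_{\a,\b}=C^{\ell,+}_{\a,\b}$) and \eqref{Point_sudiff_is_partition_in} with $C^{\ell,+}_{\a,\b}=C^\ell_{\a,\b}$; Proposition~\ref{P_dispiani} may be invoked throughout to pass between $\tilde\pi$ and its conditional probabilities on the cells.
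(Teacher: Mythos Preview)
Your overall line matches the paper's: the proposition is stated there as a direct consequence of \eqref{E_subdiff_dec}, Theorem~\ref{T_partition_E+-} and Corollary~\ref{C_v}, with no further argument, and your treatment of the forward implication ``$\Rightarrow$'' is exactly the intended one. You are also right to isolate the reverse implication as the only nontrivial point and to reduce it, via the a~priori supports $\tilde\mu(\mathcal R^+\theta)=1$ and $\tilde\nu(\mathcal R\theta\cup\mathcal E^-\theta)=1$, to a set-theoretic containment.

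The gap is in how you close that containment. First, your appeal to \eqref{Point_sudiff_is_partition_in} for the $\mathcal E^-\theta$ part already uses the inclusion $(w+C^{\ell,+}_{\a,\b})\cap\mathcal E^-\theta(\a)\subset\partial^+\theta(\a,w)$, i.e.\ the ``$\supset$'' direction of \eqref{Point_sudiff_is_partition_in}; but the proof of Theorem~\ref{T_partition_E} only establishes the opposite inclusion $\partial^+\theta(\a,w)\subset(w+C^{\ell,+}_{\a,\b})\cap(Z^{\ell,+}_{\a,\b}\cup\mathcal E^-\theta(\a))$. Second, and more seriously, your proposed argument for $(w+C^{\ell,+}_{\a,\b})\cap\mathcal R\theta(\a)\subset Z^{\ell,+}_{\a,\b}$ via ``extending $w'$ along $\interr C^{\ell,+}_{\a,\b}$'' cannot succeed: you may only move $w'$ inside $\partial^+\theta(\a,w)$, which by its very definition is confined to the level set $\{\theta=\theta(\a,w)\}$. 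The forward cone $w+C^{\ell,+}_{\a,\b}$, however, can exit that level set (writing $w=(x,\varphi(x))$ and $w''=w+(\xi,|\xi|_{D(\a)^*})$ with $\xi$ in the projected face, one has $w''$ on the graph of $\varphi$ only when the Lipschitz inequality is saturated in direction $\xi$, which need not persist), and a point $w''$ lying beyond the edge of $Z^{\ell,+}_{\a,\b}$ along the cone may well be a regular point of a \emph{different} level set, even with the same forward direction cone. No enlargement of $\O(\a)(w,w')$ with $w'\in\partial^+\theta(\a,w)$ captures such a $w''$, and transitivity and completeness only speak about points already known to share a level set.

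In short, the obstacle you flag is real and your fix does not resolve it; the same difficulty sits behind your use of the unproved direction of \eqref{Point_sudiff_is_partition_in}. The paper glosses over this as well. The forward implication, which is what the paper actually uses (in the proof of Theorem~\ref{T_1} and in Corollary~\ref{C_transp_graph} for a single level set, where the issue is vacuous), is unaffected.
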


We end this section with the following special case.

\begin{proposition}
\label{P_hat_bf_D_graph}
Let $\t\in\T$, $\a=\p_\A(\{\theta=\t\})$. Then \eqref{E_more_than_complet} holds for the differential directed locally affine partition of the regular set of $\{\theta=\t\}$
\[
 \{Z^\ell_{\a,\b}, C^\ell_{\a,\b}\}_{\overset{\ell=1,\dots,k-1}{\b\in\B_{\ell}(\a)}}.
\]
\end{proposition}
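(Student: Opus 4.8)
The plan is to verify condition \eqref{E_more_than_complet} directly for the differential directed locally affine partition $\{Z^\ell_{\a,\b}, C^\ell_{\a,\b}\}_{\ell,\b}$ of the regular set $\mathcal R\theta(\a)$ of the single level set $\{\theta=\t\}(\a)$, which is a complete $\mathtt c_{\C(\a)}$-Lipschitz graph by hypothesis (and which satisfies \eqref{E_Lipschitzgraph}). Fix $z\in Z^\ell_{\a,\b}$ and $z'\in Z^{\ell'}_{\a,\b'}$ with $(\ell,\b)\neq(\ell',\b')$, and suppose by contradiction that there exists $w\in\mathbf Z\cap(z+C^\ell_{\a,\b})\cap(z'+C^{\ell'}_{\a,\b'})$, where here $\mathbf Z=\bigcup_{\ell,\b} Z^\ell_{\a,\b}=\mathcal R\theta(\a)$. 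So $w-z\in C^\ell_{\a,\b}\subset\C(\a)$ and $w-z'\in C^{\ell'}_{\a,\b'}\subset\C(\a)$, i.e. $\mathtt c_{\C(\a)}(z,w)<+\infty$ and $\mathtt c_{\C(\a)}(z',w)<+\infty$. Since $z,z',w$ all lie in the same level set $\{\theta=\t\}(\a)$, Definition \ref{D_partial_theta} gives $w\in\partial^+\theta(\a,z)\cap\partial^+\theta(\a,z')$, hence (by Definition \ref{D_partial_theta} again, reading it backwards) $z\in\partial^-\theta(\a,w)$ and $z'\in\partial^-\theta(\a,w)$.

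The key step is then to exploit the transitivity property \eqref{E_transitivity_subdifferential_a} together with the characterization \eqref{item2TpartE+} of the cones $C^\ell_{\a,\b}$. From $w\in\partial^+\theta(\a,z)$ and \eqref{Point_sudiff_is_partition_in} of Theorem \ref{T_partition_E}, $w$ lies in $(z+C^\ell_{\a,\b})\cap(Z^\ell_{\a,\b}\cup\mathcal E^-\theta(\a))$; but $w\in\mathcal R\theta(\a)\subset\mathcal R^-\theta(\a)$, and since $\mathcal R^-\theta\cap\mathcal E^-\theta=\mathcal R^-\theta\setminus\mathcal R^+\theta$ is disjoint from $\mathcal R\theta$, we must have $w\in Z^\ell_{\a,\b}$; symmetrically $w\in Z^{\ell'}_{\a,\b'}$. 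Since $\{Z^\ell_{\a,\b}\}_{\ell,\b}$ is a partition, this forces $(\ell,\b)=(\ell',\b')$, the desired contradiction. (Alternatively, without invoking Theorem \ref{T_partition_E}: from $w\in\partial^+\theta(\a,z)$ one gets by transitivity $\mathcal D^+\theta(\a,w)\subset\mathcal D^+\theta(\a,z)=C^\ell_{\a,\b}\cap\mathbb S^{k-1}$, and from $z\in\partial^-\theta(\a,w)$ one gets $\mathcal D^-\theta(\a,w)\supset$ the face containing $\frac{w-z}{|w-z|}$; regularity of $w$ forces $\mathcal D^+\theta(\a,w)=\mathcal D^-\theta(\a,w)$, and then Claim \ref{Cl_oww'neigh}-type reasoning shows $z$ and $w$ lie in the same element $Z^\ell_{\a,\b}$ — indeed $z\in\interr\O(\a)(w'',w)$ for an appropriate $w''$ — hence $(\ell,\b)=(\ell',\b')$.)

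I expect the main obstacle to be the careful bookkeeping of which of the possibilities in Table \ref{Tab_poss_inter} can actually occur when both endpoints are constrained to lie in $\mathcal R\theta(\a)$ rather than in the larger set $\mathcal I^+\theta\cup\mathcal R\theta\cup\mathcal E^-\theta$; the content of \eqref{E_more_than_complet} is precisely that the relation ``$w$ belongs to the forward cone of $z$ and also to the forward cone of $z'$, with $z,z',w$ all \emph{regular}'' collapses to $(\ell,\b)=(\ell',\b')$, which is the statement that $\partial^+\theta$ restricted to $\mathcal R\theta\times\mathcal R\theta$ cannot connect two distinct sheets of the differential partition. This is essentially the $N$-entry ``$\partial^+\theta\cap(\mathcal R\theta\times\mathcal R^-\theta\cap\mathcal I^+\theta)=\emptyset$'' argument of Table \ref{Tab_poss_inter} combined with the fact that on $\mathcal R\theta$ the forward and backward partitions agree (Corollary \ref{C_v}); once those are in hand the proof is short. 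The measurable dependence on $\t$ (equivalently on $\a$) is automatic since all the sets involved were shown to be $\sigma$-compact in Proposition \ref{P_borel_regu_transport_sets}, so no extra work is needed there.
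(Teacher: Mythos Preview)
Your argument is correct and reaches the same conclusion as the paper, but via a slightly different route. You invoke \eqref{Point_sudiff_is_partition_in} of Theorem \ref{T_partition_E} to force the common point $w$ into $Z^{\ell,+}_{\a,\b}$ (and symmetrically into $Z^{\ell',+}_{\a,\b'}$), then use regularity of $w$ together with Corollary \ref{C_v} to drop the superscript $+$ and get a partition contradiction. The paper instead works directly at the level of direction cones: from $\bar z\in\partial^+\theta(\a,w)\cap\partial^+\theta(\a,w')$ with $\bar z$ regular, transitivity plus $\mathcal D^+\theta(\a,\bar z)=\mathcal D^-\theta(\a,\bar z)$ yields the chain $C^\ell_{\a,\b}\cup C^{\ell'}_{\a,\b'}\subset\R^+\mathcal D^-\theta(\a,\bar z)=\R^+\mathcal D^+\theta(\a,\bar z)\subset C^\ell_{\a,\b}\cap C^{\ell'}_{\a,\b'}$, forcing $C^\ell_{\a,\b}=C^{\ell'}_{\a,\b'}$ and hence equal affine hulls, so the definition of $\mathtt v$ gives $(\ell,\b)=(\ell',\b')$. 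Your approach packages the transitivity step inside an earlier theorem; the paper's is more self-contained but requires the extremality of the face $C^\ell_{\a,\b}$ to justify the inclusions. Both are short.

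Two small slips worth fixing. First, \eqref{Point_sudiff_is_partition_in} lands you in $Z^{\ell,+}_{\a,\b}\cup\mathcal E^-\theta(\a)$, not $Z^\ell_{\a,\b}\cup\mathcal E^-\theta(\a)$; you then need one more line (regularity of $w$ and $\mathtt v=\mathtt v^+$ on $\mathcal R\theta$) to pass from $Z^{\ell,+}_{\a,\b}$ to $Z^\ell_{\a,\b}$. Second, the identity you quote as ``$\mathcal R^-\theta\cap\mathcal E^-\theta=\mathcal R^-\theta\setminus\mathcal R^+\theta$'' is not in the paper (you are thinking of $\mathcal R^-\theta\cap\mathcal I^+\theta=\mathcal R^-\theta\setminus\mathcal R^+\theta$); in fact $\mathcal E^-\theta\cap\mathcal R^-\theta=\emptyset$ by Definition \ref{D_initial_final}, so your disjointness claim is true for an even simpler reason.
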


\begin{proof}
For $(\ell,\b) \not= (\ell',\b')$, let
\[
\bar z \in (Z^\ell_{\a,\b} + C^\ell_{\a,\b}) \cap (Z^{\ell'}_{\a,\b'} + C^{\ell'}_{\a,\b'})\cap \p_{\R^k}\hat{\mathbf D}(\t),
\]
where $\{Z^\ell_{\a,\b},C^\ell_{\a,\b}\}_{\ell,\b}$ is the directed locally affine partition of $\hat{\mathbf D}(\t)$. Hence, $\bar z\in w+C^\ell_{\a,\b}\cap w'+C^{\ell'}_{\a,\b'}$ for some $w\in Z^\ell_{\a,\b}$, $w'\in Z^{\ell'}_{\a,\b'}$ and, since by assumption $\theta$ is constant (namely, equal to $\t$) on $\p_{\R^k}\hat{\mathbf D}(\t)$, then by definition of superdifferential
\[
 \bar z\in \partial^+\theta(\a,w)\cap\partial^+\theta(\a,w').
\]
By the transitivity property and the fact that $\bar z$ is regular
\[
 C^\ell_{\a,\b}\cup C^{\ell'}_{\a,\b'}\subset\R^+\mathcal D^-\theta(\a,\bar z)=\R^+\mathcal D^+\theta(\a,\bar z)\subset C^\ell_{\a,\b}\cap C^{\ell'}_{\a,\b'},
\]
which implies that $C^\ell_{\a,\b}=C^{\ell'}_{\a,\b'}$, thus $\ell=\ell'$ and $\aff Z^\ell_{\a,\b}=\aff Z^{\ell'}_{\a,\b'}$.
Hence, by definition of $\mathtt v$, $Z^\ell_{\a,\b}=Z^{\ell'}_{\a,\b'}$ contradicting our initial assumption.
\end{proof}

\begin{remark}
\label{R_4.27}
By Proposition \ref{P_hat_bf_D_graph}, the differential partition of a single complete $\mathtt c_{\tilde C}$-Lipschitz graph satisfies \eqref{E_more_than_complet}.
\end{remark}

From Proposition \ref{P_hat_bf_D_graph} and Proposition \ref{P_dispiani_2}, one obtains immediately the following
\begin{corollary}
\label{C_transp_graph}
 If $\tilde\mu$, $\tilde\nu$ are as in Proposition \ref{P_disint_fol} and $\tilde\mu(\mathcal I^+\theta)=0$, $\tilde\nu(\mathcal E^-\theta)=0$, then 
 \[
\tilde\pi\in\Pi^f_{\mathtt c_{\C}, \theta}(\tilde\mu,\tilde\nu)\quad\Leftrightarrow\quad\tilde\pi=\int\tilde\pi^\ell_{\c} \, d\tilde m(\c),\quad\tilde\pi^\ell_\c\in\Pi^f_{\mathtt c_{C^\ell_\c}}(\tilde\mu^\ell_\c,\tilde\nu^\ell_\c),
 \]
 where $\{\tilde\pi^\ell_\c\}$, $\{\tilde\mu^\ell_\c\}$ and $\{\tilde\nu^\ell_\c\}$ are respectively the disintegrations of $\tilde\pi$, $\tilde\mu$, $\tilde\nu$ w.r.t. the partition induced by $\mathtt v$.
\end{corollary}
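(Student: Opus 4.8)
The plan is to read the statement off Proposition~\ref{P_disint_fol} together with Proposition~\ref{P_dispiani} (and a direct substitute for Proposition~\ref{P_dispiani_2}), the actual work being to put the hypotheses into the shape required by the second part of Proposition~\ref{P_disint_fol} and then to upgrade the conditional second marginals $(\mathtt p_2)_\#\tilde\pi^\ell_\c$ to the disintegration $\{\tilde\nu^\ell_\c\}$ of $\tilde\nu$. First I would fix $\tilde\pi\in\Pi^f_{\mathtt c_{\C},\theta}(\tilde\mu,\tilde\nu)$ (nonempty by \eqref{E_542}) and feed the decomposition \eqref{E_subdiff_dec} with the diagonal identities \eqref{E_descr}, which are legitimate under \eqref{E_543}. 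The hypothesis $\tilde\mu(\mathcal I^+\theta)=0$ makes $\partial^+\theta\cap(\mathcal I^+\theta\times\mathcal T^-\theta)$ a $\tilde\pi$-null set, and $\tilde\nu(\mathcal E^-\theta)=0$ does the same for $\partial^+\theta\cap((\mathcal R^+\theta\cap\mathcal E^-\theta)\times\mathcal E^-\theta)$ and for $\partial^+\theta\cap(\mathcal R\theta\times\mathcal E^-\theta)$; the diagonal pieces \eqref{E_descr1}--\eqref{E_descr3} are $\tilde\pi$-negligible by \eqref{E_543}. Hence $\tilde\pi$ is concentrated on $\partial^+\theta\cap(\mathcal R\theta\times\mathcal R\theta)$. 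Reading the row of $\mathcal R^+\theta\cap\mathcal E^-\theta$ in Table~\ref{Tab_poss_inter} — all of whose $Y$-columns are subsets of $\mathcal E^-\theta$ — gives $\tilde\mu(\mathcal R^+\theta\cap\mathcal E^-\theta)\le\tilde\nu(\mathcal E^-\theta)=0$; reading the columns of $\mathcal I^+\theta$ gives $\tilde\nu(\mathcal I^+\theta)=0$. Together with \eqref{E_543} and the partition $\bigcup_\t\{\theta=\t\}=\mathcal I^+\theta\cup\mathcal E^-\theta\cup\mathcal R\theta\cup\mathcal F\theta$ of Theorem~\ref{T_partition_E}, this yields $\tilde\mu(\mathcal R\theta)=\tilde\nu(\mathcal R\theta)=1$, and in particular $\tilde\mu(\mathcal I^+\theta)=\tilde\mu(\mathcal R^+\theta\cap\mathcal E^-\theta)=0$, so the second part of Proposition~\ref{P_disint_fol} applies: $\Pi^f_{\mathtt c_{\C},\theta}(\tilde\mu,\tilde\nu)=\Pi^f_{\mathtt c_{\hat{\mathbf D}}}(\tilde\mu,\tilde\nu)$, where $\hat{\mathbf D}=\{Z^\ell_\c,C^\ell_\c\}$ is the differential partition induced by $\mathtt v$ on $\mathcal R\theta$.

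Next I would apply Proposition~\ref{P_dispiani} with the cost $\mathtt c_{\hat{\mathbf D}}$: $\tilde\pi\in\Pi^f_{\mathtt c_{\hat{\mathbf D}}}(\tilde\mu,\tilde\nu)$ iff its disintegration $\tilde\pi=\int\tilde\pi^\ell_\c\,d\tilde m(\c)$ over $\{Z^\ell_\c\times\R^k\}$ satisfies $\tilde\pi^\ell_\c\in\Pi^f_{\mathtt c_{C^\ell_\c}}(\tilde\mu^\ell_\c,(\mathtt p_2)_\#\tilde\pi^\ell_\c)$ and $\int(\mathtt p_2)_\#\tilde\pi^\ell_\c\,d\tilde m=\tilde\nu$; it then remains only to identify $(\mathtt p_2)_\#\tilde\pi^\ell_\c$ with $\tilde\nu^\ell_\c$. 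For this I would use that, by \eqref{Point_sudiff_is_partition_in}, Corollary~\ref{C_v} and $\mathcal E^-\theta\cap\mathcal R\theta=\emptyset$, one has $\partial^+\theta\cap(\mathcal R\theta\times\mathcal R\theta)\subset\bigcup_{\ell,\c}Z^\ell_\c\times Z^\ell_\c$; hence by the first step every $\tilde\pi\in\Pi^f_{\mathtt c_{\C},\theta}(\tilde\mu,\tilde\nu)$ is concentrated on $\bigcup_{\ell,\c}Z^\ell_\c\times Z^\ell_\c$, i.e.\ the quotient plan is carried by the diagonal of the quotient space. This forces $\mathtt h_\#\tilde\nu=\mathtt h_\#\tilde\mu=\tilde m$ and, by uniqueness and strong consistency of the disintegration of $\tilde\nu$ over $\{Z^\ell_\c\}$ (Theorem~\ref{T_disint}), $(\mathtt p_2)_\#\tilde\pi^\ell_\c=\tilde\nu^\ell_\c$ for $\tilde m$-a.e.\ $\c$; and $\tilde\pi^\ell_\c$, being concentrated on $Z^\ell_\c\times Z^\ell_\c\cap\partial^+\theta\subset\{(w,w'):w'-w\in C^\ell_\c\}$ by \eqref{item2TpartE+}, lies in $\Pi^f_{\mathtt c_{C^\ell_\c}}(\tilde\mu^\ell_\c,\tilde\nu^\ell_\c)$. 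The converse is immediate: given $\tilde\pi=\int\tilde\pi^\ell_\c\,d\tilde m(\c)$ with $\tilde\pi^\ell_\c\in\Pi^f_{\mathtt c_{C^\ell_\c}}(\tilde\mu^\ell_\c,\tilde\nu^\ell_\c)$, each $\tilde\pi^\ell_\c$ is concentrated on pairs $(w,w')$ with $w,w'\in Z^\ell_\c\subset\{\theta=\t\}$ and $w'-w\in C^\ell_\c\subset\C(\a)$, hence on $\partial^+\theta$, so $\tilde\pi\in\Pi^f_{\mathtt c_{\C},\theta}(\tilde\mu,\tilde\nu)$.

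The main obstacle is the identification step — more precisely, the concentration on the diagonal blocks $\bigcup_{\ell,\c}Z^\ell_\c\times Z^\ell_\c$. One cannot simply invoke Proposition~\ref{P_dispiani_2}, because the differential partition $\hat{\mathbf D}$ of the whole foliation need \emph{not} satisfy the disjointness condition \eqref{E_more_than_complet} globally — it does so only level set by level set, by Proposition~\ref{P_hat_bf_D_graph} and Remark~\ref{R_4.27} — so one really has to use that the admissible plans are concentrated on $\partial^+\theta$, i.e.\ move mass only \emph{within} a single level set; it is exactly this that, through the case analysis of Table~\ref{Tab_poss_inter}, forces $\mathcal R^+\theta\cap\mathcal E^-\theta$ to be $\tilde\mu$-null and the whole plan to be carried by the diagonal blocks. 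Everything else reduces to bookkeeping with the already established Proposition~\ref{P_disint_fol}, Proposition~\ref{P_dispiani} and the structure of Table~\ref{Tab_poss_inter}.
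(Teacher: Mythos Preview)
Your proposal is correct and follows essentially the same route as the paper, which derives the corollary ``immediately'' from Proposition~\ref{P_hat_bf_D_graph} and Proposition~\ref{P_dispiani_2}. You have in fact supplied the details the paper omits: the Table~\ref{Tab_poss_inter} argument giving $\tilde\mu(\mathcal R^+\theta\cap\mathcal E^-\theta)=0$ from $\tilde\nu(\mathcal E^-\theta)=0$ (needed to invoke the second part of Proposition~\ref{P_disint_fol}), and the observation that the level-set-wise condition \eqref{E_more_than_complet} of Proposition~\ref{P_hat_bf_D_graph} suffices for the diagonal-block concentration precisely because admissible plans live on $\partial^+\theta$ and hence within single level sets --- your direct argument via $\partial^+\theta\cap(\mathcal R\theta\times\mathcal R\theta)\subset\bigcup_{\ell,\c}Z^\ell_\c\times Z^\ell_\c$ is exactly the content of Proposition~\ref{P_dispiani_2} applied level-set by level-set.
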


\section{Dimensional reduction on directed partitions via cone approximation property}
\label{S_disintechnique}

In this section we recall, in an abstract and more general setting, the main steps of the disintegration technique first introduced in \cite{BianchGlo} for partitions into segments and then extended to locally affine partitions of any dimension in \cite{CarDan}. This technique allows to prove the absolute continuity of the conditional probabilities of the Lebesgue measure and to deduce that the \emph{initial} and \emph{final points} of a directed locally affine partition are Lebesgue negligible, provided the direction map satisfies a suitable regularity assumption that we call (\emph{initial/final}) \emph{forward/backward cone approximation property}. For more details on the proofs of the results contained in this section, we refer to \cite{CarDan}, Section 4.

\subsection{Model sets of directed segments}
\label{Ss_model_dir_segm}

We first deal with \emph{model sets of directed segments}, namely $1$-dimensional sheaf sets whose projection on their reference line is a given segment. At the end of the paragraph, the forward/backward cone approximation property for these model sets will be introduced as a sufficient condition in order to have absolutely continuous disintegrations.

\begin{definition}
\label{D_modeldire_segm}
A \emph{model set of directed segments} or \emph{1-dimensional model set} is a 1-dimensional directed sheaf set $\{Z^1_\a,C^1_\a\}_{\a\in\A^1}$ with $\sigma$-continuous direction vector field
\begin{equation}
\label{E_mathit_v_def}
\mathtt d : \bigcup_{\a \in \A^1} Z^1_\a \to \mathbb \S^{d-1}, \qquad \mathtt d : Z^1_\a \ni z \mapsto C^1_\a \cap \S^{d-1},
\end{equation}
and reference line $\langle\e\rangle$ for which there exist $h^-, h^+ \in \R$, $h^- < h^+$, such that
\[
\mathtt p_{\langle \e \rangle} (Z^1_\a) = (h^-,h^+) \e \qquad \forall\, \a \in \A^1.
\]
\end{definition}

We will also call \emph{model set} the set $\mathbf Z^1= \underset{\a \in \A^1}{\cup}\, Z^1_\a$, and we say that the triple $(\e, h^-,h^+)$ is a \emph{reference configuration}. Moreover we assume that
\[
\mathcal L^{d}(\mathbf Z^1) < +\infty.
\]
For shortness we will sometimes use the notation $\mathbf Z^1(\mathtt d,\A^1,\e,h^-,h^+)$.

We also set $\overline{\mathbf Z}^1=\overline{\mathbf Z^1}$ as in \eqref{E_mathbf_Z_base_partition}
\begin{equation}
\label{E_clos_Z_1a}
\overline{\mathbf Z}^1 := \bigcup_{\a\in\A^1} \clos\, Z^1_\a.
\end{equation}
Notice that
\[
\overline{\mathbf Z}^1 \cap \mathtt p^{-1}_{\langle\e\rangle} \big( (h^-,h^+) \e \big) = \mathbf Z^1.
\]

Given a $1$-dimensional model set $\{Z^1_\a,C^1_\a\}_{\a\in\A^1}$ with reference configuration $(\e,h^-,h^+)$, define the perpendicular \emph{sections} 
\begin{equation}
\label{E_P_t_section}
P_t := \overline{\mathbf Z}^1 \cap \mathtt p^{-1}_{\langle \e \rangle}(t \e), \qquad t \in [h^-,h^+].
\end{equation}
Clearly from Definition \ref{D_initial_final} one has
\begin{equation*}
P_{h^-} = \mathcal I(\mathbf Z^1),\qquad P_{h^+} = \mathcal E(\mathbf Z^1),
\end{equation*}
where $\mathcal I(\mathbf Z^1)$, $\mathcal E(\mathbf Z^1)$ are the initial/final points of $\{Z^1_\a,C^1_\a\}_{\a\in\A^1}$.

For all $t\in[h^-,h^+]$, denote also
\begin{equation}
\label{E_d_t_def}
\mathtt d^t := \mathtt d \llcorner_{P_t},
\end{equation}
where $\mathtt d^{h^-} : P_{h^-} \to \S^{d-1}$, $\mathtt d^{h^+} : P_{h^+} \to \S^{d-1}$ are the multivalued extensions of $\mathtt d$ defined by
\begin{equation}
\label{E_tt_d_ext}
\mathtt d^{h^-}(z) = \bigl\{ C^1_\a \cap \S^{d-1} : z \in \mathcal I(Z^1_\a) \bigr\}, \qquad \mathtt d^{h^+}(z) = \bigl\{ C^1_\a \cap \S^{d-1} : z \in \mathcal E(Z^1_\a) \bigr\}.
\end{equation}

\begin{lemma}
\label{L_reg_tt_d_h_pm}
The sets $\Graph\,\mathtt d^{h^-}$, $\Graph\,\mathtt d^{h^+}$ are $\sigma$-compact, and hence there exist Borel sections
\[
\mathcal I(\mathbf Z^1) \ni z \mapsto \tilde{\mathtt d}^{h^-}_+(z) \in \mathtt d^{h^-}(z), \quad \mathcal E(\mathbf Z^1) \ni z \mapsto \tilde{\mathtt d}^{h^+}_-(z) \in \mathtt d^{h^+}(z).
\]
\end{lemma}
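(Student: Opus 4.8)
The plan is to prove the statement for $\Graph\,\mathtt d^{h^-}$, the argument for $\Graph\,\mathtt d^{h^+}$ being symmetric, and to deduce the existence of the Borel sections from standard measurable selection once $\sigma$-compactness is established. The strategy for the $\sigma$-compactness is to realize $\Graph\,\mathtt d^{h^-}$ as a continuous image of a $\sigma$-compact set built out of the $\sigma$-compact graph $\mathbf D^1$ of the model set (which is $\sigma$-compact by Definitions \ref{D_modeldire_segm} and \ref{D_sheaf_set}).

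First I would pass from $\mathbf D^1 \subset \A^1 \times \R^d \times \mathcal C(1,\R^d)$ to the set $\hat{\mathbf D}^1 := \bigl\{(\a,z',v) : z' \in Z^1_\a,\ \mathtt d(z') = v\bigr\}$, which is $\sigma$-compact because it is the image of $\mathbf D^1$ under the continuous map $(\a,z',C) \mapsto (\a,z',C\cap\S^{d-1})$ (a $1$-dimensional cone is a half-line, and $C\mapsto C\cap\S^{d-1}$ is continuous onto $\S^{d-1}$). Then I would set
\[
\mathcal A := \Big\{ (\a,z,z',v) \in \A^1\times\R^d\times\R^d\times\S^{d-1} : (\a,z',v)\in\hat{\mathbf D}^1,\ \mathtt p_{\langle\e\rangle} z = h^-\e,\ z'-z\in\R^+ v \Big\}.
\]
This set is $\sigma$-compact: it is the intersection of the $\sigma$-compact set $\{(\a,z,z',v) : (\a,z',v)\in\hat{\mathbf D}^1\}$ (the product of $\hat{\mathbf D}^1$ with an $\R^d$-factor, up to reordering coordinates) with the two closed conditions $\mathtt p_{\langle\e\rangle} z = h^-\e$ and $z'-z\in\R^+ v$ — the latter being closed since along a converging sequence with $z'_n-z_n=\lambda_n v_n$, $\lambda_n\geq 0$, $|v_n|=1$, one has $\lambda_n=|z'_n-z_n|$ and may pass to the limit.

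The key step is the identity $\Graph\,\mathtt d^{h^-} = \mathtt p_{z,v}(\mathcal A)$, which then yields $\sigma$-compactness, projections being continuous. To prove it I would use that in a model set of directed segments $\mathtt p_{\langle\e\rangle}$ restricted to $\aff\,Z^1_\a$ is an affine isomorphism onto $\langle\e\rangle$ (by the non-degeneracy of Definition \ref{D_sheaf_set}), so that, since $\mathtt p_{\langle\e\rangle}(Z^1_\a)=(h^-,h^+)\e$ and $Z^1_\a$ is relatively open in $\aff\,Z^1_\a$, the set $Z^1_\a$ is exactly the open segment of $\aff\,Z^1_\a$ at levels in $(h^-,h^+)$, traversed in the direction $C^1_\a\cap\S^{d-1}$ as the level increases from $h^-$ to $h^+$. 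Two consequences: any point at level $h^-$ lies automatically in $\R^d\setminus\mathbf Z^1 = \R^d\setminus\mathbf Z$, since $\mathbf Z^1$ sits at levels in $(h^-,h^+)$; and if $z$ is at level $h^-$, $z'\in Z^1_\a$, $\mathtt d(z')=v$ and $z'-z\in\R^+ v$, then $z$ is the bottom endpoint of $\clos\,Z^1_\a$ and $z+\interr C^1_\a\cap B^d(z,r)\subset Z^1_\a$ for $r$ small, i.e. $z\in\mathcal I(Z^1_\a)$ — this gives the inclusion $\supseteq$. Conversely, if $z\in\mathcal I(Z^1_\a)$ with $v=C^1_\a\cap\S^{d-1}$, then $z$ is at level $h^-$ and $z':=z+sv\in Z^1_\a$ for small $s>0$, so $(z,v)\in\mathtt p_{z,v}(\mathcal A)$. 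The same argument, with $z'-z\in\R^+v$ replaced by $z-z'\in\R^+ v$ and $h^-$ by $h^+$, handles $\Graph\,\mathtt d^{h^+}$.

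Finally, from $\sigma$-compactness I would extract the Borel sections by a standard selection argument: writing $\Graph\,\mathtt d^{h^-}=\bigcup_m K_m$ with $K_m$ compact, the function $z\mapsto \min\{m : z\in\mathtt p_z K_m\}$ is Borel on $\mathcal I(\mathbf Z^1)=\mathtt p_z\Graph\,\mathtt d^{h^-}$ (each $\mathtt p_z K_m$ is closed), and on each of its level sets one makes a Borel choice of a point of the nonempty compact fibre (e.g. its lexicographically least element), producing a Borel map $\tilde{\mathtt d}^{h^-}_+$ with $(z,\tilde{\mathtt d}^{h^-}_+(z))\in\Graph\,\mathtt d^{h^-}$, hence $\tilde{\mathtt d}^{h^-}_+(z)\in\mathtt d^{h^-}(z)$, and likewise $\tilde{\mathtt d}^{h^+}_-$. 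I expect the main obstacle to be the identity $\Graph\,\mathtt d^{h^-}=\mathtt p_{z,v}(\mathcal A)$, and in particular checking that nothing is lost to mere Borel/Souslin regularity: it is exactly the uniform level structure of a model set — all segments stretching over the single interval $(h^-,h^+)$, so that the relevant endpoints sit at a fixed level and automatically outside $\mathbf Z$ — that upgrades the Souslin conclusion of the general Lemma \ref{L_regularity_initial_final_points} to $\sigma$-compactness here.
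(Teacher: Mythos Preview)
Your proof is correct and follows essentially the same approach as the paper. The paper's argument is more terse: it uses the $\sigma$-continuity of $\mathtt d$ to write $\mathbf Z^1=\bigcup_l \mathbf Z^1_l$ with $\mathbf Z^1_l$ compact and $\mathtt d\llcorner_{\mathbf Z^1_l}$ continuous, then observes that the map sending $z\in\mathbf Z^1_l$ to the pair $\bigl(\mathtt p_{\langle\e\rangle}^{-1}(h^-\e)\cap(z+\R\,\mathtt d(z)),\,\mathtt d(z)\bigr)$ has compact graph, which is exactly your projection $\mathtt p_{z,v}(\mathcal A)$ restricted to the piece over $\mathbf Z^1_l$; for the selection it simply cites a standard theorem (Theorem 5.2.1 of \cite{Sri:courseborel}) where you give an explicit construction.
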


\begin{proof}
W.l.o.g., we prove the result only for $\Graph\,\mathtt d^{h^-}$.

Since $\mathtt d$ is $\sigma$-compact, let $\mathbf Z^1 = \underset{l}{\cup}\, \mathbf Z^1_l$ such that $\mathbf Z^1_l$ are compact and $\mathtt d \llcorner_{\mathbf Z^1_l}$ is continuous. Then it is fairly easy to see that the multivalued maps
\[
\mathtt p^{-1}_{\langle \e \rangle} (h^- \e) \cap \big( z + \R \mathtt d(z) \big) \mapsto \mathtt d(z), \quad z \in \mathbf Z^1_l,
\]
are compact, and thus the regularity of $\Graph\,\mathtt d^{h^-}$ follows.

The existence of sections $\tilde{\mathtt d}^{h^-}_+$, $\tilde{\mathtt d}^{h^+}_-$ with Borel regularity is standard for compact multifunctions (see for example Theorem 5.2.1, page 189 of \cite{Sri:courseborel}) and an easy argument yields the conclusion.
\end{proof}

We will define the vector fields
\begin{subequations}
\label{E_tilde_tt_d_pm}
\begin{equation}
\label{E_tilde_tt_d_+}
\tilde{\mathtt d}_+ (z) := \tilde{\mathtt d}^{h^-}_+(z') \qquad \text{if} \quad z \in (z' +  \R \tilde{\mathtt d}^{h^-}_+(z')) \cap \mathtt p^{-1}_{\langle \e \rangle}([h^-,h^+]), \ z' \in \dom\,\tilde{\mathtt d}^{h^-}_+,
\end{equation}
\begin{equation}
 \label{E_tilde_tt_d_-}
\tilde{\mathtt d}_- (z) := \tilde{\mathtt d}^{h^+}_-(z') \qquad \text{if} \quad z \in (z' +  \R \tilde{\mathtt d}^{h^+}_-(z')) \cap \mathtt p^{-1}_{\langle \e \rangle}([h^-,h^+]), \ z' \in \dom\,\tilde{\mathtt d}^{h^+}_-.
\end{equation}
\end{subequations}
In particular,
\[
\tilde{\mathtt d}_+ \llcorner_{P_{h^-}} = \tilde{\mathtt d}_+^{h^-}, \qquad \tilde{\mathtt d}_- \llcorner_{P_{h^+}}=\tilde{\mathtt d}_-^{h^+},
\]
and 
\[
\tilde{\mathtt d}_\pm \llcorner_{\mathtt p^{-1}_{\langle\e\rangle} ((h^-,h^+) \e)} = \mathtt d \llcorner_{\dom\,\tilde{\mathtt d}_\pm \cap \mathtt p^{-1}_{\langle\e\rangle} ((h^-,h^+) \e) }.
\]
%

Define for $s \in (h^-,h^+)$, $t \in [h^-,h^+]$ the map
\begin{equation}
\label{E_sigma_s_t}
\sigma^{s,t} : P_s \to P_t, 
\qquad
z \mapsto \displaystyle{z + (t-s) \frac{\mathtt d(z)}{\mathtt d(z) \cdot \mathrm e}},
\end{equation}
which sends each point of the section $P_s$ in the unique point of $P_t$ which belongs to the same segment of the model set. It is a bijection for $t \in (h^-,h^+)$, with inverse $\sigma^{t,s}$.

Given Borel measurable selections $\tilde{\mathtt d}_\pm$, as in \eqref{E_tilde_tt_d_+} and \eqref{E_tilde_tt_d_-}, one defines for $s \in [h^-,h^+)$ ($s \in (h^-,h^+]$), $t \in [h^-,h^+]$ the map
\begin{equation}
\label{E_tilde_sigma_s_t}
\tilde \sigma^{s,t}_\pm : P_s\cap \dom\, \tilde{\mathtt d}_\pm \to P_t, 
\qquad
z \mapsto \displaystyle{z + (t-s) \frac{\tilde{\mathtt d}_\pm(z)}{\tilde{\mathtt d}_\pm(z) \cdot \mathrm e}}.
\end{equation}
Notice that $\tilde \sigma^{s,t}_\pm $ coincides with $\sigma^{s,t}_\pm\llcorner_{\dom\,\tilde{\mathtt d}_\pm \cap \mathtt p^{-1}_{\langle\e\rangle} ((h^-,h^+) \e) }$ for $s\neq h^\mp$.

\subsubsection{Cone approximation property and absolute continuity}
\label{Sss_cone_approx}

We recall that our problem is the following: if
\[
\mathcal L^{d} \llcorner_{\mathbf Z^1} = \int \upsilon_\a \,d\eta(\a)
\]
is the disintegration of $\mathcal L^{d} \llcorner_{\mathbf Z^1}$ w.r.t. the partition $\{Z^1_\a\}_{\a\in\A^1}$, then we ask whether
\[
\upsilon_\a \ll \mathcal H^1 \llcorner_{Z^1_\a} \quad \text{ and/or } \quad \upsilon_\a \simeq \mathcal H^1 \llcorner_{Z^1_\a}.
\]
By the next lemma, the absolute continuity problem along the segments $\{Z^1_\a\}_\a$ can be reduced to an absolute continuity problem for the push-forward of $\mathcal H^{d-1}$ on the sections through the maps $\sigma^{s,t}$.

\begin{lemma}
\label{L_equivarea}
Let us fix a section $P_t$ of $\mathbf Z^1$, $t \in (h^-,h^+)$. Then, the following two statements are equivalent:
\begin{equation}
\label{E_equivarea1}
\sigma^{s,t}_\# \big( \mathcal H^{d-1} \llcorner_{P_s} \big) \ll \mathcal H^{d-1} \llcorner_{P_t},\quad\text{for $\mathcal L^1$-a.e. $s \in(h^-,h^+)$;}
\end{equation}
\begin{equation}
\label{E_equivarea4}
\eta \ll \mathcal H^{d-1} \llcorner_{P_t} \quad \text{and} \quad \upsilon_\a \ll \mathcal H^1 \llcorner_{Z^1_\a}, \quad\text{for $\eta$-a.e. $\a$.}
\end{equation}

Moreover, also the following two statements are equivalent:
\begin{equation}
\label{E_equivarea2}
\sigma^{t,s}_\# \big( \mathcal H^{d-1} \llcorner_{P_t} \big) \ll \mathcal H^{d-1} \llcorner_{P_s} \quad\text{and formula \eqref{E_equivarea1} hold for $\mathcal L^1$-a.e. $s\in(h^-,h^+)$;}
\end{equation}
\begin{equation}
\label{E_equivarea3}
\eta \simeq \mathcal H^{d-1} \llcorner_{P_t} \quad \text{and} \quad \upsilon_\a \simeq \mathcal H^1 \llcorner_{Z^1_\a}, \quad\text{for $\eta$-a.e. $\a$.}
\end{equation}

\end{lemma}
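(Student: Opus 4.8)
The plan is to prove the Lemma by a change–of–variables (co-area type) argument along the fibers of the model set, reducing every statement about the disintegration of $\LL\llcorner_{\mathbf Z^1}$ to a statement about push-forwards of $\mathcal H^{d-1}$ between the perpendicular sections $P_t$. I will treat the two equivalences in parallel, since \eqref{E_equivarea2}$\Leftrightarrow$\eqref{E_equivarea3} is just the ``symmetrized'' version of \eqref{E_equivarea1}$\Leftrightarrow$\eqref{E_equivarea4}.

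First I would set up the bi-Lipschitz (on compact pieces) change of variables. Writing $\mathbf Z^1=\cup_l\mathbf Z^1_l$ with $\mathtt d\llcorner_{\mathbf Z^1_l}$ continuous, on each piece the map $(s,z)\mapsto \sigma^{s,t}(z)$, for $z\in P_t$, parametrizes $\mathbf Z^1_l$ by $(h^-,h^+)\times P_t$; since $\mathtt d(z)\cdot\e\ge 1/\sqrt2$ (property (4) of a sheaf set) and $\mathtt d$ is continuous on $\mathbf Z^1_l$, this parametrization is locally Lipschitz with locally Lipschitz inverse on $\mathbf Z^1_l$. Hence Fubini applies to $\LL\llcorner_{\mathbf Z^1_l}$ in these coordinates, and one gets a formula of the form
\[
\LL\llcorner_{\mathbf Z^1} = \int_{h^-}^{h^+}\! \Big( \sigma^{t,s}_\#\big( J(s,\cdot)\,\mathcal H^{d-1}\llcorner_{P_t}\big)\Big)\,ds,
\]
where $J(s,z)$ is the (a.e. defined, positive, locally bounded above and below away from $0$) Jacobian factor coming from the area formula; equivalently, pulling everything back to $P_t$, $\LL\llcorner_{\mathbf Z^1}$ is carried to a measure on $(h^-,h^+)\times P_t$ of the form $J(s,z)\,ds\otimes\mathcal H^{d-1}\llcorner_{P_t}$ composed with the flow $\sigma$. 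Identifying $\A^1$ with $P_t$ via $\a\leftrightarrow Z^1_\a\cap P_t$ (legitimate by Definition \ref{D_modeldire_segm}, since $P_t$ meets every segment exactly once for $t\in(h^-,h^+)$), the conditional measure $\upsilon_\a$ is the image under $s\mapsto\sigma^{t,s}(z)$ of $J(s,z)\,ds$ (suitably normalized), and the quotient measure $\eta$ is $\big(\int J(s,z)\,ds\big)\mathcal H^{d-1}\llcorner_{P_t}$ up to normalization. From this representation, $\upsilon_\a\ll\mathcal H^1\llcorner_{Z^1_\a}$ for $\eta$-a.e.\ $\a$ is automatic (the $ds$ part is already absolutely continuous along the segment), and the only real content is the behavior of $\eta$ versus $\mathcal H^{d-1}\llcorner_{P_t}$, which in turn is governed by $\int J(s,z)\,ds$ being finite and positive $\mathcal H^{d-1}$-a.e.

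Next I would connect $J$ with the maps $\sigma^{s,t}$. By the area formula, for fixed $s$ the measure $\sigma^{s,t}_\#(\mathcal H^{d-1}\llcorner_{P_s})$ has density (w.r.t.\ $\mathcal H^{d-1}\llcorner_{P_t}$ on the image) comparable to $J(s,z)^{-1}$ up to the Lipschitz factors; hence $\sigma^{s,t}_\#(\mathcal H^{d-1}\llcorner_{P_s})\ll\mathcal H^{d-1}\llcorner_{P_t}$ for a.e.\ $s$ is exactly the statement that $J(s,\cdot)^{-1}$ (equivalently $J(s,\cdot)$) is well-behaved, i.e.\ it is the condition that lets one integrate in $s$ and conclude $\eta\ll\mathcal H^{d-1}\llcorner_{P_t}$. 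Concretely: if \eqref{E_equivarea1} holds, monotone convergence / Fubini on $\int_{h^-}^{h^+}\sigma^{s,t}_\#(\mathcal H^{d-1}\llcorner_{P_s})\,ds$ and the parametrization above give $\eta\ll\mathcal H^{d-1}\llcorner_{P_t}$; combined with the automatic $\upsilon_\a\ll\mathcal H^1\llcorner_{Z^1_\a}$ this yields \eqref{E_equivarea4}. Conversely, if \eqref{E_equivarea4} holds, then $\LL\llcorner_{\mathbf Z^1}$ has no part concentrated on an $\mathcal H^{d-1}\times\mathcal L^1$-null subset of the fibered coordinates, which forces $\sigma^{s,t}_\#(\mathcal H^{d-1}\llcorner_{P_s})\ll\mathcal H^{d-1}\llcorner_{P_t}$ for $\mathcal L^1$-a.e.\ $s$ (otherwise a null set in $P_t$ would pull back, via the flow, to a positive-$\LL$-measure subset of $\mathbf Z^1$ whose disintegration contradicts $\eta\ll\mathcal H^{d-1}\llcorner_{P_t}$). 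For the second equivalence, one runs the same argument with the $\ll$'s replaced by $\simeq$'s: $\eta\simeq\mathcal H^{d-1}\llcorner_{P_t}$ and $\upsilon_\a\simeq\mathcal H^1\llcorner_{Z^1_\a}$ hold iff $0<J(s,z)<\infty$ for $\mathcal L^1\otimes\mathcal H^{d-1}$-a.e.\ $(s,z)$ and $\int J(s,z)\,ds\in(0,\infty)$ $\mathcal H^{d-1}$-a.e., which is precisely the two-sided absolute continuity of both $\sigma^{s,t}$ and $\sigma^{t,s}$ pushed-forward sections in \eqref{E_equivarea2}.

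The main obstacle, and the step deserving the most care, is the regularity of the flow maps $\sigma^{s,t}$ at the level of null sets: the vector field $\mathtt d$ is only $\sigma$-continuous, not Lipschitz, so a priori $\sigma^{s,t}$ is not even continuous globally, and one cannot invoke the classical change of variables/co-area formula directly. The remedy is to work piece-by-piece on the compact sets $\mathbf Z^1_l$ where $\mathtt d$ is continuous, on which $\sigma^{s,t}$ is a homeomorphism between the corresponding compact slices and is Lipschitz from $P_s$ to $P_t$ for $s,t$ in compact subintervals (with Lipschitz constant controlled by the modulus of continuity of $\mathtt d$ and the uniform bound $\mathtt d\cdot\e\ge1/\sqrt2$); there the area formula applies, the Jacobian $J$ is defined and measurable, and one then glues and lets the intervals exhaust $(h^-,h^+)$. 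A secondary technical point is measurability of all the objects ($J$, the maps $(s,z)\mapsto\sigma^{t,s}(z)$, the selections $\tilde{\mathtt d}_\pm$) in the joint variables, which is handled via Lemma \ref{L_reg_tt_d_h_pm} and standard selection theorems for $\sigma$-compact graphs; once that is in place, Fubini's theorem does the rest.
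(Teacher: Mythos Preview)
Your approach overcomplicates the argument and contains a real gap. You parametrize $\mathbf Z^1$ by the flow $(s,z)\mapsto\sigma^{t,s}(z)$ from $(h^-,h^+)\times P_t$ and invoke the area formula to produce a Jacobian $J(s,z)$; for this you need $\sigma^{t,s}$ to be (at least) Lipschitz on the pieces $\mathbf Z^1_l$. But continuity of $\mathtt d$ on $\mathbf Z^1_l$ only yields continuity of $\sigma^{t,s}$ there, not Lipschitz continuity: the modulus of continuity of $\mathtt d$ controls the modulus of continuity of $z\mapsto z+(s-t)\frac{\mathtt d(z)}{\mathtt d(z)\cdot\e}$, not any Lipschitz constant. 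So your Jacobian $J$ need not exist, and with it your claim that $\upsilon_\a\ll\mathcal H^1\llcorner_{Z^1_\a}$ is ``automatic'' collapses.

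The paper sidesteps all of this by slicing in the \emph{fixed} direction $\e$ rather than along the segments. Since $P_s=\overline{\mathbf Z}^1\cap\mathtt p^{-1}_{\langle\e\rangle}(s\e)$ lies in an affine hyperplane orthogonal to $\e$, ordinary Fubini--Tonelli gives
\[
\int\phi\,d\mathcal L^d\llcorner_{\mathbf Z^1}=\int_{h^-}^{h^+}\!\int_{P_s}\phi\,d\mathcal H^{d-1}\,ds
\]
with no regularity of $\mathtt d$ whatsoever. Then one uses the \emph{hypothesis} \eqref{E_equivarea1} to write $\sigma^{s,t}_\#(\mathcal H^{d-1}\llcorner_{P_s})=\mathtt f(s,t,\cdot)\,\mathcal H^{d-1}\llcorner_{P_t}$, applies the abstract change of variables $\int_{P_s}\phi\,d\mathcal H^{d-1}=\int_{P_t}\phi\circ\sigma^{t,s}\,\mathtt f(s,t,\cdot)\,d\mathcal H^{d-1}$ (a bare bijection suffices, no area formula), swaps the order of integration by Tonelli, and reads off the disintegration by uniqueness. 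The density $\mathtt f$ plays the role your Jacobian was meant to play, but it is \emph{supplied by the assumption} rather than constructed from differentiability of the flow; that is precisely why the lemma is stated as an equivalence.
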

In particular, whenever \eqref{E_equivarea3} holds, $\mathbf Z^1$ is a regular partition according to Definition \ref{D_disint_regular}.

The proof of Lemma \ref{L_equivarea} is an application of Fubini-Tonelli theorem w.r.t. the projection on $\langle\e\rangle$ and the change of variables formula for the maps $\sigma^{s,t}$. We give a short proof for completeness.

\begin{proof}
By \eqref{E_equivarea1}
\[
\sigma^{s,t}_\# \big( \mathcal H^{d-1} \llcorner_{P_s} \big) = \mathtt f(s,t,\cdot) \mathcal H^{d-1} \llcorner_{P_t}
\]
for some Borel non-negative function $\mathtt f$, for a.e. $s\in(h^-,h^+)$ and thus by Fubini-Tonelli theorem we can write for a compactly supported function $\phi : \R^d \to \R$
\[
\begin{split}
\int \phi \mathcal L^d \llcorner_{\mathbf Z^1} =&~ \int_{h^-}^{h^+} \bigg[ \int_{P_s} \phi(z) d\mathcal H^{d-1}(z) \bigg] ds = \int_{h^-}^{h^+} \bigg[ \int_{P_t} \phi(\sigma^{t,s}(z)) d\big( \sigma^{s,t}_\# \mathcal H^{d-1} \llcorner_{P_s} \big)(z) \bigg] ds \crcr
=&~ \int_{h^-}^{h^+} \bigg[ \int_{P_t} \phi(\sigma^{t,s}(z)) \mathtt f(s,t,z) d\mathcal H^{d-1}(z) \bigg] ds = \int_{P_t} \bigg[ \int_{h^-}^{h^+} \phi(\sigma^{t,s}(z)) \mathtt f(s,t,z) ds \bigg] d\mathcal H^{d-1}(z).
\end{split}
\]
By the uniqueness of the disintegration (see Theorem \ref{T_disint}), this shows that \eqref{E_equivarea4} is true. Repeating the argument starting from the end, one can prove the equivalence of \eqref{E_equivarea1} and \eqref{E_equivarea4}.

By using the additional assumption \eqref{E_equivarea2}, $\mathtt f(s,t,\cdot)$ is $\mathcal H^{d-1}$-a.e. strictly positive on $P_t$, for a.e. $s\in (h^-,h^+)$ and the equivalence of \eqref{E_equivarea2} with \eqref{E_equivarea3} follows immediately.
\end{proof}

Now we are ready to introduce the forward/backward cone approximation properties, which will imply the assumptions of Lemma \ref{L_equivarea}. For notational convenience, we will state the definition of cone vector field.

\begin{figure}
\centering{\resizebox{12cm}{10cm}{\input{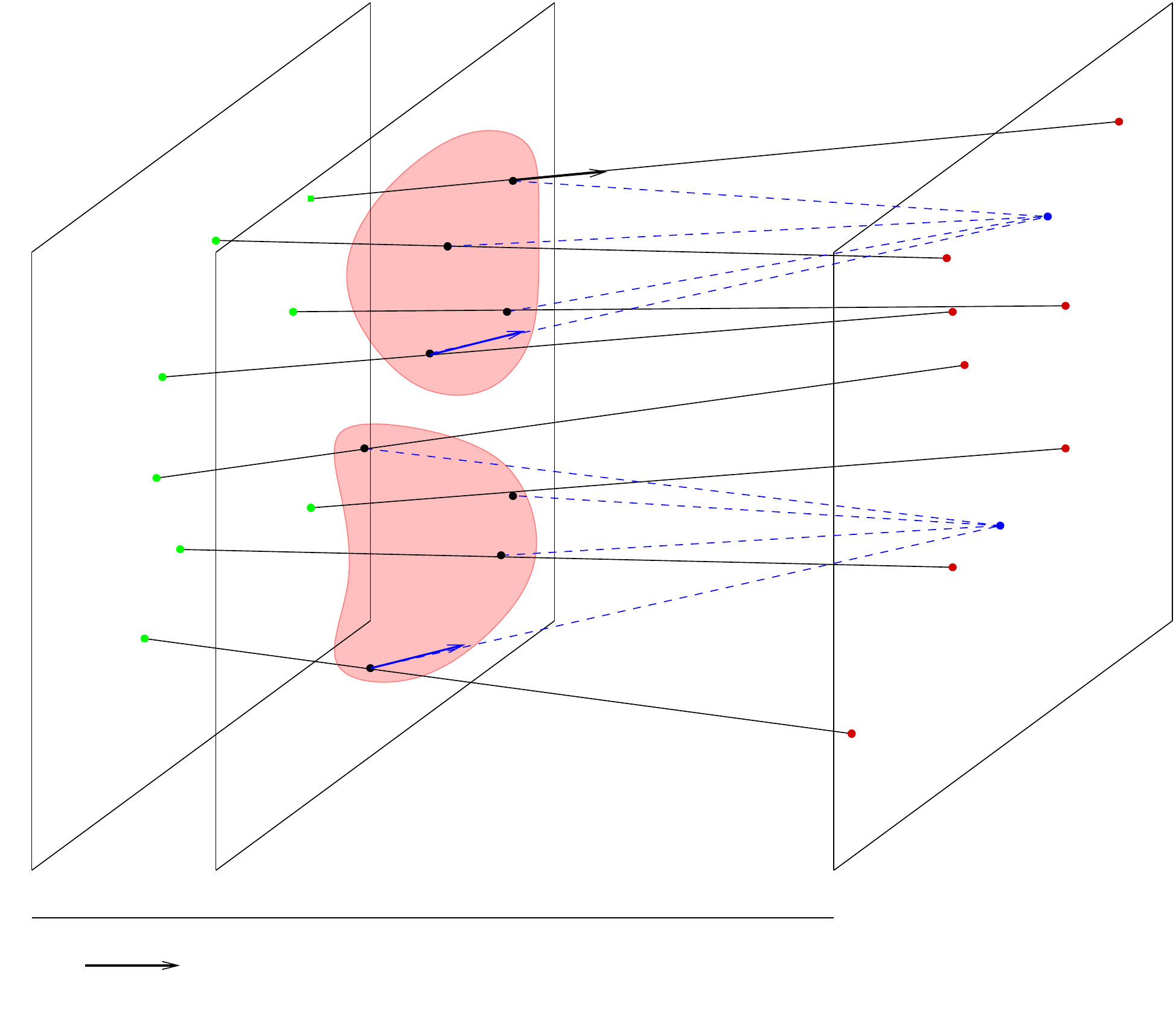_t}}}
\caption{A model set of directed segments and a union of two cone vector fields.}
\label{Fi_modelset}
\end{figure}

\begin{definition}
\label{D_cone_vector}
The \emph{cone vector field} with base in $E_1 \subset \R^{d}$, and vertex $\bar z \in E_2 \subset \R^{d} \setminus E_1$ is defined as
\begin{equation*}
\begin{array}{ccccc}
\mathtt d &:& E_1 \supset \mathrm{dom}\,\mathtt d &\to& \mathbb S^{d-1} \crcr
&& z &\mapsto& \mathtt d(z) := \frac{\bar z- z}{\|\bar z- z\|}
\end{array}
\end{equation*}
We say that $\mathtt d$ is a \emph{finite union of cone vector fields} with base in $E_1$ and vertices in $E_2$ if there exist finitely many cone vector fields $\{\mathtt d_i\}_{i=1}^I$ with bases in $E_1$ and vertices $\{\bar z_i\}_{i=1}^I$ in $E_2 \subset \R^{d} \setminus E_1$ such that the sets
\begin{equation*}
E_{\mathtt d_i} := \big\{ (1-t) \bar z_i + t z, t \in [0,1], z \in \mathrm{dom}\, \mathtt d_i \big\}, \quad i = 1,\dots,I,
\end{equation*}
satisfy $E_{\mathtt d_i} \cap E_{\mathtt d_j} = \emptyset$, for all $i \neq j$.
\end{definition}


\begin{definition}
\label{D_forw_back}
We say that the model set of directed segments $\mathbf Z^1(\mathtt d,\A^1,\e,h^-,h^+)$ has the \emph{forward cone approximation property} if there exists $\epsilon > 0$ such that for all $t \in (h^-,h^+)$ there exists $\{\mathtt d^t_j\}_{j \in \N}$ finite union of cone vector fields with base in $\mathtt p^{-1}_{\langle\e\rangle}(t \e)$ and vertices in $\mathtt p^{-1}_{\langle\e\rangle}((h^+ + \epsilon)\e)$ such that
\[
\mathcal H^{d-1}(P_t\setminus\mathrm{dom}\, \mathtt d^t_j) = 0
\]
and $\mathtt d^t_j \to \mathtt d^t$ $\mathcal H^{d-1} \llcorner_{P_t}$-a.e.. 

Analogously, we say that the model set of directed segments $\mathbf Z^1(\mathtt d,\A^1,\mathrm e,h^-,h^+)$ has the \emph{backward cone approximation property} if there exists $\epsilon > 0$ such that for all $t \in (h^-,h^+)$ there exists $\{\mathtt d^t_j\}_{j \in \N}$ finite union of cone vector fields with base in $\mathtt p^{-1}_{\langle\e\rangle}(t \e)$ and vertices in $\mathtt p^{-1}_{\langle\e\rangle}((h^- - \epsilon) \e)$ such that
\[
\mathcal H^{d-1}(P_t\setminus\mathrm{dom}\, \mathtt d^t_j) = 0
\]
and $\mathtt d^t_j \to -\mathtt d^t$ $\mathcal H^{d-1} \llcorner_{P_t}$-a.e.. 
%
\end{definition}

\begin{lemma}
\label{L_forward_esti_area}
If $\mathbf Z^1(\mathtt d,\A^1,\e,h^-,h^+)$ has the forward cone approximation property, then for $h^- < s \leq t \leq h^+$
\begin{equation}
\label{E_forward_esti_area1}
\sigma^{s,t}_\# \mathcal H^{d-1} \llcorner_{P_s} \leq \bigg( \frac{h^+ + \epsilon - s}{h^+ + \epsilon - t} \bigg)^{d-1} \mathcal H^{d-1} \llcorner_{P_t}.
\end{equation}

Analogously, if $\mathbf Z^1(\mathtt d,\A^1,\e,h^-,h^+)$ has the backward cone approximation property, then for $h^- \leq t < s <h^+$
\begin{equation}
\label{E_forward_esti_area2}
\sigma^{s,t}_\# \mathcal H^{d-1} \llcorner_{P_s} \leq \bigg( \frac{s-h^-+ \epsilon}{t-h^-+ \epsilon} \bigg)^{d-1} \mathcal H^{d-1} \llcorner_{P_t}.
\end{equation}
\end{lemma}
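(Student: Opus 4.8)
The statement to prove is Lemma~\ref{L_forward_esti_area}, giving quantitative comparison estimates $\sigma^{s,t}_\#\mathcal H^{d-1}\llcorner_{P_s}\leq (\text{ratio})^{d-1}\mathcal H^{d-1}\llcorner_{P_t}$ under the forward (resp. backward) cone approximation property. The plan is to reduce the general statement to the case of a single cone vector field, where the map $\sigma^{s,t}$ becomes a homothety and the Jacobian is computed explicitly, and then pass to the limit using the approximating sequences.

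First I would fix $t\in(h^-,h^+)$ and a finite union of cone vector fields $\mathtt d^t_j$ as in Definition~\ref{D_forw_back}, with vertices lying on $\mathtt p^{-1}_{\langle\e\rangle}((h^++\epsilon)\e)$. On each piece $E_{\mathtt d^t_{j,i}}$ where $\mathtt d^t_j$ equals a single cone vector field with vertex $\bar z_i$ at ``height'' $h^++\epsilon$, the associated flow map from the section $P_s$ to the section $P_t$ (built by moving along the rays through $\bar z_i$) is the restriction of the homothety centered at $\bar z_i$ with ratio $\frac{h^++\epsilon-t}{h^++\epsilon-s}$. Hence the push-forward of $\mathcal H^{d-1}\llcorner_{P_s}$ under this single-cone flow map is exactly $\big(\frac{h^++\epsilon-t}{h^++\epsilon-s}\big)^{-(d-1)}$ times $\mathcal H^{d-1}$ restricted to the image, which for $h^-<s\leq t\leq h^+$ gives the bound $\big(\frac{h^++\epsilon-s}{h^++\epsilon-t}\big)^{d-1}\mathcal H^{d-1}$. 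Since the pieces $E_{\mathtt d^t_{j,i}}$ are pairwise disjoint, summing over $i$ shows that the push-forward under the $j$-th approximating flow map $\tilde\sigma^{s,t}_j$ satisfies the same inequality globally on $P_t$, i.e. $(\tilde\sigma^{s,t}_j)_\#\mathcal H^{d-1}\llcorner_{P_s}\leq \big(\frac{h^++\epsilon-s}{h^++\epsilon-t}\big)^{d-1}\mathcal H^{d-1}\llcorner_{P_t}$, uniformly in $j$.

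Next I would pass to the limit $j\to\infty$. Since $\mathtt d^t_j\to\mathtt d^t$ $\mathcal H^{d-1}\llcorner_{P_s}$-a.e.\ (after transporting the a.e.-convergence statement on $P_t$ back to $P_s$, which is legitimate once one knows the conditional structure, or one argues directly from the convergence on $P_s$ of the sections), the maps $z\mapsto z+(t-s)\mathtt d^t_j(z)/(\mathtt d^t_j(z)\cdot\e)$ converge pointwise $\mathcal H^{d-1}$-a.e.\ on $P_s$ to $\sigma^{t,s}$ composed appropriately, hence $(\tilde\sigma^{s,t}_j)_\#\mathcal H^{d-1}\llcorner_{P_s}$ converges weakly-$*$ to $\sigma^{s,t}_\#\mathcal H^{d-1}\llcorner_{P_s}$ by dominated convergence against continuous compactly supported test functions. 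Weak-$*$ limits preserve the inequality $\leq C\,\mathcal H^{d-1}\llcorner_{P_t}$ (the right-hand side being a fixed measure), so we obtain \eqref{E_forward_esti_area1}. The backward case \eqref{E_forward_esti_area2} is entirely analogous, with vertices at height $h^--\epsilon$ and ratio $\frac{s-h^-+\epsilon}{t-h^-+\epsilon}$ for $h^-\leq t<s<h^+$; one simply replaces $\mathtt d^t_j\to\mathtt d^t$ by $\mathtt d^t_j\to-\mathtt d^t$ and flips the roles of the two endpoints.

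The main obstacle I expect is the careful bookkeeping in the limit passage: one must ensure that the domains of the approximating flow maps exhaust $P_s$ (resp.\ $P_t$) up to $\mathcal H^{d-1}$-negligible sets, that the convergence $\mathtt d^t_j\to\pm\mathtt d^t$ on the reference section $P_t$ can indeed be transferred to pointwise convergence of the relevant transport maps at $\mathcal H^{d-1}$-a.e.\ point of $P_s$ (this uses that the rays do not degenerate, i.e.\ $\mathtt d(z)\cdot\e$ is bounded below away from zero on the sheaf set, which follows from the sheaf-set structure in Definition~\ref{D_sheaf_set}), and that no mass is lost at the ``boundary'' sections. Once these measure-theoretic continuity points are secured, the geometric heart---that a cone vector field induces a homothety between transversal sections with the stated Jacobian---is elementary. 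This is precisely the argument of \cite{CarDan}, Section~4, to which the reader is referred for the details of these routine but somewhat delicate verifications.
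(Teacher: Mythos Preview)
Your proposal follows essentially the same strategy as the paper: reduce to a single cone vector field, where the map between sections is a homothety with ratio $(h^++\epsilon-t)/(h^++\epsilon-s)$ and hence an explicit Jacobian $(d-1)$-th power, extend to finite unions by disjointness, and then pass to the limit. Two small points are worth noting.

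First, there is a bookkeeping slip: you fix the approximating sequence $\mathtt d^t_j$ based at $P_t$ and then try to ``transport the a.e.-convergence statement on $P_t$ back to $P_s$''. This is circular, since the absolute continuity of the pushforward is precisely what you are proving. The fix is immediate: Definition~\ref{D_forw_back} gives an approximating sequence for \emph{every} section in $(h^-,h^+)$, so simply take the sequence $\mathtt d^s_j$ based at $P_s$; then the a.e.\ convergence on $P_s$ is part of the hypothesis, and the flow maps $\tilde\sigma^{s,t}_j$ built from $\mathtt d^s_j$ converge to $\sigma^{s,t}$ pointwise $\mathcal H^{d-1}\llcorner_{P_s}$-a.e.\ with no transport needed. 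This is exactly what the paper does.

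Second, in the limit passage the paper uses a slightly different device than your weak-$*$/dominated-convergence argument: it applies Egorov's theorem to upgrade a.e.\ convergence to uniform convergence on compact subsets of $P_s$, and then invokes upper semicontinuity of $\mathcal H^{d-1}$ with respect to Hausdorff convergence of compact sets to pass the inequality to the limit. Both routes are valid and standard; the paper's has the advantage of avoiding the localization step needed to justify dominated convergence when $\mathcal H^{d-1}\llcorner_{P_s}$ is not finite.
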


\begin{proof}
We prove only the first estimate, since the proof of the second is completely similar. 

It is fairly easy to see that the estimates hold for the map $\sigma^{s,t}$ associated to a cone vector field with bases in $P_s$ and vertices in $P_{h^++\epsilon}$ as in Definition \ref{D_cone_vector} by similitude criteria for triangles or equivalently by the polar change of coordinates in $\R^d$. Hence, the same estimate holds also for the finite unions of cone vector fields approximating $\mathtt d^s$ as in Definition \ref{D_forw_back}.

Restricting by Egorov's Theorem to continuous and uniformly convergent sequences $\{\mathtt d^s_j\}_{j\in\N}$ on compact subsets of $P_s$, by the u.s.c. of $\mathcal H^{d-1}$ on the hyperplanes perpendicular to $(h^-,h^+)\mathtt e$ w.r.t. the Hausdorff convergence of compact sets, the inequality immediately passes to the limit.
\end{proof}

It is straightforward to observe that \eqref{E_forward_esti_area1} implies \eqref{E_equivarea1} and \eqref{E_forward_esti_area2} implies the first part of \eqref{E_equivarea2}.
Hence we have the following

\begin{corollary}
\label{C_disinte_ac}
If $\mathbf Z^1(\mathtt d,\A^1,\e,h^-,h^+)$ has either the forward cone approximation property or the backward cone approximation property, then
\[
\eta \ll \mathcal H^{d-1} \llcorner_{\A^1} \quad \text{and} \quad \upsilon_\a \ll \mathcal H^1 \llcorner_{Z^1_\a} \ \text{for $\eta$-a.e. $\a \in \A^1$.}
\]

If both the forward cone approximation and the backward cone approximation properties hold, then $\mathbf Z^1$ is a regular partition, i.e.
\[
\eta \simeq \mathcal H^{d-1} \llcorner_{\A^1} \quad \text{and} \quad \upsilon_\a \simeq \mathcal H^1 \llcorner_{Z^1_\a} \ \text{for $\eta$-a.e. $\a \in \A^1$.}
\]
\end{corollary}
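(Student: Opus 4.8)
The final statement to prove is Corollary \ref{C_disinte_ac}: if a model set of directed segments $\mathbf Z^1(\mathtt d,\A^1,\e,h^-,h^+)$ has the forward (resp. backward) cone approximation property, then the quotient measure $\eta$ is absolutely continuous w.r.t. $\mathcal H^{d-1}\llcorner_{\A^1}$ and the conditional measures $\upsilon_\a$ are absolutely continuous w.r.t. $\mathcal H^1\llcorner_{Z^1_\a}$ for $\eta$-a.e. $\a$; and if both properties hold, the partition is regular (the measures are equivalent).

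The plan is to chain together the three results immediately preceding the corollary. First I would invoke Lemma \ref{L_forward_esti_area}: the forward cone approximation property yields, for every $h^- < s \le t \le h^+$, the quantitative push-forward bound $\sigma^{s,t}_\#\mathcal H^{d-1}\llcorner_{P_s} \le \big(\tfrac{h^++\epsilon-s}{h^++\epsilon-t}\big)^{d-1}\mathcal H^{d-1}\llcorner_{P_t}$; similarly the backward property gives the bound \eqref{E_forward_esti_area2}. Second, as observed just before the corollary, the bound \eqref{E_forward_esti_area1} in particular implies the hypothesis \eqref{E_equivarea1} of Lemma \ref{L_equivarea} (absolute continuity of $\sigma^{s,t}_\#\mathcal H^{d-1}\llcorner_{P_s}$ w.r.t. $\mathcal H^{d-1}\llcorner_{P_t}$ for $\mathcal L^1$-a.e. $s$), since a bound of one measure by a constant multiple of another trivially forces absolute continuity. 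Third, I would apply the equivalence \eqref{E_equivarea1}$\Leftrightarrow$\eqref{E_equivarea4} of Lemma \ref{L_equivarea} at a fixed reference section $P_t$, $t\in(h^-,h^+)$: this gives $\eta\ll\mathcal H^{d-1}\llcorner_{P_t}$ and $\upsilon_\a\ll\mathcal H^1\llcorner_{Z^1_\a}$ for $\eta$-a.e. $\a$. Since the quotient space $\A^1$ is identified with a section (here $P_t$, or equivalently $P_{h^-}$ as in the sheaf-set convention, and these identifications differ by a bi-Lipschitz projection $\sigma^{t,s}$ which preserves the class of $\mathcal H^{d-1}$-null sets by the bounds just quoted), this is exactly $\eta\ll\mathcal H^{d-1}\llcorner_{\A^1}$. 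The backward case is symmetric, using \eqref{E_forward_esti_area2} in place of \eqref{E_forward_esti_area1}. This proves the first assertion.

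For the second assertion, suppose both the forward and backward cone approximation properties hold. Then for $h^- < s \le t \le h^+$ I have the forward bound \eqref{E_forward_esti_area1}, and applying it with the roles of $s$ and $t$ played by points on the other side (i.e. using the backward property for $h^- \le t < s < h^+$ via \eqref{E_forward_esti_area2}, and also running the forward estimate "in reverse") I obtain a two-sided comparison: for a.e. $s$, $\sigma^{t,s}_\#\mathcal H^{d-1}\llcorner_{P_t}$ is controlled above by a constant multiple of $\mathcal H^{d-1}\llcorner_{P_s}$ as well. Concretely, the forward property applied between $t$ and $s$ for $s\ge t$ gives $\sigma^{t,s}_\#\mathcal H^{d-1}\llcorner_{P_t}\le(\cdots)\mathcal H^{d-1}\llcorner_{P_s}$ when $t\le s$, and the backward property handles $t\ge s$; combining these with \eqref{E_forward_esti_area1}, both $\sigma^{s,t}_\#\mathcal H^{d-1}\llcorner_{P_s}\ll\mathcal H^{d-1}\llcorner_{P_t}$ and $\sigma^{t,s}_\#\mathcal H^{d-1}\llcorner_{P_t}\ll\mathcal H^{d-1}\llcorner_{P_s}$ hold for $\mathcal L^1$-a.e. $s\in(h^-,h^+)$. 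This is precisely condition \eqref{E_equivarea2} of Lemma \ref{L_equivarea}, so by the equivalence \eqref{E_equivarea2}$\Leftrightarrow$\eqref{E_equivarea3} we conclude $\eta\simeq\mathcal H^{d-1}\llcorner_{P_t}$ and $\upsilon_\a\simeq\mathcal H^1\llcorner_{Z^1_\a}$ for $\eta$-a.e. $\a$, which (transporting back to $\A^1$) is the regularity statement, i.e. \eqref{E_disint_regular}.

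The main subtlety — though it is more bookkeeping than a genuine obstacle — is making sure that the comparison bounds of Lemma \ref{L_forward_esti_area} really do supply \emph{both} inclusions in \eqref{E_equivarea2} when both approximation properties are assumed, rather than just one; this requires observing that the forward property bounds $\sigma^{s,t}_\#$ for $s\le t$ while the backward property bounds $\sigma^{s,t}_\#$ for $s\ge t$, so that for a fixed intermediate section $P_t$ the two together cover all directions of transport both "up" and "down" from $P_t$, and one needs the elementary fact that if $\sigma^{s,t}$ has a Lipschitz-type measure bound in one direction and its inverse $\sigma^{t,s}$ has one in the other, then $\sigma^{s,t}_\#\mathcal H^{d-1}\llcorner_{P_s}$ and $\mathcal H^{d-1}\llcorner_{P_t}$ are mutually absolutely continuous. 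The identification of $\A^1$ with a section and the verification that the projection maps between sections preserve negligibility are likewise routine given the uniform cone bounds. Everything else is a direct citation of Lemmata \ref{L_equivarea} and \ref{L_forward_esti_area} and the remark preceding the corollary.
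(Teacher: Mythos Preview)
Your proposal is correct and follows exactly the paper's approach: the paper does not give a separate proof of the corollary but simply records (in the sentence immediately preceding it) that \eqref{E_forward_esti_area1} implies \eqref{E_equivarea1} and \eqref{E_forward_esti_area2} implies the first part of \eqref{E_equivarea2}, then invokes Lemma~\ref{L_equivarea}. Your write-up is a faithful expansion of this, with the extra bookkeeping on identifying $\A^1$ with a section and on combining the two directions for \eqref{E_equivarea2} made explicit.
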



We can extend the forward/backward cone approximation properties to Borel sections of initial/final points. This will be useful later (see Theorem \ref{T_FC_no_initial}), when we will give conditions ensuring that $\LL(\mathcal I(\mathbf Z))=0$/$\LL(\mathcal E(\mathbf Z))=0$ for a directed locally affine partition $\mathbf Z$. 

\begin{definition}
\label{D_initial_forward}
We say that $\mathbf Z^1(\mathtt d,\A^1,\e,h^-,h^+)$ satisfies the \emph{initial forward cone approximation property} if there exists a Borel section $\tilde{\mathtt d}_+$ which satisfies the assumptions of the forward cone approximation property of Definition \ref{D_forw_back} for all $t \in [h^-,h^+)$.

Similarly, $\mathbf Z^1(\mathtt d,\A^1,\e,h^-,h^+)$ satisfies the \emph{final backward cone approximation property} if there exists a Borel section $\tilde{\mathtt d}_-$ which satisfies the assumptions of the backward cone approximation property of Definition \ref{D_forw_back} for all $t\in(h^-,h^+]$. 
\end{definition}

The next lemma is the analogue of Lemma \ref{L_forward_esti_area} for the Borel sections $\tilde{\mathtt d}_\pm$.

\begin{lemma}
\label{Cinfin}
If $\mathbf Z^1(\mathtt d,\A^1,\e,h^-,h^+)$ satisfies the initial (final) forward (backward) cone approximation property, then \eqref{E_forward_esti_area1} (resp. \eqref{E_forward_esti_area2}) holds for $\tilde\sigma^{s,t}_+$ ($\tilde\sigma^{s,t}_-$), for all $h^- \leq s \leq t \leq h^+$ (resp. $h^- \leq t \leq s\leq h^+$).
\end{lemma}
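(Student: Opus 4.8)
The plan is to repeat verbatim the argument used for Lemma \ref{L_forward_esti_area}, with two bookkeeping changes: one replaces the direction vector field $\mathtt d$ by the Borel selection $\tilde{\mathtt d}_\pm$ and the maps $\sigma^{s,t}$ by $\tilde\sigma^{s,t}_\pm$; and one observes that the \emph{initial} (resp.\ \emph{final}) forward (resp.\ backward) cone approximation property, unlike the plain one of Definition \ref{D_forw_back}, supplies the approximating finite unions of cone vector fields for every base height in the half-open interval $[h^-,h^+)$ (resp.\ $(h^-,h^+]$), hence also for the endpoint section $P_{h^-}=\mathcal I(\mathbf Z^1)$ (resp.\ $P_{h^+}=\mathcal E(\mathbf Z^1)$). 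This is exactly what upgrades the admissible range of parameters from $h^-<s\le t\le h^+$ to $h^-\le s\le t\le h^+$ (resp.\ from $h^-\le t<s<h^+$ to $h^-\le t\le s\le h^+$).

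Concretely, for the forward case I would fix $h^-\le s\le t\le h^+$, the case $s=t$ being trivial since $\tilde\sigma^{t,t}_+=\Id$ and the ratio in \eqref{E_forward_esti_area1} equals $1$; so assume $s<t$, in particular $s\in[h^-,h^+)$. By Definition \ref{D_initial_forward} there are $\epsilon>0$ and, for this $s$, finite unions of cone vector fields $\mathtt d^s_j$ with bases in $\mathtt p^{-1}_{\langle\e\rangle}(s\e)$, vertices in $\mathtt p^{-1}_{\langle\e\rangle}((h^++\epsilon)\e)$, domains of full $\mathcal H^{d-1}\llcorner_{P_s}$-measure, and $\mathtt d^s_j\to\tilde{\mathtt d}_+$ $\mathcal H^{d-1}\llcorner_{P_s}$-a.e. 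For an individual cone vector field with vertex $\bar z$ at height $h^++\epsilon$, the induced map from the section at height $s$ to the one at height $t$ is a restriction of the homothety centered at $\bar z$ of ratio $(h^++\epsilon-t)/(h^++\epsilon-s)$, so it scales the $(d-1)$-dimensional Hausdorff measure by the $(d-1)$-th power of that ratio; since the cones $E_{\mathtt d_i}$ composing a finite union are disjoint, the map $\sigma^{s,t}_j$ associated with $\mathtt d^s_j$ satisfies \eqref{E_forward_esti_area1} (with equality on its domain). This is literally the computation already carried out inside the proof of Lemma \ref{L_forward_esti_area}.

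The passage to the limit is then the same Egorov-plus-upper-semicontinuity argument: one restricts $P_s$ to countably many compact sets on which the $\mathtt d^s_j$ are continuous and converge uniformly to $\tilde{\mathtt d}_+$, exhausting $P_s\cap\dom\,\tilde{\mathtt d}_+$ up to an $\mathcal H^{d-1}$-null set; on each such compact set $K$ the maps $z\mapsto z+(t-s)\,\mathtt d^s_j(z)/(\mathtt d^s_j(z)\cdot\e)$ converge uniformly to $\tilde\sigma^{s,t}_+\llcorner_K$, so the compact images converge in Hausdorff distance; and since $\mathcal H^{d-1}$ is upper semicontinuous with respect to Hausdorff convergence of compact subsets of the hyperplane $\mathtt p^{-1}_{\langle\e\rangle}(t\e)$, the inequality \eqref{E_forward_esti_area1} passes to the limit on each $K$ and then on all of $P_s$. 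The backward case is obtained by the symmetric choice of vertices at height $(h^--\epsilon)\e$, yielding \eqref{E_forward_esti_area2} for $\tilde\sigma^{s,t}_-$ when $h^-\le t\le s\le h^+$.

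I do not expect a genuine obstacle here: everything reduces to the observation that the Borel selections $\tilde{\mathtt d}_\pm$ provided by Lemma \ref{L_reg_tt_d_h_pm}, together with the hypothesis, feed the machinery of Lemma \ref{L_forward_esti_area} unchanged. The only point requiring a little care is that $\tilde\sigma^{s,t}_\pm$ is only defined on $P_s\cap\dom\,\tilde{\mathtt d}_\pm$, so one should check, as in Section \ref{Ss_model_dir_segm}, that this set has full $\mathcal H^{d-1}\llcorner_{P_s}$-measure and that $\tilde\sigma^{s,t}_\pm$ is Borel; both are immediate from the construction of $\tilde{\mathtt d}_\pm$ by propagation along the segments of the model set, since every such segment meets the section $P_{h^\mp}$ in an initial/final point.
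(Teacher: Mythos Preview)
Your proposal is correct and matches the paper's intended approach: the paper does not give an explicit proof of Lemma \ref{Cinfin} but introduces it as ``the analogue of Lemma \ref{L_forward_esti_area} for the Borel sections $\tilde{\mathtt d}_\pm$'', which is exactly the route you take. One small caveat: your final remark that $P_s\cap\dom\,\tilde{\mathtt d}_\pm$ has full $\mathcal H^{d-1}\llcorner_{P_s}$-measure is not needed and may fail (an initial point can be shared by several segments, and the Borel selection $\tilde{\mathtt d}^{h^-}_+$ picks only one), but this is harmless since $\tilde\sigma^{s,t}_+$ is by definition only a map on $P_s\cap\dom\,\tilde{\mathtt d}_+$ and the push-forward estimate \eqref{E_forward_esti_area1} is understood accordingly.
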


\subsection{\texorpdfstring{$k$-dimensional model sets}{k-dimensional model sets}}
\label{Ss_k_dim_model_set}

In this section we extend the results for $1$-dimensional model sets to the $k$-dimensional model sets defined below.

\begin{definition}
\label{D_k_dim_model}
A \emph{$k$-dimensional model set} is a $k$-dimensional directed sheaf set $\{Z\ka,C\ka\}_{\a\in\A^k}$ with reference plane $V^k = \langle \e^k_1,\dots,\e^k_k \rangle$, for which there exist $\mathrm h^- = (h^-_1,\dots,h^-_k)$, $\mathrm h^+ = (h^+_1,\dots,h^+_k) \in \mathbb R^k$, with $h^-_j < h^+_j$ for all $j=1,\dots,k$, such that
\begin{equation}
\label{E_pkdimen}
\mathtt p_{V^k} Z\ka = \interr U \big( \{\e^k_i\},\mathrm h^-,\mathrm h^+ \big) := \bigg\{ \sum_{j=1}^k t_j \mathtt e^k_j : t_j \in (h^-_j,h^+_j) \bigg\}.
\end{equation}
\end{definition}

We will also call \emph{$k$-dimensional model set} the set $\mathbf Z^k=\underset{\a\in\A^k}{\cup}Z\ka$. Setting
\begin{equation*}
\mathcal D : \mathbf Z^k \to \mathcal C(k,\R^d) \cap \S^{d-1}, \quad \mathcal D : Z\ka \ni z \mapsto \mathcal D(z) := C\ka \cap \S^{d-1}
\end{equation*}
for the direction map, sometimes we will use the more precise notation $\mathbf Z^k(\mathcal D,\A^k,\e^k_1,\dots,\e^k_k,\mathrm h^-,\mathrm h^+)$ and we say that the $3k$-tuple $(\e^k_1,\dots,\e^k_k, \mathrm h^-,\mathrm h^+)$ is a \emph{reference configuration} for the model set.

The absolute continuity problem for the disintegration of the Lebesgue measure on a $k$-dimensional model set $\mathbf Z^k$ can be reduced to the absolute continuity problem for $1$-dimensional model sets obtained cutting $\mathbf Z^k$ with suitable $(d-k+1)$-dimensional planes, called \emph{slices}. 

Indeed, for all $\e \in C(\{\e^k_i\})\cap\S^{d-1}$ and $w\in  \interr U(\{\e^k_i\},\mathrm h^-,\mathrm h^+)$, set
\begin{subequations}
\begin{equation*}
h^-(w,\e) := \inf \Big\{ t \in \R : w + t \e \in U \big( \{\e^k_i\},\mathrm h^-,\mathrm h^+ \big) \Big\},
\end{equation*}
\begin{equation*}
h^+(w,\e) := \sup \Big\{ t \in \R : w + t \e \in U \big( \{\e^k_i\},\mathrm h^-,\mathrm h^+ \big) \Big\}.
\end{equation*}
\end{subequations}

\begin{definition}
\label{D_1_sim_slice}
Given a $k$-dimensional model set $\mathbf Z^k(\mathcal D,\A^k,\mathrm e^k_1,\dots,\mathrm e^k_k,\mathrm h^-,\mathrm h^+)$ we define \emph{$1$-dimensional slice} of $\mathbf Z^k$ in the direction $\e\in C(\{\e^k_i\})\cap\S^{d-1}$ any set of the form 
\begin{equation}
\label{E_slicekdim}
\mathbf Z^k \cap \mathtt p^{-1}_{V^k} \Big( w + \big( h^-(w,\e),h^+(w,\e) \big) \e \Big), \quad w \in U \big( \{\e^k_i\},\mathrm h^-,\mathrm h^+ \big).
\end{equation}
\end{definition}

The important observation is the following:

\begin{remark}
\label{Prop_1dim_slice}
By \eqref{E_pkdimen} and Point $(3)$ of Definition \ref{D_sheaf_set}, the $1$-dimensional slice \eqref{E_slicekdim} is a model set of directed segments in the $(d+1-k)$-dimensional space $\mathtt p^{-1}_{V^k}(w + \langle\e\rangle)$ with direction vector field 
\begin{equation}
\label{E_mathtt_d_e}
\mathtt d_\e:=\mathcal D\cap\mathtt p^{-1}_{V^k}(\langle \e\rangle),
\end{equation}
quotient space $\A^k$ and reference configuration $(\e, h^-(w,\e),h^+(w,\e))$.
\end{remark}

As a consequence we obtain the following $k$-dimensional version of Lemma \ref{L_forward_esti_area}.

\begin{lemma}
\label{L_one_d_slicing_FC}
If the forward (or backward) cone approximation property holds for all the $1$-dimensional slices of a $k$-dimensional model set $\mathbf Z^k$ in the directions $\e^k_1,\dots\e^k_k$, then
\begin{equation*}
\LL \llcorner_{\mathbf Z^k} = \int \upsilon\ka\, d\eta(k,a), \quad \text{with} \ \upsilon\ka \ll \mathcal H^{k}\llcorner_{Z\ka} \ \text{and} \ \eta(k) \ll \mathcal H^{d-k} \llcorner_{\A^k}.
\end{equation*}
If both the forward and the backward properties hold for all the above slices, then $\mathbf Z^k$ is regular, namely
\begin{equation}
\label{E_equiv_Zk}
\eta(k) \simeq \mathcal H^{d-k} \llcorner_{\A^k} \quad \text{and} \quad \upsilon\ka \simeq \mathcal H^{k} \llcorner_{Z\ka} \ \text{for $\eta$-a.e. $(k,\a)$}.
\end{equation}
\end{lemma}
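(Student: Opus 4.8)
The statement of Lemma~\ref{L_one_d_slicing_FC} should follow by combining the structural observation of Remark~\ref{Prop_1dim_slice} with the $1$-dimensional results of Section~\ref{Ss_model_dir_segm} (Lemma~\ref{L_forward_esti_area}, Lemma~\ref{L_equivarea} and Corollary~\ref{C_disinte_ac}), together with an application of the Fubini--Tonelli theorem in the reference cube $\interr U(\{\e^k_i\},\mathrm h^-,\mathrm h^+)$. The idea is to fiber $\LL\llcorner_{\mathbf Z^k}$ first over the coordinates transversal to one of the reference directions $\e^k_j$, reducing to a family of $1$-dimensional model sets, and then to iterate over $j=1,\dots,k$.

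\textbf{Step 1: reduction to slices.} Fix a reference direction $\e = \e^k_j$. By Remark~\ref{Prop_1dim_slice}, for each $w$ in the $(k-1)$-dimensional face of $U(\{\e^k_i\},\mathrm h^-,\mathrm h^+)$ transversal to $\e^k_j$, the $1$-dimensional slice $\mathbf Z^k\cap \mathtt p^{-1}_{V^k}(w+(h^-(w,\e),h^+(w,\e))\e)$ is a model set of directed segments inside the $(d+1-k)$-dimensional plane $\mathtt p^{-1}_{V^k}(w+\langle\e\rangle)$, with direction vector field $\mathtt d_\e$ given by \eqref{E_mathtt_d_e}. By hypothesis this model set has the forward (resp.\ backward) cone approximation property, so Corollary~\ref{C_disinte_ac} applies to each slice. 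One first writes $\LL\llcorner_{\mathbf Z^k}$ via Fubini--Tonelli as an integral over $w$ of the $(d+1-k)$-dimensional Lebesgue measure restricted to the plane $\mathtt p^{-1}_{V^k}(w+\langle\e\rangle)$, which coincides (up to the bounded, bounded-away-from-zero Jacobian of $\mathtt p_{V^k}$, guaranteed by Point~$(4)$ of Definition~\ref{D_sheaf_set}) with the $1$-dimensional model set measure on that slice. Applying Corollary~\ref{C_disinte_ac} slicewise and integrating back in $w$, one obtains that the conditional measures of $\LL\llcorner_{\mathbf Z^k}$ along the segments of the slices are a.c.\ (resp.\ equivalent) to $\HH^1$ on those segments, and the quotient (``area'') measures on the $(d-k)$-dimensional transversal parameter are a.c.\ (resp.\ equivalent) to $\HH^{d-k}$ there.

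\textbf{Step 2: iteration over the $k$ directions and reassembly.} The conditional measure $\upsilon\ka$ of $\LL$ along a $k$-dimensional set $Z\ka$ can itself be disintegrated along the $1$-dimensional foliations of $Z\ka$ induced by each $\e^k_j$; since $\{\e^k_1,\dots,\e^k_k\}$ spans $V^k$ and, via $\mathtt p_{V^k}$, the whole affine hull $\aff Z\ka$ bi-Lipschitzly, the absolute continuity of $\upsilon\ka$ w.r.t.\ $\HH^k\llcorner_{Z\ka}$ follows from the absolute continuity of its one-dimensional conditionals together with the a.c.\ of the corresponding quotient measures, obtained in Step~1 for each $j$; this is exactly the content of the $1$-dimensional Lemma~\ref{L_equivarea}, applied iteratively (a $k$-fold Fubini argument). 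Equivalence in \eqref{E_equiv_Zk} is obtained the same way once \emph{both} the forward and backward cone approximation properties are available for all the slices, using the two-sided conclusion of Corollary~\ref{C_disinte_ac} (equivalently, the estimates \eqref{E_forward_esti_area1} and \eqref{E_forward_esti_area2} together). The statement about $\eta(k)\ll\HH^{d-k}\llcorner_{\A^k}$ (resp.\ $\simeq$) follows by the same bookkeeping of the quotient measures, recalling that $\A^k$ has been chosen as a $(d-k)$-dimensional set via \eqref{E_mathfrak_A_k_def}.

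\textbf{Main obstacle.} The delicate point is not the $1$-dimensional input, which is already packaged in Corollary~\ref{C_disinte_ac}, but rather the careful handling of the nested disintegrations: one must check that disintegrating $\LL\llcorner_{\mathbf Z^k}$ first over the transversal parameter and then along slices, versus disintegrating directly over $\{Z\ka\}$, yields compatible conditional measures, and that no mass is lost on the measure-zero ``boundary'' slices or on the set where $\mathtt p_{V^k}$ degenerates. This requires invoking the uniqueness and strong consistency of disintegrations (Theorem~\ref{T_disint}) at each stage, the $\sigma$-compactness/Borel-measurability of the slicing maps (so that the iterated disintegration is legitimate), and the uniform non-degeneracy of $\mathtt p_{V^k}$ on the cones $C\ka$ from Definition~\ref{D_sheaf_set} to control the change-of-variables Jacobians uniformly in $\a$. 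Once these measurability and non-degeneracy facts are in place, the proof is a routine Fubini--Tonelli computation iterated $k$ times, and we refer to Section~4 of \cite{CarDan} for the analogous details.
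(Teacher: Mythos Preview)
Your proposal is essentially correct and follows the same route as the paper: Fubini--Tonelli to reduce to $1$-dimensional slices, apply Corollary~\ref{C_disinte_ac} (via Lemma~\ref{L_forward_esti_area}) on each slice, then iterate over the $k$ reference directions.

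The one point where your write-up is looser than the paper's is the iteration in Step~2. You phrase it as disintegrating each $\upsilon\ka$ along the foliations of $Z\ka$ in the successive directions and then ``reassembling''. The paper organizes the induction differently and more transparently: after the first slicing in direction $\e^k_1$, the quotient measure $\eta_1$ is shown to be $\ll \mathcal H^{d-1}$ on the cross-section $\mathbf Z^{k-1}:=\mathbf Z^k\cap \mathtt p_{\langle \e^k_1\rangle}^{-1}(0)$, and the crucial observation is that $\mathbf Z^{k-1}$ is itself a $(k-1)$-dimensional model set in $\R^{d-1}$, with the same reference vectors $\e^k_2,\dots,\e^k_k$. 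One then repeats the procedure on $\eta_1$ (not on the conditionals $\upsilon\ka$) using direction $\e^k_2$, composes the two disintegrations, and continues. This makes the bookkeeping of the nested disintegrations that you flag as the ``main obstacle'' essentially automatic: at each stage one is back in the hypotheses of the lemma with $k$ replaced by $k-1$, and uniqueness of the disintegration (Theorem~\ref{T_disint}) glues the steps. Your version would also work, but verifying compatibility of ``disintegrate each $\upsilon\ka$ separately in each direction'' with the global disintegration requires exactly the inductive structure the paper makes explicit.
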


\begin{proof}
W.l.o.g. we assume $\e^k_i$ to be the first $k$ unit vectors of a standard orthonormal base in $\R^d$, $h_j^-<0<h_j^+$ $\forall\,j=1,\dots,k$, and $\A^k = \mathbf Z \cap \mathtt p_{V^k}^{-1}(0)$.

For $x \in U(\{\e^k_i\},\mathtt h^-,\mathtt h^+) \cap \mathtt p^{-1}_{\langle \e^k_1 \rangle}(0)$, consider the $1$-dimensional slice
\[
 \mathbf Z^1_x := \mathbf Z^k \cap \mathtt p^{-1}_{V^k} \big( x + (h^-_1,h^+_1) \e^k_1 \big) = \bigcup_{\a \in \A^k} Z^k_\a \cap \mathtt p^{-1}_{V^k} \big( x + (h^-_1,h^+_1) \e^k_1 \big).
\]

By Fubini-Tonelli theorem 
\[
\mathcal L^d \llcorner_{\mathbf Z^k} = \int _{\mathbf Z^k\cap\p^{-1}_{\langle \e_1^k\rangle}(0)}\mathcal L^{d-k+1} \llcorner_{\mathbf Z^1_{x}} \mathcal L^{k-1}(dx)
\]
and applying Lemma \ref{L_forward_esti_area} in the case of forward (or backward) approximation property, we obtain the disintegration
\[
\mathcal L^d \llcorner_{\mathbf Z^k} = \int \upsilon^1_{\a,x} d\eta_1(x,\a), \quad \text{with} \ \upsilon^1_{\a,x} \ll \mathcal H^1 \llcorner_{Z^k_\a \cap \mathtt p^{-1}_{V^k} ( x + (h^-_1,h^+_1) \e^k_1 )}, \ \eta_1 \ll \mathcal H^{d-1} \llcorner_{\mathbf Z^k \cap\, \mathtt p_{\langle \e^k_1 \rangle}^{-1}(0)}.
\]

Now one repeats the procedure starting with the measure $\mathcal L^{d-1}$ restricted to the $(k-1)$-dimensional model set in $\R^{d-1}$ given by
\[
\mathbf Z^{k-1} = \mathbf Z^k \cap \mathtt p_{\langle \e^k_1 \rangle}^{-1}(0)
\]
and considering directions along $\e^k_2$, which allows to write
\[
\eta_1 = \int_{\mathbf Z^k\cap\mathtt p_{\langle \e^k_1,\e^k_2 \rangle}^{-1} (0)} \eta_{1,\a,y} d\eta_2(y,\a), \quad \text{with} \ \eta_{1,\a,y} \ll \mathcal H^1 \llcorner_{Z^k_\a \cap \mathtt p^{-1}_{V^k}(y+(h_2^-,h_2^+)\e^k_2)}, \ \eta_2 \ll \mathcal H^{d-2} \llcorner_{\mathbf Z^k \cap \mathtt p^{-1}_{\langle\e^k_1,\e^k_2 \rangle}(0)}
\]
Hence by composing the two disintegrations one obtains
\[
\mathcal L^d \llcorner_{\mathbf Z^k} = \int \upsilon^2_{\a,y} d\eta_2(y,\a), \quad \text{with} \ \upsilon^2_{\a,y} \ll \mathcal H^2 \llcorner_{Z^k_{\a} \cap \mathtt p^{-1}_{V^k}(y+(h^-_1,h^+_1) \e^k_1+(h_2^-,h_2^+)\e^k_2)}.
\]
Iterating the process $k$-times, one obtains the result.

In case both approximation properties holds, the same analysis shows \eqref{E_equiv_Zk}.
\end{proof}

\subsection{\texorpdfstring{$k$-dimensional sheaf sets and $\mathcal D$-cylinders}{k-dimensional sheaf sets and D-cylinders}}
\label{Ss_k_dim_sheaf}

Now we apply the cone approximation property technique also to general $k$-directed sheaf sets.

Let $\{Z\ka,C\ka\}_{\a\in\A^k}$ be a $k$-directed sheaf set with reference plane $V^k = \langle \e^k_1,\cdots,\e^k_k \rangle$, base rectangle $U(\{\e_i^k\},\mathtt h^-,\mathtt h^+)$ and direction map
\begin{equation}
\label{E_k_dim_dire_map}
\mathcal D^k(\a) := C\ka \cap \mathbb S^{d-1}.
\end{equation}
For $\A^{k,'} \subset \A^k$ $\sigma$-compact, set
\begin{equation}
\label{E_bf_Zk'_def}
\mathbf Z^{k,'} := \bigcup_{\a \in \A^{k,'}} Z\ka.
\end{equation}

\begin{definition}
\label{D_mathcal_D_cyl}
Any $k$-dimensional model set $\mathbf Z^k(\mathcal D, \A^{k,'},\e^k_1,\dots,\e^k_k,\mathrm h^-, \mathrm h^+)$ of the form
\begin{equation*}
\mathbf Z^k \big( \mathcal D, \A^{k,'},\e^k_1,\dots,\e^k_k,\mathrm h^-, \mathrm h^+ \big) = \mathbf Z^{k,'} \cap \mathtt p_{V^k}^{-1} \big(\interr  U(\{\e^k_i\},\mathrm h^-,\mathrm h^+) \big)
\end{equation*}
for which there exists $\epsilon>0$ such that
\[
\begin{split}
&\mathbf Z^k \Big( \mathcal D,\A^{k,'},\e^k_1,\dots,\e^k_k, \mathrm h^- -(\epsilon,\dots,\epsilon), \mathrm h^+ + (\epsilon,\dots,\epsilon) \Big) \\
& \qquad \qquad \qquad = \mathbf Z^{k,'} \cap \mathtt p_{V^k}^{-1} \Big( \interr  U \big( \{\e^k_i\},\mathrm h^--(\epsilon,\dots,\epsilon),\mathrm h^++(\epsilon,\dots,\epsilon) \big) \Big)
\end{split}
\]
is also a $k$-dimensional model set, will be called \emph{$k$-dimensional $\mathcal D$-cylinder}.
\end{definition}

In particular, by the above definition and Definition \ref{D_k_dim_model}
\[
\mathtt p_{V^k} Z \ka \supset \interr U \Big( \{\e^k_i\},\mathrm h^--(\epsilon,\dots\,\epsilon), \mathrm h^++(\epsilon,\dots\,\epsilon) \Big) \qquad \forall\, \a \in \A^{k,'}.
\]
\begin{remark}
\label{R_property}
Notice that, since any $Z\ka$ is a relatively open set, then the sheaf set can be covered by a countable disjoint collection of $k$-dimensional $\mathcal D$-cylinders
\begin{equation}
\label{E_bf_Z_k'_n}
\mathbf Z^{k,'}_n = \mathbf Z^k \big( \mathcal D, \A^{k,'}_n,\e^k_1,\dots,\e^k_k,\mathrm h^-_n, \mathrm h^+_n \big), \qquad n \in \N,
\end{equation}
up to the points which belong to the perpendicular sections
\[
\mathbf Z^{k,'}_n \cap \mathtt p^{-1}_{V^k} \big( \partial U(\{\e^k_i\},\mathrm h_n^-, \mathrm h_n^+) \big).
\] 
In particular, the $k$-dimensional $\mathcal D$-cylinders as in \eqref{E_bf_Z_k'_n} define a partition of the sheaf set $\mathbf Z^{k,'}$ up to an $\mathcal L^d$-negligible set. 
\end{remark}
\begin{definition}
\label{D_1_dim_slice_sheaf}
Define \emph{$1$-dimensional slices} of a directed locally affine partition $\{Z\ka, C\ka\}_{\underset{\a\in\A^k}{k=1,\dots,d}}$ the $1$-dimensional slices of any of the $k$-dimensional $\mathcal D$-cylinders given by \eqref{E_bf_Z_k'_n}, for any of the countably many $k$-directed sheaf sets $\mathbf Z^k$ given by Proposition \eqref{P_countable_partition_in_reference_directed_planes}.
\end{definition}

\begin{remark}
\label{R_slice0}
Notice that, for any $1$-dimensional slice of a $\mathcal D$-cylinder
\[
\mathbf Z^{k,'}_n \cap \mathtt p^{-1}_{V^k} \Big( w_n + \big( h^-(w_n,\e_n),h^+(w_n,\e_n) \big) \e_n \Big), \quad w_n \in \interr U \big( \{\e^k_i\},\mathrm h^-_n,\mathrm h^+_n \big), \ \e_n \in C(\{\e^k_i\}),
\]
as in \eqref{E_slicekdim}, there exists $\epsilon = \epsilon_n >0$ such that the set
\[
\mathbf Z^{k,'}_n \cap \mathtt p^{-1}_{V^k} \Big( w_n + \big( h^-(w_n,\e_n) - \epsilon_n,h^+(w_n,\e_n) + \epsilon_n \big) \e_n \Big)
\]
is still a $1$-dimensional model set. This assures that the extreme points of the segments of the slice do not contain relative boundary points of the sets of the partition, and in particular the vector field $\mathtt d_{\e_n}$ is single-valued up to the boundary of $Z^{k,'}_{n,\a}$.
\end{remark}

The main result of this section is then the following theorem.

\begin{theorem}
\label{T_one_d_slicing_FC}
If either the forward cone approximation property or the backward cone approximation property holds for the $1$-dimensional slices of a directed locally affine partition $\{Z\ka, C\ka\}_{k,\a\in\A^k}$, then
\[
\mathcal L^d \llcorner_{\mathbf Z^k} = \int \upsilon^k_a d\eta(k,\a) \quad \text{with} \ \eta(k) \ll \mathcal H^{d-k} \llcorner_{\A^k} \ \text{and} \ \upsilon\ka \ll \mathcal H^{k} \llcorner_{Z\ka} \ \text{for $\eta(k)$-a.e.} \ \a \in \A^k.
\]

If both properties hold, then $\mathbf Z^k$ is regular, i.e.
\[
\eta(k) \simeq \mathcal H^{d-k} \llcorner_{\A^k} \ \text{and} \ \upsilon\ka \simeq \mathcal H^{k} \llcorner_{Z\ka} \ \text{for $\eta(k)$-a.e.} \ \a \in \A^k.
\]
\end{theorem}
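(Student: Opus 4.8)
The plan is to reduce Theorem \ref{T_one_d_slicing_FC} for $k$-directed sheaf sets to the already-established Lemma \ref{L_one_d_slicing_FC} for $k$-dimensional model sets, via the decomposition of a sheaf set into countably many $\mathcal D$-cylinders. First I would fix one of the countably many $k$-directed sheaf sets $\mathbf Z^k$ given by Proposition \ref{P_countable_partition_in_reference_directed_planes} — it suffices to prove the statement for each of these separately and then sum, since the quotient measure $\eta$ and the conditional measures glue together in the obvious way and absolute continuity / equivalence are preserved under countable disjoint unions. Inside such a sheaf set, by Remark \ref{R_property} there is a countable disjoint collection of $k$-dimensional $\mathcal D$-cylinders $\{\mathbf Z^{k,'}_n\}_{n\in\N}$ as in \eqref{E_bf_Z_k'_n} that partition $\mathbf Z^{k,'}$ up to an $\mathcal L^d$-negligible set (the leftover being contained in the perpendicular sections $\mathbf Z^{k,'}_n \cap \mathtt p^{-1}_{V^k}(\partial U)$, which are $\mathcal L^d$-null). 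Hence $\mathcal L^d \llcorner_{\mathbf Z^{k,'}} = \sum_n \mathcal L^d \llcorner_{\mathbf Z^{k,'}_n}$, and it is enough to disintegrate each $\mathcal L^d \llcorner_{\mathbf Z^{k,'}_n}$.

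Next I would apply Lemma \ref{L_one_d_slicing_FC} to each $\mathcal D$-cylinder $\mathbf Z^{k,'}_n$, which is by definition a $k$-dimensional model set. The key point to check is that the hypothesis of Theorem \ref{T_one_d_slicing_FC} — the forward (or backward) cone approximation property for the $1$-dimensional slices of the directed locally affine partition — transfers to the $1$-dimensional slices of $\mathbf Z^{k,'}_n$ in the coordinate directions $\e^k_1,\dots,\e^k_k$. By Definition \ref{D_1_dim_slice_sheaf} the $1$-dimensional slices of the partition are precisely the $1$-dimensional slices of the $\mathcal D$-cylinders, so the hypothesis applies directly; moreover Remark \ref{R_slice0} guarantees that, enlarging the cylinder slightly, each such slice is a genuine model set of directed segments on which the direction vector field $\mathtt d_{\e^k_j}$ is single-valued up to the boundary, so the forward/backward cone approximation property of Definition \ref{D_forw_back} is meaningful and holds for all $t$ in the relevant open interval. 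Lemma \ref{L_one_d_slicing_FC} then yields
\[
\mathcal L^d \llcorner_{\mathbf Z^{k,'}_n} = \int \upsilon^k_{\a,n}\, d\eta_n(k,\a), \qquad \upsilon^k_{\a,n} \ll \mathcal H^k \llcorner_{Z\ka}, \quad \eta_n(k) \ll \mathcal H^{d-k} \llcorner_{\A^{k,'}_n},
\]
and, if both cone approximation properties hold, the corresponding equivalences $\upsilon^k_{\a,n} \simeq \mathcal H^k \llcorner_{Z\ka}$ and $\eta_n(k) \simeq \mathcal H^{d-k} \llcorner_{\A^{k,'}_n}$.

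Finally I would reassemble: summing over $n$ and using the uniqueness and strong consistency of the disintegration (Theorem \ref{T_disint}), the family $\{\upsilon^k_{\a,n}\}$ patches into a single family $\{\upsilon\ka\}$ with $\upsilon\ka \ll \mathcal H^k\llcorner_{Z\ka}$ (resp. $\simeq$) and quotient measure $\eta(k) = \sum_n \eta_n(k) \ll \mathcal H^{d-k}\llcorner_{\A^k}$ (resp. $\simeq$), after renormalising as in Remark \ref{R_disint_lebesgue}; then summing over the countably many sheaf sets finishes the proof. The main obstacle I anticipate is bookkeeping rather than conceptual: one must be careful that the $\mathcal D$-cylinders genuinely exhaust $\mathbf Z^{k,'}$ up to an $\mathcal L^d$-null set (which is exactly Remark \ref{R_property}, relying on each $Z\ka$ being relatively open), and that the boundary sections discarded at each stage — both the $\partial U$-sections between cylinders and the relative-boundary points excluded by Remark \ref{R_slice0} — are indeed Lebesgue-negligible so that no mass is lost when gluing the disintegrations; checking that the measurable dependence on $(k,\a)$ is preserved through the countable gluing is the only other point requiring mild care.
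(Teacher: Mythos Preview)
Your proposal is correct and follows essentially the same approach as the paper: reduce to sheaf sets via Proposition \ref{P_countable_partition_in_reference_directed_planes}, then to $\mathcal D$-cylinders via Remark \ref{R_property}, observe that Definition \ref{D_1_dim_slice_sheaf} makes the hypothesis apply directly, and conclude with Lemma \ref{L_one_d_slicing_FC}. The paper's proof is in fact just these three sentences, and your write-up is a faithful expansion of the covering-and-gluing bookkeeping they leave implicit.
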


\begin{proof}
By Proposition \ref{P_countable_partition_in_reference_directed_planes} and Remark \ref{R_property}, using a simple covering argument and $\sigma$-additivity of measures one can reduce to study the absolute continuity of the disintegrations on $\mathcal D$-cylinders. Then, by Definition \ref{D_1_dim_slice_sheaf}, one concludes using Lemma \ref{L_one_d_slicing_FC} .
\end{proof}

\subsection{Negligibility of initial/final points}
\label{Ss_neglig_init_fin}

Now we deal with the other measure-theoretic problem connected to directed locally affine partitions, namely to establish whether
\[
\LL(\mathcal I) = 0 \quad \text{ and/or } \quad \LL(\mathcal E) = 0.
\]
It turns out that these properties are implied by the validity of the initial/final cone approximation properties for \emph{initial/final $1$-dimensional slices}.

\begin{definition}
\label{L_init_fin_1_dim_slice}
An \emph{initial $1$-dimensional slice} of a directed locally affine partition $\{Z\ka, C\ka\}_{\underset{\a\in\A^k}{k=1,\dots,d}}$ is a $1$-dimensional model set of the form
\[
\mathbf Z^k \cap \mathtt p_{V^k}^{-1} \big(w+(h^-,h^+)\e \big)
\]
for a $k$-directed sheaf set $\mathbf Z^k $ with reference plane $V^k=\langle\e_1,\dots,\e_k\rangle$, $\e\in C(\{\e_i^k\})\cap\mathbb S^{d-1}$, for which there exists $\epsilon > 0$ such that the set
\[
\mathbf Z^k \cap \mathtt p_{V^k}^{-1} \big( w+(h^-,h^++\epsilon) \e \big)
\]
is still a $1$-dimensional model set.

Similarly, a \emph{final $1$-dimensional slice} of a directed locally affine partition $\{Z\ka, C\ka\}_{\underset{\a\in\A^k}{k=1,\dots,d}}$ is a $1$-dimensional model set of the form
\[
\mathbf Z^k \cap \mathtt p_{V^k}^{-1} \big( w+(h^-,h^+)\e \big)
\]
for which there exists $\epsilon>0$ such that the set
\[
\mathbf Z^k \cap \mathtt p_{V^k}^{-1} \big( w+(h^--\epsilon,h^+) \e \big)
\]
is still a $1$-dimensional model set.
\end{definition}
By Remark \ref{R_slice0}, the vector field $\mathtt d_\e$ of an initial slice can be multivalued only at the points of the section $P_{h^-}$, while for a final slice it can be multivalued only on $P_{h^+}$. 

\begin{remark}
 \label{R_infinslices}
Notice that a $1$-dimensional slice of a directed locally affine partition according to Definition \ref{D_1_dim_slice_sheaf} is both an initial/final $1$-dimensional slice. In particular, since the direction vector field of a $1$-dimensional slice is single-valued, its Borel-measurable sections coincide trivially with itself and then the initial/final forward/backward cone approximation properties (see Definition \ref{D_initial_forward}) are simply an extension of the forward/backward cone approximation property (see Definition \ref{D_forw_back}) to the initial/final points of the slice. Hence, saying that the initial/final $1$-dimensional slices of a directed locally affine partition satisfy the initial forward/final backward cone approximation property implies that the $1$-dimensional slices of that directed locally affine partition satisfy the forward/backward cone approximation property. 
\end{remark}

The following theorem follows from Corollary \ref{Cinfin}, as in the density Lemma 4.19 proved in \cite{CarDan} for the relative boundary points of the locally affine partition into the faces of a convex function.

\begin{theorem}
\label{T_FC_no_initial}
If the initial $1$-dimensional slices of a directed locally affine partition satisfy the initial forward cone approximation property, then
\[
\LL(\mathcal I) = 0.
\]

Similarly, if the final $1$-dimensional slices of a directed locally affine partition satisfy the final backward cone approximation property, then
\[
\LL(\mathcal E) = 0.
\]
\end{theorem}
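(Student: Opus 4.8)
\textbf{Proof plan for Theorem \ref{T_FC_no_initial} (negligibility of initial/final points).}

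The plan is to reduce the statement to the one-dimensional estimate of Lemma \ref{Cinfin} via Fubini--Tonelli, exactly in the spirit of Theorem \ref{T_one_d_slicing_FC}, but now keeping track of the boundary section $P_{h^-}$ (resp. $P_{h^+}$) which was discarded there. I will only treat $\mathcal I$, the argument for $\mathcal E$ being symmetric. First I would use Proposition \ref{P_countable_partition_in_reference_directed_planes} to write the directed locally affine partition as a countable disjoint union of $k$-directed sheaf sets, and then cover each sheaf set $\mathbf Z^k$ by countably many $k$-dimensional $\mathcal D$-cylinders as in Remark \ref{R_property}. By $\sigma$-additivity it suffices to prove that $\mathcal L^d$-almost every point of a fixed $\mathcal D$-cylinder which lies in $\mathcal I$ is negligible; more precisely, since $\mathcal I \subset \R^d \setminus \mathbf Z$ and the initial points of $Z\ka$ accumulate onto $Z\ka$ along $\interr C\ka$, the set $\mathcal I$ meets each $\mathcal D$-cylinder in a portion of the boundary section $\mathtt p_{V^k}^{-1}(\partial U)$ reached as a limit of segments of the cylinder. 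The key reduction is: fixing one of the $k$ reference directions $\e^k_j$, the initial $1$-dimensional slices (Definition \ref{L_init_fin_1_dim_slice}) of the $\mathcal D$-cylinder in direction $\e^k_j$ are model sets of directed segments $\mathbf Z^1(\mathtt d_{\e^k_j}, \A^{k,'}, \e^k_j, h^-_j, h^+_j)$ for which, by hypothesis, the \emph{initial} forward cone approximation property holds, so that Lemma \ref{Cinfin} applies to the map $\tilde\sigma^{s,t}_+$ for \emph{all} $h^-_j \le s \le t \le h^+_j$, including $s = h^-_j$.

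The core computation is then the following. For a single initial $1$-dimensional slice $\mathbf Z^1(\mathtt d,\A^1,\e,h^-,h^+)$, Lemma \ref{Cinfin} gives, taking $s = h^-$ and any $t \in (h^-,h^+)$,
\[
\tilde\sigma^{h^-,t}_{+ \#}\, \mathcal H^{d-1}\llcorner_{P_{h^-}} \le \Big( \frac{h^+ + \epsilon - h^-}{h^+ + \epsilon - t} \Big)^{d-1} \mathcal H^{d-1}\llcorner_{P_t},
\]
and since $\tilde\sigma^{h^-,t}_+$ is a bijection onto its image with bi-Lipschitz inverse $\tilde\sigma^{t,h^-}_+$ onto $P_{h^-}$, one concludes that $\mathcal H^{d-1}(P_{h^-}) \le C(t)\, \mathcal H^{d-1}(\tilde\sigma^{t,h^-}_+(P_t)) \le C(t)\,\mathcal H^{d-1}(P_t)$ is bounded uniformly for $t$ away from $h^+$; but if the initial points $P_{h^-}$ carried positive $\mathcal H^{d-1}$-measure inside the ambient $(d-k+1)$-plane of the slice, then running the argument the other way (pushing mass \emph{from} $P_{h^-}$) and applying Fubini-Tonelli over $t \in (h^-,h^+)$ would force the $(d-k+1)$-dimensional Lebesgue measure of the slice to be infinite, contradicting $\mathcal L^d(\mathbf Z^k) < \infty$. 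This is precisely the mechanism of Lemma 4.19 in \cite{CarDan}: the section of initial points is $\mathcal H^{d-1}$-negligible in each transversal plane because a set of positive measure at the "tip" $s = h^-$ would be pushed forward with controlled (and bounded-below, after using the lower bound analogue obtained from running the cone approximation from the tip) density onto every section, violating finiteness of $\mathcal L^d$.

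Having shown that for every initial $1$-dimensional slice the boundary section $P_{h^-}$ is $\mathcal H^{d-1}$-negligible inside its transversal $(d-k+1)$-plane, I would assemble the global statement by Fubini-Tonelli: foliating the $\mathcal D$-cylinder by its initial $1$-dimensional slices in the direction $\e^k_j$ (parametrized by $w$ in a $(d-1)$-dimensional transversal), the set of initial points of the cylinder is contained in the union of the sections $P_{h^-_j}$ of these slices, and by the slice-wise negligibility together with the integral-geometric (Fubini) identity $\mathcal L^d\llcorner_{\mathbf Z^k_n} = \int \mathcal L^{d-k+1}\llcorner_{\text{slice}}\, d\mathcal L^{k-1}$ adapted to this foliation, one gets $\mathcal L^d$ of that union equal to zero. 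Summing over the countably many $\mathcal D$-cylinders, sheaf sets, and over $k = 1,\dots,d$ gives $\mathcal L^d(\mathcal I) = 0$; the measurability of $\mathcal I$ needed to make sense of this was established in Lemma \ref{L_regularity_initial_final_points}. The main obstacle, and the point where I expect to have to be careful, is the passage from the one-dimensional estimate at the degenerate endpoint $s = h^-$ to the genuine $\mathcal H^{d-1}$-negligibility of $P_{h^-}$: one must combine the upper bound \eqref{E_forward_esti_area1} at the tip with a matching lower bound (obtained by the same cone-approximation argument run forward from the tip, which forces the push-forward densities to stay bounded below on compact subsets of $(h^-,h^+)$) so that positive mass at $s=h^-$ would genuinely contradict $\mathcal L^d(\mathbf Z^k) < \infty$ — this is exactly the content borrowed from \cite{CarDan}, Lemma 4.19, and the honest work is to check its hypotheses transfer to our initial-slice setting, which they do by Remark \ref{R_slice0} and Remark \ref{R_infinslices}.
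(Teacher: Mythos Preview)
Your overall architecture (reduce to sheaf sets, then to $1$-dimensional slices, then use Lemma~\ref{Cinfin} at the degenerate endpoint) is the right shape, but there is a genuine geometric gap in the covering step. You assert that the initial points of the partition, restricted to a $\mathcal D$-cylinder, sit on the boundary section $P_{h^-_j}$ of your initial $1$-dimensional slices, and then run Fubini over the transversal parameter $w$. This is not correct: an initial $1$-dimensional slice in the sense of Definition~\ref{L_init_fin_1_dim_slice} has a \emph{single} value of $h^-$ common to all its segments, whereas for different $\a$ the sets $\mathtt p_{V^k}(Z^k_\a)$ are different open subsets of $V^k$ and their relative boundaries (where the initial points project) occur at different heights along $\e$. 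So for a fixed $w$ and $\e$ there is no single $h^-$ putting all the initial points of all the relevant $Z^k_\a$ on $P_{h^-}$; your foliation simply does not cover $\mathcal I$. The paper deals with exactly this obstruction by first stratifying $\mathcal I$ according to the ``reach'' $\mathtt l(z)=\sup\{r: z+\interr C^k_\a\cap B^d(0,r)\subset Z^k_\a\}$, fixing a level set $\mathcal I^{\bar r}$, assuming for contradiction it has positive $\mathcal L^d$-measure, picking a Lebesgue point $\bar z$, and only \emph{then} constructing an initial slice whose $P_{h^-}$ section passes through $\bar z$, with $\A^{k,'}$ restricted to those $\a$ whose initial point actually lies on that section and with $h^+:=h^-+\bar r/2$ controlled by the reach.

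The second gap is the contradiction mechanism. Your ``positive $\mathcal H^{d-1}$-mass at the tip forces infinite $\mathcal L^d$ of the slice'' does not follow from the upper bound \eqref{E_forward_esti_area1} alone; as you note, it needs a matching lower bound on the push-forward density, which the initial forward cone approximation property does not supply. The paper uses a different and cleaner contradiction: from Lemma~\ref{Cinfin} at $s=h^-$ one gets that the forward image $\tilde\sigma^{h^-,t}_+(P_{h^-})$ fills $P_t\cap B^d(\bar z,r)$ with $\mathcal H^{d-k}$-density tending to $1$ as $t\searrow h^-$ and $r\to 0$; but these forward-image points lie in $\mathbf Z$, hence are disjoint from $\mathcal I^{\bar r}$, contradicting that $\bar z$ is a Lebesgue point of $\mathcal I^{\bar r}$. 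The missing ideas, then, are the stratification by $\bar r$ and the Lebesgue-density argument at a well-chosen point, rather than a global Fubini over pre-chosen cylinder sections.
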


We give only a sketch of the proof, since the details have already been given in Lemma 4.19 of \cite{CarDan}.

\begin{proof}
W.l.o.g. we can restrict to a $k$-directed sheaf set $Z^k$ with reference $k$-plane $V^k=\langle\e^k_1,\dots,\e^k_k\rangle$. Let us then consider the map
\begin{equation}
\label{E_r_init_map}
\mathcal I \ni z \mapsto \mathtt l(z) := \sup \Big\{ r : z + \big( \interr\, C\ka \cap B^d(0,r) \big) \subset Z\ka, \ \text{for some} \ \a \in \A^k \Big\}.
\end{equation}
By Definition \ref{D_initial_final}, $\mathtt l(z) > 0$ for all $z \in \mathcal I$, and then by a countable covering argument we need only to prove the negligibility of the set
\begin{equation}
\label{E_cal_I_r}
\mathcal I^{\bar r} := \mathcal I \cap \mathtt l^{-1}(\bar r),
\end{equation}
with $\bar r > 0$ fixed.

Assume that $\mathcal L^d(\mathcal I^{\bar r})>0$ and that $\bar z \in \bar{\mathbf Z}^k$ is a Lebesgue point of $\mathcal I^{\bar r}$. Then if $\e\in C(\{\e_i^k\})\cap\mathbb S^{d-1}$, at least one of the sets
\[
P_{w + t \e} = \mathcal I^{\bar r} \cap \mathtt p^{-1}_{V^k} ( w + t \e), \quad w \in \langle\e\rangle^\perp \cap V^k,
\]
has $\mathcal H^{d-k}$-positive measure, and we can assume that $\bar z$ is also a Lebesgue point for $\mathcal H^{d-k} \llcorner_{P_{w+t\e}}$. For definiteness, we will assume that $P_{w + t \e}=P_{\bar w + h^-(\bar w, \e) \e}=\overline{\mathbf Z}^{k,'}\cap\p^{-1}_{V^k}(\bar w+h^-(\bar w,\e)\e)$ for some $\bar w$ in the relative interior of the base rectangle of $Z^{k,'}\subset Z^k$, and let
\[
P_{\bar w + h^-(\bar w, \e) \e}\ni z\mapsto\mathtt d^{h^-}(z) := \Big\{ C\ka \cap \mathtt p^{-1}_{V^k}\langle\e\rangle: z \in \mathcal I(Z\ka) \Big\}
\]
be the multivalued maps defined in \eqref{E_tt_d_ext} for the $1$-dimensional slice $\mathbf Z^{k,'}_{\bar w, \e}$ defined by
\[
\mathbf Z^{k,'}_{\bar w,\e}:=\Big\{ Z\ka \cap \mathtt p^{-1}_{V^k} \big( \bar w + (h^-,h^+) \e \big), C\ka \cap \mathtt p^{-1}_{V^k}\langle\e\rangle\Big\}_{\a \in \A^{k,'}}, \quad h^+ := h^- + \frac{\bar r}{2},
\]
where $\A^{k,'}$ is the sets of $\a$ such that $Z\ka$ has an initial point $z$ on $P_{\bar w + h^-(\bar w, \e) \e}$ and
\[
z + \interr\, C\ka \cap B^d(0,\bar r/2) \subset Z\ka.
\]

If $\tilde{\mathtt d}^{h^-}_+$ is a Borel section of $\mathtt d^{h^-}$ chosen accordingly to Definition \ref{D_initial_forward}, then consider the $1$-dimensional slice $\mathbf Z^{k,''}_{\bar w, \e} \subset \mathbf Z^{k,'}_{\bar w, \e}$ defined by
\[
\mathbf Z^{k,''}_{\bar w,\e}:=\Big\{ Z\ka \cap \mathtt p^{-1}_{V^k} \big( \bar w + (h^-,h^+) \e \big), C\ka \cap \mathtt p^{-1}_{V^k}(\langle\e\rangle) \Big\}_{\a \in \A^{k,''}}, \quad h^+ := h^- + \frac{\bar r}{4},
\]
where $\A^{k,''} \subset \A^{k,'}$ is the set of $\a$ satisfying
\[
z \in P_{\bar w + h^- (\bar w,\e)\e} \quad \Longrightarrow \quad \tilde{\mathtt d}^{h^-}_+(z) = C\ka \cap \mathtt p^{-1}_{V^k}\langle\e\rangle \cap \mathbb S^{d-1}.
\]
In other words, $\mathbf Z^{k,''}_{\bar w,\e}$ is the $1$-dimensional slice whose initial points belong to $P_{\bar w + h^-(\bar w,\e) \e}$ and whose segments are given by $\dom\,\tilde{\mathtt d}_+\cap\p_{V^k}^{-1}\big(\bar w+\big(h^-(\bar w,\e),h^-(\bar w,\e)+\frac{\bar r}{4}\big)\big)\e$, where $\tilde{\mathtt d}_+$ was defined in \eqref{E_tilde_tt_d_+}. Clearly, by restricting to a $\sigma$-compact $\mathcal H^{d-k}$-conegligible subset of $P_{\bar w + h^-(\bar w, \e) \e}$ so that $\tilde{\mathtt d}^{h-}_+$ is $\sigma$-continuous, this procedure defines an initial $1$-dimensional slice. 

By the initial forward cone approximation property, Lemma \ref{Cinfin} implies that if $\bar z \in \mathcal I(Z^{k,''}_{\bar w,\e})$ is a Lebesgue point of $\mathcal H^{d-k} \llcorner_{P_{\bar w + h^-(\bar w,\e) \e}}$, then
\[
\lim_{r \to 0} \bigg[ \lim_{t \searrow h^-} \mathcal H^{d-k} \big( \mathbf Z^{k,''}_{\bar w,\e} \cap P_{\bar w + t \e} \cap B^d(\bar z,r) \big) \bigg] = 1.
\]
Since $\mathbf Z^{k,''}\cap\mathcal I^{\bar r}=\emptyset$, this clearly contradicts the fact that $\bar z$ is a Lebesgue point of $\mathcal I^{\bar r}$.

\end{proof}
In view of Theorems \ref{T_one_d_slicing_FC} and \ref{T_FC_no_initial} and recalling Remark \ref{R_infinslices}, for future convenience we give the following 
\begin{definition}
\label{D_coneapprpart}
 A directed locally affine partition satisfies the (initial/final) forward/backward cone approximation property if its (initial/final) $1$-dimensional slices satisfy the 
 (initial/final) forward/backward cone approximation property.
\end{definition}

\section{Proof of Theorem \ref{T_1}}
\label{S_theorem_1_proof}

This section is devoted to the proof of Theorem \ref{T_1}, which we recall below.

\begin{theorem1}
Let $\mu,\nu \in \PP(\R^d)$ with $\mu \ll \LL$ and let $\d{\cdot}$ be a convex norm in $\R^d$. Then there exists a locally affine directed partition $\{Z^k_\a,C^k_\a\}_{\overset{k=0,\dots,d}{\a\in\A^k}}$ in $\R^d$ with the following properties:
\begin{enumerate}
\item for all $\a \in \A^k$ the cone $C^k_\a$ is a $k$-dimensional extremal face of $\d{\cdot}$;
\item $\LL \Bigl( \R^d \setminus \underset{k,\a}{\bigcup}\, Z^k_\a \Bigr)=0$;
\item $\{Z\ka\}_{k,\a}$ is regular, namely the disintegration of the measure $\LL$ w.r.t. the partition $\{Z\ka\}_{k,\a}$, $\displaystyle{\LL \llcorner_{\underset{k,\a}{\cup} Z\ka} = \int v^k_\a\,d\eta(k,\a)}$, satisfies
\[
v^k_a \simeq \HH^k \llcorner_{Z^k_\a}\quad\text{for $\eta(k)$-a.e. $\a\in\A^k$.}
\]
\item \label{Point_4_T_1_h} for all $\pi \in \pod(\mu,\nu)$, the disintegration $\displaystyle{\pi = \int \pi\ka\, dm(k,\a)}$ w.r.t. the partition $\{Z\ka \times \R^d\}_{k,\a}$ satisfies
\[
\pi^k_\a \in \Pi^f_{\mathtt c_{C^k_\a}}(\mu\ka, (\mathtt p_2)_\# \pi\ka),
\]
where $\displaystyle{\mu = \int \mu\ka\,dm(k,\a)}$ is the disintegration w.r.t. the partition $\{Z\ka\}_{k,\a}$, and moreover
\[
(\mathtt p_2)_\#\pi\ka \biggl( Z\ka \cup \biggl( \R^d \setminus \underset{(k',\a') \not= (k,\a)}{\bigcup} Z^{k'}_{\a'} \biggr) \biggr) = 1.
\]
\end{enumerate}

If also $\nu \ll \LL$, then for all $\pi\in\pod(\mu,\nu)$
\[
(\mathtt p_2)_\#\pi\ka = \nu\ka
\]
where $\displaystyle{\nu = \int \nu\ka \,dm(k,\a)}$ is the disintegration w.r.t. the partition $\{Z\ka\}_{k,\a}$, and the converse of Point \eqref{Point_4_T_1_h} holds:
\begin{equation*}
\pi\ka \in \Pi^f_{\mathtt c_{C\ka}}(\mu\ka,\nu\ka) \quad \Longrightarrow \quad \pi \in \pod(\mu,\nu).
\end{equation*}

\end{theorem1}

We start the proof by recalling that, by Proposition \ref{P_equivalence_lifting},
\begin{equation}
\label{E_pioptgraph}
\pi \in \Pi^{\mathrm{opt}}_{\d{\cdot}}(\mu,\nu) \quad \Longleftrightarrow \quad \hat \pi \in \Pi(\hat\mu,\hat\nu),\ \hat{\pi}(\partial^+\Graph\,\psi)=1,
\end{equation}
where $\psi:\R^d\to\R$ is the $\d{\cdot}$-Lipschitz function given by a Kantorovich potential and $\hat\mu$, $\hat\nu$, $\hat\pi$ are the push-forwards of $\mu$, $\nu$, $\pi$ on $\R^{d+1}$ through the map $(\Id\times\psi)$.

By Remark \ref{R_lipgraph}, $\Graph\,\psi \subset \R^{d+1}$ is a \emph{complete $\mathtt c_{\epi\d{\cdot}}$-Lipschitz graph}, according to Definition \ref{D_complete_G}. Then, by Proposition \ref{P_ex_fol}, call $\theta_\psi$ the trivial $\mathtt c_{\C}$-Lipschitz foliation on $\R^{d+1}$ associated to $\Graph\,\psi$.

We now show that Theorem \ref{T_1} follows from \eqref{E_pioptgraph} , thanks to the results of Sections \ref{S_foliations} and \ref{S_disintechnique} and the following theorem.

\begin{theorem}
\label{T_cone_graph}
Let $\Graph\,\varphi\subset\R^{d+1}$ be a complete $\mathtt c_{\epi\d{\cdot}}$-Lipschitz graph. Then, the superdifferential partition satisfies the initial forward cone approximation property and the subdifferential partition satisfies the final backward approximation property. 
\end{theorem}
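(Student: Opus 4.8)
The statement to be proven is Theorem~\ref{T_cone_graph}: for a complete $\mathtt c_{\epi\d{\cdot}}$-Lipschitz graph $\Graph\,\varphi\subset\R^{d+1}$, the superdifferential partition $\{Z^{\ell,+}_{\a,\b},C^{\ell,+}_{\a,\b}\}$ satisfies the initial forward cone approximation property, and the subdifferential partition satisfies the final backward cone approximation property. By symmetry (passing to $\varphi\mapsto-\varphi$, which exchanges $\partial^+$ and $\partial^-$ up to reflection), it suffices to treat the forward statement. By Definition~\ref{D_coneapprpart}, I must show that the initial $1$-dimensional slices of the superdifferential partition satisfy the initial forward cone approximation property of Definition~\ref{D_initial_forward}: for a Borel selection $\tilde{\mathtt d}_+$, for each level $t$ in the reference interval there is a sequence $\{\mathtt d^t_j\}_j$ of finite unions of cone vector fields with base in $P_t$ and vertices on a section strictly beyond $h^+$, defined $\mathcal H^{d-1}$-a.e.\ on $P_t$, with $\mathtt d^t_j\to\mathtt d^t$ a.e.

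\textbf{Construction of the approximating cone vector fields.} The key geometric input is the completeness property \eqref{0_compl1}--\eqref{0_compl2} of $\Graph\,\varphi$, equivalently \eqref{E_parallelogram_subdiff}, which gives that $\O(w,w')\subset\partial^+\Graph\,\varphi(w)\cap\partial^-\Graph\,\varphi(w')$ whenever $w,w'\in\Graph\,\varphi$ with $\mathtt c_{\epi\d{\cdot}}(w,w')<\infty$, together with Proposition~\ref{P_parall} describing $\O(w,w')$ as the ``parallelogram'' $w+C(w,w')\cap w'-C(w,w')$. First I would fix a $k$-directed sheaf set inside the superdifferential partition (Proposition~\ref{P_countable_partition_in_reference_directed_planes}, here with $k$ between $1$ and $d$), reduce to a $\mathcal D$-cylinder, and fix a $1$-dimensional slice $\mathbf Z^1$ in a base direction $\e$ with configuration $(\e,h^-,h^+)$; by Remark~\ref{R_slice0} the direction field $\mathtt d_\e$ is single-valued on $Z^{k}_{\a}$ up to the boundary, and by Remark~\ref{R_property} the cylinder can be enlarged to $(h^--\epsilon_0,h^++\epsilon_0)$. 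The plan for producing $\mathtt d^t_j$ is: fix $t\in(h^-,h^+)$ and a point $z\in P_t$ lying on the ray $Z^{\ell,+}_{\a,\b}$ with direction cone $C^{\ell,+}_{\a,\b}$; by Theorem~\ref{T_partition_E+-}, $\mathcal D^+\theta_\varphi$ at $z$ equals $C^{\ell,+}_{\a,\b}\cap\mathbb S^{k-1}$, so there is a point $w'(z)\in\partial^+\Graph\,\varphi$ at parameter $\geq h^++\epsilon_0$ along a direction in $\interr C^{\ell,+}_{\a,\b}$; using completeness, the whole parallelogram $\O(z,w'(z))$ lies in the graph's superdifferential structure, so the cone vector field with vertex $w'(z)$ agrees with $\mathtt d$ on an open neighborhood within $Z^{\ell,+}_{\a,\b}$. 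To get a \emph{finite union} of cone vector fields covering $\mathcal H^{d-1}$-almost all of $P_t$ with pairwise disjoint ``shadow'' sets $E_{\mathtt d_i}$, I would use a Vitali-type / measurable-selection covering of $P_t$ by countably many such parallelogram-shaped neighborhoods, truncate to finitely many up to small measure, and index by $j$ so that as $j\to\infty$ the cone vertices can be taken with vertex-parameter tending to $h^++\epsilon_0$ but the directions converge to $\mathtt d^t$. The disjointness of the $E_{\mathtt d_i}$ is exactly what the parallelogram picture of Proposition~\ref{P_parall} and the fact that $\{Z^{\ell,+}_{\a,\b}\}$ is a genuine partition buy us. For the \emph{initial} version one extends the selection $\tilde{\mathtt d}^{h^-}_+$ of Lemma~\ref{L_reg_tt_d_h_pm} to $t=h^-$ and checks the same construction works up to the section $P_{h^-}$ of initial points.

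\textbf{Convergence and conclusion.} Once the finite unions $\mathtt d^t_j$ are built with $\mathtt d^t_j\to\mathtt d^t$ $\mathcal H^{d-1}\llcorner_{P_t}$-a.e., Definition~\ref{D_forw_back} (resp.\ Definition~\ref{D_initial_forward}) is verified, so Lemma~\ref{L_forward_esti_area} / Lemma~\ref{Cinfin} apply and the superdifferential partition has the initial forward cone approximation property; the subdifferential case is symmetric, giving the final backward property. The main obstacle, and where I'd spend the real work, is the disjointness-and-finiteness bookkeeping: showing that the parallelogram neighborhoods furnished by completeness can be organized into finite unions of cone vector fields whose cone-shadows $E_{\mathtt d_i}$ are disjoint (Definition~\ref{D_cone_vector}), \emph{and} that this can be done with vertices on a single hyperplane $\mathtt p_{\langle\e\rangle}^{-1}((h^++\epsilon)\e)$ common to the whole slice — the radii $\delta(F,w)$ controlling the size of the parallelograms may degenerate as $w$ varies (Remark~\ref{R_directionfaces}), so one must pass to $\sigma$-compact pieces on which things are uniform, exactly as in the measurable-selection arguments of Proposition~\ref{P_borel_regu_transport_sets} and Lemma~\ref{L_reg_tt_d_h_pm}, and then exhaust $P_t$ up to $\mathcal H^{d-1}$-null sets. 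The remaining ingredients — that $C^{\ell,+}_{\a,\b}$ is an extremal face of $\epi\d{\cdot}$, the transitivity \eqref{E_transitivity_subdifferential_a}, and the $\sigma$-continuity of all the relevant maps — are already available from Sections~\ref{S_foliations}. This mirrors the technique of \cite{CarDan}, the difference being that here the ``convex function'' is replaced by the graph of the $\d{\cdot}$-Lipschitz $\varphi$ and the faces by the extremal faces of $\epi\d{\cdot}$.
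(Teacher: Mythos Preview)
Your proposal has a genuine gap: the geometric ``parallelogram'' picture does not by itself supply the two ingredients the construction needs, namely (i) a mechanism producing finite unions of cone vector fields with \emph{pairwise disjoint} shadows $E_{\mathtt d_i}$, and (ii) a reason why these converge to $\mathtt d^t$ $\mathcal H^{d-1}$-a.e. Your claim that ``the cone vector field with vertex $w'(z)$ agrees with $\mathtt d$ on an open neighborhood within $Z^{\ell,+}_{\a,\b}$'' is not correct: the $1$-dimensional slice meets each $Z^{\ell,+}_{\a,\b}$ in a single segment, so on the section $P_t$ each $z$ corresponds to its own $\b$, and a cone field with the single vertex $w'(z)$ agrees with $\mathtt d$ only at the one point $z$, not on any neighborhood in $P_t$. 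Likewise, disjointness of the partition sets $Z^{\ell,+}_{\a,\b}$ says nothing about disjointness of the cone shadows $E_{\mathtt d_i}$ once you replace the true endpoints by finitely many approximating vertices: nothing in the parallelogram structure prevents the approximating segments from crossing.

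The paper's proof rests on a different idea that you are missing. Writing $w_{\b,h^-}=(x_{\b,h^-},\varphi(x_{\b,h^-}))$ and $w_{\b,h^++\eps}=\sigma^{h^-,h^++\eps}(w_{\b,h^-})$, one observes that $y_{\b,h^++\eps}:=\mathtt p_{\R^d}w_{\b,h^++\eps}$ is the \emph{unique maximizer} of
\[
\varphi(x_{\b,h^-}) \;=\; \max_{y\in \mathtt p_{\R^d}(P_{h^++\eps})}\big\{\varphi(y)-\d{y-x_{\b,h^-}}\big\}.
\]
This variational characterization is what drives both disjointness and convergence: one discretizes $\mathtt p_{\R^d}(P_{h^++\eps})$ by finitely many points, solves the transport problem from $P_{h^-}$ to these atoms for a strictly convex perturbation of the cost $\d{\cdot}$, and takes the resulting optimal rays. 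These rays are automatically non-crossing by $|\cdot|$-cyclical monotonicity (giving the disjoint $E_{\mathtt d_i}$), they form a genuine finite union of cone vector fields because the second marginal is purely atomic, and they converge to $\mathtt d^t$ as the discretization refines precisely because the maximizer above is unique. For the initial (possibly multivalued) section one lets the same approximation scheme \emph{select} a Borel section $\tilde{\mathtt d}^{h^-}_{+,\e}$ as its limit. Your Vitali/selection outline does not produce any of this; the missing ingredient is the optimal-transport construction of the approximating cones, not a covering argument.
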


Indeed, recalling Definition \ref{D_coneapprpart}, first notice that if Theorem \ref{T_cone_graph} holds, then by Theorem \ref{T_FC_no_initial}
\begin{equation}
\label{E_hd0}
\mathcal H^d(\mathcal I^+\theta_\varphi)= \mathcal H^d(\mathcal E^-\theta_\varphi) = 0.
\end{equation}
Moreover, by Remark \ref{R_infinslices} and the fact that $\hat{\mathbf D}=\hat{\mathbf D}^+\cap\hat{\mathbf D}^-$ (see \eqref{E_sub_directed_partition_all}), Theorem \ref{T_one_d_slicing_FC} implies that the disintegration of the $d$-dimensional Hausdorff measure on the differential partition $\hat{\mathbf D}$ of $\theta_\varphi$ is regular, namely
has conditional probabilities equivalent to the Hausdorff measures on the locally affine sets on which they are concentrated. 
Therefore, denoting the locally affine partition $\hat{\mathbf D}$ as
\[
\big\{ \hat Z^{k}_{\mathfrak \a},\hat C\ka \big\}_{\overset{k=1,\dots\,d}{\mathfrak a\in \mathfrak A^k}} \subset \mathtt P \bigg( \R^{d+1} \times \bigcup_{k=1}^d \mathcal C(k,\R^{d+1}) \bigg),
\] 
and setting $\{\hat Z^0_\a\}_{\a\in\mathfrak A^0}$ for the $0$-dimensional partition of the fixed points $\mathfrak A^0=\mathcal F\theta_\varphi$,
by \eqref{E_graphpsimu} the sets 
\[
\Big\{ Z^k_\a = \mathtt p_{\R^d} \hat Z^k_\a, C^k_\a = \mathtt p_{\R^d} \hat C^k_\a \Big\}_{\overset{k=0,\dots\,d}{\mathfrak a\in \mathfrak A^k}} \subset \mathtt P \bigg( \R^{d} \times \bigcup_{k=0}^d \mathcal C(k,\R^{d}) \bigg),
\]
define a locally affine directed partition of $\R^{d}$ satisfying $(1)$, $(2)$ and $(3)$. 

Let us now use the fact that $\varphi=\psi$ is a Kantorovich potential for $\Pi^{\mathrm{opt}}_{\d{\cdot}}(\mu,\nu)$ and that $\mu\ll\mathcal L^d$. By \eqref{E_graphpsimu} and \eqref{E_hd0},
\begin{align}
& \mu \ll \LL \quad \Longrightarrow \quad \hat \mu \ll \mathcal H^d \llcorner_{\Graph\,\psi} \quad \Longrightarrow \quad \hat \mu(\mathcal I^+\theta_\psi) = \hat \mu(\mathcal E^-\theta_\psi) = 0, \label{E_mu0}
\end{align}
Then, by Remark \ref{R_4.27}, Proposition \ref{P_disint_fol} applies to the locally affine partition $\big\{ \hat Z^{k}_{\mathfrak \a},\hat C\ka \big\}_{\overset{k=1,\dots\,d}{\mathfrak a\in \mathfrak A^k}}$, giving that any transport plan $\hat\pi$ as in \eqref{E_pioptgraph} satisfies $\hat\pi\in\Pi^f_{\mathtt c_{\hat{\mathbf D}}}(\hat\mu,\hat\nu)$. In particular, by Proposition \ref{P_dispiani}, the disintegration $\displaystyle{\hat\pi = \int \hat\pi\ka\, dm(k,\a)}$ w.r.t. the partition $\{\hat Z\ka \times \R^{d+1}\}_{k,\a}$ satisfies
\[
\hat\pi^k_\a \in \Pi^f_{\mathtt c_{\hat C^k_\a}}(\hat\mu\ka, (\mathtt p_2)_\# \hat\pi\ka),
\]
where $\displaystyle{\hat\mu = \int \hat\mu\ka\,dm(k,\a)}$ is the disintegration w.r.t. the partition $\{\hat Z\ka\}_{k,\a}$. Moreover, by Proposition \ref{P_hat_bf_D_graph}, the partition $\{\hat Z\ka\}_{k,\a}$ satisfies condition \eqref{E_more_than_complet}, which gives 
\[
(\mathtt p_2)_\#\hat\pi\ka \biggl( \hat Z\ka \cup \biggl( \Graph\,\psi \setminus \underset{(k',\a') \not= (k,\a)}{\bigcup} \hat Z^{k'}_{\a'} \biggr) \biggr) = 1.
\]
 Then it is not difficult to see that the directed locally affine partition of $\R^d$ given by $\big\{ Z^{k}_{\mathfrak \a},C\ka \big\}_{\overset{k=0,\dots\,d}{\mathfrak a\in \mathfrak A^k}}$ satisfies also Point $(4)$ of Theorem \ref{T_1}. Finally, if $\nu\ll\LL$, by \eqref{E_hd0} and \eqref{E_graphpsimu} we have also
 \begin{equation}
   \nu \ll \LL \quad \Longrightarrow \quad \hat \nu \ll \mathcal H^d \llcorner_{\Graph\,\psi} \quad \Longrightarrow \quad \hat\nu(\mathcal E^-\theta_\psi) = 0.\label{E_nu0}
 \end{equation}
 Then $\hat\nu(\mathtt p_{\R^{d+1}}(\hat{\mathbf D}))=1$ and Corollary \ref{C_transp_graph} gives, when projected on $\R^d$, the last part of Theorem \ref{T_1}.
\begin{remark}
\label{E_more_nat_form}
Observe that the characterization given by Proposition \ref{P_disint_fol} of the optimal transport plans for the $\mathtt c_{\epi\d{\cdot}}$-Lipschitz set $\Graph\,\psi$ seems more natural than the one given by Theorem \ref{T_1} for their projections on $\R^d$, namely the optimal transport plans for the original convex norm problem. Indeed, in the first case we have a complete (namely, if and only if) geometric characterization of the transport plans by disintegrations into transport plans of finite cone cost w.r.t. their conditional marginals, even in the case in which $\nu$ is not absolutely continuous. This is due to the geometric condition \eqref{E_more_than_complet}, which is satisfied by the partition $\{\hat Z\ka, \hat C\ka\}_{k,\a}$ and not by its projection on $\R^d$.

In particular, there might be decompositions $\{\nu\ka\}$ of $\nu$ which are not obtained by projections of second marginals of disintegrations of $\hat\pi\in \Pi(\hat\mu,\hat\nu),\ \hat{\pi}(\partial^+\Graph\,\psi)=1$ and such that $\Pi^f_{\mathtt c_{\p_{\R^d}\hat{\mathbf D}}}(\mu,\{\nu\ka\}) \not= \emptyset$.
\end{remark}

\begin{proof}[Proof of Theorem \ref{T_cone_graph}]
The proof will be given in two steps.
We prove the initial forward cone approximation property for the superdifferential partition of the forward regular set, being the proof of the final backward cone approximation property for the subdifferential partition analogous.
 
By Definition \ref{D_coneapprpart}, let us consider an initial $1$-dimensional slice
\[
\mathbf Z^{k,+} \cap \mathtt p^{-1}_{V^k} \bigl( (h^-, h^+) \e \bigr)=\underset{k,\a}{\bigcup}\,Z^{k,+}_\a\cap \mathtt p^{-1}_{V^k} \bigl( (h^-, h^+) \e \bigr)
\]
where
\begin{enumerate}
\item $V^k = \langle \e^k_1,\dots,\e^k_k\rangle\in \mathcal G(k,\R^{d+1})$ reference plane of the sheaf set $\mathbf Z^{k,+}$,
\item $\e \in \S^d \cap C(\{\e^k_i\})$,
\item there exists $\eps > 0$ for which
\[
\mathbf Z^{k,+} \cap \mathtt p^{-1}_{V^k} \bigl( ( h^-, h^+ + \eps)\e \bigr)
\]
is still a $1$-dimensional model set. Let $\mathtt d_\e$ be the direction vector field.
\end{enumerate}
{\it Step 1.} Assume $\mathtt d_\e^{h^-}$ is injective. 
  
Then, it is sufficient to prove the forward cone approximation property for the vector field $\mathtt d_\e$ on a fixed perpendicular section, say e.g. $P_{h^-}$. 

First recall that, by the general properties of sheaf sets, i.e. Point $(3)$ of Definition \ref{D_sheaf_set},
\[
\{0\} \cup \R^+ \e \subset C(\{\e^k_i\}) \subset \mathtt p_{V^k}(C^{k,+}_\a), \quad \forall\,\a\in\A^k.
\]
By definition of $1$-dimensional slice, for all $w_{\a,h^-} = (x_{\a,h^-},\varphi(x_{\a,h^-})) \in P_{h^-}$ one has
\begin{equation}
\label{E_tt_d_e_sigma}
\mathtt d_\e(w_{\a,h^-}) = \frac{\sigma^{h^-,h^++\eps}(w_{\a,h^-})-w_{\a,h^-}}{|\sigma^{h^-,h^++\eps}(w_{\a,h^-})-w_{\a,h^-}|},
\end{equation}
being
\[
\sigma^{h^-,h^++\eps}(w_{\a,h^-}) = w_{\a,h^{+}+\eps} = \big( y_{\a,h^{+}+\eps},\varphi(y_{\a,h^{+}+\eps}) \big)
\]
the unique point s.t.
\[
P_{h^++\eps} \cap \big( w_{\a,h^{-}} + C^{k,+}_\a \big) = \{ w_{\a,h^{+}+\eps} \}.
\]
Since now we are dealing with the superdifferential partition, (see Theorems \ref{T_partition_E+-} and \ref{T_partition_E}) for all $w_\a \in Z^{k,+}_\a$
\begin{equation*}
C^{k,+}_{\a} = \R^+ \mathcal D^+{\theta}_{\varphi}(\a,w_{\a}) \quad \text{and} \quad \partial^+\theta_{\varphi}(\a,w_{\a}) \cap \mathcal R^{+,k} \theta_{\varphi} = \big( w_{\a} + C^{k,+}_{\a} \big) \cap Z^{k,+}_\a.
\end{equation*}
Then we conclude that
\[
y_{\a,h^{+}+\eps}=\mathtt p_{\R^d}(w_{\a,h^{+}+\eps})
\]
is the unique point of $\mathtt p_{\R^d}(P_{h^++\eps})$ s.t. 

\[
\varphi(y_{\a,h^{+}+\eps}) - \varphi(x_{\a,h^{-}}) = \big| y_{\a,h^{+}+\eps} - x_{\a,h^{-}} \big|_{D^*},
\]
namely $y_{\a,h^{+}+\eps}$ is the unique maximizer of
\begin{equation}
\label{E_ya_unique}
\varphi(x_{\a,h^-}) = \underset{y \in \mathtt p_{\R^d}(P_{h^++\eps})} \max \Big\{ \varphi(y) - \d{y-x_{\a,h^-}} \Big\}.
\end{equation}

Hence one can construct the finite cone approximations of $\mathtt p_{\R^d} \mathtt d_e$ as in \cite{Car1}, namely discretizing the set $\mathtt p_{\R^d}(P_{h^++\eps})$ and taking the cones given by the differential partition of an optimal potential w.r.t. a strictly convex cost obtained by perturbating the norm cost $\d{\cdot}$ and whose second marginals are Dirac deltas centered at the points of the discretization (see \cite{Car1}). The convergence of the approximations to $\mathtt p_{\R^d}\mathtt d_e$ at a.e. point $x_{\a,h^-}$ as the cost perturbation goes to $0$ and the points of the discretization become dense is given by the uniqueness of the $y_{\a,h^{+}+\eps}\in\mathtt p_{\R^d}(P_{h^++\eps})$ satisfying \eqref{E_ya_unique}.

Lifting the approximating cones with the map $\Id\times\varphi$, one gets finite cones approximations of $\mathtt d_e$ as required.

{\it Step 2.} Let now $\mathtt d_\e^{h^-}$ be possibly multivalued. In order to prove the initial forward cone approximation property, we build as in Step 1 finite cone approximations given by the differential partition of optimal potentials w.r.t. strictly convex approximating costs and second marginals given by Dirac deltas in $\mathtt p_{\R^d}(P_{h^++\eps})$. These will converge to a Borel section $\tilde{\mathtt d}^{h^-}_{+,\e}$ of the direction vector field $\mathtt d_\e^{h^-}$ which by construction satisfies the cone approximation property.
\end{proof}

\begin{remark}
\label{R_part_case_Th7}
The above theorem can also be proved as a particular case of the analysis done in Section \ref{S_cone_approx_folia}: in this case we have a single cone-Lipschitz graph, and the uniqueness role of the linear order is trivial.
\end{remark}

%
%

\section{\texorpdfstring{From $\C^k$-fibrations to linearly ordered $\C^k$-Lipschitz foliations}{From C-fibrations to linearly ordered C-Lipschitz foliations}}
\label{S_cfibr_cfol}

This section is devoted to the proof of Theorem \ref{T_cfibrcfol} stated below, that will be the building block for proving of Theorem \ref{T_subpart_step}. 

Let $\C^k : \R^{d-k} \times \R^k \supset \A \times \R^k \to \mathcal C(k,\R^k)$ be the $\sigma$-compact direction map of a $k$-dimensional fibration $\tilde{\mathbf D}^k$ and $\mathtt c_{\C^k}$ be the associated cost function \eqref{E_cost_fibr}. Let
\[
\tilde \mu = \int \tilde \mu_\a d\tilde m(\a), \qquad \tilde \nu = \int \tilde \nu_\a d\tilde m(\a) 
\]
be probability measures on $\R^d$ 
such that
\begin{equation}
\label{cfibrcfol1}
\Pi^f_{\mathtt c_{\C^k}}(\tilde\mu,\tilde\nu)\neq\emptyset
\end{equation}
and 
\begin{equation}
\label{cfibrcfol2}
\tilde \mu_\a \ll \mathcal H^k \llcorner_{\tilde Z\ka} \quad \text{ for $\tilde m$-a.e. $\a \in \A$.}
\end{equation}

Recall Definition \ref{D_compatible} of $(\mathtt c,\mu,\nu)$-compatible preorder, Definition \ref{D_pimunuconn} of $\Pi^f_{\mathtt c}(\mu,\nu)$-cyclically connected partition and let $\{0,1\}^\N$ be the Polish space of sequences in $\{0,1\}$ endowed with the product topology.

Our main result is the following theorem. Recall that $\omega$ is the first countable ordinal.

\begin{theorem}
\label{T_cfibrcfol} 
If \eqref{cfibrcfol1} and \eqref{cfibrcfol2} hold, then there exists a $(\mathtt c_{\C^k},\tilde \mu,\tilde \nu)$-compatible linear preorder $\bar \preccurlyeq$ with Borel graph on $\A \times \R^k$
\begin{equation}
\label{E_bar_theta_th}
\bar \preccurlyeq = (\bar \theta \times \bar \theta)^{-1}(\trianglelefteq_\omega), \qquad \bar \theta : \A \times \R^{k} \to \A \times \{0,1\}^\N, \ \trianglelefteq_\omega \ \text{linear order,}
\end{equation}
and equivalence classes $\{\bar\theta^{-1}(\a,\t)\}_{\nfrac{\a \in \A}{\t \in \{0,1\}^\N}}$ such that the subcollection of sets $\{\bar Z^k_{\a,\t}\}_{\nfrac{\a \in \A}{\t\in\T^k(\a)}}$ defined by
\begin{equation}
\label{E_subcollection}
\bar Z^k_{\a,\t} = \interr\,\bar\theta^{-1}(\a,\t) \quad \text{and} \quad \tilde\mu_\a(\bar\theta^{-1}(\a,\t))>0
\end{equation}
is $\Pi^f_{\mathtt c_{\C^k}}(\tilde \mu,\tilde \nu)$-cyclically connected.
\end{theorem}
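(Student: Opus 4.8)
The plan is to build the linear preorder $\bar\preccurlyeq$ by a transfinite/countable iteration in which, at each step, one \emph{extends} a $\mathtt c_{\C^k}$-compatible Borel linear preorder $\preccurlyeq_{\Gamma,\mathtt W^\Gamma}$ of the type produced in Theorem \ref{T_order_gamma} (whose equivalence classes form a $\mathtt c_{\C^k}$-Lipschitz foliation à la Proposition \ref{P_ex_fol}(2)) by applying the minimality result of \cite{BiaCar} recalled in Appendix \ref{A_minimal_equivalence}. Concretely I would: (i) fix a $\mathtt c_{\C^k}$-cyclically monotone carriage $\Gamma$ of some $\pi\in\Pi^f_{\mathtt c_{\C^k}}(\tilde\mu,\tilde\nu)$ (nonempty by \eqref{cfibrcfol1}), and invoke Theorem \ref{T_order_gamma} to get a Borel $\mathtt c_{\C^k}$-compatible linear preorder $\preccurlyeq_{\Gamma,\mathtt W^\Gamma}$ extending $\preccurlyeq_{(\Gamma,\mathtt c_{\C^k})}$; by Theorem \ref{T_A2} this preorder is already $(\mathtt c_{\C^k},\tilde\mu,\tilde\nu)$-compatible, so its equivalence classes carry all plans of finite cost; (ii) among the class of all such Borel $(\mathtt c_{\C^k},\tilde\mu,\tilde\nu)$-compatible linear preorders obtained by varying $\Gamma$, use Corollary \ref{C_constant_for_minimal_equivalence} (the key consequence of the minimal-equivalence theorem in Appendix \ref{A_minimal_equivalence}) to select a \emph{minimal} one, call it $\bar\preccurlyeq$, with the representation \eqref{E_bar_theta_th} coming from the countable construction of $\mathtt W^\Gamma$ — the quotient map $\bar\theta$ takes values in $\A\times\{0,1\}^\N$ because each step of the construction records one bit, and the order $\trianglelefteq_\omega$ is the lexicographic-type order on $\{0,1\}^\N$ obtained after $\omega$ steps.

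\textbf{From minimality to cyclical connectedness.} The heart of the argument is showing that, \emph{for the minimal} $\bar\preccurlyeq$, the $k$-dimensional classes $\bar Z^k_{\a,\t}$ of \eqref{E_subcollection} are $\Pi^f_{\mathtt c_{\C^k}}(\tilde\mu,\tilde\nu)$-cyclically connected. Fix any $\mathtt c_{\C^k}$-cyclically monotone carriage $\Gamma'$ of a plan $\pi'\in\Pi^f_{\mathtt c_{\C^k}}(\tilde\mu,\tilde\nu)$. By Remark \ref{rem_pre} the $\preccurlyeq_{(\Gamma'\cap\cup_\c Z^{\bar\preccurlyeq}_\c\times Z^{\bar\preccurlyeq}_\c,\mathtt c_{\C^k})}$-equivalence classes refine the $\simeq_{\bar\preccurlyeq}$-classes, so the partition into the former is $(\tilde\mu,\Gamma',\mathtt c_{\C^k})$-cyclically connected. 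Suppose, for contradiction, that on a class $\bar\theta^{-1}(\a,\t)$ with $\tilde\mu_\a(\bar\theta^{-1}(\a,\t))>0$ the $(\Gamma',\mathtt c_{\C^k})$-cycles do \emph{not} connect a $\tilde\mu_\a$-conegligible subset: then one can strictly sub-divide this class into two saturated pieces of positive measure, each a union of $(\Gamma',\mathtt c_{\C^k})$-cyclically connected sets, and use them to manufacture a Borel $(\mathtt c_{\C^k},\tilde\mu,\tilde\nu)$-compatible linear preorder strictly finer than $\bar\preccurlyeq$ — here one applies Theorem \ref{T_order_gamma} to $\Gamma'$ to turn the refined $(\Gamma',\mathtt c)$-partition into a Borel compatible preorder, and then combines it with $\bar\preccurlyeq$ lexicographically. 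This contradicts the minimality guaranteed by Corollary \ref{C_constant_for_minimal_equivalence}. Hence on a $\tilde m$-conegligible set of $\a$ and for each relevant $\t$, $\bar\theta^{-1}(\a,\t)$ is $(\Gamma',\mathtt c_{\C^k})$-cyclically connected up to a $\tilde\mu_\a$-null set, which is exactly $\Pi^f_{\mathtt c_{\C^k}}(\tilde\mu,\tilde\nu)$-cyclical connectedness of $\{\bar Z^k_{\a,\t}\}$.

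\textbf{Why the $k$-dimensional restriction and the role of \eqref{cfibrcfol2}.} I would use \eqref{cfibrcfol2} twice: first to ensure the disintegration $\tilde\mu=\int\tilde\mu_\a\,d\tilde m$ is strongly consistent and each $\tilde\mu_\a$ is nonatomic on $\tilde Z^k_\a$, so that the ``interior'' taken in \eqref{E_subcollection} discards only a $\tilde\mu_\a$-null set whenever the class is genuinely $k$-dimensional; second to rule out that a positive-$\tilde\mu_\a$-mass class has empty interior (a class with empty interior is, by Proposition \ref{P_fol_char}/\eqref{E_Lipschitzgraph}, a complete $\mathtt c_{\C^k(\a)}$-Lipschitz graph, hence $\mathcal H^k$-null, hence $\tilde\mu_\a$-null by \eqref{cfibrcfol2}). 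This is why exactly the $k$-dimensional classes survive in \eqref{E_subcollection}. The Borel-measurability/$\sigma$-compactness bookkeeping in the parameter $\a$ — needed so that $\bar\theta$ is Borel with the asserted codomain and $\trianglelefteq_\omega$ is a genuine order — follows the pattern of Proposition \ref{P_borel_regu_transport_sets} and Theorem \ref{T_order_gamma}, and I would treat it as routine.

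\textbf{Main obstacle.} The delicate point is not the selection of a minimal preorder (that is exactly what Appendix \ref{A_minimal_equivalence} provides) but verifying that the minimal object in the relevant \emph{lattice} of Borel $(\mathtt c_{\C^k},\tilde\mu,\tilde\nu)$-compatible linear preorders is still of the structured form $\preccurlyeq_{\Gamma,\mathtt W^\Gamma}$ — i.e. that the countable construction $\mathtt W^\Gamma$ of Theorem \ref{T_order_gamma} is compatible with taking infima and yields the representation \eqref{E_bar_theta_th}. Equivalently, the hard part is proving the contradiction step above: showing that any failure of $\Pi^f_{\mathtt c_{\C^k}}(\tilde\mu,\tilde\nu)$-cyclical connectedness on a positive-measure $k$-dimensional class can genuinely be realized as a strictly finer Borel compatible linear preorder, so that minimality bites. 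This requires carefully splicing the preorder built from $\Gamma'$ (via Theorem \ref{T_order_gamma}) onto $\bar\preccurlyeq$ while preserving $\mathtt c_{\C^k}$-compatibility and Borel-ness, and checking that the new preorder is not merely finer as an equivalence relation but strictly so on a set of positive $\tilde\mu$-measure — this is where the nonatomicity from \eqref{cfibrcfol2} and the fibration structure \eqref{E_cones_fibration} are essential.
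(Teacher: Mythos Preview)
Your strategy is essentially the paper's: build the family $\{\preccurlyeq_{\tilde\Gamma,\mathtt W^{\tilde\Gamma}}\}$ via Theorem \ref{T_order_gamma}, invoke the minimal-equivalence machinery of Appendix \ref{A_minimal_equivalence}, and deduce cyclical connectedness of the positive-measure $k$-dimensional classes. Two points are worth sharpening.

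First, the ``main obstacle'' you flag --- that the minimal element of the family must itself be of the form $\preccurlyeq_{\tilde\Gamma,\mathtt W^{\tilde\Gamma}}$ --- is exactly what Proposition \ref{P_equiv_coun} establishes: the family of equivalence relations \eqref{E_equivcount} is closed under countable intersections, because given $\{\preccurlyeq_{\tilde\Gamma_n,\mathtt W^{\tilde\Gamma_n}}\}_n$ one restricts any carriage $\tilde\Gamma_i$ to the common diagonal classes and reapplies Theorem \ref{T_order_gamma}. This closure is the hypothesis Theorem \ref{T_minimal_equival} needs, and it directly answers your worry about ``splicing'' preorders lexicographically.

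Second, the paper's argument is direct rather than by contradiction, and the direct route is cleaner. Given any carriage $\tilde\Gamma$, set $\check\Gamma=\tilde\Gamma\cap\bigcup_{\a,\t}\bar\theta^{-1}(\a,\t)\times\bar\theta^{-1}(\a,\t)$ and apply Theorem \ref{T_order_gamma} to $\check\Gamma$. By construction $\preccurlyeq_{\check\Gamma,\mathtt W^{\check\Gamma}}$ lies in the family and refines $\bar\preccurlyeq$, so by minimality and Corollary \ref{C_constant_for_minimal_equivalence} there is a $\tilde\mu$-conegligible set on which $\theta_{\check\Gamma,\mathtt W^{\check\Gamma}}=\mathtt s\circ\bar\theta$; in particular $\theta_{\check\Gamma,\mathtt W^{\check\Gamma}}$ is $\tilde\mu_\a$-a.e.\ constant on each positive-measure class $\bar\theta^{-1}(\a,\t)$. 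Now the specific property \eqref{E_leb_open} of Theorem \ref{T_order_gamma} --- which you do not invoke explicitly --- says that Lebesgue points of $\mathtt p_1\check\Gamma(\a)$ inside a single $\theta_{\check\Gamma,\mathtt W^{\check\Gamma}}$-level set are $(\check\Gamma(\a),\mathtt c_{\C^k(\a)})$-cyclically connected. Combined with \eqref{cfibrcfol2} (so Lebesgue points carry full $\tilde\mu_\a$-mass) this yields $(\tilde\mu,\tilde\Gamma,\mathtt c_{\C^k})$-cyclical connectedness at once. Your contradiction argument is morally the contrapositive of this, but without \eqref{E_leb_open} made explicit the step ``failure of cyclical connectedness $\Rightarrow$ strictly finer Borel preorder in the family'' is not fully justified.
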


As noticed in Proposition \ref{P_ex_fol}, the equivalence classes of a $\mathtt c_{\C^k}$-compatible linear preorder on $\A\times\R^k$ with $\sigma$-compact graph form a $\mathtt c_{\C^k}$-Lipschitz foliation.
Then, by definition of $(\mathtt c_{\C^k},\tilde \mu,\tilde \nu)$-compatible linear preorder and by disintegration of measures, Theorem \ref{T_cfibrcfol} claims that we can reduce the optimal transportation problem on a $\C^k$-fibration to a family of optimal transportation problems on the level sets of a $\mathtt c_{\C^k}$-Lipschitz foliation, whose $k$-dimensional classes of positive $\tilde\mu_\a$ measure (see the characterization of $\mathtt c_{\C^k}$-Lipschitz foliations given in Proposition \ref{P_fol_char}) satisfy the cyclically connectedness property w.r.t. $\Pi^f_{\mathtt c_{\C^k}}(\tilde\mu,\tilde\nu)$.  

As noticed in the Proposition \ref{P_ex_fol}, since
\[
\mathtt c_{\C^k}(\a,w,\a',w') < +\infty \quad \Rightarrow \quad \a = \a',
\]
the equivalence classes of the preorder $\bar \preccurlyeq$ constructed in Theorem \ref{T_cfibrcfol} will be contained in sections $\{\a\}\times\R^k$. At a first reading, the geometry which lies behind the construction of $\bar\preccurlyeq$ will be clear to the reader even assuming that $\A=\{\a_0\}$ for some point $\a_0$, and thus 
$\mathtt c_{\C^k}$ is equal to a single convex cone cost. The variable $\a\in\A$ plays in fact the role of a parameter on which the maps used to define the preorder have to depend in a suitably measurable way.

As a preliminary, let us define the sets of $\sigma$-compact \emph{carriages} as follows: for $\tilde \pi \in \Pi^f_{\mathtt c_{\C^k}}(\tilde \mu,\tilde \nu)$ set
\begin{equation}
\label{E_varGamma_def_pi}
\varGamma(\tilde \pi) := \Big\{ \tilde \Gamma \subset (\A\times\R^k) \times (\A\times\R^k) : \tilde \Gamma \subset \{{\mathtt c_{\C^k}} < \infty\},\text{ $\tilde\Gamma$ $\sigma$-compact, }\tilde \pi(\tilde \Gamma) = 1 \Big\}, 
\end{equation}
and define
\begin{equation}
\label{E_varGamma_def}
\varGamma := \bigcup_{\tilde \pi \in \Pi^f_{\mathtt c_{\C^k}}(\tilde \mu,\tilde \nu)} \varGamma(\tilde \pi).
\end{equation}
The section of a carriage $\tilde \Gamma(\a,\a)$ will be also denoted as $\tilde\Gamma(\a)\subset\R^k\times\R^k$.

\subsection{\texorpdfstring{Construction of a $(\mathtt c_{\C^k},\tilde\mu,\tilde\nu)$-compatible linear preorder}{Construction of a (c,m,n)-compatible linear preorder}}
\label{Ss_gamma_order}

The main result of this section, which is the first step of the proof of Theorem \ref{T_cfibrcfol}, is the following theorem.
\begin{theorem}
\label{T_order_gamma}
For any $\tilde \Gamma \in \varGamma$ there exists a $(\mathtt c_{\C^k},\tilde \mu,\tilde \nu)$-compatible linear preorder $\preccurlyeq_{\tilde\Gamma,\mathtt W^{\tg}}$ with Borel graph
\begin{equation}
\label{E_bar_theta_th_2}
\preccurlyeq_{\tilde\Gamma,\mathtt W^{\tg}} = \big( \theta_{\tilde\Gamma,\mathtt W^{\tg}} \times \theta_{\tilde\Gamma,\mathtt W^{\tg}} \big)^{-1}(\trianglelefteq_\omega), \qquad \theta_{\tilde\Gamma,\mathtt W^{\tg}} : \A \times \R^{d-k} \to \A \times \{0,1\}^\N, \ \trianglelefteq_\omega \ \text{linear order,}
\end{equation}
whose equivalence classes $\{\theta^{-1}_{\tg,\mathtt W^{\tg}}(\a,\t)\}_{\nfrac{\a \in \A}{\t\in\{0,1\}^\N}}$ satisfy
\begin{equation}
\label{E_leb_open}
\mathrm{Leb} \Big( \mathtt p_1(\tg(\a)) \cap \{\theta_{\tg,\mathtt W^{\tg}}(\a,\cdot)=(\a,\t)\} \Big) \quad \text{ is $(\tg(\a),\mathtt c_{\C^k(\a)})$-cyclically connected}.
\end{equation}
\end{theorem}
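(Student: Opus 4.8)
The plan is to build the preorder $\preccurlyeq_{\tilde\Gamma,\mathtt W^{\tg}}$ by iterating, through the countable ordinals, the construction of a ``first level'' preorder whose equivalence classes are already $(\tilde\Gamma,\mathtt c_{\C^k})$-cyclically connected up to a $\mu$-negligible set, and then closing off the remaining freedom with a measurable selection of a linear order on the quotient. First I would fix $\tilde\Gamma\in\varGamma$ and introduce the $(\tilde\Gamma,\mathtt c_{\C^k})$-axial preorder $\preccurlyeq_{(\tilde\Gamma,\mathtt c_{\C^k})}$ of \eqref{E_axpreorder}: its equivalence classes are the maximal $(\tilde\Gamma,\mathtt c_{\C^k})$-cyclically connected sets, and by \eqref{E_caa'} they all live in single fibers $\{\a\}\times\R^k$. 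This preorder is in general neither Borel nor $\mathtt c_{\C^k}$-compatible, so the point is to enlarge it. The enlargement is done fiberwise and then glued: on each $\{\a\}\times\R^k$ one has a $\mathtt c_{\C^k(\a)}$-axial preorder, and I would show that its graph, together with the graph of $\{\mathtt c_{\C^k(\a)}<\infty\}$, generates — via transitive closure — a preorder that is $\mathtt c_{\C^k(\a)}$-compatible; the measurability in $\a$ is guaranteed because $\mathtt c_{\C^k}$, $\tilde\Gamma$ and the cone map $\C^k$ are $\sigma$-compact.

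Next I would address Borel regularity, which is the reason the axial preorder alone is insufficient. The strategy (following \cite{BiaCar}) is a transfinite construction: set $A_0$ to be the transitive closure of $\{\mathtt c_{\C^k}<\infty\}\cup\preccurlyeq_{(\tilde\Gamma,\mathtt c_{\C^k})}$, and then at each countable ordinal step add countably many ``test pairs'' that are forced into the preorder by transitivity and $\mathtt c_{\C^k}$-compatibility, using a countable topological basis to make the process Borel-measurable; by a standard $\sigma$-algebra exhaustion argument the process stabilizes before $\omega_1$, and — here one uses that the cost is the indicator of a cone, so the quotient is not too large — one can arrange it to stabilize already at $\omega$, producing a map $\mathtt W^{\tg}$ coding the construction and a Borel preorder $A=\preccurlyeq_{\tilde\Gamma,\mathtt W^{\tg}}$ containing $\{\mathtt c_{\C^k}<\infty\}$ and extending $\preccurlyeq_{(\tilde\Gamma,\mathtt c_{\C^k})}$. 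Then Theorem~\ref{T_A2} applies: $A$ is $(\mathtt c_{\C^k},\tilde\mu,\tilde\nu)$-compatible, so all transport plans in $\Pi^f_{\mathtt c_{\C^k}}(\tilde\mu,\tilde\nu)$ are concentrated on $\bigcup_{\c}Z^A_\c\times Z^A_\c$ and the disintegration on $\{Z^A_\c\}_\c$ is strongly consistent.

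To get a genuine linear preorder and the representation \eqref{E_bar_theta_th_2}, I would note that $A$ induces a partial order on the quotient $\A\times\mathfrak S$, which (again by the abstract machinery of \cite{BiaCar}, via a Borel embedding of the quotient into $\A\times\{0,1\}^\N$) can be extended to a Borel linear order $\trianglelefteq_\omega$ on $\A\times\{0,1\}^\N$ respecting the fiber structure; composing the quotient map with this embedding gives $\theta_{\tilde\Gamma,\mathtt W^{\tg}}$ and hence \eqref{E_bar_theta_th_2} with the graph still Borel and still $(\mathtt c_{\C^k},\tilde\mu,\tilde\nu)$-compatible (linearization only enlarges the preorder within the same fiber, preserving compatibility). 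Finally, for \eqref{E_leb_open}: fix $\a$ and an equivalence class $E=\theta^{-1}_{\tg,\mathtt W^{\tg}}(\a,\t)$. By Remark~\ref{rem_pre}, the $\preccurlyeq_{(\tilde\Gamma\cap\bigcup_\c Z^A_\c\times Z^A_\c,\mathtt c_{\C^k})}$-classes coincide with the classes of $\simeq_A$ up to a $\tilde\mu$-null set, i.e. the partition into classes of $A$ restricted to $E$ is $(\tilde\mu,\tilde\Gamma,\mathtt c_{\C^k})$-cyclically connected; restricting to the conegligible set of Lebesgue points $\mathrm{Leb}(\mathtt p_1(\tg(\a))\cap E)$ — using \eqref{cfibrcfol2} so that $\tilde\mu_\a\simeq\mathcal H^k\llcorner$ on the relevant sets and hence $\tilde\mu$-negligible equals $\mathcal L^k$-negligible inside a fiber — one concludes that this set is $(\tg(\a),\mathtt c_{\C^k(\a)})$-cyclically connected, which is \eqref{E_leb_open}.

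The main obstacle I expect is the Borel-regularity part: showing that the transfinite enlargement process both produces a \emph{Borel} preorder and \emph{terminates at $\omega$} rather than merely at some countable ordinal. The termination at $\omega$ is where the special structure of cone costs (the cost takes only the values $0$ and $+\infty$, so the relevant quotient and the families of ``axial paths'' are combinatorially rigid) must be exploited, and the measurable dependence of the whole construction on the parameter $\a$ — needed so that $\theta_{\tilde\Gamma,\mathtt W^{\tg}}$ is globally Borel and not just fiberwise — has to be tracked carefully through every step of the iteration.
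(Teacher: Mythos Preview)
Your plan misses the paper's key construction and, as a result, your argument for \eqref{E_leb_open} does not go through. The paper does \emph{not} take the transitive closure of $\preccurlyeq_{(\tilde\Gamma,\mathtt c_{\C^k})}\cup\{\mathtt c_{\C^k}<\infty\}$ and then try to make it Borel and linear. Instead, it first selects (Lemma~\ref{L_z_n_dense_selections}) a countable family $\mathtt W^{\tg}=\{\mathtt w_n^{\tg}\}_{n\in\N}$ of $\sigma$-continuous maps $\tilde{\mathfrak A}'\to\R^k$ whose values are dense in $\mathtt p_1\tilde\Gamma(\a)$ for each $\a$, then defines for each $n$ the $\sigma$-compact set $H_{\tg,n}$ of points $(\a,w)$ reachable from $(\a,\mathtt w_n^{\tg}(\a))$ by a $(\tilde\Gamma,\mathtt c_{\C^k})$-axial path, and finally sets $\theta_{\tg,\mathtt W^{\tg}}(\a,w)=(\a,\{\chi_{\R^k\setminus H_{\tg,n}(\a)}(w)\}_{n})$. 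Borel regularity is then immediate (each coordinate is Borel), $\mathtt c_{\C^k}$-compatibility follows from the backward-cone invariance \eqref{E_conecomp_hn} of $H_{\tg,n}$, linearity is free because $\trianglelefteq_\omega$ is a fixed linear order on $\{0,1\}^\N$, and no transfinite iteration, no ``termination at $\omega$'', and no Borel linearization of a partial order are needed.

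More seriously, your derivation of \eqref{E_leb_open} is circular. Remark~\ref{rem_pre} only says that the $(\tilde\Gamma,\mathtt c_{\C^k})$-cycle classes are \emph{contained} in the $\simeq_A$-classes; it does \emph{not} say they coincide up to a $\tilde\mu$-null set. That coincidence is precisely what \eqref{E_leb_open} asserts, and it is obtained in the paper from the specific form of $\theta_{\tg,\mathtt W^{\tg}}$: if $w,w'$ are Lebesgue points of $\mathtt p_1\tilde\Gamma(\a)$ in the same class, that class has nonempty interior (Proposition~\ref{P_fol_char}), so balls around $w,w'$ lie inside it; density of $\{\mathtt w_n^{\tg}(\a)\}_n$ then produces $\mathtt w_{\bar n}^{\tg}(\a),\mathtt w_{\bar n'}^{\tg}(\a)$ in the same class with $\mathtt c_{\C^k(\a)}(w,\mathtt w_{\bar n}^{\tg}(\a)),\mathtt c_{\C^k(\a)}(\mathtt w_{\bar n'}^{\tg}(\a),w')<\infty$, and equality of the $\theta$-values forces $\mathtt w_{\bar n}^{\tg}(\a)\in H_{\tg,\bar n'}(\a)$, i.e.\ an explicit axial path from $\mathtt w_{\bar n'}^{\tg}(\a)$ to $\mathtt w_{\bar n}^{\tg}(\a)$, which closes into the required cycle. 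Without the dense sections $\mathtt W^{\tg}$ and the sets $H_{\tg,n}$ you have no mechanism to produce such a cycle. Note also that the proof uses neither \eqref{cfibrcfol2} nor any equivalence $\tilde\mu_\a\simeq\mathcal H^k$; the Lebesgue points in \eqref{E_leb_open} are taken with respect to $\mathcal L^k$ on the fiber, independently of $\tilde\mu$.
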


By the characterization of $\mathtt c_{\C^k}$-Lipschitz foliations given in Proposition \ref{P_fol_char}, \eqref{E_leb_open} must refer to $k$-dimensional equivalence classes. Moreover, by Remark \ref{rem_pre},  $\preccurlyeq _{\tilde\Gamma\cap\cup\theta^{-1}(\a,\t)\times\theta^{-1}(\a,\t)}\subset\preccurlyeq_{\tg,\mathtt W^{\tg}}$ and then $\forall\,x,y\in \mathrm{Leb} \big( \mathtt p_1(\tg(\a)) \cap \{\theta_{\tg,\mathtt W^{\tg}}(\a,\cdot)=(\a,\t)\} \big)$ the $(\tilde\Gamma(\a),\mathtt c_{\C^k(\a)})$-cycle connecting $x$ to $y$ must be contained in $\theta^{-1}(\a,\t)\times\theta^{-1}(\a,\t)$. 

The first step to prove Theorem \ref{T_order_gamma} is to select an $\tilde m$-conegligible set $\tilde{\A}' \subset \A$ and a $\sigma$-compact subset of
\[
\mathtt p_1 \tg \cap \tilde{\A}' \times \R^k 
\]
with $\a$-sections countable and dense in $\mathtt p_1\tg(\a)$.

\begin{lemma}
\label{L_z_n_dense_selections}
There exist an $\tilde m$-conegligible $\sigma$-compact set $\tilde{\mathfrak A}' \subset \A\subset\R^{d-k}$ and a countable family $\mathtt W^{\tg}$ of $\sigma$-continuous functions $\mathtt w_n^{\tg}: \tilde{\mathfrak A}' \to \R^k$, $n \in \N$, such that for all $\a \in \tilde{\A}'$
\begin{equation}
\label{E_z_n_dense_sel}
\big\{ \mathtt w_n^{\tg}(\mathfrak a) \big\}_{n \in \N} \subset \mathtt p_1 \tilde \Gamma(\mathfrak a) \subset 
\clos\,\{\mathtt w_n^{\tg}(\mathfrak a)\}_{n \in \N}. 
\end{equation}
\end{lemma}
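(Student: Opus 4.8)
The statement to prove is Lemma \ref{L_z_n_dense_selections}: given $\tg \in \varGamma$, we must produce an $\tilde m$-conegligible $\sigma$-compact $\tilde{\mathfrak A}' \subset \mathfrak A$ and a countable family of $\sigma$-continuous selections $\mathtt w^{\tg}_n : \tilde{\mathfrak A}' \to \R^k$ whose values, for each $\a \in \tilde{\mathfrak A}'$, lie in $\mathtt p_1\tg(\a)$ and are dense in it. The plan is to use the fact that $\tg$ is $\sigma$-compact, hence $\mathtt p_1\tg$ is $\sigma$-compact, together with a Castaing/von Neumann-type measurable selection theorem applied fiberwise over $\a$, after reducing to a situation where the fibers $\mathtt p_1\tg(\a)$ depend nicely (lower semicontinuously or continuously on compact pieces) on $\a$.

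First I would write $\tg = \bigcup_{l} \tg_l$ with $\tg_l$ compact and increasing; then $\mathtt p_1\tg = \bigcup_l \mathtt p_1\tg_l$ with $\mathtt p_1\tg_l$ compact. The multifunction $\a \mapsto \mathtt p_1\tg_l(\a) \subset \R^k$ has compact graph $\mathtt p_{1,3}\tg_l$ (projecting out the second $\R^k$-factor and keeping the $\a$-component and first $\R^k$-component; recall $\mathtt c_{\C^k}(\a,w,\a',w')<\infty$ forces $\a=\a'$, so the base variable is unambiguous). A multifunction with compact graph is upper semicontinuous with compact values; by standard results (e.g. the Kuratowski--Ryll-Nardzewski selection theorem, or the Castaing representation, see Theorem 5.2.1 of \cite{Sri:courseborel} as already cited in the paper) its domain $\mathtt p_{\mathfrak A}\,\mathtt p_{1,3}\tg_l$ is $\sigma$-compact and there is a countable family of Borel selections $\{\mathtt s^l_m\}_{m}$ with $\{\mathtt s^l_m(\a)\}_m$ dense in $\mathtt p_1\tg_l(\a)$ for every $\a$ in that domain. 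Moreover, by restricting each $\mathtt s^l_m$ to a $\sigma$-compact set on which it is continuous (Lusin-type argument, legitimate since the graph is contained in a $\sigma$-compact set), we may take these selections $\sigma$-continuous. Relabelling the doubly-indexed family $\{\mathtt s^l_m\}_{l,m}$ as $\{\mathtt w^{\tg}_n\}_{n\in\N}$ gives the required countable family, and the union of the (continuous pieces of) their domains, call it $\tilde{\mathfrak A}'$, is $\sigma$-compact.

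It remains to check that $\tilde{\mathfrak A}'$ is $\tilde m$-conegligible and that \eqref{E_z_n_dense_sel} holds on it. For the density: for $\a \in \tilde{\mathfrak A}'$ and any $l$ with $\a$ in the domain of $\mathtt p_1\tg_l$, the points $\{\mathtt w^{\tg}_n(\a)\}_n$ include a dense subset of $\mathtt p_1\tg_l(\a)$; letting $l \to \infty$ and using $\mathtt p_1\tg(\a) = \bigcup_l \mathtt p_1\tg_l(\a)$ (an increasing union, hence $\clos \bigcup_l \mathtt p_1\tg_l(\a) \supset \mathtt p_1\tg(\a)$, so $\clos\{\mathtt w^{\tg}_n(\a)\}_n \supset \mathtt p_1\tg(\a)$), while conversely each $\mathtt w^{\tg}_n(\a) \in \mathtt p_1\tg(\a)$ by construction. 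For the $\tilde m$-negligibility of the complement: since $\tilde\pi(\tg)=1$ for some $\tilde\pi \in \Pi^f_{\mathtt c_{\C^k}}(\tilde\mu,\tilde\nu)$, we have $\tilde\mu(\mathtt p_1\tg)=1$, so $\tilde\mu_\a(\mathtt p_1\tg(\a))=1$ for $\tilde m$-a.e. $\a$; in particular $\mathtt p_1\tg(\a)\neq\emptyset$ for $\tilde m$-a.e. $\a$, hence $\a$ lies in the domain of $\mathtt p_1\tg_l$ for $l$ large, hence in $\tilde{\mathfrak A}'$ — so $\tilde m(\mathfrak A \setminus \tilde{\mathfrak A}')=0$. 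Finally one can replace $\tilde{\mathfrak A}'$ by a $\sigma$-compact $\tilde m$-conegligible subset on which all $\mathtt w^{\tg}_n$ remain $\sigma$-continuous, which changes nothing.

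\textbf{Main obstacle.} The genuinely delicate point is the joint measurability/$\sigma$-continuity of the selections as functions of the parameter $\a$ together with the fiberwise density — i.e. producing a \emph{single} countable family $\{\mathtt w^{\tg}_n\}$ that simultaneously serves as a dense sequence in $\mathtt p_1\tg(\a)$ for every $\a$ in a conegligible set. This is exactly where the Castaing representation for measurable closed-valued multifunctions is needed, and where one must be careful that $\mathtt p_1\tg$, though only $\sigma$-compact (not compact), can be exhausted by compact graphs on which upper semicontinuity gives the classical selection theorems; the bookkeeping of restricting to $\sigma$-continuous pieces while keeping the union of domains conegligible is routine but is the part that requires care. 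I do not expect any real difficulty beyond this; everything else is a direct application of the structure already set up in Section \ref{Ss_souslin_multifunction} and the disintegration theorem.
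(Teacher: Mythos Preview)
Your proposal is correct, and it takes a somewhat different route from the paper's own proof. You decompose $\tg$ into compact pieces $\tg_l$, observe that $\a \mapsto \mathtt p_1\tg_l(\a)$ is a compact-graph (hence upper semicontinuous, compact-valued) multifunction, and then invoke the Castaing representation theorem to obtain a countable dense family of Borel selections in one stroke, finishing with Lusin to upgrade to $\sigma$-continuity. The paper instead works directly with $\Lambda = \mathtt p_1\tg$ and builds the dense family by hand: for each fixed $\eps>0$ it iteratively selects a point in each nonempty fiber (via Theorem 5.2.1 of \cite{Sri:courseborel}), removes the $\eps$-ball around it, selects again in what remains, and continues; letting $\eps \searrow 0$ along a sequence yields the countable dense family. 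In effect the paper re-proves a version of Castaing's theorem tailored to this situation, whereas you cite it. Your approach is cleaner and shorter; the paper's is more self-contained and makes the $\sigma$-continuity bookkeeping slightly more explicit at each step. One minor point you glossed over: the statement requires all $\mathtt w^{\tg}_n$ to have the common domain $\tilde{\mathfrak A}'$, so the selections $\mathtt s^l_m$ coming from different $l$ must be extended (e.g.\ by setting them equal to an already-defined selection outside $\mathtt p_{\mathfrak A}\mathtt p_{1,3}\tg_l$); the paper handles the analogous issue by an explicit extension formula, and the same trivial fix works in your argument.
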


\begin{proof}
For shortness we use the notation 
\begin{equation*}
\Lambda := \mathtt p_1 \tilde \Gamma = \Big\{ (\mathfrak a,w) : \exists\, \mathfrak a', w' \text{ s.t. } (\mathfrak a,w,\mathfrak a',w') \in \tilde \Gamma \Big\} \subset \R^{d-k} \times \R^k.
\end{equation*}

{\it Step 1.} Let $\mathfrak Q := \mathtt p_\mathfrak a(\Lambda) \subset \R^{d-k}$ and fix $\eps > 0$. By standard selection theorems (for example, Theorem 5.2.1 of \cite{Sri:courseborel} is sufficient in this setting), there exists $\mathtt w^\eps_0 : \mathfrak Q \mapsto \R^k$ Borel such that $\Graph\,\mathtt w^\eps_0 \subset \Lambda$. By Lusin Theorem (134Yd of \cite{MR2462519}) we obtain an $\tilde m$-conegligible set $\mathfrak Q^\eps_0$ such that $\mathtt w^\eps_0 \llcorner_{\mathfrak Q^\eps_0}$ is $\sigma$-continuous.

Define
\[
\Lambda^\eps_0 := \Lambda \cap (\mathtt p_\mathfrak a)^{-1}(\mathfrak Q^\eps_0), \quad (\Lambda^\eps_1)' := \Lambda^\eps_0 \setminus \big\{ (\mathfrak a,w) : \big| w - \mathtt w^\eps_0(\mathfrak a) \big| < \eps \big\}.
\]
These are clearly Borel sets.

Let $(\mathfrak Q^\eps_1)' := \mathtt p_\mathfrak a((\Lambda^\eps_1)')$, and define $(\mathfrak Q^\eps_0)'' \subset \mathfrak Q^\eps_0 \setminus (\mathfrak Q^\eps_1)'$ as a $\sigma$-compact set with the same $\tilde m$-measure of $\mathfrak Q^\eps_0 \setminus (\mathfrak Q^\eps_1)'$.

{\it Step 2.} If the Borel set $(\Lambda^\eps_n)' \subset \R^{d-k} \times \R^k$ and Souslin set $(\mathfrak Q^\eps_n)' := \mathtt p_\mathfrak a((\Lambda^\eps_n)') \subset \tilde{\A}'$ are given, let $\mathtt w^\eps_n : (\mathfrak Q^\eps_n)' \to \R^{k}$ be a $\varTheta$-measurable selection s.t. $\Graph\,\mathtt w^\eps_n \subset (\Lambda^\eps_n)' $, where $\varTheta$ is the $\sigma$-algebra generated by Souslin sets: its existence is guaranteed by Theorem 5.5.2 of \cite{Sri:courseborel}. As in Step 1, find an $\tilde m$-conegligible set $\mathfrak Q^\eps_n \subset (\mathfrak Q^\eps_n)'$ such that $\mathtt w^\eps_n \llcorner_{\mathfrak Q^\eps_n}$ is $\sigma$-continuous. 

Define the Borel sets
\[
\Lambda^\eps_{n+1} := (\Lambda^\eps_n)' \cap (\mathtt p_\mathfrak a)^{-1}(\mathfrak Q^\eps_n), \quad (\Lambda^\eps_{n+1})' := \Lambda^\eps_{n+1} \setminus \big\{ (\mathfrak a,w) : \big| w - \mathtt w^\eps_n(\mathfrak a) \big| < \eps \big\}.
\]
If $(\mathfrak Q^\eps_{n+1})' := \mathtt p_\mathfrak a((\Lambda^\eps_{n+1})')$, let $(\mathfrak Q^\eps_n)'' \subset \mathfrak Q^\eps_n \setminus (\mathfrak Q^\eps_{n+1})'$ be a $\sigma$-compact set with the same measure of $\mathfrak Q^\eps_n \setminus (\mathfrak Q^\eps_{n+1})'$.

Extend also the $\sigma$-compact function $\mathtt w^\eps_n$ to an $\tilde m$-conegligible set by
\[
\mathtt w^\eps_n(\mathfrak a) :=
\begin{cases}
\mathtt w^\eps_n(\mathfrak a) & \mathfrak a \in \mathfrak Q^\eps_n, \crcr
\mathtt w^\eps_{m}(\mathfrak a) & \mathfrak a \in (\mathfrak Q^\eps_m)'',\quad m=0,\dots, n-1.
\end{cases}
\]

{\it Step 3.} By repeating the above procedure countably many times, we obtain a countable family of $\sigma$-continuous functions $\mathtt w^\eps_n : \overset{\infty}{\underset{m=0}{\cup}} (\mathfrak Q^\eps_m)'' \to \R^k$, $n \in \N_0$, such that
\[
\Lambda \cap (\mathtt p_\mathfrak a)^{-1} \bigg( \bigcup_{m=0}^\infty (\mathfrak Q^\eps_m)'' \bigg) \subset \Big\{ (\mathfrak a,w) : \dist \big( w, \{ \mathtt w^\eps_n(\mathfrak a)\}_{n \in \N_0} \big\} \big) < \eps \Big\}.
\]
Taking a countable sequence $\eps_i \searrow 0$ as $i \to \infty$, the functions $\{\mathtt w^{\eps_i}_n\}_{i,n \in \N_0}$ satisfy clearly the statement when restricted to an $\tilde m$-conegligible $\sigma$-compact subset $\tilde{\mathfrak A}'$ of $\underset{i \in \N}{\cap} \underset{n \in \N_0}{\cup} (\mathfrak Q^{\eps_i}_n)''$.
\end{proof}

Now we associate to each $(\a,\mathtt w_n^{\tg}(\a))$ the subset of $\{\a\}\times\R^k$ of all the points $(\a,w)$ s.t. $\exists$ an axial path of finite $\mathtt c_{\C^k}$-cost in $\tilde\Gamma$ going from $(\a,\mathtt w_n^{\tilde{\Gamma}}(\a))$ to $(\a,w)$ (see Definition \ref{D_axpath}).

\noindent Define 
\begin{equation}
\label{E_h_n}
H_{\tg,n} := \bigg\{ (\a,w) : \exists\, (\bar w,\bar w') \in \tg \ \text{s.t.} \ \mathtt c_{\tilde{\mathbf C}^k(\a)}(w,\bar w') < \infty \text{ and } (\a,\bar w) \preccurlyeq_{(\tg,\mathtt c_{\C^k})} (\a,\mathtt w^{\tg}_n(\a)) \bigg\},
\end{equation}
where $\preccurlyeq_{(\tg,\mathtt c_{\C^k})}$ is the $(\tg, \mathtt c_{\C^k})$-axial preorder relation defined in \eqref{E_axpreorder}. Notice that 
\begin{equation}
\label{E_ordergamma}
 H_{\tg,n} \cap \p_1\tilde\Gamma=\bigg\{(\a,w):\,(\a,w)\preccurlyeq_{(\tg,\mathtt c_{\C^k})}(\a,\mathtt w^{\tg}_n(\a))\bigg\}.
\end{equation}
Observe that, despite the notation, $H_{\tg,n}$ does not depend only on $\tilde\Gamma$ but also on the sections $\{\mathtt w_n^{\tg}\}_{n\in\N}$ selected in Lemma \ref{L_z_n_dense_selections}. 

\begin{figure}
\centering{\resizebox{12cm}{8cm}{\input{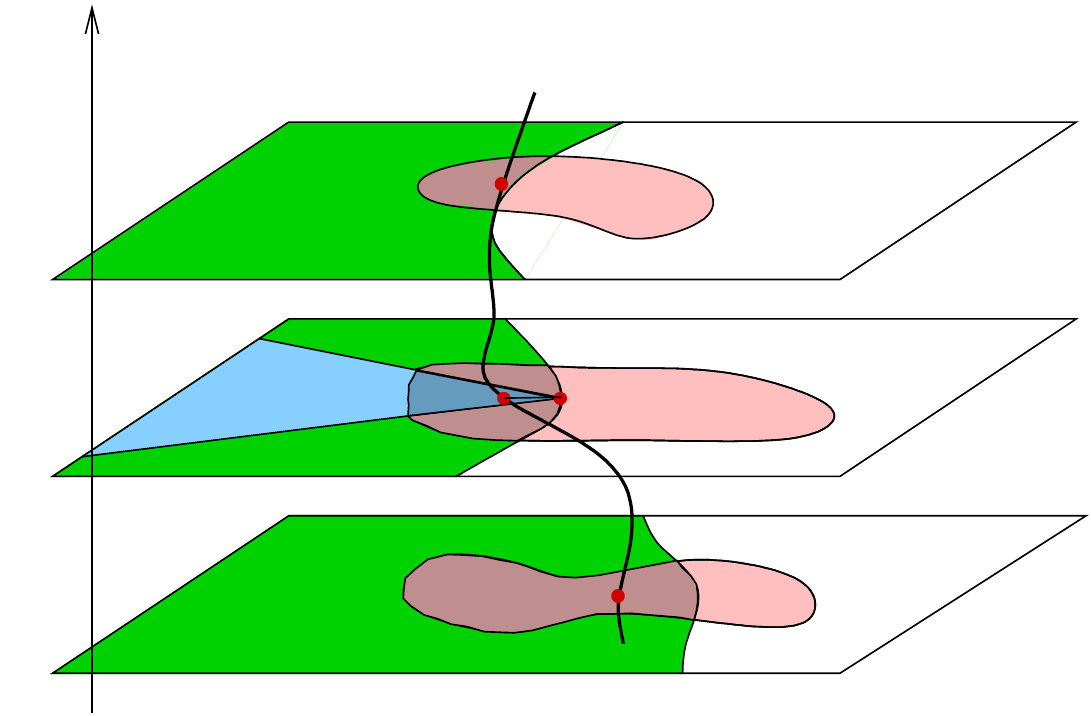_t}}}
\caption{The construction of the set $H_{\tg,n}$.}
\label{Fi_fibration}
\end{figure}

\begin{proposition}
\label{P_Hn_sigma_cpt_compatibility}
The set $H_{\tg,n}$ is $\sigma$-compact in $\tilde{\A}' \times \R^k$ and the set $\tilde{\mathfrak A}:=\bigl\{\a\in\tilde{\A}':\,H_{\tg,n}(\a)\neq\emptyset\bigr\}$ is Borel. Moreover
\begin{equation}
\label{E_conecomp_hn}
w'\in H_{\tg,n}(\a) \quad \Longrightarrow \quad \bigl\{\mathtt c_{\C^k(\a)}(\cdot,w')<+\infty\bigr\}\subset H_{\tg,n}(\a).
\end{equation}
\end{proposition}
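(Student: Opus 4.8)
The statement has three parts: the $\sigma$-compactness of $H_{\tilde\Gamma,n}$ in $\tilde{\mathfrak A}'\times\R^k$, the Borel regularity of $\tilde{\mathfrak A}=\{\a:\ H_{\tilde\Gamma,n}(\a)\neq\emptyset\}$, and the ``cone-completeness'' property \eqref{E_conecomp_hn}. The last one is essentially immediate from the definition \eqref{E_h_n}: it is enough to observe that if $(\a,w')\in H_{\tilde\Gamma,n}$, then there is a witness pair $(\bar w,\bar w')\in\tilde\Gamma$ with $\mathtt c_{\tilde{\mathbf C}^k(\a)}(w',\bar w')<\infty$ and $(\a,\bar w)\preccurlyeq_{(\tilde\Gamma,\mathtt c_{\C^k})}(\a,\mathtt w^{\tilde\Gamma}_n(\a))$; now if $w''\in\R^k$ is any point with $\mathtt c_{\C^k(\a)}(w'',w')<+\infty$, i.e. $w'-w''\in\C^k(\a)$, then since $w'-w'\in\C^k(\a)$ trivially we may also use the \emph{same} witness pair $(\bar w,\bar w')$ — but more to the point, we need $\mathtt c_{\tilde{\mathbf C}^k(\a)}(w'',\bar w')<\infty$, which follows from $\bar w'-w''=(\bar w'-w')+(w'-w'')$ together with $\bar w'-w'\in\C^k(\a)$, $w'-w''\in\C^k(\a)$ and the cone property $\C^k(\a)+\C^k(\a)\subset\C^k(\a)$ (this is \eqref{E_sum_of_cones_inside_cone} applied in $\R^k$). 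Hence $(\a,w'')\in H_{\tilde\Gamma,n}$, which is exactly \eqref{E_conecomp_hn}.

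\textbf{$\sigma$-compactness of $H_{\tilde\Gamma,n}$.} The plan is to write $H_{\tilde\Gamma,n}$ as a projection of an explicitly $\sigma$-compact set, using the descriptive set theory already exploited in the proof of Proposition \ref{P_borel_regu_transport_sets}. First I would unfold \eqref{E_h_n}: being $(\a,\bar w)\preccurlyeq_{(\tilde\Gamma,\mathtt c_{\C^k})}(\a,\mathtt w^{\tilde\Gamma}_n(\a))$ means, by \eqref{E_axpreorder} and Definition \ref{D_axpath}, that there is a $(\tilde\Gamma,\mathtt c_{\C^k})$-axial path going from $(\a,\mathtt w^{\tilde\Gamma}_n(\a))$ to $(\a,\bar w)$. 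Such a path of length $I$ is encoded by base points $(x_i,y_i)\in\tilde\Gamma(\a)$, $i=1,\dots,I$, and the finiteness-of-cost constraints are of the form $w'-w\in\C^k(\a)$ for consecutive points, i.e. they are intersections of the graph of the $\sigma$-compact multifunction $\a\mapsto\C^k(\a)$ with coordinate subspaces — all $\sigma$-compact conditions. For each fixed $I$ the set of $(\a,w,(x_i,y_i)_{i=1}^I,\bar w,\bar w')$ satisfying: $\a\in\tilde{\mathfrak A}'$, $(x_i,y_i)\in\tilde\Gamma$, $(\bar w,\bar w')\in\tilde\Gamma$, $x_{i+1}-y_i\in\C^k(\a)$, $\bar w-y_I\in\C^k(\a)$, $\mathtt w^{\tilde\Gamma}_n(\a)=x_1$ (this last uses that $\mathtt w^{\tilde\Gamma}_n$ is $\sigma$-continuous, hence has $\sigma$-compact graph, by Lemma \ref{L_z_n_dense_selections}), and $\bar w'-w\in\C^k(\a)$, is a finite intersection of $\sigma$-compact sets, hence $\sigma$-compact; the exact same bookkeeping of cone constraints via $C(-2^{-m})\setminus B^k(0,2^{-m})$ as in Proposition \ref{P_borel_regu_transport_sets} makes the relations $w'-w\in\C^k(\a)$ expressible as countable unions of closed conditions compatible with Hausdorff convergence of $\C^k(\cdot)$. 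Then $H_{\tilde\Gamma,n}$ is the projection onto the $(\a,w)$-coordinates of the countable union over $I\in\N$ of these sets, hence $\sigma$-compact. The case $I=0$ (empty set of base points) must be included: it corresponds to $\bar w=\mathtt w^{\tilde\Gamma}_n(\a)$, i.e. $w$ such that $\bar w'-w\in\C^k(\a)$ for some $(\mathtt w^{\tilde\Gamma}_n(\a),\bar w')\in\tilde\Gamma$.

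\textbf{Borel regularity of $\tilde{\mathfrak A}$.} Finally, $\tilde{\mathfrak A}=\mathtt p_\mathfrak a(H_{\tilde\Gamma,n})$ is the $\mathtt p_\mathfrak a$-projection of a $\sigma$-compact set, hence $\sigma$-compact, in particular Borel; I would just remark this, noting that $\tilde{\mathfrak A}\subset\tilde{\mathfrak A}'$. (In fact $\tilde{\mathfrak A}$ will coincide, up to an $\tilde m$-null set, with $\mathtt p_\mathfrak a\tilde\Gamma$, since $\mathtt w^{\tilde\Gamma}_n(\a)\in\mathtt p_1\tilde\Gamma(\a)$ by \eqref{E_z_n_dense_sel} gives $\mathtt w^{\tilde\Gamma}_n(\a)\in H_{\tilde\Gamma,n}(\a)$ via the $I=0$ clause whenever $\mathtt p_1\tilde\Gamma(\a)\neq\emptyset$, but this refinement is not needed for the statement.) \emph{The main obstacle} I anticipate is purely notational: carefully setting up the product Polish space that records an axial path of \emph{arbitrary but finite} length and checking that \emph{all} the cone-inclusion constraints remain $\sigma$-compact under the Hausdorff topology on $\mathcal C(k,\R^k)$ — this is routine given Proposition \ref{P_borel_regu_transport_sets}, but requires writing the countable-union-of-closed-sets representation of $\{(\a,w,w'):w'-w\in\C^k(\a)\}$ correctly, exactly as was done there for $\{(w,w',C):w'\in w-\interr C\}$ and its variants.
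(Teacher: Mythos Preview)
Your proposal is correct and follows essentially the same strategy as the paper: unfold the axial-path definition of $H_{\tilde\Gamma,n}$ as a countable union over path lengths $I$, show each piece is a projection of a finite intersection of $\sigma$-compact sets, and deduce \eqref{E_conecomp_hn} from the cone additivity $\C^k(\a)+\C^k(\a)\subset\C^k(\a)$. One small simplification: you invoke the $C(-2^{-m})\setminus B^k(0,2^{-m})$ bookkeeping from Proposition~\ref{P_borel_regu_transport_sets}, but that was needed there for \emph{relative interior} constraints; here the condition $w'-w\in\C^k(\a)$ involves the closed cone, so the paper simply observes that $\{\mathtt c_{\C^k}<\infty\}$ is $\sigma$-compact directly from the $\sigma$-continuity of $\a\mapsto\C^k(\a)$, which is all you need.
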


\begin{proof}
We prove the $\sigma$-compactness of $H_{\tg,n}$, since \eqref{E_conecomp_hn} is clear from the definition \eqref{E_h_n}. Observing that
\[
\begin{split}
H_{\tg,n} = \Big\{ (\mathfrak a,w) :&~ \exists\, I \in \N, \big\{ (w_i,w'_i) \big\}_{i = 1}^I \subset \tilde \Gamma(\mathfrak a) \text{ s.t. } \crcr
&~ w_1 = \mathtt w_n^{\tg}(\mathfrak a) \ \text{and} \ {\mathtt c}_{\C^k(\a)}(w_{i+1},w'_{i}),\,{\mathtt c}_{\C^k(\a)}(w,w'_I) < \infty \Big\}, \crcr
\end{split}
\]
write
\[
H_{\tg,n} = \bigcup_{I \in \N} H_{\tg,n}^I \qquad \text{where} \qquad H_{\tg,n}^I = \mathtt p_{(\a_{I+1},w_{I+1})}(\tilde H^I_{\tg,n})
\]
and $\tilde H^I_{\tg,n} \subset (\R^{d-k} \times \R^k)^{2I+1}$ is given by
\begin{align*}
\tilde H^I_{\tg,n} :
=&~ \big\{ w_1 = \mathtt w_n^{\tg}(\mathfrak a_1) \big\} \cap \bigg[ \bigcap_{i=1}^I \big\{ (\mathfrak a_i,w_i,\mathfrak a'_i,w'_i) \in \tilde \Gamma \big\} \bigg] \cap \bigg[ \bigcap_{i=1}^{I} \big\{ {\mathtt c}_{\C^k}(\mathfrak a_{i+1},w_{i+1},\mathfrak a'_{i},w'_{i}) < \infty \big\} \bigg].
\end{align*}
Since $\mathfrak a \mapsto \C^k({\mathfrak a})$ is $\sigma$-continuous, it follows that the set $\{{\mathtt c}_{\C^k} < \infty\}$ is $\sigma$-compact in $(\R^{d-k} \times \R^k) \times (\R^{d-k} \times \R^k)$. Hence, being $\mathtt w_n^{\tg}$ $\sigma$-continuous and $\tilde \Gamma$ $\sigma$-compact, the set $\tilde H^I_{\tg,n}$ is $\sigma$-compact, thus also $H^I_{\tg,n}$, and finally $H_{\tg,n}$ too.
\end{proof}

We are now ready to define the Borel linear preorder of Theorem \ref{T_order_gamma}. If $\mathtt W^{\tg} = \{\mathtt w_n^{\tg}\}_n\in\N$ is the countable family of sections constructed in Lemma \ref{L_z_n_dense_selections}, define the function
\begin{equation*}
\begin{array}{ccccc}
\theta_{\tg,\mathtt W^{\tg}} &:& \tilde{\mathfrak A}' \times \R^k &\to& \tilde{\mathfrak A}' \times \{0,1\}^\N \crcr
&& (\mathfrak a,w) &\mapsto& \theta_{\tg,\mathtt W^{\tg}}(\mathfrak a,w) := \big( \mathfrak a,\{\chi_{\R^k \setminus H_{\tg,n}(\a)}(w)\}_{n \in \N} \big)
\end{array}
\end{equation*}
Since each component $\mathtt p_i\circ\theta_{\tg,\mathtt W^{\tg}}$ of $\theta_{\tg,\mathtt W^{\tg}}$ is Borel, also $\theta_{\tg,\mathtt W^{\tg}}$ is Borel in the product topology.

On the space $\R^{d-k} \times \{0,1\}^\alpha$, $\alpha$ ordinal number, let us consider the natural linear order given by the lexicographic order. Namely, for $\mathfrak a = (\mathfrak a_1,\dots,\mathfrak a_{d-k}) \in \R^{d-k}$ set
\begin{equation}
\label{E_lexico_on_R_d_k}
\mathfrak a <_{\mathrm{lexi}_{\R^{d-k}}} \mathfrak a' \quad \Longleftrightarrow \quad \exists \,i \in \{1,\dots,d-k\} \text{ s.t. } \forall j < i, \, \mathfrak a_j = \mathfrak a'_j \ \text{and} \ \mathfrak a_i < \mathfrak a'_i,
\end{equation}
and define
\begin{equation}
\label{E_lexico_on_Q_01}
\begin{split}
\big( \mathfrak a,\{s_\beta\}_{\beta < \alpha} \big) \vartriangleleft_\alpha \big( \mathfrak a',\{s'_\beta\}_{\beta < \alpha} \big) \ \Longleftrightarrow &\text{ either } \mathfrak a <_{\mathrm{lexi}_{\R^{d-k}}} \mathfrak a' \text{ or } \crcr
& \ \mathfrak a = \mathfrak a' \ \text{and} \ \exists \bar \beta < \alpha:\, s_\beta = s_\beta' \ \forall\, \beta < \bar \beta,\quad  s_{\bar \beta} = 0 \text{ and } s'_{\bar \beta} = 1.
\end{split}
\end{equation}
Let then  $\trianglelefteq_\omega$ be the lexicographic linear order on $\R^{d-k}\times\{0,1\}^\N$ and define the linear preorder on $\tilde{\A}'\times\R^k$ as
\begin{equation}
\label{E_order_on_fibration_by_Z}
\preccurlyeq_{\tg,\mathtt W^{\tg}} := \big( \theta_{\tg,\mathtt W^{\tg}} \otimes \theta_{\tg,\mathtt W^{\tg}} \big)^{-1}(\trianglelefteq_\omega).
\end{equation}
The induced equivalence relation on $\tilde{\mathfrak A}' \times \R^k$ is given by
\begin{equation*}
\preccurlyeq_{\tg,\mathtt W^{\tg}} \cap \preccurlyeq^{-1}_{\tg,\mathtt W^{\tg}} = \bigl\{ \theta^{-1}_{\tg,\mathtt W^{\tg}}(\a,\t) \bigr\}_{(\a,\t) \in \tilde{\A}' \times \{0,1\}^\N}.
\end{equation*}

\begin{figure}
\centering{\resizebox{14cm}{11cm}{\input{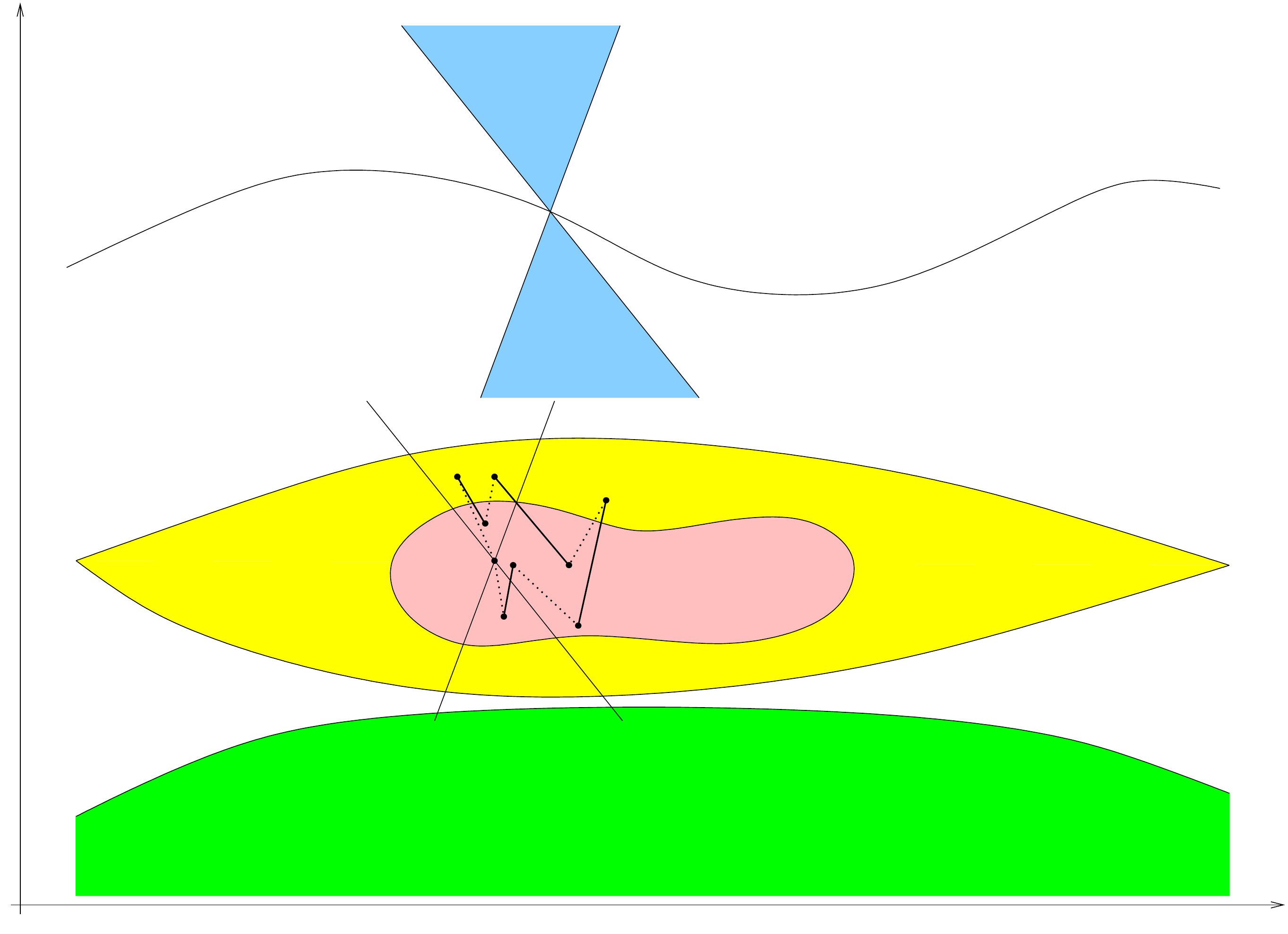_t}}}
\caption{The function $\theta_{\tg,\mathtt W^{\tg}}$ and the cyclical connectedness of the Lebesgue points of $\mathtt p_1(\tilde \Gamma(\a))$.}
\label{Fi_thetafunct}
\end{figure}

\begin{proof}[Proof of Theorem \ref{T_order_gamma}]
The proof is given in two steps.

{\it Step 1.} In this step we prove that the relation $\preccurlyeq_{\tg,\mathtt W^{\tg}}$ defined in \eqref{E_order_on_fibration_by_Z} is a $(\mathtt c_{\C^k},\tilde \mu,\tilde\nu)$-compatible linear preorder with Borel graph. By Remark \ref{rem_pre}, this amounts to prove that $\preccurlyeq_{\tg,\mathtt W^{\tg}}$ is Borel, $\mathtt c_{\C^k}$-compatible and \eqref{E_tA21} holds for the carriage $\tilde\Gamma$.

First of all, $\preccurlyeq_{\tg,\mathtt W^{\tg}}$ is Borel because it is the preimage under $\theta_{\tg,\mathtt W^{\tg}}$, which is a Borel map, of the lexicographic order $\trianglelefteq_\omega$.

Moreover
\begin{equation*}
\begin{split}
\mathtt c_{\C^k(\a)}(w,w') < +\infty \quad \overset{\eqref{E_conecomp_hn}}{\Longrightarrow} \quad &\text{if $w\in \R^k \setminus H_{\tg,n}(\a)$, then $w' \in \R^k \setminus H_{\tg,n}(\a)$} \crcr
\quad \Longrightarrow \, \quad &\theta_{\tg,\mathtt W^{\tg}}(\a,w) \trianglelefteq_\omega \theta_{\tg,\mathtt W^{\tg}}(\a,w'),
\end{split}
\end{equation*}
i.e. $\preccurlyeq_{\tg,\mathtt W^{\tg}}$ is $\mathtt c_{\C^k}$-compatible. Formula \eqref{E_tA21} follows directly from \eqref{E_ordergamma}.

{\it Step 2.} Now we prove \eqref{E_leb_open}. Let
\[
w,w' \in \mathrm{Leb} \Big( \mathtt p_1(\tilde \Gamma(\mathfrak a)) \cap \big\{ \theta_{\tg,\mathtt W^{\tg}}(\mathfrak a,\cdot) = (\a, \t) \big\} \Big).
\]
Since $\preccurlyeq_{\tg,\mathtt W^{\tg}}$ is $\mathtt c_{\C^k}$-compatible, by Proposition \ref{P_ex_fol} its equivalence classes form a $\mathtt c_{\C^k}$-Lipschitz foliation and then, from Point \eqref{Prop_3_fol_char} of Proposition \ref{P_fol_char}, there exists $r>0$ such that
\[
B^k(w,r), B^k(w',r) \subset \big\{ \theta_{\tg,\mathtt W^{\tg}}(\mathfrak a,\cdot) = (\a,\t) \big\}.
\]
Hence, by the density of $\{\mathtt w_n^{\tg}(\a)\}_{n\in\N}$ stated in Lemma \ref{L_z_n_dense_selections}, there exist $\mathtt w_{\bar n}^{\tg}$, $\mathtt w_{\bar n'}^{\tg}$ such that
\begin{equation}
\label{E_cycle_contrct}
\theta_{\tg,\mathtt W^{\tg}}(\mathfrak a,\mathtt w_{\bar n}^{\tg}(\mathfrak a)) = \theta_{\tg,\mathtt W^{\tg}}(\mathfrak a,\mathtt w_{\bar n'}^{\tg}(\mathfrak a)) = (\a, \t) \qquad \text{and} \qquad {\mathtt c}_{\C^k(\mathfrak a)}(w,\mathtt w_{\bar n}^{\tg}(\mathfrak a)), {\mathtt c}_{\C^k(\mathfrak a)}(\mathtt w^{\tg}_{\bar n'}(\mathfrak a),w') < \infty.
\end{equation}
The first condition in \eqref{E_cycle_contrct} implies that
\[
\mathtt w_{\bar n}^{\tg}(\mathfrak a) \in H_{\tg,n'}(\mathfrak a), \qquad  \text{i.e.} \qquad \mathtt w_{\bar n}^{\tg}(\mathfrak a) \preccurlyeq_{(\tg,\mathtt c_{\C^k(\a)})} \mathtt w_{\bar n'}^{\tg}(\mathfrak a).
\]
The second condition implies that 
\[
w \preccurlyeq_{(\tg,\mathtt c_{\C^k})} \mathtt w_{\bar n}^{\tg}(\mathfrak a) \qquad \text{and} \qquad \mathtt w_{\bar n'}^{\tg}(\mathfrak a)\preccurlyeq_{(\tg,\mathtt c_{\C^k(\a)})}w'.
\]
Hence, composing the three axial paths, $w\preccurlyeq w$, and exchanging their roles we obtain a $(\tg,\mathtt c_{\C^k(\a)})$-cycle, thus concluding the proof.
\end{proof}
Notice that actually the subset of a $k$-dimensional class $\{\theta_{\tg,\mathtt W^{\tg}}(\a,\cdot)=(\a,\t)\}$ which is contained in the $(\Gamma,\mathtt c_{\C^k(\a)})$-cycle above is
\[
\begin{split}
\Bigl\{ w \in \mathtt p_1\tg(\a) \cap \{\theta_{\tg,\mathtt W^{\tg}}(\a,\cdot)=(\a,\t)\} :&~ \big( w - \interr \C^k(\a) \big) \cap \mathtt p_1\tg(\a) \cap \{\theta_{\tg,\mathtt W^{\tg}}(\a,\cdot)=(\a,\t)\}\neq \emptyset \crcr
&~ \text{and} \ \big( w + \interr \C^k(\a) \big) \cap \mathtt p_1\tg(\a) \cap \{\theta_{\tg,\mathtt W^{\tg}}(\a,\cdot)=(\a,\t)\}\neq \emptyset \Bigr\}.
\end{split}
\]
In fact, by \eqref{E_z_n_dense_sel} every couple of points $w$, $w'$ in the above set satisfy \eqref{E_cycle_contrct} for some $\mathtt w_{\bar n}^{\tg}$, $\mathtt w_{\bar n'}^{\tg}$.

\subsection{\texorpdfstring{Minimal $(\mathtt c_{\C^k},\tilde\mu,\tilde\nu)$-compatible linear preorder}{Minimal (c,m,n)-compatible linear preorder}}
\label{Ss_minial_order}

Now we apply Theorem \ref{T_minimal_equival} to the class of linear preorders $\{\theta_{\tg,\mathtt W^{\tg}}\}_{\tg,\mathtt W^{\tg}}$ constructed in Theorem \ref{T_order_gamma}, in order to find a Borel $(\mathtt c_{\C^k},\tilde\mu,\tilde\nu)$-compatible linear preorder as in Theorem \ref{T_cfibrcfol}.

Recall the definition of lexicographic order $\trianglelefteq_\alpha$ on $\R^{d-k}\times\{0,1\}^\alpha$, $\alpha$ ordinal number, given in \eqref{E_lexico_on_Q_01}, and recall also the definition of closure under countable intersection of a family of equivalence relations, Definition \ref{D_clos_sigma_E} and Remark \ref{R_min_contain_E}.

\begin{proposition}
\label{P_equiv_coun}
The class of equivalence relations
\begin{align}
\label{E_equivcount}
\Big\{ \preccurlyeq_{\tg,\mathtt W^{\tg}}\cap\preccurlyeq_{\tg,\mathtt W^{\tg}}^{-1}:\, \preccurlyeq_{\tg,\mathtt W^{\tg}} \text{ as in Theorem \ref{T_order_gamma} for some $\tg\in\varGamma$} \Big\}
\end{align}
is closed under countable intersections.
\end{proposition}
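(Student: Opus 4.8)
\textbf{Proof proposal for Proposition \ref{P_equiv_coun}.}

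The plan is to take a countable family $\{\preccurlyeq_{\tg_i,\mathtt W^{\tg_i}}\cap\preccurlyeq_{\tg_i,\mathtt W^{\tg_i}}^{-1}\}_{i\in\N}$ of equivalence relations from the class \eqref{E_equivcount}, with carriages $\tg_i\in\varGamma$, and to produce a single carriage $\tg_\infty\in\varGamma$ together with a countable family $\mathtt W^{\tg_\infty}$ of $\sigma$-continuous sections for which the associated equivalence relation $\preccurlyeq_{\tg_\infty,\mathtt W^{\tg_\infty}}\cap\preccurlyeq_{\tg_\infty,\mathtt W^{\tg_\infty}}^{-1}$ is contained in $\bigcap_i\bigl(\preccurlyeq_{\tg_i,\mathtt W^{\tg_i}}\cap\preccurlyeq_{\tg_i,\mathtt W^{\tg_i}}^{-1}\bigr)$; by Remark \ref{R_min_contain_E} this is exactly what closure under countable intersection requires. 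First I would choose, for each $i$, a transference plan $\tilde\pi_i\in\Pi^f_{\mathtt c_{\C^k}}(\tilde\mu,\tilde\nu)$ with $\tilde\pi_i(\tg_i)=1$, and set $\tilde\pi_\infty:=\sum_i 2^{-i}\tilde\pi_i$. This is again an element of $\Pi^f_{\mathtt c_{\C^k}}(\tilde\mu,\tilde\nu)$ (convexity of $\Pi(\tilde\mu,\tilde\nu)$, linearity of the cost), and $\tg_\infty:=\bigcup_i\tg_i$ is a $\sigma$-compact subset of $\{\mathtt c_{\C^k}<\infty\}$ with $\tilde\pi_\infty(\tg_\infty)=1$, hence $\tg_\infty\in\varGamma(\tilde\pi_\infty)\subset\varGamma$.

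Next I would build the section family $\mathtt W^{\tg_\infty}$ so that the resulting preorder refines all the given ones. Observe that enlarging the carriage and adding more sections can only enlarge the sets $H_{\tg,n}$ of \eqref{E_h_n} and hence can only make the equivalence classes of $\preccurlyeq_{\tg,\mathtt W^{\tg}}$ smaller in a suitable sense: more precisely, if $\tg\subset\tg'$ and $\mathtt W^{\tg}\subset\mathtt W^{\tg'}$ (after restricting to a common $\tilde m$-conegligible $\sigma$-compact quotient set), then for every $\a$ the $(\tg,\mathtt c_{\C^k(\a)})$-axial preorder is contained in the $(\tg',\mathtt c_{\C^k(\a)})$-axial preorder, so each $H_{\tg,n}(\a)\subset H_{\tg',n}(\a)$, and reading off the definition \eqref{E_order_on_fibration_by_Z} of $\theta$ via the characteristic functions $\chi_{\R^k\setminus H_{\tg,n}(\a)}$ one gets that the equivalence classes of $\preccurlyeq_{\tg',\mathtt W^{\tg'}}$ are contained in those of $\preccurlyeq_{\tg,\mathtt W^{\tg}}$ — because two points that are separated by some $H_{\tg,n}$ remain separated by the larger $H_{\tg',n}$. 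Concretely, I would run the selection procedure of Lemma \ref{L_z_n_dense_selections} for the carriage $\tg_\infty$, intersect the resulting $\tilde m$-conegligible $\sigma$-compact quotient sets with those produced for each $\tg_i$ (a countable intersection, still $\tilde m$-conegligible and $\sigma$-compact), and then take $\mathtt W^{\tg_\infty}$ to be the union of all the section families $\mathtt W^{\tg_i}$ together with a fresh dense family of $\sigma$-continuous selections of $\mathtt p_1\tg_\infty$; relabel this countable union as $\{\mathtt w_n^{\tg_\infty}\}_{n\in\N}$. All the measurability and $\sigma$-compactness facts needed for $H_{\tg_\infty,n}$ then follow verbatim from Proposition \ref{P_Hn_sigma_cpt_compatibility}, and Theorem \ref{T_order_gamma} applies to $\tg_\infty$, so $\preccurlyeq_{\tg_\infty,\mathtt W^{\tg_\infty}}$ is an admissible member of the class \eqref{E_equivcount}.

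Finally I would check the containment $\preccurlyeq_{\tg_\infty,\mathtt W^{\tg_\infty}}\cap\preccurlyeq_{\tg_\infty,\mathtt W^{\tg_\infty}}^{-1}\subset\bigcap_i\bigl(\preccurlyeq_{\tg_i,\mathtt W^{\tg_i}}\cap\preccurlyeq_{\tg_i,\mathtt W^{\tg_i}}^{-1}\bigr)$. Fix $i$ and a pair $(\a,w)$, $(\a,w')$ (note that equivalent points lie over the same $\a$ by \eqref{E_compl_fol1}) which are $\preccurlyeq_{\tg_\infty,\mathtt W^{\tg_\infty}}$-equivalent. Since $\mathtt W^{\tg_i}\subset\mathtt W^{\tg_\infty}$, for every $n$ the set $H_{\tg_i,n}(\a)$ equals $H_{\tg_\infty,m}(\a)$ restricted to the axial preorder of $\tg_i$ — more carefully, $H_{\tg_i,n}(\a)\subset H_{\tg_\infty,m(n)}(\a)$ for the index $m(n)$ at which $\mathtt w_n^{\tg_i}$ sits inside the relabelled family — and therefore $\chi_{\R^k\setminus H_{\tg_\infty,m(n)}(\a)}(w)=\chi_{\R^k\setminus H_{\tg_\infty,m(n)}(\a)}(w')$ for all $n$ forces, by monotonicity of $H$ in the carriage, $\chi_{\R^k\setminus H_{\tg_i,n}(\a)}(w)=\chi_{\R^k\setminus H_{\tg_i,n}(\a)}(w')$ for all $n$; hence $\theta_{\tg_i,\mathtt W^{\tg_i}}(\a,w)=\theta_{\tg_i,\mathtt W^{\tg_i}}(\a,w')$, i.e. the two points are $\preccurlyeq_{\tg_i,\mathtt W^{\tg_i}}$-equivalent. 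The main obstacle I anticipate is precisely pinning down this monotonicity of $H_{\tg,n}(\a)$ under simultaneous enlargement of the carriage and of the section family, together with the bookkeeping of the relabelling of the section indices and the choice of a common $\tilde m$-conegligible $\sigma$-compact quotient set on which all the $\sigma$-continuity statements hold at once; once that is set up, the rest is a direct reading of the definitions \eqref{E_h_n}, \eqref{E_order_on_fibration_by_Z} together with Definition \ref{D_clos_sigma_E} and Remark \ref{R_min_contain_E}.
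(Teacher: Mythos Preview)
Your monotonicity claim is false, and this breaks the argument. You assert that if $\tg\subset\tg'$ and $\mathtt W^{\tg}\subset\mathtt W^{\tg'}$ then the equivalence classes of $\preccurlyeq_{\tg',\mathtt W^{\tg'}}$ are contained in those of $\preccurlyeq_{\tg,\mathtt W^{\tg}}$, because ``two points that are separated by some $H_{\tg,n}$ remain separated by the larger $H_{\tg',n}$''. But $H_{\tg,n}(\a)\subset H_{\tg',n}(\a)$ means exactly the opposite: if $w\in H_{\tg,n}(\a)$ and $w'\notin H_{\tg,n}(\a)$, then certainly $w\in H_{\tg',n}(\a)$, yet nothing prevents $w'\in H_{\tg',n}(\a)$ as well, since enlarging the carriage adds axial paths. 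Concretely, a pair $(\bar w,\bar w')\in\tg_j(\a)\setminus\tg_i(\a)$ can bridge two distinct $\theta_{\tg_i,\mathtt W^{\tg_i}}$-classes once you pass to $\tg_\infty=\bigcup_i\tg_i$, so the new equivalence relation may be strictly \emph{coarser} than $\preccurlyeq_{\tg_i,\mathtt W^{\tg_i}}\cap\preccurlyeq_{\tg_i,\mathtt W^{\tg_i}}^{-1}$ on part of the space. Your final paragraph makes this explicit: from $\chi_{\R^k\setminus H_{\tg_\infty,m(n)}(\a)}(w)=\chi_{\R^k\setminus H_{\tg_\infty,m(n)}(\a)}(w')$ and $H_{\tg_i,n}(\a)\subset H_{\tg_\infty,m(n)}(\a)$ one simply cannot deduce $\chi_{\R^k\setminus H_{\tg_i,n}(\a)}(w)=\chi_{\R^k\setminus H_{\tg_i,n}(\a)}(w')$.

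The paper goes in the opposite direction: instead of enlarging the carriage it \emph{restricts} it. One fixes any $i$ and replaces $\tg_i$ by
\[
\check\Gamma:=\tg_i\cap\bigcap_{n\in\N}\Bigl(\preccurlyeq_{\tg_n,\mathtt W^{\tg_n}}\cap(\preccurlyeq_{\tg_n,\mathtt W^{\tg_n}})^{-1}\Bigr).
\]
Since every $\preccurlyeq_{\tg_n,\mathtt W^{\tg_n}}$ is $(\mathtt c_{\C^k},\tilde\mu,\tilde\nu)$-compatible, $\tilde\pi_i$ is concentrated on each diagonal and hence on $\check\Gamma$, so $\check\Gamma\in\varGamma$. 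Now every base point of a $(\check\Gamma,\mathtt c_{\C^k})$-axial path lies in all the equivalence relations, and the cone moves respect each $\preccurlyeq_{\tg_n,\mathtt W^{\tg_n}}$ by $\mathtt c$-compatibility; hence $\preccurlyeq_{(\check\Gamma,\mathtt c_{\C^k})}\subset\bigcap_n\preccurlyeq_{\tg_n,\mathtt W^{\tg_n}}$, and any preorder $\preccurlyeq_{\check\Gamma,\bar{\mathtt W}}$ built from $\check\Gamma$ via Theorem \ref{T_order_gamma} is automatically contained in each $\preccurlyeq_{\tg_n,\mathtt W^{\tg_n}}$. The point you missed is that the $(\mathtt c,\tilde\mu,\tilde\nu)$-compatibility of the \emph{given} preorders is what allows the restricted carriage to still have full measure; without using this, no combinatorial manipulation of carriages and sections will work.
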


\begin{proof}
 Let $\{\tg_n\}_{n\in\N}\subset\varGamma$ and $\big\{\preccurlyeq_{\tg_n,\mathtt W^{\tg_n}}=(\theta_{\tg_n,\mathtt W^{\tg_n}}\times \theta_{\tg_n,\mathtt W^{\tg_n}})^{-1}(\trianglelefteq_\omega)\big\}_{n\in\N}$ be a countable family of $(\mathtt c_{\C^k}, \tilde\mu,\tilde\nu)$-compatible Borel linear preorders as in Theorem \ref{T_order_gamma}. Fix $\tg_i\subset \varGamma(\tilde\pi_i)$ for some $i\in\N$. By Definition \ref{D_cmunucomp} of $(\mathtt c_{\C^k}, \tilde\mu,\tilde\nu)$-compatibility,
 \[
  \tilde\pi_i\bigg(\tg_i\underset{n\in\N}{\bigcap}\preccurlyeq_{\tg_n,\mathtt W^{\tg_n}}\cap(\preccurlyeq_{\tg_n,\mathtt W^{\tg_n}})^{-1}\bigg)=1
 \]
 and by Remark \ref{rem_pre}
 \begin{equation}
  \label{E_gammaicomp}
  \preccurlyeq_{\tg_i\underset{n\in\N}{\bigcap}\preccurlyeq_{\tg_n,\mathtt W^{\tg_n}}\cap(\preccurlyeq_{\tg_n,\mathtt W^{\tg_n}})^{-1},\mathtt c_{\C^k}}\subset \underset{n\in\N}{\bigcap}\preccurlyeq_{\tg_n,\mathtt W^{\tg_n}}.
  \end{equation}
 Let then $\bar{\mathtt W}^{\tg_i}$ be a countable family of $\sigma$-compact sections of $\p_1\big(\tg_i\underset{n\in\N}{\bigcap}\preccurlyeq_{\tg_n,\mathtt W^{\tg_n}}\cap(\preccurlyeq_{\tg_n,\mathtt W^{\tg_n}})^{-1}\big)$ as in Lemma \ref{L_z_n_dense_selections}.
 Then, by \eqref{E_gammaicomp}, it follows immediately that the $(\mathtt c_{\C^k}, \tilde\mu,\tilde\nu)$-compatible Borel linear preorder $\preccurlyeq_{\tg_i\underset{n\in\N}{\cap}\preccurlyeq_{\tg_n,\mathtt W^{\tg_n}}\cap(\preccurlyeq_{\tg_n,\mathtt W^{\tg_n}})^{-1},\bar{\mathtt W}^{\tg_i}}$ constructed as in the proof of Theorem \ref{T_order_gamma} satisfies
 \[
  \preccurlyeq_{\tg_i\underset{n\in\N}{\cap}\preccurlyeq_{\tg_n,\mathtt W^{\tg_n}}\cap(\preccurlyeq_{\tg_n,\mathtt W^{\tg_n}})^{-1},\bar{\mathtt W}^{\tg_i}}\subset\underset{n\in\N}{\bigcap}\preccurlyeq_{\tg_n,\mathtt W^{\tg_n}}.
 \]
\end{proof}

We now have all the tools to prove Theorem \ref{T_cfibrcfol}.

\begin{proof}[Proof of Theorem \ref{T_cfibrcfol}]

Let $\bar \preccurlyeq\cap\bar \preccurlyeq^{-1}=\cup_{\a,\t'} \big\{ \bar\theta^{-1}(\a,\t') \big\} \times \big\{ \bar\theta^{-1}(\a,\t') \big\}$ be the minimal equivalence relation in the class \eqref{E_equivcount} w.r.t. the measure $\tilde \mu$, whose existence is guaranteed by Theorem \ref{T_minimal_equival}.

We claim that it satisfies the conclusions of Theorem \ref{T_cfibrcfol}. Thanks to Theorem \ref{T_order_gamma}, we only have to prove \eqref{E_subcollection}. 
Recalling Definitions \ref{D_pimunuconn} and \ref{D_gammaconn}, let $\tilde \pi \in \Pi^f_{\mathtt c_{\C^k}}(\tilde\mu,\tilde\nu)$, $\tg \in \varGamma(\tilde \pi)$. By Remark \ref{rem_pre}, we can consider the carriage
\begin{equation}
\label{E_check_Gamma'}
\check\Gamma := \tilde\Gamma\cap \bigcup_{\a,\t'} \{ \bar\theta^{-1}(\a,\t') \} \times \{ \bar\theta^{-1}(\a,\t') \}\subset\tilde\Gamma
\end{equation}
and prove that the subcollection $\{\bar Z^k_{\a,\t}\}_{\nfrac{\a \in \A}{\t\in\T^k(\a)}}$ of the equivalence classes of $\bar\preccurlyeq$ defined by
\begin{equation}
\label{E_subcollection2}
\bar Z^k_{\a,t} = \interr\,\bar\theta^{-1}(\a,\t) \quad \text{and} \quad \tilde\mu_\a(\bar\theta^{-1}(\a,\t))>0
\end{equation}
is $(\tilde\mu,\check\Gamma,\mathtt c_{\C^k})$-cyclically connected.

Let $\mathtt W^{\check\Gamma}$ be a countable family of $\sigma$-compact sections of $\p_1\check\Gamma$ as in Lemma \ref{L_z_n_dense_selections}. Hence, by \eqref{E_check_Gamma'}, reasoning as in the proof of Proposition \ref{P_equiv_coun}, the equivalence classes of $\preccurlyeq_{\check\Gamma,{\mathtt W}^{\check\Gamma}}$ are contained in those of $\bar \preccurlyeq$. By minimality of $\{\bar \theta^{-1}(\a,\t)\}$ and Corollary \ref{C_constant_for_minimal_equivalence}, there exists a $\tilde \mu$-conegligible $\sigma$-compact set $\check B \subset \R^{d-k} \times \R^k$ and a Borel function
\[
\mathtt s : \R^{d-k} \times \{0,1\}^{\N} \to \R^{d-k} \times \{0,1\}^\N
\]
such that
\[
\theta_{\check \Gamma,{\mathtt W^{\check \Gamma}}} = \mathtt s \circ \bar \theta \qquad \text{ on } \check B.
\] 
The set $\check B$ depends on $\theta_{\check \Gamma,{\mathtt W^{\check\Gamma}}}$. 

In particular, using this result for the equivalence classes of positive $\tilde \mu_\a$-measure of $\bar\theta$, we deduce that there exists a set
\[
\A'' \subset \Big\{ \a : \exists\, \t \ \text{s.t.}\ \tilde \mu_\a \big(\bar \theta^{-1}(\a,\t) \big) > 0 \Big\}
\]
such that
\[
m \Big( \Big\{ \a : \exists\, \t \ \text{s.t.}\ \tilde \mu_\a \big(\bar \theta^{-1}(\a,\t) \big) > 0 \Big\} \setminus \A'' \Big) = 0
\]
and for all $\a \in \A''$, for all $\t$ such that $\tilde \mu_\a(\bar \theta^{-1}(\a,\t)) > 0$, the function $\theta_{\check\Gamma,\mathtt W^{\check\Gamma}}$ is $\tilde\mu_\a$-a.e. constant on $\{\bar \theta^{-1}(\a,\t)\}$.

Hence, using the assumption \eqref{cfibrcfol2} and condition \eqref{E_leb_open} for $\preccurlyeq_{\check\Gamma,{\mathtt W}^{\check\Gamma}}$, the sets $\{\bar \theta^{-1}(\a,\t)\}$ with $\a \in \A''$ and of positive $\tilde \mu_\a$ measure are open (see Proposition \ref{P_fol_char}) and their set of Lebesgue points is of full $\tilde \mu_\a$-measure and $(\check\Gamma,\mathtt c_{\C(\a)})$-cyclically connected.

By Definition \ref{D_gammaconn}, we thus conclude that these sets are $(\tilde \mu,\tg,\mathtt c_{\C})$-cyclically connected, and then applying the same reasoning to any transference plan $\tilde \pi$, we get that the subcollection of sets $\{\bar Z^k_{\a,\t}\}_{\nfrac{\a \in \A}{\t\in\T^k(\a)}}$ defined by
\begin{equation*}
\bar Z^k_{\a,t} = \inter \bar Z^k_{\a,\t}, \quad \text{with} \ \tilde\mu_\a(\bar\theta^{-1}(\a,\t)) > 0,
\end{equation*}
is $\Pi^f_{\mathtt c_{\C^k}}(\tilde \mu,\tilde \nu)$-cyclically connected, thus concluding the proof.
\end{proof}


\begin{remark}
\label{R_why_this_meth}
We observe here that, being the equivalence relation $\bar \preccurlyeq \cap (\bar \preccurlyeq)^{-1}$ constructed in Theorem \ref{T_cfibrcfol} minimal in the family \eqref{E_equivcount}, by Theorem \ref{T_minimal_equival} the equivalence classes cannot be further decomposed by equivalence relations of the form \eqref{E_equivcount}. However, the information on cyclical connectedness of the equivalence classes can be deduced only for the equivalence classes with positive $\tilde \mu_a$-measure, because of the particular choice of the family \eqref{E_equivcount} satisfying Theorem \ref{T_order_gamma}.
\end{remark}

\section{\texorpdfstring{Cone approximation property for linearly ordered $\mathtt c_{\C}$-Lipschitz foliations}{Cone approximation property for linearly ordered C-Lipschitz foliations}}
\label{S_cone_approx_folia}

In this section we prove the following result.

\begin{theorem}
\label{T_coneappr_fol}
Let $\{\theta^{-1}(\a,\t)\}_{\a,\t} \subset \mathtt P(\A \times \R^k)$, $\A \subset \R^{d-k}$, be a $\mathtt c_{\C}$-Lipschitz foliation given by the equivalence classes of a Borel ${\mathtt c_{\C}}$-compatible linear preorder $\preccurlyeq$ on $\R^{d-k}\times\R^k$ as in \eqref{E_bar_theta_th}.
Then
for all $\a \in \A$, the subdifferential partition of $\mathcal R^{+}\theta(\a)$ satisfies the initial forward cone approximation property and the subdifferential partition of $\mathcal R^-\theta(\a)$ satisfies the final backward cone approximation property. 
\end{theorem}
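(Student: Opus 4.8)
The plan is to adapt, in the setting of a linearly ordered $\mathtt c_{\C}$-Lipschitz foliation, the cone-approximation argument already carried out for a single complete $\mathtt c_{\epi\d{\cdot}}$-Lipschitz graph in the proof of Theorem~\ref{T_cone_graph}. As observed in Remark~\ref{R_part_case_Th7}, the structure is the same; the only novelty is that the uniqueness that was trivial for a single graph (the maximizer in \eqref{E_ya_unique}) must now be replaced by the uniqueness of a suitable transference plan, which is exactly what the Borel $\mathtt c_{\C}$-compatible linear preorder $\preccurlyeq$ and Theorem~\ref{T_A2} buy us. Since the variable $\a$ is only a measurable parameter (recall \eqref{E_subaa'}), one first fixes $\a\in\A$ and works on $\{\a\}\times\R^k$, keeping track at the end of the $\sigma$-continuous dependence of all the constructed objects on $\a$, exactly as in Section~\ref{S_foliations}.

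First I would reduce, via Definition~\ref{D_coneapprpart}, to an initial $1$-dimensional slice of the superdifferential partition $\hat{\mathbf D}^+$ of $\mathcal R^+\theta(\a)$: a model set $\mathbf Z^{k,+}\cap\mathtt p^{-1}_{V^k}((h^-,h^+)\e)$ with reference plane $V^k=\langle\e^k_1,\dots,\e^k_k\rangle$, $\e\in\S^{d-1}\cap C(\{\e^k_i\})$, and with the enlarged slice $\mathtt p^{-1}_{V^k}((h^-,h^++\eps)\e)$ still a $1$-dimensional model set; let $\mathtt d_\e$ be its direction vector field. As in Step~1 of the proof of Theorem~\ref{T_cone_graph}, for each $w_{\a,h^-}\in P_{h^-}$ the point $w_{\a,h^++\eps}=\sigma^{h^-,h^++\eps}(w_{\a,h^-})$ is the unique point of $P_{h^++\eps}$ lying in $w_{\a,h^-}+C^{k,+}_\a$, and by Theorems~\ref{T_partition_E+-} and \ref{T_partition_E} one has $C^{k,+}_\a=\R^+\mathcal D^+\theta(\a,w_{\a,h^-})$, so $w_{\a,h^++\eps}\in\partial^+\theta(\a,w_{\a,h^-})$. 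The construction of the finite unions of cone vector fields approximating $\mathtt d_\e$ then proceeds by discretizing $\mathtt p_{\R^k}(P_{h^++\eps})$ and taking, for each discretization, the differential directed locally affine partition associated to an auxiliary \emph{strictly convex} cone cost $\mathtt c^{(\delta)}_{\C}$ (a perturbation of $\mathtt c_{\C}$ making all its extremal faces $1$-dimensional) together with second marginals equal to Dirac masses at the discretization points. The cones coming from this differential partition are $1$-dimensional, hence define a finite union of cone vector fields with base on $P_t$ (any $t\in(h^-,h^+)$) and vertices on $P_{h^++\eps}$, as required by Definition~\ref{D_forw_back}; lifting/projecting is handled exactly as in Theorem~\ref{T_cone_graph}. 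When $\mathtt d_\e^{h^-}$ is multivalued one repeats the argument to produce instead a Borel section $\tilde{\mathtt d}^{h^-}_+$ satisfying the approximation property, as in Step~2 there.

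The heart of the matter, and the main obstacle, is the \emph{convergence} of these approximating cone vector fields to $\mathtt d_\e$ (resp.\ to the section $\tilde{\mathtt d}^{h^-}_+$) at $\mathcal H^{d-1}\llcorner_{P_t}$-a.e.\ point, as the perturbation $\delta\to 0$ and the discretization becomes dense. For a single potential this followed from uniqueness of the maximizer in \eqref{E_ya_unique}; here the analogue is: the transference plan obtained by limiting the finite approximations is concentrated on $\partial^+\theta$ and has the prescribed marginals, and by Theorem~\ref{T_A2} (applied to the Borel $\mathtt c_{\C}$-compatible linear preorder $\preccurlyeq$ whose equivalence classes are the level sets $\{\theta=\t\}$, and using that $\preccurlyeq_{(\Gamma,\mathtt c_{\C})}\subset\preccurlyeq$ for the carriage $\Gamma$ produced along the approximation) \emph{all} competitor plans between the relevant conditional marginals are concentrated on the diagonal equivalence classes; on a single $k$-dimensional class the differential partition of Section~\ref{S_foliations} together with Corollary~\ref{C_transp_graph} then forces the optimal plan on that class to be the unique monotone one, hence induced by the map given by $\mathtt d_\e$. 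Thus the limit of the approximating directions must coincide $\mathcal H^{d-1}$-a.e.\ with $\mathtt d_\e$; I expect this uniqueness-via-$\preccurlyeq$ step to require the most care, in particular checking that the auxiliary Dirac-mass problems have nonempty sets of competitors compatible with $\preccurlyeq$ and that the passage to the limit preserves both the marginal constraints and concentration on $\{\mathtt c_{\C}<+\infty\}$ (Egorov and upper semicontinuity of $\mathcal H^{d-1}$ on hyperplanes, as in Lemma~\ref{L_forward_esti_area}). The statement for $\mathcal R^-\theta(\a)$ and the final backward property is entirely symmetric, and the $\sigma$-continuous dependence on $\a$ follows, as throughout Section~\ref{S_foliations}, from the $\sigma$-compactness of the graphs of all the maps involved.
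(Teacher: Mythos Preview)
Your high-level plan is right: reduce to a $1$-dimensional slice, produce finite unions of cone vector fields via discretized targets, and get convergence from a uniqueness statement driven by the Borel $\mathtt c_{\C}$-compatible linear preorder. But two pieces of your argument do not work as written, and the paper handles them differently.

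First, your construction of the approximating cones is not well-defined. You propose using ``the differential directed locally affine partition associated to an auxiliary strictly convex cone cost $\mathtt c^{(\delta)}_{\C}$'' with Dirac second marginals. A cone cost is an indicator function, so optimality selects nothing, and a ``differential partition'' in the sense of Section~\ref{S_foliations} requires a foliation (equivalently, a potential), which you do not have here---this is precisely the point of the whole section. The paper does something concrete and different: it fixes $\bar\mu\simeq\mathcal H^{k-\ell}\llcorner_{P_{h^-}}$, sets $\bar\nu:=\sigma^{h^-,h^++\eps}_{\#}\bar\mu$, approximates $\bar\nu$ by locally finitely atomic $\bar\nu_n$ built by an explicit push-forward (Lemma~\ref{L_appr_meas}, which also guarantees $\Pi^f_{\mathtt c_{\C(\a)}}(\bar\mu,\bar\nu_n)\neq\emptyset$), and then takes $\bar\pi_n$ optimal for the \emph{secondary Euclidean cost} $\mathtt c''_{\C(\a)}(w,w')=|w-w'|$ on $\{\mathtt c_{\C(\a)}<\infty\}$. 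The $|\cdot|$-cyclical monotonicity of a carriage of $\bar\pi_n$ forces the transport segments to be pairwise disjoint, which is exactly what produces a finite union of cone vector fields with vertices at the atoms of $\bar\nu_n$.

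Second, your uniqueness step is muddled: you invoke Corollary~\ref{C_transp_graph} and ``$k$-dimensional classes'' and a ``unique monotone'' plan, but none of that is what is needed. The paper's argument is sharper and lives entirely on the slice. One restricts $\preccurlyeq$ to the closed $1$-dimensional slice $\bar{\mathbf Z}^{\ell,+}_\e$; on this set the equivalence classes are precisely the closed segments $\bar Z^{\ell,+}_{\b,\e}$, and the restricted preorder is a Borel $\mathtt c_{\C(\a)\cap\mathtt p^{-1}_{V^\ell}\langle\e\rangle}$-compatible linear preorder carrying $\bar\pi:=(\Id\times\sigma^{h^-,h^++\eps})_\#\bar\mu$. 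By Theorem~\ref{T_A2} every plan in $\Pi^f_{\mathtt c_{\C(\a)}}(\bar\mu,\bar\nu)$ is concentrated on $\bigcup_\b \bar Z^{\ell,+}_{\b,\e}\times\bar Z^{\ell,+}_{\b,\e}$; since $\bar\mu$ lives on one section and $\bar\nu$ on another, each such segment intersects the supports in at most one point each, so $\bar\pi$ is the \emph{unique} element of $\Pi^f_{\mathtt c_{\C(\a)}}(\bar\mu,\bar\nu)$. This uniqueness then forces $\bar\pi_n\rightharpoonup\bar\pi$ and hence $\sigma^{h^-,h^++\eps}_n\to\sigma^{h^-,h^++\eps}$ $\mathcal H^{k-\ell}$-a.e., which is the required convergence. (Incidentally, your indices are off throughout: the superdifferential partition on $\{\a\}\times\R^k$ has elements of dimension $\ell<k$, so the slice lives in a $(k-\ell+1)$-dimensional space and the relevant Hausdorff measure on sections is $\mathcal H^{k-\ell}$, not $\mathcal H^{d-1}$.)
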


In particular, we conclude from Theorem \ref{T_FC_no_initial} that the initial and final points $\mathcal I^+\theta(\a)$, $\mathcal E^-\theta(\a)$ are $\mathcal H^k$-negligible and, by integration w.r.t. $\mathcal H^{d-k}$ on $\A$, the sets $\mathcal I^+\theta$, $\mathcal E^-\theta$ are also $\LL$-negligible. Moreover, by Theorem \ref{T_one_d_slicing_FC}, the disintegration of $\LL$ w.r.t. the differential partition of the regular set $\mathcal R\theta$ is \emph{regular}, i.e. it satisfies \eqref{E_disint_regular} of Definition \ref{D_disint_regular}. 

Since the disintegration of $\mathcal H^k$ on equivalence classes of $\theta$ with positive $\mathcal H^k$-measure is clearly regular, we thus have the following corollary.

\begin{corollary}
\label{C_infinnegl}
If $\{Z^\ell_{\a,\b}\}_{\underset{\a\in\A,\b\in\B}{\ell=1,\dots,k}}$ is the partition of a $\mathtt c_{\C}$-Lipschitz foliation given by the equivalence classes of a Borel ${\mathtt c_{\C}}$-compatible linear preorder obtained as the union of the differential partition and of the classes of positive $\mathcal H^k$-measure, then the disintegration of the Lebesgue measure $\mathcal L^d$ restricted on the $\mathtt c_{\C}$-Lipschitz foliation
\[
\mathcal L^d \llcorner_{\mathbf Z} = \int \upsilon^\ell_{\a,\b} d\eta(\ell,\a,\b), \qquad \mathbf Z = \bigcup_{\ell,\a,\b} Z^\ell_{\a,\b},
\]
satisfies
\[
\upsilon^\ell_{\a,\b} \simeq \mathcal H^\ell \llcorner_{Z^\ell_{\a,\b}}, \quad\text{ for $\eta$-a.e. $(\ell,\a,\b)$}.
\]
\end{corollary}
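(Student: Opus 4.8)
The statement is an immediate corollary of Theorem \ref{T_coneappr_fol} together with the dimensional‑reduction machinery of Section \ref{S_disintechnique}, so the plan splits into two parts: first prove Theorem \ref{T_coneappr_fol}, which is the real content, and then harvest Corollary \ref{C_infinnegl} from it. For the corollary itself the reduction is as follows. The sets $Z^\ell_{\a,\b}$ split into two disjoint families: the $\ell\le k-1$ pieces coming from the differential partition of $\mathcal R\theta$ (these live inside the complete $\mathtt c_{\C(\a)}$‑Lipschitz graphs, which by Proposition \ref{P_fol_char} are $(k-1)$‑dimensional), and the $k$‑dimensional classes of positive $\mathcal H^k$‑measure (the classes with nonempty interior, again by Proposition \ref{P_fol_char}). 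On the second family the conditional of $\LL\llcorner_{\mathbf Z}$ is, by Fubini in the $\R^{d-k}$ variable and openness of the class in $\{\a\}\times\R^k$, the normalized restriction of $\mathcal H^k$, hence $\simeq\mathcal H^k\llcorner_{Z^k_{\a,\b}}$ trivially. On the first family one invokes Theorem \ref{T_one_d_slicing_FC}: once one knows that the differential partition of $\mathcal R\theta$ enjoys \emph{both} the forward and the backward cone approximation property, the disintegration of $\LL$ on it is regular, $\upsilon^\ell_{\a,\b}\simeq\mathcal H^\ell\llcorner_{Z^\ell_{\a,\b}}$. Combining the two families over the quotient measure $\eta$ gives the claim.

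The forward/backward cone approximation property for the differential partition of $\mathcal R\theta$ is exactly what Theorem \ref{T_coneappr_fol} supplies (recall Corollary \ref{C_v}, $\hat{\mathbf D}=\hat{\mathbf D}^+\cap\hat{\mathbf D}^-$, and Definition \ref{D_coneapprpart}). So the plan for Theorem \ref{T_coneappr_fol} is: fix $\a\in\A$, work inside a single complete $\mathtt c_{\C(\a)}$‑Lipschitz graph, and verify the initial forward cone approximation property for the $1$‑dimensional (initial) slices of the superdifferential directed partition of $\mathcal R^+\theta(\a)$, mutatis mutandis for the backward/subdifferential side. On such a slice, exactly as in the proof of Theorem \ref{T_cone_graph}, the direction field $\mathtt d_\e$ at a point $w$ of the section $P_{h^-}$ points toward the unique target in the section $P_{h^++\eps}$ which lies in $\partial^+\theta(\a,w)$; by transitivity \eqref{E_transitivity_subdifferential_a} and Theorem \ref{T_partition_E+-} this target is uniquely determined inside the superdifferential partition (it is the single point $\partial^+\theta(\a,w)\cap Z^{k,+}_\a$ in the $k$‑dimensional slice). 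Thus $\mathtt d_\e$ is built from a family of "optimal‑target" maps, and one wants to approximate it $\mathcal H^{d-1}$‑a.e. on $P_{h^-}$ (or on a Borel section of $\mathtt d_\e^{h^-}$ if it is multivalued) by finite unions of cone vector fields with vertices in $P_{h^++\eps}$.

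The main obstacle — and the reason this section is not just a citation of Theorem \ref{T_cone_graph} — is that here there is no Kantorovich potential to run the approximation scheme with. In the single‑graph case one perturbs $\d{\cdot}$ to a strictly convex norm, places Dirac masses at a dense discretization of $\mathtt p_{\R^d}(P_{h^++\eps})$, and takes the differential partitions of the resulting (unique) optimal potentials; convergence of the approximating cone fields to $\mathtt d_\e$ is forced by uniqueness of the strictly convex maximizer. In the foliation case I would instead exploit the $\mathtt c_{\C}$‑compatible Borel linear preorder $\preccurlyeq$: by Theorem \ref{T_A2} every finite‑cost transference plan is concentrated on the equivalence classes $\{\theta^{-1}(\a,\t)\}$, so on a single class the transport to a prescribed (discretized) second marginal is rigid, and by Theorem \ref{T_A1} (together with the construction of class‑wise potentials via \eqref{E_pot_form}) this plan is \emph{unique}. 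The role played in Theorem \ref{T_cone_graph} by "uniqueness of the strictly convex maximizer" is now played by "uniqueness of the transference plan on the class with the given Dirac‑type target"; one builds the approximating fields from the superdifferential directed partitions (Theorem \ref{T_partition_E+-}) of these uniquely determined plans for a strictly convex perturbation of the cone cost $\mathtt c_{\C(\a)}$, and lets the discretization become dense. The limit is a Borel section $\tilde{\mathtt d}^{h^-}_{+,\e}$ of $\mathtt d^{h^-}_\e$ meeting Definition \ref{D_initial_forward}, and Lemma \ref{Cinfin} then converts this into the area estimate \eqref{E_forward_esti_area1} along the slice.

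The remaining, more routine, difficulty is uniformity/measurability in $\a$: every object above (the preorder, the sections $\mathtt w^{\tg}_n$, the strictly convex perturbations, the approximating differential partitions) must depend measurably on $\a$, so that the per‑$\a$ conclusions of Theorem \ref{T_coneappr_fol} assemble into the cone approximation property of the full directed locally affine partition in $\R^d$; this is precisely where one must check that the $1$‑dimensional slices of the differential partition of $\mathcal R\theta$, as an object in $\R^d$, are genuinely the slices to which the per‑$\a$ argument applies. With that in place, Theorem \ref{T_FC_no_initial} applied slicewise and integrated over $\A$ gives $\LL(\mathcal I^+\theta)=\LL(\mathcal E^-\theta)=0$ (whence $\LL(\mathcal N\theta)=0$ by Theorem \ref{T_partition_E}), Theorem \ref{T_one_d_slicing_FC} gives the regularity of the disintegration of $\LL$ on the differential part, the $k$‑dimensional classes are handled trivially as above, and Corollary \ref{C_infinnegl} follows.
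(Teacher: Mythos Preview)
Your reduction of the corollary to Theorem \ref{T_coneappr_fol} via Theorems \ref{T_FC_no_initial} and \ref{T_one_d_slicing_FC}, with the $k$-dimensional classes handled by Fubini, is exactly what the paper does. You also correctly identify that the Borel $\mathtt c_{\C}$-compatible linear preorder is the substitute for the potential of Theorem \ref{T_cone_graph}, and that a uniqueness-of-plan statement derived from it is the engine of convergence.

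The gap is where you place that uniqueness. You claim that for each discretized (Dirac-type) second marginal the preorder, via Theorem \ref{T_A2}/\ref{T_A1}, makes the approximating plan unique, and that the cone fields come from these unique discrete plans. This does not work: any atomic approximation $\bar\nu_n$ aggregates mass from a continuum of equivalence classes onto finitely many points, so the Diracs cannot sit on the classes of $\preccurlyeq$ and Theorem \ref{T_A2} says nothing about $\Pi^f_{\mathtt c_{\C(\a)}}(\bar\mu,\bar\nu_n)$. The paper uses the preorder at the \emph{limit} instead: with $\bar\nu:=\sigma^{h^-,h^++\eps}_\#\bar\mu$, the restriction of $\preccurlyeq$ to the slice has the segments $\bar Z^{\ell,+}_{\b,\e}$ as equivalence classes, each meeting $P_{h^-}$ and $P_{h^++\eps}$ in exactly one point, and this forces $\bar\pi=(\Id\times\sigma^{h^-,h^++\eps})_\#\bar\mu$ to be the \emph{unique} element of $\Pi^f_{\mathtt c_{\C(\a)}}(\bar\mu,\bar\nu)$ (Step~2 of the proof). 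For the discrete problems one does not invoke the preorder at all: one takes $\bar\pi_n$ optimal for the secondary Euclidean cost $\mathtt c''_{\C(\a)}(w,w')=|w-w'|$ (not a strictly convex perturbation of the norm), and $|\cdot|$-cyclical monotonicity alone makes the rays non-crossing, which is what realizes $\bar\pi_n$ as a finite union of cone vector fields (Step~3 and Lemma \ref{L_appr_meas}). Weak compactness of $\{\bar\pi_n\}$ together with the uniqueness of the limit plan $\bar\pi$ then yields $\bar\pi_n\rightharpoonup\bar\pi$, i.e.\ $\sigma^{h^-,h^++\eps}_n\to\sigma^{h^-,h^++\eps}$ a.e., which is the cone approximation property. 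So the fix to your outline is: move the preorder-based uniqueness from the discrete approximants to the limit problem, and let the Euclidean secondary cost supply the non-crossing rays for the approximants.
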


\begin{remark}
\label{R_contin_quotient}
If the quotient set $\{1,\dots,k\} \times \A \times \B$ is chosen to be a countable union of sets as in \eqref{E_mathfrak_A_k_def}, then the quotient measure
\[
\eta = \sum_{\ell = 1}^k \eta^\ell,\quad\eta^\ell(\{\ell\}\times\A\times\B)=1
\]
satisfies
\[
\eta^\ell \simeq \mathcal H^{d-\ell} \llcorner_{\underset{i\in\N}{\cup} \mathfrak C^\ell_i}
\]
for some $\mathfrak C^\ell_i \subset V^{d-\ell}_i \in \mathcal A(d-\ell,\R^d)$.
\end{remark}

\begin{proof}
In the following we identify $\{\a\}\times\R^k$ with $\R^k$ and omit the variable $\a$ when clear from the context. Unless explicitly stated, for the notions and notations used in the proof we refer to Section \ref{S_disintechnique}. 

Since the proof of the initial forward cone approximation property is the same as the forward cone approximation property up to the Borel selection given by Lemma \ref{L_reg_tt_d_h_pm}, for simplicity we prove the forward cone approximation property.

The proof will be given in three steps, and we will restrict to the case of $\ell < k$, due to the structure of equivalence classes of positive $\mathcal H^k$-measure given in Proposition \ref{P_fol_char} and the existence of at most two degenerate equivalence classes (see Definition \ref{D_non_dege}), which are clearly of positive measure, as observed in Remark \ref{R_nondeg}.

{\it Step 1.} By Definitions \ref{D_coneapprpart} and \ref{D_1_dim_slice_sheaf}, we have to prove the forward cone approximation property for the $1$-dimensional slices $\mathcal D$-cylinder (see Definition \ref{D_mathcal_D_cyl}) of the superdifferential partition of $\mathcal R^{+}\theta(\a)$ ($\ell\in\{1,\dots,k-1\}$) given by 
\[
\bigl\{ Z^{\ell,+}_{\b}, C^{\ell,+}_{\b} \bigr\}_{\b \in \mathfrak B_{\ell,+}(\a)}, \qquad \mathbf Z^{\ell,+} = \bigcup_{\b \in \mathfrak B_{\ell,+}(\a)} Z^{\ell,+}_{\b} \subset \R^k,
\]
with reference plane $V^\ell = \langle \e^\ell_1,\dots,\e^\ell_\ell \rangle \in \mathcal G(\ell,\R^k)$ and base rectangle $U(\{\e^\ell_i\},\mathrm h^-,\mathrm h^+)$. By \eqref{E_mathfrak_A_k_def}, the set $\B_{\ell,+}(\a)$ is a subset of $(z + (V^\ell)^\perp) \in \mathcal A(k-l,\R^k)$ for some $z \in \interr U(\{\e^\ell_i\},\mathrm h^-,\mathrm h^+)$.

Let us fix a $1$-dimensional slice of $\mathbf Z^{\ell,+}$ with reference configuration $(\e,w + h^-(w,\e)\e,w+h^+(w,\e)\e)$, $w \in \interr U(\{\e^\ell_i\},\mathrm h^-,\mathrm  h^+)$, $\e\in C(\{\e^\ell_i\})$ (see Definition \ref{D_1_sim_slice}), i.e.
\begin{equation}
\label{E_1_dim_superdif}
\mathbf Z^{\ell,+}_\e = \mathbf Z^{\ell,+} \cap \mathtt p^{-1}_{ V^\ell} \Big( w + \big( h^-(w,\e),h^+(w,\e) \big) \e \Big)
\end{equation}
with $\eps>0$ such that the set
\[
\mathbf Z^{\ell,+} \cap \mathtt p^{-1}_{V^\ell} \Big( w + \big( h^-(w,\e)-\eps,h^+(w,\e)+\eps \big) \e \Big)
\]
is still a $1$-dimensional model set. 
Let
\begin{equation}
\label{E_tt_d_e+}
\mathtt d_\e^+ = \mathcal D \cap \mathtt p^{-1}_{V^\ell}\langle \e\rangle
\end{equation}
be its direction vector field as in \eqref{E_mathtt_d_e}.

As in \eqref{E_tt_d_e_sigma}, by definition of $1$-dimensional slice and since the cones of directions are given by the directions of the superdifferential, for all
\[
z_{\b,h^-} \in P_{w+h^-(w,\e)\e} = Z^{\ell,+}_\b \cap \mathtt p^{-1}_{V^\ell}(w+ h^-(w,\e)\e), \quad\b\in\B_{\ell,+}(\a)
\]
we have
\[
\mathtt d_\e^+(z_{\b,h^-}) = \frac{\sigma^{h^-,h^+ + \eps}(z_{\b,h^-}) - z_{\b,h^-}}{\big| \sigma^{h^-,h^++\eps}(z_{\b,h^-}) - z_{\b,h^-} \big|},
\]
where
\[
\sigma^{h^-,h^++\eps}(z_{\b,h^-}) = z_{\b,h^++\eps}
\]
is the unique point of $P_{w + (h^+(w,\e)+\eps) \e}$ satisfying
\begin{equation}
\label{E_w-+}
\begin{split}
\big\{ z_{\b,h^++\eps} \big\} &= \big( z_{\b,h^-} + C^{\ell,+}_\b \big) \cap Z^{\ell,+}_{\b}\cap\mathtt p_{V^\ell}^{-1} \big( w + (h^+(w,\e) + \eps) \e \big) \\
&\overset{\eqref{Point_sudiff_is_partition_in}}{=} \partial^+ \theta(z_{\b,h^-}) \cap \mathtt p_{V^\ell}^{-1} \big( w + (h^+(w,\e) + \eps) \e \big).
\end{split}
\end{equation}

{\it Step 2.}
Let
\[
\bar\mu \simeq \mathcal H^{k-\ell} \llcorner_{P_{w + \mathtt h^-(w,\e) \e}}, \qquad \bar\mu \big( P_{w + \mathtt h^-(w,\e) \e} \big) = 1,
\]
and set $\bar\nu=\sigma^{h^-,h^{+}+\eps}_{\#}\bar\mu$. Then clearly 
\[
\bar\pi := \big( \Id\times\sigma^{h^-,h^{+}+\eps} \big)_{\#} \bar\mu \in \Pi^f_{\mathtt c_{\C(\a)}}(\bar \mu,\bar \nu).
\]

In this step we prove that \eqref{E_w-+} and the fact that $\{\theta^{-1}(\a,\t)\}_{\t \in \T}$ is induced by a Borel $\mathtt c_{\C}$-compatible linear preorder $\preccurlyeq$ imply that
\begin{equation*}
\bar \pi \ \text{\it is the unique transport plan in } \Pi^f_{\mathtt c_{\C(\a)}}(\bar\mu,\bar\nu).
\end{equation*}

First of all observe that, by transversality of $\mathtt p^{-1}_{V^\ell}\langle\e\rangle$ w.r.t. $\C(\a)$, 
\begin{equation}
\label{E_csection}
\Pi^f_{\mathtt c_{\C(\a)}}(\bar\mu,\bar\nu)=\Pi^f_{\mathtt c_{\C(\a)\cap\p_{V^\ell}^{-1}\langle \e\rangle}}(\bar\mu,\bar\nu).
\end{equation}

Then, consider the Borel linear preorder
\[
\bar\preccurlyeq =\, \preccurlyeq \cap \; \bar{\mathbf Z}^{\ell,+}_\e \times \bar{\mathbf Z}^{\ell,+}_\e, \qquad \bar{\mathbf Z}^{\ell,+}_\e := \mathbf Z^{\ell,+} \cap \mathtt p^{-1}_{ V^\ell}\Big( w + \big[ h^-(w,\e),h^+(w,\e) \big] \e \Big). 
\]
Its equivalence classes are given by the (closed) segments
\[
\bar Z^{\ell,+}_{\b,\e} = Z^{\ell,+}_\b \cap \mathtt p^{-1}_{ V^\ell} \Big( w + \big[ h^-(w,\e),h^+(w,\e) \big] \e \Big), 
\]
from the transversality of $\mathtt p^{-1}_{V^\ell}(w + [ h^-(w,\e),h^+(w,\e) ] \e)$ w.r.t. $Z^{\ell,+}_{\b}$. Moreover, it is clearly $\mathtt c_{\C(\a)\cap\p_{V^\ell}^{-1}\langle \e\rangle}$-compatible and, since a carriage of $\bar\pi$ is given by 
\[
\bar\Gamma = \Graph\,\sigma^{h^-,h^++\eps} \cup \Graph\,\Id \llcorner_{\bar{\mathbf Z}^{\ell,+}_\e},
\]
then
\[
\bar \pi \bigg( \bigcup_{\b} \bar Z^{\ell,+}_{\b,\e} \times \bar Z^{\ell,+}_{\b,\e} \biggr) = 1.
\]
By Remark \ref{rem_pre} and Theorem \ref{T_A2},  we conclude that $\bar \preccurlyeq$ is
a Borel $(\mathtt c_{\C(\a)\cap\p_{V^\ell}^{-1}(\langle \e\rangle)},\bar\mu,\bar\nu)$-compatible linear preorder. In particular, by \eqref{E_csection}, any transport plan $\pi\in\Pi^f_{\mathtt c_{\C(\a)}}(\bar\mu,\bar\nu)$ must satisfy
\[
\pi \bigg( \bigcup_{\b} \bar Z^{\ell,+}_{\b,\e} \times \bar Z^{\ell,+}_{\b,\e} \biggr) = 1,
\]
and then since $\bar\mu$ and $\bar\nu$ are supported only on the sections $P_{w + h^-(w,\e)\e}$ and $P_{w + (h^+(w,\e) + \eps) \e}$, \eqref{E_w-+} implies that $\pi=\bar\pi$.

{\it Step 3.}
In this step we prove the cone approximation property for the $1$-dimensional slice $Z^{\ell,+}_{\e}$. The sequence of approximating finite union of cone vector fields (see Definition \ref{D_cone_vector}) will be given by the transport rays of transport plans which are optimal w.r.t. the secondary cost
\begin{equation}
\label{E_tt_c_C_''}
\mathtt c_{\C(\a)}''(w,w') = 
\begin{cases}
|w-w'| & \text{if $\mathtt c_{\C(\a)}(w,w') < +\infty$}, \\
+\infty & \text{otherwise},
\end{cases}
\end{equation}
and whose first marginal is $\bar\mu$ and second marginal is given by finite sums of Dirac deltas which are weakly converging to $\bar\nu$, as given by the next lemma.

\begin{lemma}
\label{L_appr_meas}
There exists a sequence
\[
\big\{ \bar \nu_n \big\}_{n \in \N} \in \mathcal P \big( P_{w + (h^+(w,\e) + \eps) \e} \big) 
\]
such that
\begin{enumerate}
\item $\bar \nu_n$ is locally finitely atomic, $\bar \nu_n = \underset{i \in \N}{\sum}\alpha_{n_i}\delta_{z'_{n_i}}$,

\item $\Pi^f_{\mathtt c_{\C(\a)}}(\bar\mu,\bar\nu_n) \neq \emptyset$,

\item $\bar \nu_n$ weakly converges to $\bar \nu$ in $\mathcal P(\R^k)$.

\end{enumerate}
\end{lemma}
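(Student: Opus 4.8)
The plan is to obtain $\bar\nu_n$ by collapsing the mass that $\bar\mu$ puts on small Borel pieces of $P_{w+h^-(w,\e)\e}$ onto finitely (locally) many points of the far section plane, in such a way that each collapse is \emph{admissible} for the cone cost, i.e. the target chosen for a given piece $E$ lies in $z+\C(\a)$ for every $z\in E$. Recall from Step 2 that $\bar\nu=\sigma^{h^-,h^++\epsilon}_\#\bar\mu$, where $\sigma:=\sigma^{h^-,h^++\epsilon}$ is a Borel bijection of $P_{w+h^-(w,\e)\e}$ onto the section plane at height $h^++\epsilon$, and that $\bar\mu\simeq\mathcal H^{k-\ell}\llcorner_{P_{w+h^-(w,\e)\e}}$.

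The geometric point I would isolate first is the following. By the sheaf-set properties (Definition \ref{D_sheaf_set}, \eqref{E_cones_fibration}), the slice direction $\e$ lies in the relative interior of the base cone, hence in the interior of $\mathtt p_{V^\ell}\C(\a)$; therefore the fiber $U_\e:=\{u\in(V^\ell)^\perp:(\e,u)\in\C(\a)\}$ is a convex set with nonempty interior, say containing a ball $B(u^*,\rho)$ with $\rho>0$. On the other hand the direction of each segment of the slice projects onto $\R^+\e$ in $V^\ell$ but, lying in an extremal face $C^{\ell,+}_{\mathfrak b}$ which is proper when $\ell<k$, it need not point into the interior of $\C(\a)$; so the ``naive'' target $\sigma(z_0)$ of a piece $E\ni z_0$ is in general not admissible for the other points of $E$. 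However, since all points of $P_{w+h^-(w,\e)\e}$ have the same $\mathtt p_{V^\ell}$-image, a short computation shows that if $\operatorname{diam}E<L\rho$ (with $L:=h^++\epsilon-h^-$) one may translate $\sigma(z_0)$ by an \emph{interior correction} of size $O(\operatorname{diam}E)$ towards $u^*$ so that the resulting point $z'_E$ satisfies $z'_E-z\in\C(\a)$ for all $z\in E$ and $|z'_E-\sigma(z_0)|=O(\operatorname{diam}E)$.

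With this at hand the construction is routine. For fixed $n$, using a $\sigma$-compact exhaustion of (a $\bar\mu$-full subset of) $P_{w+h^-(w,\e)\e}$ and Egorov's theorem, I would pick a compact $K$ with $\bar\mu\big(P_{w+h^-(w,\e)\e}\setminus K\big)<1/n$ on which $\sigma$ is uniformly continuous, partition $K$ into finitely many Borel pieces $A_{n_i}$ with both $\operatorname{diam}A_{n_i}$ and $\operatorname{diam}\sigma(A_{n_i})$ less than a suitable $\delta_n\to0$ (the latter via uniform continuity), and partition the complement into locally finitely many Borel pieces of diameter $<\delta_n$. For each piece pick $z_0^{(n_i)}\in A_{n_i}$ and the admissible target $z'_{n_i}$ of the previous paragraph, and set $\bar\nu_n:=\sum_i\bar\mu(A_{n_i})\,\delta_{z'_{n_i}}$. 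Then (1) is built in; (2) holds because $\pi_n:=\sum_i(\Id\times z'_{n_i})_\#(\bar\mu\llcorner_{A_{n_i}})$ is a zero-cost element of $\Pi^f_{\mathtt c_{\C(\a)}}(\bar\mu,\bar\nu_n)$; and for (3), testing against $\varphi\in C_b(\R^k)$ and using $\bar\nu=\sigma_\#\bar\mu$, the difference $\int\varphi\,d\bar\nu_n-\int\varphi\,d\bar\nu$ is bounded on $K$ by the modulus of continuity of $\varphi$ on a fixed compact set evaluated at $O(\delta_n)+\max_i\operatorname{diam}\sigma(A_{n_i})$, and on the complement by $2\|\varphi\|_\infty/n$; since weak convergence on the Polish space $\R^k$ is metrizable, $\bar\nu_n$ converges weakly to $\bar\nu$.

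The main obstacle is exactly the admissibility requirement in (2): because $\mathtt c_{\C(\a)}$ is a cone indicator, finite cost forces each target atom to lie in $z+\C(\a)$ for \emph{all} $z$ in the piece it collects, and the slice directions generally sit on the relative boundary of the fiber $U_\e$, so the target cannot be taken on the model set itself. Reconciling this with the weak convergence in (3) is precisely what the positive-inradius fact together with the $O(\operatorname{diam})$ interior-correction estimate achieve. The residual bookkeeping — Borel measurability of the pieces, local finiteness of the atoms, and the metrization of weak convergence — is standard.
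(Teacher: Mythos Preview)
Your argument is correct and rests on the same geometric observation as the paper: the admissible-target fiber in the transverse direction has nonempty interior (your ball $B(u^*,\rho)\subset U_\e$, the paper's cone $C'\subset(V^\ell)^\perp$), so one can nudge a candidate atom into the interior and stay $O(\delta_n)$-close to the nominal target.

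The execution differs. You discretize the \emph{source} section: partition $P_{w+h^-(w,\e)\e}$ into small pieces $A_{n_i}$, then for each piece produce an admissible atom close to $\sigma(z_0^{(n_i)})$ by convex interpolation towards $u^*$. This forces you to control both $\operatorname{diam}A_{n_i}$ (for admissibility) and $\operatorname{diam}\sigma(A_{n_i})$ (for weak convergence), hence the step restricting to a compact $K$ on which $\sigma$ is continuous. A minor slip: you invoke Egorov, but what you actually use is Lusin---or simply the $\sigma$-continuity of $\sigma$ already available from the sheaf-set structure.

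The paper instead discretizes the \emph{target} section directly: it defines a map $\mathtt T_n$ on the target plane sending each $z'$ to a nearby locally-finite grid point $z'_{n_i}$ lying in $z'+C'$, then sets $\bar\nu_n=(\mathtt T_n)_\#\bar\nu$ and $\bar\pi_n=(\Id\times\mathtt T_n\circ\sigma)_\#\bar\mu$. Admissibility is immediate from $\mathtt T_n(\sigma(z))\in\sigma(z)+C'\subset z+\C(\a)$, and weak convergence follows from $\|\mathtt T_n-\Id\|_\infty\to 0$, so no regularity of $\sigma$ is needed. This is cleaner, but your route is equally valid; your convex-interpolation estimate $|c|=O(\operatorname{diam}E)$ plays the same role as the paper's grid shift by $\tfrac{1}{n}\sum_i\e'_i$.
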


Once the lemma is assumed to be valid, the proof of the theorem is concluded as follows.

Let $\bar \pi_n \in \Pi^f_{\mathtt c_{\C(\a)}}(\bar \mu,\bar \nu_n)$ be optimal for $\mathtt c''_{\C(\a)}$ and let $\bar \Gamma''_n$ be a $\mathtt c''_{\C(\a)}$-cyclically monotone carriage. A standard argument based on $|\cdot|$-cyclical monotonicity (see e.g. \cite{Car:strictly}) implies the interiors of the segments $[z,z']$, with $[z,z'] \in \bar \Gamma''_n$, do not intersect. Since $\bar \Gamma''_n$ can be decomposed as
\begin{equation}
\label{E_Gamma_n_def}
\bar \Gamma''_n = \bigcup_{i \in \N} B_i \times \{z'_{n_i}\}, \qquad B_i \subset P_{w + h^-(w,\e)\e} \quad \text{Borel and disjoint},
\end{equation}
up to a $\bar\mu$-negligible set, then the approximating sequence of finite unions of cone vector fields (see Definition \ref{D_cone_vector}) is given by 
\[
E_{\mathtt d_i} := \big\{ (1-t) z + t z'_{n_i}: t \in [0,1], z \in B_i \big\}.
\]

Let
\[
\sigma^{h^-,h^++\eps}_n : P_{w + h^-(w,\e)\e} \to P_{w + h^+(w,\e)\e}, \qquad \sigma^{h^-,h^++\eps}_n(B_i) = z'_{n_i},
\]
be the Borel function such that $\bar \pi_n = (\Id\times\sigma^{h^-,h^++\eps}_n) _\# \bar \mu$, as it follows from \eqref{E_Gamma_n_def}. The uniqueness result proved in Step 2 implies that the measure $\bar \pi_n$ converges weakly to the measure $\bar \pi$, which equivalently means that
\[
\sigma^{h^-,h^++\eps}_n \to \sigma^{h^-,h^++\eps} \quad \mathcal H^{k-\ell}\text{-a.e.}.
\]
Then, the last condition for the forward cone approximation property required in Definition \ref{D_forw_back} is satisfied and the theorem is proved.
\end{proof}

We are left with the proof of Lemma \ref{L_appr_meas}.

\begin{proof}[Proof of Lemma \ref{L_appr_meas}]
W.l.o.g. assume that $\exists\,C'\in \mathcal C(k-\ell,\R^{k-\ell})$ such that, for all $z_{\b,h^-} \in P_{w + h^-(w,\e)\e}$,
\begin{equation*}
\big\{ \mathtt c_{\C(\a)}(z_{\b,h^-},\cdot) < +\infty \big\} \cap \mathtt p_{V^\ell}^{-1} \big( w + (h^+(w,\e) + \eps)\e \big) \supset z_{\b,h^++\eps} + C'
\end{equation*}
and
\[
C' \supset \bigcap_{i=1}^{k-\ell} \big\{ w : w \cdot \mathrm e'_i \geq 0 \big\}
\]
for some fixed system of coordinates $\{\mathrm e'_1,\dots,\mathrm e'_{k-\ell}\}\subset\R^{k-\ell}$

Let
\[
\bar Q = \prod_{i=1}^{k-\ell} \bigg[ - \frac{1}{2}, \frac{1}{2} \bigg] \mathrm e'_j, \qquad \bar Q(z',r) : = z' + r \bar Q.
\]
For $n \in \N$, choose $r_n>0$ sufficiently small such that
\[
\bar Q \bigg( z' + \frac{1}{n} \sum_{i = 1}^{k-\ell} \mathrm e'_{i}, 2 r_n \bigg) \subset z ' + C',
\]
for all $z' \in \mathtt p_{V^\ell}^{-1} ( w + (h^+(w,\e) + \eps)\e )$ (clearly any $r_n \leq \frac{1}{n}$ suffices). Let
\[
\big\{ \bar Q(z'_{n_i},r_n) \big\}_{i\in\N}
\]
be a locally finite covering of $\mathtt p_{V^\ell}^{-1}( w + (h^+(w,\e) + \eps)\e )$. Then, define the map
\[
\mathtt T_n : \mathtt p_{V^\ell}^{-1} \big( w + (h^+(w,\e) + \eps) \e \big) \to \mathtt p_{V^\ell}^{-1} \big( w + (h^+(w,\e) + \eps) \e \big)
\]
by
\begin{equation}
\label{E_tt_T_n}
\mathtt T_n(z') := z_{n_i}' \qquad \text{where} \ i = \min \bigg\{ j : z'_{n_j} \in \bar Q \bigg( z' + \frac{1}{n} \sum_{i=1}^{k-\ell} \mathrm e'_{i}, 2r_n \bigg) \bigg\}.
\end{equation}

The measures
\[
\bar \nu_n := {\mathtt T_n}_\#\bar \nu, \qquad \bar \pi_n := (\Id\times\mathtt T_n \circ \sigma^{h^-,h^++\eps})_\# \bar\mu.
\]
satisfy
\[
\bar\pi_n\in\Pi^f_{\mathtt c_{\C(\a)}}(\bar\mu,\bar\nu_n) \qquad \text{and} \qquad \bar\nu_n\rightharpoonup\nu\in\mathcal P(\R^k),
\]
proving the lemma.
%
\end{proof}

\begin{remark}
\label{R_not_gener_potential}
Since the level sets of the function $\theta$ form a $\mathtt c_{\C(\a)}$-Lipschitz foliation, then from Proposition \ref{P_fol_char} the equivalence classes of negligible $\mathcal H^k$-measure are complete $\mathtt c_{\C(\a)}$-Lipschitz graph. One could then  suspect that the proof of Theorem \ref{T_coneappr_fol} can be deduced from the proof of Theorem \ref{T_cone_graph}.

This would be the case if the disintegration of the $\mathcal H^k$-measure on the level sets of negligible $\mathcal H^k$-measure of the generating $\theta$ were absolutely continuous w.r.t. $\mathcal H^{k-1}$. Here we show that in general this is not the case. In fact, we will construct a Borel function $\Theta : [0,1]^2 \to [0,1]$ whose level sets are subsets of $C^\infty$-functions on $[0,1]$ such that there exists a Cantor set $C$ of positive $\mathcal L^2$-measure on which $\Theta$ is injective. This clearly implies that the disintegration of $\mathcal L^2 \llcorner_C$ w.r.t. $\Theta$ has conditional probabilities made of a single Dirac $\delta$-mass.

Let $\varrho : [0,1] \to [0,1]$ be a strictly increasing $C^\infty$ function such that
\[
\varrho(0) = 0, \ \varrho(1) = 1, \qquad \varrho(x) = 1 - \varrho(1-x), \qquad \frac{d^k\varrho}{dx^k}(0) = \frac{d^k\varrho}{dx^k}(1) = 0 \quad \forall k \in \N. 
\]

Consider the sequence of numbers
\[
c_k = 2^{-2k-1}, \qquad a_0 = 1, \ a_k = \frac{a_{k-1}-c_k}{2} =  3 \cdot 2^{-2-k} + 2^{-2-2k}, \quad b_k = 2^{-2 k^2}.
\]

{\it Step 1.}
If
\[
Q^1 = [0,a_1] \times [0,b_1] = \bigg[ 0,\frac{7}{16} \bigg] \times \bigg[ 0,\frac{1}{4} \bigg]
\]
and having fixed the points
\[
x_0 = 0, \ x_1 = a_1 + c_1 = \frac{9}{16}, \quad y_0 = 0, \ y_1 = b_1 + \frac{b_1}{3} = \frac{1}{3}, \ y_2 = 2 \bigg( b_1 + \frac{b_1}{3} \bigg) = \frac{2}{3},
\]
consider the squares
\[
Q^1_{0,j} = \bigg( x_0,y_j+\frac{1}{24} \bigg) + Q^1, \ Q^1_{1,j} = (x_1,y_j) + Q^1, \qquad j=0,1,2.
\]

We define the level sets of $\Theta$ outside the squares $Q^1_{i,j}$ as follows. First, in the strips
\[
[0,1] \times \bigg[ \frac{7}{24}, \frac{1}{3} \bigg], \quad [0,1] \times \bigg[ \frac{15}{24}, \frac{2}{3} \bigg], \quad [0,1] \times \bigg[ \frac{23}{24}, 1 \bigg]
\]
the level sets are horizontal segments $y = \mathrm{constant}$. In the remaining strips, the construction is completely similar so that we show only the case $[0,1] \times [0,7/24]$.

In the strip $[9/16,1] \times [1/4,7/24]$ the level lines are again $y = \mathrm{constant}$. In the remaining strip $[7/16,9/16] \times [0,7/24]$ we show how to use the function $\varrho$ to connect the points of the vertical segment $\{7/16\} \times [0,1/24]$ to the points of the vertical segment $\{9/16\} \times [0,1/4]$: the construction in the remaining part is symmetrical. In this case, define the level set of $\Theta$ as the curves
\[
\Big\{ y + 5 y \varrho \big( 8 (x - 7/16) \big), x \in [7/16,9/16] \Big\}, \quad y \in [0,1/24].
\]
Due to the regularity of $\varrho$, these curves are $C^\infty$ in the set of definition.

\begin{figure}
\centering{\resizebox{12cm}{12cm}{\input{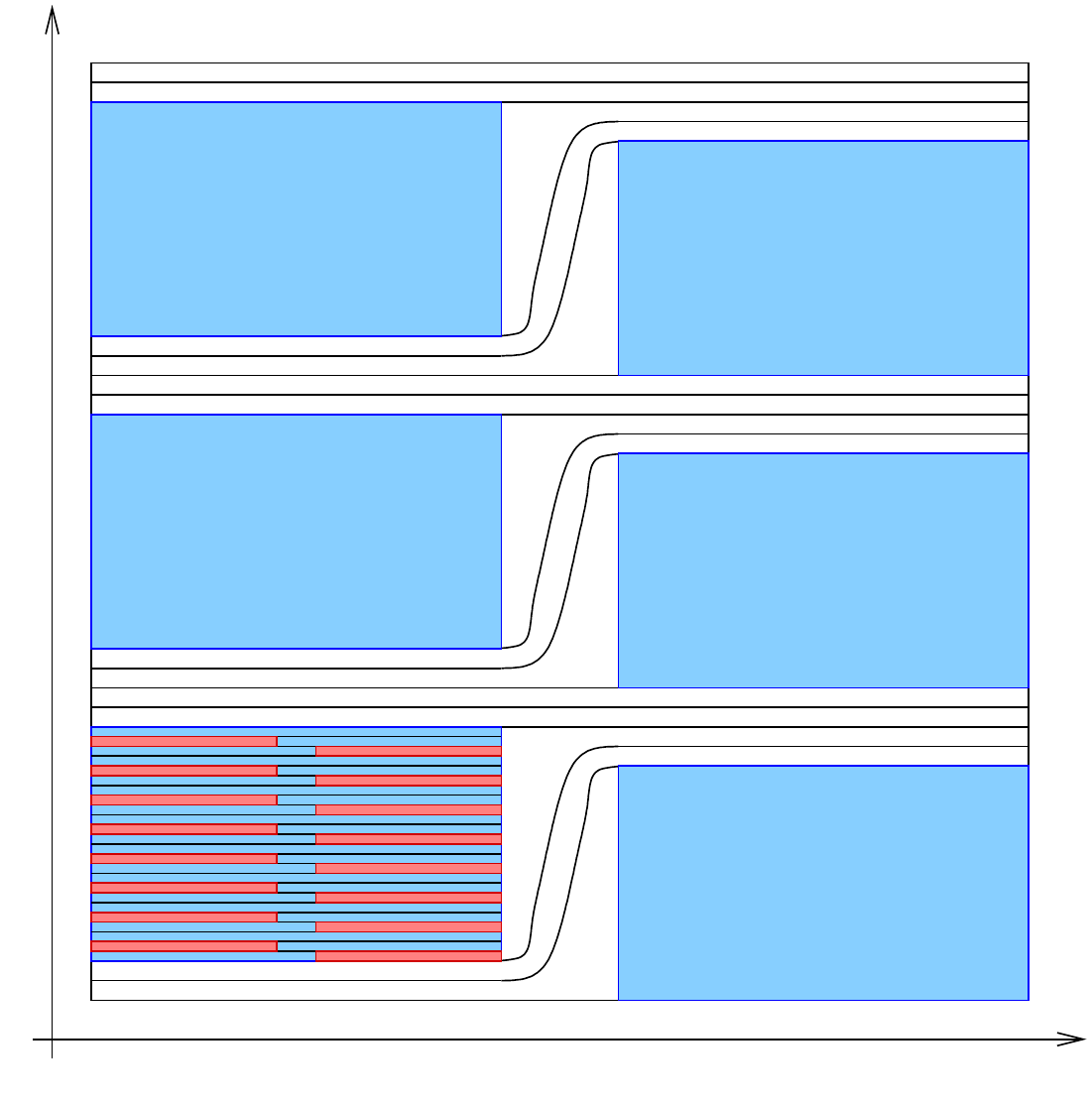_t}}}
\caption{The first steps of the construction done in Remark \ref{R_not_gener_potential}.}
\label{Fi_counterex2}
\end{figure}

{\it Step 2.}
In this step we show how to repeat the above construction in a square of the form $[0,a_k] \times [0,b_k]$, in order to define the level sets of $\Theta$ outside finitely many squares of size $[0,a_{k+1}] \times [0,b_{k+1}]$.

Define the points
\[
x_0 = 0, \ x_1 = a_{k+1} + c_{k+1} = \frac{3}{8} \cdot 2^{-k} + \frac{3}{16} \cdot 2^{-2k}, \quad y_j = j \cdot \bigg( b_{k+1} + \frac{b_{k+1}}{\frac{b_k}{b_{k+1}} - 1} \bigg), \ j=0,\dots,\frac{b_k}{b_{k+1}} - 2.
\]
The new squares which will be used in the next step are given by
\[
Q^k_{0,j} = \bigg( x_0,y_j + \frac{1}{2} \cdot \frac{b_{k+1}}{\frac{b_k}{b_{k+1}} - 1} \bigg) + [0,a_{k+1}] \times [0,b_{k+1}], \quad Q^k_{1,j} = (x_1,y_j) + [0,a_{k+1}] \times [0,b_{k+1}],
\]
with $j=0,\dots,\frac{b_k}{b_{k+1}} - 2$.

As before, in the strips
\[
[0,a_k] \times \bigg[ y_j - \frac{1}{2} \cdot \frac{b_{k+1}}{\frac{b_k}{b_{k+1}} - 1},y_j \bigg], \quad j = 1,\dots,\frac{b_k}{b_{k+1}} - 2,
\]
the level sets of $\Theta$ are straight lines $y = \mathrm{constant}$, and the same for the strips
\[
[0,a_{k+1}] \times \bigg[y_j,y_j + \frac{1}{2} \cdot \frac{b_{k+1}}{\frac{b_k}{b_{k+1}} - 1} \bigg], \quad j = 0,\dots,\frac{b_k}{b_{k+1}} - 2,
\]
\[
[a_{k+1}+c_{k+1},a_k] \times \bigg[y_j - \frac{b_{k+1}}{\frac{b_k}{b_{k+1}} - 1}, y_j - \frac{1}{2} \cdot \frac{(b_{k+1})^2}{\frac{b_k}{b_{k+1}} - 1} \bigg], \quad j = 1,\dots,\frac{b_k}{b_{k+1}} - 1.
\]

Similarly as done in Step 1, we just show how to define the level sets connecting the segments
\[
\{a_{k+1}\} \times \bigg[ 0,\frac{1}{2} \cdot \frac{b_{k+1}}{\frac{b_k}{b_{k+1}} - 1} \bigg] \quad \text{and} \quad \{a_{k+1} + c_{k+1}\} \times [0,b_{k+1}].
\]
One just defines the level sets of $\Theta$ to be
\[
\bigg\{ y +  \bigg( 2 \frac{b_k}{b_{k+1}} - 3 \bigg) y \varrho \bigg( \frac{x - a_{k+1}}{c_{k+1}} \bigg), x \in [a_{k+1},a_{k+1}+c_{k+1}] \bigg\}, \quad  y \in \bigg[ 0,\frac{1}{2} \cdot \frac{b_{k+1}}{\frac{b_k}{b_{k+1}} - 1} \bigg].
\]

{\it Step 3.}
We show that the level sets are $C^\infty$. In fact, by the estimate
\[
\frac{d^\ell}{dx^\ell} \big( b_k \varrho(x/c_k) \big) = \mathcal O(1) \frac{b_k}{c_k^\ell} = \mathcal O(1) 2^{-k^2 + 2 k \ell},
\]
it follows that the curves have a uniform bound in $C^\ell$, for every fixed $\ell$, and thus they belongs to $C^\infty$.

Moreover, it is fairly easy to see that the intersection of each level set with the line $\{1/2\} \times [0,1]$ determines completely the curve, so that the function $\Theta$ can be defined as the $y$-coordinate of this intersection. With a slight variation of this construction one can obtain $\Theta$ to be regular.

{\it Step 4.}
Let $C$ be the compact Cantor set obtained by intersecting all the squares $Q^k_{i,j}$:
\[
C = \bigcap_{k \in \N} \bigcup_{i,j} Q^k_{i,j}.
\]
It is standard to see that 
the function $\Theta$ is single valued on $C$. Moreover, a simple area estimate yields
\[
\mathcal L^2 \bigg( \bigcup_{i,j} Q^k_{i,j} \bigg) = 2^{k} a_{k} \cdot \bigg( 1 - \sum_{\ell = 1}^k b_k \bigg) \geq \frac{3}{4} \cdot \frac{2}{3} = \frac{1}{2},
\]
and this concludes the example.
\end{remark}

\section{Proof of Theorems \ref{T_subpart_final}-\ref{T_Monge_final}.}
\label{S_proff_main_Th}

In this final section we collect the proofs of the remaining main theorems stated in Section \ref{Ss_main_Sud}.

\subsection{Proof of Theorems \ref{T_subpart_step} and \ref{T_final_nu}}
\label{Ss_proof_Th_2}

In this section we prove Theorem \ref{T_subpart_step}, that we recall below.
\begin{theorem2}
\label{T_subpart_step2}
Let $\{Z\ka, C\ka\}_{\nfrac{k = 0,\dots,d}{\mathfrak a \in \mathfrak A^k}}$ be a Lebesgue-regular directed locally affine partition in $\R^d$ and let $\mu$, $\nu$ be probability measures in $\mathcal P(\R^d)$ such that $\mu\ll\LL$ and $\Pi^f_{\mathtt c_{\mathbf D}}(\mu,\nu)\neq\emptyset$.
Then, for all fixed $\check \pi \in \Pi^ f_{\mathtt c_{\mathbf D}}(\mu,\nu)$, there exists a directed locally affine subpartition $\{\check Z^\ell_{\b}, \check C^\ell_{\b}\}_{\nfrac{\ell = 0,\dots,d}{\mathfrak b \in \mathfrak B^\ell}}$ of $\{Z\ka, C\ka\}_{k,\a}$, up to a $\mu$-negligible set $N_{\check \pi}$, such that 
\begin{equation*}
\big\{ \check Z^\ell_\b,\check C^\ell_\b \big\}_{\ell,\b} \quad \text{is Lebesgue-regular},
\end{equation*}
and setting $\check \nu^\ell_\b := (\mathtt p_2)_\#\check \pi^\ell_\b$, where $\check \pi^\ell_\b$ is the conditional probability on the partition $\{\check Z^{\ell}_\b \times \R^d\}_{\ell,\b}$, then the sets
\begin{equation}
\label{E_subp_ell_cycl}
\Bigl\{ \check Z^\ell_\b:\,\check Z^\ell_\b\subset Z^\ell_\a\text{ for some $\a\in\A^\ell$}, \ell=1,\dots,d \Bigr\} 
\end{equation}
form a $\Pi^ f_{\mathtt c_{\mathbf D}}(\mu,\{\check \nu^\ell_\b\})$-cyclically connected partition.
\end{theorem2}

\begin{proof}
Let $\{Z\ka, C\ka\}_{\nfrac{k = 0,\dots,d}{\mathfrak a \in \mathfrak A^k}}$ be a Lebesgue-regular directed locally affine partition in $\R^d$ and let $\mu \ll \LL$, $\nu$ be probability measures such that $\Pi^f_{\mathtt c_{\mathbf D}}(\mu,\nu) \neq \emptyset$.

{\it Step 1.}
By Proposition \ref{P_countable_partition_in_reference_directed_planes}, we can restrict the proof of the theorem to a fixed $\ell$-directed sheaf set, which we will denote again by $\{Z^\ell_\a, C^\ell_\a\}_{\a\in\A^\ell}$.

Moreover, by Proposition \ref{P_sub_sheaf_fol} it is enough to prove the existence of subpartitions as in Theorem \ref{T_subpart_step} for the fibration $\{\tilde Z^\ell_\a,\tilde C^\ell_\a\}_{\a\in\A^\ell}$, $\C^\ell(\a)=C^\ell_\a$, given by Proposition \ref{P_map_sheaf_set_into_fibration}. It is indeed clear that $(\mu,\check \Gamma,\mathtt c_{\mathbf D})$-cyclically connected sets, where $\check \pi(\check \Gamma)=1$ are mapped into $(\tilde\mu,\tilde\Gamma,\mathtt c_{\C})$-cyclically connected sets and viceversa, being $\tilde\mu$ obtained through \eqref{E_rxr}. Since the map $\mathtt r$ defined in \eqref{E_r_map} is not a bijection of $\R^d$ into $\A^\ell \times \R^\ell$, then the transport problem on the fibration $\{\tilde Z^\ell_\a,\tilde C^\ell_\a\}_{\a\in\A^\ell}$ depends on the conditional second marginals $\{\check \nu^\ell_\a\}$ of $\check \pi$ w.r.t. the partition $\{Z^\ell_\a\}_{\ell,\a}$. Let $\tilde\nu=\int\check\nu^\ell_\a\,dm(\ell,\a)$.

{\it Step 2.}
Let $\bar \theta$ be the equivalence relation given by Theorem \ref{T_cfibrcfol} for the transport problem $\Pi^f_{\mathtt c_{\C}}(\tilde\mu,\tilde \nu)$. In particular, \eqref{E_subcollection} gives the sets that, when mapped back through the map $\mathbf r$ on the sheaf set, satisfy \eqref{E_subp_ell_cycl} and are $\Pi^f_{\mathtt c_{\mathbf D}}(\mu,\{\check \nu^\ell_\b\})$-cyclically connected.

By Propositions \ref{P_ex_fol} and \ref{P_fol_char}, the remaining sets form a $\mathtt c_{\C}$-foliation into graphs of cone-Lipschitz functions. Let $\{\check Z^\ell_{\a,\b}\}_{\ell'<\ell}$ be the $\mathtt c_{\C}$-differential partition given by Corollary \ref{C_v}. By Theorems \ref{T_coneappr_fol} and \ref{T_FC_no_initial} its complementary $N_{\check \pi}$ is $\mu$-negligible, and by Corollary \ref{C_infinnegl} the partition is Lebesgue-regular. 
\end{proof}

\subsection{Proof of Theorem \ref{T_final_nu}}
\label{Ss_proof_T_final}

The only missing point is to prove that the conditional second marginals $\{\check \nu^\ell_\b\}_{\ell,\b}$ are independent of the particular transference plan $\check \pi \in \Pi^{\mathrm{opt}}_{\d{\cdot}}(\mu,\nu)$ chosen.

From Corollary \ref{C_infinnegl} and Theorem \ref{T_FC_no_initial} it follows that
\[
\nu = \int \check \nu^\ell_\b\,dm(\ell,\b)
\]
is a disintegration, and Corollary \ref{C_transp_graph} implies that $(\mathtt p_2)_\#\check \pi^\ell_\b (\check Z^\ell_\b) = 1$. Hence, from the uniqueness of strongly consistent disintegrations, it follows that
\[
\check \nu^\ell_\b = (\mathtt p_2)_\#\check \pi^\ell_\b \qquad m\text{-a.e.}\ \ell,\b,
\]
yielding the improved version of Conditions (4')-(5') in Theorem \ref{T_final_nu}.

\newpage

\appendix

\section{Minimality of equivalence relations}
\label{A_minimal_equivalence}

Consider a family of Borel equivalence relations on $X$,
\begin{equation}
\label{E_fami_equiv}
\mathbf E = \big\{E_\mathfrak e\subset X\times X, \mathfrak e \in \mathfrak E \big\},
\end{equation}
and let $\mu \in \mathcal P(X)$. By Theorem \ref{T_disint}, we can construct the family of disintegrations
\begin{equation}
\label{E_disi_mu_app}
\mu = \int_{\mathfrak A_\mathfrak e} \mu_{\mathfrak e,\mathfrak a}\, dm_\mathfrak e(\mathfrak a), \quad \mathfrak e \in \mathfrak E,
\end{equation}
where $\A_{\mathfrak e}$, $\mu_{\mathfrak e,\mathfrak a}$, $m_\mathfrak e$ are respectively the quotient space, the conditional probabilities and image measure w.r.t. $E_\mathfrak e\in \mathbf E$.

\begin{definition}
\label{D_clos_sigma_E}
A family of equivalence relations $\mathbf E$ is \emph{closed under countable intersections} if
\[
\{E_{\mathfrak e_i}\}_{i \in \N} \subset \mathbf E \quad \Longrightarrow \quad \exists\,\mathfrak e\in \mathfrak E\text{ s.t. }E_{\mathfrak \e}\subset \bigcap_{i \in \N} E_{\mathfrak e_i}\in \mathbf E.
\]
\end{definition}
In \cite{BiaCar}, a family of equivalence relations is said to be closed under countable intersections if $\{E_{\mathfrak e_i}\}\subset E\quad\Rightarrow\quad \underset{i}{\cap} E_{\mathfrak e_i}\subset E$. However, by direct inspection, also with Definition \ref{D_clos_sigma_E} the following Theorem holds (see Theorem A.11 in \cite{BiaCar}):

\begin{theorem}
\label{T_minimal_equival}
If $\mathbf E$ is closed under countable intersections, there exists $E_{\bar{\mathfrak e}}\in \mathbf{E}$ such that for all $E_\mathfrak e\in \mathbf E$, the following holds:
\begin{enumerate}
\item
\label{item:poifvojfivfi}
if $\mathtt A_\mathfrak e$, $\mathtt A_{\bar{\mathfrak e}}$ are the $\sigma$-subalgebras of the Borel sets of $X$ made of the saturated sets for $E_\mathfrak e$, $E_{\bar{\mathfrak e}}$ respectively, then for all $A \in \mathtt A_\mathfrak e$ there is $A' \in \mathtt A_{\bar{\mathfrak e}}$ s.t.~ $\mu(A \vartriangle A') = 0$;
\item 
\label{item:gfoaifvfffv}
if $m_\mathfrak e$, $m_{\bar{\mathfrak e}}$ are the restrictions of $\mu$ to $\mathtt A_\mathfrak e$, $\mathtt A_{\bar{\mathfrak e}}$ respectively, then $\mathtt A_\mathfrak e$ can be embedded (as measure algebra) in $\mathtt A_{\bar{\mathfrak e}}$ by Point \eqref{item:poifvojfivfi}: let
\begin{equation}
\label{E_disi_m_app}
m_{\bar{\mathfrak e}}  = \int m_{\bar{\mathfrak e},\mathfrak a} dm_\mathfrak e(\mathfrak a)
\end{equation}
be the disintegration of $m_{\bar{\mathfrak e}}$ consistent with the equivalence classes of $\mathtt A_\mathfrak e$ in $\mathtt A_{\bar{\mathfrak e}}$.
\item
\label{item:gfoaivdasvds}
If
\[
\mu = \int \mu_{\mathfrak e,\mathfrak a} dm_\mathfrak e(\mathfrak a), \qquad \mu = \int \mu_{\bar{\mathfrak e},\b} dm_{\bar{\mathfrak e}}(\b)
\]
are the disintegrations consistent with $E_\mathfrak e$, $E_{\bar{\mathfrak e}}$ respectively, then
\begin{equation*}
\mu_{\mathfrak e,\mathfrak a} = \int \mu_{\bar{\mathfrak e},\mathfrak b} dm_{\bar{\mathfrak e},\mathfrak a}(\mathfrak b).
\end{equation*}
for $m_\mathfrak e$-a.e.~$\mathfrak a$.
\end{enumerate}
\end{theorem}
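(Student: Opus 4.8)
The plan is to produce $E_{\bar{\mathfrak e}}$ by a separability argument in the measure algebra of $(X,\mu)$: first reduce the (a priori uncountable) family $\mathbf E$ to a countable subfamily that already realizes, modulo $\mu$, the supremum of all the saturated-set $\sigma$-algebras; then use Definition \ref{D_clos_sigma_E} to produce a single member of $\mathbf E$ that is finer than all the chosen ones; and finally check that this member automatically has a maximal saturated-set algebra, so that \eqref{item:poifvojfivfi} holds. Points \eqref{item:gfoaifvfffv} and \eqref{item:gfoaivdasvds} will then come from Theorem \ref{T_disint} and its uniqueness part, applied on the quotient spaces.

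First I would work in the measure algebra $\mathcal M$ of $(X,\mathcal B(X),\mu)$ with the metric $\rho(A,B)=\mu(A\vartriangle B)$; since $X$ is Polish, $\mathcal B(X)$ is countably generated and $\mathcal M$ is separable. For each $E_{\mathfrak e}\in\mathbf E$, which by hypothesis comes from a Borel partition with Borel quotient map $\mathtt h_{\mathfrak e}$, the associated saturated-set $\sigma$-algebra $\mathtt A_{\mathfrak e}$ is, modulo $\mu$-null sets, countably generated (by the $\mathtt h_{\mathfrak e}$-preimages of a countable base of the Polish quotient $\A_{\mathfrak e}$); let $\overline{\mathtt A}_{\mathfrak e}\subset\mathcal M$ be its image and let $\mathtt A_\infty$ be the closed subalgebra of $\mathcal M$ generated by $\bigcup_{\mathfrak e\in\mathfrak E}\overline{\mathtt A}_{\mathfrak e}$. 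Being a subspace of a separable metric space, $\mathtt A_\infty$ is separable; each element of a countable dense subset is approximated in $\rho$ by Boolean combinations of elements of $\bigcup_{\mathfrak e}\overline{\mathtt A}_{\mathfrak e}$ involving only finitely many indices, so collecting all indices that occur yields a countable family $\{\mathfrak e_i\}_{i\in\N}\subset\mathfrak E$ with $\mathtt A_\infty=\bigvee_{i}\overline{\mathtt A}_{\mathfrak e_i}$.

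Then I would apply Definition \ref{D_clos_sigma_E} to $\{E_{\mathfrak e_i}\}_{i\in\N}$ to obtain $\bar{\mathfrak e}\in\mathfrak E$ with $E_{\bar{\mathfrak e}}\subset\bigcap_i E_{\mathfrak e_i}$. Since a set saturated for a coarser equivalence relation is saturated for every finer one, $\mathtt A_{\mathfrak e_i}\subset\mathtt A_{\bar{\mathfrak e}}$ for every $i$, whence $\mathtt A_\infty\subset\overline{\mathtt A}_{\bar{\mathfrak e}}$ in $\mathcal M$; the reverse inclusion holds by the very definition of $\mathtt A_\infty$, so $\overline{\mathtt A}_{\bar{\mathfrak e}}=\mathtt A_\infty$. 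Consequently, for any $E_{\mathfrak e}\in\mathbf E$ and any $A\in\mathtt A_{\mathfrak e}$, the $\mu$-class of $A$ lies in $\mathtt A_\infty=\overline{\mathtt A}_{\bar{\mathfrak e}}$, i.e. there is $A'\in\mathtt A_{\bar{\mathfrak e}}$ with $\mu(A\vartriangle A')=0$, which is \eqref{item:poifvojfivfi}. For \eqref{item:gfoaifvfffv}, the induced inclusion of measure algebras $\overline{\mathtt A}_{\mathfrak e}\hookrightarrow\overline{\mathtt A}_{\bar{\mathfrak e}}$ is realized, off a $\mu$-null set, by a Borel map $q:\A_{\bar{\mathfrak e}}\to\A_{\mathfrak e}$ with $\mathtt h_{\mathfrak e}=q\circ\mathtt h_{\bar{\mathfrak e}}$ $\mu$-a.e.; disintegrating $m_{\bar{\mathfrak e}}$ over the fibres of $q$ by Theorem \ref{T_disint} gives \eqref{E_disi_m_app} with $m_{\bar{\mathfrak e},\mathfrak a}$ concentrated on $q^{-1}(\mathfrak a)$. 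Finally, for \eqref{item:gfoaivdasvds}, I would set $\tilde\mu_{\mathfrak a}:=\int\mu_{\bar{\mathfrak e},\mathfrak b}\,dm_{\bar{\mathfrak e},\mathfrak a}(\mathfrak b)$ and verify, using \eqref{E_disi_mu_app} for $E_{\bar{\mathfrak e}}$, the disintegration \eqref{E_disi_m_app}, and $\mathtt h_{\mathfrak e}^{-1}(F)=\mathtt h_{\bar{\mathfrak e}}^{-1}(q^{-1}(F))$ $\mu$-a.e., that $\int_F\tilde\mu_{\mathfrak a}(B)\,dm_{\mathfrak e}(\mathfrak a)=\mu\bigl(B\cap\mathtt h_{\mathfrak e}^{-1}(F)\bigr)$ for all Borel $B,F$; hence $\{\tilde\mu_{\mathfrak a}\}$ satisfies \eqref{E_disint1}--\eqref{E_disint2} for the partition associated with $E_{\mathfrak e}$, and the uniqueness statement in Theorem \ref{T_disint} forces $\tilde\mu_{\mathfrak a}=\mu_{\mathfrak e,\mathfrak a}$ for $m_{\mathfrak e}$-a.e.\ $\mathfrak a$.

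The hard part will be the countable reduction in the second paragraph: one must argue carefully that the supremum of an a priori uncountable family of saturated-set $\sigma$-algebras, taken in the measure algebra, is attained by a countable subfamily, and must keep track of the passage between statements in $\mathcal M$ (which only sees sets up to $\mu$-null differences) and honest Borel $\sigma$-algebras --- this is exactly where separability of $(X,\mu)$ and closure of $\mathbf E$ under countable intersections enter in an essential way. Once $E_{\bar{\mathfrak e}}$ is in hand, the monotonicity ``finer relation, larger saturated algebra'' and the disintegration theorem with its uniqueness make Points \eqref{item:poifvojfivfi}--\eqref{item:gfoaivdasvds} routine.
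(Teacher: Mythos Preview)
The paper does not contain its own proof of Theorem~\ref{T_minimal_equival}: it is merely stated in Appendix~\ref{A_minimal_equivalence} with the remark ``by direct inspection, also with Definition~\ref{D_clos_sigma_E} the following Theorem holds (see Theorem~A.11 in \cite{BiaCar})'', and the appendix only proves the downstream Corollary~\ref{C_constant_for_minimal_equivalence}. So there is no in-paper argument to compare your proposal against.

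That said, your outline is the natural one and matches the strategy one finds in \cite{BiaCar}: exploit separability of the measure algebra of $(X,\mu)$ to reduce the supremum $\bigvee_{\mathfrak e}\overline{\mathtt A}_{\mathfrak e}$ to a countable subfamily, then invoke closure under countable intersections (Definition~\ref{D_clos_sigma_E}) to produce a single $E_{\bar{\mathfrak e}}$ whose saturated algebra realizes this supremum. Your derivation of Point~\eqref{item:poifvojfivfi} from $\overline{\mathtt A}_{\bar{\mathfrak e}}=\mathtt A_\infty$ is clean, and the uniqueness part of Theorem~\ref{T_disint} is exactly the right tool for Point~\eqref{item:gfoaivdasvds}. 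One small caution: in Point~\eqref{item:gfoaifvfffv} you assert the existence of a Borel $q:\A_{\bar{\mathfrak e}}\to\A_{\mathfrak e}$ realizing the measure-algebra embedding at the level of points; this is true but is itself a nontrivial selection/realization statement (essentially the content of Corollary~\ref{C_constant_for_minimal_equivalence}, or a von Neumann--type lifting theorem), so you should either cite such a result or note that Point~\eqref{item:gfoaifvfffv} as stated in the theorem is really just recording the embedding and \emph{defining} the disintegration \eqref{E_disi_m_app}, which exists by abstract disintegration theory once the measure-algebra inclusion is in hand.
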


\begin{remark}
\label{R_min_contain_E}
From the proof of the above theorem one can observe that it is enough to require that for all $\{\mathfrak e_i\}_{i \in \N} \subset \mathfrak E$ there exists ${ \mathfrak e} \in \mathfrak E$ such that $E_{\mathfrak e}\subset \underset{i}{\cap}\, E_{\mathfrak e_i}$.
\end{remark}

\begin{definition}
\label{D_min_equiv_def}
The equivalence relation $E_{\bar{\mathfrak e}}$ satisfying the assumptions of Theorem \ref{T_minimal_equival} is called the \emph{minimal equivalence relation} in $\mathbf E$ w.r.t. $\mu$.
\end{definition}

In particular, assume that each $E_\mathfrak e$ is given by
\[
E_\mathfrak e = \underset{x'\in X'}{\bigcup}\big\{ \theta_\mathfrak e=x'\big\}\times\{ \theta_\mathfrak e=x'\big\}, \qquad \theta_\mathfrak e : X \to X', \ X' \ \text{Polish}, \ \theta_\mathfrak e \ \text{Borel}.
\]

\begin{corollary}
\label{C_constant_for_minimal_equivalence}
$\forall\,\mathfrak e\in\mathfrak E$, there exists a $\mu$-conegligible set $F \subset X$ such that $\theta_\mathfrak e$ is constant on $F \cap \theta_{\bar{\mathfrak e}}^{-1}(x')$, for all $x' \in X'$.
\end{corollary}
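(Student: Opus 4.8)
The plan is to derive Corollary~\ref{C_constant_for_minimal_equivalence} as a direct consequence of Theorem~\ref{T_minimal_equival}, specifically of Point~\eqref{item:poifvojfivfi}. First I would fix an arbitrary $\mathfrak e \in \mathfrak E$ and recall the set-up: the equivalence relation $E_{\mathfrak e}$ is the graph of the partition induced by a Borel map $\theta_{\mathfrak e}:X \to X'$, while $E_{\bar{\mathfrak e}}$ is the minimal equivalence relation in $\mathbf E$ w.r.t.\ $\mu$, induced by $\theta_{\bar{\mathfrak e}}:X \to X'$. Let $\mathtt A_{\mathfrak e}$ and $\mathtt A_{\bar{\mathfrak e}}$ be the sub-$\sigma$-algebras of saturated (Borel) sets. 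The goal ``$\theta_{\mathfrak e}$ constant on $F \cap \theta_{\bar{\mathfrak e}}^{-1}(x')$ for all $x'$'' is exactly the statement that, up to a $\mu$-null set $F^c$, the partition $\{\theta_{\mathfrak e}=x''\}_{x''}$ is \emph{coarser} than the partition $\{\theta_{\bar{\mathfrak e}}=x'\}_{x'}$, i.e.\ that every $\mathtt A_{\mathfrak e}$-measurable set agrees $\mu$-a.e.\ with an $\mathtt A_{\bar{\mathfrak e}}$-measurable set in a way that can be realized simultaneously by a single null set.

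The key steps, in order, would be as follows. Since $X'$ is Polish, fix a countable algebra $\{B_i\}_{i\in\N}$ of Borel sets generating $\mathcal B(X')$. For each $i$, the set $\theta_{\mathfrak e}^{-1}(B_i)$ lies in $\mathtt A_{\mathfrak e}$, so by Point~\eqref{item:poifvojfivfi} of Theorem~\ref{T_minimal_equival} there is $A_i' \in \mathtt A_{\bar{\mathfrak e}}$ with $\mu\big(\theta_{\mathfrak e}^{-1}(B_i) \vartriangle A_i'\big) = 0$. Put $N := \bigcup_{i\in\N} \big(\theta_{\mathfrak e}^{-1}(B_i) \vartriangle A_i'\big)$, a $\mu$-null set, and set $F := X \setminus N$; this is $\mu$-conegligible. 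On $F$ we have $\theta_{\mathfrak e}^{-1}(B_i) \cap F = A_i' \cap F$ for every $i$, so $\theta_{\mathfrak e}^{-1}(B_i) \cap F$ is a union of $E_{\bar{\mathfrak e}}$-equivalence classes intersected with $F$; since $\{B_i\}$ generates $\mathcal B(X')$ and both $\theta_{\mathfrak e}$ and $\theta_{\bar{\mathfrak e}}$ are Borel, separating points of $X'$ by countably many $B_i$'s shows that $\theta_{\mathfrak e}$ cannot take two distinct values on any set of the form $F \cap \theta_{\bar{\mathfrak e}}^{-1}(x')$: if $z, z' \in F \cap \theta_{\bar{\mathfrak e}}^{-1}(x')$ with $\theta_{\mathfrak e}(z) \ne \theta_{\mathfrak e}(z')$, pick $B_i$ containing $\theta_{\mathfrak e}(z)$ but not $\theta_{\mathfrak e}(z')$; then $z \in A_i' \cap F$ but $z' \notin A_i' \cap F$, contradicting that $A_i'$ is saturated for $E_{\bar{\mathfrak e}}$ (hence contains either all or none of the class of $x'$, up to the already-excised null set, which is exactly why we intersected with $F$). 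This yields the claim.

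I do not expect a serious obstacle here, since the corollary is essentially a measure-algebra reformulation of the minimality property already established. The one point requiring a little care is the quantifier order: Point~\eqref{item:poifvojfivfi} a priori gives, for \emph{each} saturated set $A$, a null set depending on $A$; the argument needs these combined into a \emph{single} $\mu$-null set working for all $x' \in X'$ at once. This is handled precisely by passing to the countable generating family $\{B_i\}$ and taking the countable union $N$ of the corresponding symmetric differences — the separability of $X'$ is what makes this legitimate. A secondary, purely bookkeeping point is that the sets $A_i'$, being $\mathtt A_{\bar{\mathfrak e}}$-measurable, are genuine unions of $E_{\bar{\mathfrak e}}$-classes (not merely equal to one up to null sets), so that the pointwise argument on $F$ goes through without re-introducing exceptional sets; this is immediate from the definition of $\mathtt A_{\bar{\mathfrak e}}$ as the algebra of Borel saturated sets. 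Thus the proof is short: produce $F$, then run the point-separation argument.
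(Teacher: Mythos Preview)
Your argument is correct and proves the corollary as stated. It is, however, a genuinely different route from the paper's. You use only Point~\eqref{item:poifvojfivfi} of Theorem~\ref{T_minimal_equival}: pull back a countable separating family $\{B_i\}$ in $X'$ along $\theta_{\mathfrak e}$, match each preimage $\mu$-a.e.\ with an $E_{\bar{\mathfrak e}}$-saturated set, throw away the countable union of symmetric differences, and run a point-separation argument on the remaining conegligible $F$. The paper instead works through the disintegration machinery of Points~\eqref{item:gfoaifvfffv}--\eqref{item:gfoaivdasvds}: it considers $\vartheta=(\theta_{\mathfrak e},\theta_{\bar{\mathfrak e}})$, iterates two disintegrations to show that the conditional measures $m_{\vartheta,x''}$ of $m_\vartheta$ over $m_{\bar{\mathfrak e}}$ are Dirac masses, and concludes that $m_\vartheta$ is concentrated on a Borel graph, i.e.\ $\theta_{\mathfrak e}=\mathtt s\circ\theta_{\bar{\mathfrak e}}$ on a conegligible set for some Borel $\mathtt s$.

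Your approach is more elementary and self-contained; it needs nothing beyond Point~\eqref{item:poifvojfivfi} and the separability of $X'$. The paper's approach, while heavier, explicitly produces the Borel factor map $\mathtt s$, which is in fact invoked later in the proof of Theorem~\ref{T_cfibrcfol}. Your argument also yields such an $\mathtt s$ implicitly (define $\mathtt s(x')$ as the constant value of $\theta_{\mathfrak e}$ on $F\cap\theta_{\bar{\mathfrak e}}^{-1}(x')$), but you would need an extra line to verify its Borel measurability if you want that byproduct; for the corollary as stated this is not required.
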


\begin{proof}
Consider the function $\vartheta := (\theta_\mathfrak e,\theta_{\bar{\mathfrak e}})$: by the minimality of $\theta_{\bar{\mathfrak e}}$, it follows that
\[
m_{\bar{\mathfrak e}} = \int m_{\bar{\mathfrak e},(x',x'')} dm_\vartheta(x',x''), \qquad m_\vartheta := \vartheta_\# \mu.
\]
Since $(\mathtt p_2)_\# m_\vartheta = m_{\bar{\mathfrak e}}$, then also
\[
m_\vartheta = \int m_{\vartheta,x''} dm_{\bar{\mathfrak e}}(x''),
\]
and thus
\[
m_{\bar{\mathfrak e}} = \int \bigg[ \int m_{\bar{\mathfrak e},(x',x'')} dm_{\vartheta,t''}(x',x'') \bigg] dm_{\bar{\mathfrak e}}(t'').
\]
This implies that for $m_{\bar{\mathfrak e}}$-a.e. $t''$
\[
\int m_{\bar{\mathfrak e},(x',x'')} dm_{\vartheta,t''}(x',x'') = \delta_{t''},
\]
or equivalently that
\[
m_{\vartheta,x'''} = \delta_{\mathtt x'(x'''),\mathtt x''(x''')}, \quad m_{\bar{\mathfrak e},(\mathtt x'(x'''),\mathtt x''(x'''))} = \delta_{x'''}.
\]
Hence $m_\vartheta$ is concentrated on a graph: by choosing $x''=x'''$, there exists $\mathtt s = \mathtt s(x'')$ Borel such that $m_\vartheta = (\Id,\mathtt s)_\# m_{\bar{\mathfrak e}}$. This is equivalent to say that there exists a $\mu$-conegligible set $F$ such that $\theta_\mathfrak e = \mathtt s \circ \theta_{\bar{\mathfrak e}}$ on $F$.
\end{proof}








\newpage

\section{Notation}
\label{A_appendix_nota}

\begin{tabbing}
\hspace{4cm}\=\kill
$\N$, $\N_0$, $\Q$, $\R$ \> natural numbers, natural numbers with $0$, rational numbers, real numbers
\\
$\Q^+$, $\R^+$ \> positive rational and real numbers
\\
$\R^d$, $\mathbb S^{d-1}$ \> $d$-dimensional real vector space and $(d-1)$-dimensional unit sphere
\\
$|\cdot|$, $\cdot$ \> norm, scalar product in $\R^d$
\\
$B^d(x,r)$ \> open unit ball in $\R^d$ centered in $x$ with radius $r$
\\
$\mathtt P(X)$ \> power set of $X$
\\
$\omega$ \> first countable ordinal number
\\
$\clos\,A$, $\inter\,A$, $\interr A$ \> closure, interior, relative interior of a set $A$
\\
$\partial A$, $\partial_\mathrm{rel} A$ \> topological boundary, relative boundary of a set $A$
\\
$A + A'$ \> vector sum of two sets $A$, $A'$ \eqref{E_vector_sum}
\\
$T A$ \> scalar product of $T \subset \R$ with $A$ \eqref{E_product_scalar_vector_def}
\\
$\underset{i}{\prod}\, X_i$ \> product space of the spaces $X_i$
\\
$\mathtt p_{\bar i} A$, $\mathtt p_{x_{\bar i}} A$, $\mathtt p_{X_{\bar i}} A$ \> projection of $A \subset \underset{i}{\prod}\, X_i$ on the $\bar i$-coordinate
\\
$\Sigma^1_1$ \> class of Souslin sets
\\
$\varTheta$ \> $\sigma$-algebra generated by Souslin sets
\\
$\mathbf f^{-1}$, $A^{-1}$ \> inverse of the multifunction $\mathbf f$ or the set $A \subset X \times Y$ \eqref{E_inverse_multi_function}
\\
$\dom\, \mathbf f$, $\dom\, \mathtt f$ \> domain of the multifunction $\mathbf f$ or the function $\mathtt f$
\\
$\Graph\, \mathtt f$ \> graph of the function $\mathtt f$
\\
$\mathtt g \circ \mathtt f$ \> composition of the functions $\mathtt f$ and $\mathtt g$
\\
$\epi\,\mathtt f$ \> epigraph of the function $\mathtt f$ \eqref{E_epigraph_function}
\\
$\Id$ \> identity map or its graph
\\
$\chi_A(x)$ \> characteristic function of $A$ \eqref{E_char_funct_set_A}
\\
$\ind_A$ \> indicator function \eqref{E_indicator_function_A}
\\
$(X,\mathtt d)$ \> Polish space
\\
$\mathcal G(k,\R^d)$ \> family of linear subspaces of $\R^d$ of dimension $k$
\\
$\mathcal A(k',V)$ \> family of affine subspaces of $V \in \mathcal A(k,\R^d)$ of dimension $k' \leq k$
\\
$\mathtt p_V$ \> orthogonal projection on the affine space $V \in \mathcal A(k,\R^d)$
\\
$\aff\,A$ \> affine space generated by $A$ \eqref{E_affine_span_set}
\\
$\conv\,A$ \> convex envelope of $A \subset \R^d$ 
\\
$V^\perp$ \> orthogonal space to $V$
\\
$\dim\,A$ \> linear dimension of $\aff\,A$
\\
$\mathcal C(k,\R^d)$ \> family of closed non degenerate cones in $\R^d$ of dimension $k$
\\
$D$ \> compact convex neighborhood of $0$
\\
$|\cdot|_{D^*}$ \> convex norm with unit ball $D$ \eqref{E_norm} 
\\
$\mathcal C(k',V)$ \> cones in the affine space $V$ \eqref{E_cones_and_directions_in_subspace}
\\
$\mathring C(\pm r)$ \> cones defined in \eqref{E_epsilon_neigh_of_Cka}, \eqref{E_inverse_neigh_Cone}
\\
$C(\pm r)$ \> closure of the cone $\mathring C(\pm r)$
\\
$\conv_{\mathbb S^{d-1}} A$ \> spherical convex envelope of $A \subset \mathbb S^{d-1}$ \eqref{E_convex_envelope_on_sphere}
\\
$\mathcal L^d$ \> Lebesgue measure on $\R^d$
\\
$\mathcal H^k$ \> $k$-dimensional Hausdorff measure
\\
$\varpi_0 \otimes \varpi_1$ \> product of the two measures $\varpi_0$ and $\varpi_1$
\\
$\varpi \llcorner_A$ \> restriction of the measure $\varpi$ to the set $A$
\\
$\mathbf f \llcorner_A$ \> restriction of the function $\mathbf f$ to the set $A$
\\
$\mathrm{Leb}(A)$ \> Lebesgue points of a set $A$ \eqref{E_lebesgue_point_definition}
\\
$\mathcal B(X)$ \> Borel $\sigma$-algebra on the topological space $X$
\\
$\mathcal P(X)$ \> space of Borel probability measures over $X$ Polish
\\
$\varpi_0 \simeq \varpi_1$ \> equivalence of measures \eqref{E_equiv_varpi}
\\
$\mathtt f_\# \varpi$ \> push-forward of the measure $\varpi$ by the function $\mathtt f$ \eqref{E_push_forward}
\\
$\int_\A \mu_\a dm(\a)$ \> disintegration of a measure $\mu$ on a partition $\{Z_\a\}_{\a \in \A}$, Definition \ref{D_dis}
\\
$\int_\A \nu_\a dm(\a)$ \> integration formula \eqref{E_int_measure}
\\
$\int_\A \upsilon_\a d\eta(\a)$ \> disintegration of $\mathcal L^d$ on a partition $\{Z_\a\}_{\a \in \A}$, Remark \ref{R_disint_lebesgue}
\\
$\mathtt c : X \times X \to [0,\infty]$ \> Borel cost function
\\
$\Pi(\mu,\nu)$ \> transference plans with marginals $\mu$, $\nu$ \eqref{E_Pi_mu_nu}
\\
$\Pi^f_{\mathtt c}(\mu,\nu)$ \> transference plans with finite $\mathtt c$ cost \eqref{E_Pi_mu_nu_fin}
\\
$\Pi^{\mathrm{opt}}_{\mathtt c}(\mu,\nu)$ \> $\mathtt c$-optimal transference plans \eqref{E_Pi_mu_nu_optimal}
\\
$\Gamma \subset X \times X$ \> $\sigma$-compact $\mathtt c$-cyclically monotone carriage of $\pi$
\\
$\preccurlyeq_A$ \> preorder on $X$ whose graph is $A$
\\
$\simeq_A$ \> equivalence relation whose graph is $E = \preccurlyeq_A \cap\, (\preccurlyeq_A)^{-1}$
\\
$\preccurlyeq_{(\Gamma,\mathtt c)}$ \> preorder induced by $(\Gamma,\mathtt c)$-axial paths \eqref{E_axpreorder} 
\\
$\partial^+ \varphi$, $\partial^- \varphi$ \> superdifferential, subdifferential of the function $\varphi$, Definition \ref{D_super_sub_diff}
\\
$\mathtt c_{C^k}$ \> convex cone cost \eqref{E_cone_cost}
\\
$\hat \mu$, $\hat \nu$ \> push forward of $\mu$, $\nu$ on the graph of $\psi$ \eqref{E_measure_on_graph}
\\
$\hat \pi$ \> push forward of $\pi$ on the set $\Graph\,\psi \times \Graph\,\psi \subset \R^{d+1} \times \R^{d+1}$ \eqref{E_graph_plan}
\\
$\partial^+ \Graph\, \varphi$, $\partial^- \Graph\, \varphi$ \> super/subdifferential of $\Graph\, \varphi$ \eqref{E_sdiff_varphi}
\\
$Z^k_\mathfrak a$ \> directed locally affine partition of $\R^d$, Definition \ref{D_locaffpart}
\\
$C^k_\mathfrak a$ \> cone of directions for the partition $Z^k_\mathfrak a$, Definition \ref{D_locaffpart}
\\
$\mathbf D$, $\{Z^k_a,C^k_\a\}_{k,\a}$ \> graph of a directed locally affine partition \eqref{E_bD}
\\
$\mathbf Z$, $\mathbf Z^k$ \> base of the partition $Z^k_\mathfrak a$ \eqref{E_mathbf_Z_base_partition}
\\
$\A$ \> quotient space of the directed locally affine partition \eqref{E_disj_unio_A}
\\
$\bigsqcup_k \A^k$ \> disjoint union of the sets $\A^k$ \eqref{E_disj_unio_A}
\\
$\mathcal I(Z^k_\a)$, $\mathcal E(Z^k_\a)$ \> initial, final points of the set $Z^k_\a$, Definition \ref{D_initial_final}
\\
$\mathcal I$, $\mathcal E$ \> initial, final points of a directed locally affine partition \eqref{E_initial_final}
\\
$\mathtt c_{\bD}$ \> cost function associated to a directed locally affine partition \eqref{E_c_bD}
\\
$\int \pi\ka dm(k,\a)$ \> disintegration of the transference plan $\pi$ on $\{Z\ka,C\ka\}_{k,\a}$, Proposition \ref{P_dispiani}
\\
$\Pi^f_{\mathtt c_{\mathbf D}}(\mu,\{\bar\nu^k_\mathfrak a\})$ \> optimal transport plans with finite $\mathtt c_{\mathbf D}$-cost w.r.t. the marginals $\mu$ and $\{\bar\nu\ka\}$, Definition \ref{D_pinu}
\\
$C(\{\mathrm e^k_i\})$ \> cone generated by $\{\mathrm e^k_i\}$ \eqref{E_cone_e_k_i}
\\
$U(\{\mathrm e^k_2\})$ \> unit cube generated by the family of vectors $\{\mathrm e^k_i\}$ \eqref{E_U_square_mathrm_e}
\\
$\mathbf D^k_n$ \> countable decomposition of $\mathbf D$ into sheaf sets, Proposition \ref{P_countable_partition_in_reference_directed_planes}
\\
$\bD^k$ \> sheaf set, Definition \ref{D_sheaf_set}
\\
$\mathfrak A^k \subset \R^{d-k}$ \> quotient space of the sheaf set $\bD^k$ \eqref{E_mathfrak_A_k_def}
\\
$\tilde{\mathbf D}^k$, $\tilde{\mathbf C}^k$ \> $\sigma$-compact fibration, Definition \ref{D_fibration}
\\
$\mathbf r$ \> bi-Lipschitz map of a sheaf set into a fibration \eqref{E_r_map}
\\
$\mathtt i_k$ \> identification map \eqref{E_identif_map}
\\
$\hat \mu$, $\hat \nu$, $\tilde \mu^k_\mathfrak q$, $\tilde \nu^k_\mathfrak q$ \> push forward of $\mu$, $\nu$, $\mu\ka$, $\nu\ka$ by $\mathbf r$ \eqref{E_hat_mu_nu}-\eqref{E_tilde_mu_nu_k_a}
\\
$\{\tilde Z^\ell_{\q,\b},\tilde C^\ell_{\q,\b}\}$ \> subpartition of a directed fibration, Proposition \ref{P_sub_sheaf_fol}
\\
$G$ \> complete $\mathtt c_{\tilde C}$-Lipschitz graph, Definition \ref{D_complete_G}
\\
$\varphi_G$ \> $|\cdot|_{D^*}$-Lipschitz function whose graph is $G$ \eqref{E_phi_graph_G}
\\
$\partial^+ G$, $\partial^- G$ \> super/subdifferential of $G$, Definition \ref{D_tilde_C_diff}
\\
$C(w,w')$ \> extremal cone of $\tilde C$ satisfying \eqref{E_cww'}
\\
$\mathtt c_{\C}$-Lipschitz foliation \> Definition \ref{D_Lip_fol}
\\
$D(\a)$, $|\cdot|_{D^*(\a)}$ \> convex set and its corresponding norm whose epigraph is $\C(\a)$, Section \ref{Ss_cone_lipschitz}
\\
$\theta : \dom\,\theta \to \mathfrak T$ \> quotient map of a $\mathtt c_{\C}$-Lipschitz foliation, Definition \ref{D_Lip_fol}
\\
$\O(\a)(w,w')$ \> convex set defined in \eqref{E_O_q_ww'}
\\
$\mathtt h^+(\t,x)$, $\mathtt h^-(\t,x)$ \> $|\cdot|_{D^*(\a)}$-Lipschitz functions, Proposition \ref{P_fol_char}
\\
$\partial^+ \theta$, $\partial^- \theta$ \> super/subdifferential of the $\mathtt c_{\C}$-Lipschitz foliation $\{\theta^{-1}(\t)\}_{\t \in \T}$, Definition \ref{D_partial_theta}
\\
$\mathcal T^+\theta$, $\mathcal T^-\theta$ \> forward, backward transport sets \eqref{E_forward_backward_set}
\\
$\mathcal F\theta$ \> set of fixed points \eqref{E_set_of_fixed_points}
\\
$\mathcal D^+ \theta$, $\mathcal D^- \theta$ \> forward/backward direction multifunction \eqref{E_forw_back_dir_theta}
\\
$\mathcal R^{+,\ell} \theta$, $\mathcal R^{-,\ell} \theta$ \> $\ell$-dimensional forward/backward regular transport set \eqref{E_ell_prime_forward_backward_regular_transport_set}
\\
$\mathcal R^+ \theta$, $\mathcal R^- \theta$ \> forward/backward regular transport set \eqref{E_forward_backward_regular_transport_set}
\\
$\mathcal R \theta$ \> regular transport set \eqref{E_regular_transport_set}
\\
$\mathcal N \theta$ \> residual set \eqref{E_residual_set_N}
\\
$\mathtt v^{\pm}$, $\mathtt v$ \> quotient maps defined in \eqref{E_partition_on_T} and \eqref{E_v}
\\
$\{Z^{\ell,\pm}_{\a,\b},C^{\ell,\pm}_{\a,\b}\}_{\ell,\a,\b}$ \> super/subdifferential directed partition defined in Theorem \ref{T_partition_E+-}
\\
$\{Z^{\ell}_{\a,\b},C^{\ell}_{\a,\b}\}_{\ell,\a,\b}$ \> $\mathtt c_{\C}$-differential locally affine partition defined in Corollary \ref{C_v}
\\
$\hat{\mathbf D}^\pm$, $\hat{\mathbf D}$ \> $\sigma$-compact graphs of the partitions $\{Z^{\ell,\pm}_{\a,\b},C^{\ell,\pm}_{\a,\b}\}$, $\{Z^{\ell}_{\a,\b},C^{\ell}_{\a,\b}\}$ \eqref{E_sub_directed_partition_all}
\\
$\mathcal I^+ \theta$ \> initial points of the superdifferential partition $\{Z^{\ell,+}_{\a,\b},C^{\ell,+}_{\a,\b}\}$ \eqref{E_init_Z+}
\\
$\mathcal E^- \theta$ \> final points of the subdifferential partition $\{Z^{\ell,-}_{\a,\b},C^{\ell,-}_{\a,\b}\}$ \eqref{E_final_Z-}
\\
$\Pi^f_{\mathtt c_{\C},\theta}(\tilde \mu,\tilde \nu)$ \> set of transference plans concentrated on the superdifferential of $\theta$ \eqref{E_542}
\\
$\mathtt d$ \> directional vector field for a $1$-dimensional model set \eqref{E_mathit_v_def}
\\
$\mathbf Z^1(\mathtt d,\A,\e,h^-,h^+)$ \> short notation for $1$-dimensional model set $\{Z^1_\a,C^1_\a\}_\a$, Definition \ref{D_modeldire_segm}
\\
$\bar{\mathbf Z}^1$ \> union of the closure of the segments of $\mathbf Z^1$ \eqref{E_clos_Z_1a}
\\
$P_t$, $\mathtt d^t$ \> section of $\mathbf Z^1$, $\mathtt d$ \eqref{E_P_t_section}, \eqref{E_d_t_def}
\\
$\mathtt d^{h^-}$, $\mathtt d^{h^+}$ \> multivalued extensions of $\mathtt d$ to the initial, final points \eqref{E_tt_d_ext}
\\
$\tilde{\mathtt d}^{h^-}_+$, $\tilde{\mathtt d}^{h^+}_-$ \> Borel section of $\mathtt d^{h^-}$, $\mathtt d^{h^+}$, Lemma \ref{L_reg_tt_d_h_pm}
\\
$\tilde{\mathtt d}_+$, $\tilde{\mathtt d}_-$ \> extension of $\tilde{\mathtt d}^{h^-}_+$, $\tilde{\mathtt d}^{h^+}_-$ to the set $\bar Z^1$ \eqref{E_tilde_tt_d_pm}
\\
$\sigma^{s,t}$, $\tilde \sigma^{s,t}_\pm$ \> maps from $P_s$ to $P_t$ defined by $\mathtt d$, $\tilde{\mathtt d}_\pm$ \eqref{E_sigma_s_t}, \eqref{E_tilde_sigma_s_t}
\\
$\mathtt d$, $E_1$, $\bar z \in E_2$ \> cone vector field with base $E_1$ and vertex $\bar z \in E_2 \subset \R^d \setminus E_1$, Definition \ref{D_cone_vector}
\\
$\{\mathtt d_i\}_{i=1}^I$, $E_{\mathtt d_i}$, $\bar z_i$ \> finite union of cone vector fields with bases $E_{\mathtt d_i}$, vertices $\bar z_i$, Definition \ref{D_cone_vector}
\\
$U(\{\e^k_i\},\mathrm h^-,\mathrm h^+)$ \> reference set defined in \eqref{E_pkdimen}
\\
$\mathbf Z^k(\mathcal D,\A^k,\{\e^k_i\},\mathrm h^-,\mathrm h^+)$ \> $k$-dimensional model set $\mathbf Z^k$ with direction map $\mathcal D$, \\
\> reference set $U(\{\e^k_i\},\mathrm h^-,\mathrm h^+)$, Definition \ref{D_k_dim_model}
\\
$(\e,h^-(w,\e),h^+(w,\e))$ \> reference configuration for a $1$-dimensional slice of $\mathbf Z^k$, Definition \ref{D_1_sim_slice}
\\
$\mathtt d_\e$ \> direction map of a 1-dimensional slice \eqref{E_mathtt_d_e}
\\
$\mathcal D^k$ \> direction map of a sheaf set $\mathbf Z^k$ \eqref{E_k_dim_dire_map}
\\
$\mathbf Z^{k,'}$ \> subset of a $k$-dimensional sheaf set \eqref{E_bf_Zk'_def}
\\
$\mathbf Z^k(\mathcal D,\A^{k,'},\{\e^k_i\},\mathrm h^-,\mathrm h^+)$ \> $k$-dimensional $\mathcal D$-cylinder, Definition \ref{D_mathcal_D_cyl}
\\
$\mathbf Z^{k,'}_n$ \> countable family of $k$-dimensional $\mathcal D$-cylinders covering a sheaf set \eqref{E_bf_Z_k'_n}
\\
$\bar \preccurlyeq$ \> linear preorder with minimality properties, Theorem \ref{T_cfibrcfol}
\\
$\bar \theta$ \> Borel function generating $\bar \preccurlyeq$ \eqref{E_bar_theta_th}
\\
$\varGamma(\tilde \pi)$ \> family of $\sigma$-compact carriages of $\tilde \pi$ \eqref{E_varGamma_def_pi}
\\
$\varGamma$ \> family of $\sigma$-compact carriages of transference plans \eqref{E_varGamma_def}
\\
$\{0,1\}^\N$ \> Polish space of sequences with product topology
\\
$\preccurlyeq_{\tilde\Gamma,\mathtt W^{\tg}}$ \> linear preorder constructed for $\tilde \pi$, Theorem \ref{T_order_gamma}
\\
$\theta_{\tilde\Gamma,\mathtt W^{\tg}}$ \> Borel function generating $\preccurlyeq_{\tilde\Gamma,\mathtt W^{\tg}}$ \eqref{E_bar_theta_th_2}
\\
$\mathtt W^{\tg} = \{\mathtt w^{\tg}_n\}_n$ \> countable family of sections, Lemma \ref{L_z_n_dense_selections}
\\
$H_{\tg,n}$ \> sets defined in \eqref{E_h_n}
\\
$\leq_\mathrm{lexi}$ \> lexicographic ordering on $\R^{d-k}$ \eqref{E_lexico_on_R_d_k}
\\
$\trianglelefteq_\alpha$ \> lexicographic ordering on $\{0,1\}^\alpha$ \eqref{E_lexico_on_Q_01}
\\
$\mathbf Z^{\ell,+}_\e$ \> 1-dimensional slice of a $\mathcal D$-cylinder  $\mathbf Z^{\ell,+}$ \eqref{E_1_dim_superdif}
\\
$\mathtt d^+_\e$ \> vector direction map for the 1-dimensional slice $\mathbf Z^{\ell,+}_\e$ \eqref{E_tt_d_e+}
\end{tabbing}

\newpage

\bibliographystyle{plain}

\end{document}